\title[Anick resolution for $U_n^+$]{Anick resolution for the \\free unitary quantum group}
\author{Alexander Mang}
\address{Hamburg University, Mathematics Department, Bundesstraße 55, Room 316, 20146 Hamburg, Germany, \href{mailto:alexander.mang@uni-hamburg.de}{alexander.mang@uni-hamburg.de}}
\email{\href{mailto:alex@alexandermang.net}{alex@alexandermang.net}}
\urladdr{\href{https://alexandermang.net}{https://alexandermang.net}}
\date{\today}
\subjclass[2020]{20G42}
\keywords{Anick resolution, Gröbner basis, free unitary quantum group, easy quantum group, compact quantum group, quantum group cohomology}
\begin{document}
\begin{abstract}
  A resolution $P$ of the co-unit of the Hopf $\ast$-algebra $\mathcal{O}(U_n^+)$ of representative functions on  van Daele and Wang's free unitary quantum  group $U_n^+$ in terms of free $\mathcal{O}(U_n^+)$-modules is computed for arbitrary $n$.
  \par A different such resolution was recently found by Baraquin, Franz, Gerhold, Kula and Tobolski. While theirs has  desirable properties which $P$ lacks, $P$ is still good enough to compute the (previously known) quantum group co\-ho\-mo\-lo\-gy and comes instead with an important advantage: $P$ can be arrived at without the clever combination of certain results potentially very particular to  $U_n^+$ that enabled the aforementioned authors to find their resolution. Especially, $P$ relies neither on the resolution for $O_n^+$ obtained by Collins, Härtel and Thom nor the one for $SL_2(q)$ found by Hadfield and Krähmer.
  \par
  Rather, as shown in the present article, the recursion defining the Anick resolution of the co\-/unit of $\mathcal{O}(U_n^+)$ can be solved in closed form. That suggests a potential strategy for determining the co\-ho\-mo\-lo\-gies of arbitrary easy quantum groups.  
\end{abstract}

\maketitle

This is a case study in computing the quantum group co\-ho\-mo\-lo\-gy of the discrete duals of so-called easy compact quantum groups.
  \par
  \vspace{0.5em}
  More precisely, the Anick resolution of the trivial right module of the algebra of representative functions on the free unitary quantum group $U_n^+$ is computed for arbitrary $n$ and used to deduce the (previously known) co\-ho\-mo\-lo\-gy of the dual of  $U_n^+$.
  \vspace{1.em}
  \begin{itemize}
  \item Which findings did the case study produce? -- Section~\ref{section:main-result}
  \item What are \enquote{easy} quantum groups and $U_n^+$? -- Section~\ref{section:on_free_unitary_quantum_group_and_easy_quantum_groups}
  \item What are Anick resolutions? -- Section~\ref{section:anick}
  \item Why do the results of the study hold? -- Sections~\ref{section:groebner_computation}--\ref{section:cohomology}
  \item Why do the case study in the first place? -- Section~\ref{section:motivation}
  \item Which conclusions can be drawn from the results? -- Section~\ref{section:conclusions}
  \end{itemize}
  \section{Main result}
  \label{section:main-result}
The main statement of the article is greatly shortened by the use of the following abbreviations.

  \begin{notation*}
  \newcommand{\indi}{i}
  \newcommand{\indj}{j}
    In the context of any set $\thegens$ with a specified involution $\Sstar$, for any $\thedim\in\Sintegersp$ and any $\thedim\times\thedim$-matrix  $\anymatA=(\anymatAX{\indj}{\indi})_{(\indj,\indi)\in\SYnumbers{\thedim}^{\Ssetmonoidalproduct 2}}$ of elements of $\thegens$ consider besides $\anymatA$ also the three matrices $\anymatAT$, $\anymatB$, $\anymatBT$
defined by for any $(\indj,\indi)\in \SYnumbers{\thedim}^{\Ssetmonoidalproduct 2}$,
\begin{IEEEeqnarray*}{C}
\anymatATX{\indj}{\indi}\Seqpd\anymatAX{\indi}{\indj},\hspace{3em}\anymatBX{\indj}{\indi}\Seqpd\anymatAX{\Sexchanged(\indj)}{\Sexchanged(\indi)}{}\SstarP,\hspace{3em}\anymatBTX{\indj}{\indi}\Seqpd\anymatAX{\Sexchanged(\indj)}{\Sexchanged(\indi)}{}\SstarP,
\end{IEEEeqnarray*}
where
 $\Sxfromto{\Sexchanged}{\Sintegers}{\Sintegers},\,\indi\mapsto\thedim-\indi+1$ is the reflection at $\thedim$.
\end{notation*}
\begin{main*}
  \label{main-result}
  \newcommand{\indi}{i}
  \newcommand{\indj}{j}
  \newcommand{\indk}{k}
  \newcommand{\indl}{\ell}
  \newcommand{\orderind}{\ell}    
  \newcommand{\inds}{s}
  \newcommand{\indt}{t}
  \newcommand{\indiX}[1]{\indi_{#1}}
  \newcommand{\indjX}[1]{\indj_{#1}}  
  \newcommand{\anycolor}{\mathfrak{c}}
  \newcommand{\colc}{\mathfrak{c}}
  \newcommand{\cold}{\mathfrak{d}}
  \newcommand{\colcX}[1]{\colc_{#1}}  
  
  \newcommand{\anylen}{m}  
  \newcommand{\anymatX}[2]{\anymat_{#1,#2}}
  \newcommand{\anyelement}{a}
  \newcommand{\otherelement}{a'}  

For any field $\thefield$,  any $\thedim\in\Sintegersp$ with $2\leq \thedim$ and any $2\thedim$-elemental set $\thegens=\{\thewuniX{\indj}{\indi},\thebuniX{\indj}{\indi}\}_{\indi,\indj=1}^\thedim$,
  if $\thewuniX{\indj}{\indi}{}\SstarP\Seqpd \thebuniX{\indj}{\indi}$ and    $\thebuniX{\indj}{\indi}{}\SstarP\Seqpd \thewuniX{\indj}{\indi}$ for any
 $(\indj,\indi)\in\SYnumbers{\thedim}^{\Ssetmonoidalproduct 2}$,
  if   $\theuni\Seqpd (\thewuniX{\indj}{\indi})_{(\indj,\indi)\in\SYnumbers{\thedim}^{\Ssetmonoidalproduct 2}}$, if $\thematrices\Seqpd \{\theuni,\theuni\StransposeP,\theuni\SskewstarP,\theuni\SskewdaggerP\}$ (which means $|\thematrices|=4$), if 
  $\thealgebra$ is the quotient of the free unital $\thefield$-algebra $\thefreealg$  generated by $\thegens$ by the two-sided ideal $\theideal$ generated by 
  \begin{IEEEeqnarray*}{l}
    \therels\Seqpd \{ \textstyle\sum_{\inds=1}^\thedim \anymatAX{\indj}{\Sexchanged(\inds)}\anymatBTX{\inds}{\Sexchanged(\indi)}-\Skronecker{\indj}{\indi}\Saction\theone\Ssetbuilder \anymat\in\thematrices\Sand (\indj,\indi)\in \SYnumbers{\thedim}^{\Ssetmonoidalproduct 2}\},
  \end{IEEEeqnarray*}
  and if  $\Sxfromto{\thecounit}{\thealgebra}{\thefield}$ is the unique morphism of complex algebras with for any $\anymat\in\thematrices$ and $(\indj,\indi)\in \SYnumbers{\thedim}^{\Ssetmonoidalproduct 2}$,
  \begin{IEEEeqnarray*}{rCl}
    \thecounit(\anymatAX{\indj}{\indi}+\theideal)\Seqpd \Skronecker{\indj}{\indi},
  \end{IEEEeqnarray*}
  which turns $\thefield$ into a right $\thealgebra$-module $\themodule$,
  then the following are true:
  \par
  If for any $\anymat\in\thematrices$, any $(\indj,\indi)\in\SYnumbers{\thedim}^{\Ssetmonoidalproduct 2}$ and  any $\orderind\in\Sintegersp$,
  \begin{IEEEeqnarray*}{rCl}
    \Sbv{\anymat}{\indj}{\indi}\equiv    \Sbvfull{\anymat}{\orderind}{\indj}{\indi}\Seqpd
    \begin{cases}
   \anymatAX{\indj}{\Sexchanged(\indi)}   &\Scase \orderind=1\\
\anymatAX{\indj}{\thedim}(\anymatBTX{1}{\thedim}\anymatAX{1}{\thedim})^{\frac{\orderind-2}{2}}\anymatBTX{1}{\Sexchanged(\indi)}      &\Scase  \orderind\text{ even}\\
\anymatAX{\indj}{\thedim}(\anymatBTX{1}{\thedim}\anymatAX{1}{\thedim})^{\frac{\orderind-3}{2}}\anymatBTX{1}{\thedim}\anymatAX{1}{\Sexchanged(\indi)}      &\Scase  \orderind\neq 1\Sand \orderind\text{ odd},
    \end{cases}
  \end{IEEEeqnarray*}
   if for any $\orderind\in\Sintegersp$,
  \begin{IEEEeqnarray*}{rCl}
    \thechains{\orderind}&\Seqpd& \{\Sbvfull{\anymat}{\orderind}{\indj}{\indi}\Ssetbuilder \anymat\in\thematrices\Sand (\indj,\indi)\in\SYnumbers{\thedim}^{\Ssetmonoidalproduct 2}\}\subseteq \thefreealg
  \end{IEEEeqnarray*}
    (which means in particular $|\thechains{1}|=2\thedim^2$ and $|\thechains{2}|=4\thedim^2-2$ and $|\thechains{\indl}|=4\thedim^2$ for any $\indl\in\Sintegersp$ with $3\leq \indl$),
    if for any $\orderind\in\Sintegers$ with $-1\leq \orderind$,
    \begin{IEEEeqnarray*}{rCl}
      \thechainsmodule{\orderind}&\Seqpd&
      \begin{cases}
      \themodule&\Scase \orderind=-1\\
 \thealgebra&\Scase \orderind=0\\
 \thefield\thechains{\indl}\SmonoidalproductC{\thefield}\thealgebra&\Scase 1\leq \orderind
      \end{cases}
\end{IEEEeqnarray*}
(i.e., if $\thechainsmodule{\orderind}$ is the free right $\thealgebra$-module over $\thechains{\orderind}$ for each $\orderind\in\Sintegersp$), and if $(\thedifferential{\orderind})_{\orderind\in\Sintegersnn}$ is such that for each $\orderind\in\Sintegersnn$ the mapping $\thedifferential{\orderind}$ is a morphism $\Sfromto{\thechainsmodule{\orderind}}{\thechainsmodule{\orderind-1}}$ of right $\thealgebra$-modules
and such that  for any $\anymat\in\thematrices$ and any $(\indj,\indi)\in \SYnumbers{\thedim}^{\Ssetmonoidalproduct 2}$,
\begin{IEEEeqnarray*}{rCl}
  \thedifferential{0}( \theone+\theideal)&\Seqpd&1
\end{IEEEeqnarray*}
and
\begin{IEEEeqnarray*}{rCl}
  \thedifferential{1}(\Sbv{\anymatA}{\indj}{\indi}\Smonoidalproduct (\theone+\theideal))&\Seqpd& \anymatAX{\indj}{\Sexchanged(\indi)}-\Skronecker{\indj}{\Sexchanged(\indi)}\Saction\theone+\theideal
\end{IEEEeqnarray*}
and
\begin{IEEEeqnarray*}{rCl}
  \IEEEeqnarraymulticol{3}{l}{\textstyle\thedifferential{2}(\Sbv{\anymatA}{\indj}{\indi}\Smonoidalproduct (\theone+\theideal))
  }\\
\hspace{1em}  &\Seqpd&  \textstyle\sum_{\inds=1}^\thedim\Sbv{\anymatA}{\indj}{\inds}\Smonoidalproduct(\anymatBTX{\inds}{\Sexchanged(\indi)}+\theideal)+\Sbv{\anymatBT}{\Sexchanged(\indj)}{\indi}\Smonoidalproduct(\theone+\theideal)\\
&&\textstyle{}-\Skronecker{\indj}{\thedim}\Skronecker{\indi}{\thedim}\sum_{\inds=2}^{\thedim}(\sum_{\indt=1}^{\thedim}\Sbv{\anymatA}{\indt}{\inds}\Smonoidalproduct(\anymatBTX{\inds}{\Sexchanged(\indt)}+\theideal)+\Sbv{\anymatBT}{\inds}{\Sexchanged(\inds)}\Smonoidalproduct(\theone+\theideal))
\end{IEEEeqnarray*}
and  for any $\orderind\in\Sintegersnn$ with $3\leq \orderind$, if for any $\othermatA\in\thematrices$ the symbol $\othermatPA{\orderind}$ is short for $\othermatA$ if $\orderind$ is odd and for $\othermatBT$ if $\orderind$ is even, then 
\begin{IEEEeqnarray*}{rCl}
  \IEEEeqnarraymulticol{3}{l}{
    \thedifferential{\orderind}(\Sbv{\anymatA}{\indj}{\indi}\Smonoidalproduct (\theone+\theideal))
  }\\
\hspace{1em}  &\Seqpd &\textstyle  \sum_{\inds=1}^\thedim\Sbv{\anymatA}{\indj}{\inds}\Smonoidalproduct(\anymatPAX{\orderind}{\inds}{\Sexchanged(\indi)}+\theideal)+(-1)^{\orderind}\Saction\Sbv{\anymatBT}{\Sexchanged(\indj)}{\indi}\Smonoidalproduct(\theone+\theideal)\\
&&\textstyle{}+\Skronecker{\indj}{\thedim}((\Sbv{\anymatAT}{1}{1}+\Skronecker{\orderind}{3}\sum_{\inds=2}^{\thedim-1}\Sbv{\anymatAT}{\inds}{\inds})\Smonoidalproduct(\anymatPATX{\orderind}{\Sexchanged(\indi)}{\thedim}+\theideal)\\
&&\textstyle\hspace{3em}\hfill{}-\Skronecker{\indi}{\thedim}(\sum_{\inds=1}^\thedim\Sbv{\anymatAT}{1}{\inds}\Smonoidalproduct(\anymatPATX{\orderind}{\inds}{\thedim}+\theideal)+(-1)^{\orderind}\Saction\Sbv{\anymatB}{\thedim}{1}\Smonoidalproduct(\theone+\theideal)))\\
&&\textstyle{}+\Skronecker{\indj}{1}\Skronecker{\indi}{\thedim}(-1)^{\orderind}(\Sbv{\anymatB}{1}{1}+\Skronecker{\orderind}{3}\sum_{\inds=2}^{\thedim-1}\Sbv{\anymatB}{\inds}{\inds})\Smonoidalproduct(\theone+\theideal),
\end{IEEEeqnarray*}
then the complex
\begin{IEEEeqnarray*}{C}
  \begin{tikzpicture}
    \node (p0) at (0,0) {$\themodule$};
    \node (p1) [left = 1cm of p0] {$\thealgebra$};
    \node (p2) [left = 1cm of p1] {$\thefield\thechains{1}\Smonoidalproduct\thealgebra$};
    \node (p3) [left = 1cm of p2] {$\thefield\thechains{2}\Smonoidalproduct\thealgebra$};
    \node (p4) [left = 1cm of p3] {$\hdots$};
    \draw [->] (p4) to node [above] {$\thedifferential{3}$} (p3) ;
    \draw [->] (p3) to node [above] {$\thedifferential{2}$} (p2);
    \draw [->] (p2) to node [above] {$\thedifferential{1}$} (p1);
    \draw [->] (p1) to node [above] {$\thedifferential{0}$} (p0);
  \end{tikzpicture}
\end{IEEEeqnarray*}
of right $\thealgebra$-modules is exact. (In other words,  $(\thechainsmodule{\indl},\thedifferential{\indl})_{\indl\in\Sintegersnn}$ is a free resolution of $\themodule$ in terms of right $\thealgebra$-modules.)
In fact, it is precisely the Anick resolution of $\themodule$ with respect to the degreewise lexicographic extension of the total order on $\thegens$ defined by, for any $\{\colc,\cold\}\subseteq \Scolors$ and any $\{(\indj,\indi),(\indt,\inds)\}\subseteq \SYnumbers{\thedim}^{\Ssetmonoidalproduct 2}$ with respect to the lexicographic order on $\SYnumbers{\thedim}^{\Ssetmonoidalproduct 2}$,
    \begin{IEEEeqnarray*}{rClCrCl}
      \thecuniX{\cold}{\indt}{\inds}&\leq& \thecuniX{\colc}{\indj}{\indi} &\Siff& (\cold,\colc)&=&{\mathnormal\Sblack\Swhite}\\
&&&&      {}\Sor(\cold,\colc)&=&{\mathnormal\Sblack\Sblack}\Sand (\indj,\indi)<(\indt,\inds)\\
&&&&      {}\Sor (\cold,\colc)&=&{\mathnormal\Swhite\Swhite}\Sand (\indt,\inds)<(\indj,\indi).
\end{IEEEeqnarray*}
Finally, if $\thefield=\Scomps$, then   $\thealgebra$ is  the underlying algebra and $\thecounit$ the co-unit of the CQG algebra $\mathcal{O}(U_n^+)$ of representative functions on the free unitary quantum group $U_n^+$.
\end{main*}
This is an inferior resolution compared to the one previously obtained by Baraquin, Franz, Gerhold, Kula and Tobolski in \cite{BaraquinFranzGerholdKulaTobolski2023} (see Appendix~\ref{section:relationship} for a qua\-si\-/iso\-mor\-phism). However, it can still be used to compute the (previously known) quantum group co\-ho\-mo\-lo\-gy
\begin{IEEEeqnarray*}{rCl}
  H^{\Sargph}(\widehat{U_n^+})&\cong&\SextX{\thealgebra}{\themodulecomps}{\themodulecomps}{\Sargph},
\end{IEEEeqnarray*}
as demonstrated in Section~\ref{section:cohomology}.
\par
The significance of the Main result is that it is evidence that perhaps Anick resolutions can be used to compute homological invariants of easy quantum groups. For a discussion in that respect see Section~\ref{section:discussion}.

\section{Reminder on Anick resolutions}
\label{section:anick}
{
  \newcommand{\smallelement}{b}
  \newcommand{\bigelement}{b'}
  \newcommand{\leftelement}{b_1}
  \newcommand{\rightelement}{b_2}
  \newcommand{\smalllen}{m}
  \newcommand{\biglen}{m'}
  \newcommand{\smallgen}{e}
  \newcommand{\smallgenX}[1]{\smallgen_{#1}}
  \newcommand{\biggen}{e'}
  \newcommand{\biggenX}[1]{\biggen_{#1}}
  \newcommand{\indi}{i}
  \newcommand{\indj}{j}
  \newcommand{\posi}{i}
  \newcommand{\distposi}{j}
  \newcommand{\anyvector}{v}
  \newcommand{\anymonomial}{b}
  \newcommand{\othermonomial}{b'}  
  \newcommand{\anyset}{G}
  \newcommand{\smallgroebner}{g}
  \newcommand{\biggroebner}{g'}
  \newcommand{\anygroebner}{g}
  \newcommand{\leftgroebner}{g_1}
  \newcommand{\rightgroebner}{g_2}
  \newcommand{\leftreduction}{\varphi_1}
  \newcommand{\rightreduction}{\varphi_2}
  \newcommand{\chainlen}{m}
  \newcommand{\orderind}{\ell}
  \newcommand{\orderindsmall}{\orderind'}  
  \newcommand{\chaindegree}{k}
  \newcommand{\chaindegreesmall}{k'}  
  \newcommand{\chainstarts}{\alpha}
  \newcommand{\chainstartsX}[1]{\chainstarts_{#1}}
  \newcommand{\chainends}{\beta}
  \newcommand{\chainendsX}[1]{\chainends_{#1}}
  \newcommand{\chaingen}{e}
  \newcommand{\anymonoid}{B}
  \newcommand{\anychain}{c}
  \newcommand{\otherchain}{c'}  
  \newcommand{\anyreduced}{b}
  \newcommand{\otherreduced}{b'}
  \newcommand{\monideal}{I}
  \newcommand{\anymodule}{\thefield_\thecounit}  
  \newcommand{\chaingenX}[1]{\chaingen_{#1}}
  \newcommand{\anyspace}{F}
  \newcommand{\anybasis}{B}
  \newcommand{\anygen}{e}  
  \newcommand{\anybv}{b}
  \newcommand{\otherbv}{b'}
  \newcommand{\anysubspace}{J}
  \newcommand{\anysubset}{J}
  \newcommand{\smallchain}{a}
  \newcommand{\chainrest}{b}
  \newcommand{\somenonunit}{\tilde{b}}  
  \newcommand{\anykernelvector}{v}
  \newcommand{\bigchain}{g}
  \newcommand{\prodrest}{h}  
  \newcommand{\anyextlen}{t}
  \newcommand{\anytipscal}{\lambda}
  \newcommand{\anyextra}{d}
  \newcommand{\posind}{s}      
  Anick presented the resolutions named after him in \cite{Anick1986}.  They are commonly explained within the framework of non\-/com\-mu\-ta\-ti\-ve Gröb\-ner bases developed by Mora in \cite{Mora1985}. Berg\-man's diamond lem\-ma from \cite{Bergman1978} also fits in this framework. All three are recalled below. 
  \par
  \subsection{Tips}
\label{section:anick-tips}  
  For any vector space $\anyspace$ over any field $\thefield$, any $\thefield$-basis $\anybasis$, any vector $\anyvector\in\anyspace$ and any basis element $\anybv\in\anybasis$ let $\SevaluateX{\anyvector}{\anybv}$ be the coefficient of $\anyvector$ with respect to $\anybv$. Given any total order $\leq$ on $\anybasis$ and any $\anyvector\in\anyspace$ with $\anyvector\neq 0$, there exists a unique element of  $\anybasis$, called the \emph{tip} of $\anyvector$ with respect to $\anybasis$ and $\leq$ and denoted by the (non-standard) symbol $\Stip\anyvector$, which is $\leq$-maximal among the elements $\anybv\in\anybasis$ with the property that $\SevaluateX{\anyvector}{\anybv}\neq 0$. Moreover, for any subset $\anysubset$ of $\anyspace$ let $\StipO\anysubset\Seqpd \{\Stip\anyvector\Ssetbuilder\anyvector\in\anysubset\Sand \anyvector\neq 0\}$.
  \par
  A \emph{well\-/order} is any total order with respect to which any non-empty subset has a minimal element. By a trans\-fi\-nite ge\-ne\-ra\-li\-za\-tion of Gaus\-sian e\-li\-mi\-na\-tion, if $\leq$ is a well\-/order on $\anybasis$, then any $\thefield$-vector subspace $\anysubspace$ induces a decomposition of $\anyspace$ into a direct sum $\anyspace=\anysubspace\Sdirectsum\Sspan(\anybasis\backslash \Stip\anysubspace)$ of $\thefield$-vector spaces, where $\anybasis\backslash \Stip\anysubspace$ is the set complement of $\Stip\anysubspace$ in $\anybasis$, sometimes referred to as the \emph{non-tips} of $\anysubspace$ with respect to $\anybasis$ and $\leq$. For any $\anyvector\in \anyspace$ the unique $\anyreduced\in \Sspan(\anybasis\backslash \Stip\anysubspace)$ such that $\anyvector-\anyreduced\in \anysubspace$ is called the \emph{normal form} of $\anyvector$ with respect to $\anysubspace$, $\anybasis$ and $\leq$.
  \par
  In particular, for any $\{\anybv,\otherbv\}\subseteq \anybasis\backslash\StipO\anysubspace$ with $\anybv\neq \otherbv$ the classes $\anybv+\anysubspace$ and $\otherbv+\anysubspace$ are then distinct. And, the set $\{\anybv+\anysubspace\Ssetbuilder\anybv\in \anybasis\backslash \StipO\anysubspace\}$ is a $\thefield$-basis of the quotient $\thefield$-vector space $\anyspace/\anysubspace$ of $\anyspace$ by $\anysubspace$. The mapping which for any $\anyvector\in\anyspace$ sends the class $\anyvector+\anysubspace$ to the normal form of $\anyvector$ is a well-defined $\thefield$-linear splitting $\Sfromto{\anyspace/\anysubspace}{\anyspace}$ of the projection map $\Sfromto{\anyspace}{\anyspace/\anysubspace}$.
  \subsection{Monoidal ideals}
\label{section:anick-monoidal_ideals}  
  In the following, a \emph{monoid} $\anymonoid$ is any set equipped with an associative binary operation with a neutral element. Any $\smallelement\in\anymonoid$ is said to \emph{divide} any $\bigelement\in\anymonoid$ if there exist $\{\leftelement,\rightelement\}\subseteq \anymonoid$ with $\leftelement\smallelement\rightelement=\bigelement$. Equivalently, $\bigelement$ is called a \emph{multiple} of $\smallelement$.
  \par
  A \emph{monoidal ideal} of $\anymonoid$ is any subset of $\anymonoid$ which also contains all multiples of its elements. Given any monoidal ideal $\monideal$ of $\anymonoid$, there may or may not exist a subset $\thecore$ which generates $\monideal$ as a monoidal ideal and which is minimal with that property with respect to the subset inclusion partial order $\subseteq$. If such a set $\thecore$ exists, it is unique and will, for brevity, be referred to as the \emph{core} of $\monideal$, which is non-standard terminology.
  \par
  \subsection{Monomial orders}
  \label{section:anick-monomial_orders}  
A well-order on a monoid $\anymonoid$ is called \emph{admissible} if for any $\{\smallelement,\bigelement,\leftelement,\rightelement\}\subseteq \anymonoid$, whenever $\smallelement<\bigelement$, then also $\leftelement\smallelement<\leftelement\bigelement$ and  $\smallelement\rightelement<\bigelement\rightelement$. If there exists at least one admissible order on $\anymonoid$, then either $\anymonoid$ is a trivial monoid, consisting only of a single element, or $\anymonoid$ has no left or right absorbing elements. Moreover, it can be seen that then any monoidal ideal of $\anymonoid$ has a core.
  \par
Let $\thegens$ be any set.
If $\anymonoid=\themonoid$ is the free monoid over $\thegens$, then any admissible order on $\themonoid$ is called  a \emph{monomial order} for $\thegens$.
One way of obtaining monomial  orders for $\thegens$ is to extend a given well-order on $\thegens$ \emph{de\-gree\-wise le\-xi\-co\-gra\-phi\-cal\-ly}: That means that for any $\{\smalllen,\biglen\}\subseteq \Sintegersnn$ and any $\smallgen\in\thegens^{\Ssetmonoidalproduct\smalllen}$ and $\biggen\in\thegens^{\Ssetmonoidalproduct\biglen}$ it is defined that $\smallgenX{1}\smallgenX{2}\ldots\smallgenX{\smalllen}<\biggenX{1}\biggenX{2}\ldots\biggenX{\biglen}$ if and only if either $\smalllen<\biglen$ or simultaneously $\smalllen=\biglen$ and there exists $\distposi\in\SYnumbers{\smalllen}$ such that $\smallgenX{\posi}=\biggenX{\posi}$ for any $\posi\in\SYnumbers{\distposi-1}$ and  $\smallgenX{\distposi}<\biggenX{\distposi}$.
  \par
  \subsection{Gröbner bases}
  \label{section:anick-groebner_bases}  
  Let $\thegens$ be any set, $\thefield$  any field, $\anyspace=\thefreealg$ the free $\thefield$-algebra over $\thegens$, moreover $\theone$ its unit, $\leq$ any monomial order for $\thegens$ and $\theideal$ any (two-sided) ideal of $\thefreealg$. Then, $\anymonoid=\themonoid$ is a basis of $\thefreealg$ and, with respect to that basis and $\leq$, the tips $\monideal=\StipO\theideal$ of $\theideal$ form a monoidal ideal in $\themonoid$. The elements of $\themonoid$ are usually called \emph{monomials} in $\thegens$ and the non-tips $\themonoid\backslash \StipO\theideal$  \emph{reduced}  monomials. Because $\leq$ is a well-order the monoidal ideal $\StipO\theideal$ admits a core $\thecore$, sometimes called the \emph{obstructions} of $\theideal$ with respect to $\leq$.
  \par
  A \emph{Gröbner basis} of  $\theideal$  with respect to  $\leq$ is any subset $\thegroebner$ of $\theideal$ such that 
$\StipO\thegroebner$ generates  $\StipO\theideal$ as a monoidal ideal of $\themonoid$ or, equivalently, such that $\thecore\subseteq \StipO\thegroebner$.
  Any Gröbner basis $\thegroebner$ is said to be \emph{minimal} if $0\notin\thegroebner$ and if there are no $\{\smallgroebner,\biggroebner\}\subseteq\thegroebner$ with $\smallgroebner\neq\biggroebner$ such that $\Stip\smallgroebner$ divides $\Stip\biggroebner$. That is the case if and only if $\StipO\thegroebner=\thecore$. 
  Even further, any minimal Gröbner basis is called \emph{reduced} if $\SevaluateX{\anygroebner}{\Stip\anygroebner}=1$ for any $\anygroebner\in\thegroebner$ and if there exist no $\{\smallgroebner,\biggroebner\}\subseteq\thegroebner$ and  $\anymonomial\in\themonoid$ such that  $\smallgroebner\neq\biggroebner$ and $\SevaluateX{\biggroebner}{\anymonomial}\neq 0$ and such that $\Stip\smallgroebner$ divides $\anymonomial$.
  In fact, with respect to $\leq$ there is exactly one reduced Gröbner basis of $\theideal$. It is in particular a $\thefield$-basis of $\theideal$.
  \subsection{Diamond lemma}
\label{section:anick-diamond_lemma}    
  For any $\anyvector\in\thefreealg$ with $\anyvector\neq 0$  a \emph{one-step reduction} by $\anyvector$ with respect to $\leq$ is any  $\thefield$-vector space endomorphism of $\thefreealg$ for which there exist  $\{\leftelement,\rightelement\}\subseteq \themonoid$ such that $\leftelement(\Stip\anyvector)\rightelement$ is mapped to $\leftelement(\Stip\anyvector-\frac{1}{\SevaluateX{\anyvector}{\Stip\anyvector}}\anyvector)\rightelement$ and any other element of $\themonoid$ to itself.
  \par
  Let $\thegroebner$ be any subset of $\thefreealg$. A \emph{reduction} by $\thegroebner$  with respect to $\leq$ is any element of a certain submonoid of the monoid formed by the  $\thefield$-vector space endomorphisms of $\thefreealg$ under composition, namely the one generated by the one-step reductions by arbitrary non-zero elements of $\thegroebner$.
  \par
An \emph{inclusion ambiguity} of  $\thegroebner$ with respect to $\leq$ is any quadruple $(\smallgroebner,\biggroebner,\leftelement,\rightelement)$ such that $\{\smallgroebner,\biggroebner\}\subseteq \thegroebner$ and $\{\leftelement,\rightelement\}\subseteq \themonoid$ and $\smallgroebner\neq 0$ and $\biggroebner\neq 0$ and $\smallgroebner\neq\biggroebner$ and $\leftelement(\Stip\smallgroebner)\rightelement=\Stip\biggroebner$. And it is then said to \emph{resolve} if there exist reductions $\leftreduction$ and $\rightreduction$ by $\thegroebner$ with respect to $\leq$ such that $\leftreduction(\Stip\biggroebner-\frac{1}{\SevaluateX{\biggroebner}{\Stip\biggroebner}}\biggroebner)=\rightreduction(\leftelement(\Stip\smallgroebner-\frac{1}{\SevaluateX{\smallgroebner}{\Stip\smallgroebner}}\smallgroebner)\rightelement)$.
  \par
  Similarly, an \emph{overlap ambiguity} of $\thegroebner$ with respect to $\leq$ is any tuple $(\leftgroebner,\rightgroebner,\leftelement,\rightelement
  )$ with $\{\leftgroebner,\rightgroebner\}\subseteq \thegroebner$ and $\{\leftelement,\rightelement\}\subseteq \themonoid$  and $\leftgroebner\neq 0$ and $\rightgroebner\neq 0$ such that $(\leftelement,\rightelement)\neq (1,1)$, such that $(\Stip\leftgroebner)\rightelement=\leftelement(\Stip\rightgroebner)$ and such that neither is $\rightelement$ divided by $\Stip\rightgroebner$  nor  $\leftelement$ by $\Stip\leftgroebner$. It is  said to \emph{resolve} if there exist reductions $\leftreduction$ and $\rightreduction$ by $\thegroebner$ such that $\leftreduction((\Stip\leftgroebner-\frac{1}{\SevaluateX{\leftgroebner}{\Stip\leftgroebner}}\leftgroebner)\rightelement)=\rightreduction(\leftelement(\Stip\rightgroebner-\frac{1}{\SevaluateX{\rightgroebner}{\Stip\rightgroebner}}\rightgroebner))$.
  \begin{proposition}[Bergman's diamond lemma]
    \label{proposition:bergman}
With respect to $\leq$, the set $\thegroebner$ is a Gröbner basis of $\theideal$ if and only if $\thegroebner$ generates $\theideal$ as an ideal and all inclusion and overlap ambiguities of $\thegroebner$ resolve. 
  \end{proposition}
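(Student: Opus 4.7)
The plan is to prove both directions of the biconditional, with the reverse implication doing the substantive work. For the forward direction, assume $\thegroebner$ is a Gröbner basis. That $\thegroebner$ generates $\theideal$ as an ideal follows by transfinite descent on tips: for any non-zero $f\in\theideal$ one has $\Stip f=u(\Stip g)v$ for some $g\in\thegroebner$ and $u,v\in\themonoid$, so subtracting the appropriate scalar multiple of $ugv$ yields an element of $\theideal$ with strictly smaller tip, and well-foundedness of $\leq$ then expresses $f$ as a finite $\thefield$-linear combination of such products $ugv$. For ambiguity resolution, each side of the defining equation already lies in $\theideal$, so their difference can be written as $\sum_i\lambda_i u_ig_iv_i$ in which every $u_i(\Stip g_i)v_i$ is strictly below the common monomial; reading off one-step reductions from this expression directly produces the required pair.

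The reverse direction is the standard Bergman argument. Assume $\thegroebner$ generates $\theideal$ as an ideal and that all inclusion and overlap ambiguities of $\thegroebner$ resolve. It suffices to show that for every non-zero $f\in\theideal$ the monomial $\Stip f$ is a multiple of $\Stip g$ for some $g\in\thegroebner$, since this yields $\thecore\subseteq\StipO\thegroebner$. Fix such an $f$ and, using the generating hypothesis, write $f=\sum_{i=1}^n\lambda_i u_ig_iv_i$ with $g_i\in\thegroebner$ and $u_i,v_i\in\themonoid$; call the $\leq$-maximum $h$ of the monomials $u_i(\Stip g_i)v_i$ the \emph{height} of the representation, and pick a representation of minimal height. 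If $h=\Stip f$ there is nothing left to prove, so assume $h>\Stip f$, which forces cancellation among the summands with leading monomial $h$. Pick any two such summands; the identity $u_i(\Stip g_i)v_i=h=u_j(\Stip g_j)v_j$ realizes either two disjoint, an inclusion-type, or a proper overlap-type occurrence of $\Stip g_i$ and $\Stip g_j$ inside $h$. The disjoint case rewrites formally; the other two cases are precisely where the resolution hypothesis is invoked, producing reductions which, after multiplication by appropriate outer monomials, replace the chosen pair by a sum of terms all of height strictly less than $h$. This contradicts the minimality of the representation.

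The main obstacle is the combinatorial bookkeeping in the reverse direction: matching the three positional cases for two occurrences of leading monomials inside a common monomial to the formal definitions of inclusion and overlap ambiguity, and converting the abstract reductions supplied by the hypothesis into genuine rewritings of the representation $\sum_i\lambda_i u_ig_iv_i$ that strictly decrease a chosen well-founded invariant. The cleanest remedy is to order representations by the Dershowitz--Manna multiset extension of $\leq$ applied to the multiset of tips of the summands; we refer to Bergman's original paper \cite{Bergman1978} for the precise execution.
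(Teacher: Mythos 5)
The paper does not prove this proposition at all: it is stated as recalled background, with the argument delegated to Bergman's original article \cite{Bergman1978}. So there is no in-paper proof to measure your attempt against; what you wrote is, in outline, the standard proof. Your substantive reverse direction is the classical minimal-representation argument: pick a representation $f=\sum_i\lambda_i u_i g_i v_i$ minimizing a well-founded invariant attached to the multiset of the monomials $u_i(\Stip g_i)v_i$, and, when the maximal such monomial $h$ exceeds $\Stip f$, turn a cancelling pair at $h$ into terms strictly below $h$; the three positional cases (disjoint, nested, properly overlapping) do match the paper's definitions, since a proper overlap automatically satisfies the divisibility restrictions in the definition of an overlap ambiguity, nesting of distinct elements is an inclusion ambiguity, and a pair using the same element at the same position simply merges. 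Your closing remark about the multiset extension is exactly the right repair, because rewriting the chosen pair need not lower the height $h$ itself when further summands sit at $h$. As a sketch this is sound, and the deferred bookkeeping is genuinely just bookkeeping.

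The one step I would not accept as written is the forward direction's treatment of resolvability: from a representation of the difference of the two remainders in which every $u_i(\Stip g_i)v_i$ lies strictly below the common monomial you cannot simply \emph{read off} reductions $\varphi_1$ and $\varphi_2$ with $\varphi_1(\cdot)=\varphi_2(\cdot)$, because one-step reductions act on whatever monomials actually occur in the element being reduced, and nothing guarantees that the monomials of your chosen representation occur there with usable coefficients. The standard argument runs differently: since $\thegroebner$ is a Gröbner basis, any monomial of $\StipO\theideal$ occurring in a support admits a one-step reduction by some element of $\thegroebner$, and transfinite induction along the well-order $\leq$ shows that every element is sent by some finite composition of one-step reductions to its normal form; the two remainders of an ambiguity are congruent modulo $\theideal$, normal forms of congruent elements coincide, and hence the reductions to normal form resolve the ambiguity. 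With that replacement your forward direction is complete, and the whole outline then reproduces the proof in \cite{Bergman1978} (in its Gröbner-basis reformulation) rather than any argument contained in the present paper.
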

  \subsection{Chains}
\label{section:anick-chains}  
Suppose $\thegens\cap\StipO\theideal=\emptyset$ and let $\theprechains{0}\Seqpd\thechains{0}\Seqpd\{1\}$ be the set containing the unit of $\thefreealg$ and let $\theprechains{1}\Seqpd\thechains{1}\Seqpd \thegens$.
\par
Suppressing the dependency on $\theideal$ and $\leq$ in the notation, for any $\orderind\in\Sintegersnn$ with $2\leq \orderind$, any $\chainlen\in\Sintegersnn$ and any $\chaingen \in\thegens^{\Ssetmonoidalproduct\chainlen}$  the monomial $\anychain=\chaingenX{1}\chaingenX{2}\hdots\chaingenX{\chainlen}$ is called 
an \emph{$\orderind$-pre-chain}, in symbols:\ $\anychain\in\theprechains{\orderind}$, if and only if  there exist $\chainstarts\in\SYnumbers{\chainlen}^{\Ssetmonoidalproduct (\orderind-1)}$ and $\chainends\in\SYnumbers{\chainlen}^{\Ssetmonoidalproduct (\orderind-1)}$ such that either $\orderind=2$ and $1=\chainstartsX{1}\leq \chainendsX{1}=\chainlen$ or $3\leq \orderind$ and   $1=\chainstartsX{1}<\chainstartsX{2}\leq \chainendsX{1}$ and $\chainstartsX{\orderind-1}\leq\chainendsX{\orderind-2}<\chainendsX{\orderind-1}= \chainlen$ and $\chainstartsX{\posind+1}\leq \chainendsX{\posind}<\chainstartsX{\posind+2}$ for any $\posind\in\SYnumbers{\orderind-3}$ and such that $\chaingenX{\chainstartsX{\posind}}\chaingenX{\chainstartsX{\posind}+1}\ldots\chaingenX{\chainendsX{\posind}}\in \thecore$ for any $\posind\in\SYnumbers{\orderind-1}$.
\par
Furthermore, $\anychain$ is called an \emph{$\orderind$-chain}, an element of $\thechains{\orderind}$, if and only if there exist $\chainstarts$ and $\chainends$ as above such that, additionally, for any $\orderindsmall\in\SYnumbers{\orderind}$ with $2\leq \orderindsmall$ and any $\posind\in\SYnumbers{\chainendsX{\orderindsmall-1}-1}$ the monomial $\chaingenX{1}\chaingenX{2}\hdots\chaingenX{\posind}$ is not an element of  $\theprechains{\orderindsmall}$. If so, then $(\chainstarts,\chainends)$ is unique with those extended properties and, in the present article, will be referred to as the \emph{$\orderind$-chain indices} of $\anychain$. Beware that compared to \cite{Anick1986} the numbering of $(\thechains{\orderind})_{\orderind\in\Sintegersnn}$ has been shifted by $1$. An $\orderind$\-/chain here would be an $(\orderind-1)$\-/chain there.
\subsection{Combinatorial differentials and splittings}
\label{section:anick-combinatorial_differentials_and_splitting}  
For any $\orderind \in\Sintegersp$, any $\chainlen\in \Sintegersnn$, any $\chaingen\in\thegens^{\Ssetmonoidalproduct \chainlen}$ and any $\orderind$\-/chain $\anychain=\chaingenX{1}\chaingenX{2}\ldots\chaingenX{\chainlen}\in \thechains{\orderind}$ let $\thedeconcatenator{\orderind}(\anychain)$ be defined as $(\theone,\anychain)$ if $\orderind=1$, as $(\chaingenX{1},\chaingenX{2}\chaingenX{3}\ldots\chaingenX{\chainlen})$ if $\orderind=2$, and otherwise, if  $(\chainstarts,\chainends)$  are the chain indices of $\anychain$, as  $ (\chaingenX{1}\chaingenX{2}\hdots\chaingenX{\chainendsX{\orderind-2}},\chaingenX{\chainendsX{\orderind-2}+1}\chaingenX{\chainendsX{\orderind-2}+2}\hdots\chaingenX{\chainlen})$. It can be proved that $\thedeconcatenator{\orderind}$ is a mapping $\Sfromto{\thechains{\orderind}}{\thechains{\orderind-1}\Ssetmonoidalproduct(\themonoid\backslash \StipO\theideal)}$ if $2\leq\orderind$.
\par
Moreover, let $\thechaincycles{0}\Seqpd (\thechains{0}\Ssetmonoidalproduct(\themonoid\backslash\StipO\theideal))\backslash \{(\theone,\theone)\}$ and for any $\orderind\in\Sintegersp$ define $\thechaincycles{\orderind}$ to be the set of all $(\anychain,\anyextra)\in\thechains{\orderind}\Ssetmonoidalproduct(\themonoid\backslash \StipO\theideal)$ such that, if $(\smallchain,\chainrest)=\thedeconcatenator{\orderind}(\anychain)$, then $\chainrest\anyextra\in\StipO\theideal$.
\par 
For any $\orderind\in\Sintegersnn$ and any $(\anychain,\anyextra)\in\thechaincycles{\orderind}$, if $\{\chainlen,\anyextlen\}\subseteq\Sintegersnn$ and  $\chaingen \in\thegens^{\Ssetmonoidalproduct(\chainlen+\anyextlen)}$ are such that $\anychain=\chaingenX{1}\chaingenX{2}\hdots\chaingenX{\chainlen}$ and $\anyextra=\chaingenX{\chainlen+1}\chaingenX{\chainlen+2}\hdots\chaingenX{\chainlen+\anyextlen}$, then let $\thereconcatenator{\orderind}(\anychain,\anyextra)\Seqpd(\chaingenX{1}\chaingenX{2}\hdots\chaingenX{\chainendsX{\orderind}},\allowbreak\chaingenX{\chainendsX{\orderind}+1}\chaingenX{\chainendsX{\orderind}+2}\hdots\chaingenX{\chainlen+\anyextlen})$, where $\chainendsX{\orderind}\in\SYnumbers{\chainlen+\anyextlen}$  is such that $\chainlen<\chainendsX{\orderind}$, such that, if $\orderind=0$, then  $\chainendsX{\orderind}=1$, such that, if $1\leq \orderind$, then  $\chaingenX{\chainstartsX{\orderind}}\chaingenX{\chainstartsX{\orderind}+1}\hdots\chaingenX{\chainendsX{\orderind}}\in\thecore$, where $\chainstartsX{\orderind}\in \SYnumbers{\chainlen}$ is given by $1$ if $\orderind=1$, by $\min\{\indi\in\Sintegers\Sand\exists\indj\in \Sintegers\Ssuchthat 1<\indi\leq\chainlen<\indj<\chainlen+\anyextlen\Sand \chaingenX{\indi}\chaingenX{\indi+1}\hdots\chaingenX{\indj}\in\thecore\}$ if $\orderind=2$,   and by $\min\{\indi\in\Sintegers\Sand\exists\indj\in \Sintegers\Ssuchthat\chainendsX{\orderind-2}<\indi\leq\chainlen<\indj<\chainlen+\anyextlen\Sand \chaingenX{\indi}\chaingenX{\indi+1}\hdots\chaingenX{\indj}\in\thecore\}$ if $3\leq \orderind$, where  $((\chainstartsX{1},\chainstartsX{2},\ldots, \chainstartsX{\orderind-1}),(\chainendsX{1},\chainendsX{2},\ldots,\chainendsX{\orderind-1}))$ are the $\orderind$\-/chain indices of $\anychain$.
One proves that $\thereconcatenator{\orderind}$ is a mapping $\Sfromto{\thechaincycles{\orderind}}{\thechains{\orderind+1}\Ssetmonoidalproduct\themonoid}$.
\par
Note that $\thedeconcatenator{\orderind}$, $\thechaincycles{\orderind}$ and $\thereconcatenator{\orderind}$ are left implicit in \cite{Anick1986}. They can be interpreted as the combinatorial base of, respectively, the differentials, cycles and splittings of the resolution given below.
\subsection{Chain modules}
\label{section:anick-chain_modules}  
Let $\thealgebra$ denote the quotient $\thefield$-algebra $\thefreealg/\theideal$ of $\thefreealg$ by $\theideal$ and let $\thecounit$ be any morphism $\Sfromto{\thealgebra}{\thefield}$ of $\thefield$-algebras. Then $\thecounit$ induces a right $\thealgebra$-module structure $\anymodule$ on $\thefield$. Let $\thechainsmodule{-1}\Seqpd \anymodule$ and for any $\orderind\in\Sintegersnn$ let $\thechainsmodule{\orderind}\Seqpd \thefield\thechains{\orderind}\SmonoidalproductC{\thefield}\thealgebra$, where $\thefield\thechains{\orderind}$ is the free $\thefield$-vector space over $\thechains{\orderind}$.
\par
There then exists a unique total order $\thechainsorder{\orderind}$ on the $\thefield$\-/ba\-sis  $\thechainsbasis{\orderind}\Seqpd \{\anychain\Smonoidalproduct(\anyreduced+\theideal)\Ssetbuilder(\anychain,\anyreduced)\in\thechains{\orderind}\Ssetmonoidalproduct(\themonoid\backslash \StipO\theideal)\}$ of $\thechainsmodule{\orderind}$ such that for any $\{\anychain,\otherchain\}\subseteq\thechains{\orderind}$ and any $\{\anyreduced,\otherreduced\}\subseteq\themonoid\backslash \StipO\theideal$ the inequality $\anychain\Smonoidalproduct\anyreduced\thechainsorderR{\orderind}\otherchain\Smonoidalproduct\otherreduced$ holds if and only if $\anychain\anyreduced\leq \otherchain\otherreduced$. The tips of non-zero vectors in $\thechainsmodule{\orderind}$ with respect to $\thechainsorder{\orderind}$ will be indicated by the symbol $\thechainstip{\orderind}$.
\subsection{Anick resolution}
\label{section:anick-anick_resolution}  
There exist unique families $(\thedifferential{\orderind})_{\orderind\in \Sintegersnn}$ and $(\thesplitting{\orderind})_{\orderind\in \Sintegersnn}$ of mappings such that the following conditions are met. For each $\orderind\in\Sintegersnn$ the mapping $\thedifferential{\orderind}$ is a morphism $\Sfromto{\thechainsmodule{\orderind}}{\thechainsmodule{\orderind-1}}$ of right $\thealgebra$-modules and $\thesplitting{\orderind}$ is a $\thefield$-linear map $\Sfromto{\Sker(\thedifferential{\orderind})}{\thechainsmodule{\orderind+1}}$, where $\Sker(\thedifferential{\orderind})\subseteq \thechainsmodule{\orderind}$ is the kernel of $\thedifferential{\orderind}$ as a $\thefield$-linear map. The map $\thedifferential{0}$ satisfies $\thedifferential{0}(\theone\Smonoidalproduct(\theone+\theideal))= 1$ and for any $\anygen\in\thegens=\thechains{1}$ it holds $\thedifferential{1}(\anygen\Smonoidalproduct(\theone+\theideal))=\theone\Smonoidalproduct(\anygen-\thecounit(\anygen)+\theideal)$. For each $\orderind\in\Sintegersnn$ with $2\leq \orderind$ and any $\anychain\in \thechains{\orderind}$, if $\thedeconcatenator{\orderind}(\anychain)=(\smallchain,\chainrest)$, then 
$\thedifferential{\orderind}(\anychain\Smonoidalproduct(\theone+\theideal))=\smallchain\Smonoidalproduct(\chainrest+\theideal)-\thesplitting{\orderind-2}(\thedifferential{\orderind-1}(\smallchain\Smonoidalproduct(\chainrest+\theideal)))$. For any $\orderind\in\Sintegersnn$ and any $\anykernelvector\in\Sker(\thedifferential{\orderind})$ with $\anykernelvector\neq 0$, if $\anytipscal=\SevaluateX{\anykernelvector}{\thechainstip{\orderind}\anykernelvector}$, if $\anychain\in \thechains{\orderind}$ and $\anyextra\in \themonoid\backslash\StipO\theideal$ are such that $\thechainstip{\orderind}\anykernelvector=\anychain\Smonoidalproduct(\anyextra+\theideal)$, in which case it can be proved that $(\anychain,\anyextra)\in \thechaincycles{\orderind}$,  and if $(\bigchain, \prodrest)=\thereconcatenator{\orderind}(\anychain,\anyextra)$, then $\thesplitting{\orderind}(\anykernelvector)=\anytipscal\Saction\bigchain\Smonoidalproduct(\prodrest+\theideal)+\thesplitting{\orderind}(\anykernelvector-\anytipscal\Saction\thedifferential{\orderind+1}(\bigchain\Smonoidalproduct(\prodrest+\theideal)))$.
  \begin{proposition}[Anick's resolution]
    $(\thechainsmodule{\orderind},\thedifferential{\orderind})_{\orderind\in \Sintegersnn}$ is a resolution of $\anymodule$ in terms of free right $\thealgebra$-modules.
  \end{proposition}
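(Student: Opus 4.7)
The plan is to establish, by simultaneous induction on $\orderind\in\Sintegersnn$ combined with transfinite induction on the well\-/orders $\thechainsorder{\orderind}$, the three mutually dependent assertions that (a)~the recursive prescriptions defining $\thedifferential{\orderind}$ and $\thesplitting{\orderind}$ produce well\-/defined outputs on every input, (b)~$\thedifferential{\orderind-1}\circ\thedifferential{\orderind}=0$ for every $\orderind\in\Sintegersp$, and (c)~the restriction of $\thedifferential{\orderind+1}\circ\thesplitting{\orderind}$ to $\Sker(\thedifferential{\orderind})$ is the identity map. From (b) and (c) the containments $\Sker(\thedifferential{\orderind})\subseteq \thedifferential{\orderind+1}(\thechainsmodule{\orderind+1})$ and the reverse inclusion are immediate, yielding exactness in every positive degree. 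Exactness at $\anymodule$ is a routine verification: by right $\thealgebra$\-/linearity $\thedifferential{0}$ coincides, under the canonical identification $\anymodule\cong\thefield$, with $\thecounit$, while the formula $\thedifferential{1}(\anygen\Smonoidalproduct(\theone+\theideal))=\theone\Smonoidalproduct(\anygen-\thecounit(\anygen)+\theideal)$ exhibits a family of $\thealgebra$\-/generators of $\Sker(\thecounit)$ inside the image of $\thedifferential{1}$.

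I would first dispose of termination of the splitting recursion. The argument passed to $\thesplitting{\orderind}$ on the right\-/hand side of its defining equation is $\anykernelvector-\anytipscal\Saction\thedifferential{\orderind+1}(\bigchain\Smonoidalproduct(\prodrest+\theideal))$; by construction of $\thereconcatenator{\orderind}$ the leading monomial of $\bigchain\prodrest$ equals $\anychain\anyextra$ and the leading coefficient of $\thedifferential{\orderind+1}(\bigchain\Smonoidalproduct(\prodrest+\theideal))$ is $1$, so the subtraction exactly cancels the tip $\anytipscal\Saction\thechainstip{\orderind}\anykernelvector$ of $\anykernelvector$. The remainder therefore has a strictly smaller tip with respect to $\thechainsorder{\orderind}$, and well\-/foundedness of that order terminates the transfinite recursion. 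Well\-/definedness of $\thedifferential{\orderind}$ follows because its recursion only invokes $\thesplitting{\orderind-2}$ on $\thedifferential{\orderind-1}(\smallchain\Smonoidalproduct(\chainrest+\theideal))$, which lies in $\Sker(\thedifferential{\orderind-2})$ by the inductive instance of~(b).

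For (b), the base case $\orderind=1$ is a direct check. For $\orderind\geq 2$ one expands
\begin{IEEEeqnarray*}{rCl}
\thedifferential{\orderind-1}(\thedifferential{\orderind}(\anychain\Smonoidalproduct(\theone+\theideal)))&=&\thedifferential{\orderind-1}(\smallchain\Smonoidalproduct(\chainrest+\theideal))\\
&&{}-\thedifferential{\orderind-1}(\thesplitting{\orderind-2}(\thedifferential{\orderind-1}(\smallchain\Smonoidalproduct(\chainrest+\theideal))))
\end{IEEEeqnarray*}
and invokes the inductive instance of (c) at level $\orderind-2$, applied to the kernel vector $\thedifferential{\orderind-1}(\smallchain\Smonoidalproduct(\chainrest+\theideal))$ supplied by the inductive instance of (b) at level $\orderind-1$, to collapse the subtrahend onto the first summand. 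Claim (c) itself drops out of the splitting recursion: substituting $\thesplitting{\orderind}(\anykernelvector)$ into $\thedifferential{\orderind+1}$ and invoking the inductive instance of (c) at the strictly smaller tip rewrites the recursive summand as $\anykernelvector-\anytipscal\Saction\thedifferential{\orderind+1}(\bigchain\Smonoidalproduct(\prodrest+\theideal))$, so everything telescopes to $\anykernelvector$.

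The hard part will be the combinatorial lemma underlying well\-/definedness of $\thesplitting{\orderind}$: for every non\-/zero $\anykernelvector\in\Sker(\thedifferential{\orderind})$ the tip $\thechainstip{\orderind}\anykernelvector=\anychain\Smonoidalproduct(\anyextra+\theideal)$ must satisfy $(\anychain,\anyextra)\in\thechaincycles{\orderind}$, so that $\thereconcatenator{\orderind}(\anychain,\anyextra)$ is defined in the first place. This is the technical core of Anick's original argument and requires a careful analysis of how the chain indices $(\chainstarts,\chainends)$ of $\anychain$ interact with an obstruction inside $\chainrest\anyextra$; absent such an obstruction, no other basis vector of $\thechainsmodule{\orderind}$ could produce $\anychain\anyextra$ as a leading monomial under $\thedifferential{\orderind}$ with the opposite sign and so cancel the tip, contradicting $\thedifferential{\orderind}(\anykernelvector)=0$. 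Once that combinatorial assertion is in hand, the remaining steps are bookkeeping.
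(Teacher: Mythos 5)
The paper never proves this proposition: Section~\ref{section:anick} is expository, and the statement is simply recalled from Anick's original article \cite{Anick1986}, so there is no in-paper argument for yours to be measured against. Your outline does reproduce the structure of Anick's own proof — mutual recursion of the differentials and splittings, exactness deduced from $\thedifferential{\ell-1}\circ\thedifferential{\ell}=0$ together with $\thedifferential{\ell+1}\circ\thesplitting{\ell}=\mathrm{id}$ on kernels, termination by transfinite induction along the well\-/order $\thechainsorder{\ell}$, and an augmentation\-/ideal argument in the lowest degrees — so the plan is sound and is the standard route.

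As a proof, however, the proposal has a genuine gap, and you have partly located it yourself: everything hinges on two tip statements that are asserted rather than proved. First, the claim that for a nonzero $v\in\Sker(\thedifferential{\ell})$ the tip $\thechainstip{\ell}v$ has the form $c\Smonoidalproduct(t+\theideal)$ with $(c,t)\in\thechaincycles{\ell}$ is exactly the combinatorial heart of Anick's theorem (it is where the Gröbner\-/basis structure of $\StipO\theideal$ and the definition of chains enter), and you leave it open. Second, and not flagged by you: your cancellation and termination step relies on $\thechainstip{\ell}$ of $\thedifferential{\ell+1}$ applied to the reconcatenated basis element being precisely the tip to be cancelled, with coefficient $1$. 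You justify this \enquote{by construction of $\thereconcatenator{\ell}$}, but the construction only guarantees that the underlying word of the free monoid is unchanged; that the correction term $\thesplitting{\ell-1}(\thedifferential{\ell}(\cdot))$ occurring in the definition of $\thedifferential{\ell+1}$ does not overtake this tip is a separate inductive lemma, and it is exactly where the minimality condition distinguishing $\ell$\-/chains from $\ell$\-/pre\-/chains is used. Without these two lemmas neither well\-/definedness nor termination of the recursion is actually established, and your items (b) and (c) rest on them; so what you have is a correct plan for reproving Anick's theorem rather than a proof, whereas the paper simply imports the proposition from \cite{Anick1986}.
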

  }

\section{\texorpdfstring{Gröbner basis for $U_n^+$}{Gröbner basis for the free unitary quantum group}}
\label{section:groebner_computation}
From now on let $\thefield$, $\thedim$, $\thegens$, $\thematrices$, $\therels$, $\theideal$, $\thealgebra$ and $\theorder$ be as in the  \hyperref[main-result]{Main result}. 
  In this section the reduced Gröbner basis of $\theideal$ with respect to $\theorder$ is computed.
  {
    \newcommand{\indi}{i}
    \newcommand{\indj}{j}
    \newcommand{\inds}{s}
    \newcommand{\indt}{t}
    \newcommand{\anymatX}[2]{\anymat_{#1,#2}}
  \begin{definition}
    For any $\anymat\in\thematrices$ and any $(\indj,\indi)\in \SYnumbers{\thedim}^{\Ssetmonoidalproduct 2}$ let
      \begin{IEEEeqnarray*}{rCl}
        \thebv{\anymat}{\indj}{\indi}&\Seqpd& \anymatAX{\indj}{\thedim}\anymatBTX{1}{\Sexchanged(\indi)}-(\textstyle-\sum_{\inds=2}^{\thedim}\anymatAX{\indj}{\Sexchanged(\inds)}\anymatBTX{\inds}{\Sexchanged(\indi)}+\Skronecker{\indj}{\indi}\theone)\in\thefreealg \\
        \specbv{\anymat}&\Seqpd& \anymatAX{\thedim}{\thedim}\anymatBTX{1}{1}-(\textstyle \sum_{\inds=2}^{\thedim}\sum_{\indt=1}^{\thedim-1} \anymatAX{\indt}{\Sexchanged(\inds)}\anymatBTX{\inds}{\Sexchanged(\indt)}-(\thedim-2)\theone)\in\thefreealg.
      \end{IEEEeqnarray*}
      and then let
      \begin{IEEEeqnarray*}{rCl}
        \thegroebner&\Seqpd&\{\specbv{\anymat},\thebv{\anymat}{\indj}{\indi}\Ssetbuilder\anymat\in\thematrices\Sand (\indj,\indi)\in\SYnumbers{\thedim}^{\Ssetmonoidalproduct 2}\backslash \{(\thedim,\thedim)\}\}.
      \end{IEEEeqnarray*}
  \end{definition}
With these abbreviations,  $\therels=\{\thebv{\anymat}{\indj}{\indi}\Ssetbuilder\anymat\in\thematrices\Sand (\indj,\indi)\in\SYnumbers{\thedim}^{\Ssetmonoidalproduct 2}\}$. Eventually, $\thegroebner$ will be shown to be the reduced Gröbner basis.
}

\subsection{Auxiliary results}
During the computation of the Gröbner basis the following fundamental consequences of the definition of the monomial order $\theorder$ will be used frequently.
{
    \newcommand{\anymatX}[2]{\anymat_{#1,#2}}
    \newcommand{\othermatX}[2]{\othermat_{#1,#2}}  
    \newcommand{\indi}{i}
    \newcommand{\indj}{j}
    \newcommand{\indk}{k}
    \newcommand{\indl}{\ell}
    \newcommand{\inds}{s}
    \newcommand{\indt}{t}    
    \begin{lemma}
      \label{lemma:any_two_generators}
 $\othermat\in \{\anymat,\anymat\StransposeP,\anymat\SskewstarP,\anymat\SskewdaggerP\}$ for any $\{\anymat,\othermat\}\subseteq\thematrices$.    
\end{lemma}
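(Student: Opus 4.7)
The plan is to exploit that the four unary matrix operations $\anymat\mapsto\anymat$, $\anymat\mapsto\anymat\StransposeP$, $\anymat\mapsto\anymat\SskewstarP$ and $\anymat\mapsto\anymat\SskewdaggerP$ form a group isomorphic to the Klein four-group, under which $\thematrices$ is a single orbit. Once that is in hand, fix any $\anymat\in\thematrices$ and write $\anymat=g\Saction\theuni$ for the appropriate $g$ in that group; the translate $\{\anymat,\anymat\StransposeP,\anymat\SskewstarP,\anymat\SskewdaggerP\}$ then coincides with $g\Saction\thematrices=\thematrices$. Since the \hyperref[main-result]{Main result} explicitly assumes $|\thematrices|=4$, these four listed elements must be pairwise distinct and exhaust $\thematrices$, so any second $\othermat\in\thematrices$ automatically lies in $\{\anymat,\anymat\StransposeP,\anymat\SskewstarP,\anymat\SskewdaggerP\}$, as claimed.

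To verify the group law, I would perform a short series of direct index computations from the notation block at the start of the section. Using that $\Sexchanged$ is the reflection at $\thedim$ (hence an involution of $\SYnumbers{\thedim}$) and that $\SstarP$ is an involution on $\thegens$ interchanging the white and black generators, one checks that $\anymat\StransposeP\StransposeP=\anymat$ and $\anymat\SskewstarP\SskewstarP=\anymat$, and that $\StransposeP$ commutes with $\SskewstarP$ with composite equal to $\SskewdaggerP$. Any iterated application of the three non-trivial operations therefore collapses to one of the four listed, confirming the Klein four-group structure and, simultaneously, that the orbit of $\theuni$ has at most four elements.

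The main --- and in fact the only --- obstacle is clerical: carefully tracking the interplay between the index swap induced by $\StransposeP$ and the composite of the index reversal via $\Sexchanged$ with the entrywise $\SstarP$ that defines the skew operations. No genuine algebraic subtlety arises, and the assertion follows at once from the orbit description and the hypothesis $|\thematrices|=4$.
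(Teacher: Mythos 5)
Your argument is correct, and in substance it is the same verification the paper performs, only packaged more conceptually. The paper proves the lemma by brute force: a $4\times 4$ table that, for every pair $(\anymat,\othermat)\in\thematrices^{\Ssetmonoidalproduct 2}$, records which of the four operations sends $\anymat$ to $\othermat$ — which is nothing other than the multiplication table of the Klein four-group you describe, written out entrywise. You instead verify the three identities $(\anymat\StransposeP)\StransposeP=\anymat$, $(\anymat\SskewstarP)\SskewstarP=\anymat$ and $\anymat\StransposeP{}\SskewstarP=\anymat\SskewstarP{}\StransposeP=\anymat\SskewdaggerP$ (each an immediate index computation using that $\Sexchanged$ and $\Sstar$ are involutions) and then conclude by the orbit argument that $\{\anymat,\anymat\StransposeP,\anymat\SskewstarP,\anymat\SskewdaggerP\}=\thematrices$ for every $\anymat\in\thematrices$. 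This buys you three checks instead of sixteen and makes the group-theoretic reason for the lemma explicit; the paper's table buys a completely self-contained case list that is reused implicitly in later lemmata. One minor remark: the hypothesis $|\thematrices|=4$ is not actually needed for the stated containment — transitivity of the orbit already gives $\othermat\in\{\anymat,\anymat\StransposeP,\anymat\SskewstarP,\anymat\SskewdaggerP\}$ even if some of the four listed matrices coincided — so invoking it only to get pairwise distinctness is harmless but superfluous.
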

\begin{proof} The below tables gives the argument for each of the $4\times 4$ cases.
  \begin{center}
    \begin{tabular}{c||cccc|cccc|cccc|cccc}    $\anymat$&$\theuni$&$\theuni$&$\theuni$&$\theuni$&$\theuni\StransposeP$&$\theuni\StransposeP$&$\theuni\StransposeP$&$\theuni\StransposeP$&$\theuni\SskewstarP$&$\theuni\SskewstarP$&$\theuni\SskewstarP$&$\theuni\SskewstarP$&$\theuni\SskewdaggerP$&$\theuni\SskewdaggerP$&$\theuni\SskewdaggerP$&$\theuni\SskewdaggerP$\\
      $\othermat$&$\theuni$&$\theuni\StransposeP$&$\theuni\SskewstarP$&$\theuni\SskewdaggerP$&$\theuni$&$\theuni\StransposeP$&$\theuni\SskewstarP$&$\theuni\SskewdaggerP$&$\theuni$&$\theuni\StransposeP$&$\theuni\SskewstarP$&$\theuni\SskewdaggerP$&$\theuni$&$\theuni\StransposeP$&$\theuni\SskewstarP$&$\theuni\SskewdaggerP$ \\\hline
\Tstrut$\othermat$ & $\anymat$ &$\anymat\StransposeP$ &$\anymat\SskewstarP$ &$\anymat\SskewdaggerP$ & $\anymat\StransposeP$ &$\anymat$ &$\anymat\SskewdaggerP$ &$\anymat\SskewstarP$& $\anymat\SskewstarP$ &$\anymat\SskewdaggerP$ &$\anymat$ &$\anymat\StransposeP$& $\anymat\SskewdaggerP$ &$\anymat\SskewstarP$ &$\anymat\StransposeP$ &$\anymat$
  \end{tabular}    
  \end{center}
That proves the claim.
\end{proof}
}
Insofar as case distinctions are necessary, Lem\-ma~\ref{lemma:any_two_generators} provides the four pairwise exclusive possibilities which need to be considered.
{
    \newcommand{\anymatX}[2]{\anymat_{#1,#2}}
    \newcommand{\othermatX}[2]{\othermat_{#1,#2}}
    \newcommand{\indi}{i}
    \newcommand{\indj}{j}
    \newcommand{\indk}{k}
    \newcommand{\indl}{\ell}
    \newcommand{\inds}{s}
    \newcommand{\indt}{t}    
    \begin{lemma}
      \label{lemma:generators_reformulation}
      For any $\{\anymat,\othermat\}\subseteq\thematrices$ and any $\{(\indj,\indi),(\indt,\inds)\}\subseteq \SYnumbers{\thedim}^{\Ssetmonoidalproduct 2}$,
      \begin{IEEEeqnarray*}{rCl}
        \othermatAX{\indt}{\inds}=\anymatAX{\indj}{\indi}
      \end{IEEEeqnarray*}
      if and only if one of the following exclusive conditions is met:
      \begin{itemize}[wide]
      \item $\othermatA=\anymatA$ and $(\indt,\inds)=(\indj,\indi)$.
        \item $\othermatA=\anymatAT$ and $(\indt,\inds)=(\indi,\indj)$.
      \end{itemize}      
      In particular, $\othermatAX{\indt}{\inds}\neq\anymatAX{\indj}{\indi}$ whenever $\othermat\in\{\anymatB,\anymatBT\}$.
    \end{lemma}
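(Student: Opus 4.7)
The plan is to invoke Lemma~\ref{lemma:any_two_generators} applied to $\{\anymat,\othermat\}$ in order to reduce the problem to the four exhaustive possibilities $\othermatA\in\{\anymatA,\anymatAT,\anymatB,\anymatBT\}$. These four cases are mutually exclusive because $|\thematrices|=4$ forces the matrices $\anymatA,\anymatAT,\anymatB,\anymatBT$ (whose set is always precisely $\thematrices$) to be pairwise distinct, and this exclusivity will immediately carry over to the two conditions stated in the lemma once the equivalence is verified case by case.

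First I would dispose of the two \enquote{color\-/reversing} cases $\othermatA\in\{\anymatB,\anymatBT\}$. By the very definitions in the notation block, $\othermatAX{\indt}{\inds}$ unfolds to either $\anymatAX{\Sexchanged(\indt)}{\Sexchanged(\inds)}\SstarP$ or $\anymatAX{\Sexchanged(\inds)}{\Sexchanged(\indt)}\SstarP$, so it is the $\Sstar$\-/image of an element of $\thegens$. The hypotheses $\thewuniX{\indj}{\indi}\SstarP=\thebuniX{\indj}{\indi}$ and $\thebuniX{\indj}{\indi}\SstarP=\thewuniX{\indj}{\indi}$ together with the distinctness of the $2\thedim^2$ generators of $\thegens$ say that $\Sstar$ strictly interchanges the two \enquote{color classes} $\{\thewuniX{\indj}{\indi}\}$ and $\{\thebuniX{\indj}{\indi}\}$. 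Since, on the other hand, every single matrix in $\thematrices$ has all of its entries in just one color class, $\othermatAX{\indt}{\inds}$ and $\anymatAX{\indj}{\indi}$ must lie on opposite sides and can therefore not coincide. This simultaneously establishes the final \enquote{In particular} clause.

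The two remaining cases are essentially tautological. For $\othermatA=\anymatA$ one simply reads $\othermatAX{\indt}{\inds}=\anymatAX{\indt}{\inds}$, so pairwise distinctness of the generators yields equality with $\anymatAX{\indj}{\indi}$ if and only if $(\indt,\inds)=(\indj,\indi)$; for $\othermatA=\anymatAT$ one reads $\othermatAX{\indt}{\inds}=\anymatAX{\inds}{\indt}$, giving equality if and only if $(\inds,\indt)=(\indj,\indi)$, equivalently $(\indt,\inds)=(\indi,\indj)$.

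The only mildly non\-/routine ingredient is the color\-/class observation that rules out the $\anymatB$ and $\anymatBT$ cases; the remainder is pure bookkeeping driven by the definitions, so I anticipate no genuine obstacle beyond keeping the four cases separated cleanly.
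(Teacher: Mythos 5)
Your proposal is correct and follows essentially the same route as the paper, whose proof is just the one-line observation that the claim follows from the pairwise distinctness of the $2\thedim^2$ generators together with Lemma~\ref{lemma:any_two_generators}; you merely spell out the resulting four-case bookkeeping (including the colour-class argument eliminating $\othermatA\in\{\anymatB,\anymatBT\}$) that the paper leaves implicit.
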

    \begin{proof}
      Follows from the assumption that $\thegens=\{\thewuniX{\indl}{\indk},\thebuniX{\indl}{\indk}\Ssetbuilder (\indl,\indk)\in\SYnumbers{\thedim}^{\Ssetmonoidalproduct 2}\}$ has $2\thedim^2$ elements and Lem\-ma~\ref{lemma:any_two_generators}.
    \end{proof}
}
{
        \newcommand{\indi}{i}
    \newcommand{\indj}{j}
    \newcommand{\inds}{s}
    \newcommand{\indt}{t}    
    \begin{lemma}
      \label{lemma:monomial_order_reformulation}
      For any $\{\anymat,\othermat\}\subseteq \thematrices$ and $\{(\indj,\indi),(\indt,\inds)\}\subseteq \SYnumbers{\thedim}^{\Ssetmonoidalproduct 2}$,
      \begin{IEEEeqnarray*}{rCl}
        \othermatAX{\indt}{\inds}&\theorderRstrict& \anymatAX{\indj}{\indi}
      \end{IEEEeqnarray*}
      if and only if one of the following mutually exclusive conditions is met:
      \begin{itemize}[wide]
      \item         $\othermatA\in\{\anymatB,\anymatBT\}$ and $\anymat\in\{\theuniA,\theuniAT\}$.
      \item $\othermatA=\anymatA$ and
            \begin{IEEEeqnarray*}{rrClCrCl}
              &\anymat&\in&\{\theuniA,\theuniB\}&\hspace{.25em}{}\Sand{}\hspace{.25em}& (\indt,\inds)&{}\lexorderRstrict{}&(\indj,\indi)\\
             \Sor
              \hspace{2em}\hphantom{.}& \anymat&\in&\{\theuniAT,\theuniBT\}&{}\Sand{}& (\inds,\indt)&{}\lexorderRstrict{}&(\indi,\indj).\hspace{2em}\hphantom{\text{or}}
      \end{IEEEeqnarray*}
    \item $\othermatA=\anymatAT$ and
            \begin{IEEEeqnarray*}{rrClCrCl}
              &\anymat&\in&\{\theuniA,\theuniB\}&\hspace{.25em}{}\Sand{}\hspace{.25em}& (\inds,\indt)&{}\lexorderRstrict{}&(\indj,\indi)\\
              \Sor
              \hspace{2em}\hphantom{.}& \anymat&\in&\{\theuniAT,\theuniBT\}&{}\Sand{}& (\indt,\inds)&{}\lexorderRstrict{}&(\indi,\indj).\hspace{2em}\hphantom{\text{or}}
      \end{IEEEeqnarray*}
    \end{itemize}
    In particular, $\anymatAX{\indj}{\inds}\theorderRstrict \anymatAX{\indj}{\indi}$ if and only if $\inds\theorderRstrict\indi$ and, likewise, $\anymatAX{\indt}{\indi}\theorderRstrict \anymatAX{\indj}{\indi}$ if and only if  $\indt\theorderRstrict\indj$.
\end{lemma}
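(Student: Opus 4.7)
The plan is to do a direct case analysis on the four possibilities for $\othermat$ relative to $\anymat$ supplied by Lemma~\ref{lemma:any_two_generators}, combined with a clause\-/by\-/clause unpacking of the definition of $\theorder$ from the Main result. First, for each matrix in $\thematrices$, I would record the colour and index pair of the generator $\anymatAX{\indj}{\indi}\in\thegens$: the matrices $\theuniA$ and $\theuniAT$ carry white entries $\thewuniX{\indj}{\indi}$ and $\thewuniX{\indi}{\indj}$ respectively, while $\theuniB$ and $\theuniBT$ carry black entries $\thebuniX{\Sexchanged(\indj)}{\Sexchanged(\indi)}$ and $\thebuniX{\Sexchanged(\indi)}{\Sexchanged(\indj)}$. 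Consequently, for arbitrary $\anymat\in\thematrices$ the passage $\anymatA\mapsto\anymatAT$ preserves colour and transposes indices, while the passages $\anymatA\mapsto\anymatB$ and $\anymatA\mapsto\anymatBT$ flip the colour and pre-compose both indices with $\Sexchanged$ (without or with an additional index swap).

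With this dictionary in hand, each of the three disjunctive clauses of the lemma matches one clause of the definition of $\theorder$. The first clause, $\othermatA\in\{\anymatB,\anymatBT\}$ together with $\anymat\in\{\theuniA,\theuniAT\}$, is exactly the condition that $\othermatAX{\indt}{\inds}$ is black while $\anymatAX{\indj}{\indi}$ is white, so the first alternative in the definition of $\theorder$ forces $\othermatAX{\indt}{\inds}\theorderRstrict\anymatAX{\indj}{\indi}$ unconditionally; the complementary colour pattern $\anymat\in\{\theuniB,\theuniBT\}$ with $\othermatA\in\{\anymatB,\anymatBT\}$ produces the reverse inequality and is therefore correctly absent from the lemma. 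For the two same\-/colour clauses ($\othermatA=\anymatA$ and $\othermatA=\anymatAT$), the comparison reduces to a lex comparison of the two index pairs. When $\anymat$ is white, the definition of $\theorder$ uses $\lexorderRstrict$ in the natural direction; when $\anymat$ is black it uses the reverse lex direction, but the indices are additionally pre-composed with $\Sexchanged$, and since $\Sexchanged$ reverses $<$ on $\SYnumbers{\thedim}$ and hence inverts $\lexorderRstrict$ on $\SYnumbers{\thedim}^{\Ssetmonoidalproduct 2}$, the two reversals cancel. This is precisely why the single condition $(\indt,\inds)\lexorderRstrict(\indj,\indi)$ covers both $\{\theuniA,\theuniB\}$ in the lemma rather than being split, and analogously for $\{\theuniAT,\theuniBT\}$ with the swapped pairs $(\inds,\indt)$ and $(\indi,\indj)$.

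The \emph{In particular} assertions fall out by specialising $\othermat=\anymat$ and fixing one of the two indices; the common\-/colour analysis above reduces in both cases to the observation that $\Sexchanged$ reverses $<$ on $\SYnumbers{\thedim}$, so the direction of the strict inequality agrees on the two sides of the biconditional. The main obstacle is purely bookkeeping: the argument weaves an asymmetric colour order through three involutions (the transpose, the $\ast$\-/involution, and the reflection $\Sexchanged$), and although no individual sub\-/case is deep, the index manipulations must be carried out with care to avoid sign or swap errors across the case table.
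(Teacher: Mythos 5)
Your proposal is correct and takes essentially the same approach as the paper's proof: both reduce the comparison to a lexicographic one after translating the matrices $\anymatA$, $\anymatAT$, $\anymatB$, $\anymatBT$ back to concrete generators $\thecuniX{\anycolor}{\cdot}{\cdot}$, and both rely on the same key observation that the reversal built into the colour rule for black generators cancels against the reversal introduced by $\Sexchanged$. The paper reaches the same conclusion through a slightly more layered sequence of reductions (case $\othermatA=\anymatAT$ from $\othermatA=\anymatA$ by swapping $\indt\leftrightarrow\inds$, then $\{\theuniAT,\theuniBT\}$ to $\{\theuniA,\theuniB\}$ by a second swap) rather than your flat dictionary-plus-case-table, but the content is the same.
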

\begin{proof}
  The case $\othermatA\in\{\anymatB,\anymatBT\}$ is clear, as then $(\othermatA,\anymatA)\in\{\theuniB,\theuniBT\}\times\{\theuniA,\theuniAT\}$. Moreover, the  case $\othermatA=\anymatAT$ follows from the case $\othermat=\anymat$ by exchanging the roles $\indt\leftrightarrow\inds$. But also in order to prove the assertion in the remaining case $\othermat=\anymat$ it  can be assumed that $\anymat\in\{\theuniA,\theuniB\}$.
  That is because, if $\anymat\in\{\theuniAT,\theuniBT\}$, it is possible to apply the case $\anymat\in\{\theuniA,\theuniB\}$ after exchanging both $\indt\leftrightarrow\inds$ and $\indj\leftrightarrow\indi$.
  \par 
 If $\anymat=\theuni$, then $\anymatAX{\indt}{\inds}\lexorderRstrict\anymatAX{\indj}{\indi}$ if and only if  $\thewuniX{\indt}{\inds}\theorderRstrict\thewuniX{\indj}{\indi}$, which by definition holds precisely if and only if $(\indt,\inds)\lexorderRstrict(\indj,\indi)$.
  \par 
If $\anymat=\theuni\SskewstarP$, then $\anymatAX{\indt}{\inds}\theorderRstrict\anymatAX{\indj}{\indi}$ if and only if $\thebuniX{\Sexchanged(\indt)}{\Sexchanged(\inds)}\theorderRstrict\thebuniX{\Sexchanged(\indj)}{\Sexchanged(\indi)}$, which by definition is equivalent to $(\Sexchanged(\indj),\Sexchanged(\indi))\lexorderRstrict(\Sexchanged(\indt),\Sexchanged(\inds))$. The simultaneous application of  $\Sexchanged$ in both components is an anti-automorphism of the lexicographic order of $\SYnumbers{\thedim}^{\Ssetmonoidalproduct 2}$. Indeed, $(\Sexchanged(\indj),\Sexchanged(\indi))\lexorderRstrict(\Sexchanged(\indt),\Sexchanged(\inds))$ means that either $\Sexchanged(\indj)<\Sexchanged(\indt)$ or both $\Sexchanged(\indj)=\Sexchanged(\indt)$ and $\Sexchanged(\indi)<\Sexchanged(\inds)$. That is true if and only if either $\indt<\indj$ or both $\indt=\indj$ and $\inds<\indi$, i.e., if and only if $(\indt,\inds)\lexorderRstrict(\indj,\indi)$. Hence, the assertion is true.
\end{proof}
One particular consequence of  Lem\-ma~\ref{lemma:monomial_order_reformulation} deserves emphasis.
\begin{lemma}
  \label{lemma:skewdagger_comparisons}
    For any $\anymatA\in \thematrices$, any $\{(\indt,\inds),(\indj,\indi)\}\subseteq \SYnumbers{\thedim}^{\Ssetmonoidalproduct 2}$ and any $\othermatA\in\{\anymatA,\anymatAT\}$,
    \begin{IEEEeqnarray*}{rClCrCl}
      \othermatBTX{\inds}{\indt}&\theorderRstrict&\anymatBTX{\indi}{\indj} &\hspace{1em}\Siff\hspace{1em}& \othermatAX{\indt}{\inds}&\theorderRstrict&\anymatAX{\indj}{\indi}.
    \end{IEEEeqnarray*}
  \end{lemma}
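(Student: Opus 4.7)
My approach is to apply Lemma~\ref{lemma:monomial_order_reformulation} to both sides of the asserted equivalence and verify that the resulting lex conditions agree. The crucial combinatorial observation, which can be read off from the table in the proof of Lemma~\ref{lemma:any_two_generators}, is that the map $\anymat\mapsto\anymatBT$ is an involution on $\thematrices$ exchanging $\theuni\leftrightarrow\theuni\SskewdaggerP$ and $\theuni\StransposeP\leftrightarrow\theuni\SskewstarP$. In particular, this map swaps the two subsets $\{\theuniA,\theuniB\}$ and $\{\theuniAT,\theuniBT\}$, and it commutes with the operation $\anymatA\mapsto\anymatAT$, so that if $\othermatA=\anymatA$ then $\othermatBT=\anymatBT$, and if $\othermatA=\anymatAT$ then $\othermatBT$ equals the transpose of $\anymatBT$.

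With these facts in hand I would split the proof into the two sub\-cases $\othermatA=\anymatA$ and $\othermatA=\anymatAT$, which by the observation above are also the sub\-cases in which $\othermatBT=\anymatBT$ respectively $\othermatBT$ equals the transpose of $\anymatBT$. Hence the same row of Lemma~\ref{lemma:monomial_order_reformulation} governs both sides of the asserted equivalence. The remaining distinction within that row depends on whether the ``$A$-matrix'' belongs to $\{\theuniA,\theuniB\}$ or to $\{\theuniAT,\theuniBT\}$ and takes either the form ``$(\indt,\inds)\theorderRstrict(\indj,\indi)$'' or the form ``$(\inds,\indt)\theorderRstrict(\indi,\indj)$''. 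However, passing from $\anymat$ to $\anymatBT$ precisely switches the membership group, while the statement of Lemma~\ref{lemma:skewdagger_comparisons} has already swapped the row--column roles of the indices on its left-hand side relative to its right-hand side; so these two flips cancel and the two conditions coincide.

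The work is therefore pure bookkeeping; the only mild obstacle I anticipate is keeping precise track of which matrix in $\thematrices$ plays the role of $\anymatBT$ in each of the four possibilities for $\anymatA$, and verifying that no residual index swap remains once the two layers of swapping are composed. No further appeal to the reflection $\Sexchanged$, to the involution $\Sstar$, or to the colour comparison itself is needed beyond what is already encapsulated in Lemma~\ref{lemma:monomial_order_reformulation}.
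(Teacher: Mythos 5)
Your proposal is correct and takes essentially the same route as the paper's own proof: both apply Lemma~\ref{lemma:monomial_order_reformulation} once to $(\othermatA,\anymatA)$ and once to $(\othermatBT,\anymatBT)$, note that $\othermatA=\anymatA$ forces $\othermatBT=\anymatBT$ while $\othermatA=\anymatAT$ forces $\othermatBT=\anymatB$ so that the same case of that lemma governs both comparisons, and then observe that the exchange of $\{\theuniA,\theuniB\}$ with $\{\theuniAT,\theuniBT\}$ effected by $\anymat\mapsto\anymatBT$ is exactly compensated by the transposition of both index pairs built into the statement. No gap.
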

  \begin{proof}
The claim follows by comparing the conclusion of Lem\-ma~\ref{lemma:monomial_order_reformulation} to the conclusion it yields when applied $(\othermatBT,\anymatBT)$ in the role of $(\othermatA,\anymatA)$.  Because $\othermatA\in\{\anymatA,\anymatAT\}$, and thus $\othermatBT\in\{\anymatB,\anymatBT\}=\{\anymatBT,\anymat^{\Sskewdagger,\Stranspose}\}$ only the last two cases of Lem\-ma~\ref{lemma:monomial_order_reformulation} need to be considered. Obviously, $\othermat=\anymat$ is equivalent to $\othermatBT=\anymatBT$ and, likewise, $\othermatA=\anymatAT$ to $\othermatBT=\anymat^{\Sskewdagger,\Stranspose}=\anymatB$. That is why the second case for $(\othermatA,\anymatA)$ must be compared to the second case for $(\othermatBT,\anymatBT)$ and, the third case for $(\othermatA,\anymatA)$ to the third case for $(\othermatBT,\anymatBT)$. Within each of these two cases, however, the exchange $\anymat\leftrightarrow\anymatBT$ has swapped the condition $\anymat\in \{\theuniA,\theuniAT\}$ for $\anymat\in \{\theuniB,\theuniBT\}$ and vice versa. But that change has the same effect as exchanging the roles $\indt\leftrightarrow\inds$ and $\indj\leftrightarrow\indi$ would. Thus, the comparison between the two conclusions of Lem\-ma~\ref{lemma:monomial_order_reformulation} proves the claim.
  \end{proof}
}

\subsection{Relations as non-minimal Gröbner basis}
It is actually more convenient to  first prove that $\therels$ itself happens to be a (non-minimal) Gröbner basis. Recall that $\Stip$ indicates tips with respect to $\theorder$.
  {
    \newcommand{\inds}{s}
    \newcommand{\indt}{t}
    \newcommand{\anymatX}[2]{\anymat_{#1,#2}}
    \newcommand{\othermatX}[2]{\othermat_{#1,#2}}
    \newcommand{\indi}{i}
    \newcommand{\indj}{j}    
    \begin{lemma}
      \label{lemma:the_leading_monomials}
  For any $\anymat\in\thematrices$ and any $(\indj,\indi)\in \SYnumbers{\thedim}^{\Ssetmonoidalproduct 2}$,
  \begin{IEEEeqnarray*}{rCl}
    \Stip \thebv{\anymat}{\indj}{\indi}=\anymatAX{\indj}{\thedim}\anymatBTX{1}{\Sexchanged(\indi)} , \hspace{3em}    \Stip \specbv{\anymat}=\anymatAX{\thedim}{\thedim}\anymatBTX{1}{1}.
  \end{IEEEeqnarray*}
\end{lemma}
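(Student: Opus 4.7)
The plan is to unpack the defining formulas for $\thebv{\anymat}{\indj}{\indi}$ and $\specbv{\anymat}$, observe that each is a sum of degree-$2$ monomials in $\thegens$ together with a scalar multiple of $\theone$, and then invoke the degreewise structure of $\theorder$ to reduce the task to a lex comparison among those degree-$2$ summands. Because degree dominates the ordering, the scalar term is immediately outweighed by any degree-$2$ summand occurring with non-zero coefficient; what remains is to single out the lex-maximum first factor among those summands, which will identify the claimed tip in each case.

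For $\thebv{\anymat}{\indj}{\indi}$ I enumerate the degree-$2$ summands as $\anymatAX{\indj}{\Sexchanged(\inds)}\anymatBTX{\inds}{\Sexchanged(\indi)}$ for $\inds\in\SYnumbers{\thedim}$, the proposed tip corresponding to $\inds=1$ since $\Sexchanged(1)=\thedim$. All these share the same first-factor generator up to the choice of the second index $\Sexchanged(\inds)$, so the final \enquote{in particular} assertion of Lem\-ma~\ref{lemma:monomial_order_reformulation} reduces comparing first factors to the integer comparison of those second indices. Since $\Sexchanged(\inds)<\thedim$ for every $\inds\geq 2$, one obtains $\anymatAX{\indj}{\Sexchanged(\inds)}\theorderRstrict\anymatAX{\indj}{\thedim}$, and hence $\anymatAX{\indj}{\thedim}\anymatBTX{1}{\Sexchanged(\indi)}$ is the degreewise-lex-maximum.

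For $\specbv{\anymat}$ I plan to compare the target $\anymatAX{\thedim}{\thedim}\anymatBTX{1}{1}$ against each $\anymatAX{\indt}{\Sexchanged(\inds)}\anymatBTX{\inds}{\Sexchanged(\indt)}$ with $\inds\in\{2,\ldots,\thedim\}$ and $\indt\in\{1,\ldots,\thedim-1\}$ by applying the same \enquote{in particular} assertion twice in the first factor: from $\indt<\thedim$ one gets $\anymatAX{\indt}{\Sexchanged(\inds)}\theorderRstrict\anymatAX{\thedim}{\Sexchanged(\inds)}$, and from $\Sexchanged(\inds)<\thedim$ (valid because $\inds\geq 2$) one gets $\anymatAX{\thedim}{\Sexchanged(\inds)}\theorderRstrict\anymatAX{\thedim}{\thedim}$. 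Transitivity then closes the first-factor comparison, and with it the degreewise-lex comparison.

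The only obstacle I foresee is a routine bookkeeping check: that the enumerated degree-$2$ monomials in either expression are pairwise distinct, so that no cancellation inside the sum can erase the proposed tip, and that this tip actually occurs with coefficient $1$. Distinctness follows from Lem\-ma~\ref{lemma:generators_reformulation} because the summation indices already distinguish at least one factor via its first index, and the coefficient equals $1$ by direct inspection of the defining formulas. Beyond this verification, no further ingredient is expected to be required.
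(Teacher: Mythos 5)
Your proposal is correct and follows essentially the same approach as the paper: reduce the tip computation to a lex comparison of the first factors of the degree-$2$ summands (the degree-$0$ term being dominated for degree reasons), then invoke Lemma~\ref{lemma:monomial_order_reformulation}. The only slight variation is in the $\specbv{\anymat}$ case, where you chain two applications of the one-index addendum via transitivity, whereas the paper applies the full statement of Lemma~\ref{lemma:monomial_order_reformulation} once and does a case distinction on $\anymat$; yours is marginally cleaner. Your explicit attention to the distinctness of the degree-$2$ monomials (to exclude cancellation) is a reasonable bookkeeping point that the paper leaves implicit.
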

\begin{proof}
  The claim about $\thebv{\anymat}{\indj}{\indi}$ is equivalent to  $\anymatAX{\indj}{\Sexchanged(\inds)}\anymatBTX{\inds}{\Sexchanged(\indi)}\theorderRstrict\anymatAX{\indj}{\thedim}\anymatBTX{1}{\Sexchanged(\indi)}$  holding for any $\inds\in \SYnumbers{\thedim}$ with $1<\inds$. This in turn is certainly true if already $\anymatX{\indj}{\Sexchanged(\inds)}\theorderRstrict\anymatX{\indj}{\thedim}$ since $\theorder$ was obtained by (degreewise) le\-xi\-co\-gra\-phi\-c extension. By the addendum to Lem\-ma~\ref{lemma:monomial_order_reformulation} the latter holds, irrespective of the value of $\anymat$, since $1<\inds$ requires $\Sexchanged(\inds)<\thedim$. That proves the assertion about  $\thebv{\anymat}{\indj}{\indi}$.
  \par
  What is claimed about $\specbv{\anymat}$ is that $\anymatAX{\indt}{\Sexchanged(\inds)}\anymatBTX{\inds}{\Sexchanged(\indt)}\theorderRstrict\anymatAX{\thedim}{\thedim}\anymatBTX{1}{1}$ for any $ \{\inds,\indt\}\subseteq \SYnumbers{\thedim}$ with $1<\inds$ and $\indt<\thedim$. As before this will follow already from $\anymatX{\indt}{\Sexchanged(\inds)}\theorderRstrict\anymatX{\thedim}{\thedim}$. A second application of Lem\-ma~\ref{lemma:monomial_order_reformulation} shows that this last condition is met if and only if either  $\anymat\in\{\theuniA,\theuniB\}$ and $(\indt,\Sexchanged(\inds))<(\thedim,\thedim)$ or $\anymat\in\{\theuniAT,\theuniBT\}$ and $(\Sexchanged(\inds),\indt)\lexorderRstrict(\thedim,\thedim)$. Again, no matter what $\anymat$ is, the respective inequality is true since $\indt<\thedim$ and since  $\Sexchanged(\inds)<\thedim$ by $1<\inds$. That concludes the proof.
\end{proof}
}
\begin{lemma}
  \label{lemma:inclusion_ambiguities}    
  Any inclusion ambiguity of $\therels$ is of the form
  \begin{IEEEeqnarray*}{rCl}
   (\thebv{\anymatAT}{\thedim}{\thedim},\thebv{\anymat}{\thedim}{\thedim},\theone,\theone) 
  \end{IEEEeqnarray*}
 for some $\anymat\in\thematrices$.
\end{lemma}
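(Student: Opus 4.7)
The plan is to combine a length comparison resting on Lem\-ma~\ref{lemma:the_leading_monomials} with a short case analysis based on Lem\-ma~\ref{lemma:generators_reformulation}. The main obstacle I expect is the bookkeeping of the four decorations, especially the identification of the transpose of $\anymatBT$ with $\anymatB$, which is what legitimately allows Lem\-ma~\ref{lemma:generators_reformulation} to be brought to bear on the second factor of the tip equation in the exact form required.

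Concretely, I would first recall from Lem\-ma~\ref{lemma:the_leading_monomials} that every tip of an element of $\therels$ is a monomial of length exactly $2$ in $\thegens$. For any inclusion ambiguity $(g_1,g_2,p,q)$ of $\therels$ the defining equation $p(\Stip g_1)q=\Stip g_2$, read as a word equation in $\themonoid$, then forces, by a length count, $p=q=\theone$ and $\Stip g_1=\Stip g_2$.

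Next I would write $g_1=\thebv{\othermat}{\indt}{\inds}$ and $g_2=\thebv{\anymat}{\indj}{\indi}$ for suitable $\{\anymat,\othermat\}\subseteq \thematrices$ and $\{(\indj,\indi),(\indt,\inds)\}\subseteq \SYnumbers{\thedim}^{\Ssetmonoidalproduct 2}$ and use Lem\-ma~\ref{lemma:the_leading_monomials} once more to turn $\Stip g_1=\Stip g_2$ into an equality of two length-$2$ words over $\thegens$. Comparing the two first and the two second factors separately yields
\begin{IEEEeqnarray*}{rClCrCl}
  \othermatAX{\indt}{\thedim}&=&\anymatAX{\indj}{\thedim} &\Sand& \othermatBTX{1}{\Sexchanged(\inds)}&=&\anymatBTX{1}{\Sexchanged(\indi)}.
\end{IEEEeqnarray*}
Lem\-ma~\ref{lemma:generators_reformulation} applied to the first equation leaves two alternatives: either $\othermatA=\anymatA$ and $\indt=\indj$, or $\othermatA=\anymatAT$ and $\indt=\indj=\thedim$. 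Using that the transpose of $\anymatBT$ equals $\anymatB$, the same lemma applied to the second equation yields analogously: either $\othermatBT=\anymatBT$ and $\inds=\indi$, or $\othermatBT=\anymatB$ and $\inds=\indi=\thedim$.

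Finally, since the maps $\anymat\mapsto\anymatA$ and $\anymat\mapsto\anymatBT$ are both bijections on $\thematrices$ sending $\anymat$ and $\anymatAT$ to distinct images, the first factor condition reads \enquote{$\othermat=\anymat$} or \enquote{$\othermat=\anymatAT$}, and so does the second factor condition. The pairing \enquote{$\othermat=\anymat$} twice forces $g_1=g_2$, contradicting the definition of an inclusion ambiguity. Mixing the two options forces $\anymat=\anymatAT$ in $\thematrices$, which is false. Only the pairing \enquote{$\othermat=\anymatAT$} twice survives, and it additionally requires $\indj=\indi=\indt=\inds=\thedim$, yielding precisely the form claimed.
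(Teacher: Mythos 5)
Your proof is correct and follows essentially the same route as the paper's: Lemma~\ref{lemma:the_leading_monomials} plus a degree count forces $p=q=\theone$ and equal tips, and Lemma~\ref{lemma:generators_reformulation} (via the identification of the transpose of $\anymatBT$ with $\anymatB$) pins down the decorations and indices, with the case $\othermat=\anymat$ excluded by $g_1\neq g_2$ and the mixed cases by $\anymat\neq\anymatAT$. The only difference is organizational -- you compare the two factors symmetrically and then intersect the dichotomies, whereas the paper first settles $\othermat=\anymatAT$ from the first factor and then reads off the indices -- which changes nothing of substance.
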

\begin{proof}  
  \newcommand{\anymatX}[2]{\anymat_{#1,#2}}    
  \newcommand{\othermatX}[2]{\othermat_{#1,#2}}
  \newcommand{\indi}{i}
  \newcommand{\indj}{j}
  \newcommand{\indk}{k}
  \newcommand{\indl}{\ell}
  \newcommand{\smallrel}{g}
  \newcommand{\bigrel}{g'}
  \newcommand{\leftmonomial}{b_1}
  \newcommand{\rightmonomial}{b_2}
  Let $(\smallrel,\bigrel,\leftmonomial,\rightmonomial)$ be any inclusion ambiguity of $\therels$. There exist
 $\{\anymat,\othermat\}\subseteq \thematrices$ and $\{(\indj,\indi),(\indl,\indk)\}\subseteq \SYnumbers{\thedim}^{\Ssetmonoidalproduct 2}$ with $\smallrel=\thebv{\othermatA}{\indl}{\indk}$ and $\bigrel=\thebv{\anymatA}{\indj}{\indi}$. Because  $\Stip\smallrel=\Stip\thebv{\othermatA}{\indl}{\indk}$ and $\Stip\bigrel=\Stip\thebv{\anymatA}{\indj}{\indi}$  have the same degree by Lem\-ma~\ref{lemma:the_leading_monomials} the assumption $\leftmonomial(\Stip\smallrel)\rightmonomial=\Stip\bigrel$ requires $\leftmonomial=\rightmonomial=\theone$ and $\Stip\smallrel=\Stip\bigrel$. By Lem\-ma~\ref{lemma:the_leading_monomials} the latter means
  \begin{IEEEeqnarray*}{rCl}
   \othermatAX{\indl}{\thedim}\othermatBTX{1}{\Sexchanged(\indk)}&=&\anymatAX{\indj}{\thedim}\anymatBTX{1}{\Sexchanged(\indi)}.
 \end{IEEEeqnarray*}
 Since that requires in particular $\othermatAX{\indl}{\thedim}=\anymatAX{\indj}{\thedim}$ Lem\-ma~\ref{lemma:generators_reformulation} excludes $\othermat\in \{\anymat\SskewstarP,\anymat\SskewdaggerP\}$, which by Lem\-ma~\ref{lemma:any_two_generators}  then requires $\othermat\in\{\anymat,\anymat\StransposeP\}$.
 \par
Moreover,  $\othermatA\neq\anymatA$. Indeed, otherwise $\anymatAX{\indl}{\thedim}\anymatBTX{1}{\Sexchanged(\indk)}=\anymatAX{\indj}{\thedim}\anymatBTX{1}{\Sexchanged(\indi)}$ would follow, which by Lem\-ma~\ref{lemma:generators_reformulation} would demand $(\indl,\thedim)=(\indj,\thedim)$ and $(1,\Sexchanged(\indk))=(1,\Sexchanged(\indi))$. That would imply $(\indl,\indk)=(\indj,\indi)$ and thus ultimately $\thebv{\othermat}{\indl}{\indk}=\thebv{\anymat}{\indj}{\indi}$, in contradiction to the assumption $\smallrel\neq\bigrel$. 
 \par
Hence, instead, $\othermatA=\anymatAT$  and thus $\anymatAX{\thedim}{\indl}\anymatBTX{\Sexchanged(\indk)}{1}=\anymatAX{\indj}{\thedim}\anymatBTX{1}{\Sexchanged(\indi)}$. By Lem\-ma~\ref{lemma:generators_reformulation} it follows that $(\thedim,\indl)=(\indj,\thedim)$ and $(\Sexchanged(\indk),1)=(1,\Sexchanged(\indi))$, which is to say $\indi=\indj=\indk=\indl=\thedim$. Hence,  $\thebv{\othermat}{\indl}{\indk}=\thebv{\anymat\StransposeP}{\thedim}{\thedim}$ and $\thebv{\anymat}{\indj}{\indi}=\thebv{\anymat}{\thedim}{\thedim}$. And this is the only inclusion ambiguity, as claimed.
\end{proof}

\begin{lemma}
  \label{lemma:inclusion_ambiguities_resolve}
   All inclusion ambiguities of $\therels$ resolve.
 \end{lemma}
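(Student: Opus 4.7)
By Lemma~\ref{lemma:inclusion_ambiguities}, for each $\anymat\in\thematrices$ there is only one inclusion ambiguity to resolve, namely $(g,g',\theone,\theone)$ with $g=\thebv{\anymatAT}{\thedim}{\thedim}$ and $g'=\thebv{\anymat}{\thedim}{\thedim}$. Lemma~\ref{lemma:the_leading_monomials} gives the common tip $t=\anymatAX{\thedim}{\thedim}\anymatBTX{1}{1}$ and normalises both leading coefficients to $1$, so the task reduces to exhibiting reductions $\varphi_1$ and $\varphi_2$ by $\therels$ with $\varphi_1(t-g')=\varphi_2(t-g)$. The plan is to unpack $t-g$ and $t-g'$ in closed form, recognise every length-two monomial that appears as the tip of a further element of $\therels$, and then to apply the corresponding one-step reductions in parallel on the two sides.

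A direct calculation from the definitions is expected to yield
\begin{equation*}
t-g'=-\sum_{s=2}^{\thedim}\anymatAX{\thedim}{\Sexchanged(s)}\anymatBTX{s}{1}+\theone,
\end{equation*}
and, once the generators coming from $\anymatAT$ and its associated $BT$-matrix are rewritten as generators indexed by $\anymatA$ and $\anymatBT$ (using only the definitions in the Notation),
\begin{equation*}
t-g=-\sum_{k=1}^{\thedim-1}\anymatAX{k}{\thedim}\anymatBTX{1}{\Sexchanged(k)}+\theone.
\end{equation*}
Lemma~\ref{lemma:the_leading_monomials} then identifies each monomial in the second sum as $\Stip\thebv{\anymatA}{k}{k}$, and the same lemma, applied with $\anymatAT$ in place of $\anymatA$ and followed by the same rewriting, identifies each monomial in the first sum as $\Stip\thebv{\anymatAT}{\Sexchanged(s)}{\Sexchanged(s)}$.

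Take $\varphi_2$ to be the composition of the one-step reductions by $\thebv{\anymatA}{k}{k}$ for $k=1,\ldots,\thedim-1$ and $\varphi_1$ the composition of the one-step reductions by $\thebv{\anymatAT}{\Sexchanged(s)}{\Sexchanged(s)}$ for $s=2,\ldots,\thedim$. A short expansion, combined with a single change of summation variables via $\Sexchanged$, should then show that both $\varphi_1(t-g')$ and $\varphi_2(t-g)$ simplify to the common polynomial
\begin{equation*}
\sum_{k=1}^{\thedim-1}\sum_{s=2}^{\thedim}\anymatAX{k}{\Sexchanged(s)}\anymatBTX{s}{\Sexchanged(k)}-(\thedim-2)\theone,
\end{equation*}
so the ambiguity resolves.

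The main, indeed the only non-mechanical, obstacle is the translation between $\anymatAT$-indexed and $\anymatA$-indexed generators that underlies the identification of the monomials in $t-g'$ as tips of $\thebv{\anymatAT}{\cdot}{\cdot}$ relations. Once that translation is in hand, both parallel reductions meet after just one one-step application per summand. As a sanity check and a preview of what comes next, the common polynomial above is exactly $t-\specbv{\anymat}$, which foreshadows the role of $\specbv{\anymat}$ in the forthcoming minimal Gröbner basis $\thegroebner$: with $\specbv{\anymat}$ available, each side of the ambiguity would reduce to zero in a single step.
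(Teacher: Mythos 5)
Your proposal is correct and takes essentially the same route as the paper: both expand $\Stip b-b$ for $\thebv{\anymat}{\thedim}{\thedim}$ and $\thebv{\anymat\StransposeP}{\thedim}{\thedim}$, identify the resulting degree-two monomials as tips of the diagonal relations attached to the transposed matrix, apply one one-step reduction per summand, and match the two outcomes after reindexing with $\Sexchanged$, arriving at the same common polynomial (your observation that it equals $\Stip\specbv{\anymat}$'s complement, i.e.\ relates to $\specbv{\anymat}$, is exactly what Lemma~\ref{lemma:relationship_between_basis_vectors} later records). The only difference is presentational: you reduce both sides explicitly and symmetrically, whereas the paper reduces one side and obtains the other by exchanging the roles of $\anymat$ and $\anymat\StransposeP$.
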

 \begin{proof}
  \newcommand{\anymatX}[2]{\anymat_{#1,#2}}
  \newcommand{\othermatX}[2]{\othermat_{#1,#2}}
  \newcommand{\inds}{s}
  \newcommand{\indt}{t}
  By Lem\-ma~\ref{lemma:inclusion_ambiguities} it suffices to show that for any $\anymatA\in\thematrices$ and $\othermatA\Seqpd \anymatAT$ the terms $\Stip\thebv{\anymatA}{\thedim}{\thedim}-\thebv{\anymatA}{\thedim}{\thedim}$ and $\Stip\thebv{\othermatA}{\thedim}{\thedim}-\thebv{\othermatA}{\thedim}{\thedim}$ reduce to the same expression.
  \par
Indeed,  by definition,
\begin{IEEEeqnarray*}{rCl}    \Stip\thebv{\anymatA}{\thedim}{\thedim}-\thebv{\anymatA}{\thedim}{\thedim}&=&-\textstyle\sum_{\inds=2}^{\thedim}\anymatAX{\thedim}{\Sexchanged(\inds)}\anymatBTX{\inds}{1}+\theone\\
  &=&-\textstyle\sum_{\inds=2}^{\thedim}\othermatAX{\Sexchanged(\inds)}{\thedim}\othermatBTX{1}{\inds}+\theone\\
  &=&-\textstyle\sum_{\inds=1}^{\thedim-1}\othermatAX{\inds}{\thedim}\othermatBTX{1}{\Sexchanged(\inds)}+\theone\\
  &=&-\textstyle\sum_{\inds=1}^{\thedim-1}\Stip\thebv{\othermat}{\inds}{\inds}+\theone
\end{IEEEeqnarray*}
where the third identity is obtained by reindexing with $\Sexchanged$.
  Once reduced by $\thebv{\othermat}{\inds}{\inds}$ for each $\inds\in\SYnumbers{\thedim}$ with $\inds<\thedim$ the above takes on the form
  \begin{IEEEeqnarray*}{rCl}
    \Stip\thebv{\anymatA}{\thedim}{\thedim}-\thebv{\anymatA}{\thedim}{\thedim}&\Sreducesto&
-\textstyle\sum_{\inds=1}^{\thedim-1}(-\textstyle\sum_{\indt=2}^{\thedim}\othermatAX{\inds}{\Sexchanged(\indt)}\othermatBTX{\indt}{\Sexchanged(\inds)}+\theone)+\theone\\
      &=&    \textstyle\sum_{\inds=1}^{\thedim-1}\sum_{\indt=2}^{\thedim}\anymatAX{\Sexchanged(\indt)}{\inds}\anymatBTX{\Sexchanged(\inds)}{\indt}- (\thedim-2)\theone.\IEEEeqnarraynumspace\IEEEyesnumber\label{eq:inclusion_ambiguities_resolve-1}
    \end{IEEEeqnarray*}
    \par 
    Exchanging the roles $\anymat\leftrightarrow\othermat$ in \eqref{eq:inclusion_ambiguities_resolve-1} yields
\begin{IEEEeqnarray*}{rCl}
  \Stip\thebv{\othermatA}{\thedim}{\thedim}-\thebv{\othermatA}{\thedim}{\thedim}&\Sreducesto&\textstyle\sum_{\inds=1}^{\thedim-1}\sum_{\indt=2}^{\thedim}\othermatAX{\Sexchanged(\indt)}{\inds}\othermatBTX{\Sexchanged(\inds)}{\indt}- (\thedim-2)\theone\\
  &=&\textstyle\sum_{\inds=1}^{\thedim-1}\sum_{\indt=2}^{\thedim}\anymatAX{\inds}{\Sexchanged(\indt)}\anymatBTX{\indt}{\Sexchanged(\inds)}- (\thedim-2)\theone.
    \end{IEEEeqnarray*}
    Switching the names of the summation indices $\inds\leftrightarrow\indt$ here gives
\begin{IEEEeqnarray*}{rCl}          \Stip\thebv{\othermatA}{\thedim}{\thedim}-\thebv{\othermatA}{\thedim}{\thedim}&\Sreducesto&\textstyle\sum_{\inds=2}^{\thedim}\sum_{\indt=1}^{\thedim-1}\anymatAX{\indt}{\Sexchanged(\inds)}\anymatBTX{\inds}{\Sexchanged(\indt)}- (\thedim-2)\theone,
    \end{IEEEeqnarray*}
    which after transforming the domains of the summations with $\Sexchanged$ each amounts to
    \begin{IEEEeqnarray*}{rCl}          \Stip\thebv{\othermatA}{\thedim}{\thedim}-\thebv{\othermatA}{\thedim}{\thedim}&\Sreducesto&\textstyle\sum_{\inds=1}^{\thedim-1}\sum_{\indt=2}^{\thedim}\anymatAX{\Sexchanged(\indt)}{\inds}\anymatBTX{\Sexchanged(\inds)}{\indt}- (\thedim-2)\theone.\IEEEeqnarraynumspace\IEEEyesnumber\label{eq:inclusion_ambiguities_resolve-2}
    \end{IEEEeqnarray*}
Since the right-hand sides of  \eqref{eq:inclusion_ambiguities_resolve-1} and  \eqref{eq:inclusion_ambiguities_resolve-2} agree, all inclusion ambiguities resolve.
 \end{proof}

{
  \newcommand{\anymatX}[2]{\anymat_{#1,#2}}
  \newcommand{\othermatX}[2]{\othermat_{#1,#2}}
  \newcommand{\indi}{i}
  \newcommand{\indj}{j}
  \newcommand{\indk}{k}
  \newcommand{\indl}{\ell}
  \begin{lemma}
  \label{lemma:overlap_ambiguities}
  Any overlap ambiguity of $\therels$ is of the form
  \begin{IEEEeqnarray*}{C}
   (\thebv{\anymatBT}{\indl}{1},\thebv{\anymatA}{1}{\indi},\anymatBTX{\indl}{\thedim},\anymatBTX{1}{\Sexchanged(\indi)}) 
  \end{IEEEeqnarray*}
  for some $\anymat\in\thematrices$ and $(\indl,\indi)\in \SYnumbers{\thedim}^{\Ssetmonoidalproduct 2}$.
\end{lemma}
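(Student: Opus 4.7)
The plan is to use Lem\-ma~\ref{lemma:the_leading_monomials} to confine the overlap to a single letter and then to identify that letter through Lem\-ma~\ref{lemma:generators_reformulation}.

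Starting from an arbitrary overlap ambiguity $(g_1,g_2,b_1,b_2)$ of $\therels$, I would write $g_1=\thebv{\othermat}{\indl}{\indk}$ and $g_2=\thebv{\anymat}{\indj}{\indi}$ for suitable $\{\anymat,\othermat\}\subseteq\thematrices$ and index pairs. By Lem\-ma~\ref{lemma:the_leading_monomials} both tips have degree two, so the word identity $(\Stip g_1)b_2=b_1(\Stip g_2)$ forces $|b_1|=|b_2|$. The requirement $(b_1,b_2)\neq(1,1)$ excludes $|b_1|=0$, while $|b_1|\geq 2$ would force $\Stip g_1$ to divide $b_1$ and is therefore ruled out by the definition of an overlap ambiguity. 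Hence $|b_1|=|b_2|=1$ and the overlap covers exactly one generator.

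That single generator, being simultaneously the second letter of $\Stip g_1$ and the first letter of $\Stip g_2$, satisfies via Lem\-ma~\ref{lemma:the_leading_monomials} the equation $\othermatBTX{1}{\Sexchanged(\indk)}=\anymatAX{\indj}{\thedim}$. The operation sending $\anymat$ to $\anymatBT$ maps $\thematrices$ into itself and is involutive (a minor check in the spirit of Lem\-ma~\ref{lemma:any_two_generators}), so $\othermatBT$ is itself a member of $\thematrices$ and Lem\-ma~\ref{lemma:generators_reformulation} may be applied with $\othermatBT$ in the position of $\othermatA$. Its second alternative forces $(1,\Sexchanged(\indk))=(\thedim,\indj)$, which would give $1=\thedim$ against $2\leq\thedim$; the first alternative yields $\othermatBT=\anymatA$ together with $\indj=\indk=1$, hence $\othermat=\anymatBT$ by involutivity.

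With $g_1=\thebv{\anymatBT}{\indl}{1}$ and $g_2=\thebv{\anymat}{1}{\indi}$ in hand, reading off $b_1$ and $b_2$ from the length-three word $(\Stip g_1)b_2=b_1(\Stip g_2)$ delivers $b_1=\anymatBTX{\indl}{\thedim}$ and $b_2=\anymatBTX{1}{\Sexchanged(\indi)}$, matching the asserted form. The only step with any real content is the invocation of Lem\-ma~\ref{lemma:generators_reformulation} on $\othermatBT$ rather than $\othermat$; once the closure of $\thematrices$ under that operation is recorded, the rest is forced by the degree-two nature of the tips.
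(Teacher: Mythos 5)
Your proof is correct and takes essentially the same route as the paper's: both arguments use the degree-two tips from Lemma~\ref{lemma:the_leading_monomials} together with the non-divisibility clauses in the definition of an overlap ambiguity to force the overlap onto a single generator, then resolve the resulting middle-letter equation via Lemma~\ref{lemma:generators_reformulation}, eliminating the transposed alternative through the contradiction with $2\leq\thedim$ and reading off the asserted quadruple. The only cosmetic difference is that the paper first narrows to $\othermatA\in\{\anymatB,\anymatBT\}$ using Lemmata~\ref{lemma:any_two_generators} and \ref{lemma:generators_reformulation} and then excludes the skew-star case, whereas you apply Lemma~\ref{lemma:generators_reformulation} directly with $\othermatBT$ in the role of the second matrix after recording that $\thematrices$ is closed under (and involutive for) the skew-dagger operation.
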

\begin{proof}
  \newcommand{\leftbv}{b_1}
  \newcommand{\rightbv}{b_2}
  \newcommand{\leftrel}{g_1}
  \newcommand{\rightrel}{g_2}
  Let $(\leftrel,\rightrel,\leftbv,\rightbv)$ be any overlap ambiguity of $\therels$. Then there exist $\{\anymat,\othermat\}\subseteq \thematrices$ and $\{(\indj,\indi),(\indl,\indk)\}\subseteq \SYnumbers{\thedim}^{\Ssetmonoidalproduct 2}$ with $\leftrel=\thebv{\othermat}{\indl}{\indk}$ and $\rightrel=\thebv{\anymat}{\indj}{\indi}$.
  \par
  Moreover, necessarily, $\{\leftbv,\rightbv\}\subseteq \thegens$ for the following reasons. If, say, $\rightbv$ had degree $2$ or greater, then because $\Stip\rightrel=\Stip\thebv{\anymat}{\indj}{\indi}$ has degree $2$ by Lem\-ma~\ref{lemma:the_leading_monomials} the identity    $(\Stip\leftrel)\rightbv=\leftbv(\Stip\rightrel)$ would imply that $\Stip\rightrel$ divides $\rightbv$, in contradiction to the assumption. Likewise, $\leftbv$ has degree less than $2$ because also  $\Stip\leftrel=\Stip\thebv{\othermat}{\indl}{\indk}$ has degree $2$ by the same lemma. Furthermore, by $(\Stip\leftrel)\rightbv=\leftbv(\Stip\rightrel)$ the monomials $\leftbv$ and $\rightbv$ must have the same degree because $\Stip\leftrel$ and $\Stip\rightrel$ do. As it was assumed that $(\leftbv,\rightbv)\neq (\theone,\theone)$ that only leaves the possibility that $\{\leftbv,\rightbv\}\subseteq \thegens$.
  \par
It follows that the identity $(\Stip\leftrel)\rightbv=\leftbv(\Stip\rightrel)$, which by  Lem\-ma~\ref{lemma:the_leading_monomials} is equivalent to 
  \begin{IEEEeqnarray*}{rCl}
\othermatAX{\indl}{\thedim}\othermatBTX{1}{\Sexchanged(\indk)}\rightbv=\leftbv \anymatAX{\indj}{\thedim}\anymatBTX{1}{\Sexchanged(\indi)},
\end{IEEEeqnarray*}
requires $\leftbv=\othermatAX{\indl}{\thedim}$ and $\rightbv=\anymatBTX{1}{\Sexchanged(\indi)}$ and, most of all, $\anymatAX{\indj}{\thedim}=\othermatBTX{1}{\Sexchanged(\indk)}$. By Lemmata~\ref{lemma:any_two_generators} and \ref{lemma:generators_reformulation} the latter demands $\othermatBT\in \{\anymatA,\anymatAT\}$, which is to say  $\othermatA\in \{\anymatB,\anymatBT\}$. If $\othermat=\anymat\SskewstarP$ was true, it would follow that $\anymatAX{\indj}{\thedim}=\anymatAX{\Sexchanged(\indk)}{1}$ and thus $(\indj,\thedim)=(\Sexchanged(\indk),1)$ by Lem\-ma~\ref{lemma:generators_reformulation}. Because that would violate the assumption $2\leq \thedim$ it must instead be the case that $\othermatA=\anymatBT$ and thus  $\leftbv=\anymatBTX{\indl}{\thedim}$ and  $\anymatAX{\indj}{\thedim}=\anymatAX{1}{\Sexchanged(\indk)}$. According to Lem\-ma~\ref{lemma:generators_reformulation} then $(\indj,\thedim)=(1,\Sexchanged(\indk))$, i.e., $\indj=\indk=1$. Thus, $\leftrel=\thebv{\anymatBT}{\indl}{1}$ and $\rightrel=\thebv{\anymatA}{1}{\indi}$. That proves the claim.
\end{proof}
  }

\begin{lemma}
  \label{lemma:overlap_ambiguities_resolve}
All overlap ambiguities of $\therels$ resolve.
\end{lemma}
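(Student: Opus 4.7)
\newcommand{\anymat}{a}
\newcommand{\indl}{\ell}
\newcommand{\indi}{i}
\newcommand{\inds}{s}
\newcommand{\indt}{t}
The plan is to proceed by direct computation. By Lemma~\ref{lemma:overlap_ambiguities}, every overlap ambiguity has the form $(\thebv{\anymatBT}{\indl}{1},\thebv{\anymatA}{1}{\indi},\anymatBTX{\indl}{\thedim},\anymatBTX{1}{\Sexchanged(\indi)})$ for some $\anymatA\in\thematrices$ and $(\indl,\indi)\in\SYnumbers{\thedim}^{\Ssetmonoidalproduct 2}$. Fixing such a quadruple, the task is to show that the two ``obstruction tails''
\begin{IEEEeqnarray*}{rCl}
L &\Seqpd & \bigl(\Stip\thebv{\anymatBT}{\indl}{1}-\thebv{\anymatBT}{\indl}{1}\bigr)\,\anymatBTX{1}{\Sexchanged(\indi)},\\
R &\Seqpd & \anymatBTX{\indl}{\thedim}\,\bigl(\Stip\thebv{\anymatA}{1}{\indi}-\thebv{\anymatA}{1}{\indi}\bigr)
\end{IEEEeqnarray*}
reduce by $\therels$ to a common normal form.

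First, I would expand $L$ and $R$ using the defining formula for $\thebv{\anymatA}{\indl}{\indi}$, substituting $\anymatBT$ for the base matrix when working with $L$. The auxiliary identity $(\anymatBT)\SskewdaggerP=\anymatA$, a consequence of $\Sstar$ being an involution and $\Sexchanged$ being a reflection, is what legitimises treating $\anymatBT$ as a base matrix in this formula. After the expansion, each of $L$ and $R$ becomes a sum indexed by $\inds\in\{2,\ldots,\thedim\}$ of length\-/three monomials, plus a single scalar correction from the Kronecker delta hidden in the definition of $\thebv{\anymatA}{\indl}{\indi}$.

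Second, I would apply another round of reductions. On the $L$ side every summand contains the sub\-/factor $\anymatAX{\inds}{\thedim}\anymatBTX{1}{\Sexchanged(\indi)}$, which by Lemma~\ref{lemma:the_leading_monomials} equals $\Stip\thebv{\anymatA}{\inds}{\indi}$ and is therefore reducible using $\thebv{\anymatA}{\inds}{\indi}$. Symmetrically, on the $R$ side every summand contains the sub\-/factor $\anymatBTX{\indl}{\thedim}\anymatAX{1}{\Sexchanged(\inds)}=\Stip\thebv{\anymatBT}{\indl}{\inds}$, reducible using $\thebv{\anymatBT}{\indl}{\inds}$. These second\-/round reductions turn both sides into a genuine double sum of length\-/three monomials plus lower\-/order correction terms.

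Third, I would compare the two resulting expressions. The degree\-/three part on both sides should, after a reindexing via $\Sexchanged$ analogous to the one used at the end of the proof of Lemma~\ref{lemma:inclusion_ambiguities_resolve}, collapse to the common double sum $\sum_{\inds=2}^{\thedim}\sum_{\indt=2}^{\thedim}\anymatBTX{\indl}{\Sexchanged(\inds)}\anymatAX{\inds}{\Sexchanged(\indt)}\anymatBTX{\indt}{\Sexchanged(\indi)}$. Conceptually, the two sides must agree because each element of $\therels$ encodes a matrix identity expressing that the row--column contraction of the base matrix with its ``conjugate'' is a Kronecker delta, so associativity of a triple product of the shape $\anymatBT\anymatA\anymatBT$ forces the same outcome from either bracketing. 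The main obstacle I expect is purely clerical: carefully tracking the $\inds=1$ boundary terms (which are suppressed from the tails because they provide the tips) and the various Kronecker\-/delta contributions, to verify that the extra scalar terms on the two sides cancel exactly against each other rather than merely up to further reducible monomials.
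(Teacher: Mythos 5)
Your plan is correct and is essentially the paper's own proof: reduce the two tails of the overlap $(\thebv{\anymatBT}{\ell}{1},\thebv{\anymatA}{1}{i},\anymatBTX{\ell}{\thedim},\anymatBTX{1}{\Sexchanged(i)})$ by $\thebv{\anymatA}{s}{i}$ respectively $\thebv{\anymatBT}{\ell}{t}$ for $s,t>1$, and compare the resulting double sums plus Kronecker corrections, which is exactly what the paper does. The only minor discrepancies are cosmetic: no $\Sexchanged$-reindexing is actually needed in this case (the two double sums $\sum_{s,t=2}^{\thedim}\anymatBTX{\ell}{\Sexchanged(s)}\anymatAX{s}{\Sexchanged(t)}\anymatBTX{t}{\Sexchanged(i)}$ coincide on the nose), and the remaining delta-term bookkeeping you defer is carried out explicitly in the paper and works out as you anticipate.
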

\begin{proof}
  \newcommand{\anymatX}[2]{\anymat_{#1,#2}}
  \newcommand{\othermatX}[2]{\othermat_{#1,#2}}
  \newcommand{\indi}{i}
  \newcommand{\indl}{\ell}
  \newcommand{\inds}{s}
  \newcommand{\indt}{t}
By  Lem\-ma~\ref{lemma:overlap_ambiguities} it is enough to prove that for any $\anymatA\in\thematrices$ and $\othermatA\Seqpd \anymatBT$ and any $(\indl,\indi)\in \SYnumbers{\thedim}^{\Ssetmonoidalproduct 2}$ the polynomials $(\Stip\thebv{\othermatA}{\indl}{1}-\thebv{\othermatA}{\indl}{1})\othermatAX{1}{\Sexchanged(\indi)}$ and $\othermatAX{\indl}{\thedim}(\Stip\thebv{\anymatA}{1}{\indi}-\thebv{\anymatA}{1}{\indi})$ have a common reduction.
  \par
  On the one hand, the definitions imply
  \begin{IEEEeqnarray*}{rCl}
    (\Stip\thebv{\othermat}{\indl}{1}-\thebv{\othermat}{\indl}{1})\othermatAX{1}{\Sexchanged(\indi)}&=& ({-}\textstyle\sum_{\inds=2}^{\thedim}\othermatAX{\indl}{\Sexchanged(s)}\othermatBTX{\inds}{\thedim}+\Skronecker{\indl}{1}\theone)\othermatAX{1}{\Sexchanged(\indi)}\\
    &=&\textstyle-\sum_{\inds=2}^{\thedim}\othermatX{\indl}{\Sexchanged(s)}\anymatAX{\inds}{\thedim}\anymatBTX{1}{\Sexchanged(\indi)}+\Skronecker{\indl}{1}\othermatAX{1}{\Sexchanged(\indi)}\\
&=&\textstyle-\sum_{\inds=2}^{\thedim}\othermatX{\indl}{\Sexchanged(s)}(\Stip\thebv{\anymatA}{\inds}{\indi})+\Skronecker{\indl}{1}\othermatAX{1}{\Sexchanged(\indi)},
  \end{IEEEeqnarray*}
  which upon reduction by $\thebv{\anymat}{\inds}{\indi}$ for each $\inds\in\SYnumbers{\thedim}$ with $1<\inds$ becomes
  \begin{IEEEeqnarray*}{rCl}
  \IEEEeqnarraymulticol{3}{l}{
    (\Stip\thebv{\othermat}{\indl}{1}-\thebv{\othermat}{\indl}{1})\othermatAX{1}{\Sexchanged(\indi)}
  }\\
  \hspace{3em}
&\Sreducesto&
\textstyle {-}\sum_{\inds=2}^{\thedim}\othermatX{\indl}{\Sexchanged(\inds)}({-}\sum_{\indt=2}^{\thedim}\anymatX{\inds}{\Sexchanged(\indt)}\anymatBTX{\indt}{\Sexchanged(\indi)}+\Skronecker{\inds}{\indi}\theone)+\Skronecker{\indl}{1}\othermatAX{1}{\Sexchanged(\indi)}\\    
&=&\textstyle\sum_{\inds,\indt=2}^{\thedim}\othermatAX{\indl}{\Sexchanged(\inds)}\anymatAX{\inds}{\Sexchanged(\indt)}\othermatAX{\indt}{\Sexchanged(\indi)}-\othermatAX{\indl}{\Sexchanged(\indi)}+\Skronecker{\indi}{1}\othermatAX{\indl}{\thedim}+\Skronecker{\indl}{1}\othermatAX{1}{\Sexchanged(\indi)},\IEEEeqnarraynumspace\IEEEyesnumber    \label{eq:overlap_ambiguities_resolve_1}
  \end{IEEEeqnarray*}
  where the last identity is due to
  \begin{IEEEeqnarray*}{rCl}
    \textstyle   \sum_{\inds=2}^{\thedim}\Skronecker{\inds}{\indi}\othermatAX{\indl}{\Sexchanged(\inds)}&=&\textstyle\sum_{\inds=1}^{\thedim}\Skronecker{\inds}{\indi}\othermatAX{\indl}{\Sexchanged(\inds)}-\Skronecker{1}{\indi}\othermatAX{\indl}{\Sexchanged(1)}=\othermatAX{\indl}{\Sexchanged(\indi)}-\Skronecker{\indi}{1} \othermatX{\indl}{\thedim}.
  \end{IEEEeqnarray*}
\par
On the other hand, by definition
\begin{IEEEeqnarray*}{rCl}
  \othermatAX{\indl}{\thedim}(\Stip\thebv{\anymatA}{1}{\indi}-\thebv{\anymatA}{1}{\indi}) &=& \textstyle\othermatAX{\indl}{\thedim}(-\sum_{\indt=2}^{\thedim}\anymatAX{1}{\Sexchanged(\indt)}\anymatBTX{\indt}{\Sexchanged(\indi)}+\Skronecker{1}{\indi}\theone)\\  &=&\textstyle-\sum_{\indt=2}^{\thedim}\othermatX{\indl}{\thedim}\othermatBTX{1}{\Sexchanged(\indt)}\othermatAX{\indt}{\Sexchanged(\indi)}+\Skronecker{\indi}{1}\othermatAX{\indl}{\thedim}\\
  &=&\textstyle-\sum_{\indt=2}^{\thedim}(\Stip\thebv{\othermatA}{\indl}{\indt})\othermatAX{\indt}{\Sexchanged(\indi)}+\Skronecker{\indi}{1}\othermatAX{\indl}{\thedim},
\end{IEEEeqnarray*}
which when reduced by $\thebv{\othermatA}{\indl}{\indt}$ for each $\indt\in\SYnumbers{\thedim}$ with $1<\indt$ takes on the form
\begin{IEEEeqnarray*}{rCl}
  \IEEEeqnarraymulticol{3}{l}{
    \othermatAX{\indl}{\thedim}(\Stip\thebv{\anymatA}{1}{\indi}-\thebv{\anymatA}{1}{\indi})
  }\\
  \hspace{3em}&\Sreducesto& \textstyle-\sum_{\indt=2}^{\thedim}(-\sum_{\inds=2}^{\thedim}\othermatAX{\indl}{\Sexchanged(\inds)}\othermatBTX{\inds}{\Sexchanged(\indt)}+\Skronecker{\indl}{\indt}\theone)\othermatAX{\indt}{\Sexchanged(\indi)}+\Skronecker{\indi}{1}\othermatAX{\indl}{\thedim}\\
 &=&\textstyle \sum_{\inds,\indt=2}^{\thedim}\othermatAX{\indl}{\Sexchanged(\inds)}\anymatAX{\inds}{\Sexchanged(\indt)}\othermatAX{\indt}{\Sexchanged(\indi)}-\othermatAX{\indl}{\Sexchanged(\indi)}+\Skronecker{\indl}{1}\othermatAX{1}{\Sexchanged(\indi)}+\Skronecker{\indi}{1}\othermatAX{\indl}{\thedim},\IEEEeqnarraynumspace\IEEEyesnumber    \label{eq:overlap_ambiguities_resolve_2}
\end{IEEEeqnarray*}
where now the last step comes from 
\begin{IEEEeqnarray*}{rCl}
  \textstyle \sum_{\indt=2}^{\thedim}\Skronecker{\indt}{\indl}\othermatAX{\indt}{\Sexchanged(\indi)}&=&\textstyle \sum_{\indt=1}^{\thedim}\Skronecker{\indt}{\indl}\othermatAX{\indt}{\Sexchanged(\indi)}-\Skronecker{1}{\indl}\othermatX{1}{\Sexchanged(\indi)}=\othermatAX{\indl}{\Sexchanged(\indi)}-\Skronecker{\indl}{1}\othermatAX{1}{\Sexchanged(\indi)}.
\end{IEEEeqnarray*}
Since   \eqref{eq:overlap_ambiguities_resolve_1} and  \eqref{eq:overlap_ambiguities_resolve_2} have the same right-hand sides, all overlap ambiguities resolve.
\end{proof}

\begin{proposition}
  \label{proposition:relations_are_groebner_basis}
  $\therels$ is a Gröbner basis of $\theideal$.
\end{proposition}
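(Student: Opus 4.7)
The plan is to invoke Bergman's diamond lemma (Proposition~\ref{proposition:bergman}), which characterizes Gröbner bases of a two-sided ideal with respect to a monomial order as those generating subsets whose inclusion and overlap ambiguities all resolve. Since $\therels$ generates $\theideal$ tautologically, by the very definition of $\theideal$ as the two-sided ideal of $\thefreealg$ generated by $\therels$, the only thing remaining to verify is the resolution of the two flavors of ambiguities.

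Both tasks have already been completed in the preceding lemmata. Lemma~\ref{lemma:inclusion_ambiguities} identifies the inclusion ambiguities of $\therels$ as the tuples $(\thebv{\anymatAT}{\thedim}{\thedim},\thebv{\anymatA}{\thedim}{\thedim},\theone,\theone)$ parametrized by $\anymatA\in\thematrices$, and Lemma~\ref{lemma:inclusion_ambiguities_resolve} shows that each such ambiguity resolves by reducing both sides to the common expression $\sum_{\inds=1}^{\thedim-1}\sum_{\indt=2}^{\thedim}\anymatAX{\Sexchanged(\indt)}{\inds}\anymatBTX{\Sexchanged(\inds)}{\indt}-(\thedim-2)\theone$. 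Analogously, Lemma~\ref{lemma:overlap_ambiguities} classifies the overlap ambiguities as $(\thebv{\anymatBT}{\indl}{1},\thebv{\anymatA}{1}{\indi},\anymatBTX{\indl}{\thedim},\anymatBTX{1}{\Sexchanged(\indi)})$ indexed by $\anymatA\in\thematrices$ and $(\indl,\indi)\in\SYnumbers{\thedim}^{\Ssetmonoidalproduct 2}$, and Lemma~\ref{lemma:overlap_ambiguities_resolve} shows that both branches reduce to the common polynomial $\sum_{\inds,\indt=2}^{\thedim}\anymatBTX{\indl}{\Sexchanged(\inds)}\anymatAX{\inds}{\Sexchanged(\indt)}\anymatBTX{\indt}{\Sexchanged(\indi)}-\anymatBTX{\indl}{\Sexchanged(\indi)}+\Skronecker{\indi}{1}\anymatBTX{\indl}{\thedim}+\Skronecker{\indl}{1}\anymatBTX{1}{\Sexchanged(\indi)}$.

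Consequently, the proof of the proposition is a one\-/line assembly: invoke Proposition~\ref{proposition:bergman} with $\thegroebner=\therels$, noting that $\therels$ generates $\theideal$ by definition, that its inclusion ambiguities resolve by Lemma~\ref{lemma:inclusion_ambiguities_resolve}, and that its overlap ambiguities resolve by Lemma~\ref{lemma:overlap_ambiguities_resolve}. No further obstacle arises at this stage, as all combinatorial and algebraic work has been discharged into the preceding four lemmata; the main conceptual effort was front\-/loaded into the identification of the two shapes of ambiguities and the verification, via reindexing by $\Sexchanged$ and the Kronecker delta manipulations, that the two reductions attached to each ambiguity converge.
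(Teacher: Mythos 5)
Your proposal is correct and follows essentially the same route as the paper: the paper's proof is exactly the one-line appeal to Bergman's diamond lemma (Proposition~\ref{proposition:bergman}), with the ambiguity resolution supplied by Lemmata~\ref{lemma:inclusion_ambiguities_resolve} and~\ref{lemma:overlap_ambiguities_resolve} (which in turn rest on the classifications in Lemmata~\ref{lemma:inclusion_ambiguities} and~\ref{lemma:overlap_ambiguities}). Your additional remark that $\therels$ generates $\theideal$ by definition is the tacit first hypothesis of the diamond lemma and is handled the same way in the paper.
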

\begin{proof}
Follows from Lem\-ma\-ta~\ref{lemma:inclusion_ambiguities_resolve} and   \ref{lemma:overlap_ambiguities_resolve} by Pro\-po\-si\-tion~\ref{proposition:bergman}, Berg\-man's dia\-mond lem\-ma.  
\end{proof}

\subsection{Reduced Gröbner basis}
Wth Proposition~\ref{proposition:relations_are_groebner_basis} at hand it is now not hard to prove that $\thegroebner$ is the reduced Gröbner basis.
{
  \newcommand{\inds}{s}
  \newcommand{\indt}{t}
  \begin{lemma}
    \label{lemma:relationship_between_basis_vectors}
    For any $\anymatA\in \thematrices$,
    \begin{IEEEeqnarray*}{rCl}
\textstyle      \specbv{\anymatA}=\thebv{\anymatA}{\thedim}{\thedim}-\sum_{\inds=1}^{\thedim-1}\thebv{\anymatAT}{\inds}{\inds}.
    \end{IEEEeqnarray*}
  \end{lemma}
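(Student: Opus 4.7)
The plan is to verify the identity by direct unfolding of both sides. A key preliminary, parallel to the index manipulation already carried out in the proof of Lemma~\ref{lemma:inclusion_ambiguities_resolve}, is to rewrite $\thebv{\anymatAT}{\inds}{\inds}$ in terms of entries of $\anymatA$ and $\anymatBT$ rather than of $\anymatAT$ and its own skew-dagger. Using that the $(\inds,\Sexchanged(\indt))$-entry of $\anymatAT$ equals $\anymatAX{\Sexchanged(\indt)}{\inds}$ and, by the same reindexing employed in Lemma~\ref{lemma:inclusion_ambiguities_resolve}, that the $(\indt,\Sexchanged(\inds))$-entry of the skew-dagger of $\anymatAT$ equals $\anymatBTX{\Sexchanged(\inds)}{\indt}$, one obtains
\begin{IEEEeqnarray*}{rCl}
\thebv{\anymatAT}{\inds}{\inds}&=&\textstyle\sum_{\indt=1}^{\thedim}\anymatAX{\Sexchanged(\indt)}{\inds}\anymatBTX{\Sexchanged(\inds)}{\indt}-\theone.
\end{IEEEeqnarray*}

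Summing this over $\inds\in\SYnumbers{\thedim-1}$ and reindexing both summation variables via the involution $\Sexchanged$---which shifts the outer range onto $\{2,\ldots,\thedim\}$ while merely permuting the inner range $\SYnumbers{\thedim}$---transforms the resulting double sum into $\sum_{\inds=2}^{\thedim}\sum_{\indt=1}^{\thedim}\anymatAX{\indt}{\Sexchanged(\inds)}\anymatBTX{\inds}{\Sexchanged(\indt)}$, so that
\begin{IEEEeqnarray*}{rCl}
\textstyle\sum_{\inds=1}^{\thedim-1}\thebv{\anymatAT}{\inds}{\inds}&=&\textstyle\sum_{\inds=2}^{\thedim}\sum_{\indt=1}^{\thedim}\anymatAX{\indt}{\Sexchanged(\inds)}\anymatBTX{\inds}{\Sexchanged(\indt)}-(\thedim-1)\theone.
\end{IEEEeqnarray*}
The $\indt=\thedim$ part of that double sum, invoking $\Sexchanged(\thedim)=1$, equals $\sum_{\inds=2}^{\thedim}\anymatAX{\thedim}{\Sexchanged(\inds)}\anymatBTX{\inds}{1}$, which is precisely the non-leading part of $\thebv{\anymatA}{\thedim}{\thedim}=\anymatAX{\thedim}{\thedim}\anymatBTX{1}{1}+\sum_{\inds=2}^{\thedim}\anymatAX{\thedim}{\Sexchanged(\inds)}\anymatBTX{\inds}{1}-\theone$.

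Forming the difference $\thebv{\anymatA}{\thedim}{\thedim}-\sum_{\inds=1}^{\thedim-1}\thebv{\anymatAT}{\inds}{\inds}$ will then cancel those two matching sums, the scalar contributions combine to $-\theone+(\thedim-1)\theone=(\thedim-2)\theone$, and what remains is $\anymatAX{\thedim}{\thedim}\anymatBTX{1}{1}-\sum_{\inds=2}^{\thedim}\sum_{\indt=1}^{\thedim-1}\anymatAX{\indt}{\Sexchanged(\inds)}\anymatBTX{\inds}{\Sexchanged(\indt)}+(\thedim-2)\theone$, which is $\specbv{\anymatA}$ by definition. The only real obstacle is notational: simultaneously tracking how $\Sexchanged$ interacts with the index-swap built into passing from $\anymatA$ to $\anymatAT$ and with the one hidden inside the definition of $\anymatBT$. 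Once that bookkeeping is settled, the identity follows by inspection.
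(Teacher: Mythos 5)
Your proposal is correct and follows essentially the same route as the paper's proof: unfold $\thebv{\anymatAT}{\inds}{\inds}$ into entries of $\anymatA$ and $\anymatBT$, sum over $\inds\in\SYnumbers{\thedim-1}$, reindex with $\Sexchanged$, and subtract from the unfolded $\thebv{\anymatA}{\thedim}{\thedim}$ so that the $\indt=\thedim$ terms cancel and the constants combine to $(\thedim-2)\theone$. The only deviation is cosmetic — the paper performs the inner reindexing before summing and the outer one afterwards, while you do both after — so the two arguments coincide in substance.
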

  \begin{proof}
    If $\othermatA\Seqpd \anymatAT$, then for any $\inds\in\SYnumbers{\thedim}$ by definition
    \begin{IEEEeqnarray*}{rCl}
      \thebv{\othermatA}{\inds}{\inds}&=&\textstyle \othermatAX{\inds}{\thedim}\othermatBTX{1}{\Sexchanged(\inds)}-(-\sum_{\indt=2}^{\thedim}\othermatAX{\inds}{\Sexchanged(\indt)}\othermatBTX{\indt}{\Sexchanged(\inds)}+\theone)\\
      &=&\textstyle\sum_{\indt=1}^{\thedim}\anymatAX{\Sexchanged(\indt)}{\inds}\anymatBTX{\Sexchanged(\inds)}{\indt}-\theone\\
      &=&\textstyle\sum_{\indt=1}^{\thedim}\anymatAX{\indt}{\inds}\anymatBTX{\Sexchanged(\inds)}{\Sexchanged(\indt)}-\theone.
\end{IEEEeqnarray*}
Summing up these  identities thus   yields
    \begin{IEEEeqnarray*}{rCl}
      \textstyle\sum_{\inds=1}^{\thedim-1}\thebv{\othermatA}{\inds}{\inds}&=&\textstyle\textstyle\sum_{\inds=1}^{\thedim-1}\sum_{\indt=1}^{\thedim}\anymatAX{\indt}{\inds}\anymatBTX{\Sexchanged(\inds)}{\Sexchanged(\indt)}-(\thedim-1)\theone\\
      &=&\textstyle\textstyle\sum_{\inds=2}^{\thedim}\sum_{\indt=1}^{\thedim}\anymatAX{\indt}{\Sexchanged(\inds)}\anymatBTX{\inds}{\Sexchanged(\indt)}-(\thedim-1)\theone.
\end{IEEEeqnarray*}
Hence,  subtracting this equality from the definition
    \begin{IEEEeqnarray*}{rCl}
          \thebv{\anymatA}{\thedim}{\thedim}&=& \anymatAX{\thedim}{\thedim}\anymatBTX{1}{1}+\textstyle\sum_{\inds=2}^{\thedim}\anymatAX{\thedim}{\Sexchanged(\inds)}\anymatBTX{\inds}{\Sexchanged(\thedim)}-\theone
        \end{IEEEeqnarray*}
 of $          \thebv{\anymatA}{\thedim}{\thedim}$  yields
    \begin{IEEEeqnarray*}{rCl}
          \thebv{\anymatA}{\thedim}{\thedim}-\textstyle\sum_{\inds=1}^{\thedim-1}\thebv{\othermat}{\inds}{\inds}&=& \anymatAX{\thedim}{\thedim}\anymatBTX{1}{1}-\textstyle\sum_{\inds=2}^\thedim\sum_{\indt=1}^{\thedim-1}\anymatAX{\indt}{\Sexchanged(\inds)}\anymatBTX{\inds}{\Sexchanged(\indt)}+(\thedim-2)\theone=\specbv{\anymatA},
        \end{IEEEeqnarray*}
        which is what was claimed.
  \end{proof}
}

  \begin{proposition}
     $\thegroebner$ is the reduced Gröbner basis of $\theideal$.
   \end{proposition}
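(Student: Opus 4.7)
The plan is to bootstrap from Pro\-po\-si\-tion~\ref{proposition:relations_are_groebner_basis}, which already establishes that $\therels$ is a Gr\"obner basis of $\theideal$. The first observation is that $\thegroebner$ and $\therels$ generate the same ideal and carry the same tips: by Lem\-ma~\ref{lemma:relationship_between_basis_vectors} each $\specbv{\anymat}$ is an integer linear combination of elements of $\therels$ and, conversely, each $\thebv{\anymat}{\thedim}{\thedim}$ lies in the ideal generated by $\thegroebner$, while Lem\-ma~\ref{lemma:the_leading_monomials} gives $\Stip\specbv{\anymat}=\anymatAX{\thedim}{\thedim}\anymatBTX{1}{1}=\Stip\thebv{\anymat}{\thedim}{\thedim}$, so that $\StipO\thegroebner=\StipO\therels$. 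Hence $\thegroebner$ is a Gr\"obner basis of $\theideal$.

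For minimality, every tip in $\thegroebner$ has degree exactly $2$, so divisibility among tips coincides with equality, and it suffices to show that the tips of $\thegroebner$ are pairwise distinct. The tips $\Stip\specbv{\theuni}$ and $\Stip\specbv{\theuni\StransposeP}$ agree, and likewise $\Stip\specbv{\theuni\SskewstarP}=\Stip\specbv{\theuni\SskewdaggerP}$; however, a brief re-indexing via $\Sexchanged$ followed by a swap of the two summation variables identifies $\specbv{\theuni}=\specbv{\theuni\StransposeP}$ and $\specbv{\theuni\SskewstarP}=\specbv{\theuni\SskewdaggerP}$ already as polynomials in $\thefreealg$, so that $\thegroebner$ contains only one copy of each. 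Apart from these two collapsings, Lem\-ma~\ref{lemma:generators_reformulation} shows that any two tips agree only when the underlying matrix-position data coincide up to the transpose symmetry $\anymat\leftrightarrow\anymatAT$, and the transpose case forces both matrix indices to equal $\thedim$, which is precisely the case that has just been collapsed. Hence $\thegroebner$ is minimal.

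For reducedness, the leading coefficients are all $1$ by inspection of the definitions of $\thebv{\anymat}{j}{i}$ and $\specbv{\anymat}$. Since every element of $\thegroebner$ consists only of degree-$0$ and degree-$2$ monomials and every tip has degree $2$, reducedness amounts to showing that no non-leading degree-$2$ monomial of any element of $\thegroebner$ equals a tip of any other element. The non-leading degree-$2$ monomials of $\thebv{\anymat}{j}{i}$ with $(j,i)\neq(\thedim,\thedim)$ are $\anymatAX{j}{\Sexchanged(s)}\anymatBTX{s}{\Sexchanged(i)}$ with $s\in\{2,\ldots,\thedim\}$, and those of $\specbv{\anymat}$ are $\anymatAX{t}{\Sexchanged(s)}\anymatBTX{s}{\Sexchanged(t)}$ with $s\in\{2,\ldots,\thedim\}$ and $t\in\{1,\ldots,\thedim-1\}$. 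In both cases the second index of the first factor satisfies $\Sexchanged(s)<\thedim$, while every tip in $\thegroebner$ has second index exactly $\thedim$ in its first factor. Lem\-ma~\ref{lemma:generators_reformulation} then forces any potential collision to proceed through its transpose case; for $\specbv{\anymat}$ this would demand $t=\thedim$, contradicting $t\leq\thedim-1$, while for $\thebv{\anymat}{j}{i}$ it would force $j=\thedim$, and following the consequence through the second factor would then also yield $i=\thedim$, contradicting $(j,i)\neq(\thedim,\thedim)$.

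Chaining these three observations yields the claim. The main obstacle will be the careful case analysis of the transpose symmetry in the reducedness step; the other verifications are routine consequences of the preparatory lemmata.
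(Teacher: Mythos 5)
Your proposal is correct and follows essentially the same route as the paper: both first deduce the Gröbner-basis property from Proposition~\ref{proposition:relations_are_groebner_basis} together with Lemmata~\ref{lemma:relationship_between_basis_vectors} and~\ref{lemma:the_leading_monomials}, and then reduce reducedness to a degree-two occurrence analysis via Lemma~\ref{lemma:generators_reformulation} and the identity $\specbv{\anymatAT}=\specbv{\anymatA}$. The only difference is organizational — you treat minimality (pairwise distinct tips) and the non-leading monomials separately, whereas the paper runs one unified case analysis on when a tip occurs in another element — but the underlying argument is the same.
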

   \begin{proof}
     \newcommand{\anymatX}[2]{\anymat_{#1,#2}}
     \newcommand{\othermatX}[2]{\othermat_{#1,#2}}
     \newcommand{\firstbv}{g}
     \newcommand{\secondbv}{g'}
     \newcommand{\indt}{t}
     \newcommand{\inds}{s}
     \newcommand{\indi}{i}
     \newcommand{\indj}{j}
     \newcommand{\indk}{k}
     \newcommand{\indl}{\ell}     
     Recall that $\thegroebner$ results from $\therels$ by replacing $\{\thebv{\anymat}{\thedim}{\thedim}\Ssetbuilder\anymat\in\thematrices\}$ by $\{\specbv{\anymat}\Ssetbuilder \anymat\in\thematrices\}$.     Lem\-ma~\ref{lemma:relationship_between_basis_vectors} therefore shows that $\thegroebner$ generates the same ideal as $\therels$. Moreover, $\StipO\thegroebner=\StipO\therels$ by Lem\-ma~\ref{lemma:the_leading_monomials}. Since $\therels$ is a Gröbner basis of $\theideal$ by Proposition~\ref{proposition:relations_are_groebner_basis}, hence so is $\thegroebner$.
     \par
     It remains to prove that $\thegroebner$ is reduced, i.e., that the tip of any element of $\thegroebner$ does not divide any monomial occurring in any other element of $\thegroebner$. In fact, for degree reasons, any such divisibility relation is already necessarily an identity. 
     Let  $\{\firstbv,\secondbv\}\subseteq \thegroebner$ be arbitrary and suppose that $\Stip\firstbv$ occurs in $\secondbv$. By definition of $\thegroebner$ there then exist $\{\anymat,\othermat\}\subseteq \thematrices$ and $\{(\indj,\indi),(\indl,\indk)\}\subseteq \SYnumbers{\thedim}^{\Ssetmonoidalproduct 2}$ such that $(\indl,\indk)\neq (\thedim,\thedim)\neq (\indj,\indi)$ and $\firstbv\in\{\specbv{\othermat},\thebv{\othermat}{\indl}{\indk}\}$ and  $\secondbv\in\{\specbv{\anymat},\thebv{\anymat}{\indj}{\indi}\}$. 
     \par
     \emph{Step~1.} First,  $(\firstbv,\secondbv)\neq (\thebv{\othermat}{\indl}{\indk},\specbv{\anymat})$ is proved indirectly. If $(\firstbv,\secondbv)$ was $(\thebv{\othermat}{\indl}{\indk},\specbv{\anymat})$, there would exist  $(\indt,\inds)\in\SYnumbers{\thedim}^{\Ssetmonoidalproduct 2}$ such that either both $1<\inds$ and $\indt<\thedim$ or $(\indt,\inds)=(\thedim,1)$ and such that $\othermatAX{\indl}{\thedim}\othermatBTX{1}{\Sexchanged(\indk)}=\anymatAX{\indt}{\Sexchanged(\inds)}\anymatBTX{\inds}{\Sexchanged(\indt)}$. By Lemmata~\ref{lemma:any_two_generators} and \ref{lemma:generators_reformulation} it would follow $\othermat\in \{\anymat,\anymat\StransposeP\}$.
     \par
     \emph{Case~1.1.} If $\othermatA=\anymatA$ was the case, then Lem\-ma~\ref{lemma:generators_reformulation} would demand both $(\indl,\thedim)=(\indt,\Sexchanged(\inds))$ and $(1,\Sexchanged(\indk))=(\inds,\Sexchanged(\indt))$. In other words, $\indl=\indt=\indk$ and $\inds=1$. By assumption on $\inds$ that would force $\indt=\thedim$ and thus $(\indl,\indk)=(\thedim,\thedim)$ in contradiction to the assumptions on $\indl$ and $\indk$.
\par
     \emph{Case~1.2.} And in the opposite case that $\othermatA=\anymatAT$ the same lemma would require both $(\indl,\thedim)=(\Sexchanged(\inds),\indt)$ and $(1,\Sexchanged(\indk))=(\Sexchanged(\indt),\inds)$, which is to say $\indl=\Sexchanged(\inds)=\indk$ and $\indt=\thedim$. Again, $\indt=\thedim$ would entail $\inds=1$, which would yield the contradiction $(\indl,\indk)=(\thedim,\thedim)$.
     \par
     \emph{Step~2.} 
     Similarly, $(\firstbv,\secondbv)\neq(\specbv{\othermat},\thebv{\anymat}{\indj}{\indi})$. Indeed, if $(\firstbv,\secondbv)$ was $(\specbv{\othermat},\thebv{\anymat}{\indj}{\indi})$, there would exist $\inds\in\SYnumbers{\thedim}$ such that $\othermatAX{\thedim}{\thedim}\othermatBTX{1}{1}=\anymatAX{\indj}{\Sexchanged(\inds)}\anymatBTX{\inds}{\Sexchanged(\indi)}$. Lem\-ma\-ta~\ref{lemma:any_two_generators} and \ref{lemma:generators_reformulation} would imply $\othermat\in\{\anymat,\anymat\StransposeP\}$ and hence $(\indj,\Sexchanged(\inds))=(\thedim,\thedim)$ and $(\inds,\Sexchanged(\indi))=(1,1)$. In particular,  $(\indj,\indi)\neq (\thedim,\thedim)$ would be violated.
     \par
     \emph{Step~3.} From Steps~1 and~2 it follows $(\firstbv,\secondbv)\in\{(\specbv{\othermat},\specbv{\anymat}),(\thebv{\othermat}{\indl}{\indk},\thebv{\anymat}{\indj}{\indi})\}$. Hence, there are two cases to distinguish. It is enough to prove $\firstbv=\secondbv$ in each of them.     
     \par 
     \emph{Case~3.1:} If  $(\firstbv,\secondbv)=(\specbv{\othermat},\specbv{\anymat})$, then there need to exist $(\indt,\inds)\in\SYnumbers{\thedim}^{\Ssetmonoidalproduct 2}$ such that either both $1<\inds$ and $\indt<\thedim$ or $(\indt,\inds)=(\thedim,1)$ and such that $\othermatAX{\thedim}{\thedim}\othermatBTX{1}{1}=\anymatAX{\indt}{\Sexchanged(\inds)}\anymatBTX{\inds}{\Sexchanged(\indt)}$. Lemmata~\ref{lemma:any_two_generators} and \ref{lemma:generators_reformulation} then force in particular $\othermat\in \{\anymatA,\anymatAT\}$. Because $\specbv{\anymatAT}=\specbv{\anymat}$ that proves $\firstbv=\secondbv$, no matter whether $\othermatA=\anymatA$ or $\othermatA=\anymatAT$.
     \par
     \emph{Case~3.2:} Alternatively, if $(\firstbv,\secondbv)=(\thebv{\othermatA}{\indl}{\indk},\thebv{\anymatA}{\indj}{\indi})$, there is $\inds\in\SYnumbers{\thedim}$ such that $\othermatAX{\indl}{\thedim}\othermatBTX{1}{\Sexchanged(\indk)}=\anymatAX{\indj}{\Sexchanged(\inds)}\anymatBTX{\inds}{\Sexchanged(\indi)}$. Hence,  $\othermat\in \{\anymat,\anymat\StransposeP\}$ by Lemmata~\ref{lemma:any_two_generators} and \ref{lemma:generators_reformulation}. If $\othermat$ was $\anymat\StransposeP$, that would require both $(\thedim,\indl)=(\indj,\Sexchanged(\inds))$ and $(\Sexchanged(\indk),1)=(\inds,\Sexchanged(\indi))$ and thus in particular $(\indj,\indi)=(\thedim,\thedim)$, which is false. Hence, instead, $\othermat=\anymat$ and thus $(\indl,\thedim)=(\indj,\Sexchanged(\inds))$ and $(1,\Sexchanged(\indk))=(\inds,\Sexchanged(\indi))$. That implies $(\indl,\indk)=(\indj,\indi)$ and, in conclusion, $\firstbv=\secondbv$, which  means that $\thegroebner$ is reduced.
   \end{proof}

\section{\texorpdfstring{Anick resolution for $U_n^+$}{Anick resolution for the free unitary quantum group}}
\label{section:anick_computation}
 {
  \newcommand{\anymatX}[2]{\anymat_{#1,#2}}  
  \newcommand{\othermatX}[2]{\othermat_{#1,#2}}  
  \newcommand{\indj}{j}
  \newcommand{\indi}{i}
  \newcommand{\inds}{s}
  \newcommand{\indt}{t}
  \newcommand{\indk}{k}
  \newcommand{\indl}{\ell}
  \newcommand{\recvar}{m}
  \newcommand{\outerrecvar}{\ell}
  \newcommand{\orderind}{\ell}
  \newcommand{\halforderind}{m}
  \newcommand{\shela}{a}
  \newcommand{\shelb}{b}
  \newcommand{\shelc}{c}
  \newcommand{\sheld}{d}
  \newcommand{\shelaX}[2]{\shela^{#2}_{#1}}
  \newcommand{\shelbX}[2]{\shelb^{#2}_{#1}}
  \newcommand{\shelcX}[2]{\shelc^{#2}_{#1}}
  \newcommand{\sheldX}[2]{\sheld^{#2}_{#1}}
  \newcommand{\indfirsts}{s_1}
  \newcommand{\indseconds}{s_2}
{
  \newcommand{\anypoly}{p}
  In addition to the assumptions from Section~\ref{section:groebner_computation} let also $\thecounit$ and $\themodule$ be as in the  \hyperref[main-result]{Main result}. 
  In this section, the Anick resolution of $\themodule$ with respect to $\theorder$ as a right $\thealgebra$-module is computed explicitly by solving the determining recursion equations.

}

\subsection{Auxiliary results}
Recall that $\Stip$ indicates tips and let $\thecore$ be the core of $\Stip\theideal$, i.e., the minimal generating set of the monoidal ideal $\Stip\theideal$ in the monoid $\themonoid$, as explained in Section~\ref{section:anick-monoidal_ideals}.
\par
The next proposition  guarantees in particular that $\thegens\cap\StipO\theideal=\emptyset$ and thus that  Anick's construction is applicable.

\begin{proposition}
  \label{proposition:core}
  $\thecore=\{\anymatAX{\indj}{\thedim}\anymatBTX{1}{\Sexchanged(\indi)}\Ssetbuilder\anymat\in\thematrices\Sand (\indj,\indi)\in\SYnumbers{\thedim}^{\Ssetmonoidalproduct 2}\}$.
\end{proposition}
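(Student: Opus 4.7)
The plan is to deduce this directly from the Gröbner basis machinery already in place. By Proposition~\ref{proposition:relations_are_groebner_basis}, the set $\therels$ is a Gröbner basis of $\theideal$ with respect to $\theorder$, so by the very definition of ``Gröbner basis'' recalled in Section~\ref{section:anick-groebner_bases}, the tip set $\StipO\therels$ generates the monoidal ideal $\StipO\theideal$ of $\themonoid$. Combined with Lemma~\ref{lemma:the_leading_monomials}, which identifies $\StipO\therels$ as precisely the set
\begin{IEEEeqnarray*}{rCl}
S &\Seqpd&\{\anymatAX{\indj}{\thedim}\anymatBTX{1}{\Sexchanged(\indi)}\Ssetbuilder \anymat\in\thematrices\Sand (\indj,\indi)\in\SYnumbers{\thedim}^{\Ssetmonoidalproduct 2}\}
\end{IEEEeqnarray*}
appearing in the claim, this reduces the proposition to showing that this particular generating set of $\StipO\theideal$ is already minimal, i.e.\ that $S=\thecore$.

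The decisive step will be a degree count in the free monoid $\themonoid$. Every element of $S$ has degree exactly $2$, and because $\StipO\theideal$ is generated as a monoidal ideal by the degree-$2$ monomials in $S$, every element of $\StipO\theideal$ has degree at least $2$. From this, $S\subseteq\thecore$ follows at once: if some $s\in S$ were equal to a proper multiple $aub$ of an element $u\in\StipO\theideal$ with $u\neq s$, then $(a,b)\neq(\theone,\theone)$ and the degrees would satisfy $2=|s|=|a|+|u|+|b|\geq|u|+1\geq 3$, a contradiction. Hence no element of $S$ is a proper multiple of any other element of $\StipO\theideal$, placing $S$ entirely inside the core.

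For the reverse inclusion $\thecore\subseteq S$, I would pick any $c\in\thecore\subseteq\StipO\theideal$. Since $S$ generates the monoidal ideal, there exist $s\in S$ and $a,b\in\themonoid$ with $asb=c$. But then $s\in\StipO\theideal$ divides the core element $c$, and core-minimality forces $s=c$, whence $c\in S$. The two inclusions combine to $\thecore=S$, as claimed.

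No serious obstacle is expected here: once the Gröbner basis has been identified in Proposition~\ref{proposition:relations_are_groebner_basis} and its tips have been computed in Lemma~\ref{lemma:the_leading_monomials}, the present claim is essentially a formal consequence. The only mildly subtle point is the conceptual passage from ``$S$ generates $\StipO\theideal$'' to ``$S=\thecore$'', which is mediated cleanly by the uniform degree-$2$ bound on elements of $S$ described above.
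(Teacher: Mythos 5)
Your proof is correct, and it takes a slightly different route from the paper's. The paper deduces the statement from the fact that $\thegroebner$ is the \emph{reduced} (hence minimal) Gröbner basis, which by the general theory recalled in Section~\ref{section:anick-groebner_bases} gives $\thecore=\StipO\thegroebner$, and then reads off the tips from Lemma~\ref{lemma:the_leading_monomials}. You instead need only the weaker Proposition~\ref{proposition:relations_are_groebner_basis} (that $\therels$ is \emph{some} Gröbner basis), and you recover minimality of the generating set $S=\StipO\therels$ of the monoidal ideal $\StipO\theideal$ by the observation that $S$ consists entirely of degree-$2$ monomials, so no element of $S$ can be a proper multiple of an element of $\StipO\theideal$ and, conversely, any core element, being a multiple $asb$ of some $s\in S$, must equal $s$. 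This degree-homogeneity argument is a genuine (if small) simplification: it makes the proposition independent of the reduced-Gröbner-basis computation, whose proof in the paper is a lengthy case analysis. The only compressed steps are the standard facts about cores in a free monoid — that a divisibility-minimal element of a monoidal ideal lies in every generating set, and that distinct core elements cannot divide one another (otherwise the divisible one could be dropped, contradicting minimality) — which you invoke implicitly in the phrases ``placing $S$ entirely inside the core'' and ``core-minimality forces $s=c$''; these are exactly the kind of unproved background facts the paper itself relies on in Sections~\ref{section:anick-monoidal_ideals} and~\ref{section:anick-groebner_bases}, so no genuine gap results, though spelling them out in one sentence each would make the argument airtight.
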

\begin{proof}
  By Proposition~\ref{proposition:relations_are_groebner_basis} the set $\thegroebner$ is a reduced and thus in particular minimal Gröbner basis of $\theideal$, whence $\thecore=\Stip\thegroebner$. Thus, Lem\-ma~\ref{lemma:the_leading_monomials} proves the claim.
\end{proof}

{
  \newcommand{\outerind}{j}
  \newcommand{\outerindX}[1]{\outerind_{#1}}
  \newcommand{\innerind}{i}
  \newcommand{\innerindX}[1]{\innerind_{#1}}
  \newcommand{\firstleftind}{\outerindX{1}}
  \newcommand{\firstrightind}{\innerindX{1}}
  \newcommand{\secondleftind}{\innerindX{2}}
  \newcommand{\secondrightind}{\outerindX{2}}
The following simple fact will be used so often that it seems worthy of its own lemma.
  \begin{lemma}
    \label{characterization_of_reduced_terms_of_order_two}
    For any $\anymat\in\thematrices$ and
    $\{\outerind,\innerind\}\subseteq \SYnumbers{\thedim}^{\Ssetmonoidalproduct 2}$,
    \begin{IEEEeqnarray*}{rCl}
      \anymatAX{\firstleftind}{\firstrightind}\anymatBTX{\secondleftind}{\secondrightind} \notin\Stip\theideal 
    \end{IEEEeqnarray*}
    if and only if
    \begin{IEEEeqnarray*}{rCl}
     (\firstrightind,\secondleftind)\neq(\thedim,1)\neq (\firstleftind,\secondrightind).
    \end{IEEEeqnarray*}
  \end{lemma}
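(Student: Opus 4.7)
The plan is to reduce the question to whether the degree-$2$ monomial $\anymatAX{j_1}{i_1}\anymatBTX{i_2}{j_2}$ literally belongs to the core $\thecore$. By Proposition~\ref{proposition:core} every element of $\thecore$ has degree exactly $2$; and since $\Stip\theideal$ is by definition the monoidal ideal generated by $\thecore$, a degree-$2$ monomial can lie in $\Stip\theideal$ only if the left and right multipliers are trivial, i.e., only if it already lies in $\thecore$ itself. Hence the lemma reduces to characterizing when there exist $\othermatA\in\thematrices$ and $(k,\ell)\in\SYnumbers{\thedim}^{\Ssetmonoidalproduct 2}$ with
\begin{IEEEeqnarray*}{C}
\anymatAX{j_1}{i_1}\anymatBTX{i_2}{j_2}=\othermatAX{k}{\thedim}\othermatBTX{1}{\Sexchanged(\ell)},
\end{IEEEeqnarray*}
and any such identity of degree-$2$ monomials is equivalent to the two separate generator identities $\anymatAX{j_1}{i_1}=\othermatAX{k}{\thedim}$ and $\anymatBTX{i_2}{j_2}=\othermatBTX{1}{\Sexchanged(\ell)}$.

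For the only-if direction I would apply Lemma~\ref{lemma:generators_reformulation} to the first factor, which restricts $\othermatA$ to $\{\anymatA,\anymatAT\}$. Each of these two subcases then has to be traced through to the second factor, where by Lemma~\ref{lemma:any_two_generators} the paired matrix is $\othermatBT=\anymatBT$ respectively $\othermatBT=\anymatB$. A second application of Lemma~\ref{lemma:generators_reformulation} to the second factor pins down the remaining indices. In the first subcase one gets $i_1=\thedim$ and $i_2=1$, i.e., $(i_1,i_2)=(\thedim,1)$; in the second subcase one gets $j_1=\thedim$ and $j_2=1$, i.e., $(j_1,j_2)=(\thedim,1)$. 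Thus if the monomial lies in $\Stip\theideal$, at least one of the two forbidden index configurations must occur.

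The if direction is just the reverse. If $(i_1,i_2)=(\thedim,1)$, then the monomial is literally $\anymatAX{j_1}{\thedim}\anymatBTX{1}{j_2}$, visibly an element of $\thecore$ via $\othermat=\anymat$ and $\ell=\Sexchanged(j_2)$. If $(j_1,j_2)=(\thedim,1)$, a parallel rewrite using $\othermatA=\anymatAT$ (whose companion is $\othermatBT=\anymatB$) exhibits $\anymatAX{\thedim}{i_1}\anymatBTX{i_2}{1}$ as $\othermatAX{i_1}{\thedim}\othermatBTX{1}{\Sexchanged(\Sexchanged(i_2))}$ and hence as a core element.

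The whole argument is mechanical given the earlier infrastructure. The only place requiring attention is tracking which partner in $\thematrices$ arises as $\othermatBT$ once $\othermatA$ has been fixed to $\anymatA$ or to $\anymatAT$, and correspondingly which of the exclusive subcases of Lemma~\ref{lemma:generators_reformulation} is then triggered when one matches the second factors; but this is purely clerical bookkeeping and is the only real source of potential slips.
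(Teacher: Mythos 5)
Your proposal is correct and follows essentially the same route as the paper: reduce membership in $\Stip\theideal$ for a degree-two monomial to membership in $\thecore$ via Proposition~\ref{proposition:core}, then match factors using Lemmata~\ref{lemma:any_two_generators} and \ref{lemma:generators_reformulation} with the case split $\othermatA\in\{\anymatA,\anymatAT\}$ (the paper phrases the sufficiency direction via $\Stip\thebv{\anymatA}{\indj}{\indi}$ from Lemma~\ref{lemma:the_leading_monomials}, but these are the same monomials). No gaps.
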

  \begin{proof}
    \newcommand{\somemonomial}{b}
    \newcommand{\leftmonomial}{b_1}
    \newcommand{\smallmonomial}{a}
    \newcommand{\rightmonomial}{b_2}
    \newcommand{\anyleftind}{t}
    \newcommand{\anyrightind}{s}
    The contraposition, $\somemonomial\Seqpd\anymatAX{\firstleftind}{\firstrightind}\anymatBTX{\secondleftind}{\secondrightind} \in\StipO\theideal$ if and only if  $(\firstrightind,\secondleftind)=(\thedim,1)$ or $(\firstleftind,\secondrightind)=(\thedim,1)$, is what is proved below.
    \par
    \emph{Sufficient condition.} First, if $(\firstrightind,\secondleftind)=(\thedim,1)$, then $\somemonomial=\anymatAX{\firstleftind}{\thedim}\anymatBTX{1}{\Sexchanged(\Sexchanged(\secondrightind))}=\Stip\thebv{\anymatA}{\firstleftind}{\Sexchanged(\secondrightind)}$ by Lem\-ma~\ref{lemma:the_leading_monomials} and thus $\somemonomial\in\Stip\theideal$ because $\thebv{\anymatA}{\firstleftind}{\Sexchanged(\secondrightind)}\in \therels$ and $\therels\subseteq \theideal$. Likewise, if $(\firstleftind,\secondrightind) =(\thedim,1)$ and if $\othermatA\Seqpd \anymatAT$, then $\somemonomial=\anymatAX{\thedim}{\firstrightind}\anymatBTX{\secondleftind}{1}=\othermatAX{\firstrightind}{\thedim}\othermatBTX{1}{\Sexchanged(\Sexchanged(\secondleftind))}=\Stip\thebv{\othermatA}{\firstrightind}{\Sexchanged(\secondleftind)}$ and hence $\somemonomial\in\Stip\theideal$.
    \par
\emph{Necessary condition.}  Conversely, if $\somemonomial\in\Stip\theideal$, then there exist $\smallmonomial\in\thecore$ and $\{\leftmonomial,\rightmonomial\}\subseteq \themonoid$ with $\leftmonomial\smallmonomial\rightmonomial=\somemonomial$. Because $\smallmonomial$ has degree $2$ by  Proposition~\ref{proposition:core}, the same degree as $\somemonomial$, that demands $\leftmonomial=\rightmonomial=1$ and thus $\smallmonomial=\somemonomial$, which then proves that actually $\somemonomial\in\thecore$. Hence, by a second application of Proposition~\ref{proposition:core} there exist  $\othermat\in\thematrices$ and $(\anyleftind,\anyrightind)\in\SYnumbers{\thedim}^{\Ssetmonoidalproduct 2}$ with 
    \begin{IEEEeqnarray*}{rCl}
      \IEEEyesnumber\label{eq:characterization_of_reduced_terms_of_order_two_1}
\anymatAX{\firstleftind}{\firstrightind}\anymatBTX{\secondleftind}{\secondrightind}=\othermatAX{\anyleftind}{\thedim}\anymatBTX{1}{\Sexchanged(\anyrightind)}.
    \end{IEEEeqnarray*}
    By Lemmata~\ref{lemma:any_two_generators} and \ref{lemma:generators_reformulation} that requires $\othermatA\in\{\anymatA,\anymatAT\}$.
    \par
    \emph{Case~1.} If $\othermat=\anymat$, then according to Lem\-ma~\ref{lemma:generators_reformulation}  the identity \eqref{eq:characterization_of_reduced_terms_of_order_two_1} demands $(\firstleftind,\firstrightind)=(\anyleftind,\thedim)$ and $(\secondleftind,\secondrightind)=(1,\anyrightind)$, in particular $(\firstrightind,\secondleftind)=(\thedim,1)$.
    \par
    \emph{Case~2.} Alternatively, if $\othermatA=\anymatAT$, then \eqref{eq:characterization_of_reduced_terms_of_order_two_1} requires $(\firstrightind,\firstleftind)=(\anyleftind,\thedim)$ and $(\secondrightind,\secondleftind)=(1,\anyrightind)$ by Lem\-ma~\ref{lemma:generators_reformulation}, which implies $(\firstleftind,\secondrightind)=(\thedim,1)$ in particular. That concludes the proof.    
\end{proof}
}
For any $\orderind\in\Sintegersp$, any $\anymat\in\thematrices$ and any $(\indj,\indi)\in \SYnumbers{\thedim}^{\Ssetmonoidalproduct 2}$ let $\Sbvfull{\anymat}{\orderind}{\indj}{\indi}$ and $\anymatPA{\orderind}$ be as in the \hyperref[main-result]{Main result}. 
It is important to note that there are non-trivial identities between the $\Sbvfull{\anymat}{\orderind}{\indj}{\indi}$ for different values of $(\anymat,\indj,\indi)$ -- however, only for $\orderind\leq 2$.
\begin{lemma}
  \label{lemma:evil_identities}
  For any $\{\anymatA,\othermat\}\subseteq\thematrices$, any $\{(\indj,\indi),(\indt,\inds)\}\subseteq \SYnumbers{\thedim}^{\Ssetmonoidalproduct 2}$ and any $\orderind\in\Sintegersp$ the identity
  \begin{IEEEeqnarray*}{rCl}
    \Sbvfull{\othermatA}{\orderind}{\indt}{\inds}=\Sbvfull{\anymatA}{\orderind}{\indj}{\indi}
  \end{IEEEeqnarray*}
  holds if and only if both $\othermatA=\anymatA$ and $(\indt,\inds)=(\indj,\indi)$ or  one of the following mutually exclusive conditions is met:
  \begin{itemize}[wide]
    \item $\orderind=1$ and $\othermatA=\anymatAT$ and $(\indt,\inds)=(\Sexchanged(\indi),\Sexchanged(\indj))$
    \item $\orderind=2$ and $\othermatA=\anymatAT$ and $(\indt,\inds)=(\thedim,\thedim)=(\indj,\indi)$.
  \end{itemize}
\end{lemma}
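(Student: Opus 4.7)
Each expression $\Sbvfull{\anymatA}{\orderind}{\indj}{\indi}$ is, by construction, a single word in the free monoid $\themonoid$ over $\thegens$, so the proposed equation is just equality of two such words and can be tested position by position. The plan is therefore to compare letter by letter, invoking Lemma~\ref{lemma:generators_reformulation} at each position to extract constraints on $(\othermat,\indt,\inds)$ relative to $(\anymat,\indj,\indi)$. For $\orderind = 1$ this is immediate: the equation $\othermatAX{\indt}{\Sexchanged(\inds)} = \anymatAX{\indj}{\Sexchanged(\indi)}$ is a direct instance of Lemma~\ref{lemma:generators_reformulation}, and its conclusion yields exactly the two cases of the claim once the index identifications are translated through $\Sexchanged$.

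For $\orderind \geq 3$ I would exploit an interior letter: both monomials carry $\othermatBTX{1}{\thedim}$ respectively $\anymatBTX{1}{\thedim}$ in position $2$. Since $\othermatBT$ and $\anymatBT$ are again elements of $\thematrices$ by Lemma~\ref{lemma:any_two_generators}, Lemma~\ref{lemma:generators_reformulation} applies and yields either $\othermatBT = \anymatBT$ or an alternative that would require $(1,\thedim) = (\thedim,1)$; the hypothesis $2 \leq \thedim$ rules the latter out. Hence $\othermat = \anymat$. The first letter then gives $\indt = \indj$, and the last letter, which is $\anymatBTX{1}{\Sexchanged(\indi)}$ or $\anymatAX{1}{\Sexchanged(\indi)}$ depending on the parity of $\orderind$, gives $\inds = \indi$. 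So only the trivial identification survives in this range.

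The substantive case is $\orderind = 2$. Comparing first letters yields $\othermatAX{\indt}{\thedim} = \anymatAX{\indj}{\thedim}$; Lemma~\ref{lemma:generators_reformulation} splits this into $\othermatA = \anymatA$ with $\indt = \indj$, or $\othermatA = \anymatAT$ with $\indt = \indj = \thedim$. In the first branch, the second-letter equation $\othermatBTX{1}{\Sexchanged(\inds)} = \anymatBTX{1}{\Sexchanged(\indi)}$ (with $\othermatBT = \anymatBT$) immediately yields $\inds = \indi$, producing the trivial identification. In the second branch, the relation $\othermatA = \anymatAT$ propagates through the table of Lemma~\ref{lemma:any_two_generators} to the identification $\othermatBT = \anymatB$; then, using $\anymatBX{\indj}{\indi} = \anymatBTX{\indi}{\indj}$, the second-letter equation rewrites as $\anymatBTX{\Sexchanged(\inds)}{1} = \anymatBTX{1}{\Sexchanged(\indi)}$. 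Because these are two entries of the same matrix $\anymatBT$ and because the $\thedim^2$ entries of any fixed matrix in $\thematrices$ are pairwise distinct elements of $\thegens$, equality forces $(\Sexchanged(\inds),1) = (1,\Sexchanged(\indi))$, i.e., $\inds = \indi = \thedim$. Combined with $\indt = \indj = \thedim$ this is precisely the exceptional $\orderind = 2$ case of the claim. The main obstacle is this last bookkeeping step: correctly tracing $\othermatA = \anymatAT$ through to $\othermatBT = \anymatB$ and reading off the resulting index constraint; once that identification is explicit, the verification reduces to repeated application of Lemma~\ref{lemma:generators_reformulation}.
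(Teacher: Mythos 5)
Your proposal is correct and follows essentially the same route as the paper: compare the two length-$\orderind$ words letter by letter, apply Lemma~\ref{lemma:generators_reformulation} (together with the bookkeeping of Lemma~\ref{lemma:any_two_generators}, e.g.\ $\othermatA=\anymatAT\Rightarrow\othermatBT=\anymatB$) at each position, and for $3\leq\orderind$ use the interior letter $\anymatBTX{1}{\thedim}$ to force $\othermatA=\anymatA$ since the alternative would need $(\thedim,1)=(1,\thedim)$, contradicting $2\leq\thedim$. The only differences are presentational (order of the cases and how the $\orderind=2$, $\othermatA=\anymatAT$ branch is phrased), not mathematical.
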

\begin{proof} The proof requires a distinction of cases.
  \par
  \emph{Case~1.} If $\orderind=1$,  the definitions and Lem\-ma~\ref{lemma:generators_reformulation} imply  that
  \begin{IEEEeqnarray*}{rCl}
    \Sbvfull{\othermat}{1}{\indt}{\inds}=\othermatAX{\indt}{\Sexchanged(\inds)}=\anymatAX{\indj}{\Sexchanged(\indi)}=\Sbvfull{\anymatA}{1}{\indj}{\indi}
  \end{IEEEeqnarray*}
  is satisfied if and only if either both  $\othermatA=\anymatA$ and $(\indt,\Sexchanged(\inds))=(\indj,\Sexchanged(\indi))$ or both  $\othermatA=\anymatAT$ and $(\indt,\Sexchanged(\inds))=(\Sexchanged(\indi),\indj)$. That proves the claim in this case.
  \par
  \emph{Case~2.} Similarly, if $\orderind=2$, then the identity
  \begin{IEEEeqnarray*}{rCl}
    \Sbvfull{\othermatA}{2}{\indt}{\inds}=\othermatAX{\indt}{\thedim}\othermatBTX{1}{\Sexchanged(\inds)}=\anymatAX{\indj}{\thedim}\anymatBTX{1}{\Sexchanged(\indi)}=\Sbvfull{\anymatA}{2}{\indj}{\indi}
  \end{IEEEeqnarray*}
holds if and only if both $\othermatAX{\indt}{\thedim}=\anymatAX{\indj}{\thedim}$ and $\othermatBTX{1}{\Sexchanged(\inds)}=\anymatBTX{1}{\Sexchanged(\indi)}$. By Lem\-ma~\ref{lemma:generators_reformulation} that is true if and only if either $\othermatA=\anymatA$ and $(\indt,\thedim)=(\indj,\thedim)$ and $(1,\Sexchanged(\inds))=(1,\Sexchanged(\indi))$ or $\othermatA=\anymatAT$ and $(\thedim,\indt)=(\indj,\thedim)$ and $(\Sexchanged(\inds),1)=(1,\Sexchanged(\indi))$.
The first of these possibilities, of course, means that $(\othermat,\indt,\inds)=(\anymat,\indj,\indi)$, the other that $\othermatA=\anymatAT$ and $(\indt,\inds)=(\thedim,\thedim)=(\indj,\indi)$. Hence, the claim is true for $\orderind=2$ as well.
  \par
\emph{Case~3.} If, next, $3\leq \orderind$, then what needs to be proved is that $\Sbvfull{\othermatA}{\orderind}{\indt}{\inds}$ can only equal $\Sbvfull{\anymatA}{\orderind}{\indj}{\indi}$ if $(\othermatA,\indt,\inds)=(\anymatA,\indj,\indi)$. The vectors $\Sbvfull{\othermatA}{\orderind}{\indt}{\inds}$ and $\Sbvfull{\anymatA}{\orderind}{\indj}{\indi}$ being equal is equivalent to the conditions that either $\orderind$ is odd and
\begin{IEEEeqnarray*}{l}
\othermatAX{\indt}{\thedim}(\othermatBTX{1}{\thedim}\othermatAX{1}{\thedim})^{\frac{\orderind-3}{2}}\othermatBTX{1}{\thedim}\othermatAX{1}{\Sexchanged(\inds)}=\anymatAX{\indj}{\thedim}(\anymatBTX{1}{\thedim}\anymatAX{1}{\thedim})^{\frac{\orderind-3}{2}}\anymatBTX{1}{\thedim}\anymatAX{1}{\Sexchanged(\indi)}\IEEEyesnumber\label{eq:evil_identities_1}
\end{IEEEeqnarray*}
or  $\orderind$ is even and 
  \begin{IEEEeqnarray*}{rCl}
\othermatAX{\indt}{\thedim}(\othermatBTX{1}{\thedim}\othermatAX{1}{\thedim})^{\frac{\orderind-4}{2}}\othermatBTX{1}{\thedim}\othermatAX{1}{\thedim}\othermatBTX{1}{\Sexchanged(\inds)}=\anymatAX{\indj}{\thedim}(\anymatBTX{1}{\thedim}\anymatAX{1}{\thedim})^{\frac{\orderind-4}{2}}\anymatBTX{1}{\thedim}\anymatAX{1}{\thedim}\anymatBTX{1}{\Sexchanged(\indi)}\IEEEyesnumber\label{eq:evil_identities_2}.
\end{IEEEeqnarray*}
Any of \eqref{eq:evil_identities_1} and \eqref{eq:evil_identities_2} requires in particular  $\othermatBTX{1}{\thedim}=\anymatBTX{1}{\thedim}$. By Lem\-ma~\ref{lemma:generators_reformulation} that is only possible if either both $\othermatA=\anymatA$ and $(1,\thedim)=(1,\thedim)$ or both $\othermatA=\anymatAT$ and $(\thedim,1)=(1,\thedim)$. As the second of these conclusions would violate the assumption that $2\leq \thedim$ it must be the first one which is true.
\par
Of course, \eqref{eq:evil_identities_1} and \eqref{eq:evil_identities_2} moreover demand, on the one hand, $\othermatAX{\indt}{\thedim}=\anymatAX{\indj}{\thedim}$ and, on the other hand, $\othermatAX{1}{\Sexchanged(\inds)}=\anymatAX{1}{\Sexchanged(\indi)}$ respectively $\othermatBTX{1}{\Sexchanged(\inds)}=\anymatBTX{1}{\Sexchanged(\indi)}$. According to Lem\-ma~\ref{lemma:generators_reformulation}, given that $\othermatA=\anymatA$ and thus $\othermatBT=\anymatBT$, these identities are only satisfied if $(\indt,\thedim)=(\indj,\thedim)$ and $(1,\Sexchanged(\inds))=(1,\Sexchanged(\indi))$. But that is only true if  $(\indt,\inds)=(\indj,\indi)$. Thus, the claim has been verified for all  $\orderind$.
\end{proof}

It will moreover be necessary to  make comparisons between those vectors with respect to $\theorder$.

\begin{lemma}
  \label{lemma:base_monomial_order}
  For any $\orderind\in\Sintegersp$, any  $\{\anymat,\othermat\}\subseteq \thematrices$ and $\{(\indj,\indi),(\indt,\inds)\}\subseteq \SYnumbers{\thedim}^{\Ssetmonoidalproduct 2}$, the inequality
  \begin{IEEEeqnarray*}{rCl}
    \Sbvfull{\othermat}{\orderind}{\indt}{\inds}&    \theorderRstrict&\Sbvfull{\anymatA}{\orderind}{\indj}{\indi}
  \end{IEEEeqnarray*}
  holds if and only if either both $\othermatA\in\{\anymatB,\anymatBT\}$ and $\anymatA\in\{\theuniA,\theuniAT\}$ or one of the following mutually exclusive conditions is met.
  
  \begin{itemize}[wide]
  \item $\orderind=1$ and, alternatively, 
    \begin{itemize}
    \item  $\othermatA=\anymat$ and $\anymat\in \{\theuniA,\theuniB\}$ and $(\indt,\indi)\lexorderRstrict(\indj,\inds)$
    \item $\othermatA=\anymat$ and $\anymat\in \{\theuniAT,\theuniBT\}$ and $(\indi,\indt)\lexorderRstrict(\inds,\indj)$
    \item $\othermatA=\anymatAT$ and $\anymat\in\{\theuniA,\theuniB\}$ and $(\Sexchanged(\indj),\indt)<(\inds,\Sexchanged(\indi))$
      \item  $\othermatA=\anymatAT$ and $\anymat\in\{\theuniAT,\theuniBT\}$ and $(\indt,\Sexchanged(\indj))<(\Sexchanged(\indi),\inds)$.
    \end{itemize}
  \item $\orderind=2$ and, alternatively,
    \begin{itemize}
  \item $\othermat=\anymat$ and $(\indt,\indi)<(\indj,\inds)$.    
  \item $\othermatA=\anymatAT$ and  $\anymat\in\{\theuni,\theuni\SskewstarP\}$ and $\indj=\thedim$ and either $\indt\neq\thedim$ or   $\indt=\thedim\neq\indi$.
  \item $\othermatA=\anymatAT$ and $\anymat\in\{\theuni\StransposeP,\theuni\SskewdaggerP\}$ and  either $\indt\neq\thedim$ or  $\indt=\inds=\thedim=\indj\neq \indi$.
    \end{itemize}
  \item $3\leq \orderind$ and, alternatively,
    \begin{itemize}
  \item $\othermat=\anymat$ and $(\indt,\indi)<(\indj,\inds)$.    
  \item $\othermatA=\anymatAT$ and $\anymat\in\{\theuni,\theuni\SskewstarP\}$ and $\indj= \thedim$.
  \item $\othermatA=\anymatAT$ and $\anymat\in\{\theuni\StransposeP,\theuni\SskewdaggerP\}$ and $\indt\neq \thedim$.
    \end{itemize}
  \end{itemize}
  In particular, $\Sbvfull{\anymatA}{\orderind}{\indj}{\inds}    \theorderRstrict\Sbvfull{\anymatA}{\orderind}{\indj}{\indi}$ if and only if  $\indi<\inds$ and, likewise, $\Sbvfull{\anymatA}{\orderind}{\indt}{\indi}    \theorderRstrict\Sbvfull{\anymatA}{\orderind}{\indj}{\indi}$ if and only if  $\indj<\indt$.
  \end{lemma}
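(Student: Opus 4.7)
The plan is to exploit that $\Sbvfull{\othermat}{\orderind}{\indt}{\inds}$ and $\Sbvfull{\anymatA}{\orderind}{\indj}{\indi}$ are monomials in $\thegens$ of identical degree $\orderind$; hence the degreewise lexicographic order $\theorder$ reduces to comparing them position by position, with Lem\-ma~\ref{lemma:monomial_order_reformulation} deciding each single-letter comparison, Lem\-ma~\ref{lemma:generators_reformulation} detecting equality of letters, and Lem\-ma~\ref{lemma:skewdagger_comparisons} handling the $\anymatBT$-type letters. The proof is then a methodical case analysis on $\orderind$.

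For $\orderind=1$, both sides are the single generators $\othermatAX{\indt}{\Sexchanged(\inds)}$ and $\anymatAX{\indj}{\Sexchanged(\indi)}$, so the claim becomes Lem\-ma~\ref{lemma:monomial_order_reformulation} after substituting $(\inds,\indi)\mapsto(\Sexchanged(\inds),\Sexchanged(\indi))$. Exactly as in the proof of Lem\-ma~\ref{lemma:monomial_order_reformulation}, I would use that applying $\Sexchanged$ simultaneously to both coordinates is an anti\-/au\-to\-mor\-phism of the lexicographic order on $\SYnumbers{\thedim}^{\Ssetmonoidalproduct 2}$ to rewrite its four sub-conditions into the form stated in the present lemma.

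For $\orderind\geq 2$, I would first compare the initial letters $\othermatAX{\indt}{\thedim}$ and $\anymatAX{\indj}{\thedim}$. If $\othermatA\in\{\anymatB,\anymatBT\}$ and $\anymatA\in\{\theuniA,\theuniAT\}$, then the former is black while the latter is white, so $\theorder$ decides the comparison unconditionally, accounting for the first listed bullet. Otherwise Lem\-ma~\ref{lemma:generators_reformulation} tells me that the initial letters coincide precisely when $\othermatA=\anymatA$ and $\indt=\indj$, or when $\othermatA=\anymatAT$ and $\indt=\indj=\thedim$; if instead they differ, Lem\-ma~\ref{lemma:monomial_order_reformulation} settles the comparison at once.

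When the initial letters coincide, I would continue with the remaining letters. If $\othermat=\anymat$ and $\indt=\indj$, all middle letters match as well, so the comparison hinges on the last letter, which by Lem\-ma\-ta~\ref{lemma:monomial_order_reformulation} and~\ref{lemma:skewdagger_comparisons} reduces to $\indi<\inds$; combined with the first-letter option $\indt<\indj$ this assembles into $(\indt,\indi)<(\indj,\inds)$. If instead $\othermatA=\anymatAT$ and $\indt=\indj=\thedim$, a short calculation from the definitions gives $\othermatBTX{1}{\thedim}=\anymatBTX{\thedim}{1}$, so Lem\-ma~\ref{lemma:monomial_order_reformulation} decides the second letter purely according to the family of $\anymat$: for $\orderind\geq 3$ this one test already settles everything and reproduces the two stated subconditions, whereas for $\orderind=2$ the second letter is also the last letter and additionally involves $\Sexchanged(\inds)$ and $\Sexchanged(\indi)$, producing the finer distinctions on whether $\inds$ or $\indi$ equals $\thedim$ that the $\orderind=2$ portion of the statement records. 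The final two addenda follow by specialising to $\othermatA=\anymatA$. The main obstacle is purely bookkeeping: stitching the several first-letter sub-cases together with the continuation sub-cases into exactly the piecewise form claimed; no tool beyond the three cited lemmata is required.
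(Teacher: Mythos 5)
You are correct, and your argument is essentially the paper's own: after disposing of the colour-mismatch case via Lemma~\ref{lemma:monomial_order_reformulation}, both proofs reduce the comparison of the two equal-degree monomials to letterwise comparisons governed by Lemmata~\ref{lemma:generators_reformulation}, \ref{lemma:monomial_order_reformulation} and \ref{lemma:skewdagger_comparisons}, split according to the order parameter and according to whether the second matrix equals the first or its transpose. The only (cosmetic) difference is that the paper first rewrites the whole words by Lemma~\ref{lemma:skewdagger_comparisons} before comparing, whereas you compare the original letters in place and invoke that lemma where a skew-dagger letter occurs.
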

  \begin{proof}
    \newcommand{\leftdummyindex}{b}
    \newcommand{\rightdummyindex}{a}
    \newcommand{\leftrest}{q}
    \newcommand{\rightrest}{p}    
    No matter the value of $\orderind$, there exist $\{\leftdummyindex,\rightdummyindex\}\subseteq \SYnumbers{\thedim}$ and $\{\leftrest,\rightrest\}\subseteq \themonoid$ such that $\Sbvfull{\othermat}{\orderind}{\indt}{\inds}=\othermatAX{\indt}{\leftdummyindex}\leftrest$ and $\Sbvfull{\anymatA}{\orderind}{\indj}{\indi} =\anymatAX{\indj}{\rightdummyindex}\rightrest$. If $\othermat\in\{\anymatB,\anymatBT\}$, then already $\othermatAX{\indt}{\leftdummyindex}\theorderRstrict\anymatAX{\indj}{\rightdummyindex}$ if and only if $\anymatA\in\{\theuniA,\theuniAT\}$ by Lem\-ma~\ref{lemma:monomial_order_reformulation} and thus, as claimed,  $\Sbvfull{\othermat}{\orderind}{\indt}{\inds}\theorderRstrict\Sbvfull{\anymatA}{\orderind}{\indj}{\indi}$ if and only if $\anymatA\in\{\theuniA,\theuniAT\}$
    since $\theorder$ was extended (degreewise) lexicographically. For that reason it can be assumed  in the following that $\othermatA\notin\{\anymatB,\anymatBT\}$, which by Lem\-ma~\ref{lemma:any_two_generators} means that $\othermatA\in\{\anymatA,\anymatAT\}$. It is necessary to distinguish both between different values of $\orderind$ and between whether $\othermatA$ is $\anymatA$ or $\anymatAT$.
    \par 
    \emph{Case~1.} If $\orderind=1$, then by Lem\-ma~\ref{lemma:monomial_order_reformulation} the inequality
    \begin{IEEEeqnarray*}{rCl}
     \Sbvfull{\othermat}{1}{\indt}{\inds}=\othermatAX{\indt}{\Sexchanged(\inds)}&\theorderRstrict&\anymatAX{\indj}{\Sexchanged(\indi)}=\Sbvfull{\anymatA}{1}{\indj}{\indi} 
    \end{IEEEeqnarray*}
    is true  if and only if either $\othermatA=\anymat$ and $\anymat\in \{\theuniA,\theuniB\}$ and $(\indt,\Sexchanged(\inds))\lexorderRstrict(\indj,\Sexchanged(\indi))$ or $\othermatA=\anymat$ and $\anymat\in \{\theuniAT,\theuniBT\}$ and $(\Sexchanged(\inds),\indt)\lexorderRstrict(\Sexchanged(\indi),\indj)$ or $\othermatA=\anymatAT$ and $\anymat\in\{\theuniA,\theuniB\}$ and $(\Sexchanged(\inds),\indt)<(\indj,\Sexchanged(\indi))$ or $\othermatA=\anymatAT$ and $\anymat\in\{\theuniAT,\theuniBT\}$ and $(\indt,\Sexchanged(\inds))<(\Sexchanged(\indi),\indj)$. Since $\Sexchanged$ reverses the order of $\SYnumbers{\thedim}$ that proves the claim in this case.
    \par
    \emph{Case~2.} For $\orderind=2$, the inequality $\Sbvfull{\othermat}{2}{\indt}{\inds}=\othermatAX{\indt}{\thedim}\othermatBTX{1}{\Sexchanged(\inds)}\theorderRstrict\anymatAX{\indj}{\thedim}\anymatBTX{1}{\Sexchanged(\indi)}=\Sbvfull{\anymatA}{2}{\indj}{\indi}$ is equivalent to 
   \begin{IEEEeqnarray*}{rCl}
\othermatAX{\indt}{\thedim}\othermatAX{\Sexchanged(\inds)}{1}\theorderRstrict\anymatAX{\indj}{\thedim}\anymatAX{\Sexchanged(\indi)}{1}\IEEEyesnumber\label{eq:base_monomial_order_1}
        \end{IEEEeqnarray*}
        by Lem\-ma~\ref{lemma:skewdagger_comparisons} since  $\othermatA\in\{\anymatA,\anymatAT\}$.
   It is necessary to distinguish further whether $\othermatA$ is $\anymatA$ and $\anymatAT$ and between different values of $\anymat$.
      \par
      \emph{Case~2.1.} If $\othermat=\anymatA$, then by Lem\-ma~\ref{lemma:generators_reformulation} and by the addendum to Lem\-ma~\ref{lemma:monomial_order_reformulation} condition \eqref{eq:base_monomial_order_1} is equivalent to the statement that either  $\indt<\indj$ or both $\indt=\indj$ and $\Sexchanged(\inds)<\Sexchanged(\indi)$, which is to say $\indi<\inds$. In other words, \eqref{eq:base_monomial_order_1} is equivalent to $(\indt,\indi)\lexorderRstrict(\indj,\inds)$ if $\othermatA=\anymatA$, as asserted.
   \par
   \emph{Case~2.2.} If, next, $\othermatA=\anymatAT$ and $\anymat\in\{\theuniA,\theuniB\}$, then Lemmata~\ref{lemma:generators_reformulation} and \ref{lemma:monomial_order_reformulation} imply that \eqref{eq:base_monomial_order_1} holds if and only if either $(\thedim,\indt)\lexorderRstrict(\indj,\thedim)$  or both $(\thedim,\indt)=(\indj,\thedim)$ and $(1,\Sexchanged(\inds))\lexorderRstrict(\Sexchanged(\indi),1)$. Those conditions are equivalent to either $\indt<\thedim=\indj$ or both $\indt=\thedim=\indj$ and  $1<\Sexchanged(\indi)$, i.e., $\indi<\thedim$, being true. In particular, $\indj=\thedim$ is required for \eqref{eq:base_monomial_order_1} to hold. And, if so, then \eqref{eq:base_monomial_order_1} is valid if and only if either $\indt\neq \thedim$ or both $\indt=\thedim$ and $\indi\neq\thedim$.
   \par 
   \emph{Case~2.3.} Finally, if $\othermatA=\anymatAT$ and $\anymat\in\{\theuniAT,\theuniBT\}$, then by  Lemmata~\ref{lemma:generators_reformulation} and \ref{lemma:monomial_order_reformulation} the inequality \eqref{eq:base_monomial_order_1} is satisfied  if and only if either $(\indt,\thedim)\lexorderRstrict(\thedim,\indj)$  or both $(\indt,\thedim)=(\thedim,\indj)$ and $(\Sexchanged(\inds),1)\lexorderRstrict(1,\Sexchanged(\indi))$. This is true if and only if  either $\indt<\thedim$ or both $\indt=\thedim=\indj$ and  $\Sexchanged(\inds)=1<\Sexchanged(\indi)$, i.e., either $\indt\neq \thedim$ or $\indt=\inds=\thedim=\indj\neq\indi$. That is what was claimed in this case.
   \par
   \emph{Case~3.} Lastly, for $3\leq \orderind$, the definitions imply that $\Sbvfull{\othermat}{\orderind}{\indt}{\inds}\theorderRstrict\Sbvfull{\anymatA}{\orderind}{\indj}{\indi}$ is true if and only if either $\orderind$ is odd and
\begin{IEEEeqnarray*}{l}
\othermatAX{\indt}{\thedim}(\othermatBTX{1}{\thedim}\othermatAX{1}{\thedim})^{\frac{\orderind-3}{2}}\othermatBTX{1}{\thedim}\othermatAX{1}{\Sexchanged(\inds)}\theorderRstrict\anymatAX{\indj}{\thedim}(\anymatBTX{1}{\thedim}\anymatAX{1}{\thedim})^{\frac{\orderind-3}{2}}\anymatBTX{1}{\thedim}\anymatAX{1}{\Sexchanged(\indi)}
\end{IEEEeqnarray*}
or  $\orderind$ is even and 
  \begin{IEEEeqnarray*}{rCl}
\othermatAX{\indt}{\thedim}(\othermatBTX{1}{\thedim}\othermatAX{1}{\thedim})^{\frac{\orderind-2}{2}}\othermatBTX{1}{\Sexchanged(\inds)}\theorderRstrict\anymatAX{\indj}{\thedim}(\anymatBTX{1}{\thedim}\anymatAX{1}{\thedim})^{\frac{\orderind-2}{2}}\anymatBTX{1}{\Sexchanged(\indi)}.
\end{IEEEeqnarray*}
Since $\othermatA\in\{\anymatA, \anymatAT\}$ these conditions can be simplified with the help of Lemma~\ref{lemma:skewdagger_comparisons} into the form that, if $\orderind$ is odd,
\begin{IEEEeqnarray*}{l}
\othermatAX{\indt}{\thedim}\othermatAX{\thedim}{1}(\othermatAX{1}{\thedim}\othermatAX{\thedim}{1})^{\frac{\orderind-3}{2}}\othermatAX{1}{\Sexchanged(\inds)}\theorderRstrict\anymatAX{\indj}{\thedim}\anymatAX{\thedim}{1}(\anymatAX{1}{\thedim}\anymatAX{\thedim}{1})^{\frac{\orderind-3}{2}}\anymatAX{1}{\Sexchanged(\indi)}\IEEEeqnarraynumspace\IEEEyesnumber\label{eq:base_monomial_order_2}
\end{IEEEeqnarray*}
and, if $\orderind$ is even,
  \begin{IEEEeqnarray*}{rCl}
\othermatAX{\indt}{\thedim}\othermatAX{\thedim}{1}(\othermatAX{1}{\thedim}\othermatAX{\thedim}{1})^{\frac{\orderind-4}{2}}\othermatAX{1}{\thedim}\othermatAX{\Sexchanged(\inds)}{1}\theorderRstrict\anymatAX{\indj}{\thedim}\anymatAX{\thedim}{1}(\anymatAX{1}{\thedim}\anymatAX{\thedim}{1})^{\frac{\orderind-4}{2}}\anymatAX{1}{\thedim}\anymatAX{\Sexchanged(\indi)}{1}.\IEEEeqnarraynumspace\IEEEyesnumber\label{eq:base_monomial_order_3}
\end{IEEEeqnarray*}
Again, the argument depends on the value of $(\othermat,\anymat)$.
\par
\emph{Case~3.1.} If $\othermatA=\anymatA$, then also $\othermatAX{\thedim}{1}(\othermatAX{\thedim}{1}\othermatAX{1}{\thedim})^{\frac{\orderind-3}{2}}=\anymatAX{\thedim}{1}(\anymatAX{\thedim}{1}\anymatAX{1}{\thedim})^{\frac{\orderind-3}{2}}$ if $\orderind$ is odd and, likewise, $\othermatAX{\thedim}{1}(\othermatAX{1}{\thedim}\othermatAX{\thedim}{1})^{\frac{\orderind-4}{2}}\othermatAX{1}{\thedim}=\anymatAX{\thedim}{1}(\anymatAX{1}{\thedim}\anymatAX{\thedim}{1})^{\frac{\orderind-4}{2}}\anymatAX{1}{\thedim}$ if $\orderind$ is even. Hence, \eqref{eq:base_monomial_order_2} is true if and only if $\anymatAX{\indt}{\thedim}\anymatAX{1}{\Sexchanged(\inds)}\theorderRstrict\anymatAX{\indj}{\thedim}\anymatAX{1}{\Sexchanged(\indi)}$ and \eqref{eq:base_monomial_order_3}  if and only if $\anymatAX{\indt}{\thedim}\anymatAX{\Sexchanged(\inds)}{1}\theorderRstrict\anymatAX{\indj}{\thedim}\anymatAX{\Sexchanged(\indi)}{1}$. By Lemma~\ref{lemma:generators_reformulation} and the addendum to Lemma~\ref{lemma:monomial_order_reformulation} that is true if and only if either $\indt<\indj$ or both $\indt=\indj$ and $\Sexchanged(\inds)<\Sexchanged(\indi)$, regardless of what parity $\orderind$ has. That means $\Sbvfull{\othermat}{\orderind}{\indt}{\inds}\theorderRstrict\Sbvfull{\anymatA}{\orderind}{\indj}{\indi}$ if and only if $(\indt,\indi)\lexorderRstrict(\indj,\inds)$, as was claimed.
\par
\emph{Case~3.2.} If $\othermatA=\anymatAT$ and $\anymat\in\{\theuniA,\theuniB\}$, then the assumption $2\leq \thedim$ requires  $(1,\thedim)\lexorderRstrict(\thedim,1)$ and thus  $\othermatAX{\thedim}{1}\theorderRstrict\anymatAX{\thedim}{1}$ by Lemma~\ref{lemma:monomial_order_reformulation}. In consequence, each of \eqref{eq:base_monomial_order_2} and \eqref{eq:base_monomial_order_3} is satisfied if and only if $\othermatAX{\indt}{\thedim}\theorderR\anymatAX{\indj}{\thedim}$. According to  Lemmata~\ref{lemma:generators_reformulation} and \ref{lemma:monomial_order_reformulation} that is equivalent to the condition that  $(\thedim,\indt)\lexorderR(\indj,\thedim)$. Because the latter is met if and only if $\indj=\thedim$ that proves the claim in this case.
\par
\emph{Case~3.3.} In the final case that $\othermatA=\anymatAT$ and $\anymat\in\{\theuniAT,\theuniBT\}$ the fact that  $(\thedim,1)\lexorderRstrict(1,\thedim)$ is false  implies by Lemma~\ref{lemma:monomial_order_reformulation} that also $\othermatAX{\thedim}{1}\theorderRstrict\anymatAX{\thedim}{1}$ is false. Hence, any one of \eqref{eq:base_monomial_order_2} and \eqref{eq:base_monomial_order_3} is equivalent to  $\othermatAX{\indt}{\thedim}\theorderRstrict\anymatAX{\indj}{\thedim}$. By Lemma~\ref{lemma:monomial_order_reformulation} that is true if and only if $(\indt,\thedim)\lexorderRstrict(\thedim,\indj)$, i.e., if $\indt\neq \thedim$. As this is what was claimed in this case that concludes the proof.
  \end{proof}

\par

The central identity for the recursion will be the following.
\begin{lemma}
  \label{lemma:base_recursion}
  For any $\orderind\in\Sintegersp$,  any $\anymat\in \thematrices$ and any $(\indj,\indi)\in \SYnumbers{\thedim}^{\Ssetmonoidalproduct 2}$, if   $2\leq \orderind$, then
  \begin{IEEEeqnarray*}{rCl}
\Sbvfull{\anymatA}{\orderind}{\indj}{\indi}&=  & \Sbvfull{\anymatA}{\orderind-1}{\indj}{1}\cdot \anymatPAX{\orderind}{1}{\Sexchanged(\indi)}    \IEEEyesnumber\label{eq:base_recursion_01}
\end{IEEEeqnarray*}
and, if $3\leq \orderind$, then
  \begin{IEEEeqnarray*}{rCl}
\Sbvfull{\anymatA}{\orderind-1}{\indj}{\indi}&=&\Sbvfull{\anymatA}{\orderind-2}{\indj}{1}\cdot\anymatPBTX{\orderind}{1}{\Sexchanged(\indi)},\IEEEyesnumber\label{eq:base_recursion_02}
\end{IEEEeqnarray*}
where $\cdot$ is the multiplication of $\thefreealg$.
\end{lemma}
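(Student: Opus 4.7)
The plan is direct verification by unfolding the closed-form definition of $\Sbvfull{\anymatA}{\orderind}{\indj}{\indi}$ stated in the \hyperref[main-result]{Main result} and splitting on the parity of $\orderind$. Both asserted identities simply say that the trailing factor of the product defining $\Sbvfull{\anymatA}{\orderind}{\indj}{\indi}$ separates off, leaving behind the analogous product of length one shorter with inner column index frozen at $1$. The sole numerical ingredient that matters is $\Sexchanged(1)=\thedim$, which is exactly what makes the penultimate factor of $\Sbvfull{\anymatA}{\orderind-1}{\indj}{1}$ fit seamlessly with the structure of $\Sbvfull{\anymatA}{\orderind}{\indj}{\indi}$.

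First I would establish \eqref{eq:base_recursion_01}. For $\orderind=2$, both sides collapse to $\anymatAX{\indj}{\thedim}\anymatBTX{1}{\Sexchanged(\indi)}$ using $\Sbvfull{\anymatA}{1}{\indj}{1}=\anymatAX{\indj}{\Sexchanged(1)}=\anymatAX{\indj}{\thedim}$ together with $\anymatPAX{2}{1}{\Sexchanged(\indi)}=\anymatBTX{1}{\Sexchanged(\indi)}$. For $\orderind\geq 3$, the closed-form expression for $\Sbvfull{\anymatA}{\orderind-1}{\indj}{1}$, which has parity opposite to $\orderind$, ends (again via $\Sexchanged(1)=\thedim$) in a factor $\anymatAX{1}{\thedim}$ or $\anymatBTX{1}{\thedim}$ that can be absorbed into the power of $\anymatBTX{1}{\thedim}\anymatAX{1}{\thedim}$ appearing in the formula for $\Sbvfull{\anymatA}{\orderind}{\indj}{\indi}$; multiplying on the right by $\anymatPAX{\orderind}{1}{\Sexchanged(\indi)}$---which is $\anymatBTX{1}{\Sexchanged(\indi)}$ for $\orderind$ even and $\anymatAX{1}{\Sexchanged(\indi)}$ for $\orderind$ odd---then precisely reproduces $\Sbvfull{\anymatA}{\orderind}{\indj}{\indi}$.

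For \eqref{eq:base_recursion_02}, I would substitute $\orderind'\Seqpd\orderind-1\geq 2$, reducing the claim to $\Sbvfull{\anymatA}{\orderind'}{\indj}{\indi}=\Sbvfull{\anymatA}{\orderind'-1}{\indj}{1}\cdot\anymatPBTX{\orderind'+1}{1}{\Sexchanged(\indi)}$. This is formally the same statement as \eqref{eq:base_recursion_01} provided $\anymatPBTX{\orderind'+1}{1}{\Sexchanged(\indi)}=\anymatPAX{\orderind'}{1}{\Sexchanged(\indi)}$, which in turn follows from the elementary observation that applying the $\anymatBT$-construction twice to any matrix returns the original. Under the parity convention of the \hyperref[main-result]{Main result}, this observation makes $\anymatPBTX{\orderind'+1}{1}{\Sexchanged(\indi)}$ equal $\anymatBTX{1}{\Sexchanged(\indi)}$ when $\orderind'+1$ is odd (i.e.\ $\orderind'$ even) and $\anymatAX{1}{\Sexchanged(\indi)}$ when $\orderind'+1$ is even, matching $\anymatPAX{\orderind'}{1}{\Sexchanged(\indi)}$ in both parities.

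The main obstacle is purely bookkeeping: tracking the three parity cases in tandem with the involution $\Sexchanged$ and the parity-dependent symbols $\anymatPA{\orderind}$ and $\anymatPBT{\orderind}$. Since the closed form in the \hyperref[main-result]{Main result} was engineered precisely to make \eqref{eq:base_recursion_01} manifest, no substantive mathematical difficulty arises beyond careful unfolding of the definitions.
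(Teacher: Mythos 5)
Your proposal is correct and follows essentially the same route as the paper: identity \eqref{eq:base_recursion_01} is verified by unfolding the closed form of $\Sbvfull{\anymatA}{\orderind}{\indj}{\indi}$ by parity (with the $\orderind=2$ base case and $\Sexchanged(1)=\thedim$ doing the work), and \eqref{eq:base_recursion_02} is reduced to \eqref{eq:base_recursion_01} via the observation $\anymatPBTX{\orderind}{1}{\Sexchanged(\indi)}=\anymatPAX{\orderind-1}{1}{\Sexchanged(\indi)}$, exactly as in the paper (which merely states this reduction first and then does the three-case unfolding). No gaps.
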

\begin{proof}
  Because $\anymatPBTX{\orderind}{1}{\Sexchanged(\indi)}=\anymatPAX{\orderind-1}{1}{\Sexchanged(\indi)}$ identity \eqref{eq:base_recursion_02} follows from \eqref{eq:base_recursion_01}, which is thus the only one that needs proving.
  \par  
  \emph{Case~1.}  If $\orderind=2$, then by definition
  \begin{IEEEeqnarray*}{rClCrClCrCl}
\Sbvfull{\anymatA}{2}{\indj}{\indi}&= & \anymatAX{\indj}{\thedim}\anymatBTX{1}{\Sexchanged(\indi)}, &\hspace{2em}&\Sbvfull{\anymatA}{1}{\indj}{1}&=&\anymatAX{\indj}{\Sexchanged(1)}=\anymatAX{\indj}{\thedim},&\hspace{2em}& \anymatPAX{\orderind}{1}{\Sexchanged(\indi)}&=&\anymatBTX{1}{\Sexchanged(\indi)},
\end{IEEEeqnarray*}
where the last identity is due to $\orderind$ being even.
  That proves \eqref{eq:base_recursion_01} in this case.
  \par
  \emph{Case~3.} If $\orderind$ is odd with $3\leq \orderind$, then $\anymatPAX{\orderind}{1}{\Sexchanged(\indi)}=\anymatAX{1}{\Sexchanged(\indi)}$ as well as
  \begin{IEEEeqnarray*}{rCl}
    \Sbvfull{\anymatA}{\orderind}{\indj}{\indi}&=&\anymatX{\indj}{\thedim}(\anymatX{1}{\thedim}\SskewdaggerP\anymatX{1}{\thedim})^{\frac{\orderind-3}{2}}\anymatX{1}{\thedim}\SskewdaggerP\anymatX{1}{\Sexchanged(\indi)},\\
    \Sbvfull{\anymatA}{\orderind-1}{\indj}{1}=\anymatX{\indj}{\thedim}(\anymatX{1}{\thedim}\SskewdaggerP\anymatX{1}{\thedim})^{\frac{(\orderind-1)-2}{2}} \anymatX{1}{\Sexchanged(1)}\SskewdaggerP&=&\anymatX{\indj}{\thedim}(\anymatX{1}{\thedim}\SskewdaggerP\anymatX{1}{\thedim})^{\frac{\orderind-3}{2}} \anymatX{1}{\thedim}\SskewdaggerP.
  \end{IEEEeqnarray*}  
  Comparing the polynomials now proves \eqref{eq:base_recursion_01}.
  \par
  \emph{Case~3.} Finally, if $\orderind$ is even and $4\leq \orderind$, then it was defined that
  \begin{IEEEeqnarray*}{rCl}
\Sbvfull{\anymatA}{\orderind}{\indj}{\indi}=\anymatX{\indj}{\thedim}(\anymatX{1}{\thedim}\SskewdaggerP\anymatX{1}{\thedim})^{\frac{\orderind-2}{2}}\anymatX{1}{\Sexchanged(\indi)}\SskewdaggerP&=&\anymatX{\indj}{\thedim}(\anymatX{1}{\thedim}\SskewdaggerP\anymatX{1}{\thedim})^{\frac{\orderind-4}{2}}\anymatX{1}{\thedim}\SskewdaggerP\anymatX{1}{\thedim} \anymatX{1}{\Sexchanged(\indi)}\SskewdaggerP,\\
    \Sbvfull{\anymatA}{\orderind-1}{\indj}{1}=\anymatX{\indj}{\thedim}(\anymatX{1}{\thedim}\SskewdaggerP\anymatX{1}{\thedim})^{\frac{(\orderind-1)-3}{2}}\anymatX{1}{\thedim}\SskewdaggerP\anymatX{1}{\Sexchanged(1)}&=&\anymatX{\indj}{\thedim}(\anymatX{1}{\thedim}\SskewdaggerP\anymatX{1}{\thedim})^{\frac{\orderind-4}{2}}\anymatX{1}{\thedim}\SskewdaggerP\anymatX{1}{\thedim}
  \end{IEEEeqnarray*}
  since $\orderind-1$ is odd. Given that by definition $\anymatPAX{\orderind}{1}{\Sexchanged(\indi)}=\anymatBTX{1}{\Sexchanged(\indi)}$ the identity \eqref{eq:base_recursion_01} is satisfied in this instance as well.
\end{proof}

\subsection{Chains, combinatorial differentials, cycles, splittings} When computing Anick's resolution, the first order of business is to determine the sets of chains and the combinatorial differentials, cycles and splittings.
\subsubsection{Chains}
  For any $\orderind\in\Sintegersnn$  let $\theprechains{\orderind}$ respectively $\thechains{\orderind}$ denote the sets of $\orderind$\-/pre\-/chains and -chains with respect to $\theideal$ and $\theorder$.
{
  \newcommand{\chaingen}{e}
  \newcommand{\chaingenX}[1]{\chaingen_{#1}}
  \newcommand{\chainends}{\beta}
  \newcommand{\chainendsX}[1]{\chainends_{#1}}
  \newcommand{\chainstarts}{\alpha}
  \newcommand{\chainstartsX}[1]{\chainstarts_{#1}}
  \newcommand{\posind}{s}
  \newcommand{\anymonomial}{c}
  \begin{lemma}
    \label{lemma:basis_vectors_reformulation}
    For any $\orderind\in\Sintegersp$ with $2\leq \orderind$, any $\anymat\in\thematrices$ and any $(\indj,\indi)\in \SYnumbers{\thedim}^{\Ssetmonoidalproduct 2}$  any $\anymonomial\in\themonoid$ is equal to $\Sbvfull{\anymat}{\orderind}{\indj}{\indi}$ if and only if there exists $\chaingen\in\thegens^{\Ssetmonoidalproduct\orderind}$  such that $\anymonomial=\chaingenX{1}\chaingenX{2}\hdots\chaingenX{\orderind}$ and such that for any $\posind\in\SYnumbers{\orderind-1}$,
    \begin{IEEEeqnarray*}{rCl}
    \chaingenX{\posind}\chaingenX{\posind+1}&=&
    \begin{cases}
      \anymatPAX{\posind}{\indj}{\thedim}\anymatPBTX{\posind}{1}{\thedim}  &\Scase\posind=1\\
      \anymatPAX{\posind}{1}{\thedim}\anymatPBTX{\posind}{1}{\thedim} & \Scase 2\leq\posind\leq\orderind-2\\
      \anymatPAX{\posind}{1}{\thedim}\anymatPBTX{\posind}{1}{\Sexchanged(\indi)} & \Scase \posind=\orderind-1.
    \end{cases}      
\IEEEyesnumber\label{eq:basis_vectors_reformulation}      
    \end{IEEEeqnarray*}
  \end{lemma}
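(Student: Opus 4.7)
The plan is direct and purely combinatorial. Since $\thefreealg$ is free over $\thegens$, every monomial in $\themonoid$ admits a unique factorization into generators, so a degree-$\orderind$ monomial $\anymonomial$ is determined by its factor tuple $(\chaingenX{1},\ldots,\chaingenX{\orderind})\in\thegens^{\Ssetmonoidalproduct\orderind}$. The claim then reduces to showing that the factor tuple of $\Sbvfull{\anymat}{\orderind}{\indj}{\indi}$ is the unique such tuple whose consecutive pairs satisfy \eqref{eq:basis_vectors_reformulation}.

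The first step is to read off the factor tuple of $\Sbvfull{\anymat}{\orderind}{\indj}{\indi}$ from its definition in the Main result, splitting on the parity of $\orderind$. In either parity class the word is an alternating sequence with $\chaingenX{1}=\anymatAX{\indj}{\thedim}$ carrying the row index $\indj$, every interior letter being $\anymatAX{1}{\thedim}$ or $\anymatBTX{1}{\thedim}$, and the terminal letter $\chaingenX{\orderind}$ carrying the column index $\Sexchanged(\indi)$ as $\anymatAX{1}{\Sexchanged(\indi)}$ when $\orderind$ is odd and $\anymatBTX{1}{\Sexchanged(\indi)}$ when $\orderind$ is even. The parity of $\posind$ thus decides whether $\chaingenX{\posind}$ is of type $\anymatAX{\cdot}{\cdot}$ or of type $\anymatBTX{\cdot}{\cdot}$, matching the definitions of $\anymatPAX{\posind}{\cdot}{\cdot}$ and $\anymatPBTX{\posind}{\cdot}{\cdot}$.

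With this explicit tuple in hand, the forward direction of the lemma is verified by computing $\chaingenX{\posind}\chaingenX{\posind+1}$ for each $\posind\in\SYnumbers{\orderind-1}$ and matching against the three cases of \eqref{eq:basis_vectors_reformulation} according as $\posind=1$ (the row index $\indj$ appears), $2\leq\posind\leq\orderind-2$ (no boundary index), or $\posind=\orderind-1$ (the column index $\Sexchanged(\indi)$ appears). For the converse I would induct on $\posind$: the condition at $\posind=1$ pins down $\chaingenX{1}$ and $\chaingenX{2}$ uniquely by freeness of $\themonoid$, and at each subsequent step the pair condition, together with the already-determined $\chaingenX{\posind}$, identifies $\chaingenX{\posind+1}$ uniquely. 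The resulting tuple therefore coincides with the one just extracted from $\Sbvfull{\anymat}{\orderind}{\indj}{\indi}$, and its product equals that basis vector by construction.

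The only subtlety is the small-$\orderind$ boundary: for $\orderind=2$ the first and third cases of \eqref{eq:basis_vectors_reformulation} collapse onto the single pair $\chaingenX{1}\chaingenX{2}$ and must jointly deliver $\anymatAX{\indj}{\thedim}\anymatBTX{1}{\Sexchanged(\indi)}$; for $\orderind=3$ the middle case is vacuous. Both are handled by direct inspection against the explicit forms $\anymatAX{\indj}{\thedim}\anymatBTX{1}{\Sexchanged(\indi)}$ and $\anymatAX{\indj}{\thedim}\anymatBTX{1}{\thedim}\anymatAX{1}{\Sexchanged(\indi)}$ from the Main result. No machinery beyond the freeness of $\themonoid$ over $\thegens$ is required.
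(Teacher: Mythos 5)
Your proof is correct and takes essentially the same route as the paper, whose entire proof is the one-line remark that the claim "follows from the definition by distinguishing between even and odd $\ell$"; your explicit letter-by-letter reading of the basis vector together with unique factorization in the free monoid is exactly that argument spelled out, including the converse. Your treatment of the boundary case $\ell=2$, where the first and last cases of the displayed condition overlap and must be read as jointly prescribing the row index $j$ and the column index $\overline{\imath}$ on the single pair, is the interpretation under which the lemma is true, and the paper's proof does not address this point at all.
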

  \begin{proof}
    Follows from the definition by distinguishing between even and odd $\orderind$.
  \end{proof}

  \begin{proposition}
    \label{proposition:chains}
  For any $\orderind\in\Sintegersp$,
  \begin{IEEEeqnarray*}{C}    \theprechains{\orderind}=\thechains{\orderind}=\{\Sbvfull{\anymat}{\orderind}{\indj}{\indi}\Ssetbuilder\anymat\in\thematrices\Sand(\indj,\indi)\in\SYnumbers{\thedim}^{\Ssetmonoidalproduct 2}\}.
  \end{IEEEeqnarray*}
  Moreover, for any $\anymat\in\thematrices$ and any $(\indj,\indi)\in\SYnumbers{\thedim}^{\Ssetmonoidalproduct 2}$, if $(\chainstarts,\chainends)$ are the chain\-/indices of $\Sbvfull{\anymat}{\orderind}{\indj}{\indi}$, then $\chainstartsX{\posind}=\posind$ and $\chainendsX{\posind}=\posind+1$ for any $\posind\in\SYnumbers{\orderind-1}$.
\end{proposition}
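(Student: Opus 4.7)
First I would pin down the structure of the chain indices. Since every element of $\thecore$ has degree two by Proposition~\ref{proposition:core}, each obstruction in an $\ell$-pre-chain occupies exactly two positions, so each $\chainendsX{s}$ equals $\chainstartsX{s}+1$. The strict interleaving $\chainstartsX{s+1} \leq \chainendsX{s} < \chainstartsX{s+2}$ then forces simultaneously $\chainstartsX{s+1} \leq \chainstartsX{s}+1$ and the strict monotonicity $\chainstartsX{s} < \chainstartsX{s+1}$; over the integers this yields $\chainstartsX{s+1}=\chainstartsX{s}+1$. Starting from $\chainstartsX{1}=1$ one concludes $\chainstartsX{s}=s$, $\chainendsX{s}=s+1$, and in particular that the pre-chain has length exactly $\ell$. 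The chain-index portion of the proposition is then settled once the set-theoretic equality is proved.

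Second, the inclusion $\{\Sbvfull{\anymatA}{\ell}{\indj}{\indi}\Ssetbuilder \anymat \in \thematrices, (\indj,\indi) \in \SYnumbers{\thedim}^{\Ssetmonoidalproduct 2}\} \subseteq \theprechains{\ell}$ follows by a direct verification: Lemma~\ref{lemma:basis_vectors_reformulation} writes each candidate as a product of $\ell$ generators whose consecutive pairs have the explicit form given there. Half of those pairs (those in which the first factor is an $\anymatA$-entry) are already elements of $\thecore$ by Proposition~\ref{proposition:core}; the remaining ones are of the shape $\anymatBTX{1}{\thedim}\anymatAX{1}{\cdot}$, which one rewrites using $\othermat\Seqpd\anymatBT\in\thematrices$ (so that $\othermatA=\anymatBT$ and $\othermatBT=\anymatA$), exhibiting the pair as $\othermatAX{1}{\thedim}\othermatBTX{1}{\cdot}\in\thecore$.

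Third and most substantively, for the reverse inclusion: given an $\ell$-pre-chain $c = e_1 e_2 \cdots e_\ell$, the first pair $e_1 e_2 \in \thecore$ supplies via Proposition~\ref{proposition:core} some $\anymat \in \thematrices$ and $(\indj, \inds) \in \SYnumbers{\thedim}^{\Ssetmonoidalproduct 2}$ with $e_1 e_2 = \anymatAX{\indj}{\thedim}\anymatBTX{1}{\Sexchanged(\inds)}$. I would then induct on the position $s$ to show that this initial choice of $\anymat$ and $\indj$ determines the entire chain. For the inductive step, $e_s e_{s+1} \in \thecore$ yields a core representation $\othermatAX{\indj'}{\thedim}\othermatBTX{1}{\Sexchanged(\inds')}$ whose first factor must coincide with the $e_s$ already determined by the hypothesis. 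Applying Lemma~\ref{lemma:generators_reformulation} to that identity of generators leaves two mutually exclusive cases: one keeps the underlying matrix of $\thematrices$ consistent (with the parity-dependent flip between the $\anymatA$- and $\anymatBT$-form) and forces the freshly exposed intermediate index to be $1$ or $\thedim$; the other would entail $\thedim=1$ and is excluded by the standing hypothesis $2 \leq \thedim$. Iterating through all $s \leq \ell-1$ pins $c$ down to $\Sbvfull{\anymatA}{\ell}{\indj}{\indi}$, where $\indi$ is the terminal free index introduced only at the last step.

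Finally, $\theprechains{\ell}=\thechains{\ell}$ is automatic: for the chain condition on a proper prefix $e_1 e_2 \cdots e_s$ of order $\ell' \leq \ell$ the relevant range is $s \leq \chainendsX{\ell'-1}-1 = \ell'-1 < \ell'$, whereas every $\ell'$-pre-chain has degree exactly $\ell'$ by the first step, so no such prefix can be an $\ell'$-pre-chain. The main obstacle is the matrix-consistency argument of the third paragraph, where careful tracking of both the parity of $s$ and the intermediate indices forced to $1$ or $\thedim$ is required; the hypothesis $2 \leq \thedim$ is exactly what rules out the spurious branch of Lemma~\ref{lemma:generators_reformulation} at each step.
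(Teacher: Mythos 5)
Your plan is correct and follows essentially the same route as the paper's proof: the degree-two form of the core forces the chain indices $\alpha_s=s$, $\beta_s=s+1$ and the length $\ell$; one inclusion is the direct verification via Lemma~\ref{lemma:basis_vectors_reformulation} and Proposition~\ref{proposition:core}; the reverse inclusion is the same step-by-step identification of consecutive core pairs through Lemma~\ref{lemma:generators_reformulation}, with the spurious branch excluded by $2\leq\thedim$; and the equality of pre-chains and chains is the same degree argument on proper prefixes. The only (harmless) difference is that the paper treats $\ell=1$ and $\ell=2$ as separate trivial cases, whereas your plan runs uniformly for $\ell\geq 2$ and leaves the $\ell=1$ case, which holds by definition, implicit.
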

\begin{proof}
  \newcommand{\anychain}{c}
  \newcommand{\chaindegree}{\orderind-1}
  \newcommand{\chainlen}{m}
  \newcommand{\smallerorder}{\orderind'}
  \newcommand{\bigposind}{t}
  \newcommand{\anymatI}[1]{v^{(#1)}}
  \newcommand{\anymatIA}[1]{\anymat^{(#1)\theniv}}
  \newcommand{\anymatIAT}[1]{\anymat^{(#1)\hspace{1pt}\theniv\Stranspose}}
  \newcommand{\anymatIB}[1]{\anymat^{(#1)\hspace{1pt}\theniv\Sskewstar}}
  \newcommand{\anymatIBT}[1]{\anymat^{(#1)\hspace{1pt}\theniv\Sskewdagger}}
  \newcommand{\anymatIAX}[3]{\anymatIA{#1}_{#2,#3}}
  \newcommand{\anymatIATX}[3]{\anymatIAT{#1}_{#2,#3}}
  \newcommand{\anymatIBX}[3]{\anymatIB{#1}_{#2,#3}}
  \newcommand{\anymatIBTX}[3]{\anymatIBT{#1}_{#2,#3}}  
  \newcommand{\anymatIPA}[2]{\anymatIA{#1}{#2}}
  \newcommand{\anymatIPAT}[2]{\anymatIAT{#1}{#2}}
  \newcommand{\anymatIPB}[2]{\anymatIB{#1}{#2}}
  \newcommand{\anymatIPBT}[2]{\anymatIBT{#1}{#2}}
  \newcommand{\anymatIPAX}[4]{\anymatIPA{#1}{#2}_{#3,#4}}
  \newcommand{\anymatIPATX}[4]{\anymatIPAT{#1}{#2}_{#3,#4}}
  \newcommand{\anymatIPBX}[4]{\anymatIPB{#1}{#2}_{#3,#4}}
  \newcommand{\anymatIPBTX}[4]{\anymatIPBT{#1}{#2}_{#3,#4}}
  \newcommand{\leftindX}[1]{\indj_{#1}}
  \newcommand{\rightindX}[1]{\indi_{#1}}
  \newcommand{\inda}{a}
  \newcommand{\indb}{b}
  The proof varies with $\orderind$. Hence, a case distinction is necessary.
  \par
\emph{Case~1.}  If $\orderind=1$, then 
 $\theprechains{1}=\thechains{1}=\thegens$ already by definition and the addendum is vacuously true. The only actual claim is that $\thegens=\{\Sbvfull{\anymat}{1}{\indj}{\indi}\Ssetbuilder\anymat\in\thematrices\Sand(\indj,\indi)\in\SYnumbers{\thedim}^{\Ssetmonoidalproduct 2}\}$. But that is clear because $\Sbvfull{\anymat}{1}{\indj}{\indi}=\anymatAX{\indj}{\Sexchanged(\indi)}$ for any $\anymat\in\thematrices$ and $(\indj,\indi)\in\SYnumbers{\thedim}^{\Ssetmonoidalproduct 2}$.
  \par
\emph{Case~2.} Also in the case that $\orderind=2$ most of the claims are clear a priori.  Indeed, irrespective of the particular choices of $\thegens$,  $\theideal$ and $\theorder$, it can be seen that  $\theprechains{2}=\thechains{2}=\thecore$ and that the chain indices of any $2$-chain $\anychain$ are necessarily $(1,\chainlen)$, where $\chainlen$ is the degree of $\anychain$. Hence, the only non-trivial claim in this case is that $\thecore=\{\Sbvfull{\anymat}{2}{\indj}{\indi}\Ssetbuilder\anymat\in\thematrices\Sand(\indj,\indi)\in\SYnumbers{\thedim}^{\Ssetmonoidalproduct 2}\}$. But again that is obvious by Proposition~\ref{proposition:core} because  $\Sbvfull{\anymat}{2}{\indj}{\indi}=\anymatAX{\indj}{\thedim}\anymatBTX{1}{\Sexchanged(\indi)}$ for any $\anymat\in\thematrices$ and $(\indj,\indi)\in\SYnumbers{\thedim}^{\Ssetmonoidalproduct 2}$.
  \par
  \emph{Case~3.} The proof for the case $3\leq \orderind$ is divided into two steps. The first is
to prove $\theprechains{\orderind}\subseteq \{\Sbvfull{\anymat}{\orderind}{\indj}{\indi}\Ssetbuilder\anymat\in\thematrices\Sand(\indj,\indi)\in\SYnumbers{\thedim}^{\Ssetmonoidalproduct 2}\}$. Because  $\thechains{\orderind}\subseteq\theprechains{\orderind}$ by definition, it is then enough 
  to check, second, that $\{\Sbvfull{\anymat}{\orderind}{\indj}{\indi}\Ssetbuilder\anymat\in\thematrices\Sand(\indj,\indi)\in\SYnumbers{\thedim}^{\Ssetmonoidalproduct 2}\}\subseteq \thechains{\orderind}$, in the process of which the addendum will  be proved as well.
  \par
  \emph{Step~3.1.} For any $\chainlen\in\Sintegersnn$ and any $\chaingen\in\thegens^{\Ssetmonoidalproduct 2}$, if $\anychain=\chaingenX{1}\chaingenX{2}\ldots\chaingenX{\chainlen}\in\theprechains{\orderind}$, then there exist  $\chainstarts\in\SYnumbers{\chainlen}^{\Ssetmonoidalproduct\chaindegree}$ and  $\chainends\in\SYnumbers{\chainlen}^{\Ssetmonoidalproduct\chaindegree}$ such that  $1=\chainstartsX{1}<\chainstartsX{2}\leq \chainendsX{1}$ and $\chainstartsX{\chaindegree}\leq\chainendsX{\orderind-2}<\chainendsX{\chaindegree}= \chainlen$ and $\chainstartsX{\posind+1}\leq \chainendsX{\posind}<\chainstartsX{\posind+2}$ for any $\posind\in\SYnumbers{\orderind-3}$ and such that $\chaingenX{\chainstartsX{\posind}}\chaingenX{\chainstartsX{\posind}+1}\ldots\chaingenX{\chainendsX{\posind}}\in \thecore$ for any $\posind\in\SYnumbers{\chaindegree}$. Since all elements of $\thecore$ have degree $2$ by Proposition~\ref{proposition:core} this last condition implies $\chainendsX{\posind}=\chainstartsX{\posind}+1$ for each $\posind\in\SYnumbers{\chaindegree}$. That implies $\chainstartsX{\posind}=\posind$ and $\chainendsX{\posind}=\posind+1$ for each $\posind\in\SYnumbers{\chaindegree}$. In particular, $\chainlen=\chainendsX{\chaindegree}=\orderind$. In fact, by Proposition~\ref{proposition:core} for any $\posind\in\SYnumbers{\orderind-1}$ there exist $\anymatIA{\posind}\in \thematrices$ and $(\leftindX{\posind},\rightindX{\posind})\in \SYnumbers{\thedim}^{\Ssetmonoidalproduct 2}$ with $\chaingenX{\chainstartsX{\posind}}\chaingenX{\chainstartsX{\posind}+1}\hdots\chaingenX{\chainendsX{\posind}}=\chaingenX{\posind}\chaingenX{\posind+1}=\anymatIAX{\posind}{\leftindX{\posind}}{\thedim}\anymatIBTX{\posind}{1}{\Sexchanged(\rightindX{\posind})}$.
  \par
  If $\anymatA\Seqpd \anymatIA{1}$, if $\indj\Seqpd \leftindX{1}$ and if $\indi\Seqpd\rightindX{\orderind-1}$, then $\anychain=\Sbvfull{\anymat}{\orderind}{\indj}{\indi}$ by the  following argument. For each $\posind\in \SYnumbers{\orderind-2}$ the equality $\chainendsX{\posind}=\chainstartsX{\posind+1}$ demands  $\anymatIBTX{\posind}{1}{\Sexchanged(\rightindX{\posind})}=\chaingenX{\posind+1}=\anymatIAX{\posind+1}{\leftindX{\posind+1}}{\thedim}$. According to Lem\-ma~\ref{lemma:generators_reformulation} that requires  $\anymatIA{\posind+1}\in\{\anymatIB{\posind},\anymatIBT{\posind}\}$ for each $\posind\in \SYnumbers{\orderind-2}$. If $\anymatIA{\posind+1}$ was $\anymatIB{\posind}$ for such $\posind$, it would follow that $(1,\Sexchanged(\rightindX{\posind}))=(\thedim,\leftindX{\posind+1})$, which would violate the assumption $2\leq \thedim$. Hence, instead, $\anymatIA{\posind+1}=\anymatIBT{\posind}$ and $(1,\Sexchanged(\rightindX{\posind}))=(\leftindX{\posind+1},\thedim)$, i.e., $\leftindX{\posind+1}=1=\rightindX{\posind}$, for each $\posind\in \SYnumbers{\orderind-2}$. In particular, $\anymatIA{\posind}=\anymatPA{\posind}$ for any $\posind\in\SYnumbers{\orderind-1}$. Actually, altogether that proves that $\anychain$ satisfies the identities \eqref{eq:basis_vectors_reformulation} in Lem\-ma~\ref{lemma:basis_vectors_reformulation} and must thus be equal to $\Sbvfull{\anymat}{\orderind}{\indj}{\indi}$, as claimed.  
  \par
  \emph{Step~3.2.} For any $\anymat\in\thematrices$ and $(\indj,\indi)\in\SYnumbers{\thedim}^{\Ssetmonoidalproduct 2}$, if $\chainstartsX{\posind}=\posind$ and $\chainendsX{\posind}=\posind+1$ for any $\posind\in\SYnumbers{\orderind-1}$, then  $\Sbvfull{\anymat}{\orderind}{\indj}{\indi}$ is an $\orderind$\-/chain with chain indices $(\chainstarts,\chainends)$, as explained hereafter. 
  \par
  Of course, $1=\chainstartsX{1}<\chainstartsX{2}\leq \chainendsX{1}$ by $\chainstartsX{2}=2=\chainendsX{1}$. The degree of $\Sbvfull{\anymat}{\orderind}{\indj}{\indi}$ is $\orderind$  and $\chainstartsX{\orderind-1}\leq\chainendsX{\orderind-2}<\chainendsX{\orderind-1}= \orderind$ holds by $\chainstartsX{\orderind-1}=\orderind-1$ and $\chainendsX{\orderind-2}=\orderind-1$. Likewise, for any $\posind\in\SYnumbers{\orderind-3}$ the definitions $\chainstartsX{\posind+1}=\posind+1$ and $\chainendsX{\posind}=\posind+1$ and $\chainstartsX{\posind+2}=\posind+2$ imply  $\chainstartsX{\posind+1}\leq \chainendsX{\posind}<\chainstartsX{\posind+2}$.
  \par
  Let $\chaingen\in\thegens^{\Ssetmonoidalproduct \orderind}$ be the unique sequence of generators with $\Sbvfull{\anymat}{\orderind}{\indj}{\indi}=\chaingenX{1}\chaingenX{2}\hdots\chaingenX{\orderind}$. By Lem\-ma~\ref{lemma:basis_vectors_reformulation} then the identities \eqref{eq:basis_vectors_reformulation} hold. According to Proposition~\ref{proposition:core} those then prove  $\chaingenX{\chainstartsX{\posind}}\chaingenX{\chainstartsX{\posind}+1}\hdots\chaingenX{\chainendsX{\posind}}=\chaingenX{\inds}\chaingenX{\inds+1}\in\thecore$ for each $\posind\in\SYnumbers{\orderind-1}$.  Hence,  $\Sbvfull{\anymat}{\orderind}{\indj}{\indi}\in\theprechains{\orderind}$.
  \par
Moreover, for any $\smallerorder\in \SYnumbers{\orderind}$ with $2\leq \smallerorder$ and any   $\bigposind\in\SYnumbers{\chainendsX{\smallerorder-1}-1}$ the monomial $\chaingenX{1}\chaingenX{2}\hdots\chaingenX{\bigposind}$ is no element of $\theprechains{\smallerorder}$ because its degree is less than $\chainendsX{\smallerorder-1}=(\smallerorder-1)+1=\smallerorder$, whereas by Step~3.1 any element of $\theprechains{\smallerorder}$ is of the form $\Sbvfull{\othermat}{\smallerorder}{\indb}{\inda}$ for some $\othermat\in\thematrices$ and $(\indb,\inda)\in \SYnumbers{\thedim}^{\Ssetmonoidalproduct 2}$ and thus of degree $\smallerorder$. 
  \par
In conclusion, $\Sbvfull{\anymat}{\orderind}{\indj}{\indi}$ is an $\orderind$\-/chain and $(\chainstarts,\chainends)$ are its chain indices. That verifies the claim in the case $3\leq \orderind$ and thus completes the proof overall.
\end{proof}
}

\subsubsection{Combinatorial differential}
For any $\orderind\in \Sintegersp$ let $\thedeconcatenator{\orderind}$ be the combinatorial differential associated with $\theideal$ and $\theorder$ from Section~\ref{section:anick-combinatorial_differentials_and_splitting}.
\par
\begin{proposition}
  \label{proposition:combinatorial_differential}
  For any $\orderind\in\Sintegersp$ with $2\leq \orderind$, any $\anymat\in\thematrices$ and any $(\indj,\indi)\in\SYnumbers{\thedim}^{\Ssetmonoidalproduct 2}$,
\begin{IEEEeqnarray*}{rCl}
  \thedeconcatenator{\orderind}(\Sbvfull{\anymatA}{\orderind}{\indj}{\indi})&=  & (\Sbvfull{\anymatA}{\orderind-1}{\indj}{1}, \anymatPAX{\orderind}{1}{\Sexchanged(\indi)}).
\end{IEEEeqnarray*}  
\end{proposition}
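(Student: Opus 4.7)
The plan is to deduce the claim by combining two results already established: the explicit description of the chain indices of $\Sbvfull{\anymatA}{\orderind}{\indj}{\indi}$ from Proposition~\ref{proposition:chains}, and the factorisation identity \eqref{eq:base_recursion_01} of Lemma~\ref{lemma:base_recursion}. The content of the proposition is purely combinatorial: the combinatorial differential is defined by a positional split, and everything has been set up so that this split happens at the last generator.

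First I would recall that, by Proposition~\ref{proposition:chains}, every $\orderind$\-/chain  $\Sbvfull{\anymatA}{\orderind}{\indj}{\indi}$ has chain indices $\chainstartsX{\posind}=\posind$ and $\chainendsX{\posind}=\posind+1$ for every $\posind\in\SYnumbers{\orderind-1}$. In particular, $\chainendsX{\orderind-2}=\orderind-1$ if $3\leq\orderind$. Unwinding the definition of $\thedeconcatenator{\orderind}$ from Section~\ref{section:anick-combinatorial_differentials_and_splitting} in the two relevant cases, I would note that both for $\orderind=2$ (where the split is by definition after the first letter) and for $3\leq\orderind$ (where it is after position $\chainendsX{\orderind-2}=\orderind-1$), the mapping $\thedeconcatenator{\orderind}$ separates the last generator of the monomial from the prefix of degree $\orderind-1$.

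Next I would apply identity \eqref{eq:base_recursion_01} of Lemma~\ref{lemma:base_recursion} in the form
\begin{IEEEeqnarray*}{rCl}
\Sbvfull{\anymatA}{\orderind}{\indj}{\indi}&=&\Sbvfull{\anymatA}{\orderind-1}{\indj}{1}\cdot\anymatPAX{\orderind}{1}{\Sexchanged(\indi)},
\end{IEEEeqnarray*}
observing that the right factor $\anymatPAX{\orderind}{1}{\Sexchanged(\indi)}$ is, irrespective of the parity of $\orderind$, a single element of $\thegens$ and that the left factor $\Sbvfull{\anymatA}{\orderind-1}{\indj}{1}$ is a monomial of degree $\orderind-1$. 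Matching this unique factorisation against the positional split described in the previous paragraph gives exactly the asserted value of $\thedeconcatenator{\orderind}(\Sbvfull{\anymatA}{\orderind}{\indj}{\indi})$.

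There is no real obstacle here; the mild point to verify is that the output of $\thedeconcatenator{\orderind}$ genuinely lies in $\thechains{\orderind-1}\Ssetmonoidalproduct(\themonoid\backslash\StipO\theideal)$, but the first component is the chain $\Sbvfull{\anymatA}{\orderind-1}{\indj}{1}$ by Proposition~\ref{proposition:chains} and the second is a generator, which is reduced because $\thegens\cap\StipO\theideal=\emptyset$ (cf.\ the remark preceding Proposition~\ref{proposition:core}). Hence the claim.
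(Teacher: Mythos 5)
Your proposal is correct and follows essentially the same route as the paper's own proof: it invokes the addendum to Proposition~\ref{proposition:chains} for the chain indices, unwinds the positional definition of $\thedeconcatenator{\orderind}$ to see the split occurs after position $\orderind-1$, and identifies the two factors via \eqref{eq:base_recursion_01} in Lemma~\ref{lemma:base_recursion}. The concluding check that the output lies in $\thechains{\orderind-1}\Ssetmonoidalproduct(\themonoid\backslash\StipO\theideal)$ is a harmless extra remark already covered by the general statement in Section~\ref{section:anick-combinatorial_differentials_and_splitting}.
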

\begin{proof}
  \newcommand{\chaingen}{e}
  \newcommand{\chaingenX}[1]{\chaingen_{#1}}
  \newcommand{\chainstarts}{\alpha}
  \newcommand{\chainstartsX}[1]{\chainstarts_{#1}}
  \newcommand{\chainends}{\beta}
  \newcommand{\chainendsX}[1]{\chainends_{#1}}
  \newcommand{\chaingenalt}{e'}
  \newcommand{\chaingenaltX}[1]{\chaingenalt_{#1}}
  \newcommand{\chainstartsalt}{\alpha'}
  \newcommand{\chainstartsaltX}[1]{\chainstartsalt_{#1}}
  \newcommand{\chainendsalt}{\beta'}
  \newcommand{\chainendsaltX}[1]{\chainendsalt_{#1}}  
  \newcommand{\posind}{s}
If  $(\chainstarts,\chainends)$ are the chain indices of $\Sbvfull{\anymatA}{\orderind}{\indj}{\indi}$, then
  $\chainstartsX{\posind}=\posind$ and $\chainendsX{\posind}=\posind+1$  for any $\posind\in\SYnumbers{\orderind-1}$ by the addendum to Proposition~\ref{proposition:chains}. Accordingly, if
  $\chaingen\in\thegens^{\Ssetmonoidalproduct\orderind}$ is such that $\Sbvfull{\anymatA}{\orderind}{\indj}{\indi}=\chaingenX{1}\chaingenX{2}\ldots\chaingenX{\orderind}$, then 
by definition  
$\thedeconcatenator{\orderind}(\Sbvfull{\anymatA}{\orderind}{\indj}{\indi})\Seqpd (\chaingenX{1}\chaingenX{2}\hdots\chaingenX{\chainendsX{\orderind-2}},\chaingenX{\chainendsX{\orderind-2}+1}\chaingenX{\chainendsX{\orderind-2}+2}\hdots\chaingenX{\orderind})= (\chaingenX{1}\chaingenX{2}\hdots\chaingenX{\orderind-1},\chaingenX{\orderind})$. Since $\chaingenX{1}\chaingenX{2}\hdots\chaingenX{\orderind-1}=\Sbvfull{\anymatA}{\orderind-1}{\indj}{1}$  and $\chaingenX{\orderind}=\anymatPAX{\orderind}{1}{\Sexchanged(\indi)}$ 
by \eqref{eq:base_recursion_01} in Lemma~\ref{lemma:base_recursion} that proves the claim.
\end{proof}

\subsubsection{Combinatorial cycles}
For any $\orderind\in \Sintegersnn$ let  $\thechaincycles{\orderind}$ be the set of combinatorial cycles associated with $\theideal$ and $\theorder$ from Section~\ref{section:anick-combinatorial_differentials_and_splitting}.
\par
{
  \newcommand{\anyrest}{h}
\begin{proposition}
  For any $\orderind\in\Sintegersp$ with $3\leq \orderind$, 
  \begin{IEEEeqnarray*}{rCl}
    \label{proposition:combinatorial_cycles}
  \thechaincycles{\orderind-2}&=&
   \{(\Sbvfull{\anymatA}{\orderind-2}{\indj}{1},\anymatPBTX{\orderind}{1}{\Sexchanged(\indi)}\anyrest)\Ssetbuilder\anymat\in\thematrices\Sand (\indj,\indi)\in\SYnumbers{\thedim}^{\Ssetmonoidalproduct 2},\anyrest\in\themonoid\backslash\StipO\theideal\}.
\end{IEEEeqnarray*}  
 \end{proposition}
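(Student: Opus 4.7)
The plan is to prove the set equality by two inclusions. Throughout, write $k = \orderind - 2 \geq 1$; by definition a pair $(c, e)$ lies in $\thechaincycles{k}$ exactly when $c \in \thechains{k}$, $e \in \themonoid \backslash \StipO\theideal$, and $h \cdot e \in \StipO\theideal$, where $h$ denotes the second component of $\thedeconcatenator{k}(c)$. Propositions~\ref{proposition:chains} and \ref{proposition:combinatorial_differential}, combined with the explicit rule $\thedeconcatenator{1}(c) = (\theone, c)$, guarantee that every such $c$ has the form $\Sbvfull{\anymatA}{k}{\indj}{\indi}$ for some $\anymat \in \thematrices$ and $(\indj, \indi) \in \SYnumbers{\thedim}^{\Ssetmonoidalproduct 2}$, and that in every case $h$ is a single generator.

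For the $\supseteq$ inclusion, fix $\anymat \in \thematrices$, $(\indj, \indi) \in \SYnumbers{\thedim}^{\Ssetmonoidalproduct 2}$ and a reduced monomial $r$ such that $e = \anymatPBTX{\orderind}{1}{\Sexchanged(\indi)} r$ is itself reduced, then set $c = \Sbvfull{\anymatA}{k}{\indj}{1}$. Then $h = \anymatPAX{k}{1}{\thedim}$ for $k \geq 2$, or $h = \anymatA_{\indj, \thedim}$ for $k = 1$. Applying equation~\eqref{eq:base_recursion_02} of Lemma~\ref{lemma:base_recursion}, the product $h \cdot \anymatPBTX{\orderind}{1}{\Sexchanged(\indi)}$ is precisely the pair of final letters of the $(k+1)$-chain $\Sbvfull{\anymatA}{k+1}{\indj}{\indi}$; by Lemma~\ref{lemma:basis_vectors_reformulation} and Proposition~\ref{proposition:core} this pair belongs to $\thecore$, with the parametrizing matrix being $\anymat$ itself when $k$ is odd and $\anymatBT$ when $k$ is even. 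Hence $h \cdot e \in \StipO\theideal$, as required.

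For the $\subseteq$ inclusion, take $(c, e) \in \thechaincycles{k}$ and invoke Proposition~\ref{proposition:chains} to write $c = \Sbvfull{\othermatA}{k}{\indj'}{\indi'}$, from which $h$ is determined as above. Because $e$ is reduced while $h e \in \StipO\theideal$, and because core elements have degree $2$ by Proposition~\ref{proposition:core}, every core sub-monomial of $h e$ must straddle the boundary, meaning that $h$ together with the first letter of $e$ must form an element of $\thecore$. Applying Lemma~\ref{lemma:generators_reformulation} to identify $h$ with the leading generator of a core element—and invoking $\thedim \geq 2$ to discard degenerate solutions—pins down $\indi' = 1$ (or $\indj' = \thedim$ in the $k = 1$ edge case) and singles out the first letter of $e$ as $\anymatPBTX{\orderind}{1}{\Sexchanged(b)}$ for a uniquely determined $b \in \SYnumbers{\thedim}$. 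With $\anymat = \othermat$ and $\indj = \indj'$ (or, in the $k = 1$ edge case, $\anymat = \othermat\StransposeP$ and $\indj = \Sexchanged(\indi')$), $\indi = b$, and $r$ the reduced suffix of $e$, the pair takes the required form.

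The main obstacle is the parity-dependence of the generator shorthands: verifying that $h \cdot \anymatPBTX{\orderind}{1}{\Sexchanged(\indi)}$ lands in $\thecore$ requires separate treatment of even and odd $k$, because in the two sub-cases the leading generator is of $A$-type versus $\tilde{B}$-type, and the matching matrix in the core description of Proposition~\ref{proposition:core} is correspondingly $\anymat$ itself versus $\anymatBT$. A secondary complication is the edge case $k = 1$, where $\thedeconcatenator{1}$ is defined by a separate rule and $h$ coincides with the full chain, so that the $\subseteq$ argument acquires a transposition ambiguity ($\othermat$ versus $\othermat\StransposeP$) that is nonetheless absorbed by the right-hand side's parametrization.
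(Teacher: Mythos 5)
Your proof is correct and follows essentially the same route as the paper's: the inclusion $\supseteq$ is obtained by reading off the second component of the combinatorial differential and recognizing the resulting junction of two letters as a core element (Proposition~\ref{proposition:core}, reached in your case via Lemma~\ref{lemma:base_recursion} and Lemma~\ref{lemma:basis_vectors_reformulation} rather than by the paper's direct identity check), while the inclusion $\subseteq$ locates the degree-two core divisor at the junction and identifies its matrix and indices through Lemma~\ref{lemma:generators_reformulation}, with the $\ell=3$ transposition ambiguity absorbed by re-parametrizing exactly as the paper does via Lemma~\ref{lemma:evil_identities}. Your extra stipulation in the $\supseteq$ direction that the whole second tensor factor be reduced is consistent with the definition of the combinatorial cycles (which does require it) and merely makes explicit a point that the paper's own Step~1 passes over silently.
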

 \begin{proof}
   \newcommand{\anychain}{c}
   \newcommand{\anyextra}{d}
   \newcommand{\smallchain}{a}
   \newcommand{\chainrest}{b}
   \newcommand{\anygen}{e}
   \newcommand{\anygenX}[1]{\anygen_{#1}}
   \newcommand{\dummyindex}{s}
   \newcommand{\extralen}{t}
   \newcommand{\anylen}{m}
   \newcommand{\dummypos}{z}
   \newcommand{\leftind}{q}
  Each inclusion is proved separately.
  \par
  \emph{Step~1.} For any $\anymat\in\thematrices$ and any $(\indj,\indi)\in \SYnumbers{\thedim}^{\Ssetmonoidalproduct 2}$  the pair   $\thedeconcatenator{\orderind-2}(\Sbvfull{\anymatA}{\orderind-2}{\indj}{1})$ is given by $(\theone,\Sbvfull{\anymatA}{1}{\indj}{1})$ if $\orderind=3$ and otherwise by  $(\Sbvfull{\anymatA}{\orderind-3}{\indj}{1},\anymatPAX{\orderind-2}{1}{\Sexchanged(1)})$ according to Pro\-po\-si\-tion~\ref{proposition:combinatorial_differential}. By definition, $\Sbvfull{\anymatA}{1}{\indj}{1}=\anymatAX{\indj}{\Sexchanged(1)}=\anymatPAX{3}{\indj}{\thedim}$ and $\anymatPAX{\orderind-2}{1}{\Sexchanged(1)}=\anymatPAX{\orderind}{1}{\thedim}$. Both $\anymatPAX{3}{\indj}{\thedim}\anymatPBTX{3}{1}{\Sexchanged(\indi)}$ and $\anymatPAX{\orderind}{1}{\thedim}\anymatPBTX{\orderind}{1}{\Sexchanged(\indi)}$ are elements of $\thecore\subseteq\StipO\theideal$ by Pro\-po\-si\-tion~\ref{proposition:core}. Hence, for any $\anyrest\in\themonoid\backslash \StipO\theideal$ neither $\anymatPAX{3}{\indj}{\thedim}\anymatPBTX{3}{1}{\Sexchanged(\indi)}\anyrest$ nor $\anymatPAX{\orderind}{1}{\thedim}\anymatPBTX{\orderind}{1}{\Sexchanged(\indi)}\anyrest$ is normal, which makes  $(\Sbvfull{\anymatA}{\orderind-2}{\indj}{1},\anymatPBTX{\orderind}{1}{\Sexchanged(\indi)}\anyrest)$ an element of $\thechaincycles{\orderind-2}$.
  \par
  \emph{Step~2.} Let $\{\anylen,\extralen\}\subseteq \Sintegersnn$ and  $\anygen\in\thegens^{\Ssetmonoidalproduct (\anylen+\extralen)}$ be arbitrary with the property, that, if  $\anychain=\anygenX{1}\anygenX{2}\hdots\anygenX{\anylen}$ and $\anyextra=\anygenX{\anylen+1}\anygenX{\anylen+2}\hdots\anygenX{\anylen+\extralen}$, then   $(\anychain, \anyextra)\in \thechaincycles{\orderind-2}$. By Proposition~\ref{proposition:chains} there then exist  $\anymat\in\thematrices$ and $(\indj,\dummyindex)\in\SYnumbers{\thematrices}^{\Ssetmonoidalproduct 2}$ with $\anychain=\Sbvfull{\anymat}{\orderind-2}{\indj}{\dummyindex}$ and, in particular, $\anylen=\orderind-2$. Consequently,   $(\smallchain,\chainrest)\Seqpd \thedeconcatenator{\orderind-2}(\Sbvfull{\anymat}{\orderind-2}{\indj}{\dummyindex})$ is given by $(1,\Sbvfull{\anymat}{1}{\indj}{\dummyindex})=(1,\anymatPAX{1}{\indj}{\Sexchanged(\inds)})$ if $\orderind=3$ and otherwise by $(\Sbvfull{\anymat}{\orderind-3}{\indj}{1},\anymatPAX{\orderind-2}{1}{\Sexchanged(\dummyindex)})$ according to Proposition~\ref{proposition:combinatorial_differential}. 
  \par
  Since $(\anychain, \anyextra)\in \thechaincycles{\orderind-2}$ then $\chainrest\anyextra\in\StipO\theideal$. Because $\chainrest\notin\StipO\theideal$ that requires $\anyextra\neq \theone$, which is to say $1\leq \extralen$. Since $\anyextra\notin \StipO\theideal$ the assumption that  $\chainrest\anyextra\in\StipO\theideal$ then demands the existence of $\dummypos\in\SYnumbers{\extralen}$ with  $\chainrest\anygenX{\anylen+1}\anygenX{\anylen+2}\hdots\anygenX{\anylen+\dummypos}\in \StipO\theideal$. In fact there must exist $\dummypos\in\SYnumbers{\extralen}$ with  $\chainrest\anygenX{\anylen+1}\anygenX{\anylen+2}\hdots\anygenX{\anylen+\dummypos}\in \thecore$ because otherwise the assumption $\anyextra\notin\StipO\theideal$ would be violated. In fact, by nature of $\thecore$ there can exist only such $\dummypos$. Even more precisely, because any element of $\thecore$ has degree $2$ by Proposition~\ref{proposition:core} and since $\chainrest$ has degree $1$ the index $\dummypos$ must be $1$. In conclusion, if $\anyrest\Seqpd \anygenX{\anylen+2}\anygenX{\anylen+3}\hdots\anygenX{\anylen+\extralen}$, then, first, $\anyrest\notin\StipO\theideal$ because already $\anyextra\notin\StipO\theideal$ and, second,  $\anyextra=\anygenX{\anylen+1}\anyrest$ and $\chainrest\anygenX{\anylen+1}\in \thecore$.  Another implication of Proposition~\ref{proposition:core} is therefore the existence of $\othermat\in\thematrices$ and $(\leftind,\indi)\in \SYnumbers{\thedim}^{\Ssetmonoidalproduct 2}$ with $\chainrest\anygenX{\anylen+1}=\othermatAX{\leftind}{\thedim}\othermatBTX{1}{\Sexchanged(\indi)}$. By Lem\-ma~\ref{lemma:generators_reformulation} the ensuing equalities $\chainrest=\anymatPAX{3}{\indj}{\Sexchanged(\dummyindex)}=\othermatAX{\leftind}{\thedim}$ for $\orderind=3$  and $\chainrest=\anymatPAX{\orderind}{1}{\Sexchanged(\dummyindex)}=\othermatAX{\leftind}{\thedim}$ for $4\leq\orderind$ demand  $\othermatA\in \{\anymatPA{\orderind},\anymatPAT{\orderind}\}$ for any $3\leq\orderind$.
  \par
  \emph{Case~2.1.} If $\orderind=3$ and $\othermatA=\anymatPA{\orderind}$, then by the same lemma, $(\indj,\Sexchanged(\dummyindex))=(\leftind,\thedim)$. Thus, $\anychain=\Sbvfull{\anymat}{1}{\indj}{1}$ and $\anygenX{\anylen+1}=\othermatBTX{1}{\Sexchanged(\indi)}=\anymatBTX{1}{\Sexchanged(\indi)}$ and hence $\anyextra=\anygenX{\anylen+1}\anyrest=\anymatPBTX{3}{1}{\Sexchanged(\indi)}\anyrest$. Ultimately,  $(\anychain,\anyextra)=(\Sbvfull{\anymatA}{1}{\indj}{1},\anymatPBTX{3}{1}{\Sexchanged(\indi)}\anyrest)$. That proves the claim in this case.  
  \par
  \emph{Case~2.2.} In case $\orderind=3$ and $\othermatA=\anymatPAT{\orderind}$, the equality  $\anymatAX{\indj}{\Sexchanged(\dummyindex)}=\othermatAX{\leftind}{\thedim}$ requires  $(\indj,\Sexchanged(\dummyindex))=(\thedim,\leftind)$ by Lem\-ma~\ref{lemma:generators_reformulation}. With the help of Lemma~\ref{lemma:evil_identities} it follows $\anychain=\Sbvfull{\anymat}{1}{\indj}{\inds}=\Sbvfull{\anymatA}{1}{\thedim}{\Sexchanged(\leftind)}=\Sbvfull{\anymatAT}{1}{\leftind}{1}$. Because, moreover, $\anygenX{\anylen+1}=\othermatBTX{1}{\Sexchanged(\indi)}=\anymatBX{1}{\Sexchanged(\indi)}$ and thus $\anyextra=\anygenX{\anylen+1}\anyrest=\anymatPBX{3}{1}{\Sexchanged(\indi)}\anyrest$ the conclusion $(\anychain,\anyextra)=(\Sbvfull{\anymatAT}{1}{\leftind}{1},\anymatPBX{3}{1}{\Sexchanged(\indi)}\anyrest)$ proves the claim in this instance.
  \par
  \emph{Case~2.3.} Finally, for $4\leq\orderind$ Lem\-ma~\ref{lemma:generators_reformulation} and the equality $\anymatPAX{\orderind}{1}{\Sexchanged(\dummyindex)}=\othermatAX{\leftind}{\thedim}$ actually require  $\othermatA=\anymatPA{\orderind}$ and $(1,\Sexchanged(\dummyindex))=(\leftind,\thedim)$ because if $\othermatA$ was $\anymatPAT{\orderind}$ the then inevitable identity between   $(1,\Sexchanged(\dummyindex))$ and $(\thedim,\leftind)$ would contradict the assumption $2\leq \thedim$. In conclusion, $\dummyindex=1$ and thus $\anychain=\Sbvfull{\anymatA}{\orderind-2}{\indj}{1}$  and  $\anygenX{\anylen+1}=\othermatBTX{1}{\Sexchanged(\indi)}=\anymatPBTX{\orderind}{1}{\Sexchanged(\indi)}$, i.e., $(\anychain,\anyextra)=(\Sbvfull{\anymatA}{\orderind-2}{\indj}{1},\anymatPBTX{\orderind}{1}{\Sexchanged(\indi)}\anyrest)$, which concludes the proof.
\end{proof}
}

\subsubsection{Combinatorial splitting}
For any $\orderind\in \Sintegersnn$ let $\thereconcatenator{\orderind}$ be the combinatorial splitting associated with $\theideal$ and $\theorder$ from Section~\ref{section:anick-combinatorial_differentials_and_splitting}.
\par
{
  \newcommand{\anyrest}{h}
  \begin{proposition}
        \label{proposition:combinatorial_splitting}
  For any $\orderind\in\Sintegersp$ with $3\leq \orderind$, any $\anymat\in\thematrices$, any $(\indj,\indi)\in\SYnumbers{\thedim}^{\Ssetmonoidalproduct 2}$ and any $\anyrest\in\themonoid\backslash\StipO\theideal$,
\begin{IEEEeqnarray*}{rCl}
\textstyle  \thereconcatenator{\orderind-2}(\Sbvfull{\anymatA}{\orderind-2}{\indj}{1},\anymatPBTX{\orderind}{1}{\Sexchanged(\indi)}\anyrest)=(\Sbvfull{\anymatA}{\orderind-1}{\indj}{\indi},\anyrest).
\end{IEEEeqnarray*}  
\end{proposition}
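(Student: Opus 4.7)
The plan is to unfold the definition of $\thereconcatenator{\orderind-2}$ from Section~\ref{section:anick-combinatorial_differentials_and_splitting} directly. Abbreviate $c\Seqpd \Sbvfull{\anymatA}{\orderind-2}{\indj}{1}$ and $d\Seqpd \anymatPBTX{\orderind}{1}{\Sexchanged(\indi)}\anyrest$ and let $e_1,e_2,\hdots$ denote the letters of the concatenation $cd$, so that $c=e_1 e_2\hdots e_{\orderind-2}$ and in particular $e_{\orderind-1}=\anymatPBTX{\orderind}{1}{\Sexchanged(\indi)}$. By the addendum to Proposition~\ref{proposition:chains} the chain indices $(\alpha,\beta)$ of $c$ satisfy $\alpha_s=s$ and $\beta_s=s+1$ for every $s\in\SYnumbers{\orderind-3}$. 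For $4\leq\orderind$ Lemma~\ref{lemma:basis_vectors_reformulation} applied at the final position $s=\orderind-3$ further identifies $e_{\orderind-2}=\anymatPBTX{\orderind-3}{1}{\thedim}$; for $\orderind=3$ the chain $c=\anymatAX{\indj}{\thedim}$ is just the single letter $e_1$, a trivial special case.

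The crux is to verify that $e_{\orderind-2}e_{\orderind-1}\in\thecore$. For $4\leq\orderind$, because $\orderind-3$ and $\orderind$ have opposite parities, a brief parity case distinction rewrites the product as $\othermatAX{1}{\thedim}\othermatBTX{1}{\Sexchanged(\indi)}$ for some $\othermatA\in\{\anymatA,\anymatBT\}$, which is the tip of $\thebv{\othermatA}{1}{\indi}$ by Lemma~\ref{lemma:the_leading_monomials} and hence belongs to $\thecore$ by Proposition~\ref{proposition:core}. For $\orderind=3$ the same conclusion is immediate since $e_1 e_2=\anymatAX{\indj}{\thedim}\anymatBTX{1}{\Sexchanged(\indi)}=\Stip\thebv{\anymatA}{\indj}{\indi}$.

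With that settled one determines the pair $(\alpha_{\orderind-2},\beta_{\orderind-2})$ prescribed by $\thereconcatenator{\orderind-2}$. Since every element of $\thecore$ has degree $2$ by Proposition~\ref{proposition:core}, any admissible choice must be of the form $(i,i+1)$, and combined with the mandatory inequality $\orderind-2<\beta_{\orderind-2}$ this forces $(\alpha_{\orderind-2},\beta_{\orderind-2})=(\orderind-2,\orderind-1)$; the minimality of $\alpha_{\orderind-2}$ is then automatic, and the three defining subcases $\orderind-2\in\{1,2\}$ and $3\leq\orderind-2$ of the reconcatenator collapse to the same answer. Consequently $\thereconcatenator{\orderind-2}(c,d)=(e_1 e_2\hdots e_{\orderind-1},\anyrest)$, whose first coordinate equals $c\cdot e_{\orderind-1}=\Sbvfull{\anymatA}{\orderind-2}{\indj}{1}\cdot \anymatPBTX{\orderind}{1}{\Sexchanged(\indi)}$; identity~\eqref{eq:base_recursion_02} of Lemma~\ref{lemma:base_recursion} identifies this in turn with $\Sbvfull{\anymatA}{\orderind-1}{\indj}{\indi}$, which is the claim. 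The only genuine subtlety is the parity bookkeeping for the symbols $\anymatPA{\cdot}$ and $\anymatPBT{\cdot}$; the rest is a direct unfolding of definitions.
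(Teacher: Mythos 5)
Your proof is correct and follows essentially the same route as the paper's: unfold the definition of $\thereconcatenator{\orderind-2}$, use the chain indices from Proposition~\ref{proposition:chains} together with the fact that all elements of $\thecore$ have degree $2$ to pin down $(\chainstarts_{\orderind-2},\chainends_{\orderind-2})=(\orderind-2,\orderind-1)$, and finish with identity~\eqref{eq:base_recursion_02} of Lemma~\ref{lemma:base_recursion}. The only (harmless) difference is that you verify $\anygen_{\orderind-2}\anygen_{\orderind-1}\in\thecore$ explicitly via the parity bookkeeping, a point the paper leaves implicit by relying on the well-definedness of the splitting on combinatorial cycles.
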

\begin{proof}
  \newcommand{\chainstarts}{\alpha}
  \newcommand{\chainstartsX}[1]{\chainstarts_{#1}}
  \newcommand{\chainends}{\beta}
  \newcommand{\chainendsX}[1]{\chainends_{#1}}
  \newcommand{\posind}{s}
   \newcommand{\anygen}{e}
   \newcommand{\anygenX}[1]{\anygen_{#1}}
   \newcommand{\extralen}{t}
   \newcommand{\anylen}{m}
   \newcommand{\somepos}{m}   
With the abbreviation $\anylen\Seqpd \orderind-2$, let $\extralen\in\Sintegersp$ and $\anygen\in \SYnumbers{\thedim}^{\Ssetmonoidalproduct(\anylen+\extralen)}$ be such that $\Sbvfull{\anymatA}{\orderind-2}{\indj}{1}=\anygenX{1}\anygenX{2}\hdots\anygenX{\anylen}$ and $\anymatPBTX{\orderind}{1}{\Sexchanged(\indi)}\anyrest=\anygenX{\anylen+1}\anygenX{\anylen+2}\hdots\anygenX{\anylen+\extralen}$. 
Since by Proposition~\ref{proposition:chains} the chain indices $(\chainstarts,\chainends)$ of $\Sbvfull{\anymatA}{\orderind-2}{\indj}{1}$ satisfy $\chainstartsX{\posind}=\posind$ and $\chainendsX{\posind}=\posind+1$ for any $\posind\in \SYnumbers{\orderind-3}$ there is exactly  one $\chainstartsX{\orderind-2}\in\Sintegersp$ such that $\chainstartsX{\orderind-2}=1$ if $\orderind=3$, such that $1<\chainstartsX{\orderind-2}\leq \anylen$ if $\orderind=4$ and  such that $\chainendsX{\orderind-4}<\chainstartsX{\orderind-2}\leq \anylen$ if $5\leq\orderind$, namely, $\chainstartsX{\orderind-2}=\anylen$. Because any element of $\thecore$ has degree $2$ by Proposition~\ref{proposition:core} the unique $\chainendsX{\orderind-2}\in\Sintegersp$ with $\anylen<\chainendsX{\orderind-2}\leq \anylen+\extralen$ and $\anygenX{\chainstartsX{\orderind-2}}\anygenX{\chainstartsX{\orderind-2}+1}\hdots \anygenX{\chainendsX{\orderind-2}}\in \thecore$ is $\chainendsX{\orderind-2}=\chainstartsX{\orderind-2}+1=\anylen+1$. In consequence, $\thereconcatenator{\orderind-2}(\Sbvfull{\anymatA}{\orderind-2}{\indj}{1},\anymatPBTX{\orderind}{1}{\Sexchanged(\indi)}\anyrest)=(\anygenX{1}\anygenX{2}\hdots\anygenX{\chainendsX{\orderind-2}},\anygenX{\chainendsX{\orderind-2}+1}\anygenX{\chainendsX{\orderind-2}+2}\hdots\anygenX{\anylen+\extralen})$ is given by $(\anygenX{1}\anygenX{2}\hdots\anygenX{\anylen+1},\anygenX{\anylen+2}\anygenX{\anylen+3}\hdots\anygenX{\anylen+\extralen})$. Evidently, $\anygenX{\anylen+2}\anygenX{\anylen+3}\hdots\anygenX{\anylen+\extralen}$ is equal to $\anyrest$. And \eqref{eq:base_recursion_02} in Lemma~\ref{lemma:base_recursion} implies $\anygenX{1}\anygenX{2}\hdots\anygenX{\anylen+1}=\Sbvfull{\anymatA}{\orderind-1}{\indj}{\indi}$. Hence, the claim is true.
\end{proof}
}

\subsection{Zeroth and first order}
In addition to the assumptions from the previous two sections, for any $\orderind\in\Sintegersnn$ let  $\thechainsmodule{\orderind}=\thefield\thechains{\orderind}\Smonoidalproduct\thealgebra$ be the free right $\thealgebra$-module over the set $\thechains{\orderind}$ of $\orderind$\-/chains and $\thechainsorder{\orderind}$ its order, tips with respect to which are indicated by $\thechainstip{\orderind}$,  from Section~\ref{section:anick-chain_modules} and let $\thedifferential{\orderind}$ and $\thesplitting{\orderind}$ be the order-$\orderind$ differential and splitting, respectively, of the Anick resolution from Section~\ref{section:anick-anick_resolution}.
\par 
Since the set $\thechains{0}$ of $0$\-/chains always has only one element, the unit $\theone$ of $\thefreealg$, it is common to identify the right $\thealgebra$-modules $\thefield\thechains{0}\Smonoidalproduct\thealgebra$ and $\thealgebra=\thechainsmodule{0}$. This practice is adopted in the following. The differential of order $0$  does not depend on $\thegens$, $\theideal$ or $\theorder$ either. It is always given by the map $\thedifferential{0}$ from the \hyperref[main-result]{Main theorem}.
\begin{proposition}
  \label{proposition:zeroth_order}
  $\thedifferential{0}(\theone\Smonoidalproduct\theone)=1$.
\end{proposition}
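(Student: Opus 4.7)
The plan is simply to unfold definitions. The identity to establish is, modulo an identification of right $\thealgebra$-modules, literally the defining property of the Anick zeroth-order differential recalled in Section~\ref{section:anick-anick_resolution}, namely $\thedifferential{0}(\theone\Smonoidalproduct(\theone+\theideal))=1$.

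First I would make the identification of $\thefield\thechains{0}\Smonoidalproduct\thealgebra$ with $\thechainsmodule{0}=\thealgebra$ — adopted right before the statement of the proposition — fully explicit. Since $\thechains{0}=\{\theone\}$ consists solely of the unit of $\thefreealg$, the $\thefield$-linear map $\thefield\thechains{0}\Smonoidalproduct\thealgebra\to\thealgebra$ sending $\theone\Smonoidalproduct(a+\theideal)$ to $a+\theideal$ is an isomorphism of right $\thealgebra$-modules. Under this isomorphism the unit $\theone\in\thealgebra$ corresponds to the simple tensor $\theone\Smonoidalproduct(\theone+\theideal)$, so the expression $\theone\Smonoidalproduct\theone$ in the statement of the proposition (in which the second $\theone$ stands for the unit of $\thealgebra$) is the same element as $\theone\Smonoidalproduct(\theone+\theideal)$ in the unidentified formulation.

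Applying $\thedifferential{0}$ and invoking its defining property then yields $1\in\thefield$ immediately. No obstacle worth speaking of remains; the only care required is to keep apart the three roles of the symbol $\theone$ in the argument, namely the unique $0$-chain in $\thefreealg$, the unit of $\thealgebra$, and the scalar $1\in\thefield$ on the right-hand side.
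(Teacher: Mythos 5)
Your proposal is correct and matches the paper's treatment: the paper states this proposition without a separate proof, precisely because — after the identification of $\thefield\thechains{0}\Smonoidalproduct\thealgebra$ with $\thechainsmodule{0}=\thealgebra$ made just before the statement — it is the defining property $\thedifferential{0}(\theone\Smonoidalproduct(\theone+\theideal))=1$ of the Anick resolution recalled in Section~\ref{section:anick-anick_resolution}. Your extra care in distinguishing the three roles of the symbol $\theone$ is sound but adds nothing beyond what the paper's identification already handles.
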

\par
The first-order differential of any Anick resolution is completely determined by the set $\thegens$ of generators and the augmentation $\thecounit$. Taking into account the aforementioned identification it is precisely the mapping $\thedifferential{1}$  from \hyperref[main-result]{Main theorem}.
\begin{proposition}
  \label{proposition:first_order}
For any $\anymat\in\thematrices$ and $(\indj,\indi)\in\SYnumbers{\thedim}^{\Ssetmonoidalproduct 2}$,
  \begin{IEEEeqnarray*}{rCl}
    \thedifferential{1}(\Sbv{\anymatA}{\indj}{\indi}\Smonoidalproduct\theone)=\anymatAX{\indj}{\Sexchanged(\indi)}-\Skronecker{\indj}{\Sexchanged(\indi)}\theone+\theideal.
  \end{IEEEeqnarray*}
\end{proposition}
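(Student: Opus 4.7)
The plan is to recognize that this proposition is, at heart, just an unpacking of the definition of $\thedifferential{1}$ from Section~\ref{section:anick-anick_resolution} together with the explicit description of $\thechains{1}$ obtained in Proposition~\ref{proposition:chains}. No genuine computation beyond substitution and the evaluation of $\thecounit$ should be required.

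First I would record that, by Proposition~\ref{proposition:chains} applied with $\orderind=1$, the set $\thechains{1}$ coincides with $\thegens$ and, for any $\anymat\in\thematrices$ and $(\indj,\indi)\in\SYnumbers{\thedim}^{\Ssetmonoidalproduct 2}$, the element $\Sbv{\anymatA}{\indj}{\indi}=\Sbvfull{\anymatA}{1}{\indj}{\indi}=\anymatAX{\indj}{\Sexchanged(\indi)}$ is itself a generator. This brings the left-hand side into the precise form to which the defining property of the Anick first-order differential recalled in Section~\ref{section:anick-anick_resolution} applies, namely that for any $\anygen\in\thegens$ one has $\thedifferential{1}(\anygen\Smonoidalproduct(\theone+\theideal))=\theone\Smonoidalproduct(\anygen-\thecounit(\anygen)+\theideal)$.

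Applying this formula with $\anygen=\anymatAX{\indj}{\Sexchanged(\indi)}$ and then invoking the definition of $\thecounit$ from the \hyperref[main-result]{Main result}, which gives $\thecounit(\anymatAX{\indj}{\Sexchanged(\indi)}+\theideal)=\Skronecker{\indj}{\Sexchanged(\indi)}$, yields $\thedifferential{1}(\Sbv{\anymatA}{\indj}{\indi}\Smonoidalproduct(\theone+\theideal))=\theone\Smonoidalproduct(\anymatAX{\indj}{\Sexchanged(\indi)}-\Skronecker{\indj}{\Sexchanged(\indi)}\theone+\theideal)$. Finally I would invoke the identification of $\thefield\thechains{0}\Smonoidalproduct\thealgebra$ with $\thealgebra$ mentioned in the paragraph immediately preceding Proposition~\ref{proposition:zeroth_order} to rewrite the right-hand side as $\anymatAX{\indj}{\Sexchanged(\indi)}-\Skronecker{\indj}{\Sexchanged(\indi)}\theone+\theideal$, giving the claim.

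There is no real obstacle: the only thing worth being careful about is that the symbol $\Sbv{\anymatA}{\indj}{\indi}$ is by convention the $\orderind=1$ case of $\Sbvfull{\anymatA}{\orderind}{\indj}{\indi}$, so that it really does land in $\thechains{1}=\thegens$ and is not being confused with the degree\-/two element $\Sbvfull{\anymatA}{2}{\indj}{\indi}$ that appears later in the Main result's formula for $\thedifferential{2}$; Proposition~\ref{proposition:chains} is what makes this identification rigorous. Once that bookkeeping is in place, the proof is a one-line substitution.
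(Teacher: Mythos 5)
Your argument is correct and matches the paper, which offers no separate proof precisely because the claim is an immediate consequence of the defining property of the Anick first-order differential, the evaluation of $\thecounit$ on generators, and the identification $\thefield\thechains{0}\Smonoidalproduct\thealgebra\cong\thealgebra$. The only cosmetic remark is that invoking Proposition~\ref{proposition:chains} is unnecessary overhead: $\thechains{1}=\thegens$ holds by definition, and $\Sbvfull{\anymatA}{1}{\indj}{\indi}=\anymatAX{\indj}{\Sexchanged(\indi)}$ is read off directly from the defining formula in the Main result.
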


Hence, it is only the orders two and up which need computing. That is carried out over the next three sections.
\par
  {
    \newcommand{\anypoly}{p}
  Beware that, in order to increase legibility, for any $\anypoly\in\thefreealg$ the symbol $\anypoly$ will from now on also be used for the element $\anypoly+\theideal$ of $\thealgebra$. 
}  
\subsection{Second order}
{
  \newcommand{\anylen}{m}  
  \newcommand{\anygen}{e}
  \newcommand{\anygenX}[1]{\anygen_{#1}}
  Computing $\thedifferential{2}$ requires determining (some of) the values of the splitting $\thesplitting{0}$. Note that, taking into acccount the identification $\thefield\thechains{0}\Smonoidalproduct\thealgebra\cong\thealgebra$, the order $\thechainsorder{0}$ is simply $\theorder$ and the combinatorial splitting $\thereconcatenator{0}$ becomes the mapping $\Sfromto{\themonoid\backslash(\{\theone\}\cup \StipO\theideal)}{\thechains{1}\Ssetmonoidalproduct \themonoid}$ which  for any $\anylen\in\Sintegersp$ and any $\anygen\in\thegens^{\Ssetmonoidalproduct \anylen}$ with $\anygenX{1}\anygenX{2}\hdots\anygenX{\anylen}\in\themonoid\backslash \StipO\theideal$  satisfies $\thereconcatenator{0}(\anygenX{1}\anygenX{2}\hdots\anygenX{\anylen})=(\anygenX{1},\anygenX{2}\hdots\anygenX{\anylen})$.
}
  \par

  For computing $\thedifferential{2}$ it is convenient to distinguish two cases. 
  \subsubsection{Generic case}
  First,  $\thedifferential{2}(\Sbv{\anymat}{\indj}{\indi}\Smonoidalproduct\theone)$ is computed for  any $\anymat\in\thematrices$ and any $(\indj,\indi)\in \SYnumbers{\thedim}^{\Ssetmonoidalproduct 2}$ with $(\indj, \indi)\neq (\thedim,\thedim)$.
 That takes $\thedim+1$ steps.
  \par
{
  \newcommand{\firstnumber}{s}
  \newcommand{\secondnumber}{t}
  For any $\{\firstnumber,\secondnumber\}\subseteq \Sintegers$ let $\Szetafunction{\firstnumber}{\secondnumber}\in\thefield$ be $1$ if $\firstnumber\leq \secondnumber$ and $0$ otherwise.
  }
  \begin{lemma}
    \label{lemma:second_order_generic_case_helper}
    For any $\recvar\in\SYnumbers{\thedim}$ and any $(\indj,\indi)\in \SYnumbers{\thedim}^{\Ssetmonoidalproduct 2}\backslash \{(\thedim,\thedim)\}$,
    \begin{IEEEeqnarray*}{rCl}
\thedifferential{2}(\Sbv{\anymatA}{\indj}{\indi}\Smonoidalproduct\theone)
&=&\textstyle\sum_{\inds=1}^{\recvar}\Sbv{\anymatA}{\indj}{\inds}\Smonoidalproduct\anymatBTX{\inds}{\Sexchanged(\indi)}\IEEEyesnumber
      \label{eq:second_order_generic_case_helper_0}\\
      &&\textstyle{}+\thesplitting{0}\big(\sum_{\inds=\recvar+1}^{\thedim}\anymatAX{\indj}{\Sexchanged(\inds)}\anymatBTX{\inds}{\Sexchanged(\indi)}+\Szetafunction{\Sexchanged(\indj)}{\recvar}\Saction\anymatBTX{\Sexchanged(\indj)}{\Sexchanged(\indi)}-\Skronecker{\indj}{\indi}\Saction\theone\big).
    \end{IEEEeqnarray*}
  \end{lemma}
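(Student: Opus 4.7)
The plan is to induct on $\recvar$ from $1$ to $\thedim$.

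For the base case $\recvar=1$, I would unfold the defining recursion for $\thedifferential{2}$ from Section~\ref{section:anick-anick_resolution}: by Proposition~\ref{proposition:combinatorial_differential} the combinatorial differential satisfies $\thedeconcatenator{2}(\Sbv{\anymat}{\indj}{\indi}) = (\Sbv{\anymat}{\indj}{1}, \anymatBTX{1}{\Sexchanged(\indi)})$, so $\thedifferential{2}(\Sbv{\anymat}{\indj}{\indi}\Smonoidalproduct\theone) = \Sbv{\anymat}{\indj}{1}\Smonoidalproduct\anymatBTX{1}{\Sexchanged(\indi)} - \thesplitting{0}(\thedifferential{1}(\Sbv{\anymat}{\indj}{1}\Smonoidalproduct\anymatBTX{1}{\Sexchanged(\indi)}))$. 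Then I would compute the argument of $\thesplitting{0}$ by applying Proposition~\ref{proposition:first_order} and the right-module structure, giving $\anymatAX{\indj}{\thedim}\anymatBTX{1}{\Sexchanged(\indi)} - \Skronecker{\indj}{\thedim}\anymatBTX{1}{\Sexchanged(\indi)}$ in $\thealgebra$. Invoking the defining relation $\sum_{\inds=1}^{\thedim}\anymatAX{\indj}{\Sexchanged(\inds)}\anymatBTX{\inds}{\Sexchanged(\indi)} = \Skronecker{\indj}{\indi}\theone$ converts $\anymatAX{\indj}{\thedim}\anymatBTX{1}{\Sexchanged(\indi)}$ into $-\sum_{\inds=2}^{\thedim}\anymatAX{\indj}{\Sexchanged(\inds)}\anymatBTX{\inds}{\Sexchanged(\indi)}+\Skronecker{\indj}{\indi}\theone$, while the identity $\Skronecker{\indj}{\thedim}\anymatBTX{1}{\Sexchanged(\indi)} = \Szetafunction{\Sexchanged(\indj)}{1}\anymatBTX{\Sexchanged(\indj)}{\Sexchanged(\indi)}$ (both sides are supported only on $\indj=\thedim$, where $\Sexchanged(\indj)=1$) matches the indicator term.

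For the inductive step $\recvar \to \recvar+1$ (with $\recvar < \thedim$), the difference of the splitting-arguments at the two indices equals $\anymatAX{\indj}{\Sexchanged(\recvar+1)}\anymatBTX{\recvar+1}{\Sexchanged(\indi)} - \Skronecker{\Sexchanged(\indj)}{\recvar+1}\anymatBTX{\Sexchanged(\indj)}{\Sexchanged(\indi)}$, since $\Szetafunction{\Sexchanged(\indj)}{\recvar}-\Szetafunction{\Sexchanged(\indj)}{\recvar+1}=-\Skronecker{\Sexchanged(\indj)}{\recvar+1}$. So by $\thefield$-linearity of $\thesplitting{0}$ it suffices to show that this difference is sent to $\Sbv{\anymat}{\indj}{\recvar+1}\Smonoidalproduct\anymatBTX{\recvar+1}{\Sexchanged(\indi)}$. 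Using Lem\-ma~\ref{characterization_of_reduced_terms_of_order_two} together with the assumption $(\indj,\indi)\neq(\thedim,\thedim)$, the degree-$2$ monomial $\anymatAX{\indj}{\Sexchanged(\recvar+1)}\anymatBTX{\recvar+1}{\Sexchanged(\indi)}$ is reduced; being of top degree it is therefore the tip, with coefficient $1$, and the combinatorial splitting $\thereconcatenator{0}$ decomposes it as $(\Sbv{\anymat}{\indj}{\recvar+1},\anymatBTX{\recvar+1}{\Sexchanged(\indi)})$. Applying the recursion that defines $\thesplitting{0}$ once, the leading contribution is exactly $\Sbv{\anymat}{\indj}{\recvar+1}\Smonoidalproduct\anymatBTX{\recvar+1}{\Sexchanged(\indi)}$, and subtracting $\thedifferential{1}$ of this term—which by Proposition~\ref{proposition:first_order} yields $\anymatAX{\indj}{\Sexchanged(\recvar+1)}\anymatBTX{\recvar+1}{\Sexchanged(\indi)}-\Skronecker{\indj}{\Sexchanged(\recvar+1)}\anymatBTX{\recvar+1}{\Sexchanged(\indi)}$—reproduces the degree-$1$ counterterm via $\Skronecker{\indj}{\Sexchanged(\recvar+1)} = \Skronecker{\Sexchanged(\indj)}{\recvar+1}$ (noting that when this Kronecker symbol is $1$, one has $\Sexchanged(\indj)=\recvar+1$ so $\anymatBTX{\recvar+1}{\Sexchanged(\indi)}=\anymatBTX{\Sexchanged(\indj)}{\Sexchanged(\indi)}$), giving a zero remainder.

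The main bookkeeping obstacle is tracking the boundary term $\Szetafunction{\Sexchanged(\indj)}{\recvar}\Saction\anymatBTX{\Sexchanged(\indj)}{\Sexchanged(\indi)}$ through the induction: its indicator only jumps at $\recvar+1=\Sexchanged(\indj)$, which is precisely the same value of $\recvar$ at which the degree-$1$ counterterm $\Skronecker{\indj}{\Sexchanged(\recvar+1)}\anymatBTX{\recvar+1}{\Sexchanged(\indi)}$ created by applying $\thedifferential{1}$ to the tip becomes nonzero; the two effects cancel. A final sanity check is that the argument of $\thesplitting{0}$ lies in $\ker\thedifferential{0}$ at every stage, which follows by writing out $\thecounit$ and distinguishing the subcases $\indj\neq\indi$ and $\indj=\indi$ with $\Sexchanged(\indj)$ on either side of $\recvar$.
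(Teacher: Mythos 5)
Your proof is correct and follows essentially the same route as the paper's: induction over $\recvar$, with the base case obtained by unfolding the definition of $\thedifferential{2}$ via Proposition~\ref{proposition:combinatorial_differential}, Proposition~\ref{proposition:first_order} and the defining relation, and the inductive step by applying one step of the recursion defining $\thesplitting{0}$ to the reduced monomial $\anymatAX{\indj}{\Sexchanged(\recvar+1)}\anymatBTX{\recvar+1}{\Sexchanged(\indi)}$ (whose normality follows from Lem\-ma~\ref{characterization_of_reduced_terms_of_order_two} exactly as you argue), together with the identity $\Szetafunction{\Sexchanged(\indj)}{\recvar}+\Skronecker{\Sexchanged(\indj)}{\recvar+1}=\Szetafunction{\Sexchanged(\indj)}{\recvar+1}$. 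The only (harmless) deviation is that you exploit the $\thefield$-linearity of $\thesplitting{0}$ to apply the recursion to the two-term difference of the splitting arguments, which makes the tip identification immediate, whereas the paper identifies the tip inside the full argument by comparing against the remaining summands via the addendum to Lem\-ma~\ref{lemma:monomial_order_reformulation}.
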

  \begin{proof}
    \newcommand{\dummyindex}{k}
    The claim is proved by induction over $\recvar$. It is convenient, though, to first note that for any $\dummyindex\in\SYnumbers{\thedim}$ by Proposition~\ref{proposition:first_order}
    \begin{IEEEeqnarray*}{rCl}
      -\thedifferential{1}(\Sbv{\anymat}{\indj}{\dummyindex}\Smonoidalproduct \anymatBTX{\dummyindex}{\Sexchanged(\indi)})&=&     - \thedifferential{1}(\Sbv{\anymat}{\indj}{\dummyindex}\Smonoidalproduct \theone)\anymatBTX{\dummyindex}{\Sexchanged(\indi)}\\
      &=&-(\anymatAX{\indj}{\Sexchanged(\dummyindex)}-\Skronecker{\indj}{\Sexchanged(\dummyindex)}\theone)\Saction\anymatBTX{\dummyindex}{\Sexchanged(\indi)}\\
      &=&-\anymatAX{\indj}{\Sexchanged(\dummyindex)}\anymatBTX{\dummyindex}{\Sexchanged(\indi)}+\Skronecker{\Sexchanged(\indj)}{\dummyindex}\Saction\anymatBTX{\Sexchanged(\indj)}{\Sexchanged(\indi)}.      \IEEEyesnumber\label{eq:second_order_generic_case_helper_1}
    \end{IEEEeqnarray*}
    \par
    \emph{Induction base.} For $\recvar=1$ the claim is that
    \begin{IEEEeqnarray*}{rCl}
      \thedifferential{2}(\Sbv{\anymatA}{\indj}{\indi}\Smonoidalproduct\theone)
      &=&\textstyle\Sbv{\anymat}{\indj}{1}\Smonoidalproduct\anymatBTX{1}{\Sexchanged(\indi)}\IEEEyesnumber      \label{eq:second_order_generic_case_helper_2}\\      &&\textstyle{}+\thesplitting{0}\big(\sum_{\inds=2}^{\thedim}\anymatAX{\indj}{\Sexchanged(\inds)}\anymatBTX{\inds}{\Sexchanged(\indi)}+\Skronecker{\Sexchanged(\indj)}{1}\Saction\anymatBTX{\Sexchanged(\indj)}{\Sexchanged(\indi)}-\Skronecker{\indj}{\indi}\Saction\theone)
    \end{IEEEeqnarray*}
    because $\Szetafunction{\Sexchanged(\indj)}{1}=\Skronecker{\Sexchanged(\indj)}{1}$ by $1\leq \Sexchanged(\indj)$.\par
    \par
        Because    by Proposition~\ref{proposition:combinatorial_differential}
    \begin{IEEEeqnarray*}{rCl}
      \thedeconcatenator{2}(\Sbvfull{\anymat}{2}{\indj}{\indi})&=&(\Sbvfull{\anymatA}{1}{\indj}{1},\anymatBTX{1}{\Sexchanged(\indi)})
    \end{IEEEeqnarray*}
    the definition of $\thedifferential{2}$ implies
    \begin{IEEEeqnarray*}{rCl}
      \thedifferential{2}(\Sbv{\anymat}{\indj}{\indi}\Smonoidalproduct\theone)
&=&\textstyle\Sbv{\anymatA}{\indj}{1}\Smonoidalproduct\anymatBTX{1}{\Sexchanged(\indi)}+\thesplitting{0}(-\thedifferential{1}(\Sbv{\anymatA}{\indj}{1}\Smonoidalproduct\anymatBTX{1}{\Sexchanged(\indi)}))\IEEEyesnumber\label{eq:second_order_generic_case_helper_2a}. 
\end{IEEEeqnarray*}
Hence, it is enough to show that the vector \eqref{eq:second_order_generic_case_helper_2a} is the same as  the  argument of $\thesplitting{0}$ on the right-hand side of \eqref{eq:second_order_generic_case_helper_2}.
\par
And, indeed, specializing $\dummyindex$ to $1$  in \eqref{eq:second_order_generic_case_helper_1} gives
\begin{IEEEeqnarray*}{rCl}  
      -\thedifferential{1}(\Sbv{\anymat}{\indj}{1}\Smonoidalproduct \anymatBTX{1}{\Sexchanged(\indi)}) &=&-\anymatAX{\indj}{\thedim}\anymatBTX{1}{\Sexchanged(\indi)}+\Skronecker{\Sexchanged(\indj)}{1}\Saction\anymatBTX{\Sexchanged(\indj)}{\Sexchanged(\indi)}.\IEEEyesnumber\label{eq:second_order_generic_case_helper_3}
    \end{IEEEeqnarray*}
    Given that $-\anymatAX{\indj}{\thedim}\anymatBTX{1}{\Sexchanged(\indi)}=\sum_{\inds=2}^{\thedim}\anymatAX{\indj}{\Sexchanged(\inds)}\anymatBTX{\inds}{\Sexchanged(\indi)}-\Skronecker{\indj}{\indi}\theone$, this thus proves the claim for $\recvar=1$.
\par
\emph{Induction step.} Let \eqref{eq:second_order_generic_case_helper_0} be true for some $\recvar$ with $ \recvar\leq \thedim-1$. Then it also holds for $\recvar+1$ in place of $\recvar$ for the following reasons. Because $(\indj,\indi)\neq (\thedim,\thedim)$ and $1\neq \recvar+1$ the vector $\anymatAX{\indj}{\Sexchanged(\recvar+1)}\anymatBTX{\recvar+1}{\Sexchanged(\indi)}$ is normal by Lem\-ma~\ref{characterization_of_reduced_terms_of_order_two}. It is the tip of the argument of $\thesplitting{0}$ in \eqref{eq:second_order_generic_case_helper_0}. Indeed, any other term in the same grouped sum is less because already $\anymatAX{\indj}{\Sexchanged(\inds)}<\anymatAX{\indj}{\Sexchanged(\recvar+1)}$ for any $\inds\in\SYnumbers{\thedim}$ with $\recvar+1<\inds$, i.e., $\Sexchanged(\inds)<\Sexchanged(\recvar+1)$, by the addendum to Lem\-ma~\ref{lemma:monomial_order_reformulation}. And all remaining terms have lower degree.
\par
Because by definition
\begin{IEEEeqnarray*}{rCl}
\textstyle\thereconcatenator{0}(\anymatAX{\indj}{\Sexchanged(\recvar+1)}\anymatBTX{\recvar+1}{\Sexchanged(\indi)})=  (\Sbvfull{\anymat}{1}{\indj}{\recvar+1},\anymatBTX{\recvar+1}{\Sexchanged(\indi)})
\end{IEEEeqnarray*}
the equality in \eqref{eq:second_order_generic_case_helper_0} hence remains true if  $\Sbv{\anymat}{\indj}{\recvar+1}\Smonoidalproduct\anymatBTX{\recvar+1}{\Sexchanged(\indi)}$ is added to the right-hand side of \eqref{eq:second_order_generic_case_helper_0} outside of $\thesplitting{0}$ and 
$-\thedifferential{1}(\Sbv{\anymat}{\indj}{\recvar+1}\Smonoidalproduct\anymatBTX{\recvar+1}{\Sexchanged(\indi)})$ to the inside.
\par
That evidently changes the outside in the same way as replacing $\recvar$ by $\recvar+1$ would.
At the same time, by \eqref{eq:second_order_generic_case_helper_1} for $\dummyindex=\recvar+1$, 
    \begin{IEEEeqnarray*}{rCl}
      -\thedifferential{1}(\Sbv{\anymat}{\indj}{\recvar+1}\Smonoidalproduct \anymatBTX{\recvar+1}{\Sexchanged(\indi)}) &=&-\anymatAX{\indj}{\Sexchanged(\recvar+1)}\anymatBTX{\recvar+1}{\Sexchanged(\indi)}+\Skronecker{\Sexchanged(\indj)}{\recvar+1}\Saction\anymatBTX{\Sexchanged(\indj)}{\Sexchanged(\indi)},      \IEEEeqnarraynumspace\IEEEyesnumber\label{eq:second_order_generic_case_helper_4}
    \end{IEEEeqnarray*}
    Keeping in mind that $\Szetafunction{\Sexchanged(\indj)}{\recvar}+\Skronecker{\Sexchanged(\indj)}{\recvar+1}=\Szetafunction{\Sexchanged(\indj)}{\recvar+1}$ it is now clear that adding the vector in \eqref{eq:second_order_generic_case_helper_4} to the argument of $\thesplitting{0}$ in \eqref{eq:second_order_generic_case_helper_0} has the same effect as replacing $\recvar$ by $\recvar+1$ there. Thus, the claim is true for all $\recvar\in\SYnumbers{\thedim}$.
  \end{proof}

  \begin{proposition}
    \label{proposition:second_order_generic_case}
    For any $\anymat\in\thematrices$ and any $(\indj,\indi)\in \SYnumbers{\thedim}^{\Ssetmonoidalproduct 2}\backslash \{(\thedim,\thedim)\}$,
      \begin{IEEEeqnarray*}{rCl}    \thedifferential{2}(\Sbv{\anymat}{\indj}{\indi}\Smonoidalproduct\theone)=\textstyle\sum_{\inds=1}^\thedim\Sbv{\anymat}{\indj}{\inds}\Smonoidalproduct\anymatBTX{\inds}{\Sexchanged(\indi)}+\Sbv{\anymatBT}{\Sexchanged(\indj)}{\indi}\Smonoidalproduct\theone.\IEEEyesnumber\label{eq:second_order_generic_case_0}
  \end{IEEEeqnarray*}
\end{proposition}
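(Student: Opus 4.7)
The plan is to specialize Lemma~\ref{lemma:second_order_generic_case_helper} to $\recvar=\thedim$. For that value the inner sum $\sum_{\inds=\thedim+1}^{\thedim}\anymatAX{\indj}{\Sexchanged(\inds)}\anymatBTX{\inds}{\Sexchanged(\indi)}$ is empty and, since $1\leq\Sexchanged(\indj)\leq\thedim$, the constant $\Szetafunction{\Sexchanged(\indj)}{\thedim}$ equals $1$. Comparing the resulting formula with the claimed identity \eqref{eq:second_order_generic_case_0}, what remains to be shown is exactly that
\begin{IEEEeqnarray*}{rCl}
  \thesplitting{0}\bigl(\anymatBTX{\Sexchanged(\indj)}{\Sexchanged(\indi)}-\Skronecker{\indj}{\indi}\Saction\theone\bigr)&=&\Sbv{\anymatBT}{\Sexchanged(\indj)}{\indi}\Smonoidalproduct\theone.
\end{IEEEeqnarray*}

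To establish this I will unroll one step of the recursive definition of $\thesplitting{0}$ from Section~\ref{section:anick-anick_resolution}, using the identification $\thefield\thechains{0}\Smonoidalproduct\thealgebra\cong\thealgebra$. Let $\anykernelvector\Seqpd \anymatBTX{\Sexchanged(\indj)}{\Sexchanged(\indi)}-\Skronecker{\indj}{\indi}\Saction\theone$. Applying $\thedifferential{0}=\thecounit$ and observing that $\thecounit(\anymatBTX{\Sexchanged(\indj)}{\Sexchanged(\indi)}+\theideal)=\Skronecker{\Sexchanged(\indj)}{\Sexchanged(\indi)}=\Skronecker{\indj}{\indi}$ confirms $\anykernelvector\in\Sker\thedifferential{0}$. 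With respect to $\theorder$ the tip of $\anykernelvector$ is the generator $\anymatBTX{\Sexchanged(\indj)}{\Sexchanged(\indi)}$, of coefficient $1$, because it has strictly greater degree than $\theone$. The pair $(\theone,\anymatBTX{\Sexchanged(\indj)}{\Sexchanged(\indi)})$ belongs to $\thechaincycles{0}$, and the description of $\thereconcatenator{0}$ given above the proposition yields $\thereconcatenator{0}(\theone,\anymatBTX{\Sexchanged(\indj)}{\Sexchanged(\indi)})=(\anymatBTX{\Sexchanged(\indj)}{\Sexchanged(\indi)},\theone)$. By definition $\Sbv{\anymatBT}{\Sexchanged(\indj)}{\indi}=\anymatBTX{\Sexchanged(\indj)}{\Sexchanged(\indi)}$, so the recursion yields
\begin{IEEEeqnarray*}{rCl}
  \thesplitting{0}(\anykernelvector)&=&\Sbv{\anymatBT}{\Sexchanged(\indj)}{\indi}\Smonoidalproduct\theone+\thesplitting{0}\bigl(\anykernelvector-\thedifferential{1}(\Sbv{\anymatBT}{\Sexchanged(\indj)}{\indi}\Smonoidalproduct\theone)\bigr).
\end{IEEEeqnarray*}

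The final step is to observe that Proposition~\ref{proposition:first_order} computes $\thedifferential{1}(\Sbv{\anymatBT}{\Sexchanged(\indj)}{\indi}\Smonoidalproduct\theone)=\anymatBTX{\Sexchanged(\indj)}{\Sexchanged(\indi)}-\Skronecker{\Sexchanged(\indj)}{\Sexchanged(\indi)}\Saction\theone$, which equals $\anykernelvector$ since $\Sexchanged$ is an involution. Hence the residual kernel vector passed to $\thesplitting{0}$ vanishes, the recursion terminates after a single iteration, and $\thesplitting{0}(\anykernelvector)=\Sbv{\anymatBT}{\Sexchanged(\indj)}{\indi}\Smonoidalproduct\theone$, as required. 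Substituting into Lemma~\ref{lemma:second_order_generic_case_helper} with $\recvar=\thedim$ then produces \eqref{eq:second_order_generic_case_0}.

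The main obstacle is purely bookkeeping: correctly reading off the tip and its preimage under $\thereconcatenator{0}$, and verifying that $\thedifferential{1}$ maps the resulting chain-module element back onto the original kernel vector so that the Anick splitting recursion closes immediately without further iterations. No non-trivial computations are involved beyond specializing the previous lemma.
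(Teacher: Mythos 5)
Your proposal is correct and follows essentially the same route as the paper: specialize Lemma~\ref{lemma:second_order_generic_case_helper} at $m=n$ (using $\Szetafunction{\Sexchanged(\indj)}{\thedim}=1$), identify $\anymatBTX{\Sexchanged(\indj)}{\Sexchanged(\indi)}$ as the tip of the argument of $\thesplitting{0}$, apply $\thereconcatenator{0}$ and one step of the splitting recursion, and observe via Proposition~\ref{proposition:first_order} that the residual argument of $\thesplitting{0}$ vanishes. The only cosmetic difference is that you phrase the step as explicitly unrolling the recursive definition of $\thesplitting{0}$, whereas the paper phrases it as adding a term outside and its negated image under $\thedifferential{1}$ inside, which is the same manipulation.
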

\begin{proof}
In the special  $\recvar=\thedim$ the identity \eqref{eq:second_order_generic_case_helper_0} in Lem\-ma~\ref{lemma:second_order_generic_case_helper} reads
  \begin{IEEEeqnarray*}{rCl}
\thedifferential{2}(\Sbv{\anymat}{\indj}{\indi}\Smonoidalproduct\theone)
&=&\textstyle\sum_{\inds=1}^\thedim\Sbv{\anymat}{\indj}{\inds}\Smonoidalproduct\anymatBTX{\inds}{\Sexchanged(\indi)}+\thesplitting{0}(\anymatBTX{\Sexchanged(\indj)}{\Sexchanged(\indi)}-\Skronecker{\indj}{\indi}\Saction\theone).\IEEEyesnumber\label{eq:second_order_generic_case_1}
\end{IEEEeqnarray*}
because $\Szetafunction{\Sexchanged(\indj)}{\thedim}=1$.
\par
Since $\anymatBTX{\Sexchanged(\indj)}{\Sexchanged(\indi)}$ is normal and greater than $\theone$ it is the tip of the argument of $\thesplitting{0}$.
Because it was defined that
\begin{IEEEeqnarray*}{rCl}
\textstyle\thereconcatenator{0}(\anymatBTX{\Sexchanged(\indj)}{\Sexchanged(\indi)})=(\Sbvfull{\anymatBT}{1}{\Sexchanged
  (\indj)}{\indi} ,\theone)
\end{IEEEeqnarray*}
 the veracity of \eqref{eq:second_order_generic_case_1} is hence not affected by adding on the right-hand side $\Sbv{\anymatBT}{\Sexchanged
  (\indj)}{\indi}\Smonoidalproduct\theone$ to the outside of $\thesplitting{0}$ and $-\thedifferential{1}(\Sbv{\anymatBT}{\Sexchanged
  (\indj)}{\indi}\Smonoidalproduct\theone)$ to the inside.
\par
Since by Proposition~\ref{proposition:first_order}
\begin{IEEEeqnarray*}{rCl}
 -\thedifferential{1}(\Sbv{\anymatBT}{\Sexchanged
  (\indj)}{\indi}\Smonoidalproduct\theone)=-\anymatBTX{\Sexchanged(\indj)}{\Sexchanged(\indi)}+\Skronecker{\indj}{\indi}\Saction\theone 
\end{IEEEeqnarray*}
 the outcome of that addition is precisely \eqref{eq:second_order_generic_case_0}.
\end{proof}
  
\subsubsection{Special case}
It remains to compute $\thedifferential{2}(\Sbv{\anymat}{\thedim}{\thedim}\Smonoidalproduct\theone)$ for any $\anymat\in\thematrices$. Actually, since $\Sbv{\anymat}{\thedim}{\thedim}=\anymatAX{\thedim}{\thedim}\anymatBTX{1}{1}$ is invariant under replacing $\anymat$ with $\anymat\StransposeP$ it is enough to consider $\anymat\in \{\theuni,\theuni\SskewstarP\}$, which  greatly simplifies the proof. This time exactly $\thedim^2$ steps are necessary to solve the recursion.
\begin{lemma}
  \label{lemma:second_order_special_case_first_step}
  For any $\anymat\in \{\theuni,\theuni\SskewstarP\}$ and any $\{\outerrecvar,\recvar\}\subseteq\SYnumbers{\thedim}$ with $2\leq \outerrecvar$,
  \begin{IEEEeqnarray*}{rCl}
    \IEEEeqnarraymulticol{3}{l}{
      \thedifferential{2}(\Sbv{\anymat}{\thedim}{\thedim}\Smonoidalproduct \theone)
    }\\    \hspace{1em}&=&\textstyle\Sbv{\anymat}{\thedim}{1}\Smonoidalproduct\anymatBTX{1}{1}-\sum_{\inds=2}^{\thedim}\sum_{\indt=\outerrecvar}^{\thedim-1}\Sbv{\anymat}{\indt}{\inds}\Smonoidalproduct\anymatBTX{\inds}{\Sexchanged(\indt)}    \IEEEyesnumber
    \label{eq:second_order_special_case_first_step_0}\\    &&\textstyle\hspace{13.25em}{}-\sum_{\inds=2}^{\recvar}\Sbv{\anymat}{\outerrecvar-1}{\inds}\Smonoidalproduct\anymatBTX{\inds}{\Sexchanged(\outerrecvar-1)}\\    &&\textstyle{}+\thesplitting{0}\big({-}\sum_{\inds=\recvar+1}^{\thedim}\anymatX{\outerrecvar-1}{\Sexchanged(\inds)}\anymatBTX{\inds}{\Sexchanged(\outerrecvar-1)}-\sum_{\inds=2}^{\thedim}\sum_{\indt=1}^{\outerrecvar-2}\anymatAX{\indt}{\Sexchanged(\inds)}\anymatBTX{\inds}{\Sexchanged(\indt)}\\    &&\textstyle\hspace{2.1875em}{}-\sum_{\inds=\outerrecvar}^{\thedim-1}\anymatBTX{\Sexchanged(\inds)}{\Sexchanged(\inds)}-\Szetafunction{\Sexchanged(\outerrecvar-1)}{\recvar}\Saction\anymatBTX{\Sexchanged(\outerrecvar-1)}{\Sexchanged(\outerrecvar-1)}+\anymatBTX{1}{1}\\
    &&\textstyle\hspace{18em}{}+(\thedim-2)\Saction\theone\big).
  \end{IEEEeqnarray*}  
\end{lemma}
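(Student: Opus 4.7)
The strategy is a double induction on $(\outerrecvar, \recvar)$, in which $\outerrecvar$ decreases from $\thedim$ down to $2$ and, for each fixed $\outerrecvar$, the index $\recvar$ increases from $1$ up to $\thedim$. For the base case $(\outerrecvar, \recvar) = (\thedim, 1)$ the plan is a direct computation. Proposition~\ref{proposition:combinatorial_differential} gives $\thedeconcatenator{2}(\Sbvfull{\anymat}{2}{\thedim}{\thedim}) = (\Sbvfull{\anymat}{1}{\thedim}{1}, \anymatBTX{1}{1})$, so the defining recursion of Section~\ref{section:anick-anick_resolution} together with Proposition~\ref{proposition:first_order} produces $\thedifferential{2}(\Sbv{\anymat}{\thedim}{\thedim}\Smonoidalproduct\theone) = \Sbv{\anymat}{\thedim}{1}\Smonoidalproduct\anymatBTX{1}{1} - \thesplitting{0}(\anymatAX{\thedim}{\thedim}\anymatBTX{1}{1} - \anymatBTX{1}{1})$. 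Since $\specbv{\anymat}\in\theideal$, the monomial $\anymatAX{\thedim}{\thedim}\anymatBTX{1}{1}$ coincides in $\thealgebra$ with $\sum_{\inds=2}^{\thedim}\sum_{\indt=1}^{\thedim-1}\anymatAX{\indt}{\Sexchanged(\inds)}\anymatBTX{\inds}{\Sexchanged(\indt)} - (\thedim-2)\theone$. After substituting and noting that $\Szetafunction{\Sexchanged(\thedim-1)}{1} = \Szetafunction{2}{1} = 0$ while the outer and partial sums at $(\thedim, 1)$ are empty, the claimed identity falls out.

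For the inner step from $(\outerrecvar, \recvar)$ to $(\outerrecvar, \recvar+1)$ with $\recvar < \thedim$, the plan is to apply the splitting recursion of Section~\ref{section:anick-anick_resolution} to the argument of $\thesplitting{0}$. By Lemma~\ref{lemma:monomial_order_reformulation} and its addendum its $\theorder$\-/maximal summand is $\anymatAX{\outerrecvar-1}{\Sexchanged(\recvar+1)}\anymatBTX{\recvar+1}{\Sexchanged(\outerrecvar-1)}$ with coefficient $-1$; Lemma~\ref{characterization_of_reduced_terms_of_order_two} certifies that it is normal, because the bounds $\recvar+1\geq 2$ and $\outerrecvar-1\leq \thedim-1$ exclude both obstructions. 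The combinatorial splitting then returns $(\Sbv{\anymat}{\outerrecvar-1}{\recvar+1}, \anymatBTX{\recvar+1}{\Sexchanged(\outerrecvar-1)})$, so peeling this tip off contributes $-\Sbv{\anymat}{\outerrecvar-1}{\recvar+1}\Smonoidalproduct\anymatBTX{\recvar+1}{\Sexchanged(\outerrecvar-1)}$ outside $\thesplitting{0}$ and, via Proposition~\ref{proposition:first_order}, adds $\anymatAX{\outerrecvar-1}{\Sexchanged(\recvar+1)}\anymatBTX{\recvar+1}{\Sexchanged(\outerrecvar-1)} - \Skronecker{\outerrecvar-1}{\Sexchanged(\recvar+1)}\anymatBTX{\recvar+1}{\Sexchanged(\outerrecvar-1)}$ inside it. The leading term cancels the extracted tip, while the residual Kronecker contribution is nonzero exactly when $\recvar+1 = \Sexchanged(\outerrecvar-1)$, in which case it equals $-\anymatBTX{\Sexchanged(\outerrecvar-1)}{\Sexchanged(\outerrecvar-1)}$ and matches the unique jump $\Szetafunction{\Sexchanged(\outerrecvar-1)}{\recvar+1} - \Szetafunction{\Sexchanged(\outerrecvar-1)}{\recvar}$ in the coefficient of $\anymatBTX{\Sexchanged(\outerrecvar-1)}{\Sexchanged(\outerrecvar-1)}$.

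The outer step from $(\outerrecvar, \thedim)$ to $(\outerrecvar-1, 1)$ for $\outerrecvar\geq 3$ should be purely bookkeeping: the now\-/complete partial sum at row $\outerrecvar-1$ merges into the outer sum, extending its $\indt$\-/range to start at $\outerrecvar-1$; the interior double sum over $\indt\in\{1,\ldots,\outerrecvar-2\}$ splits off a new single\-/row sum at row $\outerrecvar-2$; the coefficient $\Szetafunction{\Sexchanged(\outerrecvar-1)}{\thedim}=1$ extends the diagonal sum by the term with $\inds = \outerrecvar-1$; and the freshly appearing $\Szetafunction{\Sexchanged(\outerrecvar-2)}{1}$ vanishes because $\Sexchanged(\outerrecvar-2) \geq 3$ for $\outerrecvar\leq \thedim$. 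The main obstacle will be the inner step, specifically verifying that the Kronecker delta contributed by $\thedifferential{1}$ corresponds exactly to the single possible increment of the step function $\Szetafunction$ across $\recvar \mapsto \recvar+1$; once that correspondence is pinned down, everything else reduces to routine rearrangement of sums.
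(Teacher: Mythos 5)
Your proposal is correct and follows essentially the same route as the paper's proof: a nested induction (outer over $\outerrecvar$ descending from $\thedim$, inner over $\recvar$ ascending), with the base case obtained from Proposition~\ref{proposition:combinatorial_differential}, Proposition~\ref{proposition:first_order} and the relation $\specbv{\anymat}$, the inner step by peeling the tip $\anymatAX{\outerrecvar-1}{\Sexchanged(\recvar+1)}\anymatBTX{\recvar+1}{\Sexchanged(\outerrecvar-1)}$ off the argument of $\thesplitting{0}$ via Lemmata~\ref{characterization_of_reduced_terms_of_order_two} and \ref{lemma:monomial_order_reformulation} together with the $\Szetafunction{\Sexchanged(\outerrecvar-1)}{\recvar}+\Skronecker{\Sexchanged(\outerrecvar-1)}{\recvar+1}=\Szetafunction{\Sexchanged(\outerrecvar-1)}{\recvar+1}$ bookkeeping, and the passage $(\outerrecvar,\thedim)\to(\outerrecvar-1,1)$ by mere rearrangement of sums. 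The paper packages the last transition as the inner induction base of the outer step rather than a separate bookkeeping move, but the content is identical.
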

\begin{proof}
  The claim is proved by nested induction, an outer one over $\Sexchanged(\outerrecvar)$ and an inner one over $\recvar$.
  \par
  \emph{Part~1: Outer induction base.} As base case $\outerrecvar=\thedim$ what needs to be shown is that for any $\recvar\in\SYnumbers{\thedim}$,
  \begin{IEEEeqnarray*}{rCl}
    \IEEEeqnarraymulticol{3}{l}{
      \thedifferential{2}(\Sbv{\anymat}{\thedim}{\thedim}\Smonoidalproduct \theone)
    }\\
    \hspace{1em}&=&\textstyle\Sbv{\anymat}{\thedim}{1}\Smonoidalproduct\anymatBTX{1}{1}-\sum_{\inds=2}^{\recvar}\Sbv{\anymat}{\thedim-1}{\inds}\Smonoidalproduct\anymatBTX{\inds}{\Sexchanged(\thedim-1)} \IEEEeqnarraynumspace\IEEEyesnumber
    \label{eq:second_order_special_case_first_step_1}\\
&&\textstyle{}+\thesplitting{0}\big({-}\sum_{\inds=\recvar+1}^{\thedim}\anymatX{\thedim-1}{\Sexchanged(\inds)}\anymatBTX{\inds}{\Sexchanged(\thedim-1)}-\sum_{\inds=2}^{\thedim}\sum_{\indt=1}^{\thedim-2}\anymatAX{\indt}{\Sexchanged(\inds)}\anymatBTX{\inds}{\Sexchanged(\indt)}\\    &&\textstyle\hspace{2.1875em}{}-\Szetafunction{\Sexchanged(\thedim-1)}{\recvar}\Saction\anymatBTX{\Sexchanged(\thedim-1)}{\Sexchanged(\thedim-1)}+\anymatBTX{1}{1}+(\thedim-2)\Saction\theone\big).
  \end{IEEEeqnarray*}    
  \par
  \emph{Part~1.1: Inner induction base.} In particular, as the start $\recvar=1$ it is claimed that
  \begin{IEEEeqnarray*}{rCl}
    \IEEEeqnarraymulticol{3}{l}{
      \thedifferential{2}(\Sbv{\anymat}{\thedim}{\thedim}\Smonoidalproduct \theone)
    }\\    \hspace{.5em}&=&\textstyle\Sbv{\anymat}{\thedim}{1}\Smonoidalproduct\anymatBTX{1}{1}+\thesplitting{0}\big({-}\sum_{\inds=2}^{\thedim}\sum_{\indt=1}^{\thedim-1}\anymatAX{\indt}{\Sexchanged(\inds)}\anymatBTX{\inds}{\Sexchanged(\indt)}+\anymatBTX{1}{1}+(\thedim-2)\Saction\theone \big).
  \end{IEEEeqnarray*}      
  where the two grouped sums in the argument of $\thesplitting{0}$ appearing in \eqref{eq:second_order_special_case_first_step_1} have been comined into one and  $\Szetafunction{\Sexchanged(\thedim-1)}{1}=0$ has been used. And, indeed, because by Proposition~\ref{proposition:combinatorial_differential}
  \begin{IEEEeqnarray*}{rCl}
\thedeconcatenator{2}( \Sbvfull{\anymat}{2}{\thedim}{\thedim})=(\Sbvfull{\anymat}{1}{\thedim}{1}, \anymatBTX{1}{1})
  \end{IEEEeqnarray*}
  and because by Proposition~\ref{proposition:first_order}
  \begin{IEEEeqnarray*}{rCl}
    -\thedifferential{1}(\Sbv{\anymat}{\thedim}{1}\Smonoidalproduct \theone)\anymatBTX{1}{1}&=&-(\anymatAX{\thedim}{\thedim}-\theone)\anymatBTX{1}{1}\\
    &=&-\anymatAX{\thedim}{\thedim}\anymatBTX{1}{1}+\anymatBTX{1}{1}\\
    &=&\textstyle-(\sum_{\inds=2}^{\thedim}\sum_{\indt=1}^{\thedim-1}\anymatAX{\indt}{\Sexchanged(\inds)}\anymatBTX{\inds}{\Sexchanged(\indt)} -(\thedim-2)\Saction\theone)+\anymatBTX{1}{1}
  \end{IEEEeqnarray*}
  that is what the definition
  \begin{IEEEeqnarray*}{rCl}
\thedifferential{2}(\Sbv{\anymat}{\thedim}{\thedim}\Smonoidalproduct \theone)=\Sbv{\anymat}{\thedim}{1}\Smonoidalproduct \anymatBTX{1}{1}+\thesplitting{0}(    -\thedifferential{1}(\Sbv{\anymat}{\thedim}{1}\Smonoidalproduct \anymatBTX{1}{}))
  \end{IEEEeqnarray*}
  implies.
  \par
  \emph{Part~1.2: Inner induction step.}    Now, suppose \eqref{eq:second_order_special_case_first_step_1} holds for some $\recvar\in\SYnumbers{\thedim}$ with $ \recvar\leq \thedim-1$. By the ensuing argument \eqref{eq:second_order_special_case_first_step_1} remains true if $\recvar$ is replaced by $\recvar+1$.
  \par
  By Lem\-ma~\ref{characterization_of_reduced_terms_of_order_two} the vector $\anymatAX{\thedim-1}{\Sexchanged(\recvar+1)}\anymatBTX{\recvar+1}{\Sexchanged(\thedim-1)}$ is normal. It is moreover the tip of the argument of $\thesplitting{0}$ \eqref{eq:second_order_special_case_first_step_1} as explained hereafter. For degree reasons the only monomials which could possibly be greater or equal appear in the third line of \eqref{eq:second_order_special_case_first_step_1}. More precisely, the other summands in this line are precisely the $\anymatAX{\indt}{\Sexchanged(\inds)}\anymatBTX{\inds}{\Sexchanged(\indt)}$ for $\{\indt,\inds\}\subseteq \SYnumbers{\thedim}$ with  $\inds\neq 1$ such that either $\indt<\thedim-1$ or both $\indt=\thedim-1$ and $\recvar+1<\inds$, which is to say with $(\indt,\Sexchanged(\inds))\lexorderRstrict(\thedim-1,\Sexchanged(\recvar+1))$. Because $\anymat\in\{\theuni,\theuni\SskewstarP\}$ was assumed, Lem\-ma~\ref{lemma:monomial_order_reformulation} proves that already $\anymatAX{\indt}{\Sexchanged(\inds)}\theorderRstrict\anymatAX{\thedim-1}{\Sexchanged(\recvar+1)}$ for those $\inds$ and $\indt$, which then  ensures $\anymatAX{\indt}{\Sexchanged(\inds)}\anymatBTX{\inds}{\Sexchanged(\indt)}\theorderRstrict\anymatAX{\thedim-1}{\Sexchanged(\recvar+1)}\anymatBTX{\recvar+1}{\Sexchanged(\thedim-1)}$.  
  \par
  Because by definition
  \begin{IEEEeqnarray*}{rCl}
\textstyle\thereconcatenator{0}(\anymatAX{\thedim-1}{\Sexchanged(\recvar+1)}\anymatBTX{\recvar+1}{\Sexchanged(\thedim-1)})=(\Sbvfull{\anymat}{1}{\thedim-1}{\recvar+1},\anymatBTX{\recvar+1}{\Sexchanged(\thedim-1)})
  \end{IEEEeqnarray*}
  the identity in \eqref{eq:second_order_special_case_first_step_1} is hence preserved if  $-\Sbv{\anymat}{\thedim-1}{\recvar+1}\Smonoidalproduct\anymatBTX{\recvar+1}{\Sexchanged(\thedim-1)}$ is added outside of $\thesplitting{0}$ and $\thedifferential{1}(\Sbv{\anymat}{\thedim-1}{\recvar+1}\Smonoidalproduct \anymatBTX{\recvar+1}{\Sexchanged(\thedim-1)})$ inside on the right-hand side.
  \par
  This addition evidently transforms  the exterior of $\thesplitting{0}$ in \eqref{eq:second_order_special_case_first_step_1} in the same way as replacing $\recvar$ by $\recvar+1$ would. At the same time, adding
  \begin{IEEEeqnarray*}{rCl}
    \IEEEeqnarraymulticol{3}{l}{ \thedifferential{1}(\Sbv{\anymat}{\thedim-1}{\recvar+1}\Smonoidalproduct\anymatBTX{\recvar+1}{\Sexchanged(\thedim-1)})
    }\\
    \hspace{3em}&=&(\anymatAX{\thedim-1}{\Sexchanged(\recvar+1)}-\Skronecker{\thedim-1}{\Sexchanged(\recvar+1)}\Saction \theone)\anymatBTX{\recvar+1}{\Sexchanged(\thedim-1)}\\    &=&\anymatAX{\thedim-1}{\Sexchanged(\recvar+1)}\anymatBTX{\recvar+1}{\Sexchanged(\thedim-1)}-\Skronecker{\Sexchanged(\thedim-1)}{\recvar+1} \Saction\anymatBTX{\Sexchanged(\thedim-1)}{\Sexchanged(\thedim-1)}.
  \end{IEEEeqnarray*}
to the argument of $\thesplitting{0}$ in \eqref{eq:second_order_special_case_first_step_1} has the same effect there because $\Szetafunction{\Sexchanged(\thedim-1)}{\recvar}+\Skronecker{\Sexchanged(\thedim-1)}{\recvar+1}=\Szetafunction{\Sexchanged(\outerrecvar-1)}{\recvar+1}$. In conclusion, the base case for the outer induction has been established.
  \par
  \emph{Part~2: Outer induction step.} Next, suppose \eqref{eq:second_order_special_case_first_step_0} holds for some $\outerrecvar\in\SYnumbers{\thedim}$ with $3\leq \outerrecvar$ and all $\recvar\in \SYnumbers{\thedim}$. It remains to  show that \eqref{eq:second_order_special_case_first_step_0} is true with $\outerrecvar-1$ in place of $\outerrecvar$, i.e., that for any $\recvar\in\SYnumbers{\thedim}$,
  \begin{IEEEeqnarray*}{rCl}
    \IEEEeqnarraymulticol{3}{l}{
      \thedifferential{2}(\Sbv{\anymat}{\thedim}{\thedim}\Smonoidalproduct \theone)
    }\\    \hspace{1em}&=&\textstyle\Sbv{\anymat}{\thedim}{1}\Smonoidalproduct\anymatBTX{1}{1}-\sum_{\inds=2}^{\thedim}\sum_{\indt=\outerrecvar-1}^{\thedim-1}\Sbv{\anymat}{\indt}{\inds}\Smonoidalproduct\anymatBTX{\inds}{\Sexchanged(\indt)}    \IEEEyesnumber
    \label{eq:second_order_special_case_first_step_2}\\
    &&\textstyle\hspace{13em}{}-\sum_{\inds=2}^{\recvar}\Sbv{\anymat}{\outerrecvar-2}{\inds}\Smonoidalproduct\anymatBTX{\inds}{\Sexchanged(\outerrecvar-2)}\\    &&\textstyle{}+\thesplitting{0}\big({-}\sum_{\inds=\recvar+1}^{\thedim}\anymatX{\outerrecvar-2}{\Sexchanged(\inds)}\anymatBTX{\inds}{\Sexchanged(\outerrecvar-2)}-\sum_{\inds=2}^{\thedim}\sum_{\indt=1}^{\outerrecvar-3}\anymatAX{\indt}{\Sexchanged(\inds)}\anymatBTX{\inds}{\Sexchanged(\indt)}\\    &&\textstyle\hspace{2.1875em}{}-\sum_{\inds=\outerrecvar-1}^{\thedim-1}\anymatBTX{\Sexchanged(\inds)}{\Sexchanged(\inds)}-\Szetafunction{\Sexchanged(\outerrecvar-2)}{\recvar}\Saction\anymatBTX{\Sexchanged(\outerrecvar-2)}{\Sexchanged(\outerrecvar-2)}+\anymatBTX{1}{1}\\
    &&\textstyle\hspace{18em}{}+(\thedim-2)\theone\big).
  \end{IEEEeqnarray*}    
  \par
  \emph{Part~2.1: Inner induction base.} As the base case $\recvar=1$ this means proving that
  \begin{IEEEeqnarray*}{rCl}
\thedifferential{2}(\Sbv{\anymat}{\thedim}{\thedim}\Smonoidalproduct \theone)&=&\textstyle\Sbv{\anymat}{\thedim}{1}\Smonoidalproduct\anymatBTX{1}{1}-\sum_{\inds=2}^{\thedim}\sum_{\indt=\outerrecvar-1}^{\thedim-1}\Sbv{\anymat}{\indt}{\inds}\Smonoidalproduct\anymatBTX{\inds}{\Sexchanged(\indt)}    \IEEEyesnumber     \label{eq:second_order_special_case_first_step_3}\\
&&\textstyle{}+\thesplitting{0}\big({-}\sum_{\inds=2}^{\thedim}\sum_{\indt=1}^{\outerrecvar-2}\anymatAX{\indt}{\Sexchanged(\inds)}\anymatBTX{\inds}{\Sexchanged(\indt)}\\
&&\textstyle\hspace{2.25em}{}-\sum_{\inds=\outerrecvar-1}^{\thedim-1}\anymatBTX{\Sexchanged(\inds)}{\Sexchanged(\inds)}+\anymatBTX{1}{1}+(\thedim-2)\theone\big).
  \end{IEEEeqnarray*}
  where, again, two grouped sums in the argument of $\thesplitting{0}$ in \eqref{eq:second_order_special_case_first_step_2} have been combined into one and where it has been used that  $\outerrecvar\leq\thedim$ implies $\outerrecvar-2\leq \thedim-2$ and thus $3\leq \Sexchanged(\outerrecvar-2)$, which is why  $\Szetafunction{\Sexchanged(\outerrecvar-2)}{1}=0$.
  \par
  And that is precisely what the special case $\recvar=\thedim$ of the induction hypothesis \eqref{eq:second_order_special_case_first_step_0} says: 
The two grouped sums on the exterior of $\thesplitting{0}$ can be combined into one. The first grouped sum in the fourth line vanishes. And because, of course,  $\Szetafunction{\Sexchanged(\outerrecvar-1)}{\thedim}=1$ the terms $-\sum_{\inds=\outerrecvar}^{\thedim-1}\anymatBTX{\Sexchanged(\inds)}{\Sexchanged(\inds)}-\Szetafunction{\Sexchanged(\outerrecvar-1)}{\thedim}\Saction\anymatBTX{\Sexchanged(\outerrecvar-1)}{\Sexchanged(\outerrecvar-1)}$ equal $-\sum_{\inds=\outerrecvar-1}^{\thedim-1}\anymatBTX{\Sexchanged(\inds)}{\Sexchanged(\inds)}$.
  \par
  \emph{Part~2.2: Inner induction step.} Finally, let \eqref{eq:second_order_special_case_first_step_2} be true for some $\recvar\in\SYnumbers{\thedim}$ with $ \recvar\leq\thedim-1$. It follows the proof that \eqref{eq:second_order_special_case_first_step_2} is still true if  $\recvar$ is replaced by $\recvar+1$.
  \par
  The vector   $\anymatAX{\outerrecvar-2}{\Sexchanged(\recvar+1)}\anymatBTX{\recvar+1}{\Sexchanged(\outerrecvar-2)}$ is normal by Lem\-ma~\ref{characterization_of_reduced_terms_of_order_two}. It is  the tip of the argument of $\thesplitting{0}$ in \eqref{eq:second_order_special_case_first_step_2}. Indeed, already by degree it is greater than any monomial appearing in the last two lines of \eqref{eq:second_order_special_case_first_step_2}. The remaining summands inside $\thesplitting{0}$ are exactly all $\anymatAX{\indt}{\Sexchanged(\inds)}\anymatBTX{\inds}{\Sexchanged(\indt)}$ for $\{\inds,\indt\}\subseteq \SYnumbers{\thedim}$ with $\inds\neq 1$ such that either $\indt<\outerrecvar-2$ or both $\indt=\outerrecvar-2$ and $\recvar+1<\inds$. Since $(\indt,\Sexchanged(\inds))\lexorderRstrict(\outerrecvar-2,\Sexchanged(\recvar+1))$ and thus $\anymatAX{\indt}{\Sexchanged(\inds)}\theorderRstrict\anymatAX{\outerrecvar-2}{\Sexchanged(\recvar+1)}$ by Lem\-ma~\ref{lemma:monomial_order_reformulation} also $\anymatAX{\indt}{\Sexchanged(\inds)}\anymatBTX{\inds}{\Sexchanged(\indt)}<\anymatAX{\outerrecvar-2}{\Sexchanged(\recvar+1)}\anymatBTX{\recvar+1}{\Sexchanged(\outerrecvar-2)}$. 
  \par
  Consequently, given that by definition
  \begin{IEEEeqnarray*}{rCl}
\textstyle\thereconcatenator{0}(\anymatAX{\outerrecvar-2}{\Sexchanged(\recvar+1)}\anymatBTX{\recvar+1}{\Sexchanged(\outerrecvar-2)})=(\Sbvfull{\anymat}{1}{\outerrecvar-2}{\recvar+1},\anymatBTX{\recvar+1}{\Sexchanged(\outerrecvar-2)}),
  \end{IEEEeqnarray*}
the equation resulting from \eqref{eq:second_order_special_case_first_step_2} by adding on the right-hand side  $-\Sbv{\anymat}{\outerrecvar-2}{\recvar+1}\Smonoidalproduct\anymatBTX{\recvar+1}{\Sexchanged(\outerrecvar-2)}$ outside $\thesplitting{0}$ and $\thedifferential{1}(-\Sbv{\anymat}{\outerrecvar-2}{\recvar+1}\Smonoidalproduct\anymatBTX{\recvar+1}{\Sexchanged(\outerrecvar-2)})$ inside is true as well.
  \par 
Said addition effectively increases $\recvar$ by $1$ in \eqref{eq:second_order_special_case_first_step_2} outside of $\thesplitting{0}$. Simultaneously, by Proposition~\ref{proposition:first_order}
  \begin{IEEEeqnarray*}{rCl}
    \IEEEeqnarraymulticol{3}{l}{ \thedifferential{1}(\Sbv{\anymat}{\outerrecvar-2}{\recvar+1}\Smonoidalproduct\anymatBTX{\recvar+1}{\Sexchanged(\outerrecvar-2)})
}\\      \hspace{3em}&=&(\anymatAX{\outerrecvar-2}{\Sexchanged(\recvar+1)}-\Skronecker{\outerrecvar-2}{\Sexchanged(\recvar+1)}\Saction\theone )\anymatBTX{\recvar+1}{\Sexchanged(\outerrecvar-2)}\\
    &=&\anymatAX{\outerrecvar-2}{\Sexchanged(\recvar+1)}\anymatBTX{\recvar+1}{\Sexchanged(\outerrecvar-2)}-\Skronecker{\Sexchanged(\outerrecvar-2)}{\recvar+1}\Saction\anymatBTX{\Sexchanged(\outerrecvar-2)}{\Sexchanged(\outerrecvar-2)}\IEEEyesnumber\label{eq:second_order_special_case_first_step_4}
  \end{IEEEeqnarray*}
  and $\Szetafunction{\Sexchanged(\outerrecvar-2)}{\recvar}+\Skronecker{\Sexchanged(\recvar+1)}{\outerrecvar-2}=\Szetafunction{\Sexchanged(\outerrecvar-2)}{\recvar+1}$. An inspection of the argument of $\thesplitting{0}$ in \eqref{eq:second_order_special_case_first_step_2} therefore proves that the addition of the term  \eqref{eq:second_order_special_case_first_step_4} has the same effect as replacing $\recvar$ by $\recvar+1$ there. Thus, \eqref{eq:second_order_special_case_first_step_2} is still true for $\recvar+1$ instead of $\recvar$, which concludes the proof of \eqref{eq:second_order_special_case_first_step_0} for arbitrary $\outerrecvar$ and $\recvar$.
\end{proof}

  \begin{lemma}
    \label{lemma:second_order_special_case_second_step}
  For any $\anymat\in\{\theuni,\theuni\SskewstarP\}$ and any $\recvar\in\SYnumbers{\thedim}$ with $\recvar\leq \thedim-1$,
  \begin{IEEEeqnarray*}{rCl}
    \IEEEeqnarraymulticol{3}{l}{
      \thedifferential{2}(\Sbv{\anymat}{\thedim}{\thedim}\Smonoidalproduct\theone)
      }\\
\hspace{1em}&=&\textstyle\Sbv{\anymatA}{\thedim}{1}\Smonoidalproduct\anymatBTX{1}{1}-\sum_{\inds=2}^\thedim\sum_{\indt=1}^{\thedim-1}\Sbv{\anymat}{\indt}{\inds}\Smonoidalproduct\anymatBTX{\inds}{\Sexchanged(\indt)}-\sum_{\inds=1}^{\recvar}\Sbv{\anymatBT}{\Sexchanged(\inds)}{\inds}\Smonoidalproduct\theone\IEEEeqnarraynumspace
    \IEEEyesnumber
    \label{eq:second_order_special_case_second_step_0}\\ &&\textstyle{}+\thesplitting{0}\big({-}\sum_{\inds=\recvar+1}^{\thedim-1}\anymatBTX{\Sexchanged(\inds)}{\Sexchanged(\inds)}+\anymatBTX{1}{1}+\Sexchanged(\recvar+3)\Saction\theone\big).
  \end{IEEEeqnarray*}
\end{lemma}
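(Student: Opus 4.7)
The plan is to prove the identity by induction on $\recvar$ from $\recvar=1$ up to $\recvar=\thedim-1$, using the recursive definition of $\thesplitting{0}$ from Section~\ref{section:anick-anick_resolution} to successively pull tip terms out of the combinatorial splitting, just as in the proof of Lemma~\ref{lemma:second_order_special_case_first_step}.

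For the base case $\recvar=1$, I would specialize Lemma~\ref{lemma:second_order_special_case_first_step} to $\outerrecvar=2$ and $\recvar=\thedim$. Collecting the two grouped sums outside $\thesplitting{0}$ into a single double sum $\sum_{\inds=2}^{\thedim}\sum_{\indt=1}^{\thedim-1}\Sbv{\anymat}{\indt}{\inds}\Smonoidalproduct\anymatBTX{\inds}{\Sexchanged(\indt)}$ and merging the term $-\Szetafunction{\Sexchanged(1)}{\thedim}\Saction\anymatBTX{\Sexchanged(1)}{\Sexchanged(1)}=-\anymatBTX{\thedim}{\thedim}$ with the sum $-\sum_{\inds=2}^{\thedim-1}\anymatBTX{\Sexchanged(\inds)}{\Sexchanged(\inds)}$ yields an expression whose splitting argument is $-\sum_{\inds=1}^{\thedim-1}\anymatBTX{\Sexchanged(\inds)}{\Sexchanged(\inds)}+\anymatBTX{1}{1}+(\thedim-2)\theone$. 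I would then apply the recursion for $\thesplitting{0}$ once: Lem\-ma\-ta~\ref{lemma:monomial_order_reformulation} and \ref{lemma:skewdagger_comparisons} (applied to the two admissible cases $\anymat\in\{\theuni,\theuni\SskewstarP\}$) identify $\anymatBTX{\thedim}{\thedim}=\anymatBTX{\Sexchanged(1)}{\Sexchanged(1)}$ as the tip of this argument; since $\anymatBTX{\thedim}{\thedim}$ is a generator it is normal by Lemma~\ref{characterization_of_reduced_terms_of_order_two}, and  $\thereconcatenator{0}(\anymatBTX{\thedim}{\thedim})=(\Sbv{\anymatBT}{\thedim}{1},\theone)$. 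Subtracting $-\Sbv{\anymatBT}{\thedim}{1}\Smonoidalproduct\theone$ and compensating with $\thedifferential{1}(\Sbv{\anymatBT}{\thedim}{1}\Smonoidalproduct\theone)=\anymatBTX{\thedim}{\thedim}-\theone$ (Proposition~\ref{proposition:first_order}) shifts the constant from $\thedim-2$ to $\thedim-3=\Sexchanged(4)$, producing exactly the claimed form for $\recvar=1$.

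For the inductive step, assuming \eqref{eq:second_order_special_case_second_step_0} holds for some $\recvar\leq\thedim-2$, the argument of $\thesplitting{0}$ equals $-\sum_{\inds=\recvar+1}^{\thedim-1}\anymatBTX{\Sexchanged(\inds)}{\Sexchanged(\inds)}+\anymatBTX{1}{1}+\Sexchanged(\recvar+3)\theone$. I would identify the tip as $\anymatBTX{\Sexchanged(\recvar+1)}{\Sexchanged(\recvar+1)}$ and apply one more step of the $\thesplitting{0}$-recursion: using $\thereconcatenator{0}(\anymatBTX{\Sexchanged(\recvar+1)}{\Sexchanged(\recvar+1)})=(\Sbv{\anymatBT}{\Sexchanged(\recvar+1)}{\recvar+1},\theone)$, moving $-\Sbv{\anymatBT}{\Sexchanged(\recvar+1)}{\recvar+1}\Smonoidalproduct\theone$ outside $\thesplitting{0}$ and adding $\thedifferential{1}(\Sbv{\anymatBT}{\Sexchanged(\recvar+1)}{\recvar+1}\Smonoidalproduct\theone)=\anymatBTX{\Sexchanged(\recvar+1)}{\Sexchanged(\recvar+1)}-\theone$ inside removes the $\inds=\recvar+1$ summand and shifts the constant by $-1$, giving $\Sexchanged(\recvar+4)=\Sexchanged((\recvar+1)+3)$ as required.

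The main obstacle will be the tip computation in the induction step. I need that $\anymatBTX{\Sexchanged(\recvar+1)}{\Sexchanged(\recvar+1)}$ strictly dominates every other summand in the argument, in particular the term $\anymatBTX{1}{1}$ and all remaining $\anymatBTX{\Sexchanged(\inds)}{\Sexchanged(\inds)}$ with $\recvar+2\leq \inds\leq\thedim-1$. By Lemma~\ref{lemma:skewdagger_comparisons} those comparisons reduce to the analogous inequalities for $\anymatAX{\Sexchanged(\inds)}{\Sexchanged(\inds)}$, which by the restriction $\anymat\in\{\theuni,\theuni\SskewstarP\}$ and the addendum to Lemma~\ref{lemma:monomial_order_reformulation} are controlled by the lexicographic order on $(\Sexchanged(\inds),\Sexchanged(\inds))$; here $\Sexchanged(\recvar+1)>\Sexchanged(\inds)$ for $\inds\geq\recvar+2$ and $\Sexchanged(\recvar+1)>1$ thanks to $\recvar\leq\thedim-2<\thedim$, delivering both required inequalities. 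Once this is checked, the remaining algebra is routine index bookkeeping, and the induction proceeds cleanly up to $\recvar=\thedim-1$, at which point the sum $\sum_{\inds=\recvar+1}^{\thedim-1}$ is empty and the argument of $\thesplitting{0}$ reduces to $\anymatBTX{1}{1}-\theone$, matching the expected endpoint.
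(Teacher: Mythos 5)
Your proposal is correct and follows essentially the same route as the paper's proof: the base case is obtained by specializing Lemma~\ref{lemma:second_order_special_case_first_step} at $\outerrecvar=2$, $\recvar=\thedim$ and performing one splitting step that extracts $\Sbv{\anymatBT}{\thedim}{1}\Smonoidalproduct\theone$, and the induction step extracts $\Sbv{\anymatBT}{\Sexchanged(\recvar+1)}{\recvar+1}\Smonoidalproduct\theone$ after identifying $\anymatBTX{\Sexchanged(\recvar+1)}{\Sexchanged(\recvar+1)}$ as the tip via the monomial-order lemmas, exactly as in the paper. (The paper cites Lemma~\ref{lemma:monomial_order_reformulation} directly for the tip comparison rather than routing through Lemma~\ref{lemma:skewdagger_comparisons}, but this is an immaterial difference.)
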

\begin{proof}
  Once more the proof goes by induction over $\recvar$.
  \par
  \emph{Induction base.} For $\recvar=1$ what needs to be shown is that
  \begin{IEEEeqnarray*}{rCl}
    \IEEEeqnarraymulticol{3}{l}{
      \thedifferential{2}(\Sbv{\anymat}{\thedim}{\thedim}\Smonoidalproduct\theone)
      }\\
\hspace{1em}&=&\textstyle\Sbv{\anymatA}{\thedim}{1}\Smonoidalproduct\anymatBTX{1}{1}-\sum_{\inds=2}^\thedim\sum_{\indt=1}^{\thedim-1}\Sbv{\anymat}{\indt}{\inds}\Smonoidalproduct\anymatBTX{\inds}{\Sexchanged(\indt)}-\Sbv{\anymatBT}{\thedim}{1}\Smonoidalproduct\theone\IEEEeqnarraynumspace
    \IEEEyesnumber
    \label{eq:second_order_special_case_second_step_1}\\ &&\textstyle{}+\thesplitting{0}\big({-}\sum_{\inds=2}^{\thedim-1}\anymatBTX{\Sexchanged(\inds)}{\Sexchanged(\inds)}+\anymatBTX{1}{1}+(\thedim-3)\Saction\theone\big).
  \end{IEEEeqnarray*}  
\par
In the special case $\outerrecvar=2$ and $\recvar=\thedim$  the grouped sum in the third line of \eqref{eq:second_order_special_case_first_step_0} in  Lem\-ma~\ref{lemma:second_order_special_case_first_step} can be merged with that in the second line. Moreover, all the arguments of $\thesplitting{0}$ in the fourth line vanish. Since, moreover, $\Szetafunction{\Sexchanged(1)}{\thedim}=1$ the resulting identity then reads
  \begin{IEEEeqnarray*}{rCl}
      \thedifferential{2}(\Sbv{\anymat}{\thedim}{\thedim}\Smonoidalproduct \theone)
&=&\textstyle\Sbv{\anymat}{\thedim}{1}\Smonoidalproduct\anymatBTX{1}{1}-\sum_{\inds=2}^{\thedim}\sum_{\indt=1}^{\thedim-1}\Sbv{\anymat}{\indt}{\inds}\Smonoidalproduct\anymatBTX{\inds}{\Sexchanged(\indt)} \IEEEeqnarraynumspace   \IEEEyesnumber
    \label{eq:second_order_special_case_second_step_2}\\ &&\textstyle{}+\thesplitting{0}\big({-}\sum_{\inds=1}^{\thedim-1}\anymatBTX{\Sexchanged(\inds)}{\Sexchanged(\inds)}+\anymatBTX{1}{1}+(\thedim-2)\Saction\theone\big).
  \end{IEEEeqnarray*}
  \par
  Obviously,   $\anymatBTX{\thedim}{\thedim}$ is normal and the tip in the argument of $\thesplitting{0}$ in \eqref{eq:second_order_special_case_second_step_2} by Lem\-ma~\ref{lemma:monomial_order_reformulation}.   Because by definition 
  \begin{IEEEeqnarray*}{rCl}
\textstyle\thereconcatenator{0}(\anymatBTX{\thedim}{\thedim})=    (\Sbvfull{\anymatBT}{1}{\thedim}{1},\theone)
\end{IEEEeqnarray*}
the veracity of \eqref{eq:second_order_special_case_second_step_2} is thus not affected by adding $-\Sbv{\anymatBT}{\thedim}{1}\Smonoidalproduct \theone$ outside of $\thesplitting{0}$ and $\thedifferential{1}(\Sbv{\anymatBT}{\thedim}{1}\Smonoidalproduct \theone)$ inside.
\par
Since, according to Proposition~\ref{proposition:first_order},
  \begin{IEEEeqnarray*}{rCl}
\thedifferential{1}(\Sbv{\anymatBT}{\thedim}{1}\Smonoidalproduct \theone)=\anymatBTX{\thedim}{\thedim}-\theone
\end{IEEEeqnarray*}
  this change turns \eqref{eq:second_order_special_case_second_step_2} into \eqref{eq:second_order_special_case_second_step_1}, providing the induction base.
  \par
  \emph{Induction step.} Let $\recvar\in\SYnumbers{\thedim}$ be such that $ \recvar\leq \thedim-2$ and such that \eqref{eq:second_order_special_case_second_step_0} is true. The entire argument of  $\thesplitting{0}$ in \eqref{eq:second_order_special_case_second_step_0} is evidently normal and    by Lem\-ma~\ref{lemma:monomial_order_reformulation} has tip $\anymatBTX{\Sexchanged(\recvar+1)}{\Sexchanged(\recvar+1)}$. Because
\begin{IEEEeqnarray*}{rCl}
    \textstyle\thereconcatenator{0}(\anymatBTX{\Sexchanged(\recvar+1)}{\Sexchanged(\recvar+1)})&=&(\Sbvfull{\anymatBT}{1}{\Sexchanged(\recvar+1)}{\recvar+1},\theone)
  \end{IEEEeqnarray*}
a further valid identity is produced by adding to \eqref{eq:second_order_special_case_second_step_0} on the right-hand side $-\Sbv{\anymatBT}{\Sexchanged(\recvar+1)}{\recvar+1}\Smonoidalproduct \theone $ outside of $\thesplitting{0}$ and $\thedifferential{1}(\Sbv{\anymatBT}{\Sexchanged(\recvar+1)}{\recvar+1}\Smonoidalproduct \theone)$ inside.
  \par
  Since Proposition~\ref{proposition:first_order} implies
  \begin{IEEEeqnarray*}{rCl}
\thedifferential{1}(\Sbv{\anymatBT}{\Sexchanged(\recvar+1)}{\recvar+1}\Smonoidalproduct\theone)&=&\anymatBTX{\Sexchanged(\recvar+1)}{\Sexchanged(\recvar+1)}-\theone
  \end{IEEEeqnarray*}  
 the  identity resulting from the additions is just \eqref{eq:second_order_special_case_second_step_0} with $\recvar$  replaced by $\recvar+1$.
\end{proof}

  \begin{proposition}
    \label{proposition:second_order_special_case}
    For any $\anymat\in\thematrices$,
  \begin{IEEEeqnarray*}{rCl}
\thedifferential{2}(\Sbv{\anymat}{\thedim}{\thedim}\Smonoidalproduct\theone)&=&\textstyle\Sbv{\anymatA}{\thedim}{1}\Smonoidalproduct\anymatBTX{1}{1}-\sum_{\inds=2}^\thedim\sum_{\indt=1}^{\thedim-1}\Sbv{\anymat}{\indt}{\inds}\Smonoidalproduct\anymatBTX{\inds}{\Sexchanged(\indt)}
    \IEEEyesnumber
    \label{eq:second_order_special_case_0}\\
      &&\textstyle\hspace{8em}{}-\sum_{\inds=2}^{\thedim}\Sbv{\anymatBT}{\inds}{\Sexchanged(\inds)}\Smonoidalproduct\theone+\Sbv{\anymatBT}{1}{\thedim}\Smonoidalproduct\theone.
    \end{IEEEeqnarray*}
    \end{proposition}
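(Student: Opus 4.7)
The plan is to invoke Lemma~\ref{lemma:second_order_special_case_second_step} at its terminal index $\recvar = \thedim - 1$ and then resolve the last outstanding application of $\thesplitting{0}$ by one step of its recursive definition. First, since $\Sbv{\anymat}{\thedim}{\thedim} = \anymatAX{\thedim}{\thedim}\anymatBTX{1}{1}$ is manifestly invariant under $\anymat \leftrightarrow \anymat\StransposeP$ (the entries of $\anymatA$ and $\anymatAT$ agree in row $\thedim$, column $\thedim$, and those of $\anymatBT$ and $\anymatB$ agree in row $1$, column $1$), and since the right-hand side of the asserted identity is likewise invariant under that exchange by Lemma~\ref{lemma:evil_identities} applied to each of the four summands, I will restrict attention to $\anymat \in \{\theuni, \theuni\SskewstarP\}$, the range covered by Lemma~\ref{lemma:second_order_special_case_second_step}.

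Substituting $\recvar = \thedim - 1$ in that lemma makes the inner sum $\sum_{\inds = \recvar + 1}^{\thedim - 1}$ empty and evaluates $\Sexchanged(\recvar + 3) = \Sexchanged(\thedim + 2) = -1$, leaving
\begin{IEEEeqnarray*}{rCl}
\thedifferential{2}(\Sbv{\anymat}{\thedim}{\thedim}\Smonoidalproduct\theone) &=& \Sbv{\anymatA}{\thedim}{1}\Smonoidalproduct\anymatBTX{1}{1} - \textstyle\sum_{\inds=2}^\thedim\sum_{\indt=1}^{\thedim-1}\Sbv{\anymat}{\indt}{\inds}\Smonoidalproduct\anymatBTX{\inds}{\Sexchanged(\indt)} \\
&&{} - \textstyle\sum_{\inds=1}^{\thedim-1}\Sbv{\anymatBT}{\Sexchanged(\inds)}{\inds}\Smonoidalproduct\theone + \thesplitting{0}(\anymatBTX{1}{1} - \theone).
\end{IEEEeqnarray*}

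Next I will evaluate $\thesplitting{0}(\anymatBTX{1}{1} - \theone)$ directly. The generator $\anymatBTX{1}{1}$ is normal by Proposition~\ref{proposition:core} and is the tip of $\anymatBTX{1}{1} - \theone$ for degree reasons. Moreover, $\thereconcatenator{0}(\anymatBTX{1}{1}) = (\Sbv{\anymatBT}{1}{\thedim}, \theone)$ since $\Sbv{\anymatBT}{1}{\thedim} = \anymatBTX{1}{\Sexchanged(\thedim)} = \anymatBTX{1}{1}$. The recursive definition of $\thesplitting{0}$ therefore yields
\begin{IEEEeqnarray*}{rCl}
\thesplitting{0}(\anymatBTX{1}{1} - \theone) &=& \Sbv{\anymatBT}{1}{\thedim}\Smonoidalproduct\theone + \thesplitting{0}\big(\anymatBTX{1}{1} - \theone - \thedifferential{1}(\Sbv{\anymatBT}{1}{\thedim}\Smonoidalproduct\theone)\big),
\end{IEEEeqnarray*}
and by Proposition~\ref{proposition:first_order}, together with $\Sexchanged(\thedim) = 1$, the subtracted differential equals $\anymatBTX{1}{1} - \theone$, so the residual vanishes and the splitting reduces to $\Sbv{\anymatBT}{1}{\thedim}\Smonoidalproduct\theone$.

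Finally, reindexing via $\indt = \Sexchanged(\inds)$ converts $-\sum_{\inds=1}^{\thedim-1}\Sbv{\anymatBT}{\Sexchanged(\inds)}{\inds}\Smonoidalproduct\theone$ into $-\sum_{\indt=2}^{\thedim}\Sbv{\anymatBT}{\indt}{\Sexchanged(\indt)}\Smonoidalproduct\theone$, producing exactly the claimed identity. The only nontrivial bookkeeping will be the $\anymat \leftrightarrow \anymat\StransposeP$ invariance check of the right-hand side, which I expect to reduce, term by term, to routine applications of Lemmata~\ref{lemma:any_two_generators}, \ref{lemma:generators_reformulation} and \ref{lemma:evil_identities}.
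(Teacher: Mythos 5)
Your proposal is correct and follows essentially the same route as the paper: reduce to $\anymat\in\{\theuni,\theuni\SskewstarP\}$ via the transpose-invariance from Lemma~\ref{lemma:evil_identities}, specialize Lemma~\ref{lemma:second_order_special_case_second_step} at $\recvar=\thedim-1$, and then eliminate the remaining $\thesplitting{0}(\anymatBTX{1}{1}-\theone)$ using $\thereconcatenator{0}(\anymatBTX{1}{1})=(\Sbv{\anymatBT}{1}{\thedim},\theone)$ and $\thedifferential{1}(\Sbv{\anymatBT}{1}{\thedim}\Smonoidalproduct\theone)=\anymatBTX{1}{1}-\theone$. Your direct evaluation of the splitting (with vanishing residual) and the explicit reindexing $\indt=\Sexchanged(\inds)$ are just more explicit renderings of the paper's own "add outside, subtract inside" step and its silent reindexing, so no substantive difference.
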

  \begin{proof}
    Since Lemma~\ref{lemma:evil_identities} shows that \eqref{eq:second_order_special_case_0} is invariant under exchanging the roles $\anymatA\leftrightarrow\anymatAT$ it is sufficient to prove it for any $\anymat\in \{\theuniA,\theuniB\}$.
    \par
    Instantiating \eqref{eq:second_order_special_case_second_step_0} from Lem\-ma~\ref{lemma:second_order_special_case_second_step} at $\recvar=\thedim-1$ yields the identity
      \begin{IEEEeqnarray*}{rCl}
    \IEEEeqnarraymulticol{3}{l}{
      \thedifferential{2}(\Sbv{\anymat}{\thedim}{\thedim}\Smonoidalproduct\theone)
      }\\
\hspace{1em}&=&\textstyle\Sbv{\anymatA}{\thedim}{1}\Smonoidalproduct\anymatBTX{1}{1}-\sum_{\inds=2}^\thedim\sum_{\indt=1}^{\thedim-1}\Sbv{\anymat}{\indt}{\inds}\Smonoidalproduct\anymatBTX{\inds}{\Sexchanged(\indt)}-\sum_{\inds=1}^{\thedim-1}\Sbv{\anymatBT}{\Sexchanged(\inds)}{\inds}\Smonoidalproduct\theone\IEEEeqnarraynumspace
    \IEEEyesnumber
    \label{eq:second_order_special_case_1}\\ &&\textstyle{}+\thesplitting{0}(\anymatBTX{1}{1}-\Saction\theone)
  \end{IEEEeqnarray*}
  since $\Sexchanged(\thedim+2)=\thedim-(\thedim+2)+1=-1$. Because the tip of the argument of $\thesplitting{0}$ in \eqref{eq:second_order_special_case_1}  is $\anymatBTX{1}{1}$ and because
  \begin{IEEEeqnarray*}{rCl}
\textstyle\thereconcatenator{0}(\anymatBTX{1}{1})&=&(\Sbv{\anymatBT}{1}{\thedim},\theone)
  \end{IEEEeqnarray*}
  adding on the right-hand side of \eqref{eq:second_order_special_case_1} the vector $\Sbv{\anymatBT}{1}{\thedim}\Smonoidalproduct \theone$ to the exterior of $\thesplitting{0}$ while at the same time subtracting
  \begin{IEEEeqnarray*}{rCl}
    \thedifferential{2}(\Sbv{\anymatBT}{1}{\thedim}\Smonoidalproduct \theone)&=&\anymatBTX{1}{1}-\theone
  \end{IEEEeqnarray*}
  from the interior gives another true identity. As that procedure transforms \eqref{eq:second_order_special_case_1} into \eqref{eq:second_order_special_case_0} this proves the claim. 
  \end{proof}

  \subsubsection{Synthesis} Finally, it will be useful at certain points to have a formula for $\thedifferential{2}$ that covers both the generic and the special case simultaneously.
  \begin{theorem}
      \label{theorem:second_order}
    For any $\anymat\in\thematrices$ and any $(\indj,\indi)\in \SYnumbers{\thedim}^{\Ssetmonoidalproduct 2}$,
    \begin{IEEEeqnarray*}{rCl}
\thedifferential{2}(\Sbv{\anymatA}{\indj}{\indi}\Smonoidalproduct \theone)  &=&  \textstyle\sum_{\inds=1}^\thedim\Sbv{\anymatA}{\indj}{\inds}\Smonoidalproduct\anymatBTX{\inds}{\Sexchanged(\indi)}+\Sbv{\anymatBT}{\Sexchanged(\indj)}{\indi}\Smonoidalproduct\theone\IEEEyesnumber\label{eq:theorem:second_order_0}\\
&&\textstyle{}-\Skronecker{\indj}{\thedim}\Skronecker{\indi}{\thedim}\sum_{\inds=2}^{\thedim}(\sum_{\indt=1}^{\thedim}\Sbv{\anymatA}{\indt}{\inds}\Smonoidalproduct\anymatBTX{\inds}{\Sexchanged(\indt)}+\Sbv{\anymatBT}{\inds}{\Sexchanged(\inds)}\Smonoidalproduct\theone).
\end{IEEEeqnarray*}    
\end{theorem}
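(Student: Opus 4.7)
The plan is to deduce the unified identity by a case distinction on whether $(\indj,\indi)=(\thedim,\thedim)$, combining Pro\-po\-si\-tion~\ref{proposition:second_order_generic_case} with Pro\-po\-si\-tion~\ref{proposition:second_order_special_case}. In the generic case $(\indj,\indi)\neq(\thedim,\thedim)$, the product $\Skronecker{\indj}{\thedim}\Skronecker{\indi}{\thedim}$ vanishes and the right-hand side of \eqref{eq:theorem:second_order_0} reduces to precisely the right-hand side of \eqref{eq:second_order_generic_case_0} in Pro\-po\-si\-tion~\ref{proposition:second_order_generic_case}. Hence nothing is to prove in this case.

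The remaining work is thus the case $(\indj,\indi)=(\thedim,\thedim)$, where in particular $\Sexchanged(\indi)=\Sexchanged(\thedim)=1$. The plan is to start from \eqref{eq:theorem:second_order_0} specialized to $(\indj,\indi)=(\thedim,\thedim)$ and to simplify it mechanically. The first grouped sum $\sum_{\inds=1}^{\thedim}\Sbv{\anymatA}{\thedim}{\inds}\Smonoidalproduct\anymatBTX{\inds}{1}$ will be split into the $\inds=1$ term $\Sbv{\anymatA}{\thedim}{1}\Smonoidalproduct\anymatBTX{1}{1}$ plus the tail $\sum_{\inds=2}^{\thedim}\Sbv{\anymatA}{\thedim}{\inds}\Smonoidalproduct\anymatBTX{\inds}{1}$. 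Separately, the inner double sum $\sum_{\inds=2}^{\thedim}\sum_{\indt=1}^{\thedim}\Sbv{\anymatA}{\indt}{\inds}\Smonoidalproduct\anymatBTX{\inds}{\Sexchanged(\indt)}$ inside the subtracted block will be split into the $\indt=\thedim$ slice $\sum_{\inds=2}^{\thedim}\Sbv{\anymatA}{\thedim}{\inds}\Smonoidalproduct\anymatBTX{\inds}{1}$ plus $\sum_{\inds=2}^{\thedim}\sum_{\indt=1}^{\thedim-1}\Sbv{\anymatA}{\indt}{\inds}\Smonoidalproduct\anymatBTX{\inds}{\Sexchanged(\indt)}$. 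The $\indt=\thedim$ slice cancels with the above tail, leaving exactly $\Sbv{\anymatA}{\thedim}{1}\Smonoidalproduct\anymatBTX{1}{1}-\sum_{\inds=2}^{\thedim}\sum_{\indt=1}^{\thedim-1}\Sbv{\anymatA}{\indt}{\inds}\Smonoidalproduct\anymatBTX{\inds}{\Sexchanged(\indt)}$ from the $\anymatA$-part. The $\anymatBT$-part in \eqref{eq:theorem:second_order_0} is $\Sbv{\anymatBT}{\Sexchanged(\thedim)}{\thedim}\Smonoidalproduct\theone-\sum_{\inds=2}^{\thedim}\Sbv{\anymatBT}{\inds}{\Sexchanged(\inds)}\Smonoidalproduct\theone = \Sbv{\anymatBT}{1}{\thedim}\Smonoidalproduct\theone-\sum_{\inds=2}^{\thedim}\Sbv{\anymatBT}{\inds}{\Sexchanged(\inds)}\Smonoidalproduct\theone$. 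Assembling these four terms reproduces exactly the right-hand side \eqref{eq:second_order_special_case_0} of Pro\-po\-si\-tion~\ref{proposition:second_order_special_case}, which completes the argument.

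Since both propositions are already in hand, there is no conceptual obstacle and no recursion to solve; the entire proof is a routine index-bookkeeping merger. The only point requiring minor care is the correct isolation of the $\inds=1$ term in the first sum and the $\indt=\thedim$ slice in the second, so as to match the two propositions on the nose. Accordingly, the whole proof can be dispatched in a handful of lines.
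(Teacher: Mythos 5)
Your proposal is correct and follows essentially the same route as the paper: the generic case is immediate from Proposition~\ref{proposition:second_order_generic_case}, and for $(\indj,\indi)=(\thedim,\thedim)$ one cancels the $\indt=\thedim$ slice of the subtracted double sum against the $\inds\geq 2$ tail of the first sum to recover the right-hand side of \eqref{eq:second_order_special_case_0} in Proposition~\ref{proposition:second_order_special_case}. The paper performs exactly this bookkeeping (phrased as the middle grouped sum being absorbed by the double sum), so there is nothing to add.
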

\begin{proof}
  It is clear by Proposition~\ref{proposition:second_order_generic_case} that \eqref{eq:theorem:second_order_0} is valid if $(\indj,\indi)\neq (\thedim,\thedim)$. In the case $(\indj,\indi)= (\thedim,\thedim)$ the  sum of all degree-two terms in \eqref{eq:theorem:second_order_0} amounts to 
  \begin{IEEEeqnarray*}{rCl}
\textstyle\Sbv{\anymatA}{\thedim}{1}\Smonoidalproduct\anymatBTX{1}{\Sexchanged(\thedim)}+\sum_{\inds=2}^\thedim\Sbv{\anymatA}{\thedim}{\inds}\Smonoidalproduct\anymatBTX{\inds}{\Sexchanged(\thedim)}-\sum_{\inds=2}^{\thedim}\sum_{\indt=1}^{\thedim}\Sbv{\anymatA}{\indt}{\inds}\Smonoidalproduct\anymatBTX{\inds}{\Sexchanged(\indt)}.
\end{IEEEeqnarray*}
As the grouped sum in the middle is canceled by the  one on the right it is now evident  that \eqref{eq:theorem:second_order_0} also agrees with \eqref{eq:second_order_special_case_0} in Proposition~\ref{proposition:second_order_special_case}.
\end{proof}

\subsection{Third order}
While the formula for $\thedifferential{3}$ is very similar to that for $\thedifferential{\orderind}$ for odd $\orderind\in \Sintegersp$ with $3<\orderind$, the fact that $\thedifferential{2}$ is so different from $\thedifferential{\orderind}$ for even $\orderind\in\Sintegersp$ with $2<\orderind$, makes it necessary to treat $\thedifferential{3}$ separately.

\subsubsection{Case-independent identities}
It is convenient to treat all basis vectors simultaneously up to a certain point and then distinguish cases for the remainder of the proof. There will be $\thedim$ recursion steps before the case distinction.

\begin{lemma}
  \label{lemma:third_order_common}
  For any $\anymat\in \thematrices$, any $(\indj,\indi)\in\SYnumbers{\thedim}^{\Ssetmonoidalproduct 2}$ and any $\recvar\in \SYnumbers{\thedim}$, 
  \begin{IEEEeqnarray*}{rCl}
    \IEEEeqnarraymulticol{3}{l}{
      \thedifferential{3}(\Sbv{\anymatA}{\indj}{\indi}\Smonoidalproduct\theone)
    }\\
    \hspace{1.em}   
&=&\textstyle\sum_{\inds=1}^{\recvar}\Sbv{\anymatA}{\indj}{\inds}\Smonoidalproduct\anymatAX{\inds}{\Sexchanged(\indi)}\IEEEyesnumber
    \label{eq:third_order_common_0}\\
    &&\textstyle{}+\thesplitting{1}\big(\sum_{\indt=\recvar+1}^\thedim\Sbv{\anymatA}{\indj}{1}\Smonoidalproduct\anymatBTX{1}{\Sexchanged(\indt)}\anymatAX{\indt}{\Sexchanged(\indi)}-\sum_{\inds=2}^\thedim\sum_{\indt=1}^{\recvar}\Sbv{\anymatA}{\indj}{\inds}\Smonoidalproduct\anymatBTX{\inds}{\Sexchanged(\indt)}\anymatAX{\indt}{\Sexchanged(\indi)}\\    &&\textstyle\hfill\hspace{.em}{}-\sum_{\inds=1}^\recvar\Sbv{\anymatBT}{\Sexchanged(\indj)}{\inds}\Smonoidalproduct\anymatAX{\inds}{\Sexchanged(\indi)}-\Skronecker{\indi}{1}\Saction\Sbv{\anymatA}{\indj}{\indi}\Smonoidalproduct\theone\\
    &&\textstyle\hspace{2.25em}{}+\Szetafunction{\thedim}{\recvar}\Skronecker{\indj}{\thedim}(\sum_{\inds=2}^\thedim\sum_{\indt=1}^\thedim\Sbv{\anymatA}{\indt}{\inds}\Smonoidalproduct\anymatBTX{\inds}{\Sexchanged(\indt)}\anymatAX{\thedim}{\Sexchanged(\indi)}\\
    &&\textstyle\hfill\hspace{.em}{}+\sum_{\inds=2}^\thedim\Sbv{\anymatBT}{\inds}{\Sexchanged(\inds)}\Smonoidalproduct\anymatX{\thedim}{\Sexchanged(\indi)})\big).
  \end{IEEEeqnarray*}  
\end{lemma}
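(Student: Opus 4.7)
The plan is to prove the identity by induction on $\recvar \in \SYnumbers{\thedim}$, in the style of Lem\-ma\-ta~\ref{lemma:second_order_generic_case_helper} and~\ref{lemma:second_order_special_case_first_step}: each inductive step identifies the $\thechainsorder{1}$-tip of the current argument of $\thesplitting{1}$ and applies the recursive rule defining $\thesplitting{1}$ in order to transfer one summand from inside $\thesplitting{1}$ to outside.

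For the base case $\recvar = 1$, Proposition~\ref{proposition:combinatorial_differential} gives $\thedeconcatenator{3}(\Sbvfull{\anymatA}{3}{\indj}{\indi}) = (\Sbvfull{\anymatA}{2}{\indj}{1},\anymatAX{1}{\Sexchanged(\indi)})$, so the defining recursion for $\thedifferential{3}$ starts the computation with $\Sbvfull{\anymatA}{2}{\indj}{1}\Smonoidalproduct\anymatAX{1}{\Sexchanged(\indi)}$ outside and $-\thedifferential{2}(\Sbvfull{\anymatA}{2}{\indj}{1}\Smonoidalproduct\anymatAX{1}{\Sexchanged(\indi)})$ inside $\thesplitting{1}$. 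I would then expand $\thedifferential{2}$ via Theorem~\ref{theorem:second_order} (its special $(\thedim,\thedim)$-correction vanishes since $\Skronecker{1}{\thedim} = 0$ under $2 \leq \thedim$), right-multiply by $\anymatAX{1}{\Sexchanged(\indi)}$, and rewrite the resulting $\inds = 1$ summand $-\Sbv{\anymatA}{\indj}{1}\Smonoidalproduct\anymatBTX{1}{\thedim}\anymatAX{1}{\Sexchanged(\indi)}$ modulo $\theideal$ using the element of $\therels$ that, at $\indj = 1$, specializes to $\sum_{\inds=1}^{\thedim}\anymatBTX{1}{\Sexchanged(\inds)}\anymatAX{\inds}{\Sexchanged(\indi)} \equiv \Skronecker{1}{\indi}\theone$. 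Since $\Szetafunction{\thedim}{1} = 0$ this matches the claimed formula at $\recvar = 1$ verbatim.

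For the inductive step $\recvar \to \recvar + 1$ with $\recvar \leq \thedim - 1$, I would claim the $\thechainsorder{1}$-tip of the current argument of $\thesplitting{1}$ to be $\Sbvfull{\anymatA}{1}{\indj}{1}\Smonoidalproduct\anymatBTX{1}{\Sexchanged(\recvar+1)}\anymatAX{\recvar+1}{\Sexchanged(\indi)}$. Reducedness of its monomial factor $\anymatBTX{1}{\Sexchanged(\recvar+1)}\anymatAX{\recvar+1}{\Sexchanged(\indi)}$ I would derive from Lem\-ma~\ref{characterization_of_reduced_terms_of_order_two} with the base matrix reassigned from $\anymatA$ to $\anymatBT$ (so that what played the role of $\anymatBT$ in that lemma is now $\anymatA$); $\thechainsorder{1}$-maximality against the competing summands I would derive from Lem\-ma~\ref{lemma:monomial_order_reformulation} together with the degreewise lexicographic nature of $\theorder$. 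Proposition~\ref{proposition:combinatorial_splitting} then gives $\thereconcatenator{1}(\Sbvfull{\anymatA}{1}{\indj}{1},\anymatBTX{1}{\Sexchanged(\recvar+1)}\anymatAX{\recvar+1}{\Sexchanged(\indi)}) = (\Sbvfull{\anymatA}{2}{\indj}{\recvar+1},\anymatAX{\recvar+1}{\Sexchanged(\indi)})$, so by the recursive rule defining $\thesplitting{1}$ one may add $\Sbv{\anymatA}{\indj}{\recvar+1}\Smonoidalproduct\anymatAX{\recvar+1}{\Sexchanged(\indi)}$ outside and subtract $\thedifferential{2}$ of that vector inside. When $(\indj,\recvar+1) \neq (\thedim,\thedim)$, Proposition~\ref{proposition:second_order_generic_case} shows that this alteration shifts each of the three grouped interior sums exactly as required by replacing $\recvar$ with $\recvar + 1$.

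The main obstacle is the terminal step $\recvar = \thedim - 1 \to \thedim$ in the subcase $\indj = \thedim$: there $\thedifferential{2}(\Sbv{\anymatA}{\thedim}{\thedim}\Smonoidalproduct\anymatAX{\thedim}{\Sexchanged(\indi)})$ has to be evaluated via the special formula of Proposition~\ref{proposition:second_order_special_case} instead of the generic one. The surplus this special formula produces over its generic counterpart is, upon right-multiplication by $\anymatAX{\thedim}{\Sexchanged(\indi)}$, exactly the pair of extra interior summands that are tagged by the coefficient $\Szetafunction{\thedim}{\recvar}\Skronecker{\indj}{\thedim}$ in the claimed identity. Verifying this match term-for-term, and more generally keeping track of the migration of every summand across the $\thedim$ inductive steps, constitutes the delicate part of the argument.
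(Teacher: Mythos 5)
Your proposal follows the same inductive skeleton the paper uses: induct on $\recvar$, at each step identify the $\thechainsorder{1}$-tip of the current argument of $\thesplitting{1}$, invoke Proposition~\ref{proposition:combinatorial_splitting} to apply the defining recursion of $\thesplitting{1}$, and absorb one summand from inside to outside. The one genuine stylistic divergence is at the point you flag as the main obstacle: you propose expanding $\thedifferential{2}$ via Proposition~\ref{proposition:second_order_generic_case} at generic steps and switching to Proposition~\ref{proposition:second_order_special_case} at the terminal step with $\indj = \thedim$, whereas the paper simply uses the synthesized Theorem~\ref{theorem:second_order}, whose built-in $\Skronecker{\indj}{\thedim}\Skronecker{\indi}{\thedim}$-tagged correction (packaged as the shorthand $q_{\indj}$ there) fires automatically only when $\dummyindex = \thedim$ and $\indj = \thedim$. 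That choice makes the bookkeeping uniform across all recursion steps, since the $\Szetafunction{\thedim}{\recvar}\Skronecker{\indj}{\thedim}$-tagged lines of \eqref{eq:third_order_common_0} grow by exactly $\Skronecker{\thedim}{\recvar+1}q_{\indj}\anymatAX{\recvar+1}{\Sexchanged(\indi)}$ at each step, and so the "delicate term-for-term verification" you anticipate simply dissolves. Your surplus computation (special minus generic at $(\thedim,\thedim)$, right-multiplied by $\anymatAX{\thedim}{\Sexchanged(\indi)}$) would indeed reproduce those tagged lines, so your route is correct; it just duplicates the synthesis work that Theorem~\ref{theorem:second_order} already does once and for all.
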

\begin{proof}
  \newcommand{\dummyindex}{k}
  \newcommand{\abbrterm}{q_{\indj}}
  The assertion will be proved  by induction over $\recvar$. It is economic, though, to first note that, if using the shorthand
  \begin{IEEEeqnarray*}{rCl}
    \abbrterm&\Seqpd&\textstyle \Skronecker{\indj}{\thedim}(\sum_{\inds=2}^\thedim\sum_{\indt=1}^\thedim\Sbv{\anymatA}{\indt}{\inds}\Smonoidalproduct\anymatBTX{\inds}{\Sexchanged(\indt)}+\sum_{\inds=2}^\thedim\Sbv{\anymatBT}{\inds}{\Sexchanged(\inds)}\Smonoidalproduct\theone),
  \end{IEEEeqnarray*}
 Theorem~\ref{theorem:second_order} implies for any $\dummyindex\in \SYnumbers{\thedim}$,
  \begin{IEEEeqnarray*}{rCl}    -\thedifferential{2}(\Sbv{\anymat}{\indj}{\dummyindex}\Smonoidalproduct\anymatAX{\dummyindex}{\Sexchanged(\indi)})&=&\textstyle-(\sum_{\inds=1}^\thedim\Sbv{\anymat}{\indj}{\inds}\Smonoidalproduct\anymatBTX{\inds}{\Sexchanged(\dummyindex)}+\Sbv{\anymatBT}{\Sexchanged(\indj)}{\dummyindex}\Smonoidalproduct\theone-\Skronecker{\thedim}{\dummyindex}\Saction\abbrterm)\anymatAX{\dummyindex}{\Sexchanged(\indi)}\\
    &=&\textstyle-\Saction\Sbv{\anymat}{\indj}{1}\Smonoidalproduct\anymatBTX{1}{\Sexchanged(\dummyindex)}\anymatAX{\dummyindex}{\Sexchanged(\indi)}-\sum_{\inds=2}^{\thedim}\Sbv{\anymat}{\indj}{\inds}\Smonoidalproduct\anymatBTX{\inds}{\Sexchanged(\dummyindex)}\anymatAX{\dummyindex}{\Sexchanged(\indi)}\IEEEeqnarraynumspace\IEEEyesnumber
    \label{eq:third_order_common_1}\\    &&\textstyle{}-\Sbv{\anymatBT}{\Sexchanged(\indj)}{\dummyindex}\Smonoidalproduct\anymatAX{\dummyindex}{\Sexchanged(\indi)}+\Skronecker{\thedim}{\dummyindex}\Saction\abbrterm\anymatAX{\dummyindex}{\Sexchanged(\indi)}.
  \end{IEEEeqnarray*}
  \par
  \emph{Induction base.} Because $2\leq \thedim$ and thus $\Szetafunction{\thedim}{1}=0$ what is claimed in the special case $\recvar=1$  is that
  \begin{IEEEeqnarray*}{rCl}
\thedifferential{3}(\Sbv{\anymatA}{\indj}{\indi}\Smonoidalproduct\theone)&=&\textstyle\Sbv{\anymatA}{\indj}{1}\Smonoidalproduct\anymatAX{1}{\Sexchanged(\indi)}\IEEEyesnumber
    \label{eq:third_order_common_2}\\
    &&\textstyle{}+\thesplitting{1}\big(\sum_{\indt=2}^\thedim\Sbv{\anymatA}{\indj}{1}\Smonoidalproduct\anymatBTX{1}{\Sexchanged(\indt)}\anymatAX{\indt}{\Sexchanged(\indi)}-\sum_{\inds=2}^\thedim\Sbv{\anymatA}{\indj}{\inds}\Smonoidalproduct\anymatBTX{\inds}{\Sexchanged(1)}\anymatAX{1}{\Sexchanged(\indi)}\\    &&\textstyle\hfill\hspace{.em}{}-\Sbv{\anymatBT}{\Sexchanged(\indj)}{1}\Smonoidalproduct\anymatAX{1}{\Sexchanged(\indi)}-\Skronecker{\indi}{1}\Saction\Sbv{\anymatA}{\indj}{\indi}\Smonoidalproduct\theone\big).    
  \end{IEEEeqnarray*}  
  \par 
  Since by Proposition~\ref{proposition:combinatorial_differential}
  \begin{IEEEeqnarray*}{rCl}
\thedeconcatenator{3}(\Sbvfull{\anymat}{3}{\indj}{\indi})=(\Sbvfull{\anymat}{2}{\indj}{1}, \anymatAX{1}{\Sexchanged(\indi)})
  \end{IEEEeqnarray*}
the definition of $\thedifferential{3}$ implies
\begin{IEEEeqnarray*}{rCl}
\thedifferential{3}(\Sbv{\anymat}{\indj}{\indi}\Smonoidalproduct\theone)&=&  \Sbv{\anymat}{\indj}{1}\Smonoidalproduct \anymatAX{1}{\Sexchanged(\indi)}+\thesplitting{1}(-\thedifferential{2}(\Sbv{\anymat}{\indj}{1}\Smonoidalproduct \anymatAX{1}{\Sexchanged(\indi)})).\IEEEyesnumber\label{eq:third_order_common_2c}
\end{IEEEeqnarray*}
Thus it is enough to prove that the arguments of $\thesplitting{1}$ on the respective right-hand sides of \eqref{eq:third_order_common_2} and \eqref{eq:third_order_common_2c} are equal. 
  \par
  Reducing  $\anymatBTX{1}{\Sexchanged(1)}\anymatAX{1}{\Sexchanged(\indi)}$ in  \eqref{eq:third_order_common_1} for $\dummyindex=1$ yields
  \begin{IEEEeqnarray*}{rCl}    -\thedifferential{2}(\Sbv{\anymat}{\indj}{1}\Smonoidalproduct\anymatAX{1}{\Sexchanged(\indi)})&=&\textstyle-\Saction\Sbv{\anymat}{\indj}{1}\Smonoidalproduct(-\sum_{\indt=2}^\thedim\anymatBTX{1}{\Sexchanged(\indt)}\anymatAX{\indt}{\Sexchanged(\indi)}+\Skronecker{\indi}{1}\Saction\theone)\IEEEeqnarraynumspace\IEEEyesnumber
    \label{eq:third_order_common_2a}\\    &&\textstyle{}-\sum_{\inds=2}^{\thedim}\Sbv{\anymat}{\indj}{\inds}\Smonoidalproduct\anymatBTX{\inds}{\Sexchanged(1)}\anymatAX{1}{\Sexchanged(\indi)}-\Sbv{\anymatBT}{\Sexchanged(\indj)}{1}\Smonoidalproduct\anymatAX{1}{\Sexchanged(\indi)},
  \end{IEEEeqnarray*}
  which proves the claimed agreement of the two arguments of $\thesplitting{0}$.
  Hence, \eqref{eq:third_order_common_0} is true for $\recvar=1$.
\par
\emph{Induction step.} Let \eqref{eq:third_order_common_0} hold for some $\recvar\in \SYnumbers{\thedim}$ with  $\recvar\leq \thedim-1$. As shown hereafter,  \eqref{eq:third_order_common_0} remains true if $\recvar$ is replaced by $\recvar+1$.
\par
The values of $\anymat$, $\indj$ and $\indi$ be as they may,  $\Sbv{\anymat}{\indj}{1}\Smonoidalproduct\anymatBTX{1}{\Sexchanged(\recvar+1)}\anymatAX{\recvar+1}{\Sexchanged(\indi)}$ is normal by Lem\-ma~\ref{characterization_of_reduced_terms_of_order_two}. It is the tip of the argument of  $\thesplitting{1}$ in the induction hypothesis \eqref{eq:third_order_common_0} for the following reasons. First of all, by $\Szetafunction{\thedim}{\recvar}=0$ the  last two lines of \eqref{eq:third_order_common_0} are zero.  Furthermore, any vector in the fourth line is less already by degree. Hence, the only vectors against which to compare are in the third line of \eqref{eq:third_order_common_0}.
For any $\inds\in \SYnumbers{\thedim}$ with $1<\inds$ the fact that $(\indj,1)\lexorderRstrict(\indj,\inds)$ implies $\Sbv{\anymat}{\indj}{\inds}\theorderRstrict\Sbv{\anymat}{\indj}{1}$ by Lem\-ma~\ref{lemma:base_monomial_order} and thus $\Sbv{\anymatA}{\indj}{\inds}\Smonoidalproduct\anymatBTX{\inds}{\Sexchanged(\indt)}\anymatAX{\indt}{\Sexchanged(\indi)}\thechainsorderRstrict{1}\Sbv{\anymat}{\indj}{1}\Smonoidalproduct\anymatBTX{1}{\Sexchanged(\recvar+1)}\anymatAX{\recvar+1}{\Sexchanged(\indi)}$ for arbitrary $\indt\in\SYnumbers{\thedim}$. Similarly, for any $\indt\in\SYnumbers{\thedim}$ with $\recvar+1<\indt$ the addendum to Lem\-ma~\ref{lemma:monomial_order_reformulation} proves  $\anymatBTX{1}{\Sexchanged(\indt)}\theorderRstrict\anymatBTX{1}{\Sexchanged(\recvar+1)}$ and thus $\Sbv{\anymat}{\indj}{1}\Smonoidalproduct\anymatBTX{1}{\Sexchanged(\indt)}\anymatAX{\indt}{\Sexchanged(\indi)}\thechainsorderRstrict{1}\Sbv{\anymat}{\indj}{1}\Smonoidalproduct\anymatBTX{1}{\Sexchanged(\recvar+1)}\anymatAX{\recvar+1}{\Sexchanged(\indi)}$.
\par 
In consequence, since by Proposition~\ref{proposition:combinatorial_splitting}
\begin{IEEEeqnarray*}{rCl}
\textstyle\thereconcatenator{1}(  \Sbvfull{\anymat}{1}{\indj}{1},\anymatBTX{1}{\Sexchanged(\recvar+1)}\anymatAX{\recvar+1}{\Sexchanged(\indi)})=(\Sbvfull{2}{\anymat}{\indj}{\recvar+1},\anymatAX{\recvar+1}{\Sexchanged(\indi)})
\end{IEEEeqnarray*}
the veracity of \eqref{eq:third_order_common_0} is not affected by adding on the right-hand side $\Sbv{\anymat}{\indj}{\recvar+1}\Smonoidalproduct\anymatAX{\recvar+1}{\Sexchanged(\indi)}$ outside of $\thesplitting{1}$ and $-\thedifferential{2}(\Sbv{\anymat}{\indj}{\recvar+1}\Smonoidalproduct\anymatAX{\recvar+1}{\Sexchanged(\indi)})$ inside.
\par
Clearly, on the exterior of $\thesplitting{1}$ this addition has the same effect as replacing $\recvar$ by $\recvar+1$.
Simultaneously, specializing \eqref{eq:third_order_common_1} to $\dummyindex=\recvar+1$ yields
\begin{IEEEeqnarray*}{rCl}
  \IEEEeqnarraymulticol{3}{l}{
    -\thedifferential{2}(\Sbv{\anymat}{\indj}{\recvar+1}\Smonoidalproduct\anymatAX{\recvar+1}{\Sexchanged(\indi)})
  }\\
  \hspace{.5em}    &=&\textstyle-\Saction\Sbv{\anymat}{\indj}{1}\Smonoidalproduct\anymatBTX{1}{\Sexchanged(\recvar+1)}\anymatAX{\recvar+1}{\Sexchanged(\indi)}-\sum_{\inds=2}^{\thedim}\Sbv{\anymat}{\indj}{\inds}\Smonoidalproduct\anymatBTX{\inds}{\Sexchanged(\recvar+1)}\anymatAX{\recvar+1}{\Sexchanged(\indi)}\IEEEeqnarraynumspace\IEEEyesnumber
    \label{eq:third_order_common_3}\\    &&\textstyle{}-\Sbv{\anymatBT}{\Sexchanged(\indj)}{\recvar+1}\Smonoidalproduct\anymatAX{\recvar+1}{\Sexchanged(\indi)}+\Skronecker{\thedim}{\recvar+1}\Saction\abbrterm\anymatAX{\recvar+1}{\Sexchanged(\indi)}.
  \end{IEEEeqnarray*}
Because $\Szetafunction{\thedim}{\recvar}+\Skronecker{\thedim}{\recvar+1}=\Skronecker{\thedim}{\recvar+1}$  it is now evident that the addition of the vector \eqref{eq:third_order_common_3} changes the argument of $\thesplitting{1}$ in \eqref{eq:third_order_common_0} in the same way as increasing $\recvar$ by $1$ would. Hence, the assertion is true for all $\recvar$. 
\end{proof}

\begin{lemma}
  \label{lemma:third_order_common_final}
  For any $\anymat\in\thematrices$ and any $(\indj,\indi)\in\SYnumbers{\thedim}^{\Ssetmonoidalproduct 2}$,
  \begin{IEEEeqnarray*}{rCl}
    \IEEEeqnarraymulticol{3}{l}{
      \thedifferential{3}(\Sbv{\anymatA}{\indj}{\indi}\Smonoidalproduct\theone)
    }\\
    \hspace{.5em}   
&=&\textstyle\sum_{\inds=1}^{\thedim}\Sbv{\anymatA}{\indj}{\inds}\Smonoidalproduct\anymatAX{\inds}{\Sexchanged(\indi)}\IEEEyesnumber
    \label{eq:third_order_common_final_0}\\
    &&\textstyle{}+\thesplitting{1}\big({-}\sum_{\inds=1}^\thedim\Sbv{\anymatBT}{\Sexchanged(\indj)}{\inds}\Smonoidalproduct\anymatAX{\inds}{\Sexchanged(\indi)}-\Sbv{\anymatA}{\indj}{\indi}\Smonoidalproduct\theone\\
    &&\textstyle\hspace{2.25em}{}+\Skronecker{\indj}{\thedim}(\sum_{\inds=2}^\thedim\sum_{\indt=1}^\thedim\Sbv{\anymatA}{\indt}{\inds}\Smonoidalproduct\anymatBTX{\inds}{\Sexchanged(\indt)}\anymatAX{\thedim}{\Sexchanged(\indi)}+\sum_{\inds=2}^\thedim\Sbv{\anymatBT}{\inds}{\Sexchanged(\inds)}\Smonoidalproduct\anymatX{\thedim}{\Sexchanged(\indi)})\big).
  \end{IEEEeqnarray*}    
\end{lemma}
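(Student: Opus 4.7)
The plan is to specialize Lemma~\ref{lemma:third_order_common} to $\recvar = \thedim$ and then simplify the resulting argument of $\thesplitting{1}$ by applying the defining relations of $\thealgebra$ in the right tensor factor. At $\recvar=\thedim$ the indicator $\Szetafunction{\thedim}{\thedim}$ equals $1$, so the last two lines of \eqref{eq:third_order_common_0} contribute exactly the $\Skronecker{\indj}{\thedim}(\ldots)$ block that appears in the target \eqref{eq:third_order_common_final_0}. The sum $\sum_{\indt=\recvar+1}^{\thedim}$ in the third line becomes empty and drops out, and the single-sum term $-\sum_{\inds=1}^{\thedim}\Sbv{\anymatBT}{\Sexchanged(\indj)}{\inds}\Smonoidalproduct\anymatAX{\inds}{\Sexchanged(\indi)}$ already matches the target verbatim.

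What remains is to show that the two leftover summands inside $\thesplitting{1}$, namely $-\sum_{\inds=2}^{\thedim}\sum_{\indt=1}^{\thedim}\Sbv{\anymatA}{\indj}{\inds}\Smonoidalproduct\anymatBTX{\inds}{\Sexchanged(\indt)}\anymatAX{\indt}{\Sexchanged(\indi)}$ and $-\Skronecker{\indi}{1}\Saction\Sbv{\anymatA}{\indj}{\indi}\Smonoidalproduct\theone$, collapse to $-\Sbv{\anymatA}{\indj}{\indi}\Smonoidalproduct\theone$. For that I would exploit that $\anymatBT\in\thematrices$ by Lem\-ma~\ref{lemma:any_two_generators} and that $(\anymatBT)\SskewdaggerP=\anymatA$, so substituting $\othermatA\Seqpd\anymatBT$ into $\therels$ yields, in $\thealgebra$, the identity $\sum_{\indt=1}^{\thedim}\anymatBTX{\inds}{\Sexchanged(\indt)}\anymatAX{\indt}{\Sexchanged(\indi)}=\Skronecker{\inds}{\indi}\theone$ for each $\inds$. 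Applied in the right tensor factor of $\thechainsmodule{1}=\thefield\thechains{1}\Smonoidalproduct\thealgebra$, this turns the double sum into $\sum_{\inds=2}^{\thedim}\Skronecker{\inds}{\indi}\Saction\Sbv{\anymatA}{\indj}{\inds}\Smonoidalproduct\theone=(1-\Skronecker{\indi}{1})\Saction\Sbv{\anymatA}{\indj}{\indi}\Smonoidalproduct\theone$, and combining with $-\Skronecker{\indi}{1}\Saction\Sbv{\anymatA}{\indj}{\indi}\Smonoidalproduct\theone$ yields $-\Sbv{\anymatA}{\indj}{\indi}\Smonoidalproduct\theone$ regardless of whether $\indi=1$ or $\indi\geq 2$.

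This is a bookkeeping step rather than a genuine reduction in the Anick sense, so no real obstacle is anticipated; one only has to be careful that the algebra relation is being invoked legitimately in the second tensor factor, which is fine because that factor already lives in $\thealgebra=\thefreealg/\theideal$, not in $\thefreealg$. In particular, no further application of $\thedifferential{2}$ or $\thesplitting{0}$ is needed once one has reached the statement of Lem\-ma~\ref{lemma:third_order_common} at $\recvar=\thedim$.
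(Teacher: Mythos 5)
Your proposal is correct and follows essentially the same route as the paper: specialize Lemma~\ref{lemma:third_order_common} at $\recvar=\thedim$, note that the empty sum drops and $\Szetafunction{\thedim}{\thedim}=1$ retains the $\Skronecker{\indj}{\thedim}$ block, and then use the relation $\sum_{\indt=1}^{\thedim}\anymatBTX{\inds}{\Sexchanged(\indt)}\anymatAX{\indt}{\Sexchanged(\indi)}=\Skronecker{\inds}{\indi}\Saction\theone$ in the $\thealgebra$-factor to merge the double sum with the $\Skronecker{\indi}{1}$ term into $-\Sbv{\anymatA}{\indj}{\indi}\Smonoidalproduct\theone$. This is exactly the computation in the paper's proof, with your remark about $(\anymatBT)\SskewdaggerP=\anymatA$ merely making explicit the instance of $\therels$ being used.
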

\begin{proof}
  In the special case $\recvar=\thedim$ the first grouped sum in the third line of \eqref{eq:third_order_common_0} in Lemma~\ref{lemma:third_order_common} vanishes and what remains is the identity
    \begin{IEEEeqnarray*}{rCl}
    \IEEEeqnarraymulticol{3}{l}{
      \thedifferential{3}(\Sbv{\anymatA}{\indj}{\indi}\Smonoidalproduct\theone)
    }\\
    \hspace{.em}   
&=&\textstyle\sum_{\inds=1}^{\thedim}\Sbv{\anymatA}{\indj}{\inds}\Smonoidalproduct\anymatAX{\inds}{\Sexchanged(\indi)}\IEEEyesnumber
    \label{eq:third_order_common_final_1}\\    &&\textstyle{}+\thesplitting{1}\big({-}\sum_{\inds=2}^\thedim\Sbv{\anymatA}{\indj}{\inds}\Smonoidalproduct\sum_{\indt=1}^{\thedim}\anymatBTX{\inds}{\Sexchanged(\indt)}\anymatAX{\indt}{\Sexchanged(\indi)}\\    &&\textstyle\hfill\hspace{.em}{}-\sum_{\inds=1}^\thedim\Sbv{\anymatBT}{\Sexchanged(\indj)}{\inds}\Smonoidalproduct\anymatAX{\inds}{\Sexchanged(\indi)}-\Skronecker{\indi}{1}\Saction\Sbv{\anymatA}{\indj}{\indi}\Smonoidalproduct\theone\\
    &&\textstyle\hspace{2.25em}{}+\Skronecker{\indj}{\thedim}(\sum_{\inds=2}^\thedim\sum_{\indt=1}^\thedim\Sbv{\anymatA}{\indt}{\inds}\Smonoidalproduct\anymatBTX{\inds}{\Sexchanged(\indt)}\anymatAX{\thedim}{\Sexchanged(\indi)}+\sum_{\inds=2}^\thedim\Sbv{\anymatBT}{\inds}{\Sexchanged(\inds)}\Smonoidalproduct\anymatX{\thedim}{\Sexchanged(\indi)})\big).
  \end{IEEEeqnarray*}
  And because
  \begin{IEEEeqnarray*}{rCl}
    \IEEEeqnarraymulticol{3}{l}{
      \textstyle -\sum_{\inds=2}^\thedim\Sbv{\anymatA}{\indj}{\inds}\Smonoidalproduct\sum_{\indt=1}^{\thedim}\anymatBTX{\inds}{\Sexchanged(\indt)}\anymatAX{\indt}{\Sexchanged(\indi)}-\Skronecker{\indi}{1}\Saction\Sbv{\anymatA}{\indj}{\indi}\Smonoidalproduct\theone
    }\\    \hspace{0em}&=&\textstyle-\sum_{\inds=2}^\thedim\Skronecker{\inds}{\indi}\Saction\Sbv{\anymatA}{\indj}{\inds}\Smonoidalproduct\theone-\Skronecker{\indi}{1}\Saction\Sbv{\anymatA}{\indj}{1}\Smonoidalproduct\theone=-\sum_{\inds=1}^\thedim\Skronecker{\inds}{\indi}\Saction\Sbv{\anymatA}{\indj}{\inds}\Smonoidalproduct\theone=-\Sbv{\anymatA}{\indj}{\indi}\Smonoidalproduct\theone
  \end{IEEEeqnarray*}
  that already proves  \eqref{eq:third_order_common_final_0}.
\end{proof}
From now on it makes more sense to distinguish cases, ultimately four of them.
\subsubsection{Towards the first two cases}
For the first two out of the four cases that need to be considered there is a common starting point, obtained via one recursion step.
\begin{lemma}
  \label{lemma:third_order_first_two_cases_common}
  For any $\anymat\in\thematrices$ and any $(\indj,\indi)\in\SYnumbers{\thedim}^{\Ssetmonoidalproduct 2}$, if $\indj\neq \thedim$, then
    \begin{IEEEeqnarray*}{rCl}
\thedifferential{3}(\Sbv{\anymatA}{\indj}{\indi}\Smonoidalproduct\theone)&=&\textstyle\sum_{\inds=1}^{\thedim}\Sbv{\anymatA}{\indj}{\inds}\Smonoidalproduct\anymatAX{\inds}{\Sexchanged(\indi)}-\Sbv{\anymatBT}{\Sexchanged(\indj)}{\indi}\Smonoidalproduct\theone\IEEEyesnumber
    \label{eq:third_order_first_two_cases_common_0}\\
    &&\textstyle{}+\thesplitting{1}\big({-}\sum_{\inds=1}^{\thedim}\Sbv{\anymatBT}{\Sexchanged(\indj)}{\inds}\Smonoidalproduct\anymatAX{\inds}{\Sexchanged(\indi)}-\Sbv{\anymatA}{\indj}{\indi}\Smonoidalproduct \theone+\thedifferential{2}(\Sbv{\anymatBT}{\Sexchanged(\indj)}{\indi}\Smonoidalproduct\theone)).
  \end{IEEEeqnarray*}
\end{lemma}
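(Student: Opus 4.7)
The plan is to start from Lemma~\ref{lemma:third_order_common_final}, specialize it to the case $\indj \neq \thedim$, and then perform exactly one recursive step of $\thesplitting{1}$. Because of the factor $\Skronecker{\indj}{\thedim}$, the entire last line of the argument of $\thesplitting{1}$ in Lemma~\ref{lemma:third_order_common_final} vanishes, leaving
\begin{IEEEeqnarray*}{rCl}
\thedifferential{3}(\Sbv{\anymatA}{\indj}{\indi}\Smonoidalproduct\theone)
&=&\textstyle\sum_{\inds=1}^{\thedim}\Sbv{\anymatA}{\indj}{\inds}\Smonoidalproduct\anymatAX{\inds}{\Sexchanged(\indi)}\\
&&{}+\thesplitting{1}\big({-}\textstyle\sum_{\inds=1}^{\thedim}\Sbv{\anymatBT}{\Sexchanged(\indj)}{\inds}\Smonoidalproduct\anymatAX{\inds}{\Sexchanged(\indi)}-\Sbv{\anymatA}{\indj}{\indi}\Smonoidalproduct\theone\big).
\end{IEEEeqnarray*}

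The next step is to identify the tip of the new splitting argument. All summands in the first grouped sum are of total monomial degree $2$, while $\Sbv{\anymatA}{\indj}{\indi}\Smonoidalproduct\theone$ has degree $1$, so the tip must lie among the degree\-/two terms. For those, the leading factor is $\anymatBTX{\Sexchanged(\indj)}{\Sexchanged(\inds)}$, whose ordering over the running index is controlled by the addendum to Lemma~\ref{lemma:monomial_order_reformulation}; since $\Sexchanged(1)=\thedim$ is maximal, the unique maximal first letter is reached at $\inds=1$, and the tip is therefore $-\Sbv{\anymatBT}{\Sexchanged(\indj)}{1}\Smonoidalproduct\anymatAX{1}{\Sexchanged(\indi)}$ with coefficient $-1$. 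Its trailing factor $\anymatAX{1}{\Sexchanged(\indi)}$ is a single generator and thus automatically reduced.

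Next, I apply Proposition~\ref{proposition:combinatorial_splitting} with $\orderind=3$, with $\anymatBT$ in place of $\anymatA$, $\Sexchanged(\indj)$ in place of $\indj$, and $\anyrest=\theone$. Using $(\anymatBT)\PBT{3}=\anymatA$ (because $\PBT{\orderind}=\PA{\orderind-1}$ and $\PA{2}=\anymatBT$, so $(\anymatBT)BT=\anymatA$ by the Klein\-/four structure from Lemma~\ref{lemma:any_two_generators}), the reconcatenator sends
\begin{IEEEeqnarray*}{rCl}
\thereconcatenator{1}\bigl(\Sbv{\anymatBT}{\Sexchanged(\indj)}{1},\anymatAX{1}{\Sexchanged(\indi)}\bigr)=\bigl(\Sbv{\anymatBT}{\Sexchanged(\indj)}{\indi},\theone\bigr).
\end{IEEEeqnarray*}
Hence, the recursive definition of $\thesplitting{1}$ from Section~\ref{section:anick-anick_resolution} instructs one to add $-\Sbv{\anymatBT}{\Sexchanged(\indj)}{\indi}\Smonoidalproduct\theone$ to the exterior of $\thesplitting{1}$ and $+\thedifferential{2}(\Sbv{\anymatBT}{\Sexchanged(\indj)}{\indi}\Smonoidalproduct\theone)$ to its argument, which is exactly the transition from the reduced Lemma~\ref{lemma:third_order_common_final} to the claimed identity~\eqref{eq:third_order_first_two_cases_common_0}.

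The only non\-/routine step is the tip computation: one must rule out that any of the other degree\-/two terms $-\Sbv{\anymatBT}{\Sexchanged(\indj)}{\inds}\Smonoidalproduct\anymatAX{\inds}{\Sexchanged(\indi)}$ for $\inds\geq 2$ could tie or exceed the $\inds=1$ summand. This is however immediate from the addendum to Lemma~\ref{lemma:monomial_order_reformulation}, since the leading factors differ already in their second index and the map $\Sexchanged$ reverses the order on $\SYnumbers{\thedim}$; no tiebreaking on the second letter is required, and the hypothesis $\indj\neq\thedim$ plays no role here (it was only used to discard the $\Skronecker{\indj}{\thedim}$ branch at the start).
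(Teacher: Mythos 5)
Your proposal is correct and follows essentially the same route as the paper's proof: specialize Lemma~\ref{lemma:third_order_common_final} to $\indj\neq\thedim$, identify the tip of the splitting argument as the $\inds=1$ summand (your appeal to the addendum of Lemma~\ref{lemma:monomial_order_reformulation} is equivalent to the paper's citation of Lemma~\ref{lemma:base_monomial_order}, since $1$-chains are single generators), and then perform the single recursion step of $\thesplitting{1}$ via Proposition~\ref{proposition:combinatorial_splitting}. The identification $\thereconcatenator{1}(\Sbvfull{\anymatBT}{1}{\Sexchanged(\indj)}{1},\anymatAX{1}{\Sexchanged(\indi)})=(\Sbvfull{\anymatBT}{2}{\Sexchanged(\indj)}{\indi},\theone)$ and the sign bookkeeping from the tip coefficient $-1$ match the paper exactly.
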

\begin{proof}
What the identity \eqref{eq:third_order_common_final_0} in Lem\-ma~\ref{lemma:third_order_common_final} says in the special case $\indj\neq \thedim$  is that
    \begin{IEEEeqnarray*}{rCl}
\thedifferential{3}(\Sbv{\anymatA}{\indj}{\indi}\Smonoidalproduct\theone)&=&\textstyle\sum_{\inds=1}^{\thedim}\Sbv{\anymatA}{\indj}{\inds}\Smonoidalproduct\anymatAX{\inds}{\Sexchanged(\indi)}\IEEEyesnumber
    \label{eq:third_order_first_two_cases_common_1}\\
    &&\textstyle{}+\thesplitting{1}({-}\sum_{\inds=1}^{\thedim}\Sbv{\anymatBT}{\Sexchanged(\indj)}{\inds}\Smonoidalproduct\anymatAX{\inds}{\Sexchanged(\indi)}-\Sbv{\anymatA}{\indj}{\indi}\Smonoidalproduct \theone).
  \end{IEEEeqnarray*}  
  The argument of $\thesplitting{1}$ in this identity is obviously normal and its tip is $\Sbv{\anymatBT}{\Sexchanged(\indj)}{1}\Smonoidalproduct\anymatAX{1}{\Sexchanged(\indi)}$ because already  $\Sbv{\anymatBT}{\Sexchanged(\indj)}{\inds}<\Sbv{\anymatBT}{\Sexchanged(\indj)}{1}$ for any $\inds\in\SYnumbers{\thedim}$ with $1<\inds$ by Lem\-ma~\ref{lemma:base_monomial_order}.
  \par
  Because by Proposition~\ref{proposition:combinatorial_splitting}
  \begin{IEEEeqnarray*}{rCl}
   \textstyle\thereconcatenator{1}( \Sbvfull{\anymatBT}{1}{\Sexchanged(\indj)}{1},\anymatAX{1}{\Sexchanged(\indi)})=(\Sbvfull{\anymatBT}{2}{\Sexchanged(\indj)}{\indi},\theone)
  \end{IEEEeqnarray*}
  another valid identity is produced by adding to \eqref{eq:third_order_first_two_cases_common_1} on the right-hand side the term $-\Sbv{\anymatBT}{\Sexchanged(\indj)}{\indi}\Smonoidalproduct \theone$ on the exterior of $\thesplitting{1}$ and $\thedifferential{2}(\Sbv{\anymatBT}{\Sexchanged(\indj)}{\indi}\Smonoidalproduct \theone)$. The resulting equation is precisely \eqref{eq:third_order_first_two_cases_common_0}.
\end{proof}
\subsubsection{First case} The first case is very simple, requiring no further recursions.
\begin{proposition}
  \label{proposition:third_order_first_case}
  For any $\anymat\in\thematrices$ and any $(\indj,\indi)\in\SYnumbers{\thedim}^{\Ssetmonoidalproduct 2}$, if $\indj\neq \thedim$ and $(\indj,\indi)\neq (1,\thedim)$, then
  \begin{IEEEeqnarray*}{rCl}
    \thedifferential{3}(\Sbv{\anymatA}{\indj}{\indi}\Smonoidalproduct\theone)&=&\textstyle\sum_{\inds=1}^{\thedim}\Sbv{\anymatA}{\indj}{\inds}\Smonoidalproduct\anymatAX{\inds}{\Sexchanged(\indi)}-\Sbv{\anymatBT}{\Sexchanged(\indj)}{\indi}\Smonoidalproduct\theone.\IEEEyesnumber
    \label{eq:third_order_first_case_0}
  \end{IEEEeqnarray*}
\end{proposition}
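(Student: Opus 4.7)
The plan is to start from the identity provided by Lemma~\ref{lemma:third_order_first_two_cases_common}, which already rewrites $\thedifferential{3}(\Sbv{\anymatA}{\indj}{\indi}\Smonoidalproduct\theone)$ under the hypothesis $\indj\neq\thedim$ as the claimed expression plus one application of $\thesplitting{1}$ to a specific vector. It will be enough to prove that, under the extra hypothesis $(\indj,\indi)\neq(1,\thedim)$, the argument of $\thesplitting{1}$ vanishes, for then the $\thesplitting{1}$-term drops out and \eqref{eq:third_order_first_case_0} follows at once.

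To do so I would evaluate $\thedifferential{2}(\Sbv{\anymatBT}{\Sexchanged(\indj)}{\indi}\Smonoidalproduct\theone)$ by feeding the substitution $(\anymatA,\indj,\indi)\mapsto(\anymatBT,\Sexchanged(\indj),\indi)$ into the formula of Theorem~\ref{theorem:second_order}. Using that Lemma~\ref{lemma:any_two_generators} yields $(\anymatBT)^{\Sskewdagger}=\anymatA$ and $(\anymatBT)^{\Stranspose,\Sskewdagger}=\anymatA$ (so that $\anymatBTX{\Sexchanged(\indj)}{\inds}$ becomes $\anymatBTX{\Sexchanged(\indj)}{\inds}$ and the $\anymatBT$-analogue of the right adjoint matrix equals $\anymatA$), together with $\Sexchanged\circ\Sexchanged=\SxidentityX{}$, the generic part of Theorem~\ref{theorem:second_order} produces exactly
\begin{IEEEeqnarray*}{rCl}
\textstyle\sum_{\inds=1}^{\thedim}\Sbv{\anymatBT}{\Sexchanged(\indj)}{\inds}\Smonoidalproduct\anymatAX{\inds}{\Sexchanged(\indi)}+\Sbv{\anymatA}{\indj}{\indi}\Smonoidalproduct\theone,
\end{IEEEeqnarray*}
which cancels the first two summands inside $\thesplitting{1}$ on the right-hand side of \eqref{eq:third_order_first_two_cases_common_0}.

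The one thing to verify carefully is that the Kronecker-correction of Theorem~\ref{theorem:second_order}, namely $\Skronecker{\Sexchanged(\indj)}{\thedim}\Skronecker{\indi}{\thedim}$ times the double sum, vanishes under the current hypotheses: $\Skronecker{\Sexchanged(\indj)}{\thedim}=1$ is equivalent to $\indj=1$, and together with $\Skronecker{\indi}{\thedim}=1$ this forces $(\indj,\indi)=(1,\thedim)$, which is precisely the case excluded by assumption. Hence the correction term disappears, the argument of $\thesplitting{1}$ is $0$, and $\thesplitting{1}$ being $\thefield$-linear sends it to $0$, completing the proof.

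I expect no real obstacle here: the statement is a routine simplification once both Lemma~\ref{lemma:third_order_first_two_cases_common} and Theorem~\ref{theorem:second_order} are invoked, and the only subtlety is the bookkeeping that identifies $(\anymatBT)^{\Sskewdagger}$ with $\anymatA$ and makes the correction term vanish exactly when the hypotheses of the proposition hold. The harder cases will be the companion propositions ($\indj=\thedim$, or $(\indj,\indi)=(1,\thedim)$), where the correction term survives and has to be resolved by further recursion steps in $\thesplitting{1}$.
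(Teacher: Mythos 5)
Your proposal is correct and follows essentially the same route as the paper: start from Lemma~\ref{lemma:third_order_first_two_cases_common} and observe that under $(\indj,\indi)\neq(1,\thedim)$ the value of $\thedifferential{2}(\Sbv{\anymatBT}{\Sexchanged(\indj)}{\indi}\Smonoidalproduct\theone)$ exactly cancels the remaining terms inside $\thesplitting{1}$, so the splitting term vanishes by linearity. The only cosmetic difference is that you quote Theorem~\ref{theorem:second_order} and kill the Kronecker correction explicitly, whereas the paper invokes Proposition~\ref{proposition:second_order_generic_case} directly, which amounts to the same computation.
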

\begin{proof}
  By Proposition~\ref{proposition:second_order_generic_case} the assumption that  $(\indj,\indi)\neq (1,\thedim)$ implies
  \begin{IEEEeqnarray*}{rCl}
    \thedifferential{2}(\Sbv{\anymatBT}{\Sexchanged(\indj)}{\indi}\Smonoidalproduct\theone)&=&\textstyle\sum_{\inds=1}^{\thedim}\Sbv{\anymatBT}{\Sexchanged(\indj)}{\inds}\Smonoidalproduct\anymatAX{\inds}{\Sexchanged(\indi)}+\Sbv{\anymatA}{\indj}{\indi}\Smonoidalproduct \theone,
  \end{IEEEeqnarray*}
  which is precisely the negative of the remainder of the argument of $\thesplitting{1}$ in \eqref{eq:third_order_first_two_cases_common_0}. Hence, Lem\-ma~\ref{lemma:third_order_first_two_cases_common} proves the assertion.
\end{proof}
\subsubsection{Second case} The second case is still  handled quickly, in $\thedim$ steps.
\begin{lemma}
  \label{lemma:third_order_second_case_helper}  
  For any $\anymat\in\thematrices$ and any $\recvar\in\SYnumbers{\thedim}$,
    \begin{IEEEeqnarray*}{rCl}
      \IEEEeqnarraymulticol{3}{l}{ \thedifferential{3}(\Sbv{\anymatA}{1}{\thedim}\Smonoidalproduct\theone)
      }\\
      \hspace{1em}&=&\textstyle\sum_{\inds=1}^\thedim\Sbv{\anymatA}{1}{\inds}\Smonoidalproduct\anymatAX{\inds}{1}-\Sbv{\anymatBT}{\thedim}{\thedim}\Smonoidalproduct\theone-\sum_{\inds=2}^\recvar\Sbv{\anymatB}{\Sexchanged(\inds)}{\Sexchanged(\inds)}\Smonoidalproduct\theone\IEEEyesnumber\label{eq:third_order_second_case_helper_0}\\
      &&\textstyle{}+\thesplitting{1}\big({-}\sum_{\inds=1}^\thedim\sum_{\indt=\recvar+1}^\thedim\Sbv{\anymatB}{\Sexchanged(\indt)}{\inds}\Smonoidalproduct\anymatATX{\inds}{\indt}-\sum_{\inds=\recvar+1}^\thedim\Sbv{\anymatAT}{\inds}{\Sexchanged(\inds)}\Smonoidalproduct\theone).
  \end{IEEEeqnarray*}
\end{lemma}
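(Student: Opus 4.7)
The plan is to mirror the nested-induction structure of Lem\-ma\-ta~\ref{lemma:second_order_special_case_first_step} and \ref{lemma:second_order_special_case_second_step}, performing an induction on $\recvar$.

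For the \emph{base case} $\recvar = 1$, I would specialise Lem\-ma~\ref{lemma:third_order_first_two_cases_common} to $(\indj,\indi) = (1,\thedim)$ (admissible since $\thedim \geq 2$ forces $1 \neq \thedim$) and then expand the nested $\thedifferential{2}(\Sbv{\anymatBT}{\thedim}{\thedim}\Smonoidalproduct\theone)$ by means of Pro\-po\-si\-tion~\ref{proposition:second_order_special_case}. Because the transpose and skew-star operations generate a Klein four-group on $\thematrices$, applying that proposition with $\anymat$ replaced by $\anymatBT$ simply swaps the roles of $\anymat$ and $\anymatBT$ throughout. After substitution the two copies of $\Sbv{\anymat}{1}{\thedim}\Smonoidalproduct\theone$ cancel, and the $\inds = 1$ summand of the single series from Lem\-ma~\ref{lemma:third_order_first_two_cases_common} absorbs the standalone $\Sbv{\anymatBT}{\thedim}{1}\Smonoidalproduct\anymatAX{1}{1}$ produced by the generic portion of Pro\-po\-si\-tion~\ref{proposition:second_order_special_case}. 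The remnants merge into the compact double sum $-\sum_{\inds=2}^{\thedim}\sum_{\indt=1}^{\thedim} \Sbv{\anymatBT}{\indt}{\inds}\Smonoidalproduct\anymatAX{\inds}{\Sexchanged(\indt)}$ and the single sum $-\sum_{\inds=2}^{\thedim} \Sbv{\anymat}{\inds}{\Sexchanged(\inds)}\Smonoidalproduct\theone$ inside $\thesplitting{1}$. Relabelling $\inds \leftrightarrow \indt$ and invoking the $\orderind = 1$ identities furnished by Lem\-ma~\ref{lemma:evil_identities}, specifically $\Sbv{\anymatBT}{\Sexchanged(\inds)}{\indt} = \Sbv{\anymatB}{\Sexchanged(\indt)}{\inds}$ (an instance of that lemma applied with base matrix $\anymatB$), together with the observation that on the diagonal both $\Sbvfull{\anymat}{1}{\inds}{\Sexchanged(\inds)}$ and $\Sbvfull{\anymatAT}{1}{\inds}{\Sexchanged(\inds)}$ reduce to the generator $\anymatAX{\inds}{\inds}$, then turns the expression into the right-hand side of \eqref{eq:third_order_second_case_helper_0} at $\recvar = 1$.

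For the \emph{induction step}, assume \eqref{eq:third_order_second_case_helper_0} at some $\recvar \leq \thedim - 1$. The $\thechainsorder{1}$-maximal element inside $\thesplitting{1}$ will be the $(\inds,\indt)=(1,\recvar+1)$ summand $\Sbv{\anymatB}{\Sexchanged(\recvar+1)}{1}\Smonoidalproduct\anymatATX{1}{\recvar+1}$. Its normality follows from Lem\-ma~\ref{characterization_of_reduced_terms_of_order_two} because $(1,\recvar+1) \neq (\thedim,1)$, and its maximality is a consequence of Lem\-ma~\ref{lemma:base_monomial_order}: within the summation range $\indt \in \{\recvar+1,\dots,\thedim\}$ and $\inds \in \{1,\dots,\thedim\}$ one is driven to $\indt = \recvar+1$ and $\inds = 1$, and all single-sum contributions lie at strictly lower degree. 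Pro\-po\-si\-tion~\ref{proposition:combinatorial_splitting} yields $\thereconcatenator{1}(\Sbvfull{\anymatB}{1}{\Sexchanged(\recvar+1)}{1}, \anymatATX{1}{\recvar+1}) = (\Sbvfull{\anymatB}{2}{\Sexchanged(\recvar+1)}{\Sexchanged(\recvar+1)}, \theone)$, and the splitting recursion thus moves $-\Sbv{\anymatB}{\Sexchanged(\recvar+1)}{\Sexchanged(\recvar+1)}\Smonoidalproduct\theone$ out of $\thesplitting{1}$ at the price of adding $\thedifferential{2}(\Sbv{\anymatB}{\Sexchanged(\recvar+1)}{\Sexchanged(\recvar+1)}\Smonoidalproduct\theone)$ to its argument. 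Because $(\Sexchanged(\recvar+1),\Sexchanged(\recvar+1)) \neq (\thedim,\thedim)$ whenever $\recvar \geq 1$, the \emph{generic} Pro\-po\-si\-tion~\ref{proposition:second_order_generic_case} applies, and---after identifying the skew-dagger of $\anymatB$ with $\anymatAT$---it returns exactly the pair of terms that cancel the $\indt = \recvar+1$ slice of the double sum and the $\inds = \recvar+1$ summand of the single sum in the current interior. What remains on both sides is \eqref{eq:third_order_second_case_helper_0} with $\recvar+1$ in place of $\recvar$.

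The \emph{main obstacle} is the bookkeeping of the involutive identifications among $\anymat$, $\anymatAT$, $\anymatB$, $\anymatBT$---in particular, that substituting $\anymat \to \anymatBT$ swaps the roles of $\anymat$ and $\anymatBT$ in Pro\-po\-si\-tion~\ref{proposition:second_order_special_case}, and that the skew-dagger of $\anymatB$ equals $\anymatAT$---together with the $\orderind \in \{1,2\}$ identities of Lem\-ma~\ref{lemma:evil_identities} needed to recast the intermediate compact form into the shape involving $\anymatB$ and $\anymatAT$ demanded by \eqref{eq:third_order_second_case_helper_0}. Once those identifications are in place, the recursion is purely combinatorial and follows the template of Lem\-ma~\ref{lemma:second_order_special_case_second_step}, with Pro\-po\-si\-tion~\ref{proposition:second_order_generic_case} here playing the role that Pro\-po\-si\-tion~\ref{proposition:first_order} played there.
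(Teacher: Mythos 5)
Your proposal is correct and follows essentially the same route as the paper: the base case specializes Lemma~\ref{lemma:third_order_first_two_cases_common} at $(\indj,\indi)=(1,\thedim)$, expands via Proposition~\ref{proposition:second_order_special_case} with $\anymatBT$ in place of $\anymat$, cancels/merges and reindexes using Lemma~\ref{lemma:evil_identities}, and the induction step peels off the tip $\Sbv{\anymatB}{\Sexchanged(\recvar+1)}{1}\Smonoidalproduct\anymatATX{1}{\recvar+1}$ via Proposition~\ref{proposition:combinatorial_splitting} and Proposition~\ref{proposition:second_order_generic_case}, exactly as in the paper. (Only a cosmetic point: invoking Lemma~\ref{characterization_of_reduced_terms_of_order_two} for the tip's normality is unnecessary here, since the module coordinate $\anymatATX{1}{\recvar+1}$ is a single generator and hence automatically reduced.)
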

\begin{proof}
  The proof goes by induction over $\recvar$.
  \par
  \emph{Induction base.} For the base  case $\recvar=1$ what needs to be shown is that
    \begin{IEEEeqnarray*}{rCl}
\thedifferential{3}(\Sbv{\anymatA}{1}{\thedim}\Smonoidalproduct\theone)&=&\textstyle\sum_{\indt=1}^\thedim\Sbv{\anymatA}{1}{\inds}\Smonoidalproduct\anymatAX{\inds}{1}-\Sbv{\anymatBT}{\thedim}{\thedim}\Smonoidalproduct\theone\IEEEyesnumber\label{eq:third_order_second_case_helper_1}\\
      &&\textstyle{}+\thesplitting{1}\big({-}\sum_{\inds=1}^\thedim\sum_{\indt=2}^\thedim\Sbv{\anymatB}{\Sexchanged(\indt)}{\inds}\Smonoidalproduct\anymatATX{\inds}{\indt}-\sum_{\inds=2}^\thedim\Sbv{\anymatAT}{\inds}{\Sexchanged(\inds)}\Smonoidalproduct\theone).
    \end{IEEEeqnarray*}
    \par
    Since by Proposition~\ref{proposition:second_order_special_case}
      \begin{IEEEeqnarray*}{rCl}
\thedifferential{2}(\Sbv{\anymatBT}{\thedim}{\thedim}\Smonoidalproduct\theone)&=&\textstyle\Sbv{\anymatBT}{\thedim}{1}\Smonoidalproduct\anymatAX{1}{1}-\sum_{\inds=2}^\thedim\sum_{\indt=1}^{\thedim-1}\Sbv{\anymatBT}{\indt}{\inds}\Smonoidalproduct\anymatAX{\inds}{\Sexchanged(\indt)}\\
&&\textstyle\hspace{8em}{}-\sum_{\inds=2}^{\thedim}\Sbv{\anymatA}{\inds}{\Sexchanged(\inds)}\Smonoidalproduct\theone+\Sbv{\anymatA}{1}{\thedim}\Smonoidalproduct\theone
    \end{IEEEeqnarray*}
the conclusion \eqref{eq:third_order_first_two_cases_common_0} of Lem\-ma~\ref{lemma:third_order_first_two_cases_common} in the special case $(\indj,\indi)=(1,\thedim)$ reads
        \begin{IEEEeqnarray*}{rCl}
\thedifferential{3}(\Sbv{\anymatA}{1}{\thedim}\Smonoidalproduct\theone)&=&\textstyle\sum_{\inds=1}^{\thedim}\Sbv{\anymatA}{1}{\inds}\Smonoidalproduct\anymatAX{\inds}{1}-\Sbv{\anymatBT}{\thedim}{\thedim}\Smonoidalproduct\theone\IEEEyesnumber
    \label{eq:third_order_second_case_helper_1a}\\
    &&\textstyle{}+\thesplitting{1}\big({-}\sum_{\inds=2}^\thedim\sum_{\indt=1}^{\thedim}\Sbv{\anymatBT}{\indt}{\inds}\Smonoidalproduct\anymatAX{\inds}{\Sexchanged(\indt)}-\sum_{\inds=2}^{\thedim}\Sbv{\anymatA}{\inds}{\Sexchanged(\inds)}\Smonoidalproduct\theone\big).
  \end{IEEEeqnarray*}
  where two instances each of $\Sbv{\anymatA}{1}{\thedim}\Smonoidalproduct\theone$ and $\Sbv{\anymatBT}{\thedim}{1}\Smonoidalproduct\anymatAX{1}{1}$ with different signs have canceled each other out and where two grouped sums have been combined into one. Hence, it is enough to rewrite the argument of $\thesplitting{1}$ in  \eqref{eq:third_order_second_case_helper_1a} until it agrees with that in \eqref{eq:third_order_second_case_helper_1}.
  \par
  Reindexing the  summation over $\indt$  with $\Sexchanged$ and afterwards exchanging the names $\inds\leftrightarrow\indt$ of the summation indices makes the argument of $\thesplitting{1}$ in  \eqref{eq:third_order_second_case_helper_1a}  take on the form
  \begin{IEEEeqnarray*}{rCl}
\textstyle    -\sum_{\inds=1}^\thedim\sum_{\indt=2}^{\thedim}\Sbv{\anymatBT}{\Sexchanged(\inds)}{\indt}\Smonoidalproduct\anymatAX{\indt}{\inds}-\sum_{\inds=2}^{\thedim}\Sbv{\anymatA}{\inds}{\Sexchanged(\inds)}\Smonoidalproduct\theone.
  \end{IEEEeqnarray*}
  Recognizing that for any $\{\inds,\indt\}\subseteq \SYnumbers{\thedim}$ not only  $\anymatAX{\indt}{\inds}=\anymatATX{\inds}{\indt}$  but by Lemma~\ref{lemma:evil_identities} also
  \begin{IEEEeqnarray*}{rCl}
    \Sbvfull{\anymatBT}{1}{\Sexchanged(\inds)}{\indt}&=&\Sbvfull{\anymatB}{1}{\Sexchanged(\indt)}{\inds}
  \end{IEEEeqnarray*}
  as well as
  \begin{IEEEeqnarray*}{rCl}
    \Sbvfull{\anymatA}{1}{\inds}{\Sexchanged(\inds)}&=&\Sbvfull{\anymatAT}{1}{\inds}{\Sexchanged(\inds)}
  \end{IEEEeqnarray*}
  now shows the claimed equality between the arguments in \eqref{eq:third_order_second_case_helper_1a} and \eqref{eq:third_order_second_case_helper_1}, thus proving \eqref{eq:third_order_second_case_helper_0} for $\recvar=1$.
  \par
  \emph{Induction step.} Let \eqref{eq:third_order_second_case_helper_0} hold for some $\recvar\in\SYnumbers{\thedim}$ with $\recvar\leq \thedim-1$. By the following argument \eqref{eq:third_order_second_case_helper_0} is also true for $\recvar+1$ in place of $\recvar$.
  \par
  The normal vector $\Sbv{\anymatB}{\Sexchanged(\recvar+1)}{1}\Smonoidalproduct\anymatATX{1}{\recvar+1}$ is the tip of the argument of $\thesplitting{1}$ in \eqref{eq:third_order_second_case_helper_0} as explained hereafter. For reasons of degree the only terms which could possibly be greater or equal are of the form $\Sbv{\anymatB}{\Sexchanged(\indt)}{\inds}\Smonoidalproduct\anymatATX{\inds}{\indt}$ for $\{\inds,\indt\}\subseteq \SYnumbers{\thedim}$ such that either $\recvar+1<\indt$ or $\recvar+1=\indt$ and $1<\inds$, which is to say such that $(\recvar+1,1)\lexorderRstrict(\indt,\inds)$. But those conditions are actually equivalent to $(1,\recvar+1)\lexorderRstrict(\inds,\indt)$. Since thus both $(\Sexchanged(\indt),1)\lexorderRstrict(\Sexchanged(\recvar+1),\inds)$ and  $(1,\Sexchanged(\indt))\lexorderRstrict(\inds,\Sexchanged(\recvar+1))$ for such $\indt$ and $\inds$ Lem\-ma~\ref{lemma:base_monomial_order} proves that then $\Sbv{\anymatB}{\Sexchanged(\indt)}{\inds}\theorderRstrict\Sbv{\anymatB}{\Sexchanged(\recvar+1)}{1}$, irrespective of the value of $\anymat$.
  \par
  Because by Proposition~\ref{proposition:combinatorial_splitting}
  \begin{IEEEeqnarray*}{rCl}
\textstyle   \thereconcatenator{1}( \Sbvfull{\anymatB}{1}{\Sexchanged(\recvar+1)}{1},\anymatATX{1}{\recvar+1})=(\Sbvfull{\anymatB}{2}{\Sexchanged(\recvar+1)}{\Sexchanged(\recvar+1)},1)
  \end{IEEEeqnarray*}
  the definition of $\thesplitting{1}$ implies that a valid identity is derived by adding to the right-hand side of \eqref{eq:third_order_second_case_helper_0} the term $-\Sbv{\anymatB}{\Sexchanged(\recvar+1)}{\Sexchanged(\recvar+1)}\Smonoidalproduct\theone$ outside of $\thesplitting{1}$ and $\thedifferential{2}(\Sbv{\anymatB}{\Sexchanged(\recvar+1)}{\Sexchanged(\recvar+1)}\Smonoidalproduct\theone)$ inside of it.
  \par
  Evidently, the outside of this new identity is the same as that of \eqref{eq:third_order_second_case_helper_0} for $\recvar+1$ instead of $\recvar$. Likewise, the addition of, by Proposition~\ref{proposition:second_order_generic_case},
  \begin{IEEEeqnarray*}{rCl}
    \thedifferential{2}(\Sbv{\anymatB}{\Sexchanged(\recvar+1)}{\Sexchanged(\recvar+1)}\Smonoidalproduct\theone)=\textstyle\sum_{\inds=1}^\thedim\Sbv{\anymatB}{\Sexchanged(\recvar+1)}{\inds}\Smonoidalproduct\anymatATX{\inds}{\recvar+1}+\Sbv{\anymatAT}{\recvar+1}{\Sexchanged(\recvar+1)}\Smonoidalproduct\theone.
  \end{IEEEeqnarray*}
  modifies the argument of $\thesplitting{1}$ in \eqref{eq:third_order_second_case_helper_0} in the same way as replacing $\recvar$  by $\recvar+1$ would. 
That proves \eqref{eq:third_order_second_case_helper_0} to hold for arbitrary $\recvar$.
\end{proof}

\begin{proposition}
  \label{proposition:third_order_second_case}
  For any $\anymat\in\thematrices$,
    \begin{IEEEeqnarray*}{rCl}
\thedifferential{3}(\Sbv{\anymatA}{1}{\thedim}\Smonoidalproduct\theone)&=&\textstyle\sum_{\inds=1}^\thedim\Sbv{\anymatA}{1}{\inds}\Smonoidalproduct\anymatAX{\inds}{1}-\Sbv{\anymatBT}{\thedim}{\thedim}\Smonoidalproduct\theone-\sum_{\inds=1}^{\thedim-1}\Sbv{\anymatB}{\inds}{\inds}\Smonoidalproduct\theone.\IEEEeqnarraynumspace\IEEEyesnumber\label{eq:third_order_second_case_0}
  \end{IEEEeqnarray*}  
\end{proposition}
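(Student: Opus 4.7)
The plan is to derive Proposition~\ref{proposition:third_order_second_case} by simply specializing Lemma~\ref{lemma:third_order_second_case_helper} to the terminal case $\recvar=\thedim$ and then performing a change of summation index.

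First I would substitute $\recvar=\thedim$ into the identity \eqref{eq:third_order_second_case_helper_0}. The key observation is that both of the grouped sums in the argument of $\thesplitting{1}$ become empty: the sum $\sum_{\indt=\recvar+1}^{\thedim}$ in the first term and the sum $\sum_{\inds=\recvar+1}^{\thedim}$ in the second term have no summands when $\recvar=\thedim$. Consequently, the entire argument of $\thesplitting{1}$ equals $0$, and since $\thesplitting{1}$ is in particular a $\thefield$-linear map on $\Sker(\thedifferential{1})$, one has $\thesplitting{1}(0)=0$.

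What remains after this specialization is
\begin{IEEEeqnarray*}{rCl}
\thedifferential{3}(\Sbv{\anymatA}{1}{\thedim}\Smonoidalproduct\theone)&=&\textstyle\sum_{\inds=1}^\thedim\Sbv{\anymatA}{1}{\inds}\Smonoidalproduct\anymatAX{\inds}{1}-\Sbv{\anymatBT}{\thedim}{\thedim}\Smonoidalproduct\theone-\sum_{\inds=2}^\thedim\Sbv{\anymatB}{\Sexchanged(\inds)}{\Sexchanged(\inds)}\Smonoidalproduct\theone.
\end{IEEEeqnarray*}
The only discrepancy with \eqref{eq:third_order_second_case_0} is in the last grouped sum. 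To reconcile them, I would reindex this sum via the involution $\Sexchanged$: as $\inds$ runs through $\{2,3,\ldots,\thedim\}$, the value $\Sexchanged(\inds)=\thedim-\inds+1$ runs through $\{\thedim-1,\thedim-2,\ldots,1\}$, that is, exactly through $\{1,2,\ldots,\thedim-1\}$. Hence
\begin{IEEEeqnarray*}{rCl}
\textstyle\sum_{\inds=2}^\thedim\Sbv{\anymatB}{\Sexchanged(\inds)}{\Sexchanged(\inds)}\Smonoidalproduct\theone &=& \textstyle\sum_{\inds=1}^{\thedim-1}\Sbv{\anymatB}{\inds}{\inds}\Smonoidalproduct\theone,
\end{IEEEeqnarray*}
which yields the asserted formula \eqref{eq:third_order_second_case_0}.

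There is essentially no obstacle: all of the real work has been concentrated in the inductive computation of Lemma~\ref{lemma:third_order_second_case_helper}, and the present proposition is just the evaluation of that inductive formula at its final index. The only points worth noting explicitly are the vanishing of the empty sums inside $\thesplitting{1}$ and the reindexing under $\Sexchanged$.
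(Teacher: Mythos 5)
Your proposal is correct and is essentially the paper's own argument: the paper likewise obtains \eqref{eq:third_order_second_case_0} by evaluating \eqref{eq:third_order_second_case_helper_0} at $\recvar=\thedim$ (so that the sums inside $\thesplitting{1}$ are empty and the splitting term vanishes) and then reindexing $\sum_{\inds=2}^{\thedim}\Sbv{\anymatB}{\Sexchanged(\inds)}{\Sexchanged(\inds)}\Smonoidalproduct\theone$ via $\Sexchanged$. Your additional remarks on the empty sums and $\thesplitting{1}(0)=0$ merely make explicit what the paper leaves implicit.
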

\begin{proof}
  The claim \eqref{eq:third_order_second_case_0} is precisely \eqref{eq:third_order_second_case_helper_0} in Lem\-ma~\ref{lemma:third_order_second_case_helper} for $\recvar=\thedim$ with $\sum_{\inds=2}^{\thedim}\Sbv{\anymatB}{\Sexchanged(\inds)}{\Sexchanged(\inds)}\Smonoidalproduct\theone$ reindexed.
\end{proof}
\subsubsection{Towards the last two cases} Just like for the first two cases there is a common starting point for the last two, arrived at by $\thedim-1$ steps.
\begin{lemma}
  \label{lemma:third_order_last_two_cases_common}
  For any $\anymat\in \thematrices$, any  $\indi\in\SYnumbers{\thedim}$ and any $\recvar\in\SYnumbers{\thedim}$ with $\recvar\leq \thedim-1$,
  \begin{IEEEeqnarray*}{rCl}
    \IEEEeqnarraymulticol{3}{l}{
      \thedifferential{3}(\Sbv{\anymatA}{\thedim}{\indi}\Smonoidalproduct\theone)
    }\\
    \hspace{.em}   
&=&\textstyle\sum_{\inds=1}^{\thedim}\Sbv{\anymatA}{\thedim}{\inds}\Smonoidalproduct\anymatAX{\inds}{\Sexchanged(\indi)}+\sum_{\inds=2}^\recvar\Sbv{\anymatAT}{\Sexchanged(\inds)}{\Sexchanged(\inds)}\Smonoidalproduct\anymatATX{\Sexchanged(\indi)}{\thedim}\IEEEyesnumber
    \label{eq:third_order_last_two_cases_common_0}\\
    &&\textstyle{}+\thesplitting{1}\big(\sum_{\inds=\recvar+1}^\thedim\Sbv{\anymatAT}{\Sexchanged(\inds)}{1}\Smonoidalproduct\anymatBX{1}{\inds}\anymatATX{\Sexchanged(\indi)}{\thedim}+\sum_{\inds=\recvar+1}^\thedim\sum_{\indt=2}^\thedim\Sbv{\anymatAT}{\Sexchanged(\inds)}{\indt}\Smonoidalproduct\anymatBX{\indt}{\inds}\anymatATX{\Sexchanged(\indi)}{\thedim}\\
    &&\textstyle\hfill{}+\sum_{\inds=\recvar+1}^\thedim\Sbv{\anymatB}{\inds}{\Sexchanged(\inds)}\Smonoidalproduct\anymatATX{\Sexchanged(\indi)}{\thedim}-\sum_{\inds=1}^\thedim\Sbv{\anymatB}{\Sexchanged(\inds)}{\thedim}\Smonoidalproduct\anymatATX{\Sexchanged(\indi)}{\inds}-\Sbv{\anymatAT}{\Sexchanged(\indi)}{1}\Smonoidalproduct\theone\big).
  \end{IEEEeqnarray*} 
\end{lemma}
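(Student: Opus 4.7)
The plan is to proceed by induction over $\recvar$, exactly as in the proofs of Lemma~\ref{lemma:second_order_special_case_first_step}, Lemma~\ref{lemma:second_order_special_case_second_step}, and Lemma~\ref{lemma:third_order_second_case_helper}. The starting point will be the conclusion of Lemma~\ref{lemma:third_order_common_final} specialized to $\indj=\thedim$, which already contains all terms appearing outside $\thesplitting{1}$ in \eqref{eq:third_order_last_two_cases_common_0} at $\recvar=1$. The task in the base case is therefore only to reshape the argument of $\thesplitting{1}$ using the identities $\Sbvfull{\anymatA}{\orderind}{\indj}{\indi}=\Sbvfull{\anymatAT}{\orderind}{\Sexchanged(\indi)}{\Sexchanged(\indj)}$ (for $\orderind=1$) and $\Sbvfull{\anymatBT}{\orderind}{\indj}{\indi}=\Sbvfull{\anymatB}{\orderind}{\Sexchanged(\indi)}{\Sexchanged(\indj)}$ from Lemma~\ref{lemma:evil_identities}, together with relabeling the summation indices via $\Sexchanged$, to bring the terms into the form prescribed by the right-hand side at $\recvar=1$.

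For the induction step from $\recvar$ to $\recvar+1$ the strategy is the standard one: identify the tip of the argument of $\thesplitting{1}$, apply the corresponding combinatorial splitting from Proposition~\ref{proposition:combinatorial_splitting}, and then add back the image of the new chain under $\thedifferential{2}$ inside $\thesplitting{1}$. I expect the tip to be the vector $\Sbv{\anymatAT}{\Sexchanged(\recvar+1)}{1}\Smonoidalproduct \anymatBX{1}{\recvar+1}\anymatATX{\Sexchanged(\indi)}{\thedim}$ (produced by setting $\inds=\recvar+1$ in the first sum), because among the five sums in the argument it carries the largest degree-$1$ chain factor in the ordering $\thechainsorder{1}$: Lemma~\ref{lemma:base_monomial_order} applied with $\othermatA=\anymatAT$ singles out those chains $\Sbvfull{\anymatAT}{1}{\Sexchanged(\inds)}{\indt}$ whose left subscript equals $\thedim$ (i.e.\ $\inds=1$) as the largest, and among these the $\indt=1$ term wins; the terms $\Sbv{\anymatAT}{\Sexchanged(\inds)}{\indt}$ with $\indt\geq 2$ and the terms $\Sbv{\anymatB}{\cdot}{\cdot}$ are strictly smaller by Lemma~\ref{lemma:base_monomial_order}. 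Using Proposition~\ref{proposition:combinatorial_splitting} with $\orderind=3$, $\anymatA$ replaced by $\anymatAT$, and $(\indj,\indi)=(\Sexchanged(\recvar+1),\Sexchanged(\recvar+1))$ (because $\anymatPBX{3}{1}{\Sexchanged(\Sexchanged(\recvar+1))}=\anymatBX{1}{\recvar+1}$ and the trailing factor $\anymatATX{\Sexchanged(\indi)}{\thedim}$ is a reduced monomial by Lemma~\ref{characterization_of_reduced_terms_of_order_two}) then reconcatenates the tip into the chain $\Sbvfull{\anymatAT}{2}{\Sexchanged(\recvar+1)}{\Sexchanged(\recvar+1)}$.

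That step adds $\Sbv{\anymatAT}{\Sexchanged(\recvar+1)}{\Sexchanged(\recvar+1)}\Smonoidalproduct\anymatATX{\Sexchanged(\indi)}{\thedim}$ to the outside of $\thesplitting{1}$, matching the promised modification of the outer part of \eqref{eq:third_order_last_two_cases_common_0}. At the same time one must subtract $\thedifferential{2}(\Sbv{\anymatAT}{\Sexchanged(\recvar+1)}{\Sexchanged(\recvar+1)}\Smonoidalproduct\anymatATX{\Sexchanged(\indi)}{\thedim})$ inside $\thesplitting{1}$, which by Theorem~\ref{theorem:second_order} (used in the generic form, since $(\Sexchanged(\recvar+1),\Sexchanged(\recvar+1))\neq(\thedim,\thedim)$ owing to $\recvar+1\leq\thedim$ and $\recvar\geq 1$) expands precisely into the three sums over $\inds=\recvar+1$ that disappear when one passes from $\recvar$ to $\recvar+1$ on the right-hand side of \eqref{eq:third_order_last_two_cases_common_0}. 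Comparing terms one-by-one closes the induction.

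The main obstacle will be the tip identification in the induction step: the argument of $\thesplitting{1}$ mixes chains built from $\anymatAT$ with chains built from $\anymatB$ and, moreover, includes one sum in which the trailing $\thealgebra$-factor is $\anymatATX{\Sexchanged(\indi)}{\inds}$ rather than $\anymatBX{\cdot}{\cdot}\anymatATX{\Sexchanged(\indi)}{\thedim}$. Verifying carefully via Lemma~\ref{lemma:base_monomial_order} (taking into account the four distinct cases for $\anymat$) that the singled-out vector really is $\thechainsorder{1}$-maximal, and that the normality of the trailing factor holds by Lemma~\ref{characterization_of_reduced_terms_of_order_two}, is where the bookkeeping is heaviest.
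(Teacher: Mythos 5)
Your proposal follows the paper's proof essentially verbatim: induction over $m$, a base case obtained from Lemma~\ref{lemma:third_order_common_final} specialized to $j=n$ via reindexing with the reflection and the order-one identities of Lemma~\ref{lemma:evil_identities}, and an induction step that splits off exactly the tip you name, reconcatenates it via Proposition~\ref{proposition:combinatorial_splitting} into the chain indexed by $(\,n-m,\,n-m\,)$, and cancels precisely the three $s=m+1$ groups using the generic case of the second-order differential. The only inaccuracy is your heuristic for the tip identification (chains with left subscript $n$, i.e.\ $s=1$, do not occur in the summation range, which starts at $s=m+1$); the paper instead verifies both lexicographic conditions of Lemma~\ref{lemma:base_monomial_order} simultaneously for the candidate with $s=m+1$, $t=1$, so that the comparison holds for all four matrices at once, and dismisses the remaining terms purely by degree, but this does not change the tip you identified or the structure of the argument.
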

\begin{proof}
\newcommand{\dummyterm}[2]{k_{#1,#2}^\indi}
  The proof is by  induction over $\recvar$.
  \par
  \emph{Induction base.} For the base case $\recvar=1$ what needs to be proved is that
  \begin{IEEEeqnarray*}{rCl}
    \IEEEeqnarraymulticol{3}{l}{
      \thedifferential{3}(\Sbv{\anymatA}{\thedim}{\indi}\Smonoidalproduct\theone)
    }\\
    \hspace{.em}   
&=&\textstyle\sum_{\inds=1}^{\thedim}\Sbv{\anymatA}{\thedim}{\inds}\Smonoidalproduct\anymatAX{\inds}{\Sexchanged(\indi)}\IEEEyesnumber
    \label{eq:third_order_last_two_cases_common_1}\\
    &&\textstyle{}+\thesplitting{1}\big(\sum_{\inds=2}^\thedim\sum_{\indt=1}^\thedim\Sbv{\anymatAT}{\Sexchanged(\inds)}{\indt}\Smonoidalproduct\anymatBX{\indt}{\inds}\anymatATX{\Sexchanged(\indi)}{\thedim}\\
    &&\textstyle\hspace{2.5em}{}+\sum_{\inds=2}^\thedim\Sbv{\anymatB}{\inds}{\Sexchanged(\inds)}\Smonoidalproduct\anymatATX{\Sexchanged(\indi)}{\thedim}-\sum_{\inds=1}^\thedim\Sbv{\anymatB}{\Sexchanged(\inds)}{\thedim}\Smonoidalproduct\anymatATX{\Sexchanged(\indi)}{\inds}-\Sbv{\anymatAT}{\Sexchanged(\indi)}{1}\Smonoidalproduct\theone\big).
  \end{IEEEeqnarray*}   
  \par
  But \eqref{eq:third_order_last_two_cases_common_1}
  is precisely what  \eqref{eq:third_order_common_final_0} in Lem\-ma~\ref{lemma:third_order_common_final} says in the special case $\indj=\thedim$ after the summation over $\indt$ is reindexed with $\Sexchanged$ and for any $\{\inds,\indt\}\subseteq \SYnumbers{\thedim}$ with $1<\inds$ the identities
  \begin{IEEEeqnarray*}{rCl}
    \Sbvfull{\anymat}{1}{\Sexchanged(\indt)}{\inds}\Smonoidalproduct\anymatBTX{\inds}{\indt}\anymatAX{\thedim}{\Sexchanged(\indi)} =\Sbvfull{\anymatAT}{1}{\Sexchanged(\inds)}{\indt}\Smonoidalproduct
    \anymatBX{\indt}{\inds}\anymatATX{\Sexchanged(\indi)}{\thedim}
  \end{IEEEeqnarray*}
  and 
  \begin{IEEEeqnarray*}{rCl}
    \Sbvfull{\anymatBT}{1}{\inds}{\Sexchanged(\inds)}\Smonoidalproduct
    \anymatAX{\thedim}{\Sexchanged(\indi)}=\Sbvfull{\anymatB}{1}{\inds}{\Sexchanged(\inds)}\Smonoidalproduct\anymatATX{\Sexchanged(\indi)}{\thedim},
 \end{IEEEeqnarray*}  
 and
  \begin{IEEEeqnarray*}{rCl}
   \Sbvfull{\anymatBT}{1}{\Sexchanged(\thedim)}{\inds}\Smonoidalproduct   \anymatAX{\inds}{\Sexchanged(\indi)}
=\Sbvfull{\anymatB}{1}{\Sexchanged(\inds)}{\thedim}\Smonoidalproduct\anymatATX{\Sexchanged(\indi)}{\inds}
 \end{IEEEeqnarray*}
 and
  \begin{IEEEeqnarray*}{rCl}
   \Sbvfull{\anymat}{1}{\thedim}{\indi}=\Sbvfull{\anymatAT}{1}{\Sexchanged(\indi)}{1},
 \end{IEEEeqnarray*}  
each  partly implied by Lemma~\ref{lemma:evil_identities}, are taken into account.
  \par
  \emph{Induction step.} Let \eqref{eq:third_order_last_two_cases_common_0} be valid for some  $\recvar\in\SYnumbers{\thedim}$ with $ \recvar\leq \thedim-2$. By the ensuing argument it holds for $\recvar+1$ in place of $\recvar$.
  \par
  Regardless of the value of $\indi$, the vector $\Sbv{\anymatAT}{\Sexchanged(\recvar+1)}{1}\Smonoidalproduct\anymatBX{1}{\recvar+1}\anymatATX{\Sexchanged(\indi)}{\thedim}$ is normal by Lem\-ma~\ref{characterization_of_reduced_terms_of_order_two} since $\recvar+1\neq \thedim$. It is moreover the tip of the argument of  $\thesplitting{1}$ in \eqref{eq:third_order_last_two_cases_common_0} for the following reasons.
  Any vector in the fourth line of \eqref{eq:third_order_last_two_cases_common_0} is dominated already by degree. All the remaining other vectors in the argument are of the form $\Sbv{\anymatAT}{\Sexchanged(\inds)}{\indt}\Smonoidalproduct\anymatBX{\indt}{\inds}\anymatATX{\Sexchanged(\indi)}{\thedim}$ for $\{\inds,\indt\}\subseteq \SYnumbers{\thedim}$ with $\recvar+1\leq \inds$ such that either $2\leq \indt$ or both $\indt=1$ and $\recvar+1<\inds$. The latter condition can also be stated as $(1,\recvar+1)\lexorderRstrict(\indt,\inds)$, which is actually  equivalent to $(\recvar+1,1)\lexorderRstrict(\inds,\indt)$. Because thus both $(1,\Sexchanged(\inds))\lexorderRstrict(\indt,\Sexchanged(\recvar+1))$ and $(\Sexchanged(\inds),1)\lexorderRstrict(\Sexchanged(\recvar+1),\indt)$ Lem\-ma~\ref{lemma:base_monomial_order} proves $\Sbv{\anymatAT}{\Sexchanged(\inds)}{\indt}\theorderRstrict\Sbv{\anymatAT}{\Sexchanged(\recvar+1)}{1}$, which makes $\Sbv{\anymatAT}{\Sexchanged(\recvar+1)}{1}\Smonoidalproduct\anymatBX{1}{\recvar+1}\anymatATX{\Sexchanged(\indi)}{\thedim}$ the tip.
  \par
  Because moreover by Proposition~\ref{proposition:combinatorial_splitting} 
  \begin{IEEEeqnarray*}{rCl}
   \textstyle\thereconcatenator{1}( \Sbvfull{\anymatAT}{1}{\Sexchanged(\recvar+1)}{1},\anymatBX{1}{\recvar+1}\anymatATX{\Sexchanged(\indi)}{\thedim})&=&(\Sbvfull{\anymatAT}{2}{\Sexchanged(\recvar+1)}{\Sexchanged(\recvar+1)},\anymatATX{\Sexchanged(\indi)}{\thedim})
  \end{IEEEeqnarray*}
  the equality in \eqref{eq:third_order_last_two_cases_common_0} is preserved if $\Sbv{\anymatAT}{\Sexchanged(\recvar+1)}{\Sexchanged(\recvar+1)}\Smonoidalproduct\anymatATX{\Sexchanged(\indi)}{\thedim}$ is added on the right-hand side to the exterior of $\thesplitting{1}$ and $-\thedifferential{2}(\Sbv{\anymatAT}{\Sexchanged(\recvar+1)}{\Sexchanged(\recvar+1)}\Smonoidalproduct\anymatATX{\Sexchanged(\indi)}{\thedim})$ to the interior.
  \par
  Evidently, outside of $\thesplitting{1}$ this has the same effect on \eqref{eq:third_order_last_two_cases_common_0} as replacing $\recvar$ by $\recvar+1$ would. Because $(\Sexchanged(\recvar+1),\Sexchanged(\recvar+1))\neq (\thedim,\thedim)$ by $1\leq \recvar$ Proposition~\ref{proposition:second_order_generic_case} implies
  \begin{IEEEeqnarray*}{rCl}
    \IEEEeqnarraymulticol{3}{l}{ -\thedifferential{2}(\Sbv{\anymatAT}{\Sexchanged(\recvar+1)}{\Sexchanged(\recvar+1)}\Smonoidalproduct\anymatATX{\Sexchanged(\indi)}{\thedim})      }\\
\hspace{1em}&=&\textstyle-(\sum_{\indt=1}^\thedim\Sbv{\anymatAT}{\Sexchanged(\recvar+1)}{\indt}\Smonoidalproduct\anymatBX{\indt}{\recvar+1}+\Sbv{\anymatB}{\recvar+1}{\Sexchanged(\recvar+1)}\Smonoidalproduct\theone)\anymatATX{\Sexchanged(\indi)}{\thedim}\\
&=&\textstyle-\Sbv{\anymatAT}{\Sexchanged(\recvar+1)}{1}\Smonoidalproduct\anymatBX{1}{\recvar+1}\anymatATX{\Sexchanged(\indi)}{\thedim}-\sum_{\indt=2}^\thedim\Sbv{\anymatAT}{\Sexchanged(\recvar+1)}{\indt}\Smonoidalproduct\anymatBX{\indt}{\recvar+1}\anymatATX{\Sexchanged(\indi)}{\thedim}\IEEEeqnarraynumspace\IEEEyesnumber\label{eq:third_order_last_two_cases_common_2}\\
&&\hspace{16em}\textstyle{}-\Sbv{\anymatB}{\recvar+1}{\Sexchanged(\recvar+1)}\Smonoidalproduct\anymatATX{\Sexchanged(\indi)}{\thedim}.
\end{IEEEeqnarray*}
Hence, a comparison with \eqref{eq:third_order_last_two_cases_common_0} shows that adding the term \eqref{eq:third_order_last_two_cases_common_2} to the argument of $\thesplitting{1}$ in \eqref{eq:third_order_last_two_cases_common_0} is the same as increasing $\recvar$ by $1$ there. That verifies the claim for any $\recvar$.
\end{proof}

\begin{lemma}
  \label{lemma:third_order_last_two_cases_common_final}
  For any $\anymat\in \thematrices$ and any  $\indi\in\SYnumbers{\thedim}$,
    \begin{IEEEeqnarray*}{rCl}
    \IEEEeqnarraymulticol{3}{l}{
      \thedifferential{3}(\Sbv{\anymatA}{\thedim}{\indi}\Smonoidalproduct\theone)
    }\\
    \hspace{1.em}   
&=&\textstyle\sum_{\inds=1}^{\thedim}\Sbv{\anymatA}{\thedim}{\inds}\Smonoidalproduct\anymatAX{\inds}{\Sexchanged(\indi)}+\sum_{\inds=2}^{\thedim-1}\Sbv{\anymatAT}{\inds}{\inds}\Smonoidalproduct\anymatATX{\Sexchanged(\indi)}{\thedim}\IEEEyesnumber
    \label{eq:third_order_last_two_cases_common_final_0}\\
    &&\textstyle{}+\thesplitting{1}\big(\Sbv{\anymatAT}{1}{1}\Smonoidalproduct\anymatBX{1}{\thedim}\anymatATX{\Sexchanged(\indi)}{\thedim}+\sum_{\indt=2}^\thedim\Sbv{\anymatAT}{1}{\indt}\Smonoidalproduct\anymatBX{\indt}{\thedim}\anymatATX{\Sexchanged(\indi)}{\thedim}\\
    &&\textstyle\hspace{2.25em}{}+\Sbv{\anymatB}{\thedim}{1}\Smonoidalproduct\anymatATX{\Sexchanged(\indi)}{\thedim}-\sum_{\inds=1}^\thedim \Sbv{\anymatBT}{1}{\inds}\Smonoidalproduct\anymatAX{\inds}{\Sexchanged(\indi)}-\Sbv{\anymat}{\thedim}{\indi}\Smonoidalproduct\theone\big).
  \end{IEEEeqnarray*} 
\end{lemma}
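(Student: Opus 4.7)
The plan is to specialize Lemma~\ref{lemma:third_order_last_two_cases_common} at $\recvar=\thedim-1$ and then massage the resulting expression, via the trivial identity $\Sexchanged(\thedim)=1$, a reindexing in one of the exterior sums, and several applications of Lemma~\ref{lemma:evil_identities}, into the form \eqref{eq:third_order_last_two_cases_common_final_0}. No new recursion step is required; since Lemma~\ref{lemma:third_order_last_two_cases_common} already runs all the way up to $\recvar=\thedim-1$, everything is purely symbolic bookkeeping.

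Concretely, when $\recvar=\thedim-1$ every sum of the form $\sum_{\inds=\recvar+1}^{\thedim}$ collapses to the single index $\inds=\thedim$, and $\Sexchanged(\thedim)=1$ converts the chain vectors $\Sbv{\anymatAT}{\Sexchanged(\thedim)}{\,\cdot\,}$ and $\Sbv{\anymatB}{\thedim}{\Sexchanged(\thedim)}$ appearing inside $\thesplitting{1}$ into $\Sbv{\anymatAT}{1}{\,\cdot\,}$ and $\Sbv{\anymatB}{\thedim}{1}$ respectively. This already produces the first three terms inside $\thesplitting{1}$ in \eqref{eq:third_order_last_two_cases_common_final_0}, with the $\indt=1$ summand deliberately kept separate from the $\indt\in\{2,\ldots,\thedim\}$ summands (as will be convenient for the next recursion step in the case $\indj=\thedim$).

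Next I would reindex the exterior sum $\sum_{\inds=2}^{\thedim-1}\Sbv{\anymatAT}{\Sexchanged(\inds)}{\Sexchanged(\inds)}\Smonoidalproduct\anymatATX{\Sexchanged(\indi)}{\thedim}$ via $\inds\mapsto\Sexchanged(\inds)$, which maps $\{2,\ldots,\thedim-1\}$ onto itself, to obtain $\sum_{\inds=2}^{\thedim-1}\Sbv{\anymatAT}{\inds}{\inds}\Smonoidalproduct\anymatATX{\Sexchanged(\indi)}{\thedim}$, matching the second exterior summand of \eqref{eq:third_order_last_two_cases_common_final_0}. For the remaining interior sum $-\sum_{\inds=1}^\thedim\Sbv{\anymatB}{\Sexchanged(\inds)}{\thedim}\Smonoidalproduct\anymatATX{\Sexchanged(\indi)}{\inds}$, Lemma~\ref{lemma:evil_identities} in its $\orderind=1$ version (applied with $\anymatA$ in the role of $\anymatB$) yields $\Sbvfull{\anymatB}{1}{\Sexchanged(\inds)}{\thedim}=\Sbvfull{\anymatBT}{1}{\Sexchanged(\thedim)}{\Sexchanged(\Sexchanged(\inds))}=\Sbvfull{\anymatBT}{1}{1}{\inds}$, while the elementary identity $\anymatATX{\Sexchanged(\indi)}{\inds}=\anymatAX{\inds}{\Sexchanged(\indi)}$ converts the trailing factor. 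Finally the lone term $-\Sbv{\anymatAT}{\Sexchanged(\indi)}{1}\Smonoidalproduct\theone$ is identified with $-\Sbv{\anymat}{\thedim}{\indi}\Smonoidalproduct\theone$ via Lemma~\ref{lemma:evil_identities} once more, since $\Sbvfull{\anymatAT}{1}{\Sexchanged(\indi)}{1}=\Sbvfull{\anymat}{1}{\Sexchanged(1)}{\Sexchanged(\Sexchanged(\indi))}=\Sbvfull{\anymat}{1}{\thedim}{\indi}$.

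There is no real obstacle here; the only thing requiring care is keeping track of the reflection $\Sexchanged$ in every index position and invoking the correct branch of Lemma~\ref{lemma:evil_identities}. In particular, it is important that each of the three rewritings above stays within the $\orderind=1$ regime, where both branches (equality with a copy of the same matrix and with its skew-transpose) are permissible, so that no hidden assumption on $(\indj,\indi)$ sneaks in.
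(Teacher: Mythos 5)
Your proposal is correct and matches the paper's own proof: the paper likewise obtains the statement by specializing Lemma~\ref{lemma:third_order_last_two_cases_common} at $\recvar=\thedim-1$, noting $\sum_{\inds=2}^{\thedim-1}\Sbv{\anymatAT}{\Sexchanged(\inds)}{\Sexchanged(\inds)}\Smonoidalproduct\anymatATX{\Sexchanged(\indi)}{\thedim}=\sum_{\inds=2}^{\thedim-1}\Sbv{\anymatAT}{\inds}{\inds}\Smonoidalproduct\anymatATX{\Sexchanged(\indi)}{\thedim}$, and invoking Lemma~\ref{lemma:evil_identities} to rewrite $-\sum_{\inds=1}^\thedim\Sbv{\anymatB}{\Sexchanged(\inds)}{\thedim}\Smonoidalproduct\anymatATX{\Sexchanged(\indi)}{\inds}-\Sbv{\anymatAT}{\Sexchanged(\indi)}{1}\Smonoidalproduct\theone$ as $-\sum_{\inds=1}^\thedim \Sbv{\anymatBT}{1}{\inds}\Smonoidalproduct\anymatAX{\inds}{\Sexchanged(\indi)}-\Sbv{\anymat}{\thedim}{\indi}\Smonoidalproduct\theone$. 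Your index-by-index verification of the $\orderind=1$ identities is exactly the bookkeeping the paper leaves implicit, and no recursion step is needed, as you say.
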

\begin{proof}
Because $\sum_{\inds=2}^{\thedim-1}\Sbv{\anymatAT}{\Sexchanged(\inds)}{\Sexchanged(\inds)}\Smonoidalproduct\anymatATX{\Sexchanged(\indi)}{\thedim}=\sum_{\inds=2}^{\thedim-1}\Sbv{\anymatAT}{\inds}{\inds}\Smonoidalproduct\anymatATX{\Sexchanged(\indi)}{\thedim}$ and because by Lem\-ma~\ref{lemma:evil_identities}
  \begin{IEEEeqnarray*}{rCl}
     \textstyle-\sum_{\inds=1}^\thedim\Sbv{\anymatB}{\Sexchanged(\inds)}{\thedim}\Smonoidalproduct\anymatATX{\Sexchanged(\indi)}{\inds}-\Sbv{\anymatAT}{\Sexchanged(\indi)}{1}\Smonoidalproduct\theone&=&    \textstyle-\sum_{\inds=1}^\thedim \Sbv{\anymatBT}{1}{\inds}\Smonoidalproduct\anymatAX{\inds}{\Sexchanged(\indi)}-\Sbv{\anymat}{\thedim}{\indi}\Smonoidalproduct\theone
  \end{IEEEeqnarray*}
the claim is the conclusion \eqref{eq:third_order_last_two_cases_common_0} of  Lem\-ma~\ref{lemma:third_order_last_two_cases_common} in the special case $\recvar=\thedim-1$. 
\end{proof}

\subsubsection{Third case}
Again, one of the two remaining cases is quicker to deal with than the other, requiring only a single step.
\begin{lemma}
  \label{lemma:third_order_third_case_helper}
  For any $\anymat\in\thematrices$ and any $\indi\in\SYnumbers{\thedim}$, if $\indi\neq \thedim$, then
    \begin{IEEEeqnarray*}{rCl}
      \thedifferential{3}(\Sbv{\anymatA}{\thedim}{\indi}\Smonoidalproduct\theone)
&=&\textstyle\sum_{\inds=1}^{\thedim}\Sbv{\anymatA}{\thedim}{\inds}\Smonoidalproduct\anymatAX{\inds}{\Sexchanged(\indi)}+\sum_{\inds=1}^{\thedim-1}\Sbv{\anymatAT}{\inds}{\inds}\Smonoidalproduct\anymatATX{\Sexchanged(\indi)}{\thedim}\IEEEeqnarraynumspace\IEEEyesnumber
    \label{eq:third_order_third_case_helper_0}\\
    &&\textstyle{}+\thesplitting{1}\big({-}\sum_{\inds=1}^\thedim\Sbv{\anymatBT}{1}{\inds}\Smonoidalproduct\anymatAX{\inds}{\Sexchanged(\indi)}-\Sbv{\anymatA}{\thedim}{\indi}\Smonoidalproduct\theone\big).
  \end{IEEEeqnarray*} 
\end{lemma}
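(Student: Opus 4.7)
The proposition will be obtained from Lemma~\ref{lemma:third_order_last_two_cases_common_final} by a single reduction step. That is, I will exhibit one vector in the argument of $\thesplitting{1}$ in \eqref{eq:third_order_last_two_cases_common_final_0} as the tip, absorb it, and verify that the remaining $\thesplitting{1}$-argument matches precisely what is claimed in \eqref{eq:third_order_third_case_helper_0}.

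First, I plan to pinpoint the tip. The candidate is $\Sbv{\anymatAT}{1}{1}\Smonoidalproduct\anymatBX{1}{\thedim}\anymatATX{\Sexchanged(\indi)}{\thedim}$. Its degree-$3$ product $\anymatBX{1}{\thedim}\anymatATX{\Sexchanged(\indi)}{\thedim}$ is normal: interpreting it as $\othermatAX{1}{\thedim}\othermatBTX{\Sexchanged(\indi)}{\thedim}$ for $\othermatA\Seqpd\anymatB$ (whence $\othermatBT=\anymatAT$), Lemma~\ref{characterization_of_reduced_terms_of_order_two} applies and rules out the forbidden configurations because $\indi\neq\thedim$ (hence $\Sexchanged(\indi)\neq 1$) and $2\leq\thedim$. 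To see that this vector genuinely dominates the $\thesplitting{1}$-argument, the last two grouped sums in \eqref{eq:third_order_last_two_cases_common_final_0} are eliminated by degree, while among the remaining degree-$3$ terms the chain factor $\Sbv{\anymatAT}{1}{\indt}=\anymatATX{1}{\Sexchanged(\indt)}$ satisfies $\Sbv{\anymatAT}{1}{\indt}\theorderRstrict\Sbv{\anymatAT}{1}{1}=\anymatATX{1}{\thedim}$ for any $\indt\in\SYnumbers{\thedim}$ with $2\leq \indt$, by the addendum to Lemma~\ref{lemma:monomial_order_reformulation}.

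Next, I determine the image under the combinatorial splitting. A direct application of Proposition~\ref{proposition:combinatorial_splitting} (with $(\orderind-2,\anymatA,\indj,\indi,\anyrest)=(1,\anymatAT,1,1,\anymatATX{\Sexchanged(\indi)}{\thedim})$, using the identity $\anymatPBTX{3}{1}{\thedim}=\anymatBX{1}{\thedim}$) gives
\begin{IEEEeqnarray*}{rCl}
\thereconcatenator{1}(\Sbv{\anymatAT}{1}{1},\anymatBX{1}{\thedim}\anymatATX{\Sexchanged(\indi)}{\thedim})&=&(\Sbvfull{\anymatAT}{2}{1}{1},\anymatATX{\Sexchanged(\indi)}{\thedim}).
\end{IEEEeqnarray*}
The recursive definition of $\thesplitting{1}$ therefore permits adding $\Sbv{\anymatAT}{1}{1}\Smonoidalproduct\anymatATX{\Sexchanged(\indi)}{\thedim}$ outside of $\thesplitting{1}$ on the right of \eqref{eq:third_order_last_two_cases_common_final_0} and, simultaneously, subtracting $\thedifferential{2}(\Sbv{\anymatAT}{1}{1}\Smonoidalproduct\anymatATX{\Sexchanged(\indi)}{\thedim})$ inside.

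Finally, I will check both bookkeeping operations produce the desired formula. Outside, the new summand joins $\sum_{\inds=2}^{\thedim-1}\Sbv{\anymatAT}{\inds}{\inds}\Smonoidalproduct\anymatATX{\Sexchanged(\indi)}{\thedim}$ to extend the index range down to $\inds=1$, precisely as in \eqref{eq:third_order_third_case_helper_0}. Inside, Theorem~\ref{theorem:second_order} applied at matrix $\anymatAT$ and index $(1,1)\neq(\thedim,\thedim)$ yields
\begin{IEEEeqnarray*}{rCl}
\thedifferential{2}(\Sbv{\anymatAT}{1}{1}\Smonoidalproduct\anymatATX{\Sexchanged(\indi)}{\thedim})&=&\textstyle\sum_{\inds=1}^{\thedim}\Sbv{\anymatAT}{1}{\inds}\Smonoidalproduct\anymatBX{\inds}{\thedim}\anymatATX{\Sexchanged(\indi)}{\thedim}+\Sbv{\anymatB}{\thedim}{1}\Smonoidalproduct\anymatATX{\Sexchanged(\indi)}{\thedim},
\end{IEEEeqnarray*}
which upon subtraction annihilates exactly the first three of the five grouped terms in the $\thesplitting{1}$-argument of \eqref{eq:third_order_last_two_cases_common_final_0}, leaving $-\sum_{\inds=1}^{\thedim}\Sbv{\anymatBT}{1}{\inds}\Smonoidalproduct\anymatAX{\inds}{\Sexchanged(\indi)}-\Sbv{\anymatA}{\thedim}{\indi}\Smonoidalproduct\theone$, as claimed. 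The only delicate point in the plan is the tip analysis; everything else is straightforward cancellation.
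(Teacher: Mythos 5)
Your proof is correct and follows essentially the same route as the paper: it identifies the same tip $\Sbv{\anymatAT}{1}{1}\Smonoidalproduct\anymatBX{1}{\thedim}\anymatATX{\Sexchanged(\indi)}{\thedim}$ (normal because $\indi\neq\thedim$), performs the same single splitting step via Proposition~\ref{proposition:combinatorial_splitting}, and cancels the same three terms using the second-order differential; citing Theorem~\ref{theorem:second_order} in place of Proposition~\ref{proposition:second_order_generic_case} and the addendum to Lemma~\ref{lemma:monomial_order_reformulation} in place of Lemma~\ref{lemma:base_monomial_order} are equivalent choices. The only slight imprecision is the tally "the last two grouped sums are eliminated by degree": the term $\Sbv{\anymatB}{\thedim}{1}\Smonoidalproduct\anymatATX{\Sexchanged(\indi)}{\thedim}$ also has degree $2$ and must (and trivially does) get discarded by the same degree argument, so nothing is actually missing.
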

\begin{proof}
  Because $\indi\neq \thedim$ 
  the vector $\Sbv{\anymatAT}{1}{1}\Smonoidalproduct\anymatBX{1}{\thedim}\anymatATX{\Sexchanged(\indi)}{\thedim}$ is normal by Lem\-ma~\ref{characterization_of_reduced_terms_of_order_two}. It is the tip of  the argument of $\thesplitting{1}$ in \eqref{eq:third_order_last_two_cases_common_final_0} in Lem\-ma~\ref{lemma:third_order_last_two_cases_common_final} for the following reasons. Any term in the fourth line of \eqref{eq:third_order_last_two_cases_common_final_0} is dominated already by degree. And for any $\indt\in \SYnumbers{\thedim}$ with $1<\indt$, i.e., $\Sexchanged(\indt)<\thedim$, the fact that both
  $(1,1)\lexorderRstrict(1,\indt)$ and $(1,1)\lexorderRstrict(\indt,1)$
  demands $\Sbv{\anymatAT}{1}{\indt}\theorderRstrict\Sbv{\anymatAT}{1}{1}$ by Lemma~\ref{lemma:base_monomial_order}. Hence, $\Sbv{\anymatAT}{1}{1}\Smonoidalproduct\anymatBX{1}{\thedim}\anymatATX{\Sexchanged(\indi)}{\thedim}$ subjugates also any other term in the third line of \eqref{eq:third_order_last_two_cases_common_final_0}.
  \par
  Given that by Proposition~\ref{proposition:combinatorial_splitting}
  \begin{IEEEeqnarray*}{rCl}
\textstyle\thereconcatenator{1}(    \Sbvfull{\anymatAT}{1}{1}{1},\anymatBX{1}{\thedim}\anymatATX{\Sexchanged(\indi)}{\thedim})=(\Sbvfull{\anymatAT}{2}{1}{1},\anymatATX{\Sexchanged(\indi)}{\thedim}),
  \end{IEEEeqnarray*}
  adding $\Sbv{\anymatAT}{1}{1}\Smonoidalproduct\anymatATX{\Sexchanged(\indi)}{\thedim}$ to the outside of $\thesplitting{1}$ in \eqref{eq:third_order_last_two_cases_common_final_0} and $-\thedifferential{2}(\Sbv{\anymatAT}{1}{1}\Smonoidalproduct\anymatATX{\Sexchanged(\indi)}{\thedim})$ to the inside does not affect the equality.
  \par
This addition turns the outside of $\thesplitting{1}$ in  \eqref{eq:third_order_last_two_cases_common_final_0}  into that of \eqref{eq:third_order_third_case_helper_0}. Since Proposition~\ref{proposition:second_order_generic_case} implies
  \begin{IEEEeqnarray*}{rCl}
-\thedifferential{2}(\Sbv{\anymatAT}{1}{1}\Smonoidalproduct\anymatATX{\Sexchanged(\indi)}{\thedim})      
&=&\textstyle-\sum_{\indt=1}^\thedim\Sbv{\anymatAT}{1}{\indt}\Smonoidalproduct\anymatBX{\indt}{\thedim}\anymatATX{\Sexchanged(\indi)}{\thedim}-\Sbv{\anymatB}{\thedim}{1}\Smonoidalproduct\anymatATX{\Sexchanged(\indi)}{\thedim}\IEEEeqnarraynumspace\IEEEyesnumber\label{eq:third_order_third_case_helper_1}
\end{IEEEeqnarray*}
 adding \eqref{eq:third_order_third_case_helper_1} the inside of $\thesplitting{1}$ likewise makes the arguments of $\thesplitting{1}$ in  \eqref{eq:third_order_last_two_cases_common_final_0}  and \eqref{eq:third_order_third_case_helper_0} agree. That proves the claim. 
 \end{proof}

\begin{proposition}
  \label{proposition:third_order_third_case}
  For any $\anymat\in\thematrices$ and any $\indi\in\SYnumbers{\thedim}$, if $\indi\neq \thedim$, then
    \begin{IEEEeqnarray*}{rCl}
    \IEEEeqnarraymulticol{3}{l}{
      \thedifferential{3}(\Sbv{\anymatA}{\thedim}{\indi}\Smonoidalproduct\theone)
    }\\
    \hspace{2.em}   
&=&\textstyle\sum_{\inds=1}^{\thedim}\Sbv{\anymatA}{\thedim}{\inds}\Smonoidalproduct\anymatAX{\inds}{\Sexchanged(\indi)}+\sum_{\inds=1}^{\thedim-1}\Sbv{\anymatAT}{\inds}{\inds}\Smonoidalproduct\anymatATX{\Sexchanged(\indi)}{\thedim}-\Sbv{\anymatBT}{1}{\indi}\Smonoidalproduct\theone.\IEEEeqnarraynumspace\IEEEyesnumber
    \label{eq:third_order_third_case_0}
  \end{IEEEeqnarray*}
\end{proposition}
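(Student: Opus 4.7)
The plan is to continue from Lem\-ma~\ref{lemma:third_order_third_case_helper}, which reduces the claim to verifying that
\begin{IEEEeqnarray*}{rCl}
\thesplitting{1}\bigl({-}\textstyle\sum_{\inds=1}^\thedim\Sbv{\anymatBT}{1}{\inds}\Smonoidalproduct\anymatAX{\inds}{\Sexchanged(\indi)}-\Sbv{\anymatA}{\thedim}{\indi}\Smonoidalproduct\theone\bigr)&=&-\Sbv{\anymatBT}{1}{\indi}\Smonoidalproduct\theone.
\end{IEEEeqnarray*}
A single additional reduction step in the recursive definition of $\thesplitting{1}$ should suffice.

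First I would identify the tip of the inner argument with respect to $\thechainsorder{1}$. By the addendum to Lem\-ma~\ref{lemma:base_monomial_order}, for any $\inds$ with $1<\inds$ the chain $\Sbv{\anymatBT}{1}{\inds}$ is strictly dominated by $\Sbv{\anymatBT}{1}{1}$, so the $\inds=1$ summand of the grouped sum is the greatest degree-two contribution; it also exceeds $\Sbv{\anymatA}{\thedim}{\indi}\Smonoidalproduct\theone$ simply by degree. The second tensor factor $\anymatAX{1}{\Sexchanged(\indi)}$ lies in $\themonoid\backslash\StipO\theideal$, being a single generator, so $\Sbv{\anymatBT}{1}{1}\Smonoidalproduct\anymatAX{1}{\Sexchanged(\indi)}$ is a bona fide basis element of $\thechainsmodule{1}$ and is the tip, carrying coefficient $-1$.

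Next I would apply Proposition~\ref{proposition:combinatorial_splitting} with $\orderind=3$ and $\anymatBT$ substituted for the matrix variable---so that $(\anymatBT)^{\Sskewdagger,\Stranspose}=\anymatA$ assumes the role previously occupied by $\anymatBT$---to obtain
\begin{IEEEeqnarray*}{rCl}
\thereconcatenator{1}(\Sbvfull{\anymatBT}{1}{1}{1},\anymatAX{1}{\Sexchanged(\indi)})&=&(\Sbvfull{\anymatBT}{2}{1}{\indi},\theone).
\end{IEEEeqnarray*}
By the recursive definition of $\thesplitting{1}$ from Section~\ref{section:anick-anick_resolution}, one reduction step then appends $-\Sbv{\anymatBT}{1}{\indi}\Smonoidalproduct\theone$ outside $\thesplitting{1}$ while adding $+\thedifferential{2}(\Sbv{\anymatBT}{1}{\indi}\Smonoidalproduct\theone)$ inside.

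Finally I would invoke Proposition~\ref{proposition:second_order_generic_case}, whose hypothesis $(1,\indi)\neq(\thedim,\thedim)$ is secured by $\indi\neq\thedim$, to compute
\begin{IEEEeqnarray*}{rCl}
\thedifferential{2}(\Sbv{\anymatBT}{1}{\indi}\Smonoidalproduct\theone)&=&\textstyle\sum_{\inds=1}^\thedim\Sbv{\anymatBT}{1}{\inds}\Smonoidalproduct\anymatAX{\inds}{\Sexchanged(\indi)}+\Sbv{\anymatA}{\thedim}{\indi}\Smonoidalproduct\theone.
\end{IEEEeqnarray*}
This precisely cancels the inner argument displayed at the outset, so the residual $\thesplitting{1}$-term vanishes by $\thefield$-linearity and the proposition follows. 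No serious obstacle is anticipated; the only subtlety is the bookkeeping of the substitution of $\anymatBT$ for $\anymatA$ in Proposition~\ref{proposition:combinatorial_splitting} together with the implicit use of $(\anymatBT)^{\Sskewdagger,\Stranspose}=\anymatA$.
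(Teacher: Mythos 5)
Your proposal is correct and follows essentially the same route as the paper: continue from Lemma~\ref{lemma:third_order_third_case_helper}, identify $\Sbv{\anymatBT}{1}{1}\Smonoidalproduct\anymatAX{1}{\Sexchanged(\indi)}$ as the tip, apply Proposition~\ref{proposition:combinatorial_splitting} (with $\anymatBT$ in the role of the matrix variable) to perform one splitting step, and observe that Proposition~\ref{proposition:second_order_generic_case} makes the inserted $\thedifferential{2}$-term cancel the argument of $\thesplitting{1}$ exactly. The only cosmetic difference is that you cite the addendum to Lemma~\ref{lemma:base_monomial_order} for the tip comparison where the paper spells out the lexicographic conditions, which is immaterial.
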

\begin{proof}
  The argument of $\thesplitting{1}$ on the right-hand side of \eqref{eq:third_order_third_case_helper_0} in Lem\-ma~\ref{lemma:third_order_third_case_helper} is obviously entirely normal. Its tip is $\Sbv{\anymatBT}{1}{1}\Smonoidalproduct\anymatAX{1}{\Sexchanged(\indi)}$, which dominates $\Sbv{\anymat}{\thedim}{\indi}\Smonoidalproduct\theone$ by degree and which, according to Lem\-ma~\ref{lemma:base_monomial_order}, is greater than $\Sbv{\anymatBT}{1}{\inds}\Smonoidalproduct\anymatAX{\inds}{\Sexchanged(\indi)}$ for any $\inds\in\SYnumbers{\thedim}$ with $1<\inds$
  because $(1,1)\lexorderRstrict(1,\inds)$ and $(1,1)\lexorderRstrict(\inds,1)$,
  and thus already $\Sbv{\anymatBT}{1}{\inds}\theorderRstrict\Sbv{\anymatBT}{1}{1}$.
  \par
  In consequence, since by Proposition~\ref{proposition:combinatorial_splitting}
  \begin{IEEEeqnarray*}{rCl} 
\textstyle\thereconcatenator{1}(    \Sbvfull{\anymatBT}{1}{1}{1},\anymatAX{1}{\Sexchanged(\indi)})=(\Sbvfull{\anymatBT}{2}{1}{\indi},\theone)
  \end{IEEEeqnarray*}
  the veracity of \eqref{eq:third_order_third_case_helper_0} is not affected by adding $-\Sbv{\anymatBT}{1}{\indi}\Smonoidalproduct\theone$ to the outside of $\thesplitting{1}$ and $\thedifferential{2}(\Sbv{\anymatBT}{1}{\indi}\Smonoidalproduct\theone)$ to the inside. Because  $\thedifferential{2}(\Sbv{\anymatBT}{1}{\indi}\Smonoidalproduct\theone)$ by Proposition~\ref{proposition:second_order_generic_case} is precisely the negative of the argument of $\thesplitting{1}$ in \eqref{eq:third_order_third_case_helper_0} that proves \eqref{eq:third_order_third_case_0}.
\end{proof}

\subsubsection{Fourth case}
It only remains to treat a single case, in $\thedim+2$ steps.
\begin{lemma}
  \label{lemma:third_order_fourth_case_helper}
  For any $\anymat\in\thematrices$ and any $\recvar\in\SYnumbers{\thedim}$,
      \begin{IEEEeqnarray*}{rCl}
    \IEEEeqnarraymulticol{3}{l}{
      \thedifferential{3}(\Sbv{\anymatA}{\thedim}{\thedim}\Smonoidalproduct\theone)
    }\\
    \hspace{.em}   
&=&\textstyle\sum_{\inds=1}^{\thedim}\Sbv{\anymatA}{\thedim}{\inds}\Smonoidalproduct\anymatAX{\inds}{1}+\sum_{\inds=2}^{\thedim-1}\Sbv{\anymatAT}{\inds}{\inds}\Smonoidalproduct\anymatATX{1}{\thedim}-\sum_{\inds=2}^{\recvar}\Sbv{\anymatAT}{1}{\inds}\Smonoidalproduct\anymatATX{\inds}{\thedim}\IEEEeqnarraynumspace\IEEEyesnumber
    \label{eq:third_order_fourth_case_helper_0}\\
    &&\textstyle{}+\thesplitting{1}\big({-}\sum_{\inds=\recvar+1}^\thedim\Sbv{\anymatAT}{1}{1}\Smonoidalproduct\anymatBX{1}{\Sexchanged(\inds)}\anymatATX{\inds}{\thedim}+\sum_{\inds=1}^{\recvar}\sum_{\indt=2}^\thedim\Sbv{\anymatAT}{1}{\indt}\Smonoidalproduct\anymatBX{\indt}{\Sexchanged(\inds)}\anymatATX{\inds}{\thedim}\\
    &&\textstyle\hfill{}+\sum_{\inds=1}^{\recvar}\Sbv{\anymatB}{\thedim}{\inds}\Smonoidalproduct\anymatATX{\inds}{\thedim}-\sum_{\inds=1}^\thedim\Sbv{\anymatBT}{1}{\inds}\Smonoidalproduct\anymatAX{\inds}{1}\big).
  \end{IEEEeqnarray*} 
\end{lemma}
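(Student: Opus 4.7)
The plan is to prove the identity by induction on $\recvar\in\SYnumbers{\thedim}$, taking as the starting point for $\recvar=1$ the specialization of Lemma~\ref{lemma:third_order_last_two_cases_common_final} to $\indi=\thedim$. In that specialization the exterior of $\thesplitting{1}$ already agrees with the target of \eqref{eq:third_order_fourth_case_helper_0} at $\recvar=1$, because $\Sexchanged(\thedim)=1$ and the third summand $-\sum_{\inds=2}^{1}\Sbv{\anymatAT}{1}{\inds}\Smonoidalproduct\anymatATX{\inds}{\thedim}$ is empty. Hence only the argument of $\thesplitting{1}$ needs reconciling. I would match the common terms $\sum_{\indt=2}^\thedim \Sbv{\anymatAT}{1}{\indt}\Smonoidalproduct\anymatBX{\indt}{\thedim}\anymatATX{1}{\thedim}$, $\Sbv{\anymatB}{\thedim}{1}\Smonoidalproduct\anymatATX{1}{\thedim}$ and $-\sum_{\inds=1}^\thedim\Sbv{\anymatBT}{1}{\inds}\Smonoidalproduct\anymatAX{\inds}{1}$, leaving a discrepancy that amounts (after collecting the $\inds=1$ contribution of the new summation $-\sum_{\inds=2}^{\thedim}\Sbv{\anymatAT}{1}{1}\Smonoidalproduct\anymatBX{1}{\Sexchanged(\inds)}\anymatATX{\inds}{\thedim}$ with the old  $+\Sbv{\anymatAT}{1}{1}\Smonoidalproduct\anymatBX{1}{\thedim}\anymatATX{1}{\thedim}$ and rewriting $-\Sbv{\anymat}{\thedim}{\thedim}\Smonoidalproduct\theone$ via Lemma~\ref{lemma:evil_identities} as $-\Sbv{\anymatAT}{1}{1}\Smonoidalproduct\theone$) to $\Sbv{\anymatAT}{1}{1}\Smonoidalproduct\big(\theone-\sum_{\inds=1}^{\thedim}\anymatBX{1}{\Sexchanged(\inds)}\anymatATX{\inds}{\thedim}\big)$. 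The second tensor factor is precisely the image in $\thealgebra$ of the relation in $\therels$ obtained by using the matrix $\anymatB$ in place of $\anymat$ at indices $(\indj,\indi)=(1,1)$; hence it vanishes in $\thealgebra$ and the two arguments of $\thesplitting{1}$ coincide in $\thechainsmodule{1}$.

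For the inductive step, assume \eqref{eq:third_order_fourth_case_helper_0} for some $\recvar\in\SYnumbers{\thedim}$ with $\recvar\leq \thedim-1$. The plan is to identify the $\thechainsorder{1}$-tip of the argument of $\thesplitting{1}$ and perform one reduction via the recursive definition of $\thesplitting{1}$ from Section~\ref{section:anick-anick_resolution}. Arguing just as in the previous helper lemmas, by Lemma~\ref{characterization_of_reduced_terms_of_order_two} (reinterpreting $\anymatBX{1}{\Sexchanged(\recvar+1)}\anymatATX{\recvar+1}{\thedim}$ using the matrix $\othermat\Seqpd \anymatB$, for which $\othermatBT=\anymatAT$) the monomial is normal, and by Lemma~\ref{lemma:base_monomial_order} the chain-module element $\Sbv{\anymatAT}{1}{1}\Smonoidalproduct\anymatBX{1}{\Sexchanged(\recvar+1)}\anymatATX{\recvar+1}{\thedim}$ is the tip (with coefficient $-1$): the addendum gives $\Sbv{\anymatAT}{1}{\indt}\thechainsorderRstrict{1}\Sbv{\anymatAT}{1}{1}$ for any $\indt\geq 2$, Lemma~\ref{lemma:monomial_order_reformulation} gives $\anymatBX{1}{\Sexchanged(\inds)}\theorderRstrict\anymatBX{1}{\Sexchanged(\recvar+1)}$ for $\inds>\recvar+1$, and all other summands are dominated by degree. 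Proposition~\ref{proposition:combinatorial_splitting} then yields
\begin{IEEEeqnarray*}{rCl}
\thereconcatenator{1}(\Sbvfull{\anymatAT}{1}{1}{1},\anymatBX{1}{\Sexchanged(\recvar+1)}\anymatATX{\recvar+1}{\thedim})&=&(\Sbvfull{\anymatAT}{2}{1}{\recvar+1},\anymatATX{\recvar+1}{\thedim}),
\end{IEEEeqnarray*}
where the formula applies with $\orderind=3$ because $\anymatAT^{(\ldots)\BT}_{\,1,\Sexchanged(\recvar+1)}=\anymatBX{1}{\Sexchanged(\recvar+1)}$ since $(\anymatAT)\BT=\anymatB$.

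Adding $-\Sbv{\anymatAT}{1}{\recvar+1}\Smonoidalproduct\anymatATX{\recvar+1}{\thedim}$ outside of $\thesplitting{1}$ and simultaneously $+\thedifferential{2}(\Sbv{\anymatAT}{1}{\recvar+1}\Smonoidalproduct\anymatATX{\recvar+1}{\thedim})$ inside preserves the identity. The new outside contribution extends the partial sum in the third term of \eqref{eq:third_order_fourth_case_helper_0} from $\recvar$ to $\recvar+1$. By Proposition~\ref{proposition:second_order_generic_case} (applicable since $(1,\recvar+1)\neq(\thedim,\thedim)$), the added interior vector evaluates to $\sum_{\indt=1}^\thedim\Sbv{\anymatAT}{1}{\indt}\Smonoidalproduct\anymatBX{\indt}{\Sexchanged(\recvar+1)}\anymatATX{\recvar+1}{\thedim}+\Sbv{\anymatB}{\thedim}{\recvar+1}\Smonoidalproduct\anymatATX{\recvar+1}{\thedim}$, where again $(\anymatAT)\BT=\anymatB$ is used. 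Its $\indt=1$ summand cancels the tip $-\Sbv{\anymatAT}{1}{1}\Smonoidalproduct\anymatBX{1}{\Sexchanged(\recvar+1)}\anymatATX{\recvar+1}{\thedim}$ that was just peeled off; its $\indt\in\{2,\ldots,\thedim\}$ summands extend the double sum from $\inds\leq\recvar$ to $\inds\leq\recvar+1$; and the remaining term extends the $\Sbv{\anymatB}{\thedim}{\inds}$ sum to $\inds\leq\recvar+1$. The resulting equality is exactly \eqref{eq:third_order_fourth_case_helper_0} with $\recvar+1$ in place of $\recvar$.

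The main technical obstacle is the base case: identifying that the difference between the two admissible formulations of the $\thesplitting{1}$-argument is literally zero in $\thechainsmodule{1}$ via a relation in $\therels$ (specifically the one coming from the matrix $\anymatB$ at indices $(1,1)$) is the only non-mechanical ingredient, and it crucially exploits both Lemma~\ref{lemma:evil_identities} (to identify chains of different formal appearance) and the particular algebraic shape of the defining relations of $\thealgebra$. Once this is settled, the induction step is bookkeeping parallel to the other helper lemmas in this section.
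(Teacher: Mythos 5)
Your proposal is correct and follows essentially the same route as the paper: the base case is obtained from Lemma~\ref{lemma:third_order_last_two_cases_common_final} at $\indi=\thedim$, reconciling the two arguments of $\thesplitting{1}$ via $\Sbv{\anymatA}{\thedim}{\thedim}=\Sbv{\anymatAT}{1}{1}$ (Lemma~\ref{lemma:evil_identities}) and the defining relation $\sum_{\inds=1}^{\thedim}\anymatBX{1}{\Sexchanged(\inds)}\anymatATX{\inds}{\thedim}=\theone$ in $\thealgebra$, exactly as in the paper's rewriting of $\anymatBX{1}{\thedim}\anymatATX{1}{\thedim}$. Your induction step — identifying $\Sbv{\anymatAT}{1}{1}\Smonoidalproduct\anymatBX{1}{\Sexchanged(\recvar+1)}\anymatATX{\recvar+1}{\thedim}$ as the tip via Lemmata~\ref{characterization_of_reduced_terms_of_order_two}, \ref{lemma:base_monomial_order} and \ref{lemma:monomial_order_reformulation}, then applying Proposition~\ref{proposition:combinatorial_splitting} and Proposition~\ref{proposition:second_order_generic_case} — coincides with the paper's argument step for step.
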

\begin{proof}
  The claim is proved by induction over $\recvar$.
  \par
  \emph{Induction base.} In the base case $\recvar=1$ the asserted identity \eqref{eq:third_order_fourth_case_helper_0} reads
      \begin{IEEEeqnarray*}{rCl}
\thedifferential{3}(\Sbv{\anymatA}{\thedim}{\thedim}\Smonoidalproduct\theone)&=&\textstyle\sum_{\inds=1}^{\thedim}\Sbv{\anymatA}{\thedim}{\inds}\Smonoidalproduct\anymatAX{\inds}{1}+\sum_{\inds=2}^{\thedim-1}\Sbv{\anymatAT}{\inds}{\inds}\Smonoidalproduct\anymatATX{1}{\thedim}\IEEEeqnarraynumspace\IEEEyesnumber
    \label{eq:third_order_fourth_case_helper_1}\\    &&\textstyle{}+\thesplitting{1}\big({-}\sum_{\inds=2}^\thedim\Sbv{\anymatAT}{1}{1}\Smonoidalproduct\anymatBX{1}{\Sexchanged(\inds)}\anymatATX{\inds}{\thedim}+\sum_{\indt=2}^\thedim\Sbv{\anymatAT}{1}{\indt}\Smonoidalproduct\anymatBX{\indt}{\thedim}\anymatATX{1}{\thedim}\\
    &&\textstyle\hfill{}+\Sbv{\anymatB}{\thedim}{1}\Smonoidalproduct\anymatATX{1}{\thedim}-\sum_{\inds=1}^\thedim\Sbv{\anymatBT}{1}{\inds}\Smonoidalproduct\anymatAX{\inds}{1}\big).
  \end{IEEEeqnarray*} 
What  \eqref{eq:third_order_last_two_cases_common_final_0} in Lem\-ma~\ref{lemma:third_order_last_two_cases_common_final} says in the special case $\indi=\thedim$ is that
      \begin{IEEEeqnarray*}{rCl}
\thedifferential{3}(\Sbv{\anymatA}{\thedim}{\thedim}\Smonoidalproduct\theone)&=&\textstyle\sum_{\inds=1}^{\thedim}\Sbv{\anymatA}{\thedim}{\inds}\Smonoidalproduct\anymatAX{\inds}{1}+\sum_{\inds=2}^{\thedim-1}\Sbv{\anymatAT}{\inds}{\inds}\Smonoidalproduct\anymatATX{1}{\thedim}\IEEEyesnumber    \label{eq:third_order_fourth_case_helper_2}\\
    &&\textstyle{}+\thesplitting{1}\big(\Sbv{\anymatAT}{1}{1}\Smonoidalproduct\anymatBX{1}{\thedim}\anymatATX{1}{\thedim}+\sum_{\indt=2}^\thedim\Sbv{\anymatAT}{1}{\indt}\Smonoidalproduct\anymatBX{\indt}{\thedim}\anymatATX{1}{\thedim}\\
    &&\textstyle\hspace{2.25em}{}+\Sbv{\anymatB}{\thedim}{1}\Smonoidalproduct\anymatATX{1}{\thedim}-\sum_{\inds=1}^\thedim\Sbv{\anymatBT}{1}{\inds}\Smonoidalproduct\anymatAX{\inds}{1}-\Sbv{\anymatA}{\thedim}{\thedim}\Smonoidalproduct\theone\big).
  \end{IEEEeqnarray*}
Therefore, it suffices to prove that the arguments of $\thesplitting{1}$ in \eqref{eq:third_order_fourth_case_helper_1} and \eqref{eq:third_order_fourth_case_helper_2} are equal.
  \par 
  Because $\Sbv{\anymatA}{\thedim}{\thedim}=\Sbv{\anymatAT}{1}{1}$ by Lem\-ma~\ref{lemma:evil_identities} and thus
  \begin{IEEEeqnarray*}{rCl}
    \Sbv{\anymatAT}{1}{1}\Smonoidalproduct\anymatBX{1}{\thedim}\anymatATX{1}{\thedim}-\Sbv{\anymatA}{\thedim}{\thedim}\Smonoidalproduct\theone&=&\textstyle\Sbv{\anymatAT}{1}{1}\Smonoidalproduct(-\sum_{\inds=2}^\thedim\anymatBX{1}{\Sexchanged(\inds)}\anymatATX{\inds}{\thedim}+\theone)-\Sbv{\anymatAT}{1}{1}\Smonoidalproduct\theone\\
    &=&\textstyle-\sum_{\inds=2}^\thedim \Sbv{\anymatAT}{1}{1}\Smonoidalproduct\anymatBX{1}{\Sexchanged(\inds)}\anymatATX{\inds}{\thedim}
  \end{IEEEeqnarray*}
   that is indeed the case. Hence, \eqref{eq:third_order_fourth_case_helper_1} is true.
  \par
  \emph{Induction step.}  Let $\recvar\in \SYnumbers{\thedim}$ be arbitrary with $\recvar\leq \thedim-1$ and with the property that \eqref{eq:third_order_fourth_case_helper_0} is satisfied. Then \eqref{eq:third_order_fourth_case_helper_0} still holds if $\recvar$ is replaced by $\recvar+1$ by the ensuing argument.
  \par
  Since $1\leq \recvar$ the monomial $\anymatBX{1}{\Sexchanged(\recvar+1)}\anymatATX{\recvar+1}{\thedim}$ is normal by Lemma~\ref{characterization_of_reduced_terms_of_order_two} and the vector $\Sbv{\anymatAT}{1}{1}\Smonoidalproduct\anymatBX{1}{\Sexchanged(\recvar+1)}\anymatATX{\recvar+1}{\thedim}$ is   the tip of the argument of $\thesplitting{1}$ in \eqref{eq:third_order_fourth_case_helper_0} for the following reasons. Any vector in the fourth line of \eqref{eq:third_order_fourth_case_helper_0} is less already by degree. The only terms against which to compare therefore live in the third line and are of the form $\Sbv{\anymatAT}{1}{\indt}\Smonoidalproduct\anymatBX{\indt}{\Sexchanged(\inds)}\anymatATX{\inds}{\thedim}$ for $\{\inds,\indt\}\subseteq \SYnumbers{\thedim}$ with either $1<\indt$ or both $\indt=1$ and $\recvar+1<\inds$.
  If $1<\indt$, then the fact that both $(1,1)\lexorderRstrict(1,\indt)$ and $(1,1)\lexorderRstrict(\indt,1)$ implies $\Sbv{\anymatAT}{1}{\indt}\theorderRstrict\Sbv{\anymatAT}{1}{1}$ by Lem\-ma~\ref{lemma:base_monomial_order}  and thus $\Sbv{\anymatAT}{1}{\indt}\Smonoidalproduct\anymatBX{\indt}{\Sexchanged(\inds)}\anymatATX{\inds}{\thedim}\thechainsorderRstrict{1}\Sbv{\anymatAT}{1}{1}\Smonoidalproduct\anymatBX{1}{\Sexchanged(\recvar+1)}\anymatATX{\recvar+1}{\thedim}$. Alternatively, if $1=\indt$ and $\recvar+1<\inds$, which is to say, $\Sexchanged(\inds)<\Sexchanged(\recvar+1)$, then it is the addendum to Lemma~\ref{lemma:monomial_order_reformulation} that guarantees $\anymatBX{1}{\Sexchanged(\inds)}\theorderRstrict\anymatBX{1}{\Sexchanged(\recvar+1)}$  and thus $\Sbv{\anymatAT}{1}{1}\Smonoidalproduct\anymatBX{1}{\Sexchanged(\inds)}\anymatATX{\inds}{\thedim}\thechainsorderRstrict{1}\Sbv{\anymatAT}{1}{1}\Smonoidalproduct\anymatBX{1}{\Sexchanged(\recvar+1)}\anymatATX{\recvar+1}{\thedim}$ as well. 
\par
Hence, and because by Proposition~\ref{proposition:combinatorial_splitting}
\begin{IEEEeqnarray*}{rCl}
  \textstyle\thereconcatenator{1}(\Sbvfull{\anymatAT}{1}{1}{1},\anymatBX{1}{\Sexchanged(\recvar+1)}\anymatATX{\recvar+1}{\thedim})&=&(\Sbvfull{\anymatAT}{2}{1}{\recvar+1},\anymatATX{\recvar+1}{\thedim})
\end{IEEEeqnarray*}
adding to the right-hand side of \eqref{eq:third_order_fourth_case_helper_0} outside of $\thesplitting{1}$ the term $-\Sbv{\anymatAT}{1}{\recvar+1}\Smonoidalproduct\anymatATX{\recvar+1}{\thedim}$ and inside the term $\thedifferential{2}(\Sbv{\anymatAT}{1}{\recvar+1}\Smonoidalproduct\anymatATX{\recvar+1}{\thedim})$ gives another true identity.
  \par
  Evidently, this has the same effect on  the outside of $\thesplitting{1}$  in \eqref{eq:third_order_fourth_case_helper_0} as replacing $\recvar$ by $\recvar+1$ would have. Because by Proposition~\ref{proposition:second_order_generic_case}
  \begin{IEEEeqnarray*}{rCl}
    \IEEEeqnarraymulticol{3}{l}{ \thedifferential{2}(\Sbv{\anymatAT}{1}{\recvar+1}\Smonoidalproduct\anymatATX{\recvar+1}{\thedim})
    }\\
    \hspace{5em}&=&\textstyle\sum_{\indt=1}^\thedim\Sbv{\anymatAT}{1}{\indt}\Smonoidalproduct\anymatBX{\indt}{\Sexchanged(\recvar+1)}\anymatATX{\recvar+1}{\thedim}+\Sbv{\anymatB}{\thedim}{\recvar+1}\Smonoidalproduct\anymatATX{\recvar+1}{\thedim}\IEEEyesnumber\label{eq:third_order_fourth_case_helper_3}
  \end{IEEEeqnarray*}
the addition of \eqref{eq:third_order_fourth_case_helper_3} likewise changes  the argument of $\thesplitting{1}$ in \eqref{eq:third_order_fourth_case_helper_0} in the same way as increasing $\recvar$ by $1$ would. Hence, the claim is true for all $\recvar$.
\end{proof}

\begin{lemma}
  \label{lemma:third_order_fourth_case_second_helper}
  For any $\anymat\in\thematrices$,
  \begin{IEEEeqnarray*}{rCl}
\thedifferential{3}(\Sbv{\anymat}{\thedim}{\thedim}\Smonoidalproduct\theone)&=&\textstyle\sum_{\inds=1}^\thedim\Sbv{\anymat}{\thedim}{\inds}\Smonoidalproduct\anymatAX{\inds}{1}+\sum_{\inds=2}^{\thedim-1}\Sbv{\anymatAT}{\inds}{\inds}\Smonoidalproduct\anymatATX{1}{\thedim}\IEEEyesnumber\label{eq:third_order_fourth_case_second_helper_0}\\    &&\textstyle\hspace{8em}{}-\sum_{\inds=2}^\thedim\Sbv{\anymatAT}{1}{\inds}\Smonoidalproduct\anymatATX{\inds}{\thedim}+\Sbv{\anymatB}{\thedim}{1}\Smonoidalproduct\theone\\    &&\textstyle{}+\thesplitting{1}\big({-}\sum_{\inds=1}^\thedim\Sbv{\anymatBT}{1}{\inds}\Smonoidalproduct\anymatAX{\inds}{1}-\Sbv{\anymatA}{\thedim}{\thedim}\Smonoidalproduct\theone\big).
  \end{IEEEeqnarray*}
\end{lemma}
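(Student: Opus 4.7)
The plan is to carry out one more Anick reduction step on top of the $\recvar=\thedim$ specialization of Lemma~\ref{lemma:third_order_fourth_case_helper}. Instantiating that lemma at $\recvar=\thedim$ collapses the first summand inside $\thesplitting{1}$ (because the range of its index becomes empty), leaving inside the splitting three pieces: the degree-$3$ double sum $\sum_{\inds=1}^{\thedim}\sum_{\indt=2}^\thedim\Sbv{\anymatAT}{1}{\indt}\Smonoidalproduct\anymatBX{\indt}{\Sexchanged(\inds)}\anymatATX{\inds}{\thedim}$, the degree-$2$ sum $\sum_{\inds=1}^\thedim\Sbv{\anymatB}{\thedim}{\inds}\Smonoidalproduct\anymatATX{\inds}{\thedim}$, and $-\sum_{\inds=1}^\thedim\Sbv{\anymatBT}{1}{\inds}\Smonoidalproduct\anymatAX{\inds}{1}$.

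First I would show that the degree-$3$ double sum actually vanishes in $\thechainsmodule{1}$. For $\inds=1$ the monomial $\anymatBX{\indt}{\thedim}\anymatATX{1}{\thedim}$ coincides with $\Sbvfull{\anymatB}{2}{\indt}{1}\in\thecore$ by Proposition~\ref{proposition:core}; the defining relation $\thebv{\anymatB}{\indt}{1}\in\theideal$, with vanishing Kronecker contribution since $\indt\geq 2$, rewrites this monomial in $\thealgebra$ as $-\sum_{\inds'=2}^\thedim\anymatBX{\indt}{\Sexchanged(\inds')}\anymatATX{\inds'}{\thedim}$, which upon renaming is the exact negative of the $\inds\geq 2$ portion of the same double sum. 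Hence the entire double sum contributes zero in $\thechainsmodule{1}$.

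Second, with only the two degree-$2$ summands inside $\thesplitting{1}$, I would identify the tip as $\Sbv{\anymatB}{\thedim}{1}\Smonoidalproduct\anymatATX{1}{\thedim}$. All candidate first letters have the form $\anymatBX{\thedim}{\Sexchanged(\inds)}$ or $\anymatBTX{1}{\Sexchanged(\inds)}$; the addendum to Lemma~\ref{lemma:monomial_order_reformulation} immediately gives $\anymatBX{\thedim}{\Sexchanged(\inds)}\theorderRstrict\anymatBX{\thedim}{\thedim}$ for $\inds\geq 2$, and a short case distinction on $\anymat\in\thematrices$ using Lemma~\ref{lemma:monomial_order_reformulation} yields $\anymatBTX{1}{\Sexchanged(\inds)}\theorderRstrict\anymatBX{\thedim}{\thedim}$ for every $\inds\in\SYnumbers{\thedim}$. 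Proposition~\ref{proposition:combinatorial_splitting} applied with $\orderind=3$, with the matrix $\anymatB$ in the role of $\anymat$, and with $(\indj,\indi)=(\thedim,1)$ then produces $\thereconcatenator{1}(\Sbvfull{\anymatB}{1}{\thedim}{1},\anymatATX{1}{\thedim})=(\Sbvfull{\anymatB}{2}{\thedim}{1},\theone)$.

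Finally, the defining recursion for $\thesplitting{1}$ lets me add $\Sbv{\anymatB}{\thedim}{1}\Smonoidalproduct\theone$ outside of $\thesplitting{1}$ and subtract $\thedifferential{2}(\Sbv{\anymatB}{\thedim}{1}\Smonoidalproduct\theone)$ inside without disturbing the equality. Since $(\thedim,1)\neq(\thedim,\thedim)$, Proposition~\ref{proposition:second_order_generic_case} evaluates the latter as $\sum_{\inds=1}^\thedim\Sbv{\anymatB}{\thedim}{\inds}\Smonoidalproduct\anymatATX{\inds}{\thedim}+\Sbv{\anymatAT}{1}{1}\Smonoidalproduct\theone$; its subtraction annihilates the surviving $\Sbv{\anymatB}{\thedim}{\inds}$-sum and, after invoking Lemma~\ref{lemma:evil_identities} to identify $\Sbv{\anymatAT}{1}{1}$ with $\Sbv{\anymatA}{\thedim}{\thedim}$ (both equal $\anymatAX{\thedim}{1}$ as $1$-chains), leaves precisely the inside claimed in \eqref{eq:third_order_fourth_case_second_helper_0}. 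The hard part will be the first step: without observing that the apparent degree-$3$ contribution already vanishes in $\thechainsmodule{1}$, the putative tip in the second step would be a chain paired with a non-reduced monomial and Proposition~\ref{proposition:combinatorial_splitting} would not apply.
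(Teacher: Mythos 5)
Your proposal is correct and follows essentially the same route as the paper's proof: specialize Lemma~\ref{lemma:third_order_fourth_case_helper} at $\recvar=\thedim$, note the degree-three double sum vanishes (the paper uses the identity $\sum_{\inds=1}^{\thedim}\anymatBX{\indt}{\Sexchanged(\inds)}\anymatATX{\inds}{\thedim}=\Skronecker{\indt}{1}\Saction\theone$ directly, which is exactly the cancellation you extract from the relation $\thebv{\anymatB}{\indt}{1}$), identify the tip $\Sbv{\anymatB}{\thedim}{1}\Smonoidalproduct\anymatATX{1}{\thedim}$, apply $\thereconcatenator{1}$ via Proposition~\ref{proposition:combinatorial_splitting}, and perform one splitting step using Proposition~\ref{proposition:second_order_generic_case} together with $\Sbv{\anymatAT}{1}{1}=\Sbv{\anymatA}{\thedim}{\thedim}$ from Lemma~\ref{lemma:evil_identities}. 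Your tip comparison at the level of generators is the same content as the paper's use of Lemma~\ref{lemma:base_monomial_order}, since $1$-chains are generators.
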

\begin{proof}
  In the special case $\recvar=\thedim$ the first grouped sum in the third line of \eqref{eq:third_order_fourth_case_helper_0} in  Lem\-ma~\ref{lemma:third_order_fourth_case_helper} vanishes and, taking into  account the fact that $\sum_{\inds=1}^{\thedim}\anymatBX{\indt}{\Sexchanged(\inds)}\anymatATX{\inds}{\thedim}=\Skronecker{\indt}{1}\Saction\theone$ implies
  \begin{IEEEeqnarray*}{rCl}
   \textstyle \sum_{\inds=1}^{\thedim}\sum_{\indt=2}^\thedim\Sbv{\anymatAT}{1}{\indt}\Smonoidalproduct\anymatBX{\indt}{\Sexchanged(\inds)}\anymatATX{\inds}{\thedim}=0,
  \end{IEEEeqnarray*}
  what remains of \eqref{eq:third_order_fourth_case_helper_0} is the identity
      \begin{IEEEeqnarray*}{rCl}
    \IEEEeqnarraymulticol{3}{l}{
      \thedifferential{3}(\Sbv{\anymatA}{\thedim}{\thedim}\Smonoidalproduct\theone)
    }\\
    \hspace{.em}   
&=&\textstyle\sum_{\inds=1}^{\thedim}\Sbv{\anymatA}{\thedim}{\inds}\Smonoidalproduct\anymatAX{\inds}{1}+\sum_{\inds=2}^{\thedim-1}\Sbv{\anymatAT}{\inds}{\inds}\Smonoidalproduct\anymatATX{1}{\thedim}-\sum_{\inds=2}^{\thedim}\Sbv{\anymatAT}{1}{\inds}\Smonoidalproduct\anymatATX{\inds}{\thedim}\IEEEeqnarraynumspace\IEEEyesnumber
    \label{eq:third_order_fourth_case_second_helper_1}\\
    &&\textstyle{}+\thesplitting{1}\big(\sum_{\inds=1}^{\thedim}\Sbv{\anymatB}{\thedim}{\inds}\Smonoidalproduct\anymatATX{\inds}{\thedim}-\sum_{\inds=1}^\thedim\Sbv{\anymatBT}{1}{\inds}\Smonoidalproduct\anymatAX{\inds}{1}\big).
  \end{IEEEeqnarray*}
  \par
  The tip of the argument of $\thesplitting{1}$ in \eqref{eq:third_order_fourth_case_second_helper_1} is $\Sbv{\anymatB}{\thedim}{1}\Smonoidalproduct\anymatATX{1}{\thedim}$. Indeed, on the one hand, for any $\inds\in \SYnumbers{\thedim}$ with $1<\inds$ already $\Sbv{\anymatB}{\thedim}{\inds}\theorderRstrict\Sbv{\anymatB}{\thedim}{1}$ by the addendum to Lem\-ma~\ref{lemma:base_monomial_order}, and thus $\Sbv{\anymatB}{\thedim}{\inds}\Smonoidalproduct\anymatATX{\inds}{\thedim}\thechainsorderRstrict{1 }\Sbv{\anymatB}{\thedim}{1}\Smonoidalproduct\anymatATX{1}{\thedim}$. On the other hand, for any $\inds\in\SYnumbers{\thedim}$, even $\inds=1$ by $2\leq \thedim$, the facts that both $(1,1)=(\Sexchanged(\thedim),1)\lexorderRstrict(\inds,\Sexchanged(1))=(\inds,\thedim)$ and $(1,1)=(1,\Sexchanged(\thedim))\lexorderRstrict(\Sexchanged(1),\inds)=(\thedim,\inds)$ imply $\Sbv{\anymatBT}{1}{\inds}\theorderRstrict\Sbv{\anymatB}{\thedim}{1}$  by Lem\-ma~\ref{lemma:base_monomial_order} and consequently $\Sbv{\anymatBT}{1}{\inds}\Smonoidalproduct\anymatAX{\inds}{1}\thechainsorderRstrict{1 }\Sbv{\anymatB}{\thedim}{1}\Smonoidalproduct\anymatATX{1}{\thedim}$.
  \par
  Therefore and since by Proposition~\ref{proposition:combinatorial_splitting}
  \begin{IEEEeqnarray*}{rCl}
\textstyle    \thereconcatenator{1}(\Sbvfull{\anymatB}{1}{\thedim}{1},\anymatATX{1}{\thedim})=(\Sbvfull{\anymatB}{2}{\thedim}{1},\theone)
  \end{IEEEeqnarray*}
  adding on the right-hand side of \eqref{eq:third_order_fourth_case_second_helper_1} the term $\Sbv{\anymatB}{\thedim}{1}\Smonoidalproduct\theone$ to the exterior of $\thesplitting{1}$ and $-\thedifferential{2}(\Sbv{\anymatB}{\thedim}{1}\Smonoidalproduct\theone)$ to the interior preserves the equality.
  \par
  Of course, this change is precisely what is necessary to turn the outside of $\thesplitting{1}$ in \eqref{eq:third_order_fourth_case_second_helper_1} into that of \eqref{eq:third_order_fourth_case_second_helper_0}.
  At the same time, since by Proposition~\ref{proposition:second_order_generic_case}
  \begin{IEEEeqnarray*}{rCl}
    -\thedifferential{2}(\Sbv{\anymatB}{\thedim}{1}\Smonoidalproduct\theone)&=&\textstyle-\sum_{\inds=1}^\thedim \Sbv{\anymatB}{\thedim}{\inds}\Smonoidalproduct\anymatATX{\inds}{\thedim}-\Sbv{\anymatAT}{1}{1}\Smonoidalproduct\theone\IEEEyesnumber\label{eq:third_order_fourth_case_second_helper_2}
  \end{IEEEeqnarray*}
and since $-\Sbv{\anymatAT}{1}{1}\Smonoidalproduct\theone=-\Sbv{\anymatA}{\thedim}{\thedim}\Smonoidalproduct\theone$ by Lemma~\ref{lemma:evil_identities} the addition of the vector \eqref{eq:third_order_fourth_case_second_helper_2}  likewise transforms the argument of $\thesplitting{1}$ in \eqref{eq:third_order_fourth_case_second_helper_1} into that of \eqref{eq:third_order_fourth_case_second_helper_0}, which proves the latter identity.
\end{proof}

\begin{proposition}
  \label{proposition:third_order_fourth_case}
  For any $\anymat\in\thematrices$,
  \begin{IEEEeqnarray*}{rCl}
    \IEEEeqnarraymulticol{3}{l}{
      \thedifferential{3}(\Sbv{\anymat}{\thedim}{\thedim}\Smonoidalproduct\theone)
    }\\    \hspace{.5em}&=&\textstyle\sum_{\inds=1}^\thedim\Sbv{\anymat}{\thedim}{\inds}\Smonoidalproduct\anymatAX{\inds}{1}+\sum_{\inds=2}^{\thedim-1}\Sbv{\anymatAT}{\inds}{\inds}\Smonoidalproduct\anymatATX{1}{\thedim}-\sum_{\inds=2}^\thedim\Sbv{\anymatAT}{1}{\inds}\Smonoidalproduct\anymatATX{\inds}{\thedim}\IEEEyesnumber\label{eq:third_order_fourth_case_0}\\    &&\hspace{17em}{}+\Sbv{\anymatB}{\thedim}{1}\Smonoidalproduct\theone-\Sbv{\anymatBT}{1}{\thedim}\Smonoidalproduct\theone.
  \end{IEEEeqnarray*}  
\end{proposition}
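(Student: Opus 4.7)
The plan is to start from the identity established in Lemma~\ref{lemma:third_order_fourth_case_second_helper}, which already exhibits almost all the terms in the claim \eqref{eq:third_order_fourth_case_0} outside of $\thesplitting{1}$; the only discrepancy is that Lemma~\ref{lemma:third_order_fourth_case_second_helper} contains the additional term
\begin{IEEEeqnarray*}{C}
\textstyle\thesplitting{1}\bigl({-}\sum_{\inds=1}^\thedim\Sbv{\anymatBT}{1}{\inds}\Smonoidalproduct\anymatAX{\inds}{1}-\Sbv{\anymatA}{\thedim}{\thedim}\Smonoidalproduct\theone\bigr),
\end{IEEEeqnarray*}
which has to be shown to equal $-\Sbv{\anymatBT}{1}{\thedim}\Smonoidalproduct\theone$. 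As in all preceding lemmata, the strategy is a single application of the recursion defining $\thesplitting{1}$: identify the $\thechainsorder{1}$\-/tip of the argument, reconcatenate via $\thereconcatenator{1}$, and verify that the remainder cancels with the correction term involving $\thedifferential{2}$.

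First I would argue that the entire argument of $\thesplitting{1}$ is a $\thefield$-linear combination of normal basis vectors. The degree-$1$ summand $-\Sbv{\anymatA}{\thedim}{\thedim}\Smonoidalproduct\theone=-\anymatAX{\thedim}{1}\Smonoidalproduct\theone$ is trivially normal, and each degree-$2$ vector $\Sbv{\anymatBT}{1}{\inds}\Smonoidalproduct\anymatAX{\inds}{1}$ is normal because both the chain and the coefficient are reduced monomials. The tip among the degree-$2$ summands is $\Sbv{\anymatBT}{1}{1}\Smonoidalproduct\anymatAX{1}{1}$, with coefficient $-1$: by the addendum to Lemma~\ref{lemma:base_monomial_order}, for any $\inds\in\SYnumbers{\thedim}$ with $1<\inds$ the inequality $\Sbv{\anymatBT}{1}{\inds}\theorderRstrict\Sbv{\anymatBT}{1}{1}$ holds, whence $\Sbv{\anymatBT}{1}{\inds}\Smonoidalproduct\anymatAX{\inds}{1}\thechainsorderRstrict{1}\Sbv{\anymatBT}{1}{1}\Smonoidalproduct\anymatAX{1}{1}$, and the degree-$1$ summand is dominated automatically.

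Next, Proposition~\ref{proposition:combinatorial_splitting} specialized to $\orderind=3$, $\anymat=\anymatBT$, $\indj=1$, $\indi=\thedim$ and $\anyrest=\theone$ yields
\begin{IEEEeqnarray*}{C}
\thereconcatenator{1}(\Sbvfull{\anymatBT}{1}{1}{1},\anymatAX{1}{1})=(\Sbvfull{\anymatBT}{2}{1}{\thedim},\theone).
\end{IEEEeqnarray*}
Hence adding $-\Sbv{\anymatBT}{1}{\thedim}\Smonoidalproduct\theone$ to the outside of $\thesplitting{1}$ and $\thedifferential{2}(\Sbv{\anymatBT}{1}{\thedim}\Smonoidalproduct\theone)$ to the inside preserves the identity. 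Because $(1,\thedim)\neq(\thedim,\thedim)$, Proposition~\ref{proposition:second_order_generic_case} applies to give
\begin{IEEEeqnarray*}{rCl}
\thedifferential{2}(\Sbv{\anymatBT}{1}{\thedim}\Smonoidalproduct\theone)&=&\textstyle\sum_{\inds=1}^\thedim\Sbv{\anymatBT}{1}{\inds}\Smonoidalproduct\anymatAX{\inds}{1}+\Sbv{\anymatA}{\thedim}{\thedim}\Smonoidalproduct\theone,
\end{IEEEeqnarray*}
where the second summand is obtained from $\Sbv{(\anymatBT)^{\Sskewdagger}}{\Sexchanged(1)}{\thedim}=\Sbv{\anymatA}{\thedim}{\thedim}$. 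This vector is precisely the negative of the current argument of $\thesplitting{1}$, so adding it reduces the argument to $0$. Since $\thesplitting{1}(0)=0$, the recursion terminates after this single step and Lemma~\ref{lemma:third_order_fourth_case_second_helper} collapses to \eqref{eq:third_order_fourth_case_0}.

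There is no real obstacle: the proof follows the same pattern as Propositions~\ref{proposition:third_order_first_case} and~\ref{proposition:third_order_third_case}, the only point requiring mild care being the identification of the tip and the verification that the cancellation with $\thedifferential{2}(\Sbv{\anymatBT}{1}{\thedim}\Smonoidalproduct\theone)$ is exact, which it is.
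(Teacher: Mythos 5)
Your proof is correct and follows the same line of reasoning as the paper: identify $\Sbv{\anymatBT}{1}{1}\Smonoidalproduct\anymatAX{1}{1}$ as the tip of the argument of $\thesplitting{1}$ in Lemma~\ref{lemma:third_order_fourth_case_second_helper} via the addendum to Lemma~\ref{lemma:base_monomial_order} and a degree comparison, then reconcatenate via Proposition~\ref{proposition:combinatorial_splitting} and observe that $\thedifferential{2}(\Sbv{\anymatBT}{1}{\thedim}\Smonoidalproduct\theone)$ from Proposition~\ref{proposition:second_order_generic_case} cancels the argument exactly. The additional explicit checks you include (normality of the argument, the sign of the tip coefficient) are sound but do not alter the route.
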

\begin{proof}
  The tip of the argument of $\thesplitting{1}$ in \eqref{eq:third_order_fourth_case_second_helper_0} in Lem\-ma~\ref{lemma:third_order_fourth_case_second_helper} is given by $\Sbv{\anymatBT}{1}{1}\Smonoidalproduct\anymatAX{1}{1}$ because already $\Sbv{\anymatBT}{1}{\inds}\theorderRstrict\Sbv{\anymatBT}{1}{1}$ for any $\inds\in\SYnumbers{\thedim}$ with $1<\inds$ by the addendum to Lem\-ma~\ref{lemma:base_monomial_order} and thus $\Sbv{\anymatBT}{1}{\inds}\Smonoidalproduct\anymatAX{\inds}{1}\thechainsorderRstrict{1}\Sbv{\anymatBT}{1}{1}\Smonoidalproduct\anymatAX{1}{1}$ and because $\Sbv{\anymat}{\thedim}{\thedim}\Smonoidalproduct\theone\thechainsorderRstrict{1}\Sbv{\anymatBT}{1}{1}\Smonoidalproduct\anymatAX{1}{1}$ for degree reasons.
  \par
  In consequence, since by Proposition~\ref{proposition:combinatorial_splitting}
  \begin{IEEEeqnarray*}{rCl}
\textstyle\thereconcatenator{1}(    \Sbvfull{\anymatBT}{1}{1}{1},\anymatAX{1}{1})=(\Sbvfull{\anymatBT}{2}{1}{\thedim},\theone)
  \end{IEEEeqnarray*}
  the validity of \eqref{eq:third_order_fourth_case_second_helper_0} is not affected by adding
  $-\Sbv{\anymatBT}{1}{\thedim}\Smonoidalproduct\theone$ to the outside of $\thesplitting{1}$ on the right-hand side and $\thedifferential{2}(\Sbv{\anymatBT}{1}{\thedim}\Smonoidalproduct\theone)$ to the inside.
  \par
  Obviously that makes the new outside of $\thesplitting{1}$ in \eqref{eq:third_order_fourth_case_second_helper_0} agree with that in \eqref{eq:third_order_fourth_case_0}. And since by Proposition~\ref{proposition:second_order_generic_case}  the vector
  \begin{IEEEeqnarray*}{rCl}
    \thedifferential{2}(\Sbv{\anymatBT}{1}{\thedim}\Smonoidalproduct\theone)&=&\textstyle\sum_{\inds=1}^\thedim \Sbv{\anymatBT}{1}{\inds}\Smonoidalproduct\anymatAX{\inds}{1}+\Sbv{\anymat}{\thedim}{\thedim}\Smonoidalproduct\theone
  \end{IEEEeqnarray*}
 is precisely the negative of the argument of $\thesplitting{1}$  in \eqref{eq:third_order_fourth_case_second_helper_0} that verifies \eqref{eq:third_order_fourth_case_0} and thus concludes the proof.
\end{proof}

\subsubsection{Synthesis}
The individual results obtained in the four cases distinguished combine into the aggregated statement from the \hyperref[main-result]{Main result}.
\begin{theorem}
  \label{theorem:third_order}
  For any $\anymat\in\thematrices$ and any $(\indj,\indi)\in\SYnumbers{\thedim}^{\Ssetmonoidalproduct 2}$,
\begin{IEEEeqnarray*}{rCl}
  \IEEEeqnarraymulticol{3}{l}{
    \thedifferential{3}(\Sbv{\anymatA}{\indj}{\indi}\Smonoidalproduct \theone)
  }\\
\hspace{1em}  &= &\textstyle  \sum_{\inds=1}^\thedim\Sbv{\anymatA}{\indj}{\inds}\Smonoidalproduct\anymatAX{\inds}{\Sexchanged(\indi)}-\Saction\Sbv{\anymatBT}{\Sexchanged(\indj)}{\indi}\Smonoidalproduct\theone\IEEEyesnumber\label{eq:third_order_0}\\
&&\textstyle{}+\Skronecker{\indj}{\thedim}(\sum_{\inds=1}^{\thedim-1}\Sbv{\anymatAT}{\inds}{\inds}\Smonoidalproduct\anymatATX{\Sexchanged(\indi)}{\thedim}-\Skronecker{\indi}{\thedim}(\sum_{\inds=1}^\thedim\Sbv{\anymatAT}{1}{\inds}\Smonoidalproduct\anymatATX{\inds}{\thedim}-\Saction\Sbv{\anymatB}{\thedim}{1}\Smonoidalproduct\theone))\\
&&\textstyle{}-\Skronecker{\indj}{1}\Skronecker{\indi}{\thedim}\sum_{\inds=1}^{\thedim-1}\Sbv{\anymatB}{\inds}{\inds}\Smonoidalproduct\theone.
\end{IEEEeqnarray*}  
\end{theorem}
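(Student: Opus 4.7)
The plan is to derive Theorem~\ref{theorem:third_order} from Propositions~\ref{proposition:third_order_first_case}, \ref{proposition:third_order_second_case}, \ref{proposition:third_order_third_case}, and \ref{proposition:third_order_fourth_case} by case analysis. Because the assumption $2\leq \thedim$ implies $\thedim\neq 1$, any $(\indj,\indi)\in \SYnumbers{\thedim}^{\Ssetmonoidalproduct 2}$ falls into exactly one of the four mutually exclusive cases: (a)~$\indj\neq\thedim$ and $(\indj,\indi)\neq (1,\thedim)$; (b)~$(\indj,\indi)=(1,\thedim)$; (c)~$\indj=\thedim\neq\indi$; (d)~$(\indj,\indi)=(\thedim,\thedim)$. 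In each case it suffices to evaluate the Kronecker-delta prefactors in \eqref{eq:third_order_0} and to compare the resulting specialization with the formula from the appropriate proposition.

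Cases (a), (b), and (c) amount to straightforward bookkeeping. In case~(a) both $\Skronecker{\indj}{\thedim}=0$ and $\Skronecker{\indj}{1}\Skronecker{\indi}{\thedim}=0$: indeed, either $\indj\neq 1$, in which case $\Skronecker{\indj}{1}=0$, or else $\indj=1$, in which case the exclusion of case~(b) forces $\indi\neq\thedim$ and thus $\Skronecker{\indi}{\thedim}=0$. The second and third lines of \eqref{eq:third_order_0} hence vanish, and the first line reproduces the right-hand side of Proposition~\ref{proposition:third_order_first_case}. In case~(b) one has $\Skronecker{\indj}{\thedim}=0$ while $\Skronecker{\indj}{1}\Skronecker{\indi}{\thedim}=1$; substituting $\Sexchanged(\indj)=\thedim$ and $\Sexchanged(\indi)=1$ into the remaining terms of \eqref{eq:third_order_0} yields exactly the formula of Proposition~\ref{proposition:third_order_second_case}. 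In case~(c), $\Skronecker{\indj}{\thedim}=1$, $\Skronecker{\indi}{\thedim}=0$, and $\Skronecker{\indj}{1}=0$, so \eqref{eq:third_order_0} collapses to the first line plus $\sum_{\inds=1}^{\thedim-1}\Sbv{\anymatAT}{\inds}{\inds}\Smonoidalproduct\anymatATX{\Sexchanged(\indi)}{\thedim}$ with $\Sexchanged(\indj)=1$, which coincides with Proposition~\ref{proposition:third_order_third_case}.

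The only case requiring more than a direct comparison is case~(d), which is where I expect the main — but still minor — obstacle. There $\Skronecker{\indj}{\thedim}=\Skronecker{\indi}{\thedim}=1$, $\Skronecker{\indj}{1}=0$, and $\Sexchanged(\indj)=\Sexchanged(\indi)=1$, so \eqref{eq:third_order_0} specializes to an expression in which the two sums $\sum_{\inds=1}^{\thedim-1}\Sbv{\anymatAT}{\inds}{\inds}\Smonoidalproduct\anymatATX{1}{\thedim}$ and $-\sum_{\inds=1}^\thedim\Sbv{\anymatAT}{1}{\inds}\Smonoidalproduct\anymatATX{\inds}{\thedim}$ run from $\inds=1$, whereas the corresponding sums in Proposition~\ref{proposition:third_order_fourth_case} run from $\inds=2$. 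The key observation is that the two discarded $\inds=1$ summands are each literally $\Sbv{\anymatAT}{1}{1}\Smonoidalproduct\anymatATX{1}{\thedim}$ and enter with opposite signs, so they cancel against one another, leaving exactly the right-hand side of Proposition~\ref{proposition:third_order_fourth_case}. This closes the final case and completes the proof.
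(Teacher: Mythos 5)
Your proposal is correct and follows essentially the same route as the paper's own proof: the same four-case split according to the Kronecker-delta prefactors, with each case matched against Propositions~\ref{proposition:third_order_first_case}--\ref{proposition:third_order_fourth_case}, and the same observation that in the case $(\indj,\indi)=(\thedim,\thedim)$ the two $\inds=1$ summands $\Sbv{\anymatAT}{1}{1}\Smonoidalproduct\anymatATX{1}{\thedim}$ occur with opposite signs and cancel, reconciling the differing summation ranges.
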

\begin{proof}
By the vanishing of the last two lines, in the case $\indj\neq \thedim$ and $(\indj,\indi)\neq (1,\thedim)$  the formula \eqref{eq:third_order_0} collapses into 
\eqref{eq:third_order_first_case_0} from Proposition~\ref{proposition:third_order_first_case}.
\par
If, still $\indj\neq \thedim$ but $(\indj,\indi)=(1,\thedim)$, while the third line of \eqref{eq:third_order_0} is zero, the fourth becomes 
$-\sum_{\inds=1}^{\thedim-1}\Sbv{\anymatB}{\inds}{\inds}\Smonoidalproduct\theone$, which is why in this case  \eqref{eq:third_order_0} is precisely
\eqref{eq:third_order_second_case_0} from Proposition~\ref{proposition:third_order_second_case}.
\par
In the instance that $\indj=\thedim\neq \indi$ the two last lines of \eqref{eq:third_order_0} amount to nothing more than $\sum_{\inds=1}^{\thedim-1}\Sbv{\anymatAT}{\inds}{\inds}\Smonoidalproduct\anymatATX{\Sexchanged(\indi)}{\thedim}$, which proves \eqref{eq:third_order_0} identical to 
\eqref{eq:third_order_third_case_0} from Proposition~\ref{proposition:third_order_third_case} in this case.
\par 
Finally, for $(\indj,\indi)=(\thedim,\thedim)$ the sum of all degree-$2$ terms in  \eqref{eq:third_order_0} is
\begin{IEEEeqnarray*}{rCl}
\textstyle  \sum_{\inds=1}^\thedim\Sbv{\anymatA}{\thedim}{\inds}\Smonoidalproduct\anymatAX{\inds}{1}
+\sum_{\inds=1}^{\thedim-1}\Sbv{\anymatAT}{\inds}{\inds}\Smonoidalproduct\anymatATX{1}{\thedim}-\sum_{\inds=1}^\thedim\Sbv{\anymatAT}{1}{\inds}\Smonoidalproduct\anymatATX{\inds}{\thedim}.
\IEEEyesnumber\label{eq:third_order_1}
\end{IEEEeqnarray*}
Because the term $\Sbv{\anymatAT}{1}{1}\Smonoidalproduct\anymatATX{1}{\thedim}$ appears in both the middle and right grouped sum in \eqref{eq:third_order_1}, but with different signs, \eqref{eq:third_order_1} actually agrees with the second line of 
\eqref{eq:third_order_fourth_case_0} in Proposition~\ref{proposition:third_order_fourth_case}. Now it is evident that \eqref{eq:third_order_fourth_case_0} is true in all cases. 
\end{proof}


\subsection{Fourth and higher orders}
The fourth and higher orders of the resolution can be computed in a uniform manner.
\begin{assumption}
  \label{assumption:higher_orders}
  Let $\orderind\in\Sintegersnn$ be such that  $4\leq \orderind$ and for any $\anymat\in\thematrices$ and  $(\indj,\indi)\in\SYnumbers{\thedim}^{\Ssetmonoidalproduct 2}$,
\begin{IEEEeqnarray*}{rCl}
    \thedifferential{\orderind-1}(\Sbv{\anymatA}{\indj}{\indi}\Smonoidalproduct \theone)
&= &\textstyle  \sum_{\inds=1}^\thedim\Sbv{\anymatA}{\indj}{\inds}\Smonoidalproduct\anymatPAX{\orderind-1}{\inds}{\Sexchanged(\indi)}+(-1)^{\orderind-1}\Saction\Sbv{\anymatBT}{\Sexchanged(\indj)}{\indi}\Smonoidalproduct\theone\\
&&\textstyle{}+\Skronecker{\indj}{\thedim}((\Sbv{\anymatAT}{1}{1}+\Skronecker{\orderind-1}{3}\sum_{\inds=2}^{\thedim-1}\Sbv{\anymatAT}{\inds}{\inds})\Smonoidalproduct\anymatPATX{\orderind-1}{\Sexchanged(\indi)}{\thedim}\\
&&\textstyle\hspace{3em}\hfill{}-\Skronecker{\indi}{\thedim}(\sum_{\inds=1}^\thedim\Sbv{\anymatAT}{1}{\inds}\Smonoidalproduct\anymatPATX{\orderind-1}{\inds}{\thedim}+(-1)^{\orderind-1}\Saction\Sbv{\anymatB}{\thedim}{1}\Smonoidalproduct\theone))\\
&&\textstyle{}+\Skronecker{\indj}{1}\Skronecker{\indi}{\thedim}(-1)^{\orderind-1}(\Sbv{\anymatB}{1}{1}+\Skronecker{\orderind-1}{3}\sum_{\inds=2}^{\thedim-1}\Sbv{\anymatB}{\inds}{\inds})\Smonoidalproduct\theone.
\end{IEEEeqnarray*}
\end{assumption}
Note that, if $\orderind=4$, this assumption holds by Theorem~\ref{theorem:third_order}.

\subsubsection{Case-independent identities} It is convenient to carry out $\thedim$ steps of the recursion for all basis vectors simultaneously and distinguish cases afterwards.
\begin{lemma}
  \label{lemma:higher_orders_common_helper}
  For any $\anymat\in \thematrices$, any $(\indj,\indi)\in\SYnumbers{\thedim}^{\Ssetmonoidalproduct 2}$ and any $\recvar\in\SYnumbers{\thedim}$,
  \begin{IEEEeqnarray*}{rCl}
      \IEEEeqnarraymulticol{3}{l}{\thedifferential{\orderind}(\Sbv{\anymatA}{\indj}{\indi}\Smonoidalproduct \theone)
  }\\
  \hspace{.5em}  &=&\textstyle\sum_{\inds=1}^\recvar\Sbv{\anymatA}{\indj}{\inds}\Smonoidalproduct\anymatPAX{\orderind}{\inds}{\Sexchanged(\indi)}\IEEEyesnumber\label{eq:higher_orders_common_helper_0}\\  &&\textstyle{}+\thesplitting{\orderind-2}\big(\sum_{\inds=\recvar+1}^{\thedim}\Sbv{\anymatA}{\indj}{1}\Smonoidalproduct\anymatPBTX{\orderind}{1}{\Sexchanged(\inds)}\anymatPAX{\orderind}{ \inds}{\Sexchanged(\indi)}\\
  &&\textstyle\hspace{6em}{}-\sum_{\inds=2}^\thedim\sum_{\indt=1}^{\recvar}\Sbv{\anymatA}{\indj}{\inds}\Smonoidalproduct\anymatPBTX{\orderind}{\inds}{\Sexchanged(\indt)}\anymatPAX{\orderind}{\indt}{\Sexchanged(\indi)}\\
  &&\textstyle\hspace{8em}{}+(-1)^{\orderind}\sum_{\inds=1}^\recvar\Sbv{\anymatBT}{\Sexchanged(\indj)}{\inds}\Smonoidalproduct\anymatPAX{\orderind}{\inds}{\Sexchanged(\indi)}-\Skronecker{\indi}{1}\Saction\Sbv{\anymatA}{\indj}{1}\Smonoidalproduct\theone\\
  &&\textstyle\hspace{3em}{}-\Skronecker{\indj}{\thedim}(\Sbv{\anymatAT}{1}{1}+\Skronecker{\orderind}{4}\sum_{\inds=2}^{\thedim-1}\Sbv{\anymatAT}{\inds}{\inds})\Smonoidalproduct\sum_{\inds=1}^\recvar\anymatPBX{\orderind}{\Sexchanged(\inds)}{\thedim}\anymatPATX{\orderind}{\Sexchanged(\indi)}{\inds}\\  &&\textstyle\hspace{3em}{}+\Szetafunction{\thedim}{\recvar}(\Skronecker{\indj}{\thedim}(\sum_{\inds=1}^\thedim\Sbv{\anymatAT}{1}{\inds}\Smonoidalproduct\anymatPBX{\orderind}{\inds}{\thedim}\anymatPATX{\orderind}{\Sexchanged(\indi)}{\thedim}-(-1)^{\orderind}\Saction\Sbv{\anymatB}{\thedim}{1}\Smonoidalproduct\anymatPATX{\orderind}{\Sexchanged(\indi)}{\thedim})\\
&&\textstyle\hspace{6em}{}+\Skronecker{\indj}{1}(-1)^\orderind(\Sbv{\anymatB}{1}{1}+\Skronecker{\orderind}{4}\sum_{\inds=2}^{\thedim-1}\Sbv{\anymatB}{\inds}{\inds})\Smonoidalproduct\anymatPATX{\orderind}{\Sexchanged(\indi)}{\thedim})\big).
  \end{IEEEeqnarray*}
\end{lemma}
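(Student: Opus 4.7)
The approach is to prove the lemma by induction on $\recvar\in\SYnumbers{\thedim}$, following the same template already used in Lem\-ma\-ta~\ref{lemma:second_order_generic_case_helper}, \ref{lemma:third_order_common}, \ref{lemma:third_order_second_case_helper}, \ref{lemma:third_order_last_two_cases_common} and \ref{lemma:third_order_fourth_case_helper}. The essential inputs will be Proposition~\ref{proposition:combinatorial_differential} (to set up the recursion from the definition of $\thedifferential{\orderind}$), Proposition~\ref{proposition:combinatorial_splitting} (to identify each value of $\thereconcatenator{\orderind-2}$), Assumption~\ref{assumption:higher_orders} (to rewrite $\thedifferential{\orderind-1}$ on every peeled term), and, for the tip-identification step, Lem\-ma~\ref{lemma:base_monomial_order} together with Lem\-ma~\ref{characterization_of_reduced_terms_of_order_two} and the addendum to Lem\-ma~\ref{lemma:monomial_order_reformulation}.

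For the base case $\recvar=1$, Proposition~\ref{proposition:combinatorial_differential} gives $\thedeconcatenator{\orderind}(\Sbvfull{\anymatA}{\orderind}{\indj}{\indi})=(\Sbvfull{\anymatA}{\orderind-1}{\indj}{1},\anymatPAX{\orderind}{1}{\Sexchanged(\indi)})$, so the defining recursion of $\thedifferential{\orderind}$ places $\Sbv{\anymatA}{\indj}{1}\Smonoidalproduct\anymatPAX{\orderind}{1}{\Sexchanged(\indi)}$ outside $\thesplitting{\orderind-2}$ and $-\thedifferential{\orderind-1}(\Sbv{\anymatA}{\indj}{1}\Smonoidalproduct\anymatPAX{\orderind}{1}{\Sexchanged(\indi)})$ inside. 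I would then substitute Assumption~\ref{assumption:higher_orders} (with $\indi$ replaced by $1$), multiply the result by $\anymatPAX{\orderind}{1}{\Sexchanged(\indi)}$ from the right, use the parity-exchange relations $\anymatPA{\orderind-1}=\anymatPBT{\orderind}$ and $\anymatPBT{\orderind-1}=\anymatPA{\orderind}$ to rewrite the resulting products, and single out the $\inds=1$ summand of the first grouped sum to form line three of the argument of $\thesplitting{\orderind-2}$ in \eqref{eq:higher_orders_common_helper_0}. The block of Assumption~\ref{assumption:higher_orders} involving the $\Sbv{\anymatB}{\bullet}{\bullet}\Smonoidalproduct\theone$-factor carries an overall coefficient $\Skronecker{1}{\thedim}=0$ after the substitution $\indi\mapsto 1$ (since $2\leq\thedim$) and thus drops out, while $\Szetafunction{\thedim}{1}=0$ makes the $\Szetafunction{\thedim}{\recvar}$-block of \eqref{eq:higher_orders_common_helper_0} correctly absent at $\recvar=1$.

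For the inductive step, assuming \eqref{eq:higher_orders_common_helper_0} at some $\recvar\leq\thedim-1$, the candidate tip of the argument of $\thesplitting{\orderind-2}$ is $\Sbv{\anymatA}{\indj}{1}\Smonoidalproduct\anymatPBTX{\orderind}{1}{\Sexchanged(\recvar+1)}\anymatPAX{\orderind}{\recvar+1}{\Sexchanged(\indi)}$. It is normal because $\thecore$ consists exclusively of $\anymatA\anymatBT$-products by Proposition~\ref{proposition:core} and therefore never contains an $\anymatPBT\anymatPA$-product, and it dominates every other summand: vectors in the fourth through eighth line of \eqref{eq:higher_orders_common_helper_0} are either of strictly smaller total degree or carry an $(\orderind-2)$-chain factor which is $\theorder$-smaller than $\Sbv{\anymatA}{\indj}{1}$ by Lem\-ma~\ref{lemma:base_monomial_order}, while any line-three summand with $\recvar+1<\inds$ is dominated through $\anymatPBTX{\orderind}{1}{\Sexchanged(\inds)}\theorderRstrict\anymatPBTX{\orderind}{1}{\Sexchanged(\recvar+1)}$ by the addendum to Lem\-ma~\ref{lemma:monomial_order_reformulation}. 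Proposition~\ref{proposition:combinatorial_splitting} then yields $\thereconcatenator{\orderind-2}(\Sbv{\anymatA}{\indj}{1},\anymatPBTX{\orderind}{1}{\Sexchanged(\recvar+1)}\anymatPAX{\orderind}{\recvar+1}{\Sexchanged(\indi)})=(\Sbv{\anymatA}{\indj}{\recvar+1},\anymatPAX{\orderind}{\recvar+1}{\Sexchanged(\indi)})$, so adding $\Sbv{\anymatA}{\indj}{\recvar+1}\Smonoidalproduct\anymatPAX{\orderind}{\recvar+1}{\Sexchanged(\indi)}$ outside $\thesplitting{\orderind-2}$ and $-\thedifferential{\orderind-1}(\Sbv{\anymatA}{\indj}{\recvar+1}\Smonoidalproduct\anymatPAX{\orderind}{\recvar+1}{\Sexchanged(\indi)})$ inside preserves the identity. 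Expanding the last expression via Assumption~\ref{assumption:higher_orders} (with $\indi$ replaced by $\recvar+1$) and right-multiplying by $\anymatPAX{\orderind}{\recvar+1}{\Sexchanged(\indi)}$ should modify every line of \eqref{eq:higher_orders_common_helper_0} exactly as the replacement $\recvar\mapsto\recvar+1$ prescribes.

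The principal difficulty will be the bookkeeping of all the Kronecker and $\Szetafunction{\thedim}{\bullet}$-factors. The crucial identities to invoke are $\Szetafunction{\thedim}{\recvar}+\Skronecker{\thedim}{\recvar+1}=\Szetafunction{\thedim}{\recvar+1}$ and $\Skronecker{\orderind-1}{3}=\Skronecker{\orderind}{4}$, together with the parity-exchange relations $\anymatPA{\orderind-1}=\anymatPBT{\orderind}$ and $\anymatPBT{\orderind-1}=\anymatPA{\orderind}$; reindexings of the summation variable via $\Sexchanged$ are needed just as in the earlier sections. The case $\recvar+1=\thedim$ is particularly delicate: then $\Skronecker{\recvar+1}{\thedim}=1$ activates the $\Skronecker{\indj}{\thedim}\Skronecker{\indi}{\thedim}$- and $\Skronecker{\indj}{1}\Skronecker{\indi}{\thedim}$-special terms of Assumption~\ref{assumption:higher_orders} with $\indi$ replaced by $\recvar+1$, producing precisely the $\Sbv{\anymatB}{\thedim}{1}\Smonoidalproduct\anymatPATX{\orderind}{\Sexchanged(\indi)}{\thedim}$- and $\Sbv{\anymatB}{1}{1}\Smonoidalproduct\anymatPATX{\orderind}{\Sexchanged(\indi)}{\thedim}$-style contributions in the $\Szetafunction{\thedim}{\recvar+1}$-block of \eqref{eq:higher_orders_common_helper_0}, and these have to match sign-for-sign after the $(-1)^{\orderind-1}\mapsto(-1)^{\orderind}$ flip caused by moving the relevant term through the $-\thedifferential{\orderind-1}$ on the interior of $\thesplitting{\orderind-2}$.
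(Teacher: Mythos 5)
Your outline reproduces the paper's proof essentially step for step: induction over $m$, the base case $m=1$ obtained from the defining recursion of $\thedifferential{\ell}$ via Proposition~\ref{proposition:combinatorial_differential} together with Assumption~\ref{assumption:higher_orders} (with the $s=1$ term split off and rewritten), and the induction step consisting of a tip identification, an application of Proposition~\ref{proposition:combinatorial_splitting}, and a further application of Assumption~\ref{assumption:higher_orders}, governed by exactly the bookkeeping identities you list, in particular $\Szetafunction{\thedim}{m}+\Skronecker{\thedim}{m+1}=\Szetafunction{\thedim}{m+1}$.

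One justification is wrong as stated, though. In the induction step you declare the monomial $\anymatPBTX{\ell}{1}{\Sexchanged(m+1)}\anymatPAX{\ell}{m+1}{\Sexchanged(i)}$ normal on the grounds that $\thecore$ consists exclusively of products \enquote{entry of a matrix times entry of its skew dagger} and therefore contains no product of the reversed type. That principle is false: $\thematrices$ is closed under the skew dagger, so the matrix with entries $\anymatPBTX{\ell}{j}{i}$ is itself an element of $\thematrices$ whose skew dagger is the matrix with entries $\anymatPAX{\ell}{j}{i}$, and Proposition~\ref{proposition:core} therefore places, for instance, $\anymatPBTX{\ell}{j}{\thedim}\anymatPAX{\ell}{1}{\Sexchanged(i)}$ into $\thecore$ for every $(j,i)$. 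In particular the $s=1$ monomial $\anymatPBTX{\ell}{1}{\thedim}\anymatPAX{\ell}{1}{\Sexchanged(i)}$ of your base case is \emph{not} normal; it is a tip and must be rewritten via $\sum_{s=1}^{\thedim}\anymatPBTX{\ell}{1}{\Sexchanged(s)}\anymatPAX{\ell}{s}{\Sexchanged(i)}=\Skronecker{1}{i}\Saction\theone$, which is precisely what produces the $\sum_{s=2}^{\thedim}$-line and the $\Skronecker{i}{1}$-term in the argument of $\thesplitting{\ell-2}$ in \eqref{eq:higher_orders_common_helper_0}. The correct reason the induction-step monomial is normal is the one the paper uses, namely Lemma~\ref{characterization_of_reduced_terms_of_order_two}: the two index pairs that lemma asks about are $(\Sexchanged(m+1),m+1)$ and $(1,\Sexchanged(i))$, and neither can equal $(\thedim,1)$ because $1\leq m$ forces $\Sexchanged(m+1)\neq\thedim$ and $2\leq\thedim$ forces $1\neq\thedim$. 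With that local repair the rest of your argument goes through exactly as in the paper.
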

\begin{proof}
  \newcommand{\dummyindex}{k}
  The proof goes by induction over $\recvar$. It is economic though to first note  that for any $\dummyindex\in\SYnumbers{\thedim}$ renaming the $\indi$ in Assumption~\ref{assumption:higher_orders} to $\dummyindex$ and multiplying the resulting identity with $-\anymatPAX{\orderind}{\dummyindex}{\Sexchanged(i)}$ from the right yields
\begin{IEEEeqnarray*}{rCl}
  \IEEEeqnarraymulticol{3}{l}{
    -    \thedifferential{\orderind-1}(\Sbv{\anymatA}{\indj}{\dummyindex}\Smonoidalproduct \anymatPAX{\orderind}{\dummyindex}{\Sexchanged(\indi)})
    }\\
\hspace{1em}&= &\textstyle
-\Sbv{\anymatA}{\indj}{1}\Smonoidalproduct\anymatPBTX{\orderind}{1}{\Sexchanged(\dummyindex)}\anymatPAX{\orderind}{\dummyindex}{\Sexchanged(\indi)}-\sum_{\inds=2}^\thedim\Sbv{\anymatA}{\indj}{\inds}\Smonoidalproduct\anymatPBTX{\orderind}{\inds}{\Sexchanged(\dummyindex)}\anymatPAX{\orderind}{\dummyindex}{\Sexchanged(\indi)}
\IEEEyesnumber\label{eq:higher_orders_common_helper_1}\\
&&\textstyle\hspace{15em}{}+(-1)^{\orderind}\Saction\Sbv{\anymatBT}{\Sexchanged(\indj)}{\dummyindex}\Smonoidalproduct\anymatPAX{\orderind}{\dummyindex}{\Sexchanged(\indi)}\IEEEeqnarraynumspace\\
&&\textstyle{}+\Skronecker{\indj}{\thedim}(-(\Sbv{\anymatAT}{1}{1}+\Skronecker{\orderind}{4}\sum_{\inds=2}^{\thedim-1}\Sbv{\anymatAT}{\inds}{\inds})\Smonoidalproduct\anymatPBX{\orderind}{\Sexchanged(\dummyindex)}{\thedim}\anymatPATX{\orderind}{\Sexchanged(\indi)}{\dummyindex}\\
&&\textstyle\hspace{3em}{}+\Skronecker{\dummyindex}{\thedim}(\sum_{\inds=1}^\thedim\Sbv{\anymatAT}{1}{\inds}\Smonoidalproduct\anymatPBX{\orderind}{\inds}{\thedim}\anymatPATX{\orderind}{\Sexchanged(\indi)}{\thedim}-(-1)^{\orderind}\Saction\Sbv{\anymatB}{\thedim}{1}\Smonoidalproduct\anymatPATX{\orderind}{\Sexchanged(\indi)}{\thedim}))\\
&&\textstyle{}+\Skronecker{\indj}{1}\Skronecker{\dummyindex}{\thedim}(-1)^{\orderind}(\Sbv{\anymatB}{1}{1}+\Skronecker{\orderind}{4}\sum_{\inds=2}^{\thedim-1}\Sbv{\anymatB}{\inds}{\inds})\Smonoidalproduct\anymatPATX{\orderind}{\Sexchanged(\indi)}{\thedim},
\end{IEEEeqnarray*}
 where,  also, the $\inds=1$ term has been splitt off from the first grouped sum on the right-hand side.
  \par
  \emph{Induction base.} Since $2\leq \thedim$ and thus $\Szetafunction{\thedim}{1}=0$, in the base case $\recvar=1$ what needs to be proved is that
  \begin{IEEEeqnarray*}{rCl}
    \IEEEeqnarraymulticol{3}{l}{
      \thedifferential{\orderind}(\Sbv{\anymatA}{\indj}{\indi}\Smonoidalproduct \theone)
    }\\    \hspace{1em}&=&\textstyle\Sbv{\anymatA}{\indj}{1}\Smonoidalproduct\anymatPAX{\orderind}{1}{\Sexchanged(\indi)}\IEEEyesnumber\label{eq:higher_orders_common_helper_2}\\  &&\textstyle{}+\thesplitting{\orderind-2}\big(\sum_{\inds=2}^{\thedim}\Sbv{\anymatA}{\indj}{1}\Smonoidalproduct\anymatPBTX{\orderind}{1}{\Sexchanged(\inds)}\anymatPAX{\orderind}{ \inds}{\Sexchanged(\indi)}-\sum_{\inds=2}^{\thedim}\Sbv{\anymatA}{\indj}{\inds}\Smonoidalproduct\anymatPBTX{\orderind}{\inds}{\Sexchanged(1)}\anymatPAX{\orderind}{1}{\Sexchanged(\indi)}\\
  &&\textstyle\hspace{3em}\hfill{}+(-1)^{\orderind}\Saction\Sbv{\anymatBT}{\Sexchanged(\indj)}{1}\Smonoidalproduct\anymatPAX{\orderind}{1}{\Sexchanged(\indi)}-\Skronecker{\indi}{1}\Saction\Sbv{\anymatA}{\indj}{1}\Smonoidalproduct\theone\\  &&\textstyle\hspace{3em}{}-\Skronecker{\indj}{\thedim}(\Sbv{\anymatAT}{1}{1}+\Skronecker{\orderind}{4}\sum_{\inds=2}^{\thedim-1}\Sbv{\anymatAT}{\inds}{\inds})\Smonoidalproduct\anymatPBX{\orderind}{\Sexchanged(1)}{\thedim}\anymatPATX{\orderind}{\Sexchanged(\indi)}{1}\big).
\end{IEEEeqnarray*}
\par 
Given that by Proposition~\ref{proposition:combinatorial_differential}
\begin{IEEEeqnarray*}{rClCrCl}
\textstyle  \thedeconcatenator{\orderind}(\Sbvfull{\anymatA}{\orderind}{\indj}{\indi})&=&(\Sbvfull{\anymatA}{\orderind-1}{\indj}{1}, \anymatPAX{\orderind}{1}{\Sexchanged(\indi)})
\end{IEEEeqnarray*}
the definition of $\thedifferential{\orderind}$ means in this case
  \begin{IEEEeqnarray*}{rCl}
\thedifferential{\orderind}(\Sbv{\anymatA}{\indj}{\indi}\Smonoidalproduct \theone)  &=&\Sbv{\anymatA}{\indj}{1}\Smonoidalproduct \anymatPAX{\orderind}{1}{\Sexchanged(\indi)}+\thesplitting{\orderind-2}(-\thedifferential{\orderind-1}(\Sbv{\anymatA}{\indj}{1}\Smonoidalproduct \anymatPAX{\orderind}{1}{\Sexchanged(\indi)})).
\end{IEEEeqnarray*}
Hence, it suffices to prove that $-\thedifferential{\orderind-1}(\Sbv{\anymatA}{\indj}{1}\Smonoidalproduct \anymatPAX{\orderind}{1}{\Sexchanged(\indi)})$ agrees with the argument of $\thesplitting{\orderind-2}$ in \eqref{eq:higher_orders_common_helper_2}.
By $2\leq \thedim$ the last two lines of \eqref{eq:higher_orders_common_helper_1} do not survive instantiation at $\dummyindex=1$, which thus yields
\begin{IEEEeqnarray*}{rCl}
-\thedifferential{\orderind-1}(\Sbv{\anymatA}{\indj}{1}\Smonoidalproduct \anymatPAX{\orderind}{1}{\Sexchanged(\indi)})&= &\textstyle -\Sbv{\anymatA}{\indj}{1}\Smonoidalproduct\anymatPBTX{\orderind}{1}{\Sexchanged(1)}\anymatPAX{\orderind}{1}{\Sexchanged(\indi)} -\sum_{\inds=2}^\thedim\Sbv{\anymatA}{\indj}{\inds}\Smonoidalproduct\anymatPBTX{\orderind}{\inds}{\Sexchanged(1)}\anymatPAX{\orderind}{1}{\Sexchanged(\indi)}\\
    &&\textstyle\hfill{}+(-1)^\orderind\Saction\Sbv{\anymatBT}{\Sexchanged(\indj)}{1}\Smonoidalproduct\anymatPAX{\orderind}{1}{\Sexchanged(\indi)}\\
&&\textstyle{}-\Skronecker{\indj}{\thedim}(\Sbv{\anymatAT}{1}{1}+\Skronecker{\orderind}{4}\sum_{\inds=2}^{\thedim-1}\Sbv{\anymatAT}{\inds}{\inds})\Smonoidalproduct\anymatPBX{\orderind}{\Sexchanged(1)}{\thedim}\anymatPATX{\orderind}{\Sexchanged(\indi)}{1}.
\end{IEEEeqnarray*}
Since $\anymatPBTX{\orderind}{1}{\Sexchanged(1)}\anymatPAX{\orderind}{1}{\Sexchanged(\indi)}=-\sum_{\inds=2}^{\thedim}\anymatPBTX{\orderind}{1}{\Sexchanged(\inds)}\anymatPAX{\orderind}{\inds}{\Sexchanged(\indi)}+\Skronecker{\indi}{1}\Saction\theone$ it is now visible that $    -\thedifferential{\orderind-1}(\Sbv{\anymatA}{\indj}{1}\Smonoidalproduct \anymatBTX{1}{\Sexchanged(\indi)})$ is precisely the argument of $\thesplitting{\orderind-2}$ in \eqref{eq:higher_orders_common_helper_2}. Hence, the claim holds for $\recvar=1$.
\par
\emph{Induction step.} Now, suppose $\recvar\in\SYnumbers{\thedim}$ is such that $\recvar\leq \thedim-1$ and such that \eqref{eq:higher_orders_common_helper_0} is true. It follows the proof that \eqref{eq:higher_orders_common_helper_0} still holds if $\recvar$ is replaced by $\recvar+1$.
\par
Since $1\leq \recvar$ the monomial $\anymatPBTX{\orderind}{1}{\Sexchanged(\recvar+1)}\anymatPAX{\orderind}{\recvar+1}{\Sexchanged(\indi)}$ is normal by Lemma~\ref{characterization_of_reduced_terms_of_order_two} and the vector $\Sbv{\anymatA}{\indj}{1}\Smonoidalproduct\anymatPBTX{\orderind}{1}{\Sexchanged(\recvar+1)}\anymatPAX{\orderind}{\recvar+1}{\Sexchanged(\indi)}$ is the tip of the argument of $\thesplitting{\orderind-2}$ in  \eqref{eq:higher_orders_common_helper_0} for the reasons given hereafter. It dominates the other terms in the grouped sum where it originates because $\anymatPBTX{\orderind}{1}{\Sexchanged(\inds)}\theorderRstrict\anymatPBTX{\orderind}{1}{\Sexchanged(\recvar+1)}$ for any $\inds\in\SYnumbers{\thedim}$ with $\recvar+1<\inds$  by the addendum to Lem\-ma~\ref{lemma:monomial_order_reformulation}. It is also greater than any of the terms in the grouped sum in the fourth line of \eqref{eq:higher_orders_common_helper_0} because $\Sbv{\anymatA}{\indj}{\indt}\theorderRstrict\Sbv{\anymatA}{\indj}{1}$ for any $\indt\in\SYnumbers{\thedim}$ with $1<\indt$ by the  addendum to Lem\-ma~\ref{lemma:base_monomial_order}. Any term in the fifth line of \eqref{eq:higher_orders_common_helper_0} is less already for degree reasons. Also, any term in the sixth line is dominated because, if it is non-zero, then $\indj=\thedim$ and hence $\Sbv{\anymatAT}{\inds}{\inds}<\Sbv{\anymatA}{\indj}{\thedim}$ for any $\inds\in\SYnumbers{\thedim}$ with $\inds\neq \thedim$ by  Lem\-ma~\ref{lemma:base_monomial_order}. 
\par
Because by Proposition~\ref{proposition:combinatorial_splitting}
\begin{IEEEeqnarray*}{rCl}
  \textstyle\thereconcatenator{\orderind-2}(\Sbvfull{\anymatA}{\orderind-2}{\indj}{1},\anymatPBTX{\orderind}{1}{\Sexchanged(\recvar+1)}\anymatPAX{\orderind}{\recvar+1}{\Sexchanged(\indi)})=(\Sbvfull{\anymatA}{\orderind-1}{\indj}{\recvar+1},\anymatPAX{\orderind}{\recvar+1}{\Sexchanged(\indi)})
\end{IEEEeqnarray*}
the veracity of \eqref{eq:higher_orders_common_helper_0} is not affected by adding on the right-hand side $\Sbv{\anymatA}{\indj}{\recvar+1}\Smonoidalproduct\anymatPAX{\orderind}{\recvar+1}{\Sexchanged(\indi)}$ outside of  $\thesplitting{\orderind-2}$ and $-\thedifferential{\orderind-1}(\Sbv{\anymatA}{\indj}{\recvar+1}\Smonoidalproduct\anymatPAX{\orderind}{\recvar+1}{\Sexchanged(\indi)})$ inside.
\par 
It is clear that this has the same effect on the outside of $\thesplitting{\orderind-2}$ in \eqref{eq:higher_orders_common_helper_0} as replacing $\recvar$ by $\recvar+1$ would have. Simultaneously, specializing the $\dummyindex$ in \eqref{eq:higher_orders_common_helper_1} to $\recvar+1$ and separating the fifth line from the fourth and combining the fifth and sixth lines shows that
\begin{IEEEeqnarray*}{rCl}
  \IEEEeqnarraymulticol{3}{l}{
    -    \thedifferential{\orderind-1}(\Sbv{\anymatA}{\indj}{\recvar+1}\Smonoidalproduct \anymatPAX{\orderind}{\recvar+1}{\Sexchanged(\indi)})
    }\\
\hspace{1em}&= &\textstyle
-\Sbv{\anymatA}{\indj}{1}\Smonoidalproduct\anymatPBTX{\orderind}{1}{\Sexchanged(\recvar+1)}\anymatPAX{\orderind}{\recvar+1}{\Sexchanged(\indi)}-\sum_{\inds=2}^\thedim\Sbv{\anymatA}{\indj}{\inds}\Smonoidalproduct\anymatPBTX{\orderind}{\inds}{\Sexchanged(\recvar+1)}\anymatPAX{\orderind}{\recvar+1}{\Sexchanged(\indi)}\IEEEeqnarraynumspace\IEEEyesnumber\label{eq:higher_orders_common_helper_3}\\
&&\textstyle\hspace{13em}{}
+(-1)^\orderind\Saction\Sbv{\anymatBT}{\Sexchanged(\indj)}{\recvar+1}\Smonoidalproduct\anymatPAX{\orderind}{\recvar+1}{\Sexchanged(\indi)}\\
&&\textstyle{}-\Skronecker{\indj}{\thedim}(\Sbv{\anymatAT}{1}{1}+\Skronecker{\orderind}{4}\sum_{\inds=2}^{\thedim-1}\Sbv{\anymatAT}{\inds}{\inds})\Smonoidalproduct\anymatPBX{\orderind}{\Sexchanged(\recvar+1)}{\thedim}\anymatPATX{\orderind}{\Sexchanged(\indi)}{\recvar+1}\\
&&\textstyle{}+\Skronecker{\recvar+1}{\thedim}(\Skronecker{\indj}{\thedim}(\sum_{\inds=1}^\thedim\Sbv{\anymatAT}{1}{\inds}\Smonoidalproduct\anymatPBX{\orderind}{\inds}{\thedim}\anymatPATX{\orderind}{\Sexchanged(\indi)}{\thedim}-(-1)^\orderind\Saction\Sbv{\anymatB}{\thedim}{1}\Smonoidalproduct\anymatPATX{\orderind}{\Sexchanged(\indi)}{\thedim})\\
&&\textstyle\hspace{4em}{}+\Skronecker{\indj}{1}(-1)^\orderind(\Sbv{\anymatB}{1}{1}+\Skronecker{\orderind}{4}\sum_{\inds=2}^{\thedim-1}\Sbv{\anymatB}{\inds}{\inds})\Smonoidalproduct\anymatPATX{\orderind}{\Sexchanged(\indi)}{\thedim}).
\end{IEEEeqnarray*}
Keeping in mind that $\Szetafunction{\thedim}{\recvar}+\Skronecker{\thedim}{\recvar+1}=\Szetafunction{\thedim}{\recvar+1}$, a comparison  now proves that adding the term  \eqref{eq:higher_orders_common_helper_3} to the argument  of $\thesplitting{\orderind-2}$ in  \eqref{eq:higher_orders_common_helper_0} has the same effect as replacing $\recvar$ by $\recvar+1$ there.
\end{proof}

\begin{lemma}
  \label{lemma:higher_orders_common_final}
  For any $\anymat\in\thematrices$ and any $(\indj,\indi)\in\SYnumbers{\thedim}^{\Ssetmonoidalproduct 2}$,
  \begin{IEEEeqnarray*}{rCl}
    \IEEEeqnarraymulticol{3}{l}{
      \thedifferential{\orderind}(\Sbv{\anymatA}{\indj}{\indi}\Smonoidalproduct \theone)
    }\\    \hspace{1em}&=&\textstyle\sum_{\inds=1}^\thedim\Sbv{\anymatA}{\indj}{\inds}\Smonoidalproduct\anymatPAX{\orderind}{\inds}{\Sexchanged(\indi)}\IEEEyesnumber\label{eq:higher_orders_common_final_0}\\  &&\textstyle{}+\thesplitting{\orderind-2}\big((-1)^\orderind\sum_{\inds=1}^\thedim\Sbv{\anymatBT}{\Sexchanged(\indj)}{\inds}\Smonoidalproduct\anymatPAX{\orderind}{\inds}{\Sexchanged(\indi)}-\Sbv{\anymatA}{\indj}{\indi}\Smonoidalproduct\theone\\
  &&\textstyle\hspace{3em}{}+\Skronecker{\indj}{\thedim}(\sum_{\inds=1}^\thedim\Sbv{\anymatAT}{1}{\inds}\Smonoidalproduct\anymatPBX{\orderind}{\inds}{\thedim}\anymatPATX{\orderind}{\Sexchanged(\indi)}{\thedim}-(-1)^\orderind\Saction\Sbv{\anymatB}{\thedim}{1}\Smonoidalproduct\anymatPATX{\orderind}{\Sexchanged(\indi)}{\thedim}\\  &&\textstyle\hspace{11em}{}-\Skronecker{\indi}{\thedim}(\Sbv{\anymatAT}{1}{1}+\Skronecker{\orderind}{4}\sum_{\inds=2}^{\thedim-1}\Sbv{\anymatAT}{\inds}{\inds})\Smonoidalproduct\theone)\\
&&\textstyle\hspace{3em}{}+\Skronecker{\indj}{1}(-1)^\orderind(\Sbv{\anymatB}{1}{1}+\Skronecker{\orderind}{4}\sum_{\inds=2}^{\thedim-1}\Sbv{\anymatB}{\inds}{\inds})\Smonoidalproduct\anymatPATX{\orderind}{\Sexchanged(\indi)}{\thedim}\big).
  \end{IEEEeqnarray*}
\end{lemma}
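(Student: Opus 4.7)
The plan is to obtain the identity by instantiating \eqref{eq:higher_orders_common_helper_0} of Lemma~\ref{lemma:higher_orders_common_helper} at $\recvar=\thedim$ and then collapsing two inner sums via the defining relations of $\thealgebra$. Setting $\recvar=\thedim$ annihilates the first grouped sum inside $\thesplitting{\orderind-2}$ as an empty sum over $\inds=\recvar+1,\ldots,\thedim$, and turns the factor $\Szetafunction{\thedim}{\recvar}$ guarding the last two lines into $1$, so those two lines of \eqref{eq:higher_orders_common_helper_0} now appear verbatim as the last two lines of the claim \eqref{eq:higher_orders_common_final_0}.

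The key observation for the remaining simplification is that for each $\othermat\in\thematrices$ the polynomial $\sum_{\indt=1}^\thedim\othermatAX{\inds}{\Sexchanged(\indt)}\othermatBTX{\indt}{\Sexchanged(\indi)}-\Skronecker{\inds}{\indi}\theone$ lies in $\therels\subseteq\theideal$. By Lemma~\ref{lemma:any_two_generators} the set $\thematrices$ is closed under the four operations, so each of the decorated matrices $\anymatPA{\orderind},\anymatPAT{\orderind},\anymatPB{\orderind},\anymatPBT{\orderind}$ is itself a member of $\thematrices$. Applying the relation to the matrix $\othermat\in\thematrices$ whose ``$A$''\-/version equals $\anymatPBT{\orderind}$ (and whose ``$BT$''\-/version is then $\anymatPA{\orderind}$) gives, in $\thealgebra$,
\begin{IEEEeqnarray*}{rCl}
\textstyle\sum_{\indt=1}^\thedim\anymatPBTX{\orderind}{\inds}{\Sexchanged(\indt)}\anymatPAX{\orderind}{\indt}{\Sexchanged(\indi)}&=&\Skronecker{\inds}{\indi}\theone,
\end{IEEEeqnarray*}
and applying it to the $\othermat\in\thematrices$ whose ``$BT$''\-/version equals $\anymatPB{\orderind}$ (and whose ``$A$''\-/version is then $\anymatPAT{\orderind}$), after reindexing the summation by $\Sexchanged$, yields
\begin{IEEEeqnarray*}{rCl}
\textstyle\sum_{\inds=1}^\thedim\anymatPBX{\orderind}{\Sexchanged(\inds)}{\thedim}\anymatPATX{\orderind}{\Sexchanged(\indi)}{\inds}&=&\Skronecker{\indi}{\thedim}\theone.
\end{IEEEeqnarray*}

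Substituting the first identity contracts the double grouped sum $-\sum_{\inds=2}^\thedim\sum_{\indt=1}^\thedim\Sbv{\anymatA}{\indj}{\inds}\Smonoidalproduct\anymatPBTX{\orderind}{\inds}{\Sexchanged(\indt)}\anymatPAX{\orderind}{\indt}{\Sexchanged(\indi)}$ in the argument of $\thesplitting{\orderind-2}$ to $-\sum_{\inds=2}^\thedim\Skronecker{\inds}{\indi}\Sbv{\anymatA}{\indj}{\inds}\Smonoidalproduct\theone$, which combined with the separately present $-\Skronecker{\indi}{1}\Sbv{\anymatA}{\indj}{1}\Smonoidalproduct\theone$ telescopes to exactly $-\Sbv{\anymatA}{\indj}{\indi}\Smonoidalproduct\theone$, matching the second summand of the argument of $\thesplitting{\orderind-2}$ in the claim. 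Substituting the second identity into the line $-\Skronecker{\indj}{\thedim}(\Sbv{\anymatAT}{1}{1}+\Skronecker{\orderind}{4}\sum_{\inds=2}^{\thedim-1}\Sbv{\anymatAT}{\inds}{\inds})\Smonoidalproduct\sum_{\inds=1}^\thedim\anymatPBX{\orderind}{\Sexchanged(\inds)}{\thedim}\anymatPATX{\orderind}{\Sexchanged(\indi)}{\inds}$ collapses it to $-\Skronecker{\indj}{\thedim}\Skronecker{\indi}{\thedim}(\Sbv{\anymatAT}{1}{1}+\Skronecker{\orderind}{4}\sum_{\inds=2}^{\thedim-1}\Sbv{\anymatAT}{\inds}{\inds})\Smonoidalproduct\theone$, precisely the nested $\Skronecker{\indi}{\thedim}$\-/term inside the $\Skronecker{\indj}{\thedim}$\-/bracket of the claim. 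The remaining summands --- the outer $\sum_{\inds=1}^\thedim\Sbv{\anymatA}{\indj}{\inds}\Smonoidalproduct\anymatPAX{\orderind}{\inds}{\Sexchanged(\indi)}$ from the first line of \eqref{eq:higher_orders_common_helper_0} and the $(-1)^\orderind$\-/term $\sum_{\inds=1}^\thedim\Sbv{\anymatBT}{\Sexchanged(\indj)}{\inds}\Smonoidalproduct\anymatPAX{\orderind}{\inds}{\Sexchanged(\indi)}$ inside $\thesplitting{\orderind-2}$ --- transfer unchanged.

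The main obstacle is purely notational: verifying that the apparent mismatch between the order of factors in $\anymatPBTX{\orderind}{\inds}{\Sexchanged(\indt)}\anymatPAX{\orderind}{\indt}{\Sexchanged(\indi)}$ and the template $\othermatAX{\indj}{\Sexchanged(\indt)}\othermatBTX{\indt}{\Sexchanged(\indi)}$ of $\therels$ is absorbed by the appropriate choice of $\othermat\in\thematrices$, and, for the second reduction, that the $\Sexchanged$\-/reindexing puts the product into the correct template. Once that identification is nailed down, the entire algebraic content of the proof is just telescoping Kronecker deltas.
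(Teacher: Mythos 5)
Your proposal is correct and follows essentially the same route as the paper: instantiate \eqref{eq:higher_orders_common_helper_0} at $\recvar=\thedim$, then contract the two inner sums via the identities $\sum_{\indt=1}^\thedim\anymatPBTX{\orderind}{\inds}{\Sexchanged(\indt)}\anymatPAX{\orderind}{\indt}{\Sexchanged(\indi)}=\Skronecker{\inds}{\indi}\Saction\theone$ and $\sum_{\inds=1}^\thedim\anymatPBX{\orderind}{\Sexchanged(\inds)}{\thedim}\anymatPATX{\orderind}{\Sexchanged(\indi)}{\inds}=\Skronecker{\indi}{\thedim}\Saction\theone$, telescoping the Kronecker deltas into $-\Sbv{\anymatA}{\indj}{\indi}\Smonoidalproduct\theone$ and the $\Skronecker{\indi}{\thedim}$-term. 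The only difference is that you justify these summation identities explicitly from $\therels$ and Lemma~\ref{lemma:any_two_generators}, whereas the paper takes them as given.
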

\begin{proof}
  The special case $\recvar=\thedim$ of \eqref{eq:higher_orders_common_helper_0} in Lem\-ma~\ref{lemma:higher_orders_common_helper} amounts to the identity
  \begin{IEEEeqnarray*}{rCl}
    \IEEEeqnarraymulticol{3}{l}{
      \thedifferential{\orderind}(\Sbv{\anymatA}{\indj}{\indi}\Smonoidalproduct \theone)
  }\\
  \hspace{.5em}  &=&\textstyle\sum_{\inds=1}^\thedim\Sbv{\anymatA}{\indj}{\inds}\Smonoidalproduct\anymatPAX{\orderind}{\inds}{\Sexchanged(\indi)}\IEEEyesnumber\label{eq:higher_orders_common_final_1}\\  &&\textstyle{}+\thesplitting{\orderind-2}\big({-}\sum_{\inds=2}^{\thedim}\Sbv{\anymatA}{\indj}{\inds}\Smonoidalproduct\sum_{\indt=1}^\thedim\anymatPBTX{\orderind}{\inds}{\Sexchanged(\indt)}\anymatPAX{\orderind}{\indt}{\Sexchanged(\indi)}\\
  &&\textstyle\hspace{8em}{}+(-1)^\orderind\sum_{\inds=1}^\thedim\Sbv{\anymatBT}{\Sexchanged(\indj)}{\inds}\Smonoidalproduct\anymatPAX{\orderind}{\inds}{\Sexchanged(\indi)}-\Skronecker{\indi}{1}\Saction\Sbv{\anymatA}{\indj}{1}\Smonoidalproduct\theone\\
  &&\textstyle\hspace{3em}{}+\Skronecker{\indj}{\thedim}(-(\Sbv{\anymatAT}{1}{1}+\Skronecker{\orderind}{4}\sum_{\inds=2}^{\thedim-1}\Sbv{\anymatAT}{\inds}{\inds})\Smonoidalproduct\sum_{\inds=1}^\thedim\anymatPBX{\orderind}{\Sexchanged(\inds)}{\thedim}\anymatPATX{\orderind}{\Sexchanged(\indi)}{\inds}\\  &&\textstyle\hspace{5.875em}{}+\sum_{\inds=1}^\thedim\Sbv{\anymatAT}{1}{\inds}\Smonoidalproduct\anymatPBX{\orderind}{\inds}{\thedim}\anymatPATX{\orderind}{\Sexchanged(\indi)}{\thedim}-(-1)^\orderind\Saction\Sbv{\anymatB}{\thedim}{1}\Smonoidalproduct\anymatPATX{\orderind}{\Sexchanged(\indi)}{\thedim})\\
&&\textstyle\hspace{3em}{}+\Skronecker{\indj}{1}(-1)^\orderind(\Sbv{\anymatB}{1}{1}+\Skronecker{\orderind}{4}\sum_{\inds=2}^{\thedim-1}\Sbv{\anymatB}{\inds}{\inds})\Smonoidalproduct\anymatPATX{\orderind}{\Sexchanged(\indi)}{\thedim}\big).
\end{IEEEeqnarray*}
\par 
From the fact that $\sum_{\indt=1}^\thedim\anymatPBTX{\orderind}{\inds}{\Sexchanged(\indt)}\anymatPAX{\orderind}{\indt}{\Sexchanged(\indi)}=\Skronecker{\inds}{\indi}\Saction\theone$ for any $\inds\in\SYnumbers{\thedim}$ with $2\leq \inds$ it follows that
\begin{IEEEeqnarray*}{rCl}
  \IEEEeqnarraymulticol{3}{l}{
    \textstyle -\sum_{\inds=2}^{\thedim}\Sbv{\anymatA}{\indj}{\inds}\Smonoidalproduct\sum_{\indt=1}^\thedim\anymatPBTX{\orderind}{\inds}{\Sexchanged(\indt)}\anymatPAX{\orderind}{\indt}{\Sexchanged(\indi)}-\Skronecker{\indi}{1}\Saction\Sbv{\anymatA}{\indj}{1}\Smonoidalproduct\theone
  }\\  \hspace{.em}&=&\textstyle-\sum_{\inds=2}^{\thedim}\Skronecker{\inds}{\indi}\Saction \Sbv{\anymatA}{\indj}{\inds}\Smonoidalproduct\theone-\Skronecker{1}{\indi}\Saction\Sbv{\anymatA}{\indj}{1}\Smonoidalproduct\theone=-\sum_{\inds=1}^{\thedim}\Skronecker{\inds}{\indi}\Saction\Sbv{\anymatA}{\indj}{\inds}\Smonoidalproduct\theone=-\Sbv{\anymatA}{\indj}{\indi}\Smonoidalproduct\theone.
\end{IEEEeqnarray*}
That proves that the third and fourth lines of \eqref{eq:higher_orders_common_final_1} actually agree with the third line of \eqref{eq:higher_orders_common_final_0}.
\par
Similarly, because  $\sum_{\inds=1}^\thedim\anymatPBX{\orderind}{\Sexchanged(\inds)}{\thedim}\anymatPATX{\orderind}{\Sexchanged(\indi)}{\inds}=\Skronecker{\indi}{\thedim}\Saction\theone$ the fifth line of \eqref{eq:higher_orders_common_final_1} is equal to the fifth line of \eqref{eq:higher_orders_common_final_0}. And that means the identities \eqref{eq:higher_orders_common_final_1} and \eqref{eq:higher_orders_common_final_0} are equivalent. 
\end{proof}
At this point it is most efficient to distinguish four cases and treat them individually.
\subsubsection{First case} The first case is  handled in only one further step.

\begin{proposition}
  \label{proposition:higher_orders_first_case}
  For any $\anymat\in\thematrices$ and any $(\indj,\indi)\in\SYnumbers{\thedim}^{\Ssetmonoidalproduct 2}$, if $\indj\neq \thedim\neq \indi$, then 
  \begin{IEEEeqnarray*}{rCl}
      \thedifferential{\orderind}(\Sbv{\anymatA}{\indj}{\indi}\Smonoidalproduct \theone)&=&   \textstyle \sum_{\inds=1}^\thedim\Sbv{\anymatA}{\indj}{\inds}\Smonoidalproduct\anymatPAX{\orderind}{\inds}{\Sexchanged(\indi)}+(-1)^\orderind\Saction\Sbv{\anymatBT}{\Sexchanged(\indj)}{\indi}\Smonoidalproduct\theone\IEEEyesnumber\label{eq:higher_orders_first_case_0}.
  \end{IEEEeqnarray*}
\end{proposition}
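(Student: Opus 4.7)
The plan is to specialize Lemma~\ref{lemma:higher_orders_common_final} to the hypothesis $\indj\neq\thedim\neq\indi$ and then collapse the remaining splitting via a single step of its defining recursion. Under the hypothesis, the $\Skronecker{\indj}{\thedim}$-block inside the argument of $\thesplitting{\orderind-2}$ vanishes, so that argument reduces to the vector $A$ consisting of $(-1)^\orderind\sum_{\inds=1}^{\thedim}\Sbv{\anymatBT}{\Sexchanged(\indj)}{\inds}\Smonoidalproduct\anymatPAX{\orderind}{\inds}{\Sexchanged(\indi)}$, the low-degree term $-\Sbv{\anymatA}{\indj}{\indi}\Smonoidalproduct\theone$, and, only when $\indj=1$, the extra contribution $(-1)^\orderind(\Sbv{\anymatB}{1}{1}+\Skronecker{\orderind}{4}\sum_{\inds=2}^{\thedim-1}\Sbv{\anymatB}{\inds}{\inds})\Smonoidalproduct\anymatPATX{\orderind}{\Sexchanged(\indi)}{\thedim}$. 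The target is to show that $A$ coincides with $(-1)^\orderind\thedifferential{\orderind-1}(\Sbv{\anymatBT}{\Sexchanged(\indj)}{\indi}\Smonoidalproduct\theone)$, so that one add-and-subtract step of the splitting's recursion reduces the inside of $\thesplitting{\orderind-2}$ to zero while depositing $(-1)^\orderind\Sbv{\anymatBT}{\Sexchanged(\indj)}{\indi}\Smonoidalproduct\theone$ outside, which is precisely \eqref{eq:higher_orders_first_case_0}.

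I would first identify the tip of $A$ as $(-1)^\orderind\Sbv{\anymatBT}{\Sexchanged(\indj)}{1}\Smonoidalproduct\anymatPAX{\orderind}{1}{\Sexchanged(\indi)}$: the middle summand has strictly smaller degree, the addendum to Lemma~\ref{lemma:base_monomial_order} isolates $\inds=1$ within the grouped sum, and the extra $\Sbv{\anymatB}{\inds}{\inds}$-summands that appear when $\indj=1$ (so $\Sexchanged(\indj)=\thedim$) are smaller because a first-letter comparison via Lemma~\ref{lemma:monomial_order_reformulation}, using that $\anymatB=(\anymatBT)^\Stranspose$ places the pair into the third case of the lemma, yields $\anymatB_{\inds,\thedim}\theorderRstrict\anymatBT_{\thedim,\thedim}$ for all $\inds\in\SYnumbers{\thedim-1}$. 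Next, Proposition~\ref{proposition:combinatorial_splitting} applied with $\anymatA\mapsto\anymatBT$, together with $(\anymatBT)^\Sskewdagger=\anymatA$ and a two-parity check that the proposition's extension type $\anymatPBTX{\orderind}{1}{\Sexchanged(\indi)}$ for chain matrix $\anymatBT$ coincides with $\anymatPAX{\orderind}{1}{\Sexchanged(\indi)}$, yields the reconcatenation $(\Sbvfull{\anymatBT}{\orderind-1}{\Sexchanged(\indj)}{\indi},\theone)$. The defining recursion of $\thesplitting{\orderind-2}$ then licenses the addition of $(-1)^\orderind\Sbv{\anymatBT}{\Sexchanged(\indj)}{\indi}\Smonoidalproduct\theone$ outside $\thesplitting{\orderind-2}$ and of $-(-1)^\orderind\thedifferential{\orderind-1}(\Sbv{\anymatBT}{\Sexchanged(\indj)}{\indi}\Smonoidalproduct\theone)$ inside without spoiling the equality.

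To close the argument I would evaluate the inserted differential by Assumption~\ref{assumption:higher_orders} with $(\anymatA,\indj,\indi)\mapsto(\anymatBT,\Sexchanged(\indj),\indi)$, making use of the matrix identifications $(\anymatBT)^\Sskewdagger=\anymatA$ and $(\anymatBT)^\Stranspose=\anymatB$ and the parity-coherence identities which transport the $\Sskewdagger$- and $\Stranspose$-twisted matrices through the sharp operation of order $\orderind-1$ into the unsharped $\anymat$- and $\anymatAT$-based expressions of order $\orderind$. Together with the Kronecker equalities $\Skronecker{\Sexchanged(\indj)}{\thedim}=\Skronecker{\indj}{1}$, $\Skronecker{\Sexchanged(\indj)}{1}=\Skronecker{\indj}{\thedim}=0$ and $\Skronecker{\indi}{\thedim}=0$ (from the hypothesis), the sign $(-1)^{\orderind-1}=-(-1)^\orderind$, and the delta-index shift $\Skronecker{\orderind-1}{3}=\Skronecker{\orderind}{4}$, a term-by-term comparison yields $(-1)^\orderind\thedifferential{\orderind-1}(\Sbv{\anymatBT}{\Sexchanged(\indj)}{\indi}\Smonoidalproduct\theone)=A$ exactly. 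The inside of $\thesplitting{\orderind-2}$ becomes zero, $\thesplitting{\orderind-2}(0)=0$, and \eqref{eq:higher_orders_first_case_0} follows. The main obstacle lies in this final term-by-term verification: aligning the $\indj=1$ extras across the $\anymatA\leftrightarrow\anymatBT$ exchange and correctly reconciling the shift $\Skronecker{\orderind-1}{3}=\Skronecker{\orderind}{4}$ demand careful tracking of how the involutive symmetries $\Stranspose,\Sskewstar,\Sskewdagger$ and their composites interact with the sharp operation.
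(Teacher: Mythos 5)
Your proposal is correct and follows essentially the same route as the paper's proof: specialize Lemma~\ref{lemma:higher_orders_common_final} under $\indj\neq\thedim\neq\indi$, identify $\Sbv{\anymatBT}{\Sexchanged(\indj)}{1}\Smonoidalproduct\anymatPAX{\orderind}{1}{\Sexchanged(\indi)}$ as the tip, apply Proposition~\ref{proposition:combinatorial_splitting} for the chain $\Sbvfull{\anymatBT}{\orderind-2}{\Sexchanged(\indj)}{1}$, and cancel the entire splitting argument in a single recursion step via Assumption~\ref{assumption:higher_orders} evaluated at $(\anymatBT,\Sexchanged(\indj),\indi)$. The only (immaterial) difference is that for dominating the $\Skronecker{\indj}{1}$-extras you argue by a first-letter comparison through Lemma~\ref{lemma:monomial_order_reformulation}, where the paper cites Lemma~\ref{lemma:base_monomial_order} directly; both are valid.
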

\begin{proof}
  Under the assumptions that both $\indj\neq \thedim$ and $\indi\neq \thedim$ the fourth and fifth lines of \eqref{eq:higher_orders_common_final_0} in Lem\-ma~\ref{lemma:higher_orders_common_final} are zero, which hence specializes to the statement that
  \begin{IEEEeqnarray*}{rCl}
  \thedifferential{\orderind}(\Sbv{\anymatA}{\indj}{\indi}\Smonoidalproduct \theone)&=&\textstyle\sum_{\inds=1}^\thedim\Sbv{\anymatA}{\indj}{\inds}\Smonoidalproduct\anymatPAX{\orderind}{\inds}{\Sexchanged(\indi)}\IEEEyesnumber\label{eq:higher_orders_first_case_1}\\  &&\textstyle{}+\thesplitting{\orderind-2}\big((-1)^\orderind\sum_{\inds=1}^\thedim\Sbv{\anymatBT}{\Sexchanged(\indj)}{\inds}\Smonoidalproduct\anymatPAX{\orderind}{\inds}{\Sexchanged(\indi)}-\Sbv{\anymatA}{\indj}{\indi}\Smonoidalproduct\theone\\  
&&\textstyle\hspace{3em}{}+\Skronecker{\indj}{1}(-1)^\orderind(\Sbv{\anymatB}{1}{1}+\Skronecker{\orderind}{4}\sum_{\inds=2}^{\thedim-1}\Sbv{\anymatB}{\inds}{\inds})\Smonoidalproduct\anymatPATX{\orderind}{\Sexchanged(\indi)}{\thedim}).
\end{IEEEeqnarray*}
\par
The vector $\Sbv{\anymatBT}{\Sexchanged(\indj)}{1}\Smonoidalproduct\anymatPAX{\orderind}{1}{\Sexchanged(\indi)}$ is the tip of the argument  of $\thesplitting{\orderind-2}$ in \eqref{eq:higher_orders_first_case_1} for the following reasons. The term $\Sbv{\anymatA}{\indj}{\indi}\Smonoidalproduct\theone$ is less already by degree. All the other summands in that line of \eqref{eq:higher_orders_first_case_1} are dominated because $\Sbv{\anymatBT}{\Sexchanged(\indj)}{\inds}<\Sbv{\anymatBT}{\Sexchanged(\indj)}{1}$ for any $\inds\in\SYnumbers{\thedim}$ with $1<\inds$ by the addendum to Lem\-ma~\ref{lemma:base_monomial_order}. Finally, the entire third line of \eqref{eq:higher_orders_first_case_1} is less because, if it is non-zero, then $\indj=1$, i.e., $\Sexchanged(\indj)=\thedim$ and hence $\Sbv{\anymatB}{\inds}{\inds}<\Sbv{\anymatBT}{\Sexchanged(\indj)}{1}$ for any $\inds\in\SYnumbers{\thedim}$ with $\inds<\thedim$ by Lem\-ma~\ref{lemma:base_monomial_order}.
\par
Because moreover by Proposition~\ref{proposition:combinatorial_splitting}
\begin{IEEEeqnarray*}{rCl}
  \textstyle\thereconcatenator{\orderind-2}(\Sbvfull{\anymatBT}{\orderind-2}{\Sexchanged(\indj)}{1},\anymatPAX{\orderind}{1}{\Sexchanged(\indi)})=(\Sbvfull{\anymatBT}{\orderind-1}{\Sexchanged(\indj)}{\indi},\theone)
\end{IEEEeqnarray*}
another valid identity is produced by adding to the right-hand side of \eqref{eq:higher_orders_first_case_1} the term $(-1)^\orderind\Saction\Sbv{\anymatBT}{\Sexchanged(\indj)}{\indi}\Smonoidalproduct\theone$ outside of $\thesplitting{\orderind-2}$ and $-(-1)^\orderind\thedifferential{\orderind-1}(\Sbvfull{\anymatBT}{\orderind-1}{\Sexchanged(\indj)}{\indi}\Smonoidalproduct\theone)$ inside.
\par
That addtion transforms the outside of $\thesplitting{\orderind-1}$ in \eqref{eq:higher_orders_first_case_1} into that in \eqref{eq:higher_orders_first_case_0}. And since by $\Sexchanged(\indj)\neq 1$ and $ \indi\neq \thedim$ Assumption~\ref{assumption:higher_orders} gives
\begin{IEEEeqnarray*}{rCl}  -(-1)^\orderind\thedifferential{\orderind-1}(\Sbvfull{\anymatBT}{\orderind-1}{\Sexchanged(\indj)}{\indi}\Smonoidalproduct\theone)&=&\textstyle-(-1)^\orderind\sum_{\inds=1}^\thedim\Sbv{\anymatBT}{\Sexchanged(\indj)}{\inds}\Smonoidalproduct\anymatPAX{\orderind}{\inds}{\Sexchanged(\indi)}+\Sbv{\anymatA}{\indj}{\indi}\Smonoidalproduct\theone\\  
&&\textstyle{}-\Skronecker{\indj}{1}(-1)^\orderind(\Sbv{\anymatB}{1}{1}+\Skronecker{\orderind}{4}\sum_{\inds=2}^{\thedim-1}\Sbv{\anymatB}{\inds}{\inds})\Smonoidalproduct\anymatPATX{\orderind}{\Sexchanged(\indi)}{\thedim}
\end{IEEEeqnarray*}
the same is true of the inside of $\thesplitting{\orderind-2}$, which proves the claim. 
\end{proof}
\subsubsection{Second case}
For the second case, $\thedifferential{\orderind}(\Sbv{\anymatA}{\indj}{\indi}\Smonoidalproduct\theone)$ for $\indj=\thedim$ and $\indi\in\SYnumbers{\thedim}$ with $\indi\neq \thedim$, it takes two more recursion steps to eliminate $\thesplitting{\orderind-2}$ entirely.
\begin{lemma}
  \label{lemma:higher_orders_second_case_helper}
  For any $\anymat\in\thematrices$ and any $\indi\in\SYnumbers{\thedim}$, if $\indi\neq \thedim$, then
  \begin{IEEEeqnarray*}{rCl}
    \thedifferential{\orderind}(\Sbv{\anymatA}{\thedim}{\indi}\Smonoidalproduct\theone) &=&\textstyle\sum_{\inds=1}^\thedim\Sbv{\anymatA}{\thedim}{\inds}\Smonoidalproduct\anymatPAX{\orderind}{\inds}{\Sexchanged(\indi)}+\Sbv{\anymatAT}{1}{1}\Smonoidalproduct\anymatPATX{\orderind}{\Sexchanged(\indi)}{\thedim}\IEEEyesnumber\label{eq:higher_orders_second_case_helper_0}\\
    &&\textstyle{}+\thesplitting{\orderind-2}\big((-1)^\orderind\sum_{\inds=1}^\thedim \Sbv{\anymatBT}{1}{\inds}\Smonoidalproduct\anymatPAX{\orderind}{\inds}{\Sexchanged(\indi)}-\Sbv{\anymatA}{\thedim}{\indi}\Smonoidalproduct\theone\big).
  \end{IEEEeqnarray*}
\end{lemma}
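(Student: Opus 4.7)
The proof will follow the same three-step pattern used in the preceding lemmata of this section. First I would specialize Lemma~\ref{lemma:higher_orders_common_final} to $\indj=\thedim$ and $\indi\neq\thedim$. Because $\thedim\geq 2$, both $\Skronecker{\indj}{1}=0$ and $\Skronecker{\indi}{\thedim}=0$, so the last line of \eqref{eq:higher_orders_common_final_0} disappears entirely together with the summand inside the splitting that carries $\Sbv{\anymatAT}{1}{1}+\Skronecker{\orderind}{4}\sum_{\inds=2}^{\thedim-1}\Sbv{\anymatAT}{\inds}{\inds}$. Comparing what is left to \eqref{eq:higher_orders_second_case_helper_0}, the discrepancy is exactly the term $\Sbv{\anymatAT}{1}{1}\Smonoidalproduct\anymatPATX{\orderind}{\Sexchanged(\indi)}{\thedim}$ missing outside the splitting and the two excess summands $\sum_{\inds=1}^\thedim\Sbv{\anymatAT}{1}{\inds}\Smonoidalproduct\anymatPBX{\orderind}{\inds}{\thedim}\anymatPATX{\orderind}{\Sexchanged(\indi)}{\thedim}$ and $-(-1)^\orderind\Sbv{\anymatB}{\thedim}{1}\Smonoidalproduct\anymatPATX{\orderind}{\Sexchanged(\indi)}{\thedim}$ present inside it. A single further recursion step should simultaneously cancel all three.

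Next I would identify $\Sbv{\anymatAT}{1}{1}\Smonoidalproduct\anymatPBX{\orderind}{1}{\thedim}\anymatPATX{\orderind}{\Sexchanged(\indi)}{\thedim}$ as the tip of the splitting argument. Its underlying monomial has degree $\orderind+1$ while every competing summand has degree at most $\orderind$, so maximality only needs to be checked within the grouped sum over $\Sbv{\anymatAT}{1}{\inds}\Smonoidalproduct\anymatPBX{\orderind}{\inds}{\thedim}\anymatPATX{\orderind}{\Sexchanged(\indi)}{\thedim}$ for $\inds\in\SYnumbers{\thedim}$; there the addendum to Lemma~\ref{lemma:base_monomial_order} yields $\Sbv{\anymatAT}{1}{\inds}\theorderRstrict\Sbv{\anymatAT}{1}{1}$ whenever $\inds>1$. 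Normality of the trailing degree-$2$ monomial follows from Lemma~\ref{characterization_of_reduced_terms_of_order_two} applied to the matrix $\anymatPB{\orderind}$ in place of $\anymat$, using the Klein-four-group identity $(\anymatPB{\orderind})^{\Sskewdagger}=\anymatPAT{\orderind}$ (an instance of Lemma~\ref{lemma:any_two_generators}) together with $\Sexchanged(\indi)\neq 1$ by $\indi\neq\thedim$ and $(1,\thedim)\neq(\thedim,1)$ by $\thedim\geq 2$. Proposition~\ref{proposition:combinatorial_splitting} then delivers $\thereconcatenator{\orderind-2}(\Sbvfull{\anymatAT}{\orderind-2}{1}{1},\anymatPBX{\orderind}{1}{\thedim}\anymatPATX{\orderind}{\Sexchanged(\indi)}{\thedim})=(\Sbvfull{\anymatAT}{\orderind-1}{1}{1},\anymatPATX{\orderind}{\Sexchanged(\indi)}{\thedim})$---after noting that for $\othermat=\anymatAT$ one has $\othermatPBT{\orderind}=\anymatPB{\orderind}$ thanks to the same Klein-four-group structure---so that adding $\Sbv{\anymatAT}{1}{1}\Smonoidalproduct\anymatPATX{\orderind}{\Sexchanged(\indi)}{\thedim}$ outside the splitting and subtracting $\thedifferential{\orderind-1}(\Sbv{\anymatAT}{1}{1}\Smonoidalproduct\anymatPATX{\orderind}{\Sexchanged(\indi)}{\thedim})$ inside preserves the equality.

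Finally, I would evaluate that differential via Assumption~\ref{assumption:higher_orders} at $\anymat\to\anymatAT$ and $(\indj,\indi)=(1,1)$. The $\Skronecker{\indj}{\thedim}$- and $\Skronecker{\indj}{1}\Skronecker{\indi}{\thedim}$-summands both vanish since $\thedim\geq 2$, leaving $\sum_{\inds=1}^\thedim\Sbv{\anymatAT}{1}{\inds}\Smonoidalproduct\anymatPBX{\orderind}{\inds}{\thedim}+(-1)^{\orderind-1}\Sbv{\anymatB}{\thedim}{1}\Smonoidalproduct\theone$; right-multiplying by $\anymatPATX{\orderind}{\Sexchanged(\indi)}{\thedim}$ and negating produces exactly the opposites of the two excess summands identified in the first paragraph, so the cancellation is clean and the target identity \eqref{eq:higher_orders_second_case_helper_0} emerges. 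The main obstacle I anticipate is the notational bookkeeping when translating the $\anymatAT$-flavored decorations $\othermatPA{\orderind-1}$ and $\othermatPBT{\orderind-1}$ appearing on the right-hand side of Assumption~\ref{assumption:higher_orders} into the $\anymat$-flavored $\anymatPB{\orderind}$ and $\anymatPAT{\orderind}$ of the target; these identifications are forced by the Klein-four-group action on $\thematrices$ combined with the parity shift $\orderind-1\leftrightarrow\orderind$, but they have to be tracked carefully at every step.
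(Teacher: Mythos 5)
Your proof is correct and takes essentially the same route as the paper: specialize Lemma~\ref{lemma:higher_orders_common_final} at $j=\thedim$ (with $i\neq\thedim$ killing the extra Kronecker terms), identify $\Sbv{\anymatAT}{1}{1}\Smonoidalproduct\anymatPBX{\ell}{1}{\thedim}\anymatPATX{\ell}{\Sexchanged(i)}{\thedim}$ as the tip via Lemma~\ref{characterization_of_reduced_terms_of_order_two} and the addendum to Lemma~\ref{lemma:base_monomial_order}, apply Proposition~\ref{proposition:combinatorial_splitting}, and cancel the interior with Assumption~\ref{assumption:higher_orders} evaluated at $(\anymatAT,1,1)$ and multiplied on the right by $\anymatPATX{\ell}{\Sexchanged(i)}{\thedim}$, exactly as in the paper. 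Two harmless slips: $\Skronecker{i}{\thedim}=0$ in the first reduction comes from the hypothesis $i\neq\thedim$, not from $2\leq\thedim$, and your degree bookkeeping is off by one (the tip has degree $\ell$ and the summands outside its grouped sum degree at most $\ell-1$, rather than $\ell+1$ versus $\ell$), neither of which affects the comparison.
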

\begin{proof}
Under the assumption $\indi\neq \thedim$  instantiating Lem\-ma~\ref{lemma:higher_orders_common_final} at $\indj=\thedim$ yields
  \begin{IEEEeqnarray*}{rCl}
  \thedifferential{\orderind}(\Sbv{\anymatA}{\thedim}{\indi}\Smonoidalproduct \theone)&=&\textstyle\sum_{\inds=1}^\thedim\Sbv{\anymatA}{\thedim}{\inds}\Smonoidalproduct\anymatPAX{\orderind}{\inds}{\Sexchanged(\indi)}\IEEEyesnumber\label{eq:higher_orders_second_case_helper_1}\\  &&\textstyle{}+\thesplitting{\orderind-2}\big(\sum_{\inds=1}^\thedim\Sbv{\anymatAT}{1}{\inds}\Smonoidalproduct\anymatPBX{\orderind}{\inds}{\thedim}\anymatPATX{\orderind}{\Sexchanged(\indi)}{\thedim}-(-1)^\orderind\Saction\Sbv{\anymatB}{\thedim}{1}\Smonoidalproduct\anymatPATX{\orderind}{\Sexchanged(\indi)}{\thedim}\\  &&\textstyle\hspace{3em}{}+(-1)^\orderind\sum_{\inds=1}^\thedim\Sbv{\anymatBT}{1}{\inds}\Smonoidalproduct\anymatPAX{\orderind}{\inds}{\Sexchanged(\indi)}-\Sbv{\anymatA}{\thedim}{\indi}\Smonoidalproduct\theone\big).
\end{IEEEeqnarray*}
\par
Since $\indi\neq \thedim$ the monomial $\anymatPBX{\orderind}{1}{\thedim}\anymatPATX{\orderind}{\Sexchanged(\indi)}{\thedim}$ is reduced by Lemma~\ref{characterization_of_reduced_terms_of_order_two}. The vector  $\Sbv{\anymatAT}{1}{1}\Smonoidalproduct\anymatPBX{\orderind}{1}{\thedim}\anymatPATX{\orderind}{\Sexchanged(\indi)}{\thedim}$ is the tip of the argument of $\thesplitting{\orderind-2}$ on the right-hand side of \eqref{eq:higher_orders_second_case_helper_1}. Indeed, for degree reasons only the terms in the same grouped sum where $\Sbv{\anymatAT}{1}{1}\Smonoidalproduct\anymatPBX{\orderind}{1}{\thedim}\anymatPAX{\orderind}{\Sexchanged(\indi)}{\thedim}$ comes from could possibly be greater or equal. However, for any $\inds\in\SYnumbers{\thedim}$ with $1<\inds$ the addendum to Lem\-ma~\ref{lemma:base_monomial_order} ensures that already  $\Sbv{\anymatAT}{1}{\inds}\theorderRstrict\Sbv{\anymatAT}{1}{1}$ and hence $\Sbv{\anymatAT}{1}{\inds}\Smonoidalproduct\anymatPBX{\orderind}{\inds}{\thedim}\anymatPATX{\orderind}{\Sexchanged(\indi)}{\thedim}\thechainsorderRstrict{\orderind-2}\Sbv{\anymatAT}{1}{1}\Smonoidalproduct\anymatPBX{\orderind}{1}{\thedim}\anymatPATX{\orderind}{\Sexchanged(\indi)}{\thedim}$.
\par
Given that by Proposition~\ref{proposition:combinatorial_splitting}
\begin{IEEEeqnarray*}{rCl}
\textstyle\thereconcatenator{\orderind-2}(    \Sbvfull{\anymatAT}{\orderind-2}{1}{1},\anymatPBX{\orderind}{1}{\thedim}\anymatPATX{\orderind}{\Sexchanged(\indi)}{\thedim})=(  \Sbvfull{\anymatAT}{\orderind-1}{1}{1},\anymatPATX{\orderind}{\Sexchanged(\indi)}{\thedim}),
\end{IEEEeqnarray*}
adding $\Sbv{\anymatAT}{1}{1}\Smonoidalproduct\anymatPATX{\orderind}{\Sexchanged(\indi)}{\thedim}$ outside of $\thesplitting{\orderind-2}$ and $-\thedifferential{\orderind-1}(\Sbv{\anymatAT}{1}{1}\Smonoidalproduct\anymatPATX{\orderind}{\Sexchanged(\indi)}{\thedim})$ inside turns \eqref{eq:higher_orders_second_case_helper_1}  into another true identity.
\par
This change brings the outside of $\thesplitting{\orderind-2}$ in \eqref{eq:higher_orders_second_case_helper_1} exactly into the form of the outside in \eqref{eq:higher_orders_second_case_helper_0}. Moreover, by Assumption~\ref{assumption:higher_orders}
\begin{IEEEeqnarray*}{rCl}
    \thedifferential{\orderind-1}(\Sbv{\anymatAT}{1}{1}\Smonoidalproduct \theone)
&= &\textstyle  \sum_{\inds=1}^\thedim\Sbv{\anymatAT}{1}{\inds}\Smonoidalproduct\anymatPBX{\orderind}{\inds}{\thedim}-(-1)^\orderind\Saction\Sbv{\anymatB}{\thedim}{1}\Smonoidalproduct\theone,
\end{IEEEeqnarray*}
which is why
\begin{IEEEeqnarray*}{rCl}
  \IEEEeqnarraymulticol{3}{l}{
    -\thedifferential{\orderind-1}(\Sbv{\anymatAT}{1}{1}\Smonoidalproduct \anymatPATX{\orderind}{\Sexchanged(\indi)}{\thedim})
    }\\\hspace{3em}
&= &\textstyle  -\sum_{\inds=1}^\thedim\Sbv{\anymatAT}{1}{\inds}\Smonoidalproduct\anymatPBX{\orderind}{\inds}{\thedim}\anymatPATX{\orderind}{\Sexchanged(\indi)}{\thedim}+(-1)^\orderind\Sbv{\anymatB}{\thedim}{1}\Smonoidalproduct\anymatPATX{\orderind}{\Sexchanged(\indi)}{\thedim}.\IEEEeqnarraynumspace\IEEEyesnumber\label{eq:higher_orders_second_case_helper_2}
\end{IEEEeqnarray*}
Evidently adding the vector  \eqref{eq:higher_orders_second_case_helper_2} to the argument of $\thesplitting{\orderind-2}$ in \eqref{eq:higher_orders_second_case_helper_1} produces precisely the argument of $\thesplitting{\orderind-2}$ in \eqref{eq:higher_orders_second_case_helper_0}. Thus, the claim is true. 
\end{proof}
\begin{proposition}
  \label{proposition:higher_orders_second_case}
  For any $\anymat\in\thematrices$ and any $\indi\in\SYnumbers{\thedim}$, if $\indi\neq \thedim$, then
  \begin{IEEEeqnarray*}{rCl}
    \thedifferential{\orderind}(\Sbv{\anymatA}{\thedim}{\indi}\Smonoidalproduct\theone) &=&\textstyle\sum_{\inds=1}^\thedim\Sbv{\anymatA}{\thedim}{\inds}\Smonoidalproduct\anymatPAX{\orderind}{\inds}{\Sexchanged(\indi)}+\Sbv{\anymatAT}{1}{1}\Smonoidalproduct\anymatPATX{\orderind}{\Sexchanged(\indi)}{\thedim}+(-1)^\orderind\Saction\Sbv{\anymatBT}{1}{\indi}\Smonoidalproduct\theone.\IEEEeqnarraynumspace\IEEEyesnumber\label{eq:higher_orders_second_case_0}
  \end{IEEEeqnarray*}  
\end{proposition}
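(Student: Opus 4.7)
The plan is to adapt the one-step cancellation argument used in Proposition~\ref{proposition:third_order_third_case} and, most directly, in Proposition~\ref{proposition:higher_orders_first_case}: starting from identity \eqref{eq:higher_orders_second_case_helper_0} in Lemma~\ref{lemma:higher_orders_second_case_helper}, I would identify the tip of the argument of $\thesplitting{\orderind-2}$, lift it through $\thereconcatenator{\orderind-2}$, and then use Assumption~\ref{assumption:higher_orders} to evaluate the corresponding $\thedifferential{\orderind-1}$ and confirm that the resulting interior of $\thesplitting{\orderind-2}$ is identically zero.

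The argument of $\thesplitting{\orderind-2}$ in \eqref{eq:higher_orders_second_case_helper_0} is manifestly normal; its tip should be $\Sbv{\anymatBT}{1}{1}\Smonoidalproduct\anymatPAX{\orderind}{1}{\Sexchanged(\indi)}$, since the term $\Sbv{\anymatA}{\thedim}{\indi}\Smonoidalproduct\theone$ is dominated by degree, while for any $\inds\in\SYnumbers{\thedim}$ with $1<\inds$ the addendum to Lem\-ma~\ref{lemma:base_monomial_order} yields $\Sbv{\anymatBT}{1}{\inds}\theorderRstrict\Sbv{\anymatBT}{1}{1}$. A direct parity check shows that $\anymatPAX{\orderind}{1}{\Sexchanged(\indi)}$ coincides with the version of $\anymatPBTX{\orderind}{1}{\Sexchanged(\indi)}$ obtained under the substitution $\anymatA\mapsto\anymatBT$, so Proposition~\ref{proposition:combinatorial_splitting} delivers
\begin{IEEEeqnarray*}{rCl}
\thereconcatenator{\orderind-2}(\Sbvfull{\anymatBT}{\orderind-2}{1}{1},\anymatPAX{\orderind}{1}{\Sexchanged(\indi)})&=&(\Sbvfull{\anymatBT}{\orderind-1}{1}{\indi},\theone).
\end{IEEEeqnarray*}
Consequently \eqref{eq:higher_orders_second_case_helper_0} is preserved by adding $(-1)^\orderind\Saction\Sbv{\anymatBT}{1}{\indi}\Smonoidalproduct\theone$ outside $\thesplitting{\orderind-2}$ and $-(-1)^\orderind\Saction\thedifferential{\orderind-1}(\Sbv{\anymatBT}{1}{\indi}\Smonoidalproduct\theone)$ inside. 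This change brings the outside into precisely the right-hand side of \eqref{eq:higher_orders_second_case_0}.

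It then remains to see that the new interior is zero. I would invoke Assumption~\ref{assumption:higher_orders} with the role of $\anymatA$ played by $\anymatBT$ (so that $\anymatBT$ becomes $\anymatA$, $\anymatAT$ becomes $\anymatB$, $\anymatB$ becomes $\anymatAT$, and the parity-shifted ``new'' $\anymatPAX{\orderind-1}$ becomes $\anymatPAX{\orderind}$), the role of $\indj$ by $1$, and the role of $\indi$ by $\indi$. Since $1\neq\thedim$ by $2\leq\thedim$ and $\indi\neq\thedim$ by hypothesis, both the $\Skronecker{\indj}{\thedim}$ block and the $\Skronecker{\indj}{1}\Skronecker{\indi}{\thedim}$ block in that assumption collapse, leaving only
\begin{IEEEeqnarray*}{rCl}
\thedifferential{\orderind-1}(\Sbv{\anymatBT}{1}{\indi}\Smonoidalproduct\theone)&=&\textstyle\sum_{\inds=1}^\thedim\Sbv{\anymatBT}{1}{\inds}\Smonoidalproduct\anymatPAX{\orderind}{\inds}{\Sexchanged(\indi)}+(-1)^{\orderind-1}\Saction\Sbv{\anymatA}{\thedim}{\indi}\Smonoidalproduct\theone.
\end{IEEEeqnarray*}
Multiplying this identity by $-(-1)^\orderind$, and using $-(-1)^\orderind(-1)^{\orderind-1}=1$ on the degree-one term, produces exactly the negative of the argument of $\thesplitting{\orderind-2}$ in \eqref{eq:higher_orders_second_case_helper_0}. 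Hence $\thesplitting{\orderind-2}(0)=0$ is all that remains, yielding \eqref{eq:higher_orders_second_case_0}. The only real obstacle will be the bookkeeping translation under $\anymatA\mapsto\anymatBT$: confirming that the four matrix operators permute as claimed, and that the parity shift $\orderind-1\mapsto\orderind$ sends each relevant ``new'' operator to the one named in \eqref{eq:higher_orders_second_case_0}.
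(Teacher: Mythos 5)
Your proposal is correct and follows essentially the same route as the paper's proof: starting from Lemma~\ref{lemma:higher_orders_second_case_helper}, identifying $\Sbv{\anymatBT}{1}{1}\Smonoidalproduct\anymatPAX{\orderind}{1}{\Sexchanged(\indi)}$ as the tip via the addendum to Lemma~\ref{lemma:base_monomial_order} and degree, applying Proposition~\ref{proposition:combinatorial_splitting}, and cancelling the interior with $-(-1)^\orderind\thedifferential{\orderind-1}(\Sbv{\anymatBT}{1}{\indi}\Smonoidalproduct\theone)$ computed from Assumption~\ref{assumption:higher_orders} with $\anymatBT$, $\indj=1$ and $\indi\neq\thedim$. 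The bookkeeping you flag (the role permutation of the four matrices and the parity shift $(\anymatBT)^{\,\mathrm{new},\orderind-1}=\anymatPA{\orderind}$) indeed works out exactly as you claim.
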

\begin{proof}
  The vector $\Sbv{\anymatBT}{1}{1}\Smonoidalproduct\anymatPAX{\orderind}{1}{\Sexchanged(\indi)}$ is the tip of the argument of $\thesplitting{\orderind-2}$ in \eqref{eq:higher_orders_second_case_helper_0}  because $\Sbv{\anymatBT}{1}{\inds}\theorderRstrict\Sbv{\anymatBT}{1}{1}$ for any $\inds\in\SYnumbers{\thedim}$ with $1<\inds$ by the addendum to  Lem\-ma~\ref{lemma:base_monomial_order}. And, of course, $\Sbv{\anymatA}{\thedim}{\indi}\Smonoidalproduct\theone$ is dominated for degree reasons.
  \par
  Since by Proposition~\ref{proposition:combinatorial_splitting}
  \begin{IEEEeqnarray*}{rCl}
    \textstyle\thereconcatenator{\orderind-2}(\Sbvfull{\anymatBT}{\orderind-2}{1}{1},\anymatPAX{\orderind}{1}{\Sexchanged(\indi)})&=&( \Sbvfull{\anymatBT}{\orderind-1}{1}{\indi},\theone)
  \end{IEEEeqnarray*}
  it follows that the equality in \eqref{eq:higher_orders_second_case_helper_0} is preserved by adding to its right-hand side $(-1)^\orderind\Saction\Sbv{\anymatBT}{1}{\indi}\Smonoidalproduct\theone$ outside of $\thesplitting{\orderind-2}$ and $-(-1)^\orderind\thedifferential{\orderind-1}(\Sbv{\anymatBT}{1}{\indi}\Smonoidalproduct\theone)$ inside.
  \par
  That immediately makes  the outsides of $\thesplitting{\orderind-2}$ in \eqref{eq:higher_orders_second_case_helper_0} and \eqref{eq:higher_orders_second_case_0} agree. At the same time   $\indi\neq \thedim$ implies  according to  Assumption~\ref{assumption:higher_orders}
  \begin{IEEEeqnarray*}{rCl}
    -(-1)^\orderind\thedifferential{\orderind-1}(\Sbv{\anymatBT}{1}{\indi}\Smonoidalproduct\theone)&=&\textstyle-(-1)^\orderind\sum_{\inds=1}^\thedim \Sbv{\anymatBT}{1}{\inds}\Smonoidalproduct\anymatPAX{\orderind}{\inds}{\Sexchanged(\indi)}+\Sbv{\anymatA}{\thedim}{\indi}\Smonoidalproduct\theone.
  \end{IEEEeqnarray*}
Hence,   the argument of $\thesplitting{\orderind-2}$ in \eqref{eq:higher_orders_second_case_helper_0} is canceled by the addition of $-(-1)^\orderind\thedifferential{\orderind-1}(\Sbv{\anymatBT}{1}{\indi}\Smonoidalproduct\theone)$, which then proves \eqref{eq:higher_orders_second_case_0}.
\end{proof}
\subsubsection{Third case} In the third case, computing $\thedifferential{\orderind}(\Sbv{\anymatA}{\indj}{\thedim}\Smonoidalproduct\theone)$ for any $\indj\in\SYnumbers{\thedim}$ with $\indj\neq \thedim$, two recursion steps  are necessary to solve.
\begin{lemma}
  \label{lemma:higher_orders_third_case_helper}
  For any $\anymat\in\thematrices$ and any $\indj\in\SYnumbers{\thedim}$, if $\indj\neq \thedim$, then
  \begin{IEEEeqnarray*}{rCl}
  \thedifferential{\orderind}(\Sbv{\anymatA}{\indj}{\thedim}\Smonoidalproduct \theone)&=&\textstyle\sum_{\inds=1}^\thedim\Sbv{\anymatA}{\indj}{\inds}\Smonoidalproduct\anymatPAX{\orderind}{\inds}{1}+(-1)^\orderind\Saction\Sbv{\anymatBT}{\Sexchanged(\indj)}{\thedim}\Smonoidalproduct\theone\IEEEyesnumber\label{eq:higher_orders_third_case_helper_0}\\  &&\textstyle{}+\Skronecker{\indj}{1}\Saction\thesplitting{\orderind-2}\big((-1)^\orderind\sum_{\inds=1}^\thedim \Sbv{\anymatB}{1}{\inds}\Smonoidalproduct\anymatPATX{\orderind}{\inds}{\thedim}-\Sbv{\anymatAT}{\thedim}{1}\Smonoidalproduct\theone\big).
  \end{IEEEeqnarray*}  
\end{lemma}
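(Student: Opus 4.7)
The plan is to start from Lem\-ma~\ref{lemma:higher_orders_common_final} specialized to $\indi=\thedim$ (so $\Sexchanged(\indi)=1$). Since $\indj\neq \thedim$ the entire $\Skronecker{\indj}{\thedim}$ block of that formula vanishes, leaving an identity in which the outside of $\thesplitting{\orderind-2}$ already contributes the first summand of the claim, namely $\sum_{\inds=1}^\thedim \Sbv{\anymatA}{\indj}{\inds}\Smonoidalproduct\anymatPAX{\orderind}{\inds}{1}$, while the argument of $\thesplitting{\orderind-2}$ comprises three parts: the sum $(-1)^\orderind\sum_{\inds=1}^\thedim \Sbv{\anymatBT}{\Sexchanged(\indj)}{\inds}\Smonoidalproduct\anymatPAX{\orderind}{\inds}{1}$, the isolated vector $-\Sbv{\anymatA}{\indj}{\thedim}\Smonoidalproduct\theone$, and a $\Skronecker{\indj}{1}$-gated degree-$\orderind$ block built from the $\Sbv{\anymatB}{\inds}{\inds}$ and $\anymatPATX{\orderind}{1}{\thedim}$. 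The task will be one tip-elimination step that extracts $(-1)^\orderind\Saction\Sbv{\anymatBT}{\Sexchanged(\indj)}{\thedim}\Smonoidalproduct\theone$ from inside to outside and cancels the two obstructive terms.

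Next, I will identify the tip of the argument of $\thesplitting{\orderind-2}$ as $\Sbv{\anymatBT}{\Sexchanged(\indj)}{1}\Smonoidalproduct\anymatPAX{\orderind}{1}{1}$: it dominates the other summands $\Sbv{\anymatBT}{\Sexchanged(\indj)}{\inds}\Smonoidalproduct\anymatPAX{\orderind}{\inds}{1}$ for $1<\inds$ by the addendum to Lem\-ma~\ref{lemma:base_monomial_order}; it dominates $\Sbv{\anymatA}{\indj}{\thedim}\Smonoidalproduct\theone$ for degree reasons; and (only when $\indj=1$, so $\Sexchanged(\indj)=\thedim$) it dominates the $\Sbv{\anymatB}{\inds}{\inds}\Smonoidalproduct\anymatPATX{\orderind}{1}{\thedim}$ contributions again by Lem\-ma~\ref{lemma:base_monomial_order}. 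Pro\-po\-si\-tion~\ref{proposition:combinatorial_splitting}, applied with $\anymatBT$ in the role of $\anymatA$, then yields $\thereconcatenator{\orderind-2}(\Sbvfull{\anymatBT}{\orderind-2}{\Sexchanged(\indj)}{1},\anymatPAX{\orderind}{1}{1})=(\Sbvfull{\anymatBT}{\orderind-1}{\Sexchanged(\indj)}{\thedim},\theone)$, so that adding $(-1)^\orderind\Saction\Sbv{\anymatBT}{\Sexchanged(\indj)}{\thedim}\Smonoidalproduct\theone$ outside and $-(-1)^\orderind\thedifferential{\orderind-1}(\Sbv{\anymatBT}{\Sexchanged(\indj)}{\thedim}\Smonoidalproduct\theone)$ inside preserves the identity.

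It then remains to evaluate $\thedifferential{\orderind-1}(\Sbv{\anymatBT}{\Sexchanged(\indj)}{\thedim}\Smonoidalproduct\theone)$ by invoking Assumption~\ref{assumption:higher_orders} with $\anymatA$ replaced by $\anymatBT$. Under this substitution one checks that $(\anymatBT)^{BT}=\anymatA$, $(\anymatBT)^{AT}=\anymatB$, $(\anymatBT)^{B}=\anymatAT$, and that the parity swap $\orderind-1\to \orderind$ makes the decorated symbols $\anymatPA$, $\anymatPAT$, $\anymatPB$, $\anymatPBT$ for $\anymatBT$ at level $\orderind-1$ coincide with the corresponding ones for $\anymatA$ at level $\orderind$. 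Since $\indj\neq \thedim$ forces $\Sexchanged(\indj)\neq 1$, the $\Skronecker{\Sexchanged(\indj)}{1}$ summand in the assumption vanishes, while the $\Skronecker{\Sexchanged(\indj)}{\thedim}$ block survives only when $\indj=1$. The sign arithmetic $(-1)^\orderind\cdot(-1)^{\orderind-1}=-1$ then cancels both the $\sum_\inds \Sbv{\anymatBT}{\Sexchanged(\indj)}{\inds}\Smonoidalproduct\anymatPAX{\orderind}{\inds}{1}$ contribution and the $\Sbv{\anymatA}{\indj}{\thedim}\Smonoidalproduct\theone$ contribution, and a short computation (using that the $\Skronecker{\indj}{1}$-gated block already present in the argument of $\thesplitting{\orderind-2}$ combines with the $\Skronecker{\Sexchanged(\indj)}{\thedim}$ block from the assumption) reveals that the residual is exactly $\Skronecker{\indj}{1}((-1)^\orderind\sum_{\inds=1}^\thedim \Sbv{\anymatB}{1}{\inds}\Smonoidalproduct\anymatPATX{\orderind}{\inds}{\thedim}-\Sbv{\anymatAT}{\thedim}{1}\Smonoidalproduct\theone)$.

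The main obstacle will lie precisely in the substitution bookkeeping: tracking how the four decorations $\anymatPA$, $\anymatPAT$, $\anymatPB$, $\anymatPBT$ of Assumption~\ref{assumption:higher_orders} rearrange under $\anymatA\leftrightarrow\anymatBT$ together with the parity shift, how the two Kronecker-delta blocks interact, and how signs from $(-1)^\orderind$ (from the tip step) and $(-1)^{\orderind-1}$ (built into the assumption) combine so that after cancellation the surviving $\Skronecker{\indj}{1}$ block coincides on the nose with the claimed residual.
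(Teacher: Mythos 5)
Your proposal is correct and takes essentially the same route as the paper's proof: specialize Lemma~\ref{lemma:higher_orders_common_final} to column index $\thedim$ (the $\Skronecker{\indj}{\thedim}$ block dying since $\indj\neq\thedim$), identify the tip $\Sbv{\anymatBT}{\Sexchanged(\indj)}{1}\Smonoidalproduct\anymatPAX{\orderind}{1}{1}$, apply Proposition~\ref{proposition:combinatorial_splitting} and then Assumption~\ref{assumption:higher_orders} with $\anymatBT$ in place of $\anymatA$, and cancel down to the $\Skronecker{\indj}{1}$-gated residual. Only a phrasing quibble: the grouped sum cancels because the corresponding term of the assumption carries no sign factor at all (so $-(-1)^{\orderind}$ meets $+(-1)^{\orderind}$), whereas the identity $(-1)^{\orderind}(-1)^{\orderind-1}=-1$ is what removes $\Sbv{\anymatA}{\indj}{\thedim}\Smonoidalproduct\theone$; your bookkeeping of the decorations under $\anymatA\leftrightarrow\anymatBT$ with the parity shift and the resulting residual is exactly the paper's.
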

\begin{proof}
  Specializing $\indi$ to $\thedim$ in Lem\-ma~\ref{lemma:higher_orders_common_final} and recognizing that the assumption $\indj\neq \thedim$ removes the fourth and fifth lines of \eqref{eq:higher_orders_common_final_0} gives
    \begin{IEEEeqnarray*}{rCl}
  \thedifferential{\orderind}(\Sbv{\anymatA}{\indj}{\thedim}\Smonoidalproduct \theone)&=&\textstyle\sum_{\inds=1}^\thedim\Sbv{\anymatA}{\indj}{\inds}\Smonoidalproduct\anymatPAX{\orderind}{\inds}{1}\IEEEyesnumber\label{eq:higher_orders_third_case_helper_1}\\  &&\textstyle{}+\thesplitting{\orderind-2}\big((-1)^\orderind\sum_{\inds=1}^\thedim\Sbv{\anymatBT}{\Sexchanged(\indj)}{\inds}\Smonoidalproduct\anymatPAX{\orderind}{\inds}{1}-\Sbv{\anymatA}{\indj}{\thedim}\Smonoidalproduct\theone\\  
&&\textstyle\hspace{3em}{}+\Skronecker{\indj}{1}(-1)^\orderind(\Sbv{\anymatB}{1}{1}+\Skronecker{\orderind}{4}\sum_{\inds=2}^{\thedim-1}\Sbv{\anymatB}{\inds}{\inds})\Smonoidalproduct\anymatPATX{\orderind}{1}{\thedim}\big).
\end{IEEEeqnarray*}
\par
The vector $\Sbv{\anymatBT}{\Sexchanged(\indj)}{1}\Smonoidalproduct\anymatPAX{\orderind}{1}{1}$ is the tip of  the argument of $\thesplitting{\orderind-2 }$ in \eqref{eq:higher_orders_third_case_helper_1}. It dominates the other terms in the grouped sum from where it stems because for any $\inds\in\SYnumbers{\thedim}$ with $1<\inds$ necessarily $\Sbv{\anymatBT}{\Sexchanged(\indj)}{\inds}\theorderRstrict\Sbv{\anymatBT}{\Sexchanged(\indj)}{1}$ by the addendum to Lem\-ma~\ref{lemma:base_monomial_order}. And the terms in the third line of \eqref{eq:higher_orders_third_case_helper_1} are less because, if they are non-zero, then $\indj=1$, i.e., $\Sexchanged(\indj)=\thedim$ and hence $\Sbv{\anymatB}{\inds}{\inds}<\Sbv{\anymatBT}{\Sexchanged(\indj)}{1}$ for any $\inds\in\SYnumbers{\thedim}$ with $\inds\neq \thedim$ according to Lem\-ma~\ref{lemma:base_monomial_order}.
\par
Because by Proposition~\ref{proposition:combinatorial_splitting}
\begin{IEEEeqnarray*}{rCl}
  \textstyle\thereconcatenator{\orderind-2}(  \Sbvfull{\anymatBT}{\orderind-2}{\Sexchanged(\indj)}{1},\anymatPAX{\orderind}{1}{1})=(  \Sbvfull{\anymatBT}{\orderind-1}{\Sexchanged(\indj)}{\thedim},\theone)
\end{IEEEeqnarray*}
a true  identity is thus derived by adding $(-1)^\orderind\Saction\Sbv{\anymatBT}{\Sexchanged(\indj)}{\thedim}\Smonoidalproduct\theone$ on the right-hand side to the outside of $\thesplitting{\orderind-2}$ in \eqref{eq:higher_orders_third_case_helper_1} and $-(-1)^\orderind\thedifferential{\orderind-2}(\Sbv{\anymatBT}{\Sexchanged(\indj)}{\thedim}\Smonoidalproduct\theone)$ to the inside.
\par
That addition turns the outside of \eqref{eq:higher_orders_third_case_helper_1}  into that of \eqref{eq:higher_orders_third_case_helper_0}. Simultaneously, since $\Sexchanged(\indj)\neq 1$, by Assumption~\ref{assumption:higher_orders}
\begin{IEEEeqnarray*}{rCl}
  \IEEEeqnarraymulticol{3}{l}{
    -(-1)^\orderind\thedifferential{\orderind-1}(\Sbv{\anymatBT}{\Sexchanged(\indj)}{\thedim}\Smonoidalproduct \theone)
  }\\
  \hspace{3em}
&= &\textstyle  -(-1)^\orderind\sum_{\inds=1}^\thedim\Sbv{\anymatBT}{\Sexchanged(\indj)}{\inds}\Smonoidalproduct\anymatPAX{\orderind}{\inds}{1}+\Saction\Sbv{\anymatA}{\indj}{\thedim}\Smonoidalproduct\theone\IEEEyesnumber\label{eq:higher_orders_third_case_helper_2}\\
&&\textstyle{}+\Skronecker{\indj}{1}(-(-1)^\orderind(\Sbv{\anymatB}{1}{1}+\Skronecker{\orderind}{4}\sum_{\inds=2}^{\thedim-1}\Sbv{\anymatB}{\inds}{\inds})\Smonoidalproduct\anymatPATX{\orderind}{1}{\thedim}\\
&&\textstyle\hspace{5.25em}{}+(-1)^\orderind\sum_{\inds=1}^\thedim\Sbv{\anymatB}{1}{\inds}\Smonoidalproduct\anymatPATX{\orderind}{\inds}{\thedim}-\Saction\Sbv{\anymatAT}{\thedim}{1}\Smonoidalproduct\theone).
\end{IEEEeqnarray*}
It is now evident that when the vector \eqref{eq:higher_orders_third_case_helper_2} and the argument of $\thesplitting{\orderind-2}$ in \eqref{eq:higher_orders_third_case_helper_1}  are added together the result is precisely the argument of  $\thesplitting{\orderind-2}$ in  \eqref{eq:higher_orders_third_case_helper_0}. And by what was said before that proves the assertion.
\end{proof}
\begin{proposition}
  \label{proposition:higher_orders_third_case}
  For any $\anymat\in\thematrices$ and any $\indj\in\SYnumbers{\thedim}$, if $\indj \neq \thedim$, then
  \begin{IEEEeqnarray*}{rCl}
    \IEEEeqnarraymulticol{3}{l}{
      \thedifferential{\orderind}(\Sbv{\anymatA}{\indj}{\thedim}\Smonoidalproduct \theone)
}\\      \hspace{1em}&=&\textstyle\sum_{\inds=1}^\thedim\Sbv{\anymatA}{\indj}{\inds}\Smonoidalproduct\anymatPAX{\orderind}{\inds}{1}+(-1)^\orderind\Saction\Sbv{\anymatBT}{\Sexchanged(\indj)}{\thedim}\Smonoidalproduct\theone+\Skronecker{\indj}{1}(-1)^\orderind\Saction\Sbv{\anymatB}{1}{1}\Smonoidalproduct\theone.\IEEEeqnarraynumspace\IEEEyesnumber\label{eq:higher_orders_third_case_0}
  \end{IEEEeqnarray*}    
\end{proposition}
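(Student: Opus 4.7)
The plan is to start from Lemma~\ref{lemma:higher_orders_third_case_helper}, which already produces the two main summands on the right-hand side of~\eqref{eq:higher_orders_third_case_0} outside of $\thesplitting{\orderind-2}$ and isolates the residual splitting contribution, multiplied by the factor $\Skronecker{\indj}{1}$. When $\indj\neq 1$ this factor annihilates the splitting term, and the last summand in~\eqref{eq:higher_orders_third_case_0} is zero as well, so the proposition follows at once in this case.

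The substantial work is therefore the case $\indj=1$. First I would identify the tip of the splitting argument $(-1)^{\orderind}\sum_{\inds=1}^{\thedim}\Sbv{\anymatB}{1}{\inds}\Smonoidalproduct\anymatPATX{\orderind}{\inds}{\thedim}-\Sbv{\anymatAT}{\thedim}{1}\Smonoidalproduct\theone$: Lem\-ma~\ref{characterization_of_reduced_terms_of_order_two} secures normality of $\anymatPATX{\orderind}{1}{\thedim}$; the addendum to Lem\-ma~\ref{lemma:base_monomial_order} defeats each $\Sbv{\anymatB}{1}{\inds}\Smonoidalproduct\anymatPATX{\orderind}{\inds}{\thedim}$ with $1<\inds$; and the stand-alone vector $\Sbv{\anymatAT}{\thedim}{1}\Smonoidalproduct\theone$ is dominated by degree. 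Hence the tip is $\Sbv{\anymatB}{1}{1}\Smonoidalproduct\anymatPATX{\orderind}{1}{\thedim}$.

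Proposition~\ref{proposition:combinatorial_splitting}, applied with its universally quantified matrix taken to be $\anymatB$ (which lies in $\thematrices$ by Lem\-ma~\ref{lemma:any_two_generators}) and at $(\indj,\indi)=(1,1)$, then yields the reconcatenation $(\Sbvfull{\anymatB}{\orderind-1}{1}{1},\theone)$. Following the template of the preceding propositions, I would then add $(-1)^{\orderind}\Saction\Sbv{\anymatB}{1}{1}\Smonoidalproduct\theone$ outside of $\thesplitting{\orderind-2}$ on the right-hand side of Lem\-ma~\ref{lemma:higher_orders_third_case_helper} and $-(-1)^{\orderind}\Saction\thedifferential{\orderind-1}(\Sbv{\anymatB}{1}{1}\Smonoidalproduct\theone)$ inside. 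The outside then matches~\eqref{eq:higher_orders_third_case_0}, and the proposition reduces to showing that the inside cancels. Invoking Assumption~\ref{assumption:higher_orders} at the matrix $\anymatB$ with $(\indj,\indi)=(1,1)$, the boundary contributions with factors $\Skronecker{\indj}{\thedim}$, $\Skronecker{\indi}{\thedim}$ and $\Skronecker{\indj}{1}\Skronecker{\indi}{\thedim}$ (including the $\Skronecker{\orderind-1}{3}$ corrections) all vanish at the boundary values $(1,1)$ because $2\leq\thedim$, and what remains is precisely the negative of the splitting argument, up to the global sign from the parity switch $(-1)^{\orderind}\cdot(-1)^{\orderind-1}=-1$.

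The main obstacle will be the involutive bookkeeping: translating the symbols $\anymatA$, $\anymatAT$, $\anymatB$, $\anymatBT$ in Assumption~\ref{assumption:higher_orders} into their counterparts relative to $\anymatB$ via Lem\-ma~\ref{lemma:any_two_generators}, so that the image of $\anymatPAX{\orderind-1}{\inds}{\thedim}$ in the inductive formula (evaluated at $\anymatB$) agrees with $\anymatPATX{\orderind}{\inds}{\thedim}$ (evaluated at $\anymat$). This amounts to a case distinction on the parity of $\orderind$ which is straightforward but needs to be checked. Once that is verified, the cancellation is a single line and the proposition is proved.
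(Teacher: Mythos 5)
Your proposal is correct and follows essentially the same line of argument as the paper's proof: it starts from Lemma~\ref{lemma:higher_orders_third_case_helper}, identifies $\Sbv{\anymatB}{1}{1}\Smonoidalproduct\anymatPATX{\orderind}{1}{\thedim}$ as the tip of the splitting argument, applies Proposition~\ref{proposition:combinatorial_splitting} to reconcatenate to $(\Sbvfull{\anymatB}{\orderind-1}{1}{1},\theone)$, and uses Assumption~\ref{assumption:higher_orders} with $\anymatB$ in place of $\anymat$ to cancel the splitting argument. The one cosmetic difference is that you split into cases $\indj\neq 1$ and $\indj=1$ up front, whereas the paper simply carries the Kronecker factor $\Skronecker{\indj}{1}$ through the whole computation (in particular, it keeps the prefactor on both the added exterior term and on $\thesplitting{\orderind-2}$ itself), which covers both cases in one pass. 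Your worry about the $\anymat\leftrightarrow\anymatB$ bookkeeping is well-placed but resolves cleanly: $(\anymatB)\SskewdaggerP=\anymatAT$ and $(\anymatAT)\SskewdaggerP=\anymatB$, so $\anymatPAX{\orderind-1}{\inds}{\thedim}$ evaluated at $\anymatB$ coincides with $\anymatPATX{\orderind}{\inds}{\thedim}$ for both parities of $\orderind$, and the sign $(-1)^\orderind(-1)^{\orderind-1}=-1$ makes the constant term match exactly, as you anticipated.
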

\begin{proof}
  For any $\inds\in\SYnumbers{\thedim}$ with $1<\inds$ the inequality $\Sbv{\anymatB}{1}{\inds}\theorderRstrict\Sbv{\anymatB}{1}{1}$ holds by the addendum to Lem\-ma~\ref{lemma:base_monomial_order}. That makes $\Sbv{\anymatB}{1}{1}\Smonoidalproduct\anymatPATX{\orderind}{1}{\thedim}$ the tip of the argument of $\thesplitting{\orderind-2}$ on the right\-/hand side of \eqref{eq:higher_orders_third_case_helper_0} in Lem\-ma~\ref{lemma:higher_orders_third_case_helper}.
  \par
  By Proposition~\ref{proposition:combinatorial_splitting}
  \begin{IEEEeqnarray*}{rCl}
    \textstyle\thereconcatenator{\orderind-2}(\Sbvfull{\anymatB}{\orderind-2}{1}{1},\anymatPATX{\orderind}{1}{\thedim})&=&(\Sbvfull{\anymatB}{\orderind-1}{1}{1},\theone).
  \end{IEEEeqnarray*}
  Note that $\thesplitting{\orderind-2}$ has a scalar factor of $\Skronecker{\indj}{1}$ in \eqref{eq:higher_orders_third_case_helper_0}. For those reasons another valid identity is obtained  by adding to the right-hand side of \eqref{eq:higher_orders_third_case_helper_0} the terms $\Skronecker{\indj}{1}(-1)^\orderind\Saction\Sbv{\anymatB}{1}{1}\Smonoidalproduct\theone$ outside of $\thesplitting{\orderind-2}$ and $-(-1)^\orderind\thedifferential{\orderind-1}(\Sbv{\anymatB}{1}{1}\Smonoidalproduct\theone)$ inside of it.
  \par
  It is clear that this change turns the outside of $\thesplitting{\orderind-2}$ in \eqref{eq:higher_orders_third_case_helper_0} into that in \eqref{eq:higher_orders_third_case_0}. And because according to Assumption~\ref{assumption:higher_orders}
  \begin{IEEEeqnarray*}{rCl}
    -(-1)^\orderind\thedifferential{\orderind-1}(\Sbv{\anymatB}{1}{1}\Smonoidalproduct \theone)
&= &\textstyle  -(-1)^\orderind\sum_{\inds=1}^\thedim\Sbv{\anymatB}{1}{\inds}\Smonoidalproduct\anymatPATX{\orderind}{\inds}{\thedim}+\Sbv{\anymatAT}{\thedim}{1}\Smonoidalproduct\theone
\end{IEEEeqnarray*}
the addition of $-\thedifferential{\orderind-1}(\Sbv{\anymatB}{1}{1}\Smonoidalproduct \theone)$ exactly cancels out the argument of $\thesplitting{\orderind-2}$ in \eqref{eq:higher_orders_third_case_helper_0}. That verifies \eqref{eq:higher_orders_third_case_0}.
\end{proof}
\subsubsection{Fourth case}
It remains to compute $\thedifferential{\orderind}(\Sbv{\anymatA}{\thedim}{\thedim}\Smonoidalproduct\theone)$, which is the one requiring the most effort, another $\thedim+2$ recursion steps.
\begin{lemma}
  \label{lemma:higher_orders_fourth_case_first_helper}
  For any $\anymat\in\thematrices$ and any $\recvar\in \SYnumbers{\thedim}$,
  \begin{IEEEeqnarray*}{rCl}
    \IEEEeqnarraymulticol{3}{l}{
      \thedifferential{\orderind}(\Sbv{\anymatA}{\thedim}{\thedim}\Smonoidalproduct\theone)
    }\\
    \hspace{1.5em}    &=&\textstyle\sum_{\inds=1}^\thedim\Sbv{\anymatA}{\thedim}{\inds}\Smonoidalproduct\anymatPAX{\orderind}{\inds}{1}-\sum_{\inds=2}^{\recvar}\Sbv{\anymatAT}{1}{\inds}\Smonoidalproduct\anymatPATX{\orderind}{\inds}{\thedim}\IEEEyesnumber\label{eq:higher_orders_fourth_case_first_helper_0}\\
    &&\textstyle{}+\thesplitting{\orderind-2}\big({-}\sum_{\inds=\recvar+1}^{\thedim}\Sbv{\anymatAT}{1}{1}\Smonoidalproduct\anymatPBX{\orderind}{1}{\Sexchanged(\inds)}\anymatPATX{\orderind}{\inds}{\thedim}\\
    &&\textstyle\hspace{3em}{}+\sum_{\inds=2}^\thedim\sum_{\indt=1}^{\recvar}\Sbv{\anymatAT}{1}{\inds}\Smonoidalproduct\anymatPBX{\orderind}{\inds}{\Sexchanged(\indt)}\anymatPATX{\orderind}{\indt}{\thedim}\\    &&\textstyle{}\hspace{3em}-(-1)^\orderind\sum_{\inds=1}^\recvar\Sbv{\anymatB}{\thedim}{\inds}\Smonoidalproduct\anymatPATX{\orderind}{\inds}{\thedim}+(-1)^\orderind\sum_{\inds=1}^\thedim\Sbv{\anymatBT}{1}{\inds}\Smonoidalproduct\anymatPAX{\orderind}{\inds}{1}\\
    &&\textstyle{}\hspace{3em}-\Sbv{\anymatA}{\thedim}{\thedim}\Smonoidalproduct\theone-
    \Skronecker{\orderind}{4}\sum_{\inds=2}^{\thedim-1}\Sbv{\anymatAT}{\inds}{\inds}
    \Smonoidalproduct\theone\\    &&\textstyle{}\hspace{3em}-\Szetafunction{\thedim}{\recvar}(-1)^\orderind(\Sbv{\anymatBT}{1}{1}+\Skronecker{\orderind}{4}\sum_{\inds=2}^{\thedim-1}\Sbv{\anymatBT}{\inds}{\inds})\Smonoidalproduct\anymatPAX{\orderind}{\thedim}{\thedim}).
  \end{IEEEeqnarray*}
\end{lemma}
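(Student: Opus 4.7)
The proof will proceed by induction on $\recvar$, following the same pattern used in the preceding recursion lemmata (Lemmata~\ref{lemma:third_order_fourth_case_helper} and \ref{lemma:higher_orders_common_helper}).

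For the base case $\recvar=1$, I would specialize \eqref{eq:higher_orders_common_final_0} in Lemma~\ref{lemma:higher_orders_common_final} to $(\indj,\indi)=(\thedim,\thedim)$. The resulting identity contains, inside $\thesplitting{\orderind-2}$, the full grouped sum $\sum_{\inds=1}^{\thedim}\anymatPBX{\orderind}{\Sexchanged(\inds)}{\thedim}\anymatPATX{\orderind}{1}{\inds}$ originating from the $-\Skronecker{\indi}{\thedim}$ term and the $\sum_{\inds=1}^{\thedim}\Sbv{\anymatAT}{1}{\inds}\Smonoidalproduct\anymatPBX{\orderind}{\inds}{\thedim}\anymatPATX{\orderind}{\thedim}{\thedim}$ piece. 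Using the identity $\Sbv{\anymatA}{\thedim}{\thedim}=\Sbv{\anymatAT}{1}{1}$ from Lemma~\ref{lemma:evil_identities} together with the relation $\anymatPBX{\orderind}{1}{\thedim}\anymatPAX{\orderind}{\thedim}{1}=-\sum_{\inds=2}^{\thedim}\anymatPBX{\orderind}{1}{\Sexchanged(\inds)}\anymatPATX{\orderind}{\inds}{\thedim}+\theone$ (whose validity is a direct  computation from Proposition~\ref{proposition:core}), the terms rearrange precisely into \eqref{eq:higher_orders_fourth_case_first_helper_0} for $\recvar=1$, with $\Szetafunction{\thedim}{1}=0$ suppressing the last line.

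For the inductive step, assuming the identity holds for some $\recvar\in\SYnumbers{\thedim}$ with $\recvar\leq \thedim-1$, I would single out the vector $\Sbv{\anymatAT}{1}{1}\Smonoidalproduct\anymatPBX{\orderind}{1}{\Sexchanged(\recvar+1)}\anymatPATX{\orderind}{\recvar+1}{\thedim}$ as the tip of the argument of $\thesplitting{\orderind-2}$ in \eqref{eq:higher_orders_fourth_case_first_helper_0}. Its normality follows from Lemma~\ref{characterization_of_reduced_terms_of_order_two} since $\recvar+1\neq 1$ and $\recvar+1\neq \thedim$ rules out both forbidden index patterns (I would need to treat the case $\recvar+1=\thedim$ separately, where $\Szetafunction{\thedim}{\recvar+1}$ switches from $0$ to $1$ and supplies the extra correction terms). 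Maximality of this tip among the normal monomials in the argument follows from Lemma~\ref{lemma:base_monomial_order} (applied to the $\Sbv{\anymatAT}{1}{\inds}$ terms for $\inds>1$), the addendum to Lemma~\ref{lemma:monomial_order_reformulation} (applied to the $\anymatPBX{\orderind}{1}{\Sexchanged(\inds)}$ factors for $\inds>\recvar+1$), and degree considerations for the remaining terms in the last three lines of \eqref{eq:higher_orders_fourth_case_first_helper_0}.

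Then, by Proposition~\ref{proposition:combinatorial_splitting},
\begin{IEEEeqnarray*}{rCl}
\thereconcatenator{\orderind-2}(\Sbvfull{\anymatAT}{\orderind-2}{1}{1},\anymatPBX{\orderind}{1}{\Sexchanged(\recvar+1)}\anymatPATX{\orderind}{\recvar+1}{\thedim})=(\Sbvfull{\anymatAT}{\orderind-1}{1}{\recvar+1},\anymatPATX{\orderind}{\recvar+1}{\thedim}),
\end{IEEEeqnarray*}
so adding $-\Sbv{\anymatAT}{1}{\recvar+1}\Smonoidalproduct\anymatPATX{\orderind}{\recvar+1}{\thedim}$ outside of $\thesplitting{\orderind-2}$ and $\thedifferential{\orderind-1}(\Sbv{\anymatAT}{1}{\recvar+1}\Smonoidalproduct\anymatPATX{\orderind}{\recvar+1}{\thedim})$ inside preserves the identity. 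The outside change visibly converts the $\recvar$-formula into the $(\recvar+1)$-formula. For the inside change, I would apply Assumption~\ref{assumption:higher_orders} with $(\anymatA,\indj,\indi)=(\anymatAT,1,\recvar+1)$, noting that $\indj=1\neq\thedim$ and thus only a few of the correction terms there survive (the $\Skronecker{\indj}{\thedim}$ and $\Skronecker{\indj}{1}\Skronecker{\indi}{\thedim}$ blocks vanish unless $\recvar+1=\thedim$, which is exactly where $\Szetafunction{\thedim}{\recvar+1}$ turns on), and then multiply by $\anymatPATX{\orderind}{\recvar+1}{\thedim}$ on the right. Comparing term by term against \eqref{eq:higher_orders_fourth_case_first_helper_0} for $\recvar+1$, using $\Szetafunction{\thedim}{\recvar}+\Skronecker{\thedim}{\recvar+1}=\Szetafunction{\thedim}{\recvar+1}$, yields the desired equality.

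The main obstacle is the bookkeeping at the transition $\recvar+1=\thedim$: that is where both the normality check for the candidate tip and the additional correction terms supplied by Assumption~\ref{assumption:higher_orders} simultaneously change character, and where the $\Szetafunction{\thedim}{\recvar+1}$\-/governed block in \eqref{eq:higher_orders_fourth_case_first_helper_0} must appear from thin air. Careful comparison, analogous to the corresponding passage in the proof of Lemma~\ref{lemma:higher_orders_common_helper}, is what makes the identity close.
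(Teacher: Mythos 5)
Your induction step is essentially the paper's argument: the same tip $\Sbv{\anymatAT}{1}{1}\Smonoidalproduct\anymatPBX{\ell}{1}{\Sexchanged(m+1)}\anymatPATX{\ell}{m+1}{\thedim}$ (writing $\ell$ for the order and $m$ for the recursion variable), the same use of Proposition~\ref{proposition:combinatorial_splitting}, and the same application of Assumption~\ref{assumption:higher_orders} at $(\anymatAT,1,m+1)$ with $\Szetafunction{\thedim}{m}+\Skronecker{\thedim}{m+1}=\Szetafunction{\thedim}{m+1}$ closing the bookkeeping. One small correction there: no separate treatment of $m+1=\thedim$ is needed. By Lemma~\ref{characterization_of_reduced_terms_of_order_two} the monomial $\anymatPBX{\ell}{1}{\Sexchanged(m+1)}\anymatPATX{\ell}{m+1}{\thedim}$ is reduced as soon as $m+1\neq 1$; the condition $m+1\neq\thedim$ you cite plays no role, and the block governed by $\Szetafunction{\thedim}{m+1}$ is produced automatically by the Kronecker factor $\Skronecker{m+1}{\thedim}$ surviving in Assumption~\ref{assumption:higher_orders}, exactly as you yourself observe, so the uniform argument goes through.

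The genuine flaw is in your base case: the identity $\Sbv{\anymatA}{\thedim}{\thedim}=\Sbv{\anymatAT}{1}{1}$ is false in the present regime. These are $(\ell-2)$-chains with $\ell-2\geq 2$, and by Lemma~\ref{lemma:evil_identities} the only nontrivial coincidence at chain length $2$ is $\Sbvfull{\anymatAT}{2}{\thedim}{\thedim}=\Sbvfull{\anymatA}{2}{\thedim}{\thedim}$, while at length $3$ and higher there are none; so $\Sbv{\anymatA}{\thedim}{\thedim}$ and $\Sbv{\anymatAT}{1}{1}$ are distinct basis vectors of $\thechainsmodule{\ell-2}$, and any rearrangement that conflates them does not produce the asserted formula, whose fifth line retains $-\Sbv{\anymatA}{\thedim}{\thedim}\Smonoidalproduct\theone$ as a separate term. (The identity you want exists, and is actually used, only in the third-order analogue, Lemma~\ref{lemma:third_order_fourth_case_helper}, where the splitting is $\thesplitting{1}$ and the chains have length $1$.) Relatedly, the grouped sum $\sum_{s=1}^{\thedim}\anymatPBX{\ell}{\Sexchanged(s)}{\thedim}\anymatPATX{\ell}{1}{s}$ you attribute to the $-\Skronecker{i}{\thedim}$ block does not occur in the statement of Lemma~\ref{lemma:higher_orders_common_final}; it was collapsed to a scalar already in that lemma's proof, and what the block actually contributes at $(j,i)=(\thedim,\thedim)$ is $-(\Sbv{\anymatAT}{1}{1}+\Skronecker{\ell}{4}\sum_{s=2}^{\thedim-1}\Sbv{\anymatAT}{s}{s})\Smonoidalproduct\theone$. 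The repair is easy and requires no chain identity at all: keep $-\Sbv{\anymatA}{\thedim}{\thedim}\Smonoidalproduct\theone$ and $-\Sbv{\anymatAT}{1}{1}\Smonoidalproduct\theone$ as separate terms, and absorb the latter into the $s=1$ summand of $\sum_{s=1}^{\thedim}\Sbv{\anymatAT}{1}{s}\Smonoidalproduct\anymatPBX{\ell}{s}{\thedim}\anymatPATX{\ell}{1}{\thedim}$ via the relation $\anymatPBX{\ell}{1}{\thedim}\anymatPATX{\ell}{1}{\thedim}=-\sum_{s=2}^{\thedim}\anymatPBX{\ell}{1}{\Sexchanged(s)}\anymatPATX{\ell}{s}{\thedim}+\theone$; with that substitution your base case coincides with the paper's and the rest of your proof stands.
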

\begin{proof}
  \newcommand{\dummyindex}{k}
  The proof goes by induction over $\recvar$. 
  \par
  \emph{Induction base.} By $\Szetafunction{\thedim}{1}=0$ what  needs to be proved in the base case $\recvar=1$ is that
  \begin{IEEEeqnarray*}{rCl}
    \IEEEeqnarraymulticol{3}{l}{
      \thedifferential{\orderind}(\Sbv{\anymatA}{\thedim}{\thedim}\Smonoidalproduct\theone)
}\\      \hspace{1em}&=&\textstyle\sum_{\inds=1}^\thedim\Sbv{\anymatA}{\thedim}{\inds}\Smonoidalproduct\anymatPAX{\orderind}{\inds}{1}\IEEEyesnumber\label{eq:higher_orders_fourth_case_first_helper_1}\\
&&\textstyle{}+\thesplitting{\orderind-2}({-}\sum_{\inds=2}^{\thedim}\Sbv{\anymatAT}{1}{1}\Smonoidalproduct\anymatPBX{\orderind}{1}{\Sexchanged(\inds)}\anymatPAX{\orderind}{\inds}{\thedim}+\sum_{\inds=2}^\thedim\Sbv{\anymatAT}{1}{\inds}\Smonoidalproduct\anymatPBX{\orderind}{\inds}{\thedim}\anymatBX{1}{\thedim}\\
    &&\textstyle{}\hspace{3em}-(-1)^\orderind\Saction\Sbv{\anymatB}{\thedim}{1}\Smonoidalproduct\anymatPATX{\orderind}{1}{\thedim}+(-1)^\orderind\sum_{\inds=1}^\thedim\Sbv{\anymatBT}{1}{\inds}\Smonoidalproduct\anymatPAX{\orderind}{\inds}{1}\\
    &&\textstyle{}\hspace{3em}-\Sbv{\anymatA}{\thedim}{\thedim}\Smonoidalproduct\theone- \Skronecker{\orderind}{4}\sum_{\inds=2}^{\thedim-1}\Sbv{\anymatAT}{\inds}{\inds}\Smonoidalproduct\theone).
  \end{IEEEeqnarray*}
  \par
  Specializing both $\indi$ and $\indj$ to $\thedim$ in \eqref{eq:higher_orders_common_final_0} in Lem\-ma~\ref{lemma:higher_orders_common_final} gives, after some rearrangement,
  \begin{IEEEeqnarray*}{rCl}
  \thedifferential{\orderind}(\Sbv{\anymatA}{\thedim}{\thedim}\Smonoidalproduct \theone)&=&\textstyle\sum_{\inds=1}^\thedim\Sbv{\anymatA}{\thedim}{\inds}\Smonoidalproduct\anymatPAX{\orderind}{\inds}{1}\IEEEyesnumber\label{eq:higher_orders_fourth_case_first_helper_2}\\  &&\textstyle{}+\thesplitting{\orderind-2}\big(\sum_{\inds=1}^\thedim\Sbv{\anymatAT}{1}{\inds}\Smonoidalproduct\anymatPBX{\orderind}{\inds}{\thedim}\anymatPATX{\orderind}{1}{\thedim}-\Sbv{\anymatAT}{1}{1}\Smonoidalproduct\theone\\
  &&\textstyle\hspace{3em}{}-(-1)^\orderind\Saction\Sbv{\anymatB}{\thedim}{1}\Smonoidalproduct\anymatPATX{\orderind}{1}{\thedim}+(-1)^\orderind\Saction\sum_{\inds=1}^\thedim\Sbv{\anymatBT}{1}{\inds}\Smonoidalproduct\anymatPAX{\orderind}{\inds}{1}\\  &&\textstyle\hspace{3em}{}-\Sbv{\anymatA}{\thedim}{\thedim}\Smonoidalproduct\theone-\Skronecker{\orderind}{4}\sum_{\inds=2}^{\thedim-1}\Sbv{\anymatAT}{\inds}{\inds}\Smonoidalproduct\theone\big).
\end{IEEEeqnarray*}
Since $\anymatPBX{\orderind}{1}{\thedim}\anymatPATX{\orderind}{1}{\thedim}=-\sum_{\indt=2}^{\thedim}\anymatPBX{\orderind}{1}{\Sexchanged(\indt)}\anymatPATX{\orderind}{\indt}{\thedim}+1$ the terms in the argument of $\thesplitting{\orderind-2}$ in the second line of \eqref{eq:higher_orders_fourth_case_first_helper_2} can also be written as 
\begin{IEEEeqnarray*}{rCl}
  \IEEEeqnarraymulticol{3}{l}{
    \textstyle  \sum_{\inds=1}^\thedim\Sbv{\anymatAT}{1}{\inds}\Smonoidalproduct\anymatPBX{\orderind}{\inds}{\thedim}\anymatPATX{\orderind}{1}{\thedim}-\Sbv{\anymatAT}{1}{1}\Smonoidalproduct\theone}\\    \hspace{1em}&=&\textstyle-\sum_{\indt=2}^{\thedim}\Sbv{\anymatAT}{1}{1}\Smonoidalproduct\anymatPBX{\orderind}{1}{\Sexchanged(\indt)}\anymatPATX{\orderind}{\indt}{\thedim}+\sum_{\inds=2}^\thedim\Sbv{\anymatAT}{1}{\inds}\Smonoidalproduct\anymatPBX{\orderind}{\inds}{\thedim}\anymatBX{1}{\thedim},
\end{IEEEeqnarray*}
i.e., as the terms  in the argument of $\thesplitting{\orderind-2}$ in the second line of \eqref{eq:higher_orders_fourth_case_first_helper_1}. Hence, the claim is true for $\recvar=1$. 
\par
\emph{Induction step.} Let $\recvar\in\SYnumbers{\thedim}$ be such that $\recvar\leq \thedim-1$ and such that \eqref{eq:higher_orders_fourth_case_first_helper_0} holds. By the argument below, \eqref{eq:higher_orders_fourth_case_first_helper_0} is then still true if $\recvar$ is replaced by $\recvar+1$.
\par
Since $1\leq \recvar$ the monomial $\anymatPBX{\orderind}{1}{\Sexchanged(\recvar+1)}\anymatPATX{\orderind}{\recvar+1}{\thedim}$ is normal by Lemma~\ref{characterization_of_reduced_terms_of_order_two}. The tip of the argument of $\thesplitting{\orderind-2}$ on the right-hand side of \eqref{eq:higher_orders_fourth_case_first_helper_0} is the vector $\Sbv{\anymatAT}{1}{1}\Smonoidalproduct\anymatPBX{\orderind}{1}{\Sexchanged(\recvar+1)}\anymatPATX{\orderind}{\recvar+1}{\thedim}$ for the following reasons. It is greater than any other summand in the grouped sum where it originates because for any $\inds\in\SYnumbers{\thedim}$ with $\recvar+1<\inds$ necessarily $\anymatPBX{\orderind}{1}{\Sexchanged(\inds)}\theorderRstrict\anymatPBX{\orderind}{1}{\Sexchanged(\recvar+1)}$ by Lem\-ma~\ref{lemma:monomial_order_reformulation} and thus $\Sbv{\anymatAT}{1}{1}\Smonoidalproduct\anymatPBX{\orderind}{1}{\Sexchanged(\inds)}\anymatPATX{\orderind}{\inds}{\thedim}\thechainsorderRstrict{\orderind-2}\Sbv{\anymatAT}{1}{1}\Smonoidalproduct\anymatPBX{\orderind}{1}{\Sexchanged(\recvar+1)}\anymatPATX{\orderind}{\recvar+1}{\thedim}$. Any term in the fourth line of \eqref{eq:higher_orders_fourth_case_first_helper_0} is dominated because according to the addendum to  Lem\-ma~\ref{lemma:base_monomial_order}  already $\Sbv{\anymatAT}{1}{\inds}<\Sbv{\anymatAT}{1}{1}$ for any $\inds\in\SYnumbers{\thedim}$ with $1<\inds$. And all the remaining terms are less for degree reasons.
\par
Because by Proposition~\ref{proposition:combinatorial_splitting}
\begin{IEEEeqnarray*}{rCl}
  \textstyle\thereconcatenator{\orderind-2}(  \Sbvfull{\anymatAT}{\orderind-2}{1}{1},\anymatPBX{\orderind}{1}{\Sexchanged(\recvar+1)}\anymatPATX{\orderind}{\recvar+1}{\thedim})&=&(  \Sbvfull{\anymatAT}{\orderind-1}{1}{\recvar+1},\anymatPATX{\orderind}{\recvar+1}{\thedim}),
\end{IEEEeqnarray*}
if on the right-hand side of \eqref{eq:higher_orders_fourth_case_first_helper_0}  the vector $-\Sbv{\anymatAT}{1}{\recvar+1}\Smonoidalproduct\anymatPATX{\orderind}{\recvar+1}{\thedim}$ is added outside of $\thesplitting{\orderind-2}$ and the vector $\thedifferential{\orderind-1}(\Sbv{\anymatAT}{1}{\recvar+1}\Smonoidalproduct\anymatPATX{\orderind}{\recvar+1}{\thedim})$ inside, the equality is preserved.
\par
Obviously, on the exterior of $\thesplitting{\orderind-2}$ in \eqref{eq:higher_orders_fourth_case_first_helper_0} this has the same effect as replacing $\recvar$ by $\recvar+1$ would. At the same time by $2\leq \thedim$ Assumption~\ref{assumption:higher_orders} gives
\begin{IEEEeqnarray*}{rCl}
    \thedifferential{\orderind-1}(\Sbv{\anymatAT}{1}{\recvar+1}\Smonoidalproduct \theone)
&= &\textstyle  \sum_{\inds=1}^\thedim\Sbv{\anymatAT}{1}{\inds}\Smonoidalproduct\anymatPBX{\orderind}{\inds}{\Sexchanged(\recvar+1)}-(-1)^\orderind\Saction\Sbv{\anymatB}{\thedim}{\recvar+1}\Smonoidalproduct\theone\\
&&\textstyle{}-\Skronecker{\recvar+1}{\thedim}(-1)^\orderind(\Sbv{\anymatBT}{1}{1}+\Skronecker{\orderind}{4}\sum_{\inds=2}^{\thedim-1}\Sbv{\anymatBT}{\inds}{\inds})\Smonoidalproduct\theone.
\end{IEEEeqnarray*}
Multiplying  with $\anymatPATX{\orderind}{\recvar+1}{\thedim}$ from the right and splitting off the $\inds=1$ term in the first line yields
\begin{IEEEeqnarray*}{rCl}
  \IEEEeqnarraymulticol{3}{l}{
    \thedifferential{\orderind-1}(\Sbv{\anymatAT}{1}{\recvar+1}\Smonoidalproduct \anymatPATX{\orderind}{\recvar+1}{\thedim})
    }\\
\hspace{1em}&= &\textstyle \Sbv{\anymatAT}{1}{1}\Smonoidalproduct\anymatPBX{\orderind}{1}{\Sexchanged(\recvar+1)}\anymatPATX{\orderind}{\recvar+1}{\thedim}+ \sum_{\inds=2}^\thedim\Sbv{\anymatAT}{1}{\inds}\Smonoidalproduct\anymatPBX{\orderind}{\inds}{\Sexchanged(\recvar+1)}\anymatPATX{\orderind}{\recvar+1}{\thedim}\IEEEyesnumber\label{eq:higher_orders_fourth_case_first_helper_3}\\
&&\textstyle{}\hspace{3em}-(-1)^\orderind\Saction\Sbv{\anymatB}{\thedim}{\recvar+1}\Smonoidalproduct\anymatPATX{\orderind}{\recvar+1}{\thedim}\\
&&\textstyle{}\hspace{6em}-\Skronecker{\thedim}{\recvar+1}(-1)^\orderind(\Sbv{\anymatBT}{1}{1}+\Skronecker{\orderind}{4}\sum_{\inds=2}^{\thedim-1}\Sbv{\anymatBT}{\inds}{\inds})\Smonoidalproduct\anymatPAX{\orderind}{\thedim}{\thedim}, 
\end{IEEEeqnarray*}
where it has been used that $\Skronecker{\recvar+1}{\thedim}\Saction\anymatPATX{\orderind}{\recvar+1}{\thedim}=\Skronecker{\thedim}{\recvar+1}\Saction\anymatPAX{\orderind}{\thedim}{\thedim}$ at the end. Because $\Szetafunction{\thedim}{\recvar}+\Skronecker{\thedim}{\recvar+1}=\Szetafunction{\thedim}{\recvar+1}$  it is now evident that adding the vector \eqref{eq:higher_orders_fourth_case_first_helper_3} to the argument of $\thesplitting{\orderind-2}$  changes \eqref{eq:higher_orders_fourth_case_first_helper_0} in the same way as increasing $\recvar$ by $1$. That completes the induction.
\end{proof}

\begin{lemma}
    \label{lemma:higher_orders_fourth_case_second_helper}
    For any $\anymat\in\thematrices$,
    \begin{IEEEeqnarray*}{rCl}
      \IEEEeqnarraymulticol{3}{l}{
        \thedifferential{\orderind}(\Sbv{\anymatA}{\thedim}{\thedim}\Smonoidalproduct\theone)        }\\      \hspace{2em}&=&\textstyle\sum_{\inds=1}^\thedim\Sbv{\anymatA}{\thedim}{\inds}\Smonoidalproduct\anymatPAX{\orderind}{\inds}{1}-\sum_{\inds=2}^{\thedim}\Sbv{\anymatAT}{1}{\inds}\Smonoidalproduct\anymatPATX{\orderind}{\inds}{\thedim}-(-1)^\orderind\Saction\Sbv{\anymatB}{\thedim}{1}\Smonoidalproduct\theone\IEEEeqnarraynumspace\IEEEyesnumber\label{eq:higher_orders_fourth_case_second_helper_0}\\
      &&\textstyle{}+\thesplitting{\orderind-2}\big((-1)^\orderind\sum_{\inds=1}^\thedim\Sbv{\anymatBT}{1}{\inds}\Smonoidalproduct\anymatPAX{\orderind}{\inds}{1}-\Sbv{\anymatA}{\thedim}{\thedim}\Smonoidalproduct\theone\\
      &&\textstyle\hspace{12em}{}-(\Sbv{\anymatAT}{1}{1}+
    \Skronecker{\orderind}{4}\sum_{\inds=2}^{\thedim-1}\Sbv{\anymatAT}{\inds}{\inds})
    \Smonoidalproduct\theone\big).
  \end{IEEEeqnarray*}    
\end{lemma}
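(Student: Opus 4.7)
The plan is to carry the recursion in Lemma~\ref{lemma:higher_orders_fourth_case_first_helper} one step further, after first simplifying at the boundary $\recvar=\thedim$. Specialize that lemma to $\recvar=\thedim$: the first grouped sum in the third line (indexed by $\inds$ from $\thedim+1$ to $\thedim$) is empty and vanishes, while the double grouped sum in the fourth line collapses by unitarity. Concretely, for each $\inds$ with $2\leq\inds$ the inner $\indt$-sum $\sum_{\indt=1}^{\thedim}\anymatPBX{\orderind}{\inds}{\Sexchanged(\indt)}\anymatPATX{\orderind}{\indt}{\thedim}$ is a Kronecker-delta-like expression that evaluates to zero, so the whole double sum disappears. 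What remains inside $\thesplitting{\orderind-2}$ is the piece $-(-1)^\orderind\sum_{\inds=1}^{\thedim}\Sbv{\anymatB}{\thedim}{\inds}\Smonoidalproduct\anymatPATX{\orderind}{\inds}{\thedim}+(-1)^\orderind\sum_{\inds=1}^{\thedim}\Sbv{\anymatBT}{1}{\inds}\Smonoidalproduct\anymatPAX{\orderind}{\inds}{1}-\Sbv{\anymatA}{\thedim}{\thedim}\Smonoidalproduct\theone-\Skronecker{\orderind}{4}\sum_{\inds=2}^{\thedim-1}\Sbv{\anymatAT}{\inds}{\inds}\Smonoidalproduct\theone-(-1)^\orderind(\Sbv{\anymatBT}{1}{1}+\Skronecker{\orderind}{4}\sum_{\inds=2}^{\thedim-1}\Sbv{\anymatBT}{\inds}{\inds})\Smonoidalproduct\anymatPAX{\orderind}{\thedim}{\thedim}$.

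Next, identify the tip of this new argument as $\Sbv{\anymatB}{\thedim}{1}\Smonoidalproduct\anymatPATX{\orderind}{1}{\thedim}$, the $\inds=1$ summand of the first grouped sum above. Normality of the monomial $\anymatPATX{\orderind}{1}{\thedim}$ comes from Lemma~\ref{characterization_of_reduced_terms_of_order_two} via $(1,\thedim)\neq(\thedim,1)$, which holds by $2\leq\thedim$. Dominance over the other terms in the same grouped sum and over the $\Sbv{\anymatBT}{1}{\inds}\Smonoidalproduct\anymatPAX{\orderind}{\inds}{1}$ and $\Sbv{\anymatBT}{\inds}{\inds}\Smonoidalproduct\anymatPAX{\orderind}{\thedim}{\thedim}$ terms follows from the addendum to Lemma~\ref{lemma:base_monomial_order}, and the remaining degree-$\orderind-2$ terms $\Sbv{\anymatA}{\thedim}{\thedim}\Smonoidalproduct\theone$ and $\Sbv{\anymatAT}{\inds}{\inds}\Smonoidalproduct\theone$ are less for degree reasons. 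Proposition~\ref{proposition:combinatorial_splitting}, read with base matrix $\anymatB$ (so that the formula's $\anymatBT$, the $\Sskewdagger$ of the base, is here $\anymatAT$), yields $\thereconcatenator{\orderind-2}(\Sbvfull{\anymatB}{\orderind-2}{\thedim}{1},\anymatPATX{\orderind}{1}{\thedim})=(\Sbvfull{\anymatB}{\orderind-1}{\thedim}{1},\theone)$. Hence one may add $-(-1)^\orderind\Sbv{\anymatB}{\thedim}{1}\Smonoidalproduct\theone$ outside $\thesplitting{\orderind-2}$ and $(-1)^\orderind\thedifferential{\orderind-1}(\Sbv{\anymatB}{\thedim}{1}\Smonoidalproduct\theone)$ inside without disturbing the equality. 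The exterior change produces precisely the missing outside term of \eqref{eq:higher_orders_fourth_case_second_helper_0}.

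For the interior change, invoke Assumption~\ref{assumption:higher_orders} with the base matrix $\anymatA$ replaced by $\anymatB$, so that $\anymatAT$ in the formula becomes $\anymatBT$, $\anymatB$ becomes $\anymatA$, and $\anymatBT$ becomes $\anymatAT$ (confirmed via the table in the proof of Lemma~\ref{lemma:any_two_generators}); one also checks that $(\anymatB)^{PA,\orderind-1}_{\inds,\thedim}=\anymatPATX{\orderind}{\inds}{\thedim}$ and $(\anymatB)^{PAT,\orderind-1}_{\thedim,\thedim}=\anymatPAX{\orderind}{\thedim}{\thedim}$ by parity case distinction. Using $\Skronecker{\orderind-1}{3}=\Skronecker{\orderind}{4}$, the resulting expression for $(-1)^\orderind\thedifferential{\orderind-1}(\Sbv{\anymatB}{\thedim}{1}\Smonoidalproduct\theone)$ contains three summands, two of which precisely cancel the $\Sbv{\anymatB}{\thedim}{\inds}$-type and $\Sbv{\anymatBT}{\inds}{\inds}\Smonoidalproduct\anymatPAX{\orderind}{\thedim}{\thedim}$-type pieces of the simplified argument, while the third contributes a new $-\Sbv{\anymatAT}{1}{1}\Smonoidalproduct\theone$ summand (coming from the $(-1)^{\orderind-1}\Sbv{\anymatAT}{1}{1}\Smonoidalproduct\theone$ term of the assumption, multiplied by $(-1)^\orderind$). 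The resulting identity is exactly \eqref{eq:higher_orders_fourth_case_second_helper_0}. The main obstacle will be the translation of the $P$-notation under the $\anymatA\to\anymatB$ substitution and the careful bookkeeping that confirms the claimed cancellations of signs and indices; once this correspondence is established, the proof reduces to the same add-outside/subtract-inside manipulation used throughout the previous subsections.
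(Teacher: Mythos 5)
Your proposal is correct and takes essentially the same route as the paper's proof: specialize Lemma~\ref{lemma:higher_orders_fourth_case_first_helper} at $\recvar=\thedim$ (the double sum dying because $\sum_{\indt=1}^{\thedim}\anymatPBX{\orderind}{\inds}{\Sexchanged(\indt)}\anymatPATX{\orderind}{\indt}{\thedim}=\Skronecker{\inds}{1}\Saction\theone$ for $2\leq\inds$), identify the tip $\Sbv{\anymatB}{\thedim}{1}\Smonoidalproduct\anymatPATX{\orderind}{1}{\thedim}$, apply $\thereconcatenator{\orderind-2}$ via Proposition~\ref{proposition:combinatorial_splitting} read for the base $\anymatB$, and cancel against Assumption~\ref{assumption:higher_orders} evaluated at $(\anymatB,\thedim,1)$, which leaves exactly the extra $-\Sbv{\anymatAT}{1}{1}\Smonoidalproduct\theone$ inside $\thesplitting{\orderind-2}$. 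Two citations are slightly off but cosmetic: the comparisons of $\Sbv{\anymatBT}{1}{\inds}$ and $\Sbv{\anymatBT}{\inds}{\inds}$ with $\Sbv{\anymatB}{\thedim}{1}$ involve different matrices and so need the full Lemma~\ref{lemma:base_monomial_order} rather than only its addendum, and normality of the degree-one monomial $\anymatPATX{\orderind}{1}{\thedim}$ is automatic because every element of $\thecore$ has degree two, so Lemma~\ref{characterization_of_reduced_terms_of_order_two} is not the pertinent reference there.
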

\begin{proof}
  Instatiating \eqref{eq:higher_orders_fourth_case_first_helper_0} in Lem\-ma~\ref{lemma:higher_orders_fourth_case_first_helper} at $\recvar=\thedim$  renders the  third and fourth lines of \eqref{eq:higher_orders_fourth_case_first_helper_0} zero. In the case of the third line that is obvious. The fourth line vanishes because
  \begin{IEEEeqnarray*}{rCl}      \textstyle\sum_{\inds=2}^\thedim\Sbv{\anymatAT}{1}{\inds}\Smonoidalproduct\sum_{\indt=1}^{\thedim}\anymatPBX{\orderind}{\inds}{\Sexchanged(\indt)}\anymatPATX{\orderind}{\indt}{\thedim}&=&\textstyle\sum_{\inds=2}^\thedim\Skronecker{\inds}{1}\Saction\Sbv{\anymatAT}{1}{\inds}\Smonoidalproduct\theone=0.
  \end{IEEEeqnarray*}
  Hence, what survives the specialization to $\recvar=\thedim$ is the identity
  \begin{IEEEeqnarray*}{rCl}
    \IEEEeqnarraymulticol{3}{l}{
      \thedifferential{\orderind}(\Sbv{\anymatA}{\thedim}{\thedim}\Smonoidalproduct\theone)
    }\\
    \hspace{2em}&=&\textstyle\sum_{\inds=1}^\thedim\Sbv{\anymatA}{\thedim}{\inds}\Smonoidalproduct\anymatPAX{\orderind}{\inds}{1}-\sum_{\inds=2}^{\thedim}\Sbv{\anymatAT}{1}{\inds}\Smonoidalproduct\anymatPATX{\orderind}{\inds}{\thedim}\IEEEyesnumber\label{eq:higher_orders_fourth_case_second_helper_1}\\
    &&\textstyle{}+\thesplitting{\orderind-2}(-(-1)^\orderind\sum_{\inds=1}^\thedim\Sbv{\anymatB}{\thedim}{\inds}\Smonoidalproduct\anymatPATX{\orderind}{\inds}{\thedim}+(-1)^\orderind\sum_{\inds=1}^\thedim\Sbv{\anymatBT}{1}{\inds}\Smonoidalproduct\anymatPAX{\orderind}{\inds}{1}\\
    &&\textstyle{}\hspace{5em}-\Sbv{\anymatA}{\thedim}{\thedim}\Smonoidalproduct\theone-
    \Skronecker{\orderind}{4}\sum_{\inds=2}^{\thedim-1}\Sbv{\anymatAT}{\inds}{\inds}
    \Smonoidalproduct\theone\\    &&\textstyle{}\hspace{8em}-(-1)^\orderind(\Sbv{\anymatBT}{1}{1}+\Skronecker{\orderind}{4}\sum_{\inds=2}^{\thedim-1}\Sbv{\anymatB}{\inds}{\inds})\Smonoidalproduct\anymatPAX{\orderind}{\thedim}{\thedim}).
  \end{IEEEeqnarray*}
  \par 
The vector $\Sbv{\anymatB}{\thedim}{1}\Smonoidalproduct\anymatPATX{\orderind}{1}{\thedim}$ is the tip of the argument of $\thesplitting{\orderind-2}$ in  \eqref{eq:higher_orders_fourth_case_second_helper_1}. Indeed, any other term in the same grouped sum is less by the addendum to Lem\-ma~\ref{lemma:base_monomial_order} because $\Sbv{\anymatB}{\thedim}{\inds}\theorderRstrict\Sbv{\anymatB}{\thedim}{1}$ for any $\inds\in\SYnumbers{\thedim}$ with $1<\inds$. Any term of the form $\Sbv{\anymatBT}{1}{\inds}\Smonoidalproduct\anymatPAX{\orderind}{\inds}{1}$ for some $\inds\in\SYnumbers{\thedim}$ is dominated because already $\Sbv{\anymatBT}{1}{\inds}\theorderRstrict\Sbv{\anymatB}{\thedim}{1}$ by  Lem\-ma~\ref{lemma:base_monomial_order}. For the same reason $\Sbv{\anymatBT}{\inds}{\inds}<\Sbv{\anymatB}{\thedim}{1}$ for any $\inds\in\SYnumbers{\thedim}$ with $\inds<\thedim$. Hence, $\Sbv{\anymatB}{\thedim}{1}\Smonoidalproduct\anymatPATX{\orderind}{1}{\thedim}$ is also greater than any term in the fifth line of \eqref{eq:higher_orders_fourth_case_second_helper_1}. Of course all other terms are less already for reasons of degree.
  \par
  In consequence and because, according to Proposition~\ref{proposition:combinatorial_splitting},
  \begin{IEEEeqnarray*}{rCl}
    \textstyle\thereconcatenator{\orderind-2}(\Sbvfull{\anymatB}{\orderind-2}{\thedim}{1},\anymatPATX{\orderind}{1}{\thedim})=(\Sbvfull{\anymatB}{\orderind-1}{\thedim}{1},\theone),
  \end{IEEEeqnarray*}
  if $-(-1)^\orderind\Saction\Sbv{\anymatB}{\thedim}{1}\Smonoidalproduct\theone$ is added to the outisde of $\thesplitting{\orderind-2}$ and $(-1)^\orderind\thedifferential{\orderind-1}(\Sbv{\anymatB}{\thedim}{1}\Smonoidalproduct\theone)$ to the inside on the right-hand side of \eqref{eq:higher_orders_fourth_case_second_helper_1} the identity remains true.
  \par
  With this change, the outside of $\thesplitting{\orderind-2}$ in \eqref{eq:higher_orders_fourth_case_second_helper_1} becomes that in \eqref{eq:higher_orders_fourth_case_second_helper_0}. And since $2\leq \thedim$ Assumption~\ref{assumption:higher_orders} implies
  \begin{IEEEeqnarray*}{rCl}
(-1)^\orderind    \thedifferential{\orderind-1}(\Sbv{\anymatB}{\thedim}{1}\Smonoidalproduct \theone)
&= &\textstyle  (-1)^\orderind\sum_{\inds=1}^\thedim\Sbv{\anymatB}{\thedim}{\inds}\Smonoidalproduct\anymatPATX{\orderind}{\inds}{\thedim}-\Saction\Sbv{\anymatAT}{1}{1}\Smonoidalproduct\theone\IEEEyesnumber\label{eq:higher_orders_fourth_case_second_helper_2}\\
&&\textstyle{}+(-1)^\orderind(\Sbv{\anymatBT}{1}{1}+\Skronecker{\orderind}{4}\sum_{\inds=2}^{\thedim-1}\Sbv{\anymatBT}{\inds}{\inds})\Smonoidalproduct\anymatPAX{\orderind}{\thedim}{\thedim}.
\end{IEEEeqnarray*}
This shows that adding the term \eqref{eq:higher_orders_fourth_case_second_helper_2} to the argument of $\thesplitting{\orderind-2}$ in \eqref{eq:higher_orders_fourth_case_second_helper_1} transforms the latter into the argument in \eqref{eq:higher_orders_fourth_case_second_helper_0}. That concludes the proof.
\end{proof}
\begin{proposition}
  \label{proposition:higher_orders_fourth_case}
  For any $\anymat\in\thematrices$,
  \begin{IEEEeqnarray*}{rCl}
\thedifferential{\orderind}(\Sbv{\anymatA}{\thedim}{\thedim}\Smonoidalproduct\theone)&=&\textstyle\sum_{\inds=1}^\thedim\Sbv{\anymatA}{\thedim}{\inds}\Smonoidalproduct\anymatPAX{\orderind}{\inds}{1}-\sum_{\inds=2}^{\thedim}\Sbv{\anymatAT}{1}{\inds}\Smonoidalproduct\anymatPATX{\orderind}{\inds}{\thedim}\IEEEeqnarraynumspace\IEEEyesnumber\label{eq:higher_orders_fourth_case_0}\\
      &&\textstyle\hspace{7.5em}-(-1)^\orderind\Saction\Sbv{\anymatB}{\thedim}{1}\Smonoidalproduct\theone+(-1)^\orderind\Saction\Sbv{\anymatBT}{1}{\thedim}\Smonoidalproduct\theone.
  \end{IEEEeqnarray*}    
\end{proposition}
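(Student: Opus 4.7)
The plan is to resume from the identity established in Lemma~\ref{lemma:higher_orders_fourth_case_second_helper}, which already exhibits all four leading terms of the target formula, and to eliminate the trailing $\thesplitting{\orderind-2}$ correction by one further recursion step, in direct analogy with Proposition~\ref{proposition:third_order_fourth_case}.

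First I would identify the tip of the argument of $\thesplitting{\orderind-2}$ in Lemma~\ref{lemma:higher_orders_fourth_case_second_helper} as the vector $\Sbv{\anymatBT}{1}{1}\Smonoidalproduct\anymatPAX{\orderind}{1}{1}$. The single-tensor summands $\Sbv{\anymatA}{\thedim}{\thedim}\Smonoidalproduct\theone$, $\Sbv{\anymatAT}{1}{1}\Smonoidalproduct\theone$ and, if $\orderind=4$, also $\Sbv{\anymatAT}{\inds}{\inds}\Smonoidalproduct\theone$ for $2\leq\inds\leq\thedim-1$ are dominated already by degree. Among the competing terms of the form $\Sbv{\anymatBT}{1}{\inds}\Smonoidalproduct\anymatPAX{\orderind}{\inds}{1}$ for $\inds\in\SYnumbers{\thedim}$, the addendum to Lemma~\ref{lemma:base_monomial_order} yields $\Sbv{\anymatBT}{1}{\inds}\theorderRstrict\Sbv{\anymatBT}{1}{1}$ whenever $1<\inds$, so the claimed tip is as stated. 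By Proposition~\ref{proposition:combinatorial_splitting} the pair $\thereconcatenator{\orderind-2}(\Sbvfull{\anymatBT}{\orderind-2}{1}{1},\anymatPAX{\orderind}{1}{1})$ equals $(\Sbvfull{\anymatBT}{\orderind-1}{1}{\thedim},\theone)$, so the identity in Lemma~\ref{lemma:higher_orders_fourth_case_second_helper} remains valid after adding on the right-hand side the term $(-1)^\orderind\Saction\Sbv{\anymatBT}{1}{\thedim}\Smonoidalproduct\theone$ outside of $\thesplitting{\orderind-2}$ and simultaneously the term $-(-1)^\orderind\Saction\thedifferential{\orderind-1}(\Sbv{\anymatBT}{1}{\thedim}\Smonoidalproduct\theone)$ inside.

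It then only remains to check that this last interior term exactly cancels the existing argument of $\thesplitting{\orderind-2}$, which I would do by invoking Assumption~\ref{assumption:higher_orders} specialized to the generic matrix $\anymatBT$ and to indices $\indj\Seqpd 1$ and $\indi\Seqpd\thedim$. Since $\thedim\geq 2$, the factor $\Skronecker{\indj}{\thedim}$ annihilates the third and fourth lines of the assumption, while $\Skronecker{\indj}{1}\Skronecker{\indi}{\thedim}=1$ activates the fifth. The table in Lemma~\ref{lemma:any_two_generators} provides the identifications of the derivatives of $\anymatBT$ needed to re-express the resulting formula back in terms of $\anymatA$, $\anymatAT$, $\anymatB$ and $\anymatBT$, while the definition of the parity-switching $P$-symbol gives $\anymatPBTX{\orderind-1}{\cdot}{\cdot}=\anymatPAX{\orderind}{\cdot}{\cdot}$ and $\Skronecker{\orderind-1}{3}=\Skronecker{\orderind}{4}$. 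The main obstacle will be precisely this sign-and-symbol bookkeeping; once it is carried out correctly, $\thedifferential{\orderind-1}(\Sbv{\anymatBT}{1}{\thedim}\Smonoidalproduct\theone)$ is seen to equal, up to the overall scalar $-(-1)^\orderind$, exactly the argument of $\thesplitting{\orderind-2}$ in Lemma~\ref{lemma:higher_orders_fourth_case_second_helper}, so that after the addition the splitting is applied to the zero vector and the claimed formula emerges.
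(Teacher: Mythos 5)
Your proposal is correct and follows essentially the same route as the paper's proof: identify $\Sbv{\anymatBT}{1}{1}\Smonoidalproduct\anymatPAX{\orderind}{1}{1}$ as the tip of the splitting argument in Lemma~\ref{lemma:higher_orders_fourth_case_second_helper} (degree domination plus the addendum to Lemma~\ref{lemma:base_monomial_order}), apply Proposition~\ref{proposition:combinatorial_splitting} to add $(-1)^\orderind\Saction\Sbv{\anymatBT}{1}{\thedim}\Smonoidalproduct\theone$ outside and $-(-1)^\orderind\thedifferential{\orderind-1}(\Sbv{\anymatBT}{1}{\thedim}\Smonoidalproduct\theone)$ inside, and observe via Assumption~\ref{assumption:higher_orders} (with $\anymatBT$, $\indj=1$, $\indi=\thedim$) that this exactly cancels the splitting argument. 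Your extra remarks on the $\anymatBT$-substitution bookkeeping (the table of Lemma~\ref{lemma:any_two_generators}, $\Skronecker{\orderind-1}{3}=\Skronecker{\orderind}{4}$, and the parity shift of the $P$-symbol) just make explicit what the paper leaves implicit when quoting the assumption.
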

\begin{proof}
  The tip of the argument of $\thesplitting{\orderind-2}$ in \eqref{eq:higher_orders_fourth_case_second_helper_0} in Lem\-ma~\ref{lemma:higher_orders_fourth_case_second_helper} is  $\Sbv{\anymatBT}{1}{1}\Smonoidalproduct\anymatPAX{\orderind}{1}{1}$. Any other term in the grouped sum in the third line of \eqref{eq:higher_orders_fourth_case_second_helper_0} is less because  $\Sbv{\anymatBT}{1}{\inds}\theorderRstrict\Sbv{\anymatBT}{1}{1}$ for any $\inds\in\SYnumbers{\thedim}$ with $1<\inds$ by the addendum to  Lem\-ma~\ref{lemma:base_monomial_order}. And all remaining terms are dominated for degree reasons.
  \par
  Since by Proposition~\ref{proposition:combinatorial_splitting}
  \begin{IEEEeqnarray*}{rCl}
    \textstyle\thereconcatenator{\orderind-2}(\Sbvfull{\anymatBT}{\orderind-2}{1}{1},\anymatPAX{\orderind}{1}{1})&=&(\Sbvfull{\anymatBT}{\orderind-1}{1}{\thedim},\theone)
  \end{IEEEeqnarray*}
  it follows that the veracity of \eqref{eq:higher_orders_fourth_case_second_helper_0} is not affected by adding the vector $(-1)^\orderind\Saction\Sbv{\anymatBT}{1}{\thedim}\Smonoidalproduct\theone$ to the outside of  $\thesplitting{\orderind-2}$ and $-(-1)^\orderind\thedifferential{\orderind-1}(\Sbv{\anymatBT}{1}{\thedim}\Smonoidalproduct\theone)$ to the inside.
  \par
  This change makes the outsides of $\thesplitting{\orderind-2}$ in \eqref{eq:higher_orders_fourth_case_second_helper_0} and in \eqref{eq:higher_orders_fourth_case_0} agree. On the other hand, since by Assumption~\ref{assumption:higher_orders}
\begin{IEEEeqnarray*}{rCl}
-(-1)^\orderind    \thedifferential{\orderind-1}(\Sbv{\anymatBT}{1}{\thedim}\Smonoidalproduct \theone)
&= &\textstyle  -(-1)^\orderind\sum_{\inds=1}^\thedim\Sbv{\anymatBT}{1}{\inds}\Smonoidalproduct\anymatPAX{\orderind}{\inds}{1}+\Sbv{\anymatA}{\thedim}{\thedim}\Smonoidalproduct\theone\IEEEyesnumber\label{eq:higher_orders_fourth_case_1}\\
&&\textstyle\hspace{6em}{}+(\Sbv{\anymatAT}{1}{1}+\Skronecker{\orderind}{4}\sum_{\inds=2}^{\thedim-1}\Sbv{\anymatAT}{\inds}{\inds})\Smonoidalproduct\theone
\end{IEEEeqnarray*}
the argument of $\thesplitting{\orderind-2}$ in \eqref{eq:higher_orders_fourth_case_second_helper_0} is actually the negative of the term \eqref{eq:higher_orders_fourth_case_1}. That proves \eqref{eq:higher_orders_fourth_case_0}.
\end{proof}

\subsubsection{Synthesis}
The four particular results obtained in the different cases assemble into the claim from the \hyperref[main-result]{Main result}. Recall that $4\leq \orderind$.
\begin{theorem}
  \label{theorem:higher_orders}
  For any $\anymat\in\thematrices$ and $(\indj,\indi)\in \SYnumbers{\thedim}^{\Ssetmonoidalproduct 2}$,
  \begin{IEEEeqnarray*}{rCl}
    \IEEEeqnarraymulticol{3}{l}{
      \thedifferential{\orderind}(\Sbv{\anymatA}{\indj}{\indi}\Smonoidalproduct \theone)
    }\\
\hspace{1em}    &= &\textstyle  \sum_{\inds=1}^\thedim\Sbv{\anymatA}{\indj}{\inds}\Smonoidalproduct\anymatPAX{\orderind}{\inds}{\Sexchanged(\indi)}+(-1)^{\orderind}\Saction\Sbv{\anymatBT}{\Sexchanged(\indj)}{\indi}\Smonoidalproduct\theone\IEEEyesnumber\label{eq:higher_orders_0}\\
&&\textstyle{}+\Skronecker{\indj}{\thedim}(\Sbv{\anymatAT}{1}{1}\Smonoidalproduct\anymatPATX{\orderind}{\Sexchanged(\indi)}{\thedim}-\Skronecker{\indi}{\thedim}(\sum_{\inds=1}^\thedim\Sbv{\anymatAT}{1}{\inds}\Smonoidalproduct\anymatPATX{\orderind}{\inds}{\thedim}+(-1)^{\orderind}\Saction\Sbv{\anymatB}{\thedim}{1}\Smonoidalproduct\theone))\\
&&\textstyle{}+\Skronecker{\indj}{1}\Skronecker{\indi}{\thedim}(-1)^{\orderind}\Saction \Sbv{\anymatB}{1}{1}\Smonoidalproduct\theone.    
  \end{IEEEeqnarray*}
\end{theorem}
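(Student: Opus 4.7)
The plan is to deduce the unified identity \eqref{eq:higher_orders_0} by specializing it in each of the four cases covered by Propositions~\ref{proposition:higher_orders_first_case}, \ref{proposition:higher_orders_second_case}, \ref{proposition:higher_orders_third_case} and \ref{proposition:higher_orders_fourth_case}, verifying in each case that it collapses into the case\-/specific formula established there. This is the same synthesis pattern as in the proof of Theorem~\ref{theorem:third_order}, but simpler, because at orders $\orderind\geq 4$ the $\Skronecker{\orderind}{3}$ correction terms appearing in the Main result vanish identically, and hence do not need to be carried along.

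First I would handle the two cases where at most one of $\indj,\indi$ equals $\thedim$. If $\indj\neq\thedim\neq\indi$, both Kronecker factors $\Skronecker{\indj}{\thedim}$ and $\Skronecker{\indj}{1}\Skronecker{\indi}{\thedim}$ vanish, leaving precisely \eqref{eq:higher_orders_first_case_0}. If $\indj=\thedim\neq\indi$, then $\Skronecker{\indi}{\thedim}=0$ removes the parenthesized subtraction in the middle line, and $\Skronecker{\indj}{1}=0$ (using $2\leq\thedim$) kills the last line, so what survives reduces to \eqref{eq:higher_orders_second_case_0}. If $\indj\neq\thedim=\indi$, the middle line vanishes entirely, while the last line contributes $\Skronecker{\indj}{1}(-1)^\orderind\Sbv{\anymatB}{1}{1}\Smonoidalproduct\theone$, matching \eqref{eq:higher_orders_third_case_0}.

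The one step requiring a little care is the case $\indj=\indi=\thedim$, which I would treat last. Here again $\Skronecker{\indj}{1}=0$ eliminates the last line, whereas all of the middle line contributes. Collecting its terms yields, outside of the first and generic $\Sbv{\anymatBT}{\Sexchanged(\indj)}{\indi}$ summands,
\begin{IEEEeqnarray*}{rCl}
\textstyle\Sbv{\anymatAT}{1}{1}\Smonoidalproduct\anymatPATX{\orderind}{1}{\thedim}-\sum_{\inds=1}^\thedim\Sbv{\anymatAT}{1}{\inds}\Smonoidalproduct\anymatPATX{\orderind}{\inds}{\thedim}-(-1)^\orderind\Saction\Sbv{\anymatB}{\thedim}{1}\Smonoidalproduct\theone,
\end{IEEEeqnarray*}
in which the $\inds=1$ summand of the middle grouped sum cancels the first term, leaving $-\sum_{\inds=2}^\thedim\Sbv{\anymatAT}{1}{\inds}\Smonoidalproduct\anymatPATX{\orderind}{\inds}{\thedim}-(-1)^\orderind\Sbv{\anymatB}{\thedim}{1}\Smonoidalproduct\theone$. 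Combined with $\sum_{\inds=1}^\thedim\Sbv{\anymatA}{\thedim}{\inds}\Smonoidalproduct\anymatPAX{\orderind}{\inds}{1}$ from the first line and $(-1)^\orderind\Sbv{\anymatBT}{1}{\thedim}\Smonoidalproduct\theone$ coming from the generic second summand (using $\Sexchanged(\thedim)=1$), this reproduces \eqref{eq:higher_orders_fourth_case_0}.

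I do not expect any of this to be subtle: the heavy lifting has been performed by the case propositions, each of which already resolves its own recursion and incorporates all relevant cancellations. The only place where attention is required is tracking the sign $(-1)^\orderind$ and the index-reflection $\Sexchanged$ correctly in the fourth case; once those are properly accounted for, the four specializations of \eqref{eq:higher_orders_0} exactly reproduce equations \eqref{eq:higher_orders_first_case_0}, \eqref{eq:higher_orders_second_case_0}, \eqref{eq:higher_orders_third_case_0} and \eqref{eq:higher_orders_fourth_case_0}, which completes the proof.
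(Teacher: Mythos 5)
Your proposal is correct and proceeds exactly as the paper's own proof: a four-way case distinction on $(\indj,\indi)$ showing that \eqref{eq:higher_orders_0} collapses to \eqref{eq:higher_orders_first_case_0}, \eqref{eq:higher_orders_second_case_0}, \eqref{eq:higher_orders_third_case_0} and \eqref{eq:higher_orders_fourth_case_0}, including the same cancellation of $\Sbv{\anymatAT}{1}{1}\Smonoidalproduct\anymatPATX{\orderind}{1}{\thedim}$ against the $\inds=1$ term in the case $\indj=\indi=\thedim$.
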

\begin{proof}
The first two lines of \eqref{eq:higher_orders_0} agree with \eqref{eq:higher_orders_first_case_0} from Pro\-po\-si\-tion~\ref{proposition:higher_orders_first_case}. Hence, in the case that $j\neq \thedim$ and $\indi\neq \thedim$ the claim is true.
  \par 
If $\indj=\thedim$ but $\indi\neq \thedim$, then the only non-zero term in the last two lines of     \eqref{eq:higher_orders_0} is $\Sbv{\anymatAT}{1}{1}\Smonoidalproduct\anymatPATX{\orderind}{\Sexchanged(\indi)}{\thedim}$. That is why  \eqref{eq:higher_orders_0} is identical to  \eqref{eq:higher_orders_second_case_0} from Proposition~\ref{proposition:higher_orders_second_case} in this case.
  \par 
In the opposite instance that $\indj\neq \thedim$ but $\indi=\thedim$ the third line of  \eqref{eq:higher_orders_0} vanishes and the fourth reads $\Skronecker{\indj}{1}(-1)^{\orderind}\Saction \Sbv{\anymatB}{1}{1}\Smonoidalproduct\theone$.  That is precisely the term which needs to be added on the right-hand side to turn the first two lines of \eqref{eq:higher_orders_0} into \eqref{eq:higher_orders_third_case_0} from Proposition~\ref{proposition:higher_orders_third_case}.
  \par
  In the final case where $\indj=\indi=\thedim$ the last line of \eqref{eq:higher_orders_0} is zero and what remains of the right-hand side is
  \begin{IEEEeqnarray*}{l}
\textstyle  \sum_{\inds=1}^\thedim\Sbv{\anymatA}{\thedim}{\inds}\Smonoidalproduct\anymatPAX{\orderind}{\inds}{1}+(-1)^{\orderind}\Saction\Sbv{\anymatBT}{1}{\thedim}\Smonoidalproduct\theone\IEEEyesnumber\label{eq:higher_orders_1}\\
\textstyle{}+\Sbv{\anymatAT}{1}{1}\Smonoidalproduct\anymatPATX{\orderind}{1}{\thedim}-\sum_{\inds=1}^\thedim\Sbv{\anymatAT}{1}{\inds}\Smonoidalproduct\anymatPATX{\orderind}{\inds}{\thedim}-(-1)^{\orderind}\Saction\Sbv{\anymatB}{\thedim}{1}\Smonoidalproduct\theone
\end{IEEEeqnarray*}
Because the term $\Sbv{\anymatAT}{1}{1}\Smonoidalproduct\anymatPATX{\orderind}{1}{\thedim}$ also occurs in the grouped sum in the middle of the second line of  \eqref{eq:higher_orders_1}, but with opposite sign, this proves that
\eqref{eq:higher_orders_0} is actually the same as 
  \eqref{eq:higher_orders_fourth_case_0} from Proposition~\ref{proposition:higher_orders_fourth_case} in this case. Hence, the claim holds for all $(\indj,\indi)$.
\end{proof}
}

\section{\texorpdfstring{Recomputing the cohomology of $U_n^+$}{Recomputing the cohomology of the free unitary quantum group}}
\label{section:cohomology}
{
  \newcommand{\orderind}{\ell}
  \newcommand{\indi}{i}
  \newcommand{\indj}{j}
  \newcommand{\anypoly}{p}
  \newcommand{\inds}{s}
  \newcommand{\indt}{t}
  \newcommand{\thehom}{g}
  \newcommand{\themat}[3]{g^{#1}_{#2,#3}} 
  In this section it is demonstrated that the Anick resolution of the right $\thealgebra$\-/module $\themodule$ obtained in Section~\ref{section:anick_computation} is good enough to compute
\begin{IEEEeqnarray*}{rCl}
\SextX{\thealgebra}{\themodule}{\themodule}{\Sargph},
\end{IEEEeqnarray*}
which in the case $\thefield=\Scomps$ gives the qunatum group co\-ho\-mo\-lo\-gy $H^{\Sargph}(\widehat{U_n^+})$ of the discrete dual of the free unitary quantum group $U_n^+$. Note that, as special cases of much further-reaching results, the orders $1$ and $2$ of  $\SextX{\thealgebra}{\themodule}{\themodule}{\Sargph}$ had already been computed by Das, Franz, Kula and Skalski in \cite{DasFranzKulaSkalski2018}  and the higher orders by Baraquin, Franz, Gerhold, Kula and Tobolski in \cite{BaraquinFranzGerholdKulaTobolski2023}. 
\par
Let $\ShomO{\thealgebra}$ be the external hom functor from the category of right $\thealgebra$\-/modules to the category of $\thefield$\-/vector spaces. 
Since $\thechainsmodule{\orderind}$ is finite\-/dimensional as an $\thealgebra$\-/module for each $\orderind\in\Sintegersnn$ it is enough to compute the defect of the $\thefield$\-/linear map $\ShomOX{\thealgebra}{\thedifferential{\orderind}}{\themodule}$ for each $\orderind\in\Sintegersp$.
\par
As in Section~\ref{section:anick_computation} the difference between $\anypoly$ and  $\anypoly+\theideal$ will be suppressed in the notation for any $\anypoly\in\thefreealg$.
\begin{notation}
  \label{notation:homs_on_basis}
  For any $\orderind\in \Sintegersp$, any $\thehom\in \ShomOX{\thealgebra}{\thechainsmodule{\orderind}}{\themodule}$, any $\anymat\in\thematrices$ and any $(\indj,\indi)\in\SYnumbers{\thedim}^{\Ssetmonoidalproduct 2}$  let
  \begin{IEEEeqnarray*}{rCl}
    \themat{\anymat}{\indj}{\indi}&\Seqpd &\thehom(\Sbvfull{\anymat}{\orderind}{\indj}{\indi}\Smonoidalproduct\theone).
  \end{IEEEeqnarray*}
\end{notation}

\subsection{First order}
\label{section:cohomology_first_order}
Computing the defect of $\ShomOX{\thealgebra}{\thedifferential{1}}{\themodule}$ is easy.
\begin{lemma}
  \label{lemma:homkernel_first_order}
For any $\thehom\in\ShomOX{\thealgebra}{\thechainsmodule{0}}{\themodule}$, any  $\anymat\in\thematrices$ and  $ (\indj,\indi)\in\SYnumbers{\thedim}^{\Ssetmonoidalproduct 2}$,
  \begin{IEEEeqnarray*}{rCl}
  (\thehom\Scomposition\thedifferential{1})(\Sbv{\anymat}{\indj}{\indi}\Smonoidalproduct\theone)&=&0.
  \end{IEEEeqnarray*}
\end{lemma}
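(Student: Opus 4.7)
The plan is to exploit the explicit formula for $\thedifferential{1}$ recorded in Pro\-po\-si\-tion~\ref{proposition:first_order}, namely
\begin{IEEEeqnarray*}{rCl}
  \thedifferential{1}(\Sbv{\anymat}{\indj}{\indi}\Smonoidalproduct\theone)&=&\anymatAX{\indj}{\Sexchanged(\indi)}-\Skronecker{\indj}{\Sexchanged(\indi)}\Saction\theone+\theideal,
\end{IEEEeqnarray*}
together with the very definition of the right $\thealgebra$\-/module structure on $\themodule$ in terms of the counit $\thecounit$. The entire content of the lemma is that the image of $\thedifferential{1}$ lies in the augmentation ideal $\Sker(\thecounit)\subseteq \thealgebra$, so there is no real obstacle here.

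Concretely, I would first note that any right $\thealgebra$\-/linear map $\Sxfromto{\thehom}{\thealgebra}{\themodule}$ factors as $\thehom(\anypoly+\theideal)=\thehom(\theone+\theideal)\Saction\thecounit(\anypoly+\theideal)$ for every $\anypoly\in\thefreealg$, because $\anypoly+\theideal=(\theone+\theideal)\Saction(\anypoly+\theideal)$ in $\thealgebra$ and the right action of $\thealgebra$ on $\themodule$ is defined through $\thecounit$. Applied to the displayed formula, and using the defining identity $\thecounit(\anymatAX{\indj}{\Sexchanged(\indi)}+\theideal)=\Skronecker{\indj}{\Sexchanged(\indi)}$ from the \hyperref[main-result]{Main result}, this yields
\begin{IEEEeqnarray*}{rCl}
  (\thehom\Scomposition\thedifferential{1})(\Sbv{\anymat}{\indj}{\indi}\Smonoidalproduct\theone)&=&\thehom(\theone+\theideal)\Saction(\Skronecker{\indj}{\Sexchanged(\indi)}-\Skronecker{\indj}{\Sexchanged(\indi)})=0,
\end{IEEEeqnarray*}
as claimed.

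The practical significance worth emphasising is that this lemma says $\ShomOX{\thealgebra}{\thedifferential{1}}{\themodule}$ is the zero map. Consequently, the kernel of $\ShomOX{\thealgebra}{\thedifferential{2}}{\themodule}$ already computes all of $\SextX{\thealgebra}{\themodule}{\themodule}{1}$ without any quotient being necessary, which will streamline the computation of the first\-/order co\-ho\-mo\-lo\-gy in the subsequent statements.
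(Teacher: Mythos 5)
Your argument is correct and is essentially the paper's own proof: both use the formula for $\thedifferential{1}$ from Proposition~\ref{proposition:first_order} together with right $\thealgebra$-linearity of $\thehom$ on the regular module $\thechainsmodule{0}=\thealgebra$ and the definition of $\themodule$ via $\thecounit$, so that the two terms cancel. Your closing remark that $\ShomOX{\thealgebra}{\thedifferential{1}}{\themodule}$ is therefore the zero map is exactly how the paper exploits the lemma in Lemma~\ref{lemma:cohomology_first_order}.
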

\begin{proof}
Since $\thechainsmodule{0}$ is the regular $\thealgebra$-module and since by Proposition~\ref{proposition:first_order} 
  \begin{IEEEeqnarray*}{rCl}
      (\thehom\Scomposition\thedifferential{1})(\Sbv{\anymat}{\indj}{\indi}\Smonoidalproduct\theone)&=&\thehom(\anymatAX{\indj}{\indi}-\Skronecker{\indj}{\indi}\Saction\theone)=\thehom(\anymatAX{\indj}{\indi})-\Skronecker{\indj}{\indi}\Saction\thehom(\theone)
    \end{IEEEeqnarray*}
the fact     that $\thehom(\anymatAX{\indj}{\indi})=\Skronecker{\indj}{\indi}\Saction\thehom(\theone)$ by  definition of $\themodule$ proves the claim.
\end{proof}

\begin{lemma}
  \label{lemma:cohomology_first_order}
The defect and rank of  $\ShomOX{\thealgebra}{\thedifferential{1}}{\themodule}$ are $1$ and $0$ respectively.
\end{lemma}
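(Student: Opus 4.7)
The plan is to exploit Lem\-ma~\ref{lemma:homkernel_first_order} to show that $\ShomOX{\thealgebra}{\thedifferential{1}}{\themodule}$ is identically zero, after which the rank and defect fall out of an elementary dimension count.

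First I would note that $\thechainsmodule{0}=\thealgebra$ is a free right $\thealgebra$-module of rank one generated by the class of $\theone$, so that evaluation at $\theone+\theideal$ gives a $\thefield$-linear isomorphism $\ShomOX{\thealgebra}{\thealgebra}{\themodule}\cong\themodule$. In particular, the source of $\ShomOX{\thealgebra}{\thedifferential{1}}{\themodule}$ is one-dimensional over $\thefield$, while its target $\ShomOX{\thealgebra}{\thechainsmodule{1}}{\themodule}\cong\themodule^{2\thedim^2}$ has dimension $2\thedim^2$.

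Next, for any $\thehom\in\ShomOX{\thealgebra}{\thealgebra}{\themodule}$ Lem\-ma~\ref{lemma:homkernel_first_order} asserts that $(\thehom\Scomposition\thedifferential{1})(\Sbv{\anymat}{\indj}{\indi}\Smonoidalproduct\theone)=0$ for every $\anymat\in\thematrices$ and every $(\indj,\indi)\in\SYnumbers{\thedim}^{\Ssetmonoidalproduct 2}$. Since $\{\Sbv{\anymat}{\indj}{\indi}\Smonoidalproduct\theone\Ssetbuilder\anymat\in\thematrices\Sand(\indj,\indi)\in\SYnumbers{\thedim}^{\Ssetmonoidalproduct 2}\}$ is a generating set of $\thechainsmodule{1}$ as a right $\thealgebra$-module and the composite $\thehom\Scomposition\thedifferential{1}$ is right $\thealgebra$-linear, this forces $\thehom\Scomposition\thedifferential{1}=0$ on all of $\thechainsmodule{1}$. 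Hence $\ShomOX{\thealgebra}{\thedifferential{1}}{\themodule}$ is the zero map; its rank is $0$ and, by rank-nullity, its defect equals the dimension of the source, namely $1$.

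There is no real obstacle here: the substantive content sits entirely in Lem\-ma~\ref{lemma:homkernel_first_order}, whose own proof rests only on the explicit form of $\thedifferential{1}$ from Proposition~\ref{proposition:first_order} together with the defining property $\thecounit(\anymatAX{\indj}{\indi})=\Skronecker{\indj}{\indi}$ of the trivial module $\themodule$. Once Lem\-ma~\ref{lemma:homkernel_first_order} is available, the present lemma is just bookkeeping.
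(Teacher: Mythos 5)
Your argument is correct and coincides with the paper's own proof: both reduce the lemma to Lem\-ma~\ref{lemma:homkernel_first_order} (which makes $\thehom\Scomposition\thedifferential{1}$ vanish, since it vanishes on a generating set of the right $\thealgebra$-module $\thechainsmodule{1}$) and to the observation that $\ShomOX{\thealgebra}{\thechainsmodule{0}}{\themodule}\cong\thefield$ is one-dimensional, after which rank--nullity gives rank $0$ and defect $1$. Your version merely spells out the generating-set step that the paper leaves implicit.
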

\begin{proof} 
The domain $\thefield$\-/vector space $\ShomOX{\thealgebra}{\thechainsmodule{0}}{\themodule}=\ShomOX{\thealgebra}{\thealgebra}{\themodule}\cong \thefield$ of  $\ShomOX{\thealgebra}{\thedifferential{1}}{\themodule}$ is one\-/dimensional. Hence, the claim follows from Lemma~\ref{lemma:homkernel_first_order} by the dimension formula for $\thefield$\-/linear maps. 
\end{proof}

\subsection{Second order}
\label{section:cohomology_second_order}
The defect and rank of $\ShomOX{\thealgebra}{\thedifferential{2}}{\themodule}$ can still be computed without much effort. Recall Notation~\ref{notation:homs_on_basis}.
\begin{lemma}
  \label{lemma:homkernel_second_order_helper}
For any $\thehom\in\ShomOX{\thealgebra}{\thechainsmodule{1}}{\themodule}$, any  $\anymat\in\thematrices$ and  $ (\indj,\indi)\in\SYnumbers{\thedim}^{\Ssetmonoidalproduct 2}$,
\begin{IEEEeqnarray*}{rCl}
  (\thehom\Scomposition\thedifferential{2})(\Sbv{\anymat}{\indj}{\indi}\Smonoidalproduct\theone)&=&\textstyle\themat{\anymat}{\indj}{\Sexchanged(\indi)}+\themat{\anymatB}{\Sexchanged(\indi)}{\indj}-\Skronecker{\indj}{\thedim}\Skronecker{\indi}{\thedim}\sum_{\inds=2}^\thedim (\themat{\anymatA}{\Sexchanged(\inds)}{\inds}+\themat{\anymatB}{\inds}{\Sexchanged(\inds)}).
\IEEEeqnarraynumspace \IEEEyesnumber\label{eq:homkernel_second_order_helper_0}
  \end{IEEEeqnarray*}
\end{lemma}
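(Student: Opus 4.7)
The plan is a direct pushforward of Theorem~\ref{theorem:second_order} through $\thehom$. By that theorem,
\begin{IEEEeqnarray*}{rCl}
  \thedifferential{2}(\Sbv{\anymatA}{\indj}{\indi}\Smonoidalproduct \theone)
  &=&\textstyle\sum_{\inds=1}^\thedim\Sbv{\anymatA}{\indj}{\inds}\Smonoidalproduct\anymatBTX{\inds}{\Sexchanged(\indi)}+\Sbv{\anymatBT}{\Sexchanged(\indj)}{\indi}\Smonoidalproduct\theone\\
  &&\textstyle\hspace{.5em}{}-\Skronecker{\indj}{\thedim}\Skronecker{\indi}{\thedim}\sum_{\inds=2}^{\thedim}\big(\sum_{\indt=1}^{\thedim}\Sbv{\anymatA}{\indt}{\inds}\Smonoidalproduct\anymatBTX{\inds}{\Sexchanged(\indt)}+\Sbv{\anymatBT}{\inds}{\Sexchanged(\inds)}\Smonoidalproduct\theone\big).
\end{IEEEeqnarray*}
Since $\thehom$ is a morphism of right $\thealgebra$-modules and the action of $\thealgebra$ on $\themodule$ factors through $\thecounit$, every term of the form $\thehom(\Sbv{\anymatA}{\indt}{\inds}\Smonoidalproduct \anypoly)$ can be rewritten as $\thecounit(\anypoly)\Saction \themat{\anymat}{\indt}{\inds}$. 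Recalling from the \hyperref[main-result]{Main result} that $\thecounit(\anymatAX{\indj}{\indi})=\Skronecker{\indj}{\indi}$ for every $\anymat\in\thematrices$, one obtains $\thecounit(\anymatBTX{\inds}{\Sexchanged(\indi)})=\Skronecker{\inds}{\Sexchanged(\indi)}$ and similarly $\thecounit(\anymatBTX{\inds}{\Sexchanged(\indt)})=\Skronecker{\inds}{\Sexchanged(\indt)}$.

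Applying $\thehom$ and substituting these values therefore produces
\begin{IEEEeqnarray*}{rCl}
  (\thehom\Scomposition\thedifferential{2})(\Sbv{\anymat}{\indj}{\indi}\Smonoidalproduct\theone)
  &=&\textstyle\sum_{\inds=1}^\thedim\Skronecker{\inds}{\Sexchanged(\indi)}\Saction\themat{\anymat}{\indj}{\inds}+\themat{\anymatBT}{\Sexchanged(\indj)}{\indi}\\
  &&\textstyle\hspace{.5em}{}-\Skronecker{\indj}{\thedim}\Skronecker{\indi}{\thedim}\sum_{\inds=2}^{\thedim}\big(\sum_{\indt=1}^\thedim\Skronecker{\inds}{\Sexchanged(\indt)}\Saction\themat{\anymat}{\indt}{\inds}+\themat{\anymatBT}{\inds}{\Sexchanged(\inds)}\big),
\end{IEEEeqnarray*}
and the inner sums collapse via the Kronecker deltas to $\themat{\anymat}{\indj}{\Sexchanged(\indi)}$ and $\themat{\anymat}{\Sexchanged(\inds)}{\inds}$, respectively.

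The only remaining step is to reconcile the resulting $\anymatBT$\-/indexed terms with the $\anymatB$\-/indexed terms that appear in \eqref{eq:homkernel_second_order_helper_0}. This follows from Lemma~\ref{lemma:evil_identities} applied at $\orderind=1$: since $(\anymatBT)^T=\anymatB$, the lemma gives $\Sbvfull{\anymatBT}{1}{\Sexchanged(\indj)}{\indi}=\Sbvfull{\anymatB}{1}{\Sexchanged(\indi)}{\indj}$ (take $\othermat=\anymatBT$, $(\indt,\inds)=(\Sexchanged(\indj),\indi)$, and the second clause of the lemma identifies this with $(\Sexchanged(\indi),\Sexchanged(\Sexchanged(\indj)))=(\Sexchanged(\indi),\indj)$ on the $\anymatB$ side) and, analogously, $\Sbvfull{\anymatBT}{1}{\inds}{\Sexchanged(\inds)}=\Sbvfull{\anymatB}{1}{\inds}{\Sexchanged(\inds)}$, where the $\Sexchanged$-swap is a fixed point of the index transformation. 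Hence $\themat{\anymatBT}{\Sexchanged(\indj)}{\indi}=\themat{\anymatB}{\Sexchanged(\indi)}{\indj}$ and $\themat{\anymatBT}{\inds}{\Sexchanged(\inds)}=\themat{\anymatB}{\inds}{\Sexchanged(\inds)}$. Substituting these identifications yields exactly \eqref{eq:homkernel_second_order_helper_0}.

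The computation is essentially routine; the single point that requires care is the bookkeeping of the index permutations in the last step, so that the $\anymatBT$ labels produced mechanically by Theorem~\ref{theorem:second_order} are matched to the $\anymatB$ labels used in the target formula via Lemma~\ref{lemma:evil_identities}. All the remaining manipulations are applications of $\thefield$-linearity, the counit property, and collapsing of Kronecker deltas.
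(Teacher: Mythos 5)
Your proof is correct and follows essentially the same route as the paper: apply Theorem~\ref{theorem:second_order}, push the formula through $\thehom$ using the right-module property and the counit values to collapse the Kronecker deltas, and then invoke Lemma~\ref{lemma:evil_identities} at $\orderind=1$ to convert the $\anymatBT$-labeled $1$-chains $\Sbvfull{\anymatBT}{1}{\Sexchanged(\indj)}{\indi}$ and $\Sbvfull{\anymatBT}{1}{\inds}{\Sexchanged(\inds)}$ into the $\anymatB$-labeled ones appearing in \eqref{eq:homkernel_second_order_helper_0}. The index bookkeeping in your last step matches the paper's.
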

\begin{proof}
 Theorem~\ref{theorem:second_order} shows that $(\thehom\Scomposition\thedifferential{2})(\Sbv{\anymat}{\indj}{\indi}\Smonoidalproduct\theone)$ is given by
  \begin{IEEEeqnarray*}{l}
\textstyle\sum_{\inds=1}^\thedim\thehom(\Sbv{\anymatA}{\indj}{\inds}\Smonoidalproduct\anymatBTX{\inds}{\Sexchanged(\indi)})+\thehom(\Sbv{\anymatBT}{\Sexchanged(\indj)}{\indi}\Smonoidalproduct\theone)\\
\hspace{4em}\textstyle{}-\Skronecker{\indj}{\thedim}\Skronecker{\indi}{\thedim}\sum_{\inds=2}^{\thedim}(\sum_{\indt=1}^{\thedim}\thehom(\Sbv{\anymatA}{\indt}{\inds}\Smonoidalproduct\anymatBTX{\inds}{\Sexchanged(\indt)})+\thehom(\Sbv{\anymatBT}{\inds}{\Sexchanged(\inds)}\Smonoidalproduct\theone)).
\end{IEEEeqnarray*}
By $\thehom(\Sbv{\anymatA}{\indj}{\inds}\Smonoidalproduct\anymatBTX{\inds}{\Sexchanged(\indi)})=\Skronecker{\inds}{\Sexchanged(\indi)}\Saction \themat{\anymatA}{\indj}{\inds}$ and $\thehom(\Sbv{\anymatA}{\indt}{\inds}\Smonoidalproduct\anymatBTX{\inds}{\Sexchanged(\indt)})=\Skronecker{\inds}{\Sexchanged(\indt)}\Saction\themat{\anymatA}{\indt}{\inds}$ this is equal to
\begin{IEEEeqnarray*}{l}
\textstyle\themat{\anymatA}{\indj}{\Sexchanged(\indi)}+\themat{\anymatBT}{\Sexchanged(\indj)}{\indi}-\Skronecker{\indj}{\thedim}\Skronecker{\indi}{\thedim}\sum_{\inds=2}^{\thedim}(\themat{\anymatA}{\Sexchanged(\inds)}{\inds}+\themat{\anymatBT}{\inds}{\Sexchanged(\inds)}).\IEEEyesnumber\label{eq:homkernel_second_order_helper_1}
  \end{IEEEeqnarray*}
Since for any   $\inds\in\SYnumbers{\thedim}$ Lemma~\ref{lemma:evil_identities} implies $\Sbvfull{\anymatBT}{1}{\Sexchanged(\indj)}{\indi}=\Sbvfull{\anymatB}{1}{\Sexchanged(\indi)}{\indj}$ and $\Sbvfull{\anymatBT}{1}{\inds}{\Sexchanged(\inds)}=\Sbvfull{\anymatB}{1}{\inds}{\Sexchanged(\inds)}$ and, thus,
  \begin{IEEEeqnarray*}{rCl}
   \themat{\anymatBT}{\Sexchanged(\indj)}{\indi}=\themat{\anymatB}{\Sexchanged(\indi)}{\indj},&\hspace{1em}\hspace{1em}& \themat{\anymatBT}{\inds}{\Sexchanged(\inds)}=\themat{\anymatB}{\inds}{\Sexchanged(\inds)} 
  \end{IEEEeqnarray*}
the vector \eqref{eq:homkernel_second_order_helper_1} is the right-hand side of \eqref{eq:homkernel_second_order_helper_0}. 
\end{proof}

\begin{lemma}
  \label{lemma:homkernel_second_order}
  Any $\thehom\in\ShomOX{\thealgebra}{\thechainsmodule{1}}{\themodule}$ satisfies  $\thehom\Scomposition\thedifferential{2}=0$ if and only if for any $\anymat\in\thematrices$ and  any $(\indj,\indi)\in\SYnumbers{\thedim}^{\Ssetmonoidalproduct 2}$,
  \begin{IEEEeqnarray*}{rCl}
  \themat{\anymatB}{\indj}{\indi}=-\themat{\anymat}{\indi}{\indj}.  \IEEEyesnumber\label{eq:homkernel_second_order_0}
\end{IEEEeqnarray*}
In fact, \eqref{eq:homkernel_second_order_0} for any $(\indj,\indi)\in\SYnumbers{\thedim}^{\Ssetmonoidalproduct 2}$ is satisfied by any $\anymat\in\thematrices$ as soon as it is by an arbitrary one $\anymat\in\thematrices$.
\end{lemma}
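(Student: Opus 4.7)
The plan is to start from the closed-form identity in Lemma~\ref{lemma:homkernel_second_order_helper}, which expresses $(\thehom\Scomposition\thedifferential{2})(\Sbv{\anymat}{\indj}{\indi}\Smonoidalproduct\theone)$ as a short sum whose only quadratic piece (the one involving $\Skronecker{\indj}{\thedim}\Skronecker{\indi}{\thedim}$) is supported on the single index $(\indj,\indi)=(\thedim,\thedim)$. A vanishing condition for $\thehom\Scomposition\thedifferential{2}$ thus splits naturally into a \emph{generic case} $(\indj,\indi)\neq(\thedim,\thedim)$ and a single \emph{special case} $(\indj,\indi)=(\thedim,\thedim)$.

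First I would treat the generic case. Setting $(\indj,\indi)\neq(\thedim,\thedim)$ in \eqref{eq:homkernel_second_order_helper_0} gives $\themat{\anymatA}{\indj}{\Sexchanged(\indi)}+\themat{\anymatB}{\Sexchanged(\indi)}{\indj}=0$, and reindexing with $\indi\mapsto\Sexchanged(\indi)$ (which is an involution of $\SYnumbers{\thedim}$) together with swapping the two surviving indices yields exactly \eqref{eq:homkernel_second_order_0} for every $(\indj,\indi)\in\SYnumbers{\thedim}^{\Ssetmonoidalproduct 2}$ except $(1,\thedim)$. The special case $(\indj,\indi)=(\thedim,\thedim)$ reads $\themat{\anymatA}{\thedim}{1}+\themat{\anymatB}{1}{\thedim}=\sum_{\inds=2}^{\thedim}(\themat{\anymatA}{\Sexchanged(\inds)}{\inds}+\themat{\anymatB}{\inds}{\Sexchanged(\inds)})$, and here the observation to exploit is that every summand on the right carries indices $(\Sexchanged(\inds),\inds)$ and $(\inds,\Sexchanged(\inds))$ with $\inds\geq 2$, so neither is the excluded pair $(1,\thedim)$. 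Consequently the generic identity, already established, forces $\themat{\anymatB}{\inds}{\Sexchanged(\inds)}+\themat{\anymatA}{\Sexchanged(\inds)}{\inds}=0$ for each such $\inds$; the right\-/hand sum collapses to zero, and the special case becomes precisely \eqref{eq:homkernel_second_order_0} for the missing value $(\indj,\indi)=(1,\thedim)$. The converse direction, that \eqref{eq:homkernel_second_order_0} (for all indices, for the chosen $\anymat$) forces \eqref{eq:homkernel_second_order_helper_0} to vanish, is then immediate: the first two terms cancel after the reindexing and the sum over $\inds\in\{2,\ldots,\thedim\}$ has only pairs of the form just discussed, so it is termwise zero.

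For the addendum I would invoke Lemma~\ref{lemma:evil_identities} in the case $\orderind=1$, which supplies the identity $\Sbvfull{\anymatAT}{1}{\indt}{\inds}=\Sbvfull{\anymatA}{1}{\Sexchanged(\inds)}{\Sexchanged(\indt)}$ and hence the scalar relation $\themat{\anymatAT}{\indt}{\inds}=\themat{\anymatA}{\Sexchanged(\inds)}{\Sexchanged(\indt)}$ between the matrices of $\thehom$ at different members of $\thematrices$ (analogously between $\anymatB$ and $\anymatBT$). Since $(\anymatA)^{\Sstar\Stranspose}=\anymat^{\Sskewdagger}$ and $((\anymatA)^{\Sskewstar})^{\Sskewstar}=\anymatA$, each of $\anymatAT$, $\anymatB$, $\anymatBT$ can be expressed in terms of $\anymatA$ via these two involutive transports, and a short, purely algebraic substitution shows that \eqref{eq:homkernel_second_order_0} for $\anymatA$ implies \eqref{eq:homkernel_second_order_0} for each of the other three. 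The main obstacle is simply the careful bookkeeping of which $(\indj,\indi)$\-/pairs get swapped with which under $\Sexchanged$ in these substitutions; no non\-/trivial idea beyond Lemma~\ref{lemma:homkernel_second_order_helper} and Lemma~\ref{lemma:evil_identities} is needed.
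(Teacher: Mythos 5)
Your proof is correct and follows essentially the same route as the paper: both reduce to the vanishing of the expression in Lemma~\ref{lemma:homkernel_second_order_helper} and transport the identity to the remaining matrices via Lemma~\ref{lemma:evil_identities}; the only (equally valid) variation is that you recover the missing pair $(1,\thedim)$ by collapsing the sum in the special instance $(\indj,\indi)=(\thedim,\thedim)$, whereas the paper never uses that instance and instead evaluates the generic condition with $(\anymatB,\indj,\Sexchanged(\indi))$ in place of $(\anymat,\indj,\indi)$ and once at $(\anymat,1,1)$. One cosmetic caveat: the parenthetical claim that $\anymat^{\Sstar\Stranspose}$ equals $\anymatBT$ is not literally true, since the skew operations also reverse the indices by $\Sexchanged$, but this does not affect your argument because the substitutions you actually perform use the correct relations $\themat{\anymatAT}{\indt}{\inds}=\themat{\anymat}{\Sexchanged(\inds)}{\Sexchanged(\indt)}$ (and the analogue for $\anymatB$, $\anymatBT$) supplied by Lemma~\ref{lemma:evil_identities}, exactly as in the paper's Step~2.
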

\begin{proof}
\emph{Step~1: Equivalence.}  By Lemma~\ref{lemma:homkernel_second_order_helper} it is enough to prove that the conditions from the claim are satisfied if and only if  for any $\anymat\in\thematrices$ and any $ (\indj,\indi)\in\SYnumbers{\thedim}^{\Ssetmonoidalproduct 2}$, 
\begin{IEEEeqnarray*}{rCl}
\textstyle \themat{\anymat}{\indj}{\Sexchanged(\indi)}+\themat{\anymatB}{\Sexchanged(\indi)}{\indj}-\Skronecker{\indj}{\thedim}\Skronecker{\indi}{\thedim}\sum_{\inds=2}^\thedim (\themat{\anymatA}{\Sexchanged(\inds)}{\inds}+\themat{\anymatB}{\inds}{\Sexchanged(\inds)})&=&0.\IEEEyesnumber\label{eq:homkernel_second_order_2}
\end{IEEEeqnarray*}
Obviously, if \eqref{eq:homkernel_second_order_0} holds for any  $\anymat\in\thematrices$ and $(\indj,\indi)\in\SYnumbers{\thedim}^{\Ssetmonoidalproduct 2}$, then so does \eqref{eq:homkernel_second_order_2}.
\par
Conversely, suppose \eqref{eq:homkernel_second_order_2} is true for any $\anymat\in\thematrices$ and $(\indj,\indi)\in\SYnumbers{\thedim}^{\Ssetmonoidalproduct 2}$. Let $\anymat\in\thematrices$ and $(\indj,\indi)\in\SYnumbers{\thedim}^{\Ssetmonoidalproduct 2}$ be arbitrary. If $(\indj,\indi)\neq (\thedim,1)$, which is to say $(\indj,\Sexchanged(\indi))\neq (\thedim,\thedim)$, condition  \eqref{eq:homkernel_second_order_2} with $(\anymatB,\indj,\Sexchanged(\indi))$ in the role of $(\anymat,\indj,\indi)$ collapes to the demand that $  \themat{\anymatB}{\indj}{\indi}+\themat{\anymatA}{\indi}{\indj}=0$. Hence, all that is left to prove is that also $\themat{\anymatB}{\thedim}{1}=-\themat{\anymatA}{1}{\thedim}$.
But for $(\anymat,1,1)$ in the role of $(\anymat,\indj,\indi)$ condition  \eqref{eq:homkernel_second_order_2} reads $\themat{\anymatA}{1}{\thedim}+\themat{\anymatB}{\thedim}{1}=0$. Hence, the claimed equivalence is true.
\par
  \emph{Step~2: Addendum.} It remains to prove the addendum. Hence, suppose there exists some $\othermat\in\thematrices$ such that, for  $\anymat=\othermat$ specifically, \eqref{eq:homkernel_second_order_0} holds for any $(\indj,\indi)\in\SYnumbers{\thedim}^{\Ssetmonoidalproduct 2}$. By symmetry it is then true that \eqref{eq:homkernel_second_order_0} is satisfied for any $\anymat\in\{\othermatA,\othermatB\}$ and $(\indj,\indi)\in\SYnumbers{\thedim}^{\Ssetmonoidalproduct 2}$.
Because Lemma~\ref{lemma:evil_identities} and the  assumption imply
\begin{IEEEeqnarray*}{rCl}
 \themat{\othermatBT}{\indj}{\indi}=\themat{\othermatB}{\Sexchanged(\indi)}{\Sexchanged(\indj)}=-\themat{\othermatA}{\Sexchanged(\indj)}{\Sexchanged(\indi)}=-\themat{\othermatAT}{\indi}{\indj} 
\end{IEEEeqnarray*}
equation  \eqref{eq:homkernel_second_order_0} is actually satisfied for any $\anymat\in\thematrices$ and $(\indj,\indi)\in\SYnumbers{\thedim}^{\Ssetmonoidalproduct 2}$. That concludes the proof.
\end{proof}

\begin{lemma}
  \label{lemma:cohomology_second_order}
Both defect and rank of $\ShomOX{\thealgebra}{\thedifferential{2}}{\themodule}$ are $\thedim^2$.
\end{lemma}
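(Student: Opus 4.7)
The plan is to use Lemma~\ref{lemma:homkernel_second_order}, which identifies the kernel of $\ShomOX{\thealgebra}{\thedifferential{2}}{\themodule}$, together with the rank--nullity theorem for $\thefield$-linear maps between finite-dimensional $\thefield$-vector spaces. Since it suffices to determine any one of rank and defect, I will compute the defect.

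First I would record the dimension of the domain. Because $\thechainsmodule{1}=\thefield\thechains{1}\SmonoidalproductC{\thefield}\thealgebra$ is the free right $\thealgebra$-module over $\thechains{1}$ and because $\themodule\cong\thefield$ as $\thefield$-vector spaces, any $\thefield$-linear basis of $\ShomOX{\thealgebra}{\thechainsmodule{1}}{\themodule}$ is in bijection with $\thechains{1}$, via evaluation on basis elements $c\Smonoidalproduct\theone$ for $c\in\thechains{1}$. By Proposition~\ref{proposition:chains} in the case $\orderind=1$, the set $\thechains{1}=\thegens$ has cardinality $2\thedim^2$. Hence the domain is $2\thedim^2$-dimensional.

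Next I would compute the defect. By Lemma~\ref{lemma:homkernel_second_order} and its addendum, a hom $\thehom\in\ShomOX{\thealgebra}{\thechainsmodule{1}}{\themodule}$ lies in the kernel of $\ShomOX{\thealgebra}{\thedifferential{2}}{\themodule}$ if and only if, for just one (equivalently, every) $\anymat\in\thematrices$, the identity $\themat{\anymatB}{\indj}{\indi}=-\themat{\anymat}{\indi}{\indj}$ holds for every $(\indj,\indi)\in\SYnumbers{\thedim}^{\Ssetmonoidalproduct 2}$. I would specialize to $\anymat=\theuni$. Unwinding the definitions using $\Sbvfull{\theuni}{1}{\indj}{\indi}=\thewuniX{\indj}{\Sexchanged(\indi)}$ and $\Sbvfull{\theuni\SskewstarP}{1}{\indj}{\indi}=\thebuniX{\Sexchanged(\indj)}{\indi}$ shows that the values $\themat{\theuniA}{\indj}{\indi}$ for $(\indj,\indi)\in\SYnumbers{\thedim}^{\Ssetmonoidalproduct 2}$ correspond bijectively to the values of $\thehom$ on the $\thedim^2$ white generators, and the $\themat{\theuniB}{\indj}{\indi}$ correspond bijectively to the values on the $\thedim^2$ black generators. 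The $\thedim^2$ kernel constraints each express one black-generator value as the negative of a white-generator value (under the bijection $(\indj,\indi)\leftrightarrow(\indi,\indj)$ combined with appropriate applications of $\Sexchanged$) and leave the white-generator values entirely unconstrained. Thus the kernel consists exactly of those homs obtained by choosing the $\thedim^2$ values $\themat{\theuniA}{\indj}{\indi}$ freely in $\thefield$ and then solving for the $\themat{\theuniB}{\indj}{\indi}$. Consequently, the defect of $\ShomOX{\thealgebra}{\thedifferential{2}}{\themodule}$ equals $\thedim^2$.

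Finally, the rank--nullity theorem applied to the $\thefield$-linear map $\ShomOX{\thealgebra}{\thedifferential{2}}{\themodule}$ yields rank $2\thedim^2-\thedim^2=\thedim^2$. No real obstacle arises: the main bookkeeping task is to translate the condition from Lemma~\ref{lemma:homkernel_second_order}, which is phrased in terms of the labels $(\anymat,\indj,\indi)$, into a statement about the values of $\thehom$ on the basis $\thegens$ of $\thechains{1}$, and to verify that the $\thedim^2$ resulting linear conditions are independent --- which is automatic, since each condition pins down a distinct coordinate (one per black generator) in terms of the remaining (white-generator) ones.
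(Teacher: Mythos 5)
Your proof is correct and takes essentially the same approach as the paper: compute the dimension $2\thedim^2$ of $\ShomOX{\thealgebra}{\thechainsmodule{1}}{\themodule}$, use Lemma~\ref{lemma:homkernel_second_order} to see that the kernel is parametrized freely by the $\thedim^2$ values $\themat{\theuniA}{\indj}{\indi}$, and apply the dimension formula. The only difference is that you spell out more explicitly the bijection between the labels $(\anymat,\indj,\indi)$ and the actual generators in $\thegens$, which the paper leaves implicit.
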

\begin{proof}
  The domain  space $\ShomOX{\thealgebra}{\thechainsmodule{1}}{\themodule}=\ShomOX{\thealgebra}{\thefield\thechains{1}\SmonoidalproductC{\thefield}\thealgebra}{\themodule}\cong \thefield\thechains{1}$ of $\ShomOX{\thealgebra}{\thedifferential{2}}{\themodule}$ has dimension $|\thechains{1}|=|\{\Sbvfull{\theuniA}{1}{\indj}{\indi},\Sbvfull{\theuniB}{1}{\indj}{\indi}\}_{\indj,\indi=1}^\thedim|=2\thedim^2$. Lemma~\ref{lemma:homkernel_second_order} shows that the kernel of $\ShomOX{\thealgebra}{\thedifferential{2}}{\themodule}$ is in particular free over the set $\{\Sbvfull{\theuniA}{1}{\indj}{\indi}\}_{\indj,\indi=1}^\thedim$, which has $\thedim^2$ many elements. Hence, the claim follows by the dimension formula.
\end{proof}
\subsection{Third and higher orders}
\label{section:cohomology_third_order}
It is most economic to first treat all the maps $\ShomOX{\thealgebra}{\thedifferential{\orderind}}{\themodule}$  for $3\leq \orderind$ simultaneously up to a certain point and then distinguish whether $\orderind=3$ or not.
\subsubsection{Common results for orders three and higher} The strategy remains the same as for orders $1$ and $2$. Only now, more effort is required.
\begin{lemma}
  \label{lemma:homkernel_third_and_higher_orders_helper}
  For any $\orderind\in \Sintegersp$ with $3\leq \orderind$, any $\thehom\in\ShomOX{\thealgebra}{\thechainsmodule{\orderind-1}}{\themodule}$, any $\anymat\in\thematrices$ and any $(\indj,\indi)\in\SYnumbers{\thedim}^{\Ssetmonoidalproduct 2}$,
  \begin{IEEEeqnarray*}{rCl}
    (\thehom\Scomposition\thedifferential{\orderind})(\Sbv{\anymatA}{\indj}{\indi}\Smonoidalproduct\theone)&=&
    \themat{\anymatA}{\indj}{\Sexchanged(\indi)}+(-1)^\orderind\Saction\themat{\anymatBT}{\Sexchanged(\indj)}{\indi}\IEEEyesnumber\label{eq:homkernel_third_and_higher_orders_helper_0}\\
    &&\textstyle{}-\Skronecker{\indj}{\thedim}\Skronecker{\indi}{\thedim}\Saction(\themat{\anymatAT}{1}{\thedim}+(-1)^\orderind\Saction\themat{\anymatB}{\thedim}{1})\\
    &&\textstyle{}+\Skronecker{\indj}{1}\Skronecker{\indi}{\thedim}(-1)^\orderind\Saction(\themat{\anymatB}{1}{1}+\Skronecker{\orderind}{3}\sum_{\inds=2}^{\thedim-1}\themat{\anymatB}{\inds}{\inds})\\
    &&\textstyle{}+\Skronecker{\indj}{\thedim}\Skronecker{\indi}{1}\Saction(\themat{\anymatAT}{1}{1}+\Skronecker{\orderind}{3}\Saction\sum_{\inds=2}^{\thedim-1}\themat{\anymatAT}{\inds}{\inds}).
  \end{IEEEeqnarray*}  
\end{lemma}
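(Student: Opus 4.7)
The plan is to apply the formula for $\thedifferential{\orderind}$ directly — this is precisely what Theorem~\ref{theorem:third_order} provides when $\orderind = 3$ and what Theorem~\ref{theorem:higher_orders} provides when $4 \leq \orderind$ (note that both can be written in a single unified form as in the \hyperref[main-result]{Main result}) — and then push $\thehom$ through. The fundamental observation is that since $\thehom$ is a morphism of right $\thealgebra$-modules and $\themodule = \thefield_\thecounit$ has the action $x \cdot a = \thecounit(a) \Saction x$, one has $\thehom(v \Smonoidalproduct a) = \thecounit(a) \Saction \thehom(v \Smonoidalproduct \theone)$ for any $v \in \thefield\thechains{\orderind-1}$ and any $a \in \thealgebra$. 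So after applying $\thehom$ every tensor factor on the right collapses to a scalar.

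The required scalars are all easy: for any $\anymat \in \thematrices$ and any $\indj, \indi \in \SYnumbers{\thedim}$, the counit of a matrix entry $\anymatAX{\indj}{\indi}$ is $\Skronecker{\indj}{\indi}$, so in particular $\thecounit(\anymatPAX{\orderind}{\inds}{\Sexchanged(\indi)}) = \Skronecker{\inds}{\Sexchanged(\indi)}$ as well as $\thecounit(\anymatPATX{\orderind}{\Sexchanged(\indi)}{\thedim}) = \Skronecker{\Sexchanged(\indi)}{\thedim} = \Skronecker{\indi}{1}$ and $\thecounit(\anymatPATX{\orderind}{\inds}{\thedim}) = \Skronecker{\inds}{\thedim}$. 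Plugging these into the image of $\thedifferential{\orderind}(\Sbv{\anymatA}{\indj}{\indi}\Smonoidalproduct\theone)$ under $\thehom$ collapses the first grouped sum $\sum_{\inds=1}^\thedim \themat{\anymatA}{\indj}{\inds}\, \thecounit(\anymatPAX{\orderind}{\inds}{\Sexchanged(\indi)})$ to $\themat{\anymatA}{\indj}{\Sexchanged(\indi)}$, the $\Skronecker{\indj}{\thedim}$-prefactor term contributes $\Skronecker{\indj}{\thedim}\Skronecker{\indi}{1}(\themat{\anymatAT}{1}{1}+\Skronecker{\orderind}{3}\sum_{\inds=2}^{\thedim-1}\themat{\anymatAT}{\inds}{\inds})$, and the $\Skronecker{\indj}{\thedim}\Skronecker{\indi}{\thedim}$-prefactor term — after $\sum_{\inds=1}^\thedim \themat{\anymatAT}{1}{\inds}\,\Skronecker{\inds}{\thedim}$ collapses to $\themat{\anymatAT}{1}{\thedim}$ — gives $-\Skronecker{\indj}{\thedim}\Skronecker{\indi}{\thedim}(\themat{\anymatAT}{1}{\thedim}+(-1)^\orderind\Saction\themat{\anymatB}{\thedim}{1})$. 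The remaining two terms (the $(-1)^\orderind\Saction\Sbv{\anymatBT}{\Sexchanged(\indj)}{\indi}\Smonoidalproduct\theone$ piece and the last line with $\Skronecker{\indj}{1}\Skronecker{\indi}{\thedim}$) carry a trivial tensor factor of $\theone$ and thus translate literally. Collecting all five contributions produces the right-hand side of \eqref{eq:homkernel_third_and_higher_orders_helper_0} verbatim.

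There is no real obstacle here: the entire argument is bookkeeping, parallel to the proof of Lemma~\ref{lemma:homkernel_second_order_helper}, and the only subtlety is to notice that both $\orderind = 3$ (Theorem~\ref{theorem:third_order}) and $4 \leq \orderind$ (Theorem~\ref{theorem:higher_orders}) may be covered simultaneously by virtue of the common form exhibited in the \hyperref[main-result]{Main result}, so the proof can be written as a single computation without any case split.
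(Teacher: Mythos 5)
Your proposal is correct and matches the paper's own proof: both apply the unified formula for $\thedifferential{\orderind}$ from Theorems~\ref{theorem:third_order} and \ref{theorem:higher_orders}, use the right-module-morphism property of $\thehom$ into $\themodule$ to collapse each right tensor factor to its counit value (a Kronecker delta), and collect the resulting terms into \eqref{eq:homkernel_third_and_higher_orders_helper_0}. No gap; the bookkeeping details you spell out (e.g.\ $\thecounit(\anymatPATX{\orderind}{\Sexchanged(\indi)}{\thedim})=\Skronecker{\indi}{1}$) are exactly the evaluations the paper records.
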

\begin{proof}
  According to Theorems~\ref{theorem:third_order} and \ref{theorem:higher_orders} the vector     $(\thehom\Scomposition\thedifferential{\orderind})(\Sbv{\anymatA}{\indj}{\indi}\Smonoidalproduct\theone)$ is given by
  \begin{IEEEeqnarray*}{rCl}
& &\textstyle  \sum_{\inds=1}^\thedim\thehom(\Sbv{\anymatA}{\indj}{\inds}\Smonoidalproduct\anymatPAX{\orderind}{\inds}{\Sexchanged(\indi)})+(-1)^{\orderind}\Saction\thehom(\Sbv{\anymatBT}{\Sexchanged(\indj)}{\indi}\Smonoidalproduct\theone)\\
&&\textstyle{}+\Skronecker{\indj}{\thedim}\Saction(\thehom(\Sbv{\anymatAT}{1}{1}\Smonoidalproduct\anymatPATX{\orderind}{\Sexchanged(\indi)}{\thedim})+\Skronecker{\orderind}{3}\sum_{\inds=2}^{\thedim-1}\thehom(\Sbv{\anymatAT}{\inds}{\inds}\Smonoidalproduct\anymatPATX{\orderind}{\Sexchanged(\indi)}{\thedim})\\
&&\textstyle\hspace{3em}\hfill{}-\Skronecker{\indi}{\thedim}\Saction(\sum_{\inds=1}^\thedim\thehom(\Sbv{\anymatAT}{1}{\inds}\Smonoidalproduct\anymatPATX{\orderind}{\inds}{\thedim})+(-1)^{\orderind}\Saction\thehom(\Sbv{\anymatB}{\thedim}{1}\Smonoidalproduct\theone)))\\
&&\textstyle{}+\Skronecker{\indj}{1}\Skronecker{\indi}{\thedim}(-1)^{\orderind}\Saction (\thehom(\Sbv{\anymatB}{1}{1}\Smonoidalproduct\theone)+\Skronecker{\orderind}{3}\sum_{\inds=2}^{\thedim-1}\Saction\thehom(\Sbv{\anymatB}{\inds}{\inds}\Smonoidalproduct\theone)).
\end{IEEEeqnarray*} 
Recognizing $\thehom(\Sbv{\anymatA}{\indj}{\inds}\Smonoidalproduct\anymatPAX{\orderind}{\inds}{\Sexchanged(\indi)})=\Skronecker{\inds}{\Sexchanged(\indi)}\Saction\themat{\anymatA}{\indj}{\inds}$ and $\thehom(\Sbv{\anymatAT}{\inds}{\inds}\Smonoidalproduct\anymatPATX{\orderind}{\Sexchanged(\indi)}{\thedim})=\Skronecker{\Sexchanged(\indi)}{\thedim}\Saction\themat{\anymatAT}{\inds}{\inds}=\Skronecker{\indi}{1}\Saction\themat{\anymatAT}{\inds}{\inds}$ and $\thehom(\Sbv{\anymatAT}{1}{\inds}\Smonoidalproduct\anymatPATX{\orderind}{\inds}{\thedim})=\Skronecker{\inds}{\thedim}\Saction\themat{\anymatAT}{1}{\inds}$ for any $\inds\in\SYnumbers{\thedim}$
and reordering the lines shows that the above is equal to the right-hand side of \eqref{eq:homkernel_third_and_higher_orders_helper_0}.
\end{proof}

\begin{lemma}
  \label{lemma:homkernel_third_and_higher_orders}
For any $\orderind\in\Sintegersp$ with $3\leq \orderind$, any $\thehom\in\ShomOX{\thealgebra}{\thechainsmodule{\orderind-1}}{\themodule}$ satisfies $\thehom\Scomposition\thedifferential{\orderind}=0$ if and only if for any $\anymat\in\thematrices$, all in conjunction, for any $(\indj,\indi)\in\SYnumbers{\thedim}^{\Ssetmonoidalproduct 2}$ with $(\indj,\indi)\neq (1,1)$,
  \begin{IEEEeqnarray*}{rCl}
    \themat{\anymatAT}{\indj}{\indi}&=&\textstyle-(-1)^\orderind \themat{\anymatB}{\Sexchanged(\indj)}{\Sexchanged(\indi)}-\Skronecker{\indj}{\thedim}\Skronecker{\indi}{\thedim}\Saction(\themat{\anymatA}{1}{1}+\Skronecker{\orderind}{3}\sum_{\inds=2}^{\thedim-1}\themat{\anymatA}{\inds}{\inds})\IEEEyesnumber\label{eq:homkernel_third_and_higher_orders_01},    \\ \themat{\anymatBT}{\indj}{\indi}&=&\textstyle-(-1)^\orderind \themat{\anymatA}{\Sexchanged(\indj)}{\Sexchanged(\indi)}-\Skronecker{\indj}{\thedim}\Skronecker{\indi}{\thedim}\Saction(\themat{\anymatB}{1}{1}+\Skronecker{\orderind}{3}\sum_{\inds=2}^{\thedim-1}\themat{\anymatB}{\inds}{\inds})\IEEEyesnumber\label{eq:homkernel_third_and_higher_orders_02}
  \end{IEEEeqnarray*}
  and
  \begin{IEEEeqnarray*}{rCl}
    \themat{\anymatBT}{1}{1}&=&\textstyle-(-1)^\orderind\Saction\themat{\anymatAT}{1}{1}-(-1)^\orderind\Saction \themat{\anymatA}{\thedim}{\thedim}+\Skronecker{\orderind}{3}\sum_{\inds=2}^{\thedim-1}\themat{\anymatB}{\inds}{\inds}\IEEEyesnumber\label{eq:homkernel_third_and_higher_orders_03}
  \end{IEEEeqnarray*}
  as well as
  \begin{IEEEeqnarray*}{rCl}
    \textstyle\themat{\anymatB}{\thedim}{\thedim}&=&\textstyle-\Skronecker{\orderind}{3}\sum_{\inds=2}^{\thedim-1}\themat{\anymatB}{\inds}{\inds}+(-1)^{\orderind}(\themat{\anymat}{\thedim}{\thedim}+\Skronecker{\orderind}{3}\sum_{\inds=2}^{\thedim-1}\themat{\anymatA}{\inds}{\inds}).\IEEEyesnumber\label{eq:homkernel_third_and_higher_orders_04}
  \end{IEEEeqnarray*}
In fact, \eqref{eq:homkernel_third_and_higher_orders_01}--\eqref{eq:homkernel_third_and_higher_orders_04} for any $(\indj,\indi)\in \SYnumbers{\thedim}^{\Ssetmonoidalproduct 2}$ with $(\indj,\indi)\neq (1,1)$ are all satisfied by each $\anymat\in\thematrices$ as soon as they are all satisfied by an arbitrary one $\anymat\in\thematrices$.
\end{lemma}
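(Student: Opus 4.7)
The plan is to feed Lemma~\ref{lemma:homkernel_third_and_higher_orders_helper} into the vanishing condition: $\thehom\circ\thedifferential{\orderind}=0$ holds if and only if the right-hand side of \eqref{eq:homkernel_third_and_higher_orders_helper_0} vanishes for every $(\anymat,\indj,\indi)$. Since three of the four lines of that formula are supported on the corners $(\thedim,\thedim)$, $(1,\thedim)$, $(\thedim,1)$, the natural strategy is a case split over which corner Kronecker deltas fire.

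I would first treat the generic region $(\indj,\indi)\notin\{(\thedim,\thedim),(1,\thedim),(\thedim,1)\}$: only the first line of \eqref{eq:homkernel_third_and_higher_orders_helper_0} survives, forcing $\themat{\anymatA}{\indj}{\Sexchanged(\indi)}+(-1)^\orderind\themat{\anymatBT}{\Sexchanged(\indj)}{\indi}=0$. Reindexing via $\Sexchanged$ yields \eqref{eq:homkernel_third_and_higher_orders_02} without its corner correction for all $(\indj,\indi)\notin\{(\thedim,\thedim),(1,\thedim),(1,1)\}$. The corner $(1,\thedim)$ case supplies precisely the corner correction in \eqref{eq:homkernel_third_and_higher_orders_02} at $(\thedim,\thedim)$, and the corner $(\thedim,1)$ case produces \eqref{eq:homkernel_third_and_higher_orders_03}. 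The corner $(\thedim,\thedim)$ case at first yields a further equation, but a short check using the generic relations at $(1,\thedim)$ and $(\thedim,1)$ shows it is already implied and hence redundant. To obtain \eqref{eq:homkernel_third_and_higher_orders_01} and \eqref{eq:homkernel_third_and_higher_orders_04}, one repeats the same case split after replacing $\anymat$ by $\anymatAT$; the identities $(\anymatAT)^\top=\anymatA$ and $(\anymatAT)^\sstar=\anymatBT$ supplied by Lemma~\ref{lemma:any_two_generators} transport the resulting relations for $\anymatA$ into the desired relations for $\anymatAT$. The reverse implication is a direct substitution check in each of the four cases.

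For the addendum, by Lemma~\ref{lemma:any_two_generators} we have $\{\anymatA,\anymatAT,\anymatB,\anymatBT\}=\thematrices$ for any $\anymat\in\thematrices$, and the four operations act on $\thematrices$ as a Klein four-group. Under this action the pair $\{\eqref{eq:homkernel_third_and_higher_orders_01},\eqref{eq:homkernel_third_and_higher_orders_02}\}$ is stable in the sense that replacing $\anymat$ by $\anymatAT$, $\anymatB$, or $\anymatBT$ only permutes the two relations (or swaps their arguments via $\Sexchanged$), as follows from $\Sexchanged\circ\Sexchanged=\operatorname{id}$; likewise for $\{\eqref{eq:homkernel_third_and_higher_orders_03},\eqref{eq:homkernel_third_and_higher_orders_04}\}$. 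Thus validity for one $\anymat$ forces validity for all four.

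The main obstacle will be the bookkeeping for $\orderind=3$: converting the sum $\sum_{\inds=2}^{\thedim-1}\themat{\anymatAT}{\inds}{\inds}$ that naturally arises from the $(\thedim,1)$ corner into the sum $\sum_{\inds=2}^{\thedim-1}\themat{\anymatB}{\inds}{\inds}$ appearing in \eqref{eq:homkernel_third_and_higher_orders_03} requires the diagonal specialization of \eqref{eq:homkernel_third_and_higher_orders_01} followed by reindexing through $\Sexchanged$, which is only legitimate once \eqref{eq:homkernel_third_and_higher_orders_01} has itself been secured. Care is therefore needed to derive the four relations in a non-circular order, and \eqref{eq:homkernel_third_and_higher_orders_04} must be obtained through an independent $\anymat\mapsto\anymatAT$ substitution in the $(\thedim,\thedim)$-corner specialization of \eqref{eq:homkernel_third_and_higher_orders_02} rather than from \eqref{eq:homkernel_third_and_higher_orders_02} alone.
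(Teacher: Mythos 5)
Your reduction of the statement to the vanishing of the right-hand side of Lemma~\ref{lemma:homkernel_third_and_higher_orders_helper}, the case split over which corner Kronecker deltas fire, and the $\Sexchanged$-reindexing (including the $\orderind=3$ diagonal-sum conversion you flag) is essentially the paper's own computation, and the converse direction is indeed a substitution check. But two steps do not go through as written. First, in the forward direction you only use the case split for the fixed matrix $\anymat$ and for $\anymatAT$, and you discard the $(\thedim,\thedim)$-corner equations as redundant. The generic region for $\anymat$ yields \eqref{eq:homkernel_third_and_higher_orders_02} only away from $(1,\thedim)$, $(\thedim,\thedim)$, $(1,1)$ (as you note), and the generic region for $\anymatAT$ yields \eqref{eq:homkernel_third_and_higher_orders_01} only away from $(\thedim,1)$, $(\thedim,\thedim)$, $(1,1)$; your corner cases supply \eqref{eq:homkernel_third_and_higher_orders_02} at $(\thedim,\thedim)$, \eqref{eq:homkernel_third_and_higher_orders_03}, and (via \eqref{eq:homkernel_third_and_higher_orders_03}) \eqref{eq:homkernel_third_and_higher_orders_04}, but nothing in your list ever produces \eqref{eq:homkernel_third_and_higher_orders_02} at $(1,\thedim)$ or \eqref{eq:homkernel_third_and_higher_orders_01} at $(\thedim,1)$. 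Worse, these two missing instances are exactly what your ``short check'' of redundancy of the $(\thedim,\thedim)$-corner equation presupposes, so that step is circular. The hole is easy to fill -- either retain the two $(\thedim,\thedim)$-corner equations and combine them with already-derived relations (this is what the paper does in Step~1.2, deriving the identity behind \eqref{eq:homkernel_third_and_higher_orders_3} and \eqref{eq:homkernel_third_and_higher_orders_4}), or also invoke the generic instances of the helper for $\anymatB$ and $\anymatBT$ at the point $(1,1)$ -- but as proposed the derivation is incomplete.

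Second, the addendum argument does not work as stated. The substitutions from Lemma~\ref{lemma:any_two_generators} do \emph{not} merely permute the four relations. The substitution $\anymat\mapsto\anymatB$ does swap \eqref{eq:homkernel_third_and_higher_orders_01} and \eqref{eq:homkernel_third_and_higher_orders_02} literally (corner corrections included), but it does not permute \eqref{eq:homkernel_third_and_higher_orders_03} and \eqref{eq:homkernel_third_and_higher_orders_04}: equation \eqref{eq:homkernel_third_and_higher_orders_03} written for $\anymatB$ involves a combination of values at $(1,1)$ and at $(\thedim,\thedim)$ that coincides with no single member of the original package and is only recovered by inserting a rearrangement of \eqref{eq:homkernel_third_and_higher_orders_04} into \eqref{eq:homkernel_third_and_higher_orders_03}. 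Under $\anymat\mapsto\anymatAT$ even the pair \eqref{eq:homkernel_third_and_higher_orders_01}/\eqref{eq:homkernel_third_and_higher_orders_02} fails to be stable at the corner $(\thedim,\thedim)$, because the correction term transforms into a sum over a different matrix of $\thematrices$ and the corresponding instance of \eqref{eq:homkernel_third_and_higher_orders_02} would sit at the excluded point $(1,1)$; one needs \eqref{eq:homkernel_third_and_higher_orders_03} together with the diagonal identities (the paper's \eqref{eq:homkernel_third_and_higher_orders_7}) to close the gap, and for $\orderind=3$ this is where the real content lies. This is precisely why the paper's Step~2 is a genuine two-stage derivation rather than a symmetry argument; your appeal to the Klein four-group action and $\Sexchanged\circ\Sexchanged=\mathrm{id}$ would not compile into a proof of the addendum.
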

\begin{proof}
  The claimed equivalence and the addendum will be proved separately. 
  \par
\emph{Step~1: Equivalence.}  By Lemma~\ref{lemma:homkernel_third_and_higher_orders_helper} it is enough to prove that the conditions from the claim and the demand that for any $\anymat\in\thematrices$ and $(\indj,\indi)\in\SYnumbers{\thedim}^{\Ssetmonoidalproduct 2}$,
    \begin{IEEEeqnarray*}{rCl}
0&=&    \themat{\anymatA}{\indj}{\Sexchanged(\indi)}+(-1)^\orderind\Saction\themat{\anymatBT}{\Sexchanged(\indj)}{\indi}\IEEEyesnumber\label{eq:homkernel_third_and_higher_orders_1}\\
    &&\textstyle{}-\Skronecker{\indj}{\thedim}\Skronecker{\indi}{\thedim}\Saction(\themat{\anymatAT}{1}{\thedim}+(-1)^\orderind\Saction\themat{\anymatB}{\thedim}{1})\\
    &&\textstyle{}+\Skronecker{\indj}{1}\Skronecker{\indi}{\thedim}(-1)^\orderind\Saction(\themat{\anymatB}{1}{1}+\Skronecker{\orderind}{3}\sum_{\inds=2}^{\thedim-1}\themat{\anymatB}{\inds}{\inds})\\
    &&\textstyle{}+\Skronecker{\indj}{\thedim}\Skronecker{\indi}{1}\Saction(\themat{\anymatAT}{1}{1}+\Skronecker{\orderind}{3}\Saction\sum_{\inds=2}^{\thedim-1}\themat{\anymatAT}{\inds}{\inds})
  \end{IEEEeqnarray*}  
  are equivalent. The two implications will be treated separately.
  \par
  \emph{Step~1.1.} First, let  \eqref{eq:homkernel_third_and_higher_orders_01}--\eqref{eq:homkernel_third_and_higher_orders_04} hold for any $\anymat\in\thematrices$ and any $(\indj,\indi)\in\SYnumbers{\thedim}^{\Ssetmonoidalproduct 2}$ with $(\indj,\indi)\neq(1,1)$. For any $\anymat\in\thematrices$ instantiating \eqref{eq:homkernel_third_and_higher_orders_01} at $(\indj,\indi)=(1,\thedim)$ produces the identity $\themat{\anymatAT}{1}{\thedim}=-(-1)^\orderind\Saction\themat{\anymatB}{\thedim}{1}$. The latter shows that the second line of \eqref{eq:homkernel_third_and_higher_orders_1} vanishes for arbitrary $\anymat\in\thematrices$ and $(\indj,\indi)\in \SYnumbers{\thedim}^{\Ssetmonoidalproduct 2}$. Now, let $\anymat\in\thematrices$ and $(\indj,\indi)\in \SYnumbers{\thedim}^{\Ssetmonoidalproduct 2}$ be arbitrary. Then \eqref{eq:homkernel_third_and_higher_orders_1} is satisfied by  $(\anymat,\indj,\indi)$ for the following reasons.
  \par
  \emph{Case~1.1.1.} If $(\indj,\indi)\neq (\thedim,1)$, then in addition to the second also the fourth line of \eqref{eq:homkernel_third_and_higher_orders_1} is zero, which means that what needs to be shown is that
      \begin{IEEEeqnarray*}{rCl}
0&=&\textstyle
\themat{\anymatA}{\indj}{\Sexchanged(\indi)}+(-1)^\orderind\Saction\themat{\anymatBT}{\Sexchanged(\indj)}{\indi}+\Skronecker{\indj}{1}\Skronecker{\indi}{\thedim}(-1)^\orderind\Saction(\themat{\anymatB}{1}{1}+\Skronecker{\orderind}{3}\sum_{\inds=2}^{\thedim-1}\themat{\anymatB}{\inds}{\inds}).    
  \end{IEEEeqnarray*}
  Multiplying this identity with $(-1)^\orderind$ shows  it  equivalent to \eqref{eq:homkernel_third_and_higher_orders_02} for $\Sexchanged(\indj)$ in the role $\indj$. Since  $(\Sexchanged(\indj),\indi)\neq (1,1)$ by assumption that proves \eqref{eq:homkernel_third_and_higher_orders_1}  in this case.
  \par
  \emph{Case~1.1.2.} Alternatively, for $(\indj,\indi)=(\thedim,1)$ condition \eqref{eq:homkernel_third_and_higher_orders_1} collapses to the demand that
      \begin{IEEEeqnarray*}{rCl}
0&=&\textstyle
    \themat{\anymatA}{\thedim}{\thedim}+(-1)^\orderind\Saction\themat{\anymatBT}{1}{1}+(\themat{\anymatAT}{1}{1}+\Skronecker{\orderind}{3}\Saction\sum_{\inds=2}^{\thedim-1}\themat{\anymatAT}{\inds}{\inds}).
  \end{IEEEeqnarray*}
  But this is a mere rearrangement of \eqref{eq:homkernel_third_and_higher_orders_01} for $(\anymatAT,\thedim,\thedim)$ in the role of $(\anymat,\indj,\indi)$ there. Hence, \eqref{eq:homkernel_third_and_higher_orders_1} is true in all cases. That proves one half of the asserted equivalence.
  \par
  \emph{Step~1.2.} Conversely, suppose \eqref{eq:homkernel_third_and_higher_orders_1} is true for any $\anymat\in\thematrices$ and $(\indj,\indi)\in\SYnumbers{\thedim}^{\Ssetmonoidalproduct 2}$.  Then, for any $\anymat\in\thematrices$ with $(\anymatBT,1,1)$ in place of $(\anymatA,\indj,\indi)$ equation \eqref{eq:homkernel_third_and_higher_orders_1} gives
  \begin{IEEEeqnarray*}{rCl}
    0=\themat{\anymatBT}{1}{\thedim}+(-1)^\orderind\Saction\themat{\anymat}{\thedim}{1}\IEEEyesnumber\label{eq:homkernel_third_and_higher_orders_3},
  \end{IEEEeqnarray*}
  while instantiating \eqref{eq:homkernel_third_and_higher_orders_1} at $(\anymat,\indj,\indi)=(\anymat,\thedim,\thedim)$ yields
  \begin{IEEEeqnarray*}{rCl}
    0=\themat{\anymatA}{\thedim}{1}+(-1)^\orderind\Saction\themat{\anymatBT}{1}{\thedim}-(\themat{\anymatAT}{1}{\thedim}+(-1)^\orderind\themat{\anymatB}{\thedim}{1}).\IEEEyesnumber\label{eq:homkernel_third_and_higher_orders_4} \end{IEEEeqnarray*}
  For any $\anymat\in\thematrices$ multiplying \eqref{eq:homkernel_third_and_higher_orders_3} with $(-1)^\orderind$ and subtracting \eqref{eq:homkernel_third_and_higher_orders_4} from the resulting identity now proves $    0=\themat{\anymatAT}{1}{\thedim}+(-1)^\orderind\themat{\anymatB}{\thedim}{1}$.
  Hence, the second line of \eqref{eq:homkernel_third_and_higher_orders_1} is actually zero no matter what $(\anymat,\indj,\indi)$ is.\par
  In consequence, for any $\anymat\in\thematrices$ and any $(\indj,\indi)\in\SYnumbers{\thedim}^{\Ssetmonoidalproduct 2}$ with $(\anymat,\Sexchanged(\indj),\indi)$ in place of $(\anymat,\indj,\indi)$  assumption \eqref{eq:homkernel_third_and_higher_orders_1} implies
    \begin{IEEEeqnarray*}{rCl}
0&=&
    \themat{\anymatA}{\Sexchanged(\indj)}{\Sexchanged(\indi)}+(-1)^\orderind\Saction\themat{\anymatBT}{\indj}{\indi}\IEEEyesnumber\label{eq:homkernel_third_and_higher_orders_5}\\
&&\textstyle{}+\Skronecker{\indj}{\thedim}\Skronecker{\indi}{\thedim}(-1)^\orderind\Saction(\themat{\anymatB}{1}{1}+\Skronecker{\orderind}{3}\sum_{\inds=2}^{\thedim-1}\themat{\anymatB}{\inds}{\inds})\\
    &&\textstyle{}+\Skronecker{\indj}{1}\Skronecker{\indi}{1}\Saction(\themat{\anymatAT}{1}{1}+\Skronecker{\orderind}{3}\Saction\sum_{\inds=2}^{\thedim-1}\themat{\anymatAT}{\inds}{\inds}).
  \end{IEEEeqnarray*}
  Multiplying \eqref{eq:homkernel_third_and_higher_orders_5} with $(-1)^\orderind$ and solving for $\themat{\anymatBT}{\indj}{\indi}$ produces for any $\anymat\in\thematrices$ and any $(\indj,\indi)\in\SYnumbers{\thedim}^{\Ssetmonoidalproduct 2}$ the identity
    \begin{IEEEeqnarray*}{rCl}
\themat{\anymatBT}{\indj}{\indi}&=&-(-1)^\orderind\Saction\themat{\anymatA}{\Sexchanged(\indj)}{\Sexchanged(\indi)}\IEEEyesnumber\label{eq:homkernel_third_and_higher_orders_6}\\
&&\textstyle{}-\Skronecker{\indj}{\thedim}\Skronecker{\indi}{\thedim}(\themat{\anymatB}{1}{1}+\Skronecker{\orderind}{3}\sum_{\inds=2}^{\thedim-1}\themat{\anymatB}{\inds}{\inds})\\
    &&\textstyle{}-\Skronecker{\indj}{1}\Skronecker{\indi}{1}(-1)^\orderind\Saction\Saction(\themat{\anymatAT}{1}{1}+\Skronecker{\orderind}{3}\Saction\sum_{\inds=2}^{\thedim-1}\themat{\anymatAT}{\inds}{\inds}).
  \end{IEEEeqnarray*}
  Evidently, for any  $\anymat\in\thematrices$  any $(\indj,\indi)\in\SYnumbers{\thedim}^{\Ssetmonoidalproduct 2}$ with $(\indj,\indi)\neq (1,1)$ there is no difference between \eqref{eq:homkernel_third_and_higher_orders_6} and \eqref{eq:homkernel_third_and_higher_orders_02}, which proves the latter.
  \par
  But replacing $\anymat$ by $\anymatB$ transforms \eqref{eq:homkernel_third_and_higher_orders_02} into \eqref{eq:homkernel_third_and_higher_orders_01}, which has thus been verified as well for any  $\anymat\in\thematrices$  any $(\indj,\indi)\in\SYnumbers{\thedim}^{\Ssetmonoidalproduct 2}$ with $(\indj,\indi)\neq (1,1)$. It only remains to prove \eqref{eq:homkernel_third_and_higher_orders_03} and \eqref{eq:homkernel_third_and_higher_orders_04}.
  \par
  Applying \eqref{eq:homkernel_third_and_higher_orders_01} to $(\inds,\inds)$ in the role of $(\indj,\indi)$ for any $\anymat\in\thematrices$ and $\inds\in\SYnumbers{\thedim}$ with $\inds\neq 1$ shows $    \themat{\anymatAT}{\inds}{\inds}=-(-1)^\orderind\themat{\anymatB}{\Sexchanged(\inds)}{\Sexchanged(\inds)}$ and thus in particular for any $\anymat\in\thematrices$,
  \begin{IEEEeqnarray*}{rCl}
    \textstyle\Skronecker{\orderind}{3}\sum_{\inds=2}^{\thedim-1}\themat{\anymatAT}{\inds}{\inds}&=&\textstyle-(-1)^\orderind\Skronecker{\orderind}{3}\sum_{\inds=2}^{\thedim-1}\themat{\anymatB}{\inds}{\inds}.\IEEEyesnumber\label{eq:homkernel_third_and_higher_orders_7}
  \end{IEEEeqnarray*}
  Inserting \eqref{eq:homkernel_third_and_higher_orders_7} into \eqref{eq:homkernel_third_and_higher_orders_6} evaluated at $(\indj,\indi)=(1,1)$ now proves \eqref{eq:homkernel_third_and_higher_orders_03} for any $\anymat\in\thematrices$.
  \par
  Multiplying \eqref{eq:homkernel_third_and_higher_orders_03} with $(-1)^\orderind$ and rearranging yields 
  \begin{IEEEeqnarray*}{rCl}
    \themat{\anymatAT}{1}{1}&=&\textstyle-\themat{\anymat}{\thedim}{\thedim}-(-1)^\orderind\Saction\themat{\anymatBT}{1}{1}+(-1)^\orderind\Skronecker{\orderind}{3}\sum_{\inds=2}^{\thedim-1}\themat{\anymatB}{\inds}{\inds}\IEEEyesnumber\label{eq:homkernel_third_and_higher_orders_8}
  \end{IEEEeqnarray*}
  for any $\anymat\in\thematrices$. Replacing $\anymat$ by $\anymatB$ in \eqref{eq:homkernel_third_and_higher_orders_8} thus shows for any $\anymat\in\thematrices$
  \begin{IEEEeqnarray*}{rCl}
    \themat{\anymatBT}{1}{1}&=&\textstyle-\themat{\anymatB}{\thedim}{\thedim}-(-1)^\orderind\Saction\themat{\anymatAT}{1}{1}+(-1)^\orderind\Skronecker{\orderind}{3}\sum_{\inds=2}^{\thedim-1}\themat{\anymatA}{\inds}{\inds}.\IEEEyesnumber\label{eq:homkernel_third_and_higher_orders_9}
  \end{IEEEeqnarray*}
  For any $\anymat\in\thematrices$, equating the right-hand sides of \eqref{eq:homkernel_third_and_higher_orders_03} and \eqref{eq:homkernel_third_and_higher_orders_9} and then adding $(-1)^\orderind\themat{\anymatAT}{1}{1}$ on both sides of the resulting equation shows
  \begin{IEEEeqnarray*}{rCl}    \textstyle-(-1)^\orderind\Saction\themat{\anymatA}{\thedim}{\thedim}+\Skronecker{\orderind}{3}\sum_{\inds=2}^{\thedim-1}\themat{\anymatB}{\inds}{\inds}&=&\textstyle-\themat{\anymatB}{\thedim}{\thedim}+(-1)^\orderind\Skronecker{\orderind}{3}\sum_{\inds=2}^{\thedim-1}\themat{\anymatA}{\inds}{\inds},\IEEEyesnumber\label{eq:homkernel_third_and_higher_orders_10}
  \end{IEEEeqnarray*}
  which is equivalent to \eqref{eq:homkernel_third_and_higher_orders_04}. Hence, the asserted equivalence  of the conditions in the claim with  $\thehom\Scomposition\thedifferential{\orderind}=0$ is true. 
  \par
  \emph{Step~2: Addendum.} The proof of the addendum is split into two steps.
  \par
  \emph{Step~2.1.} For the following reasons, given any $\othermat\in\thematrices$, if the conditions in the claim are satisfied by $\anymat=\othermat$, then they are also true for $\anymat=\othermatB$.
  In the cases of \eqref{eq:homkernel_third_and_higher_orders_01} and \eqref{eq:homkernel_third_and_higher_orders_02} this intermediate claim is clearly true because replacing $\anymatA$ by $\anymatB$  transforms \eqref{eq:homkernel_third_and_higher_orders_01} into \eqref{eq:homkernel_third_and_higher_orders_02} and vice versa. Since \eqref{eq:homkernel_third_and_higher_orders_04} is equivalent to
    \begin{IEEEeqnarray*}{rCl}
    \textstyle\themat{\anymatB}{\thedim}{\thedim}+\Skronecker{\orderind}{3}\sum_{\inds=2}^{\thedim-1}\themat{\anymatB}{\inds}{\inds}&=&\textstyle(-1)^{\orderind}(\themat{\anymat}{\thedim}{\thedim}+\Skronecker{\orderind}{3}\sum_{\inds=2}^{\thedim-1}\themat{\anymatA}{\inds}{\inds})\IEEEyesnumber\label{eq:homkernel_third_and_higher_orders_04alt1}
  \end{IEEEeqnarray*}
and since  multiplication with $(-1)^\orderind$ has the same effect on \eqref{eq:homkernel_third_and_higher_orders_04alt1} as replacing $\anymatA$ by $\anymatB$ identity \eqref{eq:homkernel_third_and_higher_orders_04} indeed holds for any $\anymatA=\othermatB$ as soon as it does for $\anymatA=\othermatA$.
\par
As noted before, \eqref{eq:homkernel_third_and_higher_orders_04} being true for $\anymat=\othermat$ implies that also  \eqref{eq:homkernel_third_and_higher_orders_10} holds for $\anymat=\othermat$. Inserting \eqref{eq:homkernel_third_and_higher_orders_10} into the assumption \eqref{eq:homkernel_third_and_higher_orders_03} for $\anymat=\othermat$ produces the identity (read for $\anymat=\othermat$)
\begin{IEEEeqnarray*}{rCl}
  \themat{\anymatBT}{1}{1}&=&\textstyle-(-1)^\orderind\Saction\themat{\anymatAT}{1}{1}-\themat{\anymatB}{\thedim}{\thedim}+(-1)^\orderind\Skronecker{\orderind}{3}\sum_{\inds=2}^{\thedim-1}\themat{\anymat}{\inds}{\inds},
\end{IEEEeqnarray*}
which on multiplication with $(-1)^\orderind$ becomes a mere rearrangement of
  \begin{IEEEeqnarray*}{rCl}
    \themat{\anymatAT}{1}{1}&=&\textstyle-(-1)^\orderind\Saction\themat{\anymatBT}{1}{1}-(-1)^\orderind\Saction \themat{\anymatB}{\thedim}{\thedim}+\Skronecker{\orderind}{3}\sum_{\inds=2}^{\thedim-1}\themat{\anymatA}{\inds}{\inds},
  \end{IEEEeqnarray*}
  i.e.,  \eqref{eq:homkernel_third_and_higher_orders_03} for $\anymatA=\othermatB$. Hence, \eqref{eq:homkernel_third_and_higher_orders_01}--\eqref{eq:homkernel_third_and_higher_orders_04} for any $(\indj,\indi)\in\SYnumbers{\thedim}^{\Ssetmonoidalproduct 2}$ with $(\indj,\indi)\neq(1,1)$ automatically also hold for  $\anymat=\othermatB$ if they do for $\anymat=\othermatA$.
  \par
  \emph{Step~2.2.} What is left to prove after Step~2.1 is that, if the conditions in the claim are satisfied by some $\anymat=\othermat\in\thematrices$, then so they are by $\anymat=\othermatAT$.
  \par
  It has already been noted that \eqref{eq:homkernel_third_and_higher_orders_01} being true  for $\anymat=\othermatA$ and any $(\indj,\indi)\in \SYnumbers{\thedim}^{\Ssetmonoidalproduct 2}$ with $(\indj,\indi)\neq (1,1)$ implies \eqref{eq:homkernel_third_and_higher_orders_7} for this $\anymat=\othermatA$. Analogously, summing up   \eqref{eq:homkernel_third_and_higher_orders_02} for $(\inds,\inds)$ in place of $(\indj,\indi)$ over all $\inds\in\SYnumbers{\thedim}$ with $\inds\notin\{1,\thedim\}$ shows that \eqref{eq:homkernel_third_and_higher_orders_7} is also valid for $\anymatA=\othermatB$. In fact, since multiplying with $-(-1)^\orderind$ has the same effect on \eqref{eq:homkernel_third_and_higher_orders_7} as replacing $\anymatA$ by $\anymatBT$ it has thus been shown that \eqref{eq:homkernel_third_and_higher_orders_7} is true for any $\anymat\in\thematrices$.
  \par
  Instantiating \eqref{eq:homkernel_third_and_higher_orders_01} respectively \eqref{eq:homkernel_third_and_higher_orders_02} at $(\anymat,\indj,\indi)=(\othermat,\thedim,\thedim)$ yields that
  \begin{IEEEeqnarray*}{rCl}
    \themat{\anymatAT}{\thedim}{\thedim}&=&\textstyle-(-1)^\orderind\Saction\themat{\anymatB}{1}{1}-\themat{\anymatA}{1}{1}-\Skronecker{\orderind}{3}\sum_{\inds=2}^{\thedim-1}\themat{\anymatA}{\inds}{\inds}
  \end{IEEEeqnarray*}
  is true for $\anymatA=\othermatA$ and $\anymatA=\othermatB$. Inserting into this  \eqref{eq:homkernel_third_and_higher_orders_7} for $\anymatA=\othermatAT$ respectively  $\anymatA=\othermatBT$ proves
    \begin{IEEEeqnarray*}{rCl}
    \themat{\anymatAT}{\thedim}{\thedim}&=&\textstyle-(-1)^\orderind\Saction\themat{\anymatB}{1}{1}-\themat{\anymatA}{1}{1}+(-1)^\orderind\Skronecker{\orderind}{3}\sum_{\inds=2}^{\thedim-1}\themat{\anymatBT}{\inds}{\inds}
  \end{IEEEeqnarray*}
  for $\anymatA=\othermatA$ and $\anymatA=\othermatB$. By multiplying with $(-1)^\orderind$ and solving for $\themat{\anymatB}{1}{1}$ it follows
  \begin{IEEEeqnarray*}{rCl}
    \themat{\anymatB}{1}{1}&=&\textstyle-(-1)^\orderind\Saction\themat{\anymat}{1}{1}-(-1)^\orderind\Saction\themat{\anymatAT}{1}{1}+\Skronecker{\orderind}{3}\sum_{\inds=2}^{\thedim-1}\themat{\anymatBT}{\inds}{\inds}
  \end{IEEEeqnarray*}
  for $\anymatA=\othermatA$ and $\anymatA=\othermatB$, respectively, i.e., exactly \eqref{eq:homkernel_third_and_higher_orders_03}
  for $\anymatA=\othermatAT$ and $\anymatA=\othermatBT$. Hence, \eqref{eq:homkernel_third_and_higher_orders_03} is true for any $\anymat\in\thematrices$ by Step~2.1.
  \par
  It has already been seen in Step~1.2 that \eqref{eq:homkernel_third_and_higher_orders_03} being true for any $\anymat\in\thematrices$ implies that \eqref{eq:homkernel_third_and_higher_orders_04} holds for all $\anymat\in\thematrices$, in particular for $\anymat=\othermatAT$.
  \par
  Thus, all that still needs proving at this point is that \eqref{eq:homkernel_third_and_higher_orders_01} and \eqref{eq:homkernel_third_and_higher_orders_02} are true for $\anymatA=\othermatAT$ and any $(\indj,\indi)\in\SYnumbers{\thedim}^{\Ssetmonoidalproduct 2}$. For $(\indj,\indi)\neq (\thedim,\thedim)$ this is immediately clear by replacing $(\indj,\indi)$  by $(\Sexchanged(\indj),\Sexchanged(\indi))$ in \eqref{eq:homkernel_third_and_higher_orders_02} respectively \eqref{eq:homkernel_third_and_higher_orders_01} and then multiplying with $-(-1)^\orderind$.
  \par
  Multiplying \eqref{eq:homkernel_third_and_higher_orders_03} for $\anymatA=\othermatA$ respectively $\anymatA=\othermatB$ with $-(-1)^\orderind$ and solving for $\themat{\anymat}{\thedim}{\thedim}$ shows
  \begin{IEEEeqnarray*}{rCl}
    \themat{\anymat}{\thedim}{\thedim}&=&\textstyle-(-1)^\orderind\Saction\themat{\anymatBT}{1}{1}-(\themat{\anymatAT}{1}{1}-(-1)^\orderind\Skronecker{\orderind}{3}\sum_{\inds=2}^{\thedim-1}\themat{\anymatB}{\inds}{\inds}),
  \end{IEEEeqnarray*}
whence it follows by using \eqref{eq:homkernel_third_and_higher_orders_7} for $\anymatA=\othermatB$ respectively $\anymatA=\othermatA$ that
  \begin{IEEEeqnarray*}{rCl}
    \themat{\anymat}{\thedim}{\thedim}&=&\textstyle-(-1)^\orderind\Saction\themat{\anymatBT}{1}{1}-(\themat{\anymatAT}{1}{1}+\Skronecker{\orderind}{3}\sum_{\inds=2}^{\thedim-1}\themat{\anymatAT}{\inds}{\inds})
  \end{IEEEeqnarray*}
  for $\anymatA=\othermatA$ respectively $\anymatA=\othermatB$. Since this is precisely \eqref{eq:homkernel_third_and_higher_orders_01} respectively \eqref{eq:homkernel_third_and_higher_orders_02} for $(\anymat,\indj,\indi)=(\othermatAT,\thedim,\thedim)$, the proof is complete.
\end{proof}
With Lemma~\ref{lemma:homkernel_third_and_higher_orders} at hand it is time to distinguish the cases $\orderind=3$ and $4\leq \orderind$.
\subsubsection{Third order}
In the case of $\ShomOX{\thealgebra}{\thedifferential{3}}{\themodule}$ there is still a little more work to do in order to determine the defect and rank.
\begin{lemma}
  \label{lemma:homkernel_third_order}
Any $\thehom\in\ShomOX{\thealgebra}{\thechainsmodule{2}}{\themodule}$ satisfies $\thehom\Scomposition\thedifferential{3}=0$ if and only if for any $\anymat\in\thematrices$, all in conjunction, for any $(\indj,\indi)\in\SYnumbers{\thedim}^{\Ssetmonoidalproduct 2}$ with $(\indj,\indi)\notin\{ (1,1),(\thedim,\thedim)\}$,
  \begin{IEEEeqnarray*}{rCl}
    \themat{\anymatAT}{\indj}{\indi}&=&\textstyle \themat{\anymatB}{\Sexchanged(\indj)}{\Sexchanged(\indi)}\IEEEyesnumber\label{eq:homkernel_third_order_01},    \\ \themat{\anymatBT}{\indj}{\indi}&=&\textstyle \themat{\anymatA}{\Sexchanged(\indj)}{\Sexchanged(\indi)}\IEEEyesnumber\label{eq:homkernel_third_order_02}
  \end{IEEEeqnarray*}
  and
  \begin{IEEEeqnarray*}{rCl}
    \themat{\anymatBT}{1}{1}&=&\textstyle\themat{\anymatAT}{1}{1}+ \themat{\anymatA}{\thedim}{\thedim}+\sum_{\inds=2}^{\thedim-1}\themat{\anymatB}{\inds}{\inds}\IEEEyesnumber\label{eq:homkernel_third_order_03}
  \end{IEEEeqnarray*}
  as well as
  \begin{IEEEeqnarray*}{rCl}
    \textstyle\themat{\anymatB}{\thedim}{\thedim}&=&\textstyle-\sum_{\inds=2}^{\thedim-1}\themat{\anymatB}{\inds}{\inds}-\sum_{\inds=2}^{\thedim}\themat{\anymatA}{\inds}{\inds}\IEEEyesnumber\label{eq:homkernel_third_order_04}
  \end{IEEEeqnarray*}
  and
  \begin{IEEEeqnarray*}{rCl}
    \themat{\anymatB}{1}{1}=\textstyle\sum_{\inds=1}^{\thedim}\themat{\anymatA}{\inds}{\inds}.\IEEEyesnumber\label{eq:homkernel_third_order_05}
  \end{IEEEeqnarray*}
In fact, \eqref{eq:homkernel_third_order_01}--\eqref{eq:homkernel_third_order_05} for any $(\indj,\indi)\in \SYnumbers{\thedim}^{\Ssetmonoidalproduct 2}$ with $(\indj,\indi)\notin\{ (1,1),(\thedim,\thedim)\}$ are all satisfied by each $\anymat\in\thematrices$ as soon as they are all satisfied by an arbitrary one $\anymat\in\thematrices$.
\end{lemma}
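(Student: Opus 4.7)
The plan is to derive the stated characterization by specializing Lemma~\ref{lemma:homkernel_third_and_higher_orders} to $\orderind = 3$ and then exploiting two collisions of 2-chain basis vectors at the corner index $(\thedim,\thedim)$ in order to package the remaining corner-specializations of conditions \eqref{eq:homkernel_third_and_higher_orders_01} and \eqref{eq:homkernel_third_and_higher_orders_02} as the single new identity \eqref{eq:homkernel_third_order_05}.

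First I would invoke Lemma~\ref{lemma:homkernel_third_and_higher_orders} for $\orderind = 3$. With $(-1)^3 = -1$ and $\Skronecker{3}{3} = 1$ this yields that $\thehom \Scomposition \thedifferential{3} = 0$ if and only if, for every $\anymat \in \thematrices$, the four identities \eqref{eq:homkernel_third_and_higher_orders_01}--\eqref{eq:homkernel_third_and_higher_orders_04} hold at all $(\indj,\indi) \neq (1,1)$. Direct substitution turns \eqref{eq:homkernel_third_and_higher_orders_03} into \eqref{eq:homkernel_third_order_03}, and after absorbing the $\themat{\anymat}{\thedim}{\thedim}$ summand into the sum over $\{2,\ldots,\thedim-1\}$ turns \eqref{eq:homkernel_third_and_higher_orders_04} into \eqref{eq:homkernel_third_order_04}. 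For any $(\indj,\indi) \notin \{(1,1),(\thedim,\thedim)\}$ the factor $\Skronecker{\indj}{\thedim}\Skronecker{\indi}{\thedim}$ in \eqref{eq:homkernel_third_and_higher_orders_01} and \eqref{eq:homkernel_third_and_higher_orders_02} vanishes, so those identities collapse to \eqref{eq:homkernel_third_order_01} and \eqref{eq:homkernel_third_order_02}. It remains to show that the corner-specializations, at $(\indj,\indi) = (\thedim,\thedim)$, of \eqref{eq:homkernel_third_and_higher_orders_01} and \eqref{eq:homkernel_third_and_higher_orders_02} are collectively equivalent to \eqref{eq:homkernel_third_order_05}.

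The pivotal observation will be that Lemma~\ref{lemma:evil_identities} in its $\orderind = 2$ case gives the basis-vector coincidence $\Sbvfull{\anymatAT}{2}{\thedim}{\thedim} = \Sbvfull{\anymatA}{2}{\thedim}{\thedim}$ and, by applying the same result with $\anymatA$ replaced by $\anymatB$ throughout, also $\Sbvfull{\anymatBT}{2}{\thedim}{\thedim} = \Sbvfull{\anymatB}{2}{\thedim}{\thedim}$. For any $\thehom \in \ShomOX{\thealgebra}{\thechainsmodule{2}}{\themodule}$ these collisions unconditionally force $\themat{\anymatAT}{\thedim}{\thedim} = \themat{\anymatA}{\thedim}{\thedim}$ and $\themat{\anymatBT}{\thedim}{\thedim} = \themat{\anymatB}{\thedim}{\thedim}$. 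Substituting the former identification into \eqref{eq:homkernel_third_and_higher_orders_01} at $(\thedim,\thedim)$ and solving for $\themat{\anymatB}{1}{1}$ produces exactly \eqref{eq:homkernel_third_order_05}. Substituting the latter identification into \eqref{eq:homkernel_third_and_higher_orders_02} at $(\thedim,\thedim)$ produces $\themat{\anymatA}{1}{1} = \sum_{\inds=1}^{\thedim}\themat{\anymatB}{\inds}{\inds}$, but this is nothing but \eqref{eq:homkernel_third_order_05} applied with $\anymatB$ in place of $\anymat$, which is already present because the hypothesis quantifies over all elements of $\thematrices$. The converse direction reverses these manipulations: for $(\indj,\indi) \notin \{(1,1),(\thedim,\thedim)\}$ one reads off \eqref{eq:homkernel_third_and_higher_orders_01} and \eqref{eq:homkernel_third_and_higher_orders_02} from \eqref{eq:homkernel_third_order_01} and \eqref{eq:homkernel_third_order_02}, while at the corner the two instances of \eqref{eq:homkernel_third_order_05} (one for $\anymat$, one for $\anymatB$) combined with the trivial degree-2 identifications recover \eqref{eq:homkernel_third_and_higher_orders_01} and \eqref{eq:homkernel_third_and_higher_orders_02}.

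For the addendum I would mimic Step~2 of the proof of Lemma~\ref{lemma:homkernel_third_and_higher_orders}. The exchange $\anymatA \leftrightarrow \anymatB$ swaps \eqref{eq:homkernel_third_order_01} with \eqref{eq:homkernel_third_order_02}, interchanges the two instances of \eqref{eq:homkernel_third_order_05}, and sends each of \eqref{eq:homkernel_third_order_03} and \eqref{eq:homkernel_third_order_04} to an equivalent rearrangement of itself; the transition from $\anymat$ to $\anymatAT$ then proceeds through the index substitution $(\indj,\indi) \mapsto (\Sexchanged(\indi),\Sexchanged(\indj))$ together with the order-1 half of Lemma~\ref{lemma:evil_identities}. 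The main obstacle I anticipate is purely bookkeeping: the new equation \eqref{eq:homkernel_third_order_05} absorbs a constraint that in Lemma~\ref{lemma:homkernel_third_and_higher_orders} was distributed across two equations indexed by two different elements of $\thematrices$, so the back-and-forth between the two formulations must be tracked carefully to make sure no case at the corner $(\thedim,\thedim)$ is double-counted or dropped.
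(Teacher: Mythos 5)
Your treatment of the main equivalence is correct and is essentially the paper's argument: specialize Lemma~\ref{lemma:homkernel_third_and_higher_orders} to $\orderind=3$, note that away from $(\thedim,\thedim)$ the conditions \eqref{eq:homkernel_third_and_higher_orders_01}--\eqref{eq:homkernel_third_and_higher_orders_04} literally coincide with \eqref{eq:homkernel_third_order_01}--\eqref{eq:homkernel_third_order_04}, and use the $\orderind=2$ case of Lemma~\ref{lemma:evil_identities} to obtain the corner identifications $\themat{\anymatAT}{\thedim}{\thedim}=\themat{\anymatA}{\thedim}{\thedim}$ and $\themat{\anymatBT}{\thedim}{\thedim}=\themat{\anymatB}{\thedim}{\thedim}$, which convert the corner instance of \eqref{eq:homkernel_third_and_higher_orders_01} into \eqref{eq:homkernel_third_order_05}.

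Where you diverge from the paper is the corner instance of \eqref{eq:homkernel_third_and_higher_orders_02}, and this is exactly what makes your handling of the addendum heavier and, as sketched, loose. You recover that corner instance from \eqref{eq:homkernel_third_order_05} \emph{for $\anymatB$}, i.e.\ from $\themat{\anymatA}{1}{1}=\sum_{\inds=1}^{\thedim}\themat{\anymatB}{\inds}{\inds}$. That is legitimate for the biconditional in the statement, since both sides quantify over all of $\thematrices$, but it makes your equivalence a global one rather than an equivalence for each fixed $\anymat$; consequently the addendum no longer transfers from the addendum of Lemma~\ref{lemma:homkernel_third_and_higher_orders} and has to be re-proved, as you acknowledge. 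The paper instead derives the corner instance of \eqref{eq:homkernel_third_and_higher_orders_02} from \eqref{eq:homkernel_third_order_04} and \eqref{eq:homkernel_third_order_05} for the \emph{same} $\anymat$, via $\themat{\anymatBT}{\thedim}{\thedim}=\themat{\anymatB}{\thedim}{\thedim}$ and then rewriting \eqref{eq:homkernel_third_order_04} using \eqref{eq:homkernel_third_order_05}; this yields a fixed-$\anymat$ equivalence, so both the biconditional and the addendum are inherited at once. Your separate addendum argument is salvageable but glosses over precisely the delicate points: when only the conditions for a single $\anymat$ are assumed, the second instance of \eqref{eq:homkernel_third_order_05} is not ``already present'' and must be derived (it does follow, from \eqref{eq:homkernel_third_order_04} together with \eqref{eq:homkernel_third_order_05}); similarly \eqref{eq:homkernel_third_order_03} for $\anymatB$ is not a rearrangement of \eqref{eq:homkernel_third_order_03} alone but needs \eqref{eq:homkernel_third_order_04}; and for the passage $\anymat\rightsquigarrow\anymatAT$ the coincidences you need are again the $\orderind=2$ case of Lemma~\ref{lemma:evil_identities} (the relevant basis vectors are $2$-chains), not the $\orderind=1$ case, together with the index substitution $(\indj,\indi)\mapsto(\Sexchanged(\indj),\Sexchanged(\indi))$. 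With these repairs your route closes; the paper's per-$\anymat$ formulation simply buys the addendum for free.
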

\begin{proof}
Both the claimed equivalence and the addendum can be proved simultaneously by showing that for any  fixed $\anymat\in\thematrices$ the conditions \eqref{eq:homkernel_third_order_01}--\eqref{eq:homkernel_third_order_05} are satisfied for any $(\indj,\indi)\in \SYnumbers{\thedim}^{\Ssetmonoidalproduct 2}$ with $(\indj,\indi)\notin\{ (1,1),(\thedim,\thedim)\}$ if and only if the conditions \eqref{eq:homkernel_third_and_higher_orders_01}--\eqref{eq:homkernel_third_and_higher_orders_04} from Lemma~\ref{lemma:homkernel_third_and_higher_orders} are satisfied for $\orderind=3$ and any $(\indj,\indi)\in \SYnumbers{\thedim}^{\Ssetmonoidalproduct 2}$ with $(\indj,\indi)\neq (1,1)$.
\par
Obviously, for $\orderind=3$ and $(\indj,\indi)\notin\{(1,1),(\thedim,\thedim)\}$ the conditions \eqref{eq:homkernel_third_and_higher_orders_01} and \eqref{eq:homkernel_third_order_01} are identical, as are \eqref{eq:homkernel_third_and_higher_orders_02} and \eqref{eq:homkernel_third_order_02}. Likewise,  $\orderind=3$ provided, \eqref{eq:homkernel_third_and_higher_orders_03} and \eqref{eq:homkernel_third_order_03} agree, and so do \eqref{eq:homkernel_third_and_higher_orders_04} and \eqref{eq:homkernel_third_order_04}. Thus, all that remains to be shown is that, assuming all of these, the equations \eqref{eq:homkernel_third_and_higher_orders_01} for $(\indj,\indi)=(\thedim,\thedim)$, i.e.,
  \begin{IEEEeqnarray*}{rCl}
    \themat{\anymatAT}{\thedim}{\thedim}&=&\textstyle \themat{\anymatB}{1}{1}-\sum_{\inds=1}^{\thedim-1}\themat{\anymatA}{\inds}{\inds},\IEEEyesnumber\label{eq:homkernel_third_orders_1}
  \end{IEEEeqnarray*}
and \eqref{eq:homkernel_third_and_higher_orders_02} for $(\indj,\indi)=(\thedim,\thedim)$, i.e.,
\begin{IEEEeqnarray*}{rCl}
  \themat{\anymatBT}{\thedim}{\thedim}&=&\textstyle \themat{\anymatA}{1}{1}-\sum_{\inds=1}^{\thedim-1}\themat{\anymatB}{\inds}{\inds},\IEEEyesnumber\label{eq:homkernel_third_orders_2}
\end{IEEEeqnarray*}
are together equivalent to \eqref{eq:homkernel_third_order_05}.
\par
Because $\Sbvfull{\anymatAT}{2}{\thedim}{\thedim}=\Sbvfull{\anymatA}{2}{\thedim}{\thedim}$ and $\Sbvfull{\anymatBT}{2}{\thedim}{\thedim}=\Sbvfull{\anymatB}{2}{\thedim}{\thedim}$ by Lemma~\ref{lemma:evil_identities}  the assumption $\orderind=3$ requires $\themat{\anymatAT}{\thedim}{\thedim}=\themat{\anymatA}{\thedim}{\thedim}$ and $\themat{\anymatBT}{\thedim}{\thedim}=\themat{\anymatB}{\thedim}{\thedim}$. Because the first of the latter two identities reveals that \eqref{eq:homkernel_third_orders_1} and \eqref{eq:homkernel_third_order_05} are equivalent, it is  clear that \eqref{eq:homkernel_third_orders_1} and \eqref{eq:homkernel_third_orders_2} imply \eqref{eq:homkernel_third_order_05} and that it suffices to prove that \eqref{eq:homkernel_third_order_05}  requires  \eqref{eq:homkernel_third_orders_2}.
\par
And, indeed, by using \eqref{eq:homkernel_third_order_04} in the second and \eqref{eq:homkernel_third_order_05} in the fourth step it can be concluded that
\begin{IEEEeqnarray*}{rCl}
  \themat{\anymatBT}{\thedim}{\thedim}&=&  \themat{\anymatB}{\thedim}{\thedim}\\
  &=&\textstyle-\sum_{\inds=2}^{\thedim-1}\themat{\anymatB}{\inds}{\inds}-\sum_{\inds=2}^{\thedim}\themat{\anymatA}{\inds}{\inds}   \\
&=&\textstyle\themat{\anymatA}{1}{1}-\sum_{\inds=2}^{\thedim-1}\themat{\anymatB}{\inds}{\inds}-\sum_{\inds=1}^{\thedim}\themat{\anymatA}{\inds}{\inds}   \\  
  &=&\textstyle\themat{\anymatA}{1}{1}-\sum_{\inds=1}^{\thedim-1}\themat{\anymatB}{\inds}{\inds},
\end{IEEEeqnarray*}
which thus concludes the proof.
\end{proof}
\begin{lemma}
  \label{lemma:cohomology_third_order}
Both defect and rank of $\ShomOX{\thealgebra}{\thedifferential{3}}{\themodule}$ are $2\thedim^2-1$.
\end{lemma}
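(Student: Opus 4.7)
The plan is to proceed in close analogy to the proof of Lem\-ma~\ref{lemma:cohomology_second_order}: compute the $\thefield$\-/dimension of the domain of $\ShomOX{\thealgebra}{\thedifferential{3}}{\themodule}$, use Lem\-ma~\ref{lemma:homkernel_third_order} to cut out the kernel as the solution set of an explicit system, count the free parameters that survive, and finally deduce the rank via rank\-/nullity.

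First, because $\thechainsmodule{2}=\thefield\thechains{2}\SmonoidalproductC{\thefield}\thealgebra$ is free on $\thechains{2}$ and $|\thechains{2}|=4\thedim^2-2$ by the Main result, the domain $\ShomOX{\thealgebra}{\thechainsmodule{2}}{\themodule}\cong \thefield\thechains{2}$ has $\thefield$\-/dimension $4\thedim^2-2$. A morphism $\thehom$ is parametrized by the scalars $\themat{\anymat}{\indj}{\indi}$ for $\anymat\in\thematrices$ and $(\indj,\indi)\in\SYnumbers{\thedim}^{\Ssetmonoidalproduct 2}$, modulo the two identifications $\themat{\anymatAT}{\thedim}{\thedim}=\themat{\anymatA}{\thedim}{\thedim}$ for $\anymatA\in\{\theuniA,\theuniB\}$ forced by the $\orderind=2$ case of Lem\-ma~\ref{lemma:evil_identities}.

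Second, by the addendum to Lem\-ma~\ref{lemma:homkernel_third_order}, the defining kernel conditions \eqref{eq:homkernel_third_order_01}--\eqref{eq:homkernel_third_order_05} need be imposed only for a single $\anymat\in\thematrices$, since they then automatically propagate to the other three. Summed, they yield $(\thedim^2-2)+(\thedim^2-2)+1+1+1=2\thedim^2-1$ scalar equations. The main task will be to verify that these are linearly independent. This will follow from the observation that \eqref{eq:homkernel_third_order_01} and \eqref{eq:homkernel_third_order_02} solve for $\themat{\anymatAT}{\indj}{\indi}$ respectively $\themat{\anymatBT}{\indj}{\indi}$ for $(\indj,\indi)\notin\{(1,1),(\thedim,\thedim)\}$, while \eqref{eq:homkernel_third_order_03}, \eqref{eq:homkernel_third_order_04}, \eqref{eq:homkernel_third_order_05} additionally pin down $\themat{\anymatBT}{1}{1}$, $\themat{\anymatB}{\thedim}{\thedim}$, $\themat{\anymatB}{1}{1}$, respectively; one checks that all these determined variables are pairwise distinct and that none of them reappears on any right\-/hand side, so the system is triangular and of maximal rank.

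Consequently, the defect equals $2\thedim^2-1$, and rank\-/nullity yields a rank of $(4\thedim^2-2)-(2\thedim^2-1)=2\thedim^2-1$ as well. No serious obstacle is anticipated; the only bookkeeping to watch for is the triangularity claim, which reduces to an inspection of which variables appear on which side of the five families of equations.
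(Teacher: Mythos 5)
Your proposal is correct and follows essentially the same route as the paper: domain dimension $4\thedim^2-2$, the addendum to Lemma~\ref{lemma:homkernel_third_order} reducing the kernel conditions to a single $\anymat$, linear independence of the resulting $2\thedim^2-1$ equations via the observation that each determines a distinct variable not occurring elsewhere, and the dimension formula. The paper phrases the independence check as "each equation contains a variable appearing in no other equation" rather than your triangularity formulation, but this is the same argument.
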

\begin{proof}
  The domain  $\ShomOX{\thealgebra}{\thechainsmodule{2}}{\themodule}=\ShomOX{\thealgebra}{\thefield\thechains{2}\SmonoidalproductC{\thefield}\thealgebra}{\themodule}\cong \thefield\thechains{2}$ of $\ShomOX{\thealgebra}{\thedifferential{3}}{\themodule}$ has dimension $|\thechains{2}|=4\thedim^2-2$ by Lemma~\ref{lemma:evil_identities}. For a fixed $\anymatA\in\thematrices$, say $\anymatA=\theuniA$,  the (addendum  to) Lemma~\ref{lemma:homkernel_third_order} characterizes membership of an arbitrary $\thehom\in\ShomOX{\thealgebra}{\thechainsmodule{2}}{\themodule}$ in the kernel of $\ShomOX{\thealgebra}{\thedifferential{3}}{\themodule}$ by the equations  \eqref{eq:homkernel_third_order_01}--\eqref{eq:homkernel_third_order_05} for $(\indj,\indi)\in \SYnumbers{\thedim}^{\Ssetmonoidalproduct 2}$ with $(\indj,\indi)\notin\{ (1,1),(\thedim,\thedim)\}$.
  \par 
  These conditions are linearly independent for the following reasons. The variables $\themat{\anymatAT}{\indj}{\indi}$ and $\themat{\anymatBT}{\indj}{\indi}$ for $(\indj,\indi)\in \SYnumbers{\thedim}^{\Ssetmonoidalproduct 2}$ with $(\indj,\indi)\notin\{(1,1),(\thedim,\thedim)\}$ appear only in the  equations \eqref{eq:homkernel_third_order_01} and \eqref{eq:homkernel_third_order_02}, which are  obviously linearly independent. Hence, the question  is reduced to whether \eqref{eq:homkernel_third_order_03}--\eqref{eq:homkernel_third_order_05} are linearly independent. But \eqref{eq:homkernel_third_order_03} features the variables $\themat{\anymatAT}{1}{1}$ and $\themat{\anymatBT}{1}{1}$ which  appear neither in \eqref{eq:homkernel_third_order_04} nor in \eqref{eq:homkernel_third_order_05}. And, likewise, \eqref{eq:homkernel_third_order_04} and \eqref{eq:homkernel_third_order_05} each have a variable, namely $\themat{\anymatB}{\thedim}{\thedim}$ respectively $\themat{\anymatB}{1}{1}$, which occurs in no other equation. 
\par
  Since there are  $(\thedim^2-2)+(\thedim^2-2)+1+1+1=2\thedim^2-1$ independent equations for $4\thedim^2-2$ variables, the claim follows by the dimension formula.
\end{proof}
\subsubsection{Fourth and higher orders}
For orders $\orderind\in\Sintegersnn$ with $4\leq \orderind$ the defect and rank of $\ShomOX{\thealgebra}{\thedifferential{\orderind}}{\themodule}$ is now easy to compute.
\begin{lemma}
  \label{lemma:cohomology_higher_orders}
Both defect and rank of $\ShomOX{\thealgebra}{\thedifferential{\orderind}}{\themodule}$ are $2\thedim^2$ for each $\orderind\in\Sintegersnn$ with $4\leq \orderind$.
\end{lemma}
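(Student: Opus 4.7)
The strategy is to mimic the proof of Lem\-ma~\ref{lemma:cohomology_third_order}, combining the characterization of $\Sker(\ShomOX{\thealgebra}{\thedifferential{\orderind}}{\themodule})$ from Lem\-ma~\ref{lemma:homkernel_third_and_higher_orders} with a dimension count. The first observation is that for $4\leq \orderind$ the index $\orderind-1$ satisfies $3\leq \orderind-1$, so by the addendum to the \hyperref[main-result]{Main result} the set $\thechains{\orderind-1}$ has cardinality $4\thedim^2$ and hence the domain $\ShomOX{\thealgebra}{\thechainsmodule{\orderind-1}}{\themodule}\cong \thefield\thechains{\orderind-1}$ of  $\ShomOX{\thealgebra}{\thedifferential{\orderind}}{\themodule}$  has dimension $4\thedim^2$. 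By the dimension formula, it therefore suffices to show that the kernel has dimension exactly $2\thedim^2$.

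By the addendum to Lem\-ma~\ref{lemma:homkernel_third_and_higher_orders},  an element $\thehom\in \ShomOX{\thealgebra}{\thechainsmodule{\orderind-1}}{\themodule}$ lies in the kernel if and only if, for a single fixed choice of $\anymat\in\thematrices$, say $\anymat=\theuniA$, the equations  \eqref{eq:homkernel_third_and_higher_orders_01}--\eqref{eq:homkernel_third_and_higher_orders_04} are satisfied. Since $4\leq \orderind$ the Kronecker deltas $\Skronecker{\orderind}{3}$ vanish, which considerably simplifies those equations.

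Next I would verify that these conditions are linearly independent, hence cut out a subspace of codimension precisely $2\thedim^2$. The conditions \eqref{eq:homkernel_third_and_higher_orders_01} range over $(\indj,\indi)\in \SYnumbers{\thedim}^{\Ssetmonoidalproduct 2}\setminus\{(1,1)\}$, each time expressing a distinct variable $\themat{\anymatAT}{\indj}{\indi}$ (these variables being pairwise different as the underlying chains $\Sbvfull{\anymatAT}{\orderind-1}{\indj}{\indi}$ are pairwise different by Lem\-ma~\ref{lemma:evil_identities} for $\orderind-1\geq 3$) in terms of variables that occur in none of the other conditions of that family. That yields $\thedim^2-1$ independent equations. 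Analogously, \eqref{eq:homkernel_third_and_higher_orders_02} yields another $\thedim^2-1$ independent equations, whose left-hand side variables $\themat{\anymatBT}{\indj}{\indi}$ for $(\indj,\indi)\neq(1,1)$ do not appear in \eqref{eq:homkernel_third_and_higher_orders_01}. Finally, \eqref{eq:homkernel_third_and_higher_orders_03} contains the variable $\themat{\anymatBT}{1}{1}$, which does not appear in any of the other three families, while  \eqref{eq:homkernel_third_and_higher_orders_04} contains $\themat{\anymatB}{\thedim}{\thedim}$, which is equally unique. Each of these two singleton conditions therefore adds one more independent equation, for a grand total of $2(\thedim^2-1)+1+1=2\thedim^2$.

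The main obstacle is really only bookkeeping — convincing oneself that no hidden dependency slips in among the $2\thedim^2$ equations — but the argument is cleaner here than in the third-order case precisely because the terms $\Skronecker{\orderind}{3}\sum_{\inds=2}^{\thedim-1}\themat{\anymat}{\inds}{\inds}$ drop out. Once the rank of the defining linear system is established to be $2\thedim^2$, the dimension formula yields defect $4\thedim^2-2\thedim^2=2\thedim^2$ and rank $4\thedim^2-2\thedim^2=2\thedim^2$, concluding the proof.
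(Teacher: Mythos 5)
Your proposal is correct and follows essentially the same route as the paper: identify the domain as $\thefield\thechains{\orderind-1}$ of dimension $4\thedim^2$, use the addendum to Lemma~\ref{lemma:homkernel_third_and_higher_orders} with a single fixed $\anymat$ to cut out the kernel by the system \eqref{eq:homkernel_third_and_higher_orders_01}--\eqref{eq:homkernel_third_and_higher_orders_04}, check that these $2\thedim^2$ equations are independent because each exhibits a variable appearing nowhere else, and conclude by the dimension formula. The only cosmetic difference is that the cleanest independence argument (and the paper's) pivots on the left-hand-side variables $\themat{\anymatAT}{\indj}{\indi}$ and $\themat{\anymatBT}{\indj}{\indi}$ occurring in exactly one equation of the whole system, rather than on the right-hand sides being disjoint, but your count and conclusion are the same.
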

\begin{proof}
  The domain  $\ShomOX{\thealgebra}{\thechainsmodule{\orderind-1}}{\themodule}=\ShomOX{\thealgebra}{\thefield\thechains{\orderind-1}\SmonoidalproductC{\thefield}\thealgebra}{\themodule}\cong \thefield\thechains{\orderind-1}$ of $\ShomOX{\thealgebra}{\thedifferential{\orderind}}{\themodule}$ has dimension $|\thechains{\orderind-1}|=4\thedim$ by Lemma~\ref{lemma:evil_identities}. For a fixed $\anymatA\in\thematrices$, say $\anymatA=\theuniA$,  the (addendum  to) Lemma~\ref{lemma:homkernel_third_and_higher_orders} characterizes membership of an arbitrary $\thehom\in\ShomOX{\thealgebra}{\thechainsmodule{\orderind-1}}{\themodule}$ in the kernel of $\ShomOX{\thealgebra}{\thedifferential{\orderind}}{\themodule}$ by the equations  \eqref{eq:homkernel_third_and_higher_orders_01}--\eqref{eq:homkernel_third_and_higher_orders_04} for $(\indj,\indi)\in \SYnumbers{\thedim}^{\Ssetmonoidalproduct 2}$ with $(\indj,\indi)\neq(1,1)$.
  \par 
  These conditions are linearly independent. Indeed, for any  $(\indj,\indi)\in \SYnumbers{\thedim}^{\Ssetmonoidalproduct 2}$ with $(\indj,\indi)\neq(1,1)$ the variable   $\themat{\anymatAT}{\indj}{\indi}$ appears only in the corresponding \eqref{eq:homkernel_third_and_higher_orders_01} and nowhere else in the system of equations. Likewise, for such $(\indj,\indi)$ the variable $\themat{\anymatAT}{\indj}{\indi}$ is found only in the respective \eqref{eq:homkernel_third_and_higher_orders_02} and nowhere else. Furthermore, in the form of $\themat{\anymatBT}{1}{1}$ and $\themat{\anymatB}{\thedim}{\thedim}$, each of \eqref{eq:homkernel_third_and_higher_orders_03} and \eqref{eq:homkernel_third_and_higher_orders_04} respectively exhibits a  variable which appears in no other equation.
\par
  Since there are  $(\thedim^2-1)+(\thedim^2-1)+1+1=2\thedim^2$ independent equations for $4\thedim^2$ variables, the claim follows by the dimension formula.
\end{proof}
\subsection{Synthesis}
\label{section:cohomology_synthesis}
Of course, the result of the computation was already known in advance from \cite{DasFranzKulaSkalski2018} and \cite{BaraquinFranzGerholdKulaTobolski2023}.
\begin{proposition}
  For any $\orderind\in \Sintegersp$,
  \begin{IEEEeqnarray*}{rCl}
    \dim_\thefield(\SextX{\thealgebra}{\themodule}{\themodule}{\orderind})=
    \begin{cases}
      \thedim^2&\Scase \orderind=1\\
      \thedim^2-1&\Scase \orderind=2\\
      1 &\Scase\orderind=3\\
      0&\Scase4\leq \orderind.
    \end{cases}
  \end{IEEEeqnarray*}
\end{proposition}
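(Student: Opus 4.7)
The plan is to read off the claim as an immediate consequence of the preceding defect-rank calculations. By construction, the Anick resolution $(\thechainsmodule{\ell},\thedifferential{\ell})_{\ell\in\Sintegersnn}$ furnished by the \hyperref[main-result]{Main result} and verified throughout Section~\ref{section:anick_computation} is a free, hence projective, resolution of $\themodule$ as a right $\thealgebra$-module. Therefore the groups $\SextX{\thealgebra}{\themodule}{\themodule}{\ell}$ are computed as the cohomology of the cochain complex
\begin{IEEEeqnarray*}{C}
\ShomOX{\thealgebra}{\thechainsmodule{0}}{\themodule}\xrightarrow{\delta^{0}}\ShomOX{\thealgebra}{\thechainsmodule{1}}{\themodule}\xrightarrow{\delta^{1}}\ShomOX{\thealgebra}{\thechainsmodule{2}}{\themodule}\xrightarrow{\delta^{2}}\ldots,
\end{IEEEeqnarray*}
where $\delta^{\ell-1}\Seqpd\ShomOX{\thealgebra}{\thedifferential{\ell}}{\themodule}$ for each $\ell\in\Sintegersp$. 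Consequently, $\dim_{\thefield}(\SextX{\thealgebra}{\themodule}{\themodule}{\ell})$ equals the defect of $\delta^{\ell}$ minus the rank of $\delta^{\ell-1}$ for any $\ell\in\Sintegersp$.

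Next, I would tabulate the defects and ranks established earlier: Lem\-ma~\ref{lemma:cohomology_first_order} provides $\dim\Sker\delta^{0}=1$ and $\operatorname{rk}\delta^{0}=0$; Lem\-ma~\ref{lemma:cohomology_second_order} gives $\dim\Sker\delta^{1}=\operatorname{rk}\delta^{1}=\thedim^{2}$; Lem\-ma~\ref{lemma:cohomology_third_order} yields $\dim\Sker\delta^{2}=\operatorname{rk}\delta^{2}=2\thedim^{2}-1$; and Lem\-ma~\ref{lemma:cohomology_higher_orders} shows $\dim\Sker\delta^{\ell}=\operatorname{rk}\delta^{\ell}=2\thedim^{2}$ for every $\ell\in\Sintegersnn$ with $4\leq\ell$.

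It then remains to carry out four elementary subtractions. For $\ell=1$,
\begin{IEEEeqnarray*}{rCl}
\dim_{\thefield}(\SextX{\thealgebra}{\themodule}{\themodule}{1})&=&\dim\Sker\delta^{1}-\operatorname{rk}\delta^{0}=\thedim^{2}-0=\thedim^{2}.
\end{IEEEeqnarray*}
For $\ell=2$,
\begin{IEEEeqnarray*}{rCl}
\dim_{\thefield}(\SextX{\thealgebra}{\themodule}{\themodule}{2})&=&\dim\Sker\delta^{2}-\operatorname{rk}\delta^{1}=(2\thedim^{2}-1)-\thedim^{2}=\thedim^{2}-1.
\end{IEEEeqnarray*}
For $\ell=3$,
\begin{IEEEeqnarray*}{rCl}
\dim_{\thefield}(\SextX{\thealgebra}{\themodule}{\themodule}{3})&=&\dim\Sker\delta^{3}-\operatorname{rk}\delta^{2}=2\thedim^{2}-(2\thedim^{2}-1)=1.
\end{IEEEeqnarray*}
Finally, for any $\ell\in\Sintegersp$ with $4\leq\ell$, both $\delta^{\ell-1}$ and $\delta^{\ell}$ fall into the range covered by Lem\-ma~\ref{lemma:cohomology_higher_orders}, so
\begin{IEEEeqnarray*}{rCl}
\dim_{\thefield}(\SextX{\thealgebra}{\themodule}{\themodule}{\ell})&=&\dim\Sker\delta^{\ell}-\operatorname{rk}\delta^{\ell-1}=2\thedim^{2}-2\thedim^{2}=0.
\end{IEEEeqnarray*}
There is no real obstacle here: all the structural work has been done in Sections~\ref{section:cohomology_first_order}--\ref{section:cohomology_third_order}, and this final proof is just bookkeeping of the exact sequence.
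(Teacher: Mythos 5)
Your proof is correct and follows essentially the same argument as the paper: identify $\dim_\thefield(\SextX{\thealgebra}{\themodule}{\themodule}{\ell})$ with the defect of $\ShomOX{\thealgebra}{\thedifferential{\ell+1}}{\themodule}$ minus the rank of $\ShomOX{\thealgebra}{\thedifferential{\ell}}{\themodule}$ and substitute the values from Lemmata~\ref{lemma:cohomology_first_order}, \ref{lemma:cohomology_second_order}, \ref{lemma:cohomology_third_order} and \ref{lemma:cohomology_higher_orders}. The only blemish is an index shift in your tabulation sentence: Lemma~\ref{lemma:cohomology_higher_orders} concerns $\ShomOX{\thealgebra}{\thedifferential{\ell}}{\themodule}$ for $4\le\ell$, which in your notation is $\delta^{m}$ for $3\le m$ (not $4\le m$), and this is precisely what your computations for $\ell=3$ and $\ell\ge 4$ actually use, e.g.\ $\dim\Sker\delta^{3}=2\thedim^{2}$.
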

\begin{proof}
  \newcommand{\dummydefect}[1]{d_{#1}}
  \newcommand{\dummyrank}[1]{r_{#1}}
  Because   $\SextX{\thealgebra}{\themodule}{\themodule}{\orderind}$ is the quotient vector space of the kernel of $\ShomOX{\thealgebra}{\thedifferential{\orderind+1}}{\themodule}$ by the image of $\ShomOX{\thealgebra}{\thedifferential{\orderind}}{\themodule}$,
  the dimension of $\SextX{\thealgebra}{\themodule}{\themodule}{\orderind}$ is given by the difference $\dummydefect{\orderind+1}-\dummyrank{\orderind}$ of the defect $\dummydefect{\orderind+1}$ of $\ShomOX{\thealgebra}{\thedifferential{\orderind+1}}{\themodule}$ and the rank $\dummyrank{\orderind}$ of  $\ShomOX{\thealgebra}{\thedifferential{\orderind}}{\themodule}$. Because  $\dummyrank{1}=0$ and $\dummydefect{2}=\dummyrank{2}=\thedim^2$ and $\dummydefect{3}=\dummyrank{3}=2\thedim^2-1$ and $\dummydefect{\orderind}=\dummyrank{\orderind}=2\thedim^2$ if $4\leq \orderind$ by Lemmata~\ref{lemma:cohomology_first_order}, \ref{lemma:cohomology_second_order}, \ref{lemma:cohomology_third_order} and \ref{lemma:cohomology_higher_orders}  that proves the claim.
\end{proof}
}

\section{Discussion}
\label{section:discussion}
The present article is a case study in computing homological invariants of easy quantum groups.
\begin{itemize}
\item Reminder on easy quantum groups, invariants -- Section~\ref{section:on_free_unitary_quantum_group_and_easy_quantum_groups}
\item Literature context and motivation  -- Section~\ref{section:motivation}
  \item Conclusions and outlook  -- Section~\ref{section:conclusions}
  \item Various other comments and remarks -- Section~\ref{section:additional_remarks}
\end{itemize}
\subsection{Quantum groups background}
\label{section:on_free_unitary_quantum_group_and_easy_quantum_groups}
One main approach to quantizing the notion of groups was developed by Woronowicz in \cite{Woronowicz1987b, Woronowicz1991, Woronowicz1998}, culminating in the definition of \emph{compact quantum groups} (see Sec\-tion~\ref{section:compact_quantum_groups}). While there are no free compact quantum groups, the \emph{universal compact matrix quantum groups} $U_n^+(Q)$ discovered by Van Daele and Wang in \cite{VanDaeleWang1996a,Wang1995b} come close in spirit (see Sec\-tion~\ref{section:the_free_unitary_quantum_group}). A special case of those is the free unitary quantum group $U_n^+$, the  subject of the present article. It arises from the unitary group $U_n$ by a process termed \emph{liberation} by Banica and Speicher in \cite{BanicaSpeicher2009}, which makes $U_n^ +$ a so-called \emph{easy quantum groups} (see Sec\-tion~\ref{section:easy_quantum_groups}).  
The findings of this article have promising implications for the feasibility of computing the quantum group co\-ho\-mo\-lo\-gy (see Sec\-tion~\ref{section:quantum_group_cohomology}) of the discrete duals of easy quantum groups (see Sections~\ref{section:motivation} and \ref{section:conclusions}).

\subsubsection{Compact quantum groups}
\label{section:compact_quantum_groups}
{
  \newcommand{\anystate}{h}
  \newcommand{\anyelement}{a}
  \newcommand{\anygroup}{G}  
Compact quantum groups as defined by Woronowicz in  \cite{Woronowicz1987b, Woronowicz1991, Woronowicz1998} are (formally dual to) -- not necessarily commutative --  $C^\ast$-algebras with the same kind of extra structure as the group law of a compact group $\anygroup$ would induce on the commutative $C^\ast$-algebra $C(\anygroup)$ of continuous  complex-valued functions over it. The exact definition is not important for the purposes of the present article. It is enough to know that there exists a full and essentially surjective-on-objects contravariant functor  $\mathcal{O}$ from the category of compact quantum groups to the category of  \emph{CQG algebras} introduced by Dijkhuizen and Koornwinder in \cite{DijkhuizenKoornwinder1994}.
\par
A CQG algebra is a  Hopf $\ast$-algebra which admits a positive faithful integral. And, a \emph{Hopf $\ast$-algebra} is a complex Hopf algebra equipped with a conjugate-linear anti-multiplicative co-multiplicative involution, the \emph{star} $\ast$. An integral $\anystate$ on a Hopf $\ast$-algebra $\thealgebra$ is positive and faithful if and only if $h(\anyelement\SstarP\anyelement)>0$ for any $\anyelement\in\thealgebra$ with $\anyelement\neq 0$. In particular, by forgetting (the star and) the co-multiplication but not its co-unit $\thecounit$ any CQG algebra can be interpreted as an augmented algebra and comes with a natural (right) module structure $\themodulecomps$ on $\Scomps$.
That is the only  part of the compact quantum group structure that has appeared explicitly so far in the present article. 
}
\subsubsection{Universal compact matrix quantum groups}
\label{section:the_free_unitary_quantum_group}
{
  \newcommand{\indi}{i}
  \newcommand{\indj}{j}
  \newcommand{\inds}{s}
  \newcommand{\indp}{p}
  \newcommand{\indt}{t}
  Particular objects of the category of compact quantum groups are Van Daele and Wang's  universal compact $\thedim\times\thedim$-matrix quantum groups $U_\thedim^+(Q)$ defined in \cite{VanDaeleWang1996a,Wang1995b}. The latter can be understood as limits in appropriate subcategories, which motivates the name. The indexing parameters $\thedim$ and $Q$ can be any positive integer respectively any invertible $\thedim\times\thedim$\-/matrix with complex entries. The augmented algebra underlying the CQG algebra $\mathcal{O}(U_\thedim^+(Q))$ is the universal complex algebra with $2\thedim^2$ generators $\thegens=\{\thewuniX{\indj}{\indi},\thebuniX{\indj}{\indi}\}_{\indi,\indj=1}^\thedim$ subject to the relations consisting of for any $(\indj,\indi)\in \SYnumbers{\thedim}^{\Ssetmonoidalproduct 2}$ the polynomials
  \begin{IEEEeqnarray*}{ll}
    \begin{IEEEeqnarraybox}[][c]{s.s}
$\textstyle\sum_{\inds=1}^\thedim \thewuniX{\indj}{\inds}\thebuniX{\indi}{\inds}-\Skronecker{\indj}{\indi}\Saction\theone$, \hspace{12pt}&$\textstyle\sum_{\indp,\inds,\indt=1}^\thedim Q_{\indt,\inds}Q\SinverseP_{\indp,\indi}\Saction \thewuniX{\indt}{\indj}\thebuniX{\inds}{\indp}-\Skronecker{\indj}{\indi}\Saction \theone$,
  \\
    $\textstyle\sum_{\inds=1}^\thedim \thebuniX{\inds}{\indj}\thewuniX{\inds}{\indi}-\Skronecker{\indj}{\indi}\Saction\theone$, \hspace{12pt}&$\textstyle  \sum_{\indp,\inds,\indt=1}^\thedim Q_{\indj,\indt}Q\SinverseP_{\inds,\indp}\Saction\thebuniX{\indt}{\inds}\thewuniX{\indi}{\indp}-\Skronecker{\indj}{\indi}\Saction \theone$      
    \end{IEEEeqnarraybox}\IEEEyesnumber\label{eq:universal_matrix_quantum_group_relations}
\end{IEEEeqnarray*}
together with the morphism of complex algebras to $\Scomps$ with $\thewuniX{\indj}{\indi}\mapsto \Skronecker{\indj}{\indi}$ and $\thebuniX{\indj}{\indi}\mapsto \Skronecker{\indj}{\indi}$ for any $(\indj,\indi)\in \SYnumbers{\thedim}^{\Ssetmonoidalproduct 2}$. In the special case that $Q$  is the identity $\thedim\times\thedim$\-/matrix, $U_\thedim^+(Q)$ is simply denoted by $U_\thedim^+$. Then, for any $(\indj,\indi)\in \SYnumbers{\thedim}^{\Ssetmonoidalproduct 2}$ the second polynomials in the the two rows of   \eqref{eq:universal_matrix_quantum_group_relations} become $\textstyle\sum_{\inds=1}^\thedim  \thewuniX{\inds}{\indj}\thebuniX{\inds}{\indi}-\Skronecker{\indj}{\indi}\Saction \theone$ and $\textstyle  \sum_{\inds=1}^\thedim \thebuniX{\indj}{\inds}\thewuniX{\indi}{\inds}-\Skronecker{\indj}{\indi}\Saction \theone$ respectively. Hence, in that case the relations are precisely $\therels$ and the augmentation precisely $\thecounit$  from the \hyperref[main-result]{Main result}.

  }
\subsubsection{Easy quantum groups}
\label{section:easy_quantum_groups}
{
  \newcommand{\upp}[1]{{}^{\scalebox{.4}{$\square$}}#1}
  \newcommand{\lop}[1]{{}_{\scalebox{.4}{$\square$}}#1}
  \newcommand{\djpin}{{}^{\scalebox{.3}{$\square$}}{}_{\scalebox{.3}{$\square$}}}  
  {
  \newcommand{\indj}{j}
  \newcommand{\indi}{i}
  \newcommand{\inds}{s}
  \newcommand{\indt}{t}
  \newcommand{\incol}{\mathfrak{c}}
  \newcommand{\incolx}[1]{\incol_{#1}}
  \newcommand{\outcol}{\mathfrak{d}}
  \newcommand{\outcolx}[1]{\outcol_{#1}}
  \newcommand{\inind}{a}
  \newcommand{\outind}{b}
  \newcommand{\inlen}{k}
  \newcommand{\outlen}{\ell}
  \newcommand{\anyblock}{\mathsf{B}}  
  \newcommand{\anypartition}{p}
  Adding to the definition of $\mathcal{O}(U_\thedim^+)$ the relations  $\theuniX{\indt}{\inds}\theuniX{\indj}{\indi}-\theuniX{\indj}{\indi}\theuniX{\indt}{\inds}$ for any $\{(\indt,\inds),(\indj,\indi)\}\subseteq \SYnumbers{\thedim}^{\Ssetmonoidalproduct 2}$ produces the CQG algebra of the unitary group $U_\thedim$. Of course it must have been  by the reverse process, removing those relations from the presentation of $\mathcal{O}(U_\thedim)$, that Van Daele and Wang arrived at the definition of $U_\thedim^+$  in the first place. In \cite{BanicaSpeicher2009}, Banica and Speicher gave this method of quantization by straight-up removing rather than deforming relations the name \emph{liberation}. With the modifications by Tarrago and Weber \cite{TarragoWeber2018} Banica and Speicher's construction provides a way of exhibiting new examples of quantum subgroups of $U_\thedim^+$, called \emph{easy}, by finding solutions to special combinatorics problems. 
  \par
   More precisely, if $\{\upp{1},\upp{2},\ldots\}$ and $\{\lop{1},\lop{2},\ldots\}$ are two disjoint copies of $\Sintegersp$, the \emph{upper} respectively \emph{lower points}, then for any $(\inlen,\outlen)\in \Sintegersnn$ a \emph{partition} $\anypartition$ from any string  $\incol\in\Scolors^{\Ssetmonoidalproduct \inlen}$ of \emph{upper colors} to any \emph{lower colors} $\outcol\in\Scolors^{\Ssetmonoidalproduct \outlen}$  is the set of equivalence classes, its \emph{blocks}, of an equivalence relation on  $\{\upp{\inind},\lop{\outind}\Ssetbuilder\inind\in \SYnumbers{\inlen},\outind\in \SYnumbers{\outlen}\}$, its \emph{points}.
  }
  \begin{center}
          \hspace{-3em}
\begin{tikzpicture}[baseline=0.91cm]
    \def\scp{0.666}
    \def\linksize{\scp*0.075cm}
    \def\pointsize{\scp*0.25cm}
    \def\dd{\scp*0.5cm}
    \def\dx{\scp*1cm}
    \def\cx{\scp*0.3cm}
    \def\txu{3*\dx}    
    \def\txl{2*\dx}
    \def\dy{\scp*1cm}
    \def\cy{\scp*0.3cm}
    \def\ty{3*\dy}
    \tikzset{whp/.style={circle, inner sep=0pt, text width={\pointsize}, draw=black, fill=white}}
    \tikzset{blp/.style={circle, inner sep=0pt, text width={\pointsize}, draw=black, fill=black}}
    \tikzset{lk/.style={regular polygon, regular polygon sides=4, inner sep=0pt, text width={\linksize}, draw=black, fill=black}}
    \draw[dotted] ({0-\dd},{0}) -- ({\txl+\dd},{0});
    \draw[dotted] ({0-\dd},{\ty}) -- ({\txu+\dd},{\ty});
    \coordinate (l1) at ({0+0*\dx},{0+0*\ty}) {};    
    \coordinate (l2) at ({0+1*\dx},{0+0*\ty}) {};
    \coordinate (l3) at ({0+2*\dx},{0+0*\ty}) {};
    \coordinate (u1) at ({0+0*\dx},{0+1*\ty}) {};
    \coordinate (u2) at ({0+1*\dx},{0+1*\ty}) {};
    \coordinate (u3) at ({0+2*\dx},{0+1*\ty}) {};
    \coordinate (u4) at ({0+3*\dx},{0+1*\ty}) {};
    \node[lk] at ({2*\dx},{2*\dy}) {};
    \draw (u1) -- ++ ({0*\dx},{-1*\dy}) -| (u4);
    \draw[->] (l1) -- ++({0*\dx},{1*\dy});
    \draw (l2) -- (u2);
    \draw (l3) -- (u3);    
    \node[blp] at (l1) {};
    \node[whp] at (l2) {};
    \node[blp] at (l3) {};
    \node[whp] at (u1) {};
    \node[blp] at (u2) {};
    \node[blp] at (u3) {};
    \node[whp] at (u4) {};
    \node[anchor=west, gray] (upper-colors) at ({5.5*\dx},{3*\dy}) {upper colors};
    \node[anchor=east, gray] (upper-points) at ({-1.5*\dx},{3.25*\dy}) {upper points};
    \node[anchor=west, gray] (lower-colors) at ({5.5*\dx},{0*\dy}) {lower colors};    
    \node[anchor=east, gray] (lower-points) at ({-1.5*\dx},{-0.25*\dy}) {lower points};
    \node[anchor=east, gray] (blocks) at ({-1.5*\dx},{1.5*\dy}) {blocks};
    \node[align=center, anchor=center, gray] (crossing-blocks) at ({1*\dx},{4.5*\dy}) {\scriptsize two blocks\\[-0.5em] \scriptsize crossing}; 
    \node[align=center, anchor=center, gray] (big-block) at ({5*\dx},{1.5*\dy}) {\scriptsize one block of\\[-0.5em] \scriptsize ${}>2$ points};
    \node[align=center, anchor=center, gray] (singleton-block) at ({-0.75*\dx},{-1.5*\dy}) {\scriptsize block of\\[-0.5em] \scriptsize $1$ point};
    \draw[->, densely dotted, out=95, in=200, shorten >= 3pt, gray] (singleton-block) to ({-0*\dx},{0.5*\dy});
    \draw[->, densely dotted, out=190, in=315, shorten >= 3pt, gray] (big-block) to ({2*\dx},{2*\dy});
    \draw[->, densely dotted, out=245, in=150, shorten >= 3pt, gray] (crossing-blocks) to ({1*\dx},{2*\dy});
    \draw[xshift={-0.75*\dx}, decorate, decoration={brace}, gray] ({-0.5*\dx},{0.25*\dy}) to ({-0.5*\dx},{2.75*\dy});
    \draw[xshift={-0.75*\dx}, decorate, decoration={brace}, gray] ({-0.5*\dx},{-0.7*\dy}) to ({-0.5*\dx},{0.15*\dy});
    \draw[xshift={-0.75*\dx}, decorate, decoration={brace}, gray] ({-0.5*\dx},{2.85*\dy}) to ({-0.5*\dx},{3.7*\dy});
    \draw[->, densely dashed, out=180, in=0, shorten >= 0pt, gray] (upper-colors) to ({3*\dx+2.5*\dd},{3*\dy});    
    \draw[->, densely dashed, out=180, in=0, shorten >= 0pt, gray] (lower-colors) to ({2*\dx+2.5*\dd},{0*\dy});        
  \end{tikzpicture}
\end{center}
{
  \newcommand{\incol}{\mathfrak{c}}
  \newcommand{\outcol}{\mathfrak{d}}
  \newcommand{\anypartition}{p}
  \newcommand{\anyblock}{\mathsf{B}}  
            \newcommand{\inind}{a}
            \newcommand{\outind}{b}
            Given any partition $\Sxfromto{\anypartition}{\incol}{\outcol}$, its \emph{adjoint} $\Sxfromto{\anypartition\SstarP}{\outcol}{\incol}$ is obtained by exchanging the roles of upper and lower points, $\anypartition\SstarP=\{ \{\upp{\outind}\Ssetbuilder\lop{\outind}\in \anyblock\}\cup\{\lop{\inind}\Ssetbuilder\upp{\inind}\in\anyblock\}\}_{\anyblock\in\anypartition}$.
            }
\begin{center}
  $\left(
  \begin{tikzpicture}[vadjust={2}]
    \def\scp{0.666}
    \def\linksize{\scp*0.075cm}
    \def\pointsize{\scp*0.25cm}
    \def\dd{\scp*0.5cm}
    \def\dx{\scp*1cm}
    \def\cx{\scp*0.3cm}
    \def\txu{2*\dx}    
    \def\txl{0*\dx}
    \def\dy{\scp*1cm}
    \def\cy{\scp*0.3cm}
    \def\ty{2*\dy}
    \tikzset{whp/.style={circle, inner sep=0pt, text width={\pointsize}, draw=black, fill=white}}
    \tikzset{blp/.style={circle, inner sep=0pt, text width={\pointsize}, draw=black, fill=black}}
    \tikzset{lk/.style={regular polygon, regular polygon sides=4, inner sep=0pt, text width={\linksize}, draw=black, fill=black}}
    \draw[dotted] ({0-\dd},{0}) -- ({\txl+\dd},{0});
    \draw[dotted] ({0-\dd},{\ty}) -- ({\txu+\dd},{\ty});
    \coordinate (l1) at ({0+0*\dx},{0+0*\ty}) {};    
    \coordinate (u1) at ({0+0*\dx},{0+1*\ty}) {};
    \coordinate (u2) at ({0+1*\dx},{0+1*\ty}) {};
    \coordinate (u3) at ({0+2*\dx},{0+1*\ty}) {};
    \draw (u2) -- ++ ({0*\dx},{-1*\dy}) -| (u3);
    \draw (l1) -- (u1);
    \node[whp] at (l1) {};
    \node[whp] at (u1) {};
    \node[whp] at (u2) {};
    \node[blp] at (u3) {};
  \end{tikzpicture}
  \right)^\ast=
                \begin{tikzpicture}[vadjust={2}]
    \def\scp{0.666}
    \def\linksize{\scp*0.075cm}
    \def\pointsize{\scp*0.25cm}
    \def\dd{\scp*0.5cm}
    \def\dx{\scp*1cm}
    \def\cx{\scp*0.3cm}
    \def\txu{0*\dx}    
    \def\txl{2*\dx}
    \def\dy{\scp*1cm}
    \def\cy{\scp*0.3cm}
    \def\ty{2*\dy}
    \tikzset{whp/.style={circle, inner sep=0pt, text width={\pointsize}, draw=black, fill=white}}
    \tikzset{blp/.style={circle, inner sep=0pt, text width={\pointsize}, draw=black, fill=black}}
    \tikzset{lk/.style={regular polygon, regular polygon sides=4, inner sep=0pt, text width={\linksize}, draw=black, fill=black}}
    \draw[dotted] ({0-\dd},{0}) -- ({\txl+\dd},{0});
    \draw[dotted] ({0-\dd},{\ty}) -- ({\txu+\dd},{\ty});
    \coordinate (l1) at ({0+0*\dx},{0+0*\ty}) {};    
    \coordinate (l2) at ({0+1*\dx},{0+0*\ty}) {};
    \coordinate (l3) at ({0+2*\dx},{0+0*\ty}) {};
    \coordinate (u1) at ({0+0*\dx},{0+1*\ty}) {};
    \draw (l2) -- ++ ({0*\dx},{1*\dy}) -| (l3);
    \draw (l1) -- (u1);
    \node[whp] at (l1) {};
    \node[whp] at (l2) {};
    \node[blp] at (l3) {};
    \node[whp] at (u1) {};
  \end{tikzpicture}        $
\end{center}
{
  \newcommand{\anyblock}{\mathsf{B}}  
              \newcommand{\inlenI}[1]{k_{#1}}
              \newcommand{\outlenI}[1]{\ell_{#1}}
              \newcommand{\incolI}[1]{\mathfrak{c}_{#1}}
            \newcommand{\outcolI}[1]{\mathfrak{d}_{#1}}
            \newcommand{\partitionI}[1]{p_{#1}}
            \newcommand{\inind}{a}
            \newcommand{\outind}{b}
            \newcommand{\twoind}{t}
For any $\{\inlenI{\twoind},\outlenI{\twoind}\}\subseteq \Sintegersnn$, any $\incolI{\twoind}\in \Scolors^{\Ssetmonoidalproduct \inlenI{\twoind}}$, any $\outcolI{\twoind}\in \Scolors^{\Ssetmonoidalproduct \inlenI{\twoind}}$ and any partition $\Sxfromto{\partitionI{\twoind}}{\incolI{\twoind}}{\outcolI{\twoind}}$ for each $\twoind\in\Sonetwo$ the \emph{tensor product} $\partitionI{1}\Smonoidalproduct\partitionI{2}$ is the partition 
$\Sfromto{\incolI{1}\incolI{2}}{\outcolI{1}\outcolI{2}}$          obtained by horizontal concatenation, $\partitionI{1}\Smonoidalproduct\partitionI{2}\Seqpd\partitionI{1}\cup \{ \{\upp{(\inlenI{1}+\inind)}\Ssetbuilder \inind\in\SYnumbers{\inlenI{2}}\Sand \upp{\inind}\in \anyblock\}\cup \{\lop{(\outlenI{1}+\outind)}\Ssetbuilder \outind\in\SYnumbers{\outlenI{2}}\Sand \lop{\outind}\in \anyblock\}\}_{\anyblock\in\partitionI{2}}$.
}
 \begin{center}
   $    \begin{tikzpicture}[baseline=0.575cm]
    \def\scp{0.666}
    \def\linksize{\scp*0.075cm}
    \def\pointsize{\scp*0.25cm}
    \def\dd{\scp*0.5cm}
    \def\dx{\scp*1cm}
    \def\cx{\scp*0.3cm}
    \def\txu{0*\dx}    
    \def\txl{1*\dx}
    \def\dy{\scp*1cm}
    \def\cy{\scp*0.3cm}
    \def\ty{2*\dy}
    \tikzset{whp/.style={circle, inner sep=0pt, text width={\pointsize}, draw=black, fill=white}}
    \tikzset{blp/.style={circle, inner sep=0pt, text width={\pointsize}, draw=black, fill=black}}
    \tikzset{lk/.style={regular polygon, regular polygon sides=4, inner sep=0pt, text width={\linksize}, draw=black, fill=black}}
    \draw[dotted] ({0-\dd},{0}) -- ({\txl+\dd},{0});
    \draw[dotted] ({0-\dd},{\ty}) -- ({\txu+\dd},{\ty});
    \coordinate (l1) at ({0+0*\dx},{0+0*\ty}) {};    
    \coordinate (l2) at ({0+1*\dx},{0+0*\ty}) {};
    \coordinate (u1) at ({0+0*\dx},{0+1*\ty}) {};
    \draw[->] (l2) -- ++({0*\dx},{1*\dy});
    \draw (l1) -- (u1);
    \node[whp] at (l1) {};
    \node[whp] at (l2) {};
    \node[blp] at (u1) {};
  \end{tikzpicture}
\ {}  \otimes{}\
              \begin{tikzpicture}[baseline=0.575cm]
    \def\scp{0.666}
    \def\linksize{\scp*0.075cm}
    \def\pointsize{\scp*0.25cm}
    \def\dd{\scp*0.5cm}
    \def\dx{\scp*1cm}
    \def\cx{\scp*0.3cm}
    \def\txu{-1*\dx}    
    \def\txl{2*\dx}
    \def\dy{\scp*1cm}
    \def\cy{\scp*0.3cm}
    \def\ty{2*\dy}
    \tikzset{whp/.style={circle, inner sep=0pt, text width={\pointsize}, draw=black, fill=white}}
    \tikzset{blp/.style={circle, inner sep=0pt, text width={\pointsize}, draw=black, fill=black}}
    \tikzset{lk/.style={regular polygon, regular polygon sides=4, inner sep=0pt, text width={\linksize}, draw=black, fill=black}}
    \draw[dotted] ({0-\dd},{0}) -- ({\txl+\dd},{0});
    \draw[dotted] ({0-\dd},{\ty}) -- ({\txu+\dd},{\ty});
    \coordinate (l1) at ({0+0*\dx},{0+0*\ty}) {};    
    \coordinate (l2) at ({0+1*\dx},{0+0*\ty}) {};
    \coordinate (l3) at ({0+2*\dx},{0+0*\ty}) {};
    \node[lk] at ({1*\dx},{1*\dy}) {};
    \draw (l1) -- ++ ({0*\dx},{1*\dy}) -| (l2);
    \draw  ({1*\dx},{1*\dy}) -| (l3);
    \node[whp] at (l1) {};
    \node[blp] at (l2) {};
    \node[whp] at (l3) {};
  \end{tikzpicture}
  \ {}={} \
               \begin{tikzpicture}[baseline=0.575cm]
    \def\scp{0.666}
    \def\linksize{\scp*0.075cm}
    \def\pointsize{\scp*0.25cm}
    \def\dd{\scp*0.5cm}
    \def\dx{\scp*1cm}
    \def\cx{\scp*0.3cm}
    \def\txu{0*\dx}    
    \def\txl{4*\dx}
    \def\dy{\scp*1cm}
    \def\cy{\scp*0.3cm}
    \def\ty{2*\dy}
    \tikzset{whp/.style={circle, inner sep=0pt, text width={\pointsize}, draw=black, fill=white}}
    \tikzset{blp/.style={circle, inner sep=0pt, text width={\pointsize}, draw=black, fill=black}}
    \tikzset{lk/.style={regular polygon, regular polygon sides=4, inner sep=0pt, text width={\linksize}, draw=black, fill=black}}
    \draw[dotted] ({0-\dd},{0}) -- ({\txl+\dd},{0});
    \draw[dotted] ({0-\dd},{\ty}) -- ({\txu+\dd},{\ty});
    \coordinate (l1) at ({0+0*\dx},{0+0*\ty}) {};    
    \coordinate (l2) at ({0+1*\dx},{0+0*\ty}) {};
    \coordinate (l3) at ({0+2*\dx},{0+0*\ty}) {};
    \coordinate (l4) at ({0+3*\dx},{0+0*\ty}) {};
    \coordinate (l5) at ({0+4*\dx},{0+0*\ty}) {};    
    \coordinate (u1) at ({0+0*\dx},{0+1*\ty}) {};
    \node[lk] at ({3*\dx},{1*\dy}) {};
    \draw[->] (l2) -- ++({0*\dx},{1*\dy});
    \draw (l1) -- (u1);
    \draw (l3) -- ++ ({0*\dx},{1*\dy}) -| (l4);
    \draw  ({3*\dx},{1*\dy}) -| (l5);    
    \node[whp] at (l1) {};
    \node[whp] at (l2) {};
    \node[whp] at (l3) {};
    \node[blp] at (l4) {};
    \node[whp] at (l5) {};
    \node[blp] at (u1) {};
  \end{tikzpicture}$
\end{center}
{           \newcommand{\inlen}{k}
          \newcommand{\midlen}{\ell}
          \newcommand{\outlen}{m}
            \newcommand{\incol}{\mathfrak{c}}
            \newcommand{\midcol}{\mathfrak{d}}
            \newcommand{\outcol}{\mathfrak{e}}          
          \newcommand{\firstpartition}{p}
          \newcommand{\secondpartition}{q}
          \newcommand{\midsup}{s}
          \newcommand{\inblock}{\mathsf{A}}
          \newcommand{\midblock}{B}
          \newcommand{\outblock}{\mathsf{C}}
          \newcommand{\inind}{i}
          \newcommand{\outind}{j}
          Finally, for any $\{\inlen,\midlen,\outlen\}\subseteq \Sintegersnn$, any $\incol\in\Scolors^{\Ssetmonoidalproduct \inlen}$, $\midcol\in\Scolors^{\Ssetmonoidalproduct \midlen}$, and $\outcol\in\Scolors^{\Ssetmonoidalproduct \outcol}$ and any partitions $\Sxfromto{\firstpartition}{\incol}{\midcol}$ and $\Sxfromto{\secondpartition}{\midcol}{\outcol}$
          the \emph{composition} $\Sxfromto{\secondpartition\firstpartition}{\incol}{\outcol}$ of $(\secondpartition,\firstpartition)$ is the vertical concatenation under forgetting of loops, $\secondpartition\firstpartition\Seqpd \{\inblock\in\firstpartition\Sand \inblock\subseteq \{\upp{1},\upp{2},\ldots\}\}\cup \{\outblock\in\secondpartition\Sand \outblock\subseteq \{\lop{1},\lop{2},\ldots\}\}\cup \{ \bigcup\{\inblock\cap\{\upp{1},\upp{2},\ldots\}\Ssetbuilder \inblock\in \firstpartition\Sand \exists \outind\in\midblock\Ssuchthat\lop{\outind}\in\inblock \}\cup \bigcup\{\outblock\cap \{\lop{1},\lop{2},\ldots\}\Ssetbuilder \outblock\in\secondpartition\Sand \exists \inind\in\midblock\Ssuchthat\upp{\inind}\in\outblock\}\}_{\midblock\in\midsup}\backslash \{\emptyset\} $, where 
          $\midsup$ is the set of equivalence classes of the finest equivalence relation on $\SYnumbers{\midlen}$  coarser than the equivalence relation with the classes $\{\{\outind\in\SYnumbers{\midlen}\Sand \lop{\outind}\in \inblock\}\}_{\inblock\in\firstpartition}\backslash\{\emptyset\}$ and the equivalence relation with the classes $\{\{\inind\in\SYnumbers{\midlen}\Sand \upp{\inind}\in \outblock\}\}_{\outblock\in\secondpartition}\backslash\{\emptyset\}$.
          \begin{center}
            $  \begin{tikzpicture}[baseline=0.91cm]
    \def\scp{0.666}
    \def\linksize{\scp*0.075cm}
    \def\pointsize{\scp*0.25cm}
    \def\dd{\scp*0.5cm}
    \def\dx{\scp*1cm}
    \def\cx{\scp*0.3cm}
    \def\txu{1*\dx}    
    \def\txl{0*\dx}
    \def\dy{\scp*1cm}
    \def\cy{\scp*0.3cm}
    \def\ty{3*\dy}
    \tikzset{whp/.style={circle, inner sep=0pt, text width={\pointsize}, draw=black, fill=white}}
    \tikzset{blp/.style={circle, inner sep=0pt, text width={\pointsize}, draw=black, fill=black}}
    \tikzset{lk/.style={regular polygon, regular polygon sides=4, inner sep=0pt, text width={\linksize}, draw=black, fill=black}}
    \draw[dotted] ({0-\dd},{0}) -- ({\txl+\dd},{0});
    \draw[dotted] ({0-\dd},{\ty}) -- ({\txu+\dd},{\ty});
    \coordinate (l1) at ({0+0*\dx},{0+0*\ty}) {};    
    \coordinate (u1) at ({0+0*\dx},{0+1*\ty}) {};
    \coordinate (u2) at ({0+1*\dx},{0+1*\ty}) {};
    \draw (u1) -- ++ ({0*\dx},{-1*\dy}) -| (u2);
    \draw[->] (l1) -- ++({0*\dx},{1*\dy});
    \node[whp] at (l1) {};
    \node[whp] at (u1) {};
    \node[blp] at (u2) {};
  \end{tikzpicture}
  \cdot
  \begin{tikzpicture}[baseline=0.575cm]
    \def\scp{0.666}
    \def\linksize{\scp*0.075cm}
    \def\pointsize{\scp*0.25cm}
    \def\dd{\scp*0.5cm}
    \def\dx{\scp*1cm}
    \def\cx{\scp*0.3cm}
    \def\txu{2*\dx}    
    \def\txl{1*\dx}
    \def\dy{\scp*1cm}
    \def\cy{\scp*0.3cm}
    \def\ty{2*\dy}
    \tikzset{whp/.style={circle, inner sep=0pt, text width={\pointsize}, draw=black, fill=white}}
    \tikzset{blp/.style={circle, inner sep=0pt, text width={\pointsize}, draw=black, fill=black}}
    \tikzset{lk/.style={regular polygon, regular polygon sides=4, inner sep=0pt, text width={\linksize}, draw=black, fill=black}}
    \draw[dotted] ({0-\dd},{0}) -- ({\txl+\dd},{0});
    \draw[dotted] ({0-\dd},{\ty}) -- ({\txu+\dd},{\ty});
    \coordinate (l1) at ({0+0*\dx},{0+0*\ty}) {};    
    \coordinate (l2) at ({0+1*\dx},{0+0*\ty}) {};
    \coordinate (u1) at ({0+0*\dx},{0+1*\ty}) {};
    \coordinate (u2) at ({0+1*\dx},{0+1*\ty}) {};
    \coordinate (u3) at ({0+2*\dx},{0+1*\ty}) {};
    \node[lk] at ({1*\dx},{1*\dy}) {};
    \draw ({1*\dx},{1*\dy}) -| (u3);
    \draw (l1) -- (u1);
    \draw (l2) -- (u2);    
    \node[whp] at (l1) {};
    \node[blp] at (l2) {};
    \node[whp] at (u1) {};
    \node[whp] at (u2) {};
    \node[blp] at (u3) {};
  \end{tikzpicture}
  =\begin{tikzpicture}[baseline=0.91cm]
    \def\scp{0.666}
    \def\linksize{\scp*0.075cm}
    \def\pointsize{\scp*0.25cm}
    \def\dd{\scp*0.5cm}
    \def\dx{\scp*1cm}
    \def\cx{\scp*0.3cm}
    \def\txu{2*\dx}    
    \def\txl{0*\dx}
    \def\dy{\scp*1cm}
    \def\cy{\scp*0.3cm}
    \def\ty{3*\dy}
    \tikzset{whp/.style={circle, inner sep=0pt, text width={\pointsize}, draw=black, fill=white}}
    \tikzset{blp/.style={circle, inner sep=0pt, text width={\pointsize}, draw=black, fill=black}}
    \tikzset{lk/.style={regular polygon, regular polygon sides=4, inner sep=0pt, text width={\linksize}, draw=black, fill=black}}
    \draw[dotted] ({0-\dd},{0}) -- ({\txl+\dd},{0});
    \draw[dotted] ({0-\dd},{\ty}) -- ({\txu+\dd},{\ty});
    \coordinate (l1) at ({0+0*\dx},{0+0*\ty}) {};    
    \coordinate (u1) at ({0+0*\dx},{0+1*\ty}) {};
    \coordinate (u2) at ({0+1*\dx},{0+1*\ty}) {};
    \coordinate (u3) at ({0+2*\dx},{0+1*\ty}) {};
    \node[lk] at ({1*\dx},{2*\dy}) {};
    \draw (u1) -- ++ ({0*\dx},{-1*\dy}) -| (u2);
    \draw ({1*\dx},{2*\dy}) -| (u3);    
    \draw[->] (l1) -- ++({0*\dx},{1*\dy});
    \node[whp] at (l1) {};
    \node[whp] at (u1) {};
    \node[whp] at (u2) {};
    \node[blp] at (u3) {};
  \end{tikzpicture}
    {\color{gray}\leftarrow
  \begin{tikzpicture}[baseline=1.5cm]
    \def\scp{0.666}
    \def\linksize{\scp*0.075cm}
    \def\pointsize{\scp*0.25cm}
    \def\dd{\scp*0.5cm}
    \def\dx{\scp*1cm}
    \def\cx{\scp*0.3cm}
    \def\txu{2*\dx}
    \def\txm{1*\dx}
    \def\txl{0*\dx}
    \def\dy{\scp*1cm}
    \def\cy{\scp*0.3cm}
    \def\tyl{3*\dy}
    \def\tyu{2*\dy}    
    \tikzset{whp/.style={circle, inner sep=0pt, text width={\pointsize}, draw=gray, fill=white}}
    \tikzset{blp/.style={circle, inner sep=0pt, text width={\pointsize}, draw=gray, fill=gray}}
    \tikzset{lk/.style={regular polygon, regular polygon sides=4, inner sep=0pt, text width={\linksize}, draw=gray, fill=gray}}
    \draw[dotted] ({0-\dd},{0}) -- ({\txl+\dd},{0});
    \draw[dotted] ({0-\dd},{\tyl}) -- ({\txm+\dd},{\tyl});
    \draw[dotted] ({0-\dd},{\tyu+\tyl}) -- ({\txu+\dd},{\tyu+\tyl});    
    \coordinate (l1) at ({0+0*\dx},{0+0*\tyl+0*\tyu}) {};    
    \coordinate (m1) at ({0+0*\dx},{0+1*\tyl+0*\tyu}) {};
    \coordinate (m2) at ({0+1*\dx},{0+1*\tyl+0*\tyu}) {};
    \coordinate (u1) at ({0+0*\dx},{0+1*\tyl+1*\tyu}) {};
    \coordinate (u2) at ({0+1*\dx},{0+1*\tyl+1*\tyu}) {};
    \coordinate (u3) at ({0+2*\dx},{0+1*\tyl+1*\tyu}) {};
    \node[lk] at ({1*\dx},{1*\dy+1*\tyl+0*\tyu}) {};
    \draw ({1*\dx},{1*\dy+1*\tyl+0*\tyu}) -| (u3);
    \draw (m1) -- ({0*\dx},{-1*\dy+1*\tyl+0*\tyu}) -| (m2);    
    \draw[->] (l1) -- ++({0*\dx},{1*\dy});
    \draw (m1) -- (u1);
    \draw (m2) -- (u2);    
    \node[whp] at (l1) {};
    \node[whp] at (m1) {};
    \node[blp] at (m2) {};    
    \node[whp] at (u1) {};
    \node[whp] at (u2) {};
    \node[blp] at (u3) {};
  \end{tikzpicture}}$
          \end{center}
        }
        With those definitions, a \emph{category of (two-colored) partitions} is any set of partitions which is closed under forming adjoints, tensor products and (where defined)  composition and which contains the partitions
        \begin{IEEEeqnarray*}{C}
                    \begin{tikzpicture}[baseline=0]
    \def\scp{0.666}
    \def\linksize{\scp*0.075cm}
    \def\pointsize{\scp*0.25cm}
    \def\dd{\scp*0.5cm}
    \def\dx{\scp*1cm}
    \def\cx{\scp*0.3cm}
    \def\txu{0*\dx}    
    \def\txl{0*\dx}
    \def\dy{\scp*1cm}
    \def\cy{\scp*0.3cm}
    \def\ty{2*\dy}
    \tikzset{whp/.style={circle, inner sep=0pt, text width={\pointsize}, draw=black, fill=white}}
    \tikzset{blp/.style={circle, inner sep=0pt, text width={\pointsize}, draw=black, fill=black}}
    \tikzset{lk/.style={regular polygon, regular polygon sides=4, inner sep=0pt, text width={\linksize}, draw=black, fill=black}}
    \draw[dotted] ({0-\dd},{0}) -- ({\txl+\dd},{0});
    \draw[dotted] ({0-\dd},{\ty}) -- ({\txu+\dd},{\ty});
    \coordinate (l1) at ({0+0*\dx},{0+0*\ty}) {};    
    \coordinate (u1) at ({0+0*\dx},{0+1*\ty}) {};
    \draw (l1) -- (u1);
    \node[whp] at (l1) {};
    \node[whp] at (u1) {};
  \end{tikzpicture},
          \begin{tikzpicture}[baseline=0]
    \def\scp{0.666}
    \def\linksize{\scp*0.075cm}
    \def\pointsize{\scp*0.25cm}
    \def\dd{\scp*0.5cm}
    \def\dx{\scp*1cm}
    \def\cx{\scp*0.3cm}
    \def\txu{0*\dx}    
    \def\txl{0*\dx}
    \def\dy{\scp*1cm}
    \def\cy{\scp*0.3cm}
    \def\ty{2*\dy}
    \tikzset{whp/.style={circle, inner sep=0pt, text width={\pointsize}, draw=black, fill=white}}
    \tikzset{blp/.style={circle, inner sep=0pt, text width={\pointsize}, draw=black, fill=black}}
    \tikzset{lk/.style={regular polygon, regular polygon sides=4, inner sep=0pt, text width={\linksize}, draw=black, fill=black}}
    \draw[dotted] ({0-\dd},{0}) -- ({\txl+\dd},{0});
    \draw[dotted] ({0-\dd},{\ty}) -- ({\txu+\dd},{\ty});
    \coordinate (l1) at ({0+0*\dx},{0+0*\ty}) {};    
    \coordinate (u1) at ({0+0*\dx},{0+1*\ty}) {};
    \draw (l1) -- (u1);
    \node[blp] at (l1) {};
    \node[blp] at (u1) {};
  \end{tikzpicture},  
          \begin{tikzpicture}[baseline=0]
    \def\scp{0.666}
    \def\linksize{\scp*0.075cm}
    \def\pointsize{\scp*0.25cm}
    \def\dd{\scp*0.5cm}
    \def\dx{\scp*1cm}
    \def\cx{\scp*0.3cm}
    \def\txu{1*\dx}    
    \def\txl{0*\dx}
    \def\dy{\scp*1cm}
    \def\cy{\scp*0.3cm}
    \def\ty{2*\dy}
    \tikzset{whp/.style={circle, inner sep=0pt, text width={\pointsize}, draw=black, fill=white}}
    \tikzset{blp/.style={circle, inner sep=0pt, text width={\pointsize}, draw=black, fill=black}}
    \tikzset{lk/.style={regular polygon, regular polygon sides=4, inner sep=0pt, text width={\linksize}, draw=black, fill=black}}
    \draw[dotted] ({0-\dd},{\ty}) -- ({\txu+\dd},{\ty});
    \coordinate (u1) at ({0+0*\dx},{0+1*\ty}) {};
    \coordinate (u2) at ({0+1*\dx},{0+1*\ty}) {};
    \draw (u1) -- ++ ({0*\dx},{-1*\dy}) -| (u2);
    \node[whp] at (u1) {};
    \node[blp] at (u2) {};
  \end{tikzpicture},
  \begin{tikzpicture}[baseline=0]
    \def\scp{0.666}
    \def\linksize{\scp*0.075cm}
    \def\pointsize{\scp*0.25cm}
    \def\dd{\scp*0.5cm}
    \def\dx{\scp*1cm}
    \def\cx{\scp*0.3cm}
    \def\txu{1*\dx}    
    \def\txl{0*\dx}
    \def\dy{\scp*1cm}
    \def\cy{\scp*0.3cm}
    \def\ty{2*\dy}
    \tikzset{whp/.style={circle, inner sep=0pt, text width={\pointsize}, draw=black, fill=white}}
    \tikzset{blp/.style={circle, inner sep=0pt, text width={\pointsize}, draw=black, fill=black}}
    \tikzset{lk/.style={regular polygon, regular polygon sides=4, inner sep=0pt, text width={\linksize}, draw=black, fill=black}}
    \draw[dotted] ({0-\dd},{\ty}) -- ({\txu+\dd},{\ty});
    \coordinate (u1) at ({0+0*\dx},{0+1*\ty}) {};
    \coordinate (u2) at ({0+1*\dx},{0+1*\ty}) {};
    \draw (u1) -- ++ ({0*\dx},{-1*\dy}) -| (u2);
    \node[blp] at (u1) {};
    \node[whp] at (u2) {};
  \end{tikzpicture},
  \begin{tikzpicture}[baseline=0]
    \def\scp{0.666}
    \def\linksize{\scp*0.075cm}
    \def\pointsize{\scp*0.25cm}
    \def\dd{\scp*0.5cm}
    \def\dx{\scp*1cm}
    \def\cx{\scp*0.3cm}
    \def\txu{0*\dx}    
    \def\txl{1*\dx}
    \def\dy{\scp*1cm}
    \def\cy{\scp*0.3cm}
    \def\ty{2*\dy}
    \tikzset{whp/.style={circle, inner sep=0pt, text width={\pointsize}, draw=black, fill=white}}
    \tikzset{blp/.style={circle, inner sep=0pt, text width={\pointsize}, draw=black, fill=black}}
    \tikzset{lk/.style={regular polygon, regular polygon sides=4, inner sep=0pt, text width={\linksize}, draw=black, fill=black}}
    \draw[dotted] ({0-\dd},{0}) -- ({\txl+\dd},{0});
    \coordinate (l1) at ({0+0*\dx},{0+0*\ty}) {};
    \coordinate (l2) at ({0+1*\dx},{0+0*\ty}) {};
    \draw (l1) -- ++ (0,{1*\dy}) -| (l2);
    \node[whp] at (l1) {};
    \node[blp] at (l2) {};
  \end{tikzpicture},
    \begin{tikzpicture}[baseline=0]
    \def\scp{0.666}
    \def\linksize{\scp*0.075cm}
    \def\pointsize{\scp*0.25cm}
    \def\dd{\scp*0.5cm}
    \def\dx{\scp*1cm}
    \def\cx{\scp*0.3cm}
    \def\txu{0*\dx}    
    \def\txl{1*\dx}
    \def\dy{\scp*1cm}
    \def\cy{\scp*0.3cm}
    \def\ty{2*\dy}
    \tikzset{whp/.style={circle, inner sep=0pt, text width={\pointsize}, draw=black, fill=white}}
    \tikzset{blp/.style={circle, inner sep=0pt, text width={\pointsize}, draw=black, fill=black}}
    \tikzset{lk/.style={regular polygon, regular polygon sides=4, inner sep=0pt, text width={\linksize}, draw=black, fill=black}}
    \draw[dotted] ({0-\dd},{0}) -- ({\txl+\dd},{0});
    \coordinate (l1) at ({0+0*\dx},{0+0*\ty}) {};
    \coordinate (l2) at ({0+1*\dx},{0+0*\ty}) {};
    \draw (l1) -- ++ (0,{1*\dy}) -| (l2);
    \node[blp] at (l1) {};
    \node[whp] at (l2) {};
  \end{tikzpicture}.\IEEEyesnumber\label{eq:minimal_partitions}
\end{IEEEeqnarray*}
\par
{
  \newcommand{\indj}{j}
  \newcommand{\indi}{i}
  \newcommand{\incol}{\mathfrak{c}}
  \newcommand{\outcol}{\mathfrak{d}}
  \newcommand{\incolx}[1]{\mathfrak{c}_{#1}}
  \newcommand{\outcolx}[1]{\mathfrak{d}_{#1}}  
  \newcommand{\inlen}{k}
  \newcommand{\outlen}{\ell}
  \newcommand{\infree}{g}
  \newcommand{\outfree}{j}
  \newcommand{\inbound}{h}
  \newcommand{\outbound}{i}
  \newcommand{\inind}{a}
  \newcommand{\outind}{b}
  \newcommand{\infreex}[1]{g_{#1}}
  \newcommand{\outfreex}[1]{j_{#1}}
  \newcommand{\inboundx}[1]{h_{#1}}
  \newcommand{\outboundx}[1]{i_{#1}}  
  \newcommand{\incounter}{a}
  \newcommand{\outcounter}{b}
  \newcommand{\thepartition}{p}
  \newcommand{\anynum}{z}
For each category of two-colored partitions there is an associated quantum subgroup of $U_\thedim^+$ whose CQG algebra is obtained by adding to the definition of $\mathcal{O}(U_\thedim^+)$ for any  $\{\inlen,\outlen\}\subseteq \Sintegersnn$, any $\incol\in\Scolors^{\Ssetmonoidalproduct\inlen}$ and $\outcol\in\Scolors^{\Ssetmonoidalproduct\outlen}$, any partition $\Sxfromto{\thepartition}{\incol}{\outcol}$ of the category and any $\infree\in\SYnumbers{\thedim}^{\Ssetmonoidalproduct\inlen}$ and $\outfree\in\SYnumbers{\thedim}^{\Ssetmonoidalproduct\outlen}$ the relation
\begin{IEEEeqnarray*}{l}
\textstyle\sum_{\outbound\in\SYnumbers{\thedim}^{\Ssetmonoidalproduct \outlen}}
\Szetafunction{\thepartition}{\Sker(\infree\djpin\outbound)}\;
\thecuniX{\outcolx{1}}{\outfreex{1}}{\outboundx{1}}\thecuniX{\outcolx{2}}{\outfreex{2}}{\outboundx{2}}\hdots \thecuniX{\outcolx{\outlen}}{\outfreex{\outlen}}{\outboundx{\outlen}}\IEEEeqnarraynumspace\IEEEyesnumber\label{eq:relation_of_partition}\\
\textstyle\hspace{8em}{}-\sum_{\inbound\in\SYnumbers{\thedim}^{\Ssetmonoidalproduct \inlen}}\Szetafunction{\thepartition}{\Sker(\inbound\djpin\outfree)}\;
\thecuniX{\incolx{1}}{\inboundx{1}}{\infreex{1}}\thecuniX{\incolx{2}}{\inboundx{2}}{\infreex{2}}\hdots \thecuniX{\incolx{\inlen}}{\inboundx{\inlen}}{\infreex{\inlen}},
\end{IEEEeqnarray*}
where for any $\infree\in\SYnumbers{\thedim}^{\Ssetmonoidalproduct\inlen}$ and $\outfree\in\SYnumbers{\thedim}^{\Ssetmonoidalproduct\outlen}$ the number $\Szetafunction{\thepartition}{\Sker(\infree\djpin\outfree)}$ is defined as $1$ if and only if the equivalence relation embodied by $\thepartition$ is finer than the equivalence relation with the classes $\{\{\upp{\inind}\Ssetbuilder\inind\in\SYnumbers{\inlen}\Sand \infreex{\inind}=\anynum \}\cup\{\lop{\outind}\Ssetbuilder\outind\in\SYnumbers{\outlen}\Sand \outfreex{\outind}=\anynum\}\}_{\anynum\in\SYnumbers{\thedim}}\backslash \{\emptyset\}$ and as $0$ otherwise. 
}
\par
In particular,  $U_\thedim^+$ itself corresponds exactly to the smallest of all possible categories of partitions. The non\-/zero ones among the polynomials \eqref{eq:relation_of_partition} of the partitions \eqref{eq:minimal_partitions} are (up to a sign) the relations \eqref{eq:universal_matrix_quantum_group_relations} (for $Q$ being the identity).
\par
While many other categories of partitions have  been found in a collective effort by various authors (see \cite{BanicaSpeicher2009, BanicaCurranSpeicher2010, Weber2013, RaumWeber2014, RaumWeber2016a, RaumWeber2016b,TarragoWeber2018, Gromada2018, MangWeber2020, MangWeber2021a, MangWeber2021b, MangWeber2021c, Maassen2021}), the full classification is still open. And, in the vast majority of cases little to nothing is known about the easy quantum groups corresponding to these categories. Critically, it is unclear when two easy quantum groups are isomorphic, especially across different $\thedim$.  Homological invariants can help with making these distinctions.
\subsubsection{Quantum group cohomology}
\label{section:quantum_group_cohomology}
{
  \newcommand{\anyqg}{G}
  One invariant which can possibly discriminate between at least some easy quantum groups is the quantum group co\-ho\-mo\-lo\-gy of their  duals (see \cite{Bichon2017} for an introduction). The fact that in terminology this co\-ho\-mo\-lo\-gy is attributed not to the easy quantum groups but the discrete quantum groups (see \cite{VanDaele1996b}) arising as their generalized Pontryagin duals is immaterial. A coefficient module which can be chosen  for any  discrete quantum group is the complex numbers. It provides the broadest possible range for comparisons. Given any compact quantum group $\anyqg$, the \emph{quantum group co\-ho\-mo\-lo\-gy  with trivial coefficients of its discrete dual} is defined as
  \begin{IEEEeqnarray*}{rCl}
    H^{\Sargph}(\widehat{\anyqg})=\SextX{\mathcal{O}(\anyqg)}{\themodulecomps}{\themodulecomps}{\Sargph},
  \end{IEEEeqnarray*}
  i.e., as the derived functor at $\themodulecomps$  of the contravariant external hom functor $\ShomO{\mathcal{O}(\anyqg)}(\Sargph,\themodulecomps)$  from the category of (right) $\mathcal{O}(\anyqg)$\-/modules to the category of complex vector spaces. Thus, in order to compute these invariants of compact quantum groups it is necessary to have a projective resolution of  $\themodulecomps$ in terms of right $\mathcal{O}(\anyqg)$\-/modules.
}

\subsection{Motivation}
\label{section:motivation}
{
  \newcommand{\anyqg}{G}
    \newcommand{\otherqg}{H}
  \newcommand{\somehopf}{\mathcal{A}}  
The present article provides a projective resolution of the trivial module of the CQG algebra of the easy quantum group $U_\thedim^+$ and uses it to compute  the co\-ho\-mo\-lo\-gy of the discrete dual of $U_\thedim^+$ (see Section~\ref{section:on_free_unitary_quantum_group_and_easy_quantum_groups} for definitions). Of course, before the work that led to this article had even begun, Baraquin, Franz, Gerhold, Kula and Tobolski had already found in \cite{BaraquinFranzGerholdKulaTobolski2023} such a resolution and thus managed to  determine the third and higher orders of said co\-ho\-mo\-lo\-gy after Das, Franz, Kula and Skalski had previously computed the first two orders in \cite{DasFranzKulaSkalski2018,DasFranzKulaSkalski2021}. In fact,  in much greater  generality than the present article, the results of \cite{BaraquinFranzGerholdKulaTobolski2023} apply, to the co\-ho\-mo\-lo\-gy of the dual of the universal compact $\thedim\times\thedim$\-/matrix quantum group $U_\thedim^+(Q)$ or, technically, the more general universal co\-/sovereign Hopf algebra $\mathcal{H}(F)$ from \cite{Bichon2001} for generic $F$, i.e., any invertible scalar matrix $F$ such that $\mathrm{tr}(F)=0$  if and only if  $\mathrm{tr}(F\SinverseP)=0$  and such that there exist no roots of unity $q$ of order greater than $2$ with $q^2-\sqrt{\mathrm{tr}(F)\mathrm{tr}(F\SinverseP)}\Saction q+1=0$. Actually, transcending the results of the present article even further, \cite{BaraquinFranzGerholdKulaTobolski2023}  determines in addition the bi\-/algebra co\-ho\-mo\-lo\-gy  of  $\mathcal{H}(F)$, also for generic $F$ (although their results hold in greater generality by \cite{Bichon2023}), and Hochschild co\-ho\-mo\-lo\-gy with arbitrary one\-/dimensional coefficients. Finally, it must be emphasized that Ba\-ra\-quin, Franz, Ger\-hold, Ku\-la and To\-bols\-ki's resolution is not only finite but in fact minimal, namely zero  from the fourth order onwards, whereas the one presented here is infinite. Given that what the current article provides can be achieved in much greater generality with much better results, why even write it in the first place?
\par
The article grew out of the attempt to continue the work of \cite{Mang2023arxiv} and compute after the first also the second co\-ho\-mo\-lo\-gy of the discrete duals of all easy quantum groups $\anyqg$. As demonstrated by Bichon, Franz and Gerhold in \cite{BichonFranzGerhold2017} and then Das, Franz, Gerhold, Kula and Skalski in \cite{DasFranzKulaSkalski2018,DasFranzKulaSkalski2021}, at least for certain $\anyqg$,  the standard resolution is good enough for this purpose (see Appendix~\ref{section:known_method} for an explanation of their method). However, after finding myself consistently failing with this strategy, even for relatively uncomplicated $\anyqg$, I decided to  look for smaller resolutions. An analysis of the techniques used for obtaining resolutions for the very few $\anyqg$ for which this has been achieved, left me clueless as to how to adapt those to arbitrary $\anyqg$. 
\par
Researching instead  general strategies for going from the presentation of an augemented algebra in terms of generators and relations to a projective resolution of the trivial module I learned about Anick resolutions (see \cite{Anick1986}) from \cite{MussonLeymarie2023}. I wondered whether it was feasible to use those for my purpose.  There is a theorem guaranteeing the existence of an Anick resolution for each augmented algebra and monomial order. That means, in principle, this is a resolution which could be used for \emph{any} $\anyqg$.  Of course, there is a price. Its differentials are defined recursively. Hence, with those resolutions (just like with the standard resolution) the difficulty lies in computing the co\-ho\-mo\-lo\-gy from the resolution. In contrast, with the computations of quantum group co\-ho\-mo\-lo\-gies in the literature the challenge is to find a resolution in the first place and prove its exactness. Computing the co\-ho\-mo\-lo\-gy from an Anick resolution would presumably get easier if the recursion could be solved, i.e., if there was a closed-form expression for the differentials. As far as I was able to ascertain, it is not known whether this is always possible. Eventually, I decided to try. Having no idea what to expect, it seemed prudent to test on the easy $\anyqg$ for which the set of relations $\mathcal{O}(\anyqg)$ is the smallest, $U_\thedim^+$. And for $U_\thedim^+$, it turns out, there is a non\-/recursive formula for the differential, making the co\-ho\-mo\-lo\-gy computation simple.
\par
The present article is effectively me reporting on the outcome of that experiment. Although it computes a co\-ho\-mo\-lo\-gy that is already known, it is ultimately aimed at determining new, unknown ones. 
While it provides an explicit resolution for a quantum group where a better one is already available, it does so by very different means.
 At this point in time the only easy quantum groups $G$ the full co\-ho\-mo\-lo\-gies of whose duals  have been determined are  $U_\thedim^+$, the free orthogonal quantum group $O_\thedim^+$ defined in \cite{VanDaele1996b} and the free symmetric quantum group $S_\thedim^+$ of \cite{Wang1998} (see Appendix~\ref{section:other_methods} for an analysis of the three  computations in the literature).
And, whereas it is unclear to me where to even start with the methods used in those cases, the strategy employed here is, at least in principle, immediately applicable to arbitrary easy $G$ (see Section~\ref{section:conclusions}).
}
\label{section:crux}
\subsection{Conclusions}
\label{section:conclusions}
{
  \newcommand{\anyqg}{G}
  \newcommand{\orderind}{\ell}  
     \newcommand{\indi}{i}
     \newcommand{\indj}{j}
  The present article is a case study, a single data point, anecdotal evidence. As such, very few rigorous inferences are possible. That is not its purpose. Rather, it is meant to generate ideas and suggest future research directions.
  \par
On the object level, it shows that for each $\thedim\in\Sintegers$ with $2\leq \thedim$ there exists a monomial order on the set $\thegens$ of generators in the standard presentation of the CQG algebra $\mathcal{O}(U_\thedim^+)$ of the easy quantum group $U_\thedim^+$ with respect to which the Anick resolution of the co\-/unit admits a closed\-/form, non\-/recursive expression. (This resolution is different from the one obtained by Baraquin, Franz, Gerhold, Kula and Tobolski in  \cite{BaraquinFranzGerholdKulaTobolski2023}.) And it shows that the Anick resolution is good enough to compute from it the (previously known) quantum group co\-ho\-mo\-lo\-gy of the dual of $U_\thedim^+$  in all orders. 
\par
On the meta level, the article suggests the following.
\begin{enumerate}[wide, label=\arabic*)]
\item The most important conclusion to be drawn from these findings is that Anick resolutions might be a way to reliably compute the co\-ho\-mo\-lo\-gies of the discrete duals of easy quantum groups $\anyqg$.  For any  $\anyqg$ the trivial $\mathcal{O}(\anyqg)$\-/module has such a  resolution. If a Gröbner basis of $\mathcal{O}(\anyqg)$ can be found, there is, in principle, nothing preventing the investigation of the co\-ho\-mo\-lo\-gy by the method used here. The only question is whether there is a closed formula and whether the number of recursion steps needed to prove it is managable or not.
\item In particular, if one is just interested in the second co\-ho\-mo\-lo\-gy, this approach might be worth considering. The alternative method employed so far (see Appendix~\ref{section:known_method}) uses the standard resolution, which  is likely much larger than the Anick resolution already in low degrees.
\item The formulas for the differentials $(\thedifferential{\orderind})_{\orderind\in\Sintegersnn}$ become periodic from $\orderind=4$ onwards. Additionally, the $(\thedifferential{\orderind})_{\orderind\in\Sintegersnn}$ depend on $\thedim$ only in a very benign way. Probably it would have been possible to guess the formula for general $\thedim$ and $\orderind$ knowing just the orders, say, $\orderind\leq 6$ in the special cases $\thedim\leq 5$. Perhaps the Anick resolutions of $\themodule$  for other $\anyqg$ behave similarly. In that case it might be quicker to use a computer algebra system to compute the resolution for small $\orderind$ and $\thedim$ first, then form a hypothesis for general $\orderind$ and $\thedim$ and try to use the Gröbner basis to prove exactness, rather than solve the recursion.
\end{enumerate}
}
\subsection{Additional remarks}
\label{section:additional_remarks}
The final section offers some context for the choices made in the article and discusses potential modifications.
\subsubsection{On the choice of monomial order}
{
  \newcommand{\indj}{j}
  \newcommand{\indi}{i}
  \newcommand{\indl}{\ell}
  \newcommand{\indk}{k}
  \newcommand{\indt}{t}
  \newcommand{\inds}{s}
  \newcommand{\colc}{\mathfrak{c}}
  \newcommand{\cold}{\mathfrak{d}}
  When working with $U_n^+$ or with Kac-type compact $\thedim\times\thedim$\-/matrix quantum groups in general it is customary to consider besides the fundamental representation matrix $\theuni$ its transpose $\theuni\StransposeP$, its conjugate  $\SdualX{\theuni}$ and its  adjoint $\theuni\SdaggerP$. In contrast, rather than the usual $\{\theuni,\theuni\StransposeP,\SdualX{\theuni},\theuni\SdaggerP\}$ the \hyperref[main-result]{Main result} (and the entire article) only references  $\thematrices=\{\theuni,\theuniAT,\theuniB,\theuniBT\}$. The switch  $(\SdualX{\theuni},\theuni\SdaggerP)\leadsto  (\theuniB,\theuniBT)$  is due to the particular choice of monomial order for $\thegens=\{\theuniAX{\indj}{\indi},\theuniAX{\indj}{\indi}{}\SstarP\}_{\indi,\indj=1}^\thedim$.
  \par
More precisely, it is the fact that $\{\theuniAX{\indj}{\indi}\}_{\indi,\indj=1}^\thedim=\{\thewuniX{\indj}{\indi}\}_{\indi,\indj=1}^\thedim$ was given the lexicographic order but $\{\theuniAX{\indj}{\indi}{}\SstarP\}_{\indi,\indj=1}^\thedim=\{\thebuniX{\indj}{\indi}\}_{\indi,\indj=1}^\thedim$ the \emph{opposite} of the lexicographic order, that motivates this perspective. This choice may or may not seem natural. But,  importantly, equipping both $\{\thewuniX{\indj}{\indi}\}_{\indi,\indj=1}^\thedim$ and $\{\thebuniX{\indj}{\indi}\}_{\indi,\indj=1}^\thedim$  with the lexicographic order would not have worked.

  \begin{remark}
     $\therels$ is \emph{no} Gröbner basis of $\theideal$ with respect to the degreewise lexicographic extension of the total order on $\thegens$ defined by for any
     $\{\colc,\cold\}\subseteq \Scolors$ and any $\{(\indj,\indi),(\indt,\inds)\}\subseteq \SYnumbers{\thedim}^{\Ssetmonoidalproduct 2}$, 
    \begin{IEEEeqnarray*}{rClCrCl}
      \thecuniX{\cold}{\indt}{\inds}&\leq& \thecuniX{\colc}{\indj}{\indi} &\Siff& (\cold,\colc)&=&{\mathnormal\Sblack\Swhite}\\
&&&&      {}\Sor(\cold,\colc)&\in&\{\mathnormal\Sblack\Sblack,\mathnormal\Swhite\Swhite\} \Sand (\indt,\inds)<(\indj,\indi).
\end{IEEEeqnarray*}
In particular, $\therels$ is not a universal Gröbner basis of $\theideal$.
\end{remark}
\begin{proof}
\newcommand{\theevil}{b}
  \newcommand{\leftred}{\varphi_1}
  \newcommand{\rightred}{\varphi_2}    
Actually, Proposition~\ref{proposition:bergman} is only an excerpt of Bergman's diamond lemma from \cite{Bergman1978}. The full version  states, among other things, (although not in these words) that any orbit (in the sense of monoid actions) of the reductions by a Gröbner basis has exactly one fixed point. In particular, in order to refute that $\therels$ is a Gröbner basis, it suffices to exhibit a polynomial $\theevil\in\thefreealg$ and reductions $\leftred$ and $\rightred$ by $\therels$ with $\leftred(\theevil)\neq\rightred(\theevil)$ such that both $\leftred(\theevil)$ and $\rightred(\theevil)$ are fixed points of the reductions by $\therels$.
  \par
If $\theevil\Seqpd \thewuniX{\thedim}{\thedim}\thebuniX{\thedim}{\thedim}\thewuniX{\thedim}{\thedim}$, then, on the one hand, reduction of $\theevil$ by $\thewuniX{\thedim}{\thedim}\thebuniX{\thedim}{\thedim}+\sum_{\indt=1}^{\thedim-1}\thewuniX{\indt}{\thedim}\thebuniX{\indt}{\thedim}-\theone$ produces
  \begin{IEEEeqnarray*}{rCl}
\textstyle-\sum_{\indt=1}^{\thedim-1}\thewuniX{\indt}{\thedim}\thebuniX{\indt}{\thedim}\thewuniX{\thedim}{\thedim}+\thewuniX{\thedim}{\thedim},
\end{IEEEeqnarray*}
which when reduced by $\thewuniX{\indt}{\thedim}\thebuniX{\indt}{\thedim}+\sum_{\inds=1}^{\thedim-1}\thewuniX{\indt}{\inds}\thebuniX{\indt}{\inds}-\theone$ for each $\indt\in\SYnumbers{\thedim-1}$ becomes
\begin{IEEEeqnarray*}{rCl}
\textstyle\sum_{\inds,\indt=1}^{\thedim-1}\thewuniX{\indt}{\inds}\thebuniX{\indt}{\inds}\thewuniX{\thedim}{\thedim}-(\thedim-2)\thewuniX{\thedim}{\thedim}.\IEEEyesnumber\label{eq:monomial_order_justification_1}
\end{IEEEeqnarray*}
  On the other hand, if reduced by $\thebuniX{\thedim}{\thedim}\thewuniX{\thedim}{\thedim}+\sum_{\indt=1}^{\thedim-1}\thebuniX{\indt}{\thedim}\thewuniX{\indt}{\thedim}-\theone$ the polynomial $\theevil$ takes on the form
  \begin{IEEEeqnarray*}{rCl}
\textstyle-\sum_{\indt=1}^{\thedim-1}\thewuniX{\thedim}{\thedim}\thebuniX{\indt}{\thedim}\thewuniX{\indt}{\thedim}+\thewuniX{\thedim}{\thedim},
\end{IEEEeqnarray*}
which after reduction by
$\thebuniX{\indt}{\thedim}\thewuniX{\indt}{\thedim}+\sum_{\inds=1}^{\thedim-1}\thebuniX{\indt}{\inds}\thewuniX{\indt}{\inds}-\theone$ for each $\indt\in\SYnumbers{\thedim-1}$ reads
\begin{IEEEeqnarray*}{rCl}
\textstyle\sum_{\inds,\indt=1}^{\thedim-1}\thewuniX{\thedim}{\thedim}\thebuniX{\indt}{\inds}\thewuniX{\indt}{\inds}-(\thedim-2)\thewuniX{\thedim}{\thedim}.\IEEEyesnumber\label{eq:monomial_order_justification_2}
\end{IEEEeqnarray*}
Since \eqref{eq:monomial_order_justification_1} and \eqref{eq:monomial_order_justification_2} are distinct and fixed under reductions by $\therels$ the set $\therels$ cannot be a Gröbner basis. 
\end{proof}
Of course, there may still exist monomial orders which are different from the one in the \hyperref[main-result]{Main result} and which exhibit even more desirable properties. But, if one is interested in quantum groups, an order which makes the set $\therels$ encoding an essential part of what makes $U_n^+$ a quantum \enquote{group} seemed a reasonable choice.
}
\subsubsection{Left modules, bi-modules}
{
  \newcommand{\thegensI}[1]{\thegens_{#1}}
  \newcommand{\therelsI}[1]{\therels_{#1}}
  \newcommand{\anybimod}{X}
  \newcommand{\countind}{k}
\newcommand{\extanybimod}{X''}
\newcommand{\toranybimod}{X'}
\newcommand{\anypoly}{p}
\newcommand{\anyrel}{r}
\newcommand{\anygen}{e}
\newcommand{\anygenI}[1]{e_{#1}}
While the co\-/unit of $\thealgebra=\mathcal{O}(U_\thedim^+)$ is resolved as a right module in the \hyperref[main-result]{Main result}, with the obvious modifications the same method could produce a resolution in terms of left modules. However, a different question is  whether there is also a version for bi\-/modules. While that is not directly addressed by Anick in \cite{Anick1986}, there is of course the option of interpreting bi\-/modules over $\thealgebra$ as, say,  right modules over the enveloping algebra $\thealgebra\SoppositeP\Smonoidalproduct\thealgebra$ of $\thealgebra$. The enveloping algebra can also be expressed as a universal algebra over $\thefield$. Any disjoint union $\thegensI{1}\cup\thegensI{2}$ of two copies $\thegensI{1}$ and $\thegensI{2}$ of $\thegens$ could serve as a generating set. The relations $\bigcup_{\countind=0}^2\therelsI{\countind}$ would consist of one copy $\therelsI{2}=\{\anyrel((\anygenI{2})_{\anygenI{2}\in\thegensI{2}})\Ssetbuilder\anyrel\in\therels\}$ of $\therels\subseteq\thefreealg$ imposed on $\thegensI{2}$, another copy, but reversed, $\therelsI{1}=\{\anyrel\SoppositeP((\anygenI{1})_{\anygenI{1}\in\thegensI{1}})\Ssetbuilder\anyrel\in\therels\}$ of $\therels$ for $\thegensI{1}$, and the demand $\therelsI{0}=\{\anygenI{1}-\anygenI{2}\Ssetbuilder (\anygenI{1},\anygenI{2})\in\thegensI{1}\Ssetmonoidalproduct\thegensI{2}\}$ that $\thegensI{1}$ and $\thegensI{2}$ commute. One may want to try to as monomial order for $\thegensI{1}\cup\thegensI{2}$ the same order as in the \hyperref[main-result]{Main Theorem} on $\thegensI{2}$ and the same or the opposite of that on $\thegensI{1}$ together with the prescription that the maximum of $\thegensI{1}$ is less than the minimum of $\thegensI{2}$. Probably there is a then a way of obtaining a Gröbner basis of $\thealgebra\SoppositeP\Smonoidalproduct\thealgebra$ from that of $\thealgebra$. Perhaps a similar connection exists between the sets of chains of $\thealgebra\SoppositeP\Smonoidalproduct\thealgebra$ and $\thealgebra$.
\par
Be that as it may, depending on the intended application, this might not be the best approach, for example, to computing the Hochschild co\-ho\-mo\-lo\-gy of $\thealgebra$. That is because $\thealgebra$ is not just an algebra but has the structure of a Hopf algebra.
\begin{remark}
  \newcommand{\indi}{i}
  \newcommand{\indj}{j}
  \newcommand{\inds}{s}
  There exists a unique algebra morphism $\Sxfromto{\thecomultiplication}{\thealgebra}{\thealgebra\Smonoidalproduct\thealgebra}$  such that for any $\anymat\in\thematrices$ and $(\indj,\indi)\in\SYnumbers{\thedim}^{\Ssetmonoidalproduct 2}$,
  \begin{IEEEeqnarray*}{rCl}
    \thecomultiplication(\anymatAX{\indj}{\indi})&=&\sum_{\inds=1}^\thedim
    \begin{cases}
\anymatAX{\indj}{\inds}\Smonoidalproduct\anymatAX{\inds}{\indi}      &\Scase\anymat\in\{\theuniA,\theuniB\}\\
\anymatAX{\inds}{\indi}\Smonoidalproduct\anymatAX{\indj}{\inds}        &\Scase\anymat\in\{\theuniAT,\theuniBT\}.
    \end{cases}
  \end{IEEEeqnarray*}  
  $\thecomultiplication$ turns $\thealgebra$ into a Hopf algebra with co\-/unit $\thecounit$ and with antipode $\theantipode$ given by the unique algebra morphism $\Sfromto{\thealgebra}{\thealgebra\SoppositeP}$ with for any $\anymat\in\thematrices$ and $(\indj,\indi)\in\SYnumbers{\thedim}^{\Ssetmonoidalproduct 2}$,
  \begin{IEEEeqnarray*}{rCl}
    \theantipode(\anymatAX{\indj}{\indi})=\anymatBTX{\Sexchanged(\indj)}{\Sexchanged(\indi)}.
  \end{IEEEeqnarray*}
\end{remark}
\begin{proof}
  Well-knonw consequence of \cite{VanDaeleWang1996a}.
\end{proof}
In consequence (see \cite[Proposition~2.1]{Bichon2013}), there are natural isomorphisms between the Hochschild co\-ho\-mo\-lo\-gy of $\thealgebra$ with coefficients in a given bi\-/module $\anybimod$ and $\StorX{\thealgebra}{\themodule}{\toranybimod}{\Sargph}$ or  $\SextX{\thealgebra}{\themodule}{\extanybimod}{\Sargph}$ for the left $\thealgebra$\-/module $\toranybimod$ and the right $\thealgebra$\-/module $\extanybimod$ with actions given by
  \begin{IEEEeqnarray*}{C}
     \begin{tikzpicture}[strdiag={-1.5}{.}{2.}{4.}]
       \coordinate (i1) at ({.*\dx},{.*\dy});
       \coordinate (i2) at ($(i1)+({1.*\dx},{.*\dy})$);
       \node[lmorph={n1}{($(i1)+({.*\dx},{.5*\dy})$)}{\(\thecomultiplication\)}{210}{6pt}];
       \node[lmorph={n2}{($(n1)+({-.5*\dx},{.5*\dy})$)}{\(\theantipode\)}{180}{6pt}];       
       \coordinate (s1) at ($(i2)+({.*\dx},{1.5*\dy})$);
       \coordinate (s3) at ($(s1)+({.*\dx},{.5*\dy})$);
       \coordinate (s2) at ($(s1)+({.*\dx},{1.*\dy})$);       
       \node[lmorph={n3}{($(s1)+({-.5*\dx},{1.5*\dy})$)}{\(\Srightaction\)}{0}{6pt}];
       \node[lmorph={n4}{($(n3)+({-.5*\dx},{.5*\dy})$)}{\(\Sleftaction\)}{180}{6pt}];
       \coordinate (o1) at ($(n4)+({.*\dx},{.5*\dy})$);
       \draw (i1) to[lobj={\(\thealgebra\)}{.5}{0}{6pt}] (n1);
       \draw (n1) to[lobj={\(\thealgebra\)}{.5}{45}{7pt}] (n2);
       \draw (n1) -- (n3|-n2) -- (n2|-s3)  to[lobj={\(\thealgebra\)}{.5}{180}{6pt}] (n2|-n3) -- (n4);
       \draw (n2)  to[lobj={\(\thealgebra\)}{1.}{0}{6pt}] (i2|-s2) -- (n3);
       \draw (i2) to[lobj={\(\anybimod\)}{.5}{0}{6pt}] (i2|-s1) -- (o1|-s2) -- (n3);
       \draw (n3)  to[lobj={\(\anybimod\)}{.5}{45}{7pt}] (n4);
       \draw (n4) to[lobj={\(\anybimod\)}{.5}{0}{6pt}] (o1);
     \end{tikzpicture}
     \hspace{1em}\text{respectively}\hspace{1em}
          \begin{tikzpicture}[strdiag={-1.}{.}{2.5}{3.}]
       \coordinate (i1) at ({.*\dx},{.*\dy});
       \coordinate (i2) at ($(i1)+({1.*\dx},{.*\dy})$);
       \node[lmorph={n1}{($(i2)+({.*\dx},{.5*\dy})$)}{\(\thecomultiplication\)}{330}{6pt}];
       \node[lmorph={n2}{($(i1)+({.*\dx},{1.5*\dy})$)}{\(\theantipode\)}{225}{7pt}];
       \coordinate (s0) at ($(i1)+({.*\dx},{.5*\dy})$);
       \coordinate (s1) at ($(n1)+({.5*\dx},{.5*\dy})$);
       \node[lmorph={n3}{($(n2)+({.5*\dx},{.5*\dy})$)}{\(\Sleftaction\)}{180}{6pt}];
       \coordinate (s2) at ($(n2)+({1.*\dx},{.*\dy})$);
       \node[lmorph={n4}{($(n3)+({.5*\dx},{.5*\dy})$)}{\(\Srightaction\)}{0}{6pt}];
       \coordinate (o1) at ($(n4)+({.*\dx},{.5*\dy})$);
       \draw (i1) to[lobj={\(\anybimod\)}{.5}{180}{6pt}] (s0) -- (s2) -- (n3);
       \draw (i2) to[lobj={\(\thealgebra\)}{.5}{180}{6pt}] (n1);
       \draw (n1) to[lobj={\(\thealgebra\)}{.25}{225}{7pt}] (n2);
       \draw (n1) -- (s1) to[lobj={\(\thealgebra\)}{.5}{0}{6pt}] (s1|-n3) -- (n4);
       \draw (n2) to[lobj={\(\thealgebra\)}{.5}{315}{7pt}]  (n3);
       \draw (n3) to[lobj={\(\anybimod\)}{.5}{135}{7pt}]  (n4);
       \draw (n4) to[lobj={\(\anybimod\)}{.5}{180}{6pt}] (o1);
     \end{tikzpicture},
   \end{IEEEeqnarray*}
where $\Sleftaction$ and $\Srightaction$ are the left respectively right $\thealgebra$\-/actions on $\anybimod$.
}
\subsubsection{Yetter-Drinfeld modules}
{
  \newcommand{\anycomod}{V}
  \newcommand{\Srightcoaction}{\gamma}
  \newcommand{\orderind}{\ell}
  \newcommand{\anyspace}{T}
  \newcommand{\inputcoaction}{\kappa}
  \newcommand{\domcoaction}{\kappa}
  \newcommand{\codcoaction}{\kappa'}
  \newcommand{\domspace}{T}
  \newcommand{\codspace}{T'}
  \newcommand{\anymorph}{d}
  \newcommand{\indi}{i}
  \newcommand{\indj}{j}
  \newcommand{\inds}{s}
  \newcommand{\indt}{t}
  The chain complex constructed by Ba\-ra\-quin, Franz, Gerhold, Kula and Skalski is \cite{BaraquinFranzGerholdKulaTobolski2023} resolves $\themodule$ not just as an $\mathcal{O}(U_\thedim^+)$\-/module but in fact as a Yetter-Drinfeld module with respect to the co-action given by the unit $\theone$ of $\mathcal{O}(U_\thedim^+)$. It is only natural to ask whether the modules $(\thechainsmodule{\orderind})_{\orderind\in\Sintegersnn}$ from the \hyperref[main-result]{Main result} can be equipped with co-actions which also make $(\thedifferential{\orderind})_{\orderind\in\Sintegersnn}$ a resolution in co\-/modules.
  \par
  A  vector space  $\anycomod$ together with a right action $\Srightaction$ and a right co\-/action $\Srightcoaction$ of a Hopf algebra $\thealgebra$ with multiplication $\themultiplication$, co\-/multiplication $\thecomultiplication$ and antipode $\theantipode$ is a right \emph{Yetter-Drinfeld module} if
    \begin{IEEEeqnarray*}{rCl}
     \begin{tikzpicture}[strdiag={-1.}{.}{2.}{1.75}]
       \coordinate (i1) at ({.*\dx},{.*\dy});
       \coordinate (i2) at ($(i1)+({1.*\dx},{.*\dy})$);
       \node[lmorph={n1}{($(i1)+({.5*\dx},{.5*\dy})$)}{\(\Srightaction\)}{20}{6pt}];
       \node[lmorph={n2}{($(n1)+({.*\dx},{.75*\dy})$)}{\(\Srightcoaction\)}{200}{6pt}];       
       \coordinate (o1) at ($(n2)+({-.5*\dx},{.5*\dy})$);
       \coordinate (o2) at (i2|-o1);
       \draw (i1) to[lobj={\(\anycomod\)}{.25}{135}{7pt}] (n1);
       \draw (i2) to[lobj={\(\thealgebra\)}{.25}{45}{7pt}] (n1);
       \draw (n1) to[lobj={\(\anycomod\)}{.666}{0}{6pt}] (n2);
       \draw (n2) to[lobj={\(\anycomod\)}{.75}{225}{7pt}] (o1);
       \draw (n2) to[lobj={\(\thealgebra\)}{.75}{315}{7pt}] (o2);
     \end{tikzpicture}
     &=&
     \begin{tikzpicture}[strdiag={-1.5}{.}{3.}{3.}]
       \coordinate (i1) at ({.*\dx},{.*\dy});
       \coordinate (i2) at ($(i1)+({1.*\dx},{.*\dy})$);
       \node[lmorph={n1}{($(i1)+({.*\dx},{.5*\dy})$)}{\(\Srightcoaction\)}{210}{7pt}];
       \node[lmorph={n2}{(i2|-n1)}{\(\thecomultiplication\)}{330}{7pt}];
       \coordinate (s1) at ($(n1)+({-.5*\dx},{.5*\dy})$);
       \node[lmorph={n3}{($(n2|-s1)+({.5*\dx},{.*\dy})$)}{\(\thecomultiplication\)}{350}{7pt}];
       \coordinate (s2) at ($(n3)+({.5*\dx},{.5*\dy})$);
       \node[lmorph={n4}{(i1|-s2)}{\(\theantipode\)}{240}{7pt}];       
       \node[lmorph={n5}{($(n3|-s2)+({.*\dx},{.5*\dy})$)}{\(\themultiplication\)}{10}{7pt}];
       \node[lmorph={n6}{($(i1|-n5)+({.*\dx},{.5*\dy})$)}{\(\Srightaction\)}{150}{7pt}];
       \node[lmorph={n7}{(i2|-n6)}{\(\themultiplication\)}{30}{7pt}];       
       \coordinate (o1) at ($(i1|-n7)+({.*\dx},{.5*\dy})$);
       \coordinate (o2) at (i2|-o1);
       \draw (i1) to[lobj={\(\anycomod\)}{.5}{0}{6pt}] (n1);
       \draw (i2) to[lobj={\(\thealgebra\)}{.5}{180}{6pt}] (n2);
       \draw (n1) -- (s1) to[lobj={\(\anycomod\)}{.5}{180}{6pt}] (s1|-n5) -- (n6);
       \draw (n1) to[lobj={\(\thealgebra\)}{.875}{135}{7pt}] (n5);
       \draw (n2) to[lobj={\(\thealgebra\)}{.25}{225}{7pt}] (n4);
       \draw (n2) to[lobj={\(\thealgebra\)}{.75}{315}{7pt}] (n3);
       \draw (n3) to[lobj={\(\thealgebra\)}{.125}{225}{7pt}] (n6);
       \draw (n3) to[lobj={\(\thealgebra\)}{1.}{0}{6pt}] (s2) -- (n5);
       \draw (n4) to[lobj={\(\thealgebra\)}{.75}{135}{7pt}] (n7);
       \draw (n5) to[lobj={\(\thealgebra\)}{.25}{45}{7pt}] (n7);
       \draw (n6) to[lobj={\(\anycomod\)}{.5}{0}{6pt}] (o1);
       \draw (n7) to[lobj={\(\thealgebra\)}{.5}{180}{6pt}] (o2);       
     \end{tikzpicture}.
   \end{IEEEeqnarray*}
   In particular, the Hopf algebra $\thealgebra$ itself becomes the \emph{regular Yetter-Drinfeld module} if equipped with the co-action
   \begin{IEEEeqnarray*}{C}
          \begin{tikzpicture}[strdiag={-1.5}{.}{2.}{2.5}]
       \coordinate (i1) at ({.*\dx},{.*\dy});
       \node[lmorph={n1}{($(i1)+({.*\dx},{.5*\dy})$)}{\(\thecomultiplication\)}{330}{7pt}];
       \node[lmorph={n2}{($(n1)+({-.5*\dx},{.5*\dy})$)}{\(\theantipode\)}{240}{7pt}];
       \node[lmorph={n3}{($(n1|-n2)+({.5*\dx},{.*\dy})$)}{\(\thecomultiplication\)}{340}{7pt}];
       \coordinate (s1) at ($(n3)+({.5*\dx},{.5*\dy})$);
       \node[lmorph={n4}{($(n3)+({.*\dx},{1.*\dy})$)}{\(\themultiplication\)}{20}{7pt}];              
       \coordinate (o1) at ($(n2)+({.*\dx},{1.5*\dy})$);
       \coordinate (o2) at (n4|-o1);       
       \draw (i1) to[lobj={\(\thealgebra\)}{.5}{180}{6pt}] (n1);
       \draw (n1) to[lobj={\(\thealgebra\)}{.5}{45}{7pt}] (n2);
       \draw (n1) to[lobj={\(\thealgebra\)}{.75}{315}{7pt}] (n3);
       \draw (n3) to[lobj={\(\thealgebra\)}{1.}{0}{6pt}] (s1) --  (n4);
       \draw (n3) -- (n2|-n4) to[lobj={\(\thealgebra\)}{.5}{0}{6pt}] (o1);
       \draw (n4) to[lobj={\(\thealgebra\)}{.5}{180}{6pt}] (o2);
       \draw (n2) to[lobj={\(\thealgebra\)}{.75}{315}{7pt}] (n4);
     \end{tikzpicture}.\IEEEyesnumber\label{eq:regular_yetter_drinfeld_module}
   \end{IEEEeqnarray*}
   More generally, given any right co\-/module $\anyspace$ over $\thealgebra$ with co\-/action $\inputcoaction$,  the free right $\thealgebra$-module $\anyspace\Smonoidalproduct\thealgebra$ over $\anyspace$ can be turned into the \emph{free Yetter\-/Drinfeld module} over $(\anyspace,\inputcoaction)$ by means of the co\-/action
   \begin{IEEEeqnarray*}{C}
          \begin{tikzpicture}[strdiag={-1.5}{.}{3.}{3.}]
       \coordinate (i1) at ({.*\dx},{.*\dy});
       \coordinate (i2) at ($(i1)+({1.*\dx},{.*\dy})$);
       \node[lmorph={n1}{($(i1)+({.*\dx},{.5*\dy})$)}{\(\inputcoaction\)}{210}{7pt}];
       \node[lmorph={n2}{(i2|-n1)}{\(\thecomultiplication\)}{330}{7pt}];
       \coordinate (s1) at ($(n1)+({-.5*\dx},{.5*\dy})$);
       \node[lmorph={n3}{($(n2|-s1)+({.5*\dx},{.*\dy})$)}{\(\thecomultiplication\)}{350}{7pt}];
       \coordinate (s2) at ($(n3)+({.5*\dx},{.5*\dy})$);
       \node[lmorph={n4}{(i1|-s2)}{\(\theantipode\)}{240}{7pt}];       
       \node[lmorph={n5}{($(n3|-s2)+({.*\dx},{.5*\dy})$)}{\(\themultiplication\)}{10}{7pt}];
       \coordinate (s3) at ($(i1|-n5)+({.*\dx},{.5*\dy})$);
       \node[lmorph={n7}{(i2|-s3)}{\(\themultiplication\)}{30}{7pt}];       
       \coordinate (o1) at ($(i1|-n7)+({.*\dx},{.5*\dy})$);       
       \coordinate (o2) at (i2|-o1);
       \coordinate (o0) at (s1|-o1);       
       \draw (i1) to[lobj={\(\anyspace\)}{.5}{0}{6pt}] (n1);
       \draw (i2) to[lobj={\(\thealgebra\)}{.5}{180}{6pt}] (n2);
       \draw (n1) -- (s1) to[lobj={\(\anyspace\)}{.5}{180}{6pt}] (o0);
       \draw (n1) to[lobj={\(\thealgebra\)}{.875}{135}{7pt}] (n5);
       \draw (n2) to[lobj={\(\thealgebra\)}{.25}{225}{7pt}] (n4);
       \draw (n2) to[lobj={\(\thealgebra\)}{.75}{315}{7pt}] (n3);
       \draw (n3) -- (s3) to[lobj={\(\thealgebra\)}{.5}{0}{6pt}] (o1);
       \draw (n3) to[lobj={\(\thealgebra\)}{1.}{0}{6pt}] (s2) -- (n5);
       \draw (n4) to[lobj={\(\thealgebra\)}{.75}{135}{7pt}] (n7);
       \draw (n5) to[lobj={\(\thealgebra\)}{.25}{45}{7pt}] (n7);
       \draw (n7) to[lobj={\(\thealgebra\)}{.5}{180}{6pt}] (o2);       
     \end{tikzpicture}.\IEEEyesnumber\label{eq:free_yetter_drinfeld_module}
   \end{IEEEeqnarray*}
   With respect to this co\-/action on $\anyspace\Smonoidalproduct\thealgebra$ any morphism $\Sxfromto{\anymorph}{\anyspace\Smonoidalproduct\thealgebra}{\anycomod}$ of right $\thealgebra$\-/modules into any  right Yetter-Drinfeld module $\anycomod$ with co\-/action $\Srightcoaction$ is already a morphism of co\-/modules if
     \begin{IEEEeqnarray*}{C}
     \begin{tikzpicture}[strdiag={-2.}{.}{1.5}{2.5}]
       \coordinate (i1) at ({.*\dx},{.*\dy});
       \node[lmorph={n1}{($(i1)+({.*\dx},{.5*\dy})$)}{\(\inputcoaction\)}{200}{7pt}];
       \coordinate (s1) at ($(n1)+({.5*\dx},{.5*\dy})$);
       \coordinate (s2) at ($(n1)+({-1.*\dx},{1.*\dy})$);
       \node[lmorph={n2}{(n1|-s2)}{\(\theone\)}{270}{7pt}];
       \node[lmorph={n3}{($(s2)+({.5*\dx},{.5*\dy})$)}{\(\anymorph\)}{160}{7pt}];       
       \coordinate (o1) at ($(n3)+({.*\dx},{.5*\dy})$);
       \coordinate (o2) at (s1|-o1);       
       \draw (i1) to[lobj={\(\anyspace\)}{.5}{0}{6pt}] (n1);
       \draw (n1) -- (s1) to[lobj={\(\thealgebra\)}{.5}{0}{6pt}] (o2) ;
       \draw (n1) to[lobj={\(\anyspace\)}{.75}{225}{7pt}] (s2) -- (n3);
       \draw (n2) to[lobj={\(\thealgebra\)}{.5}{45}{7pt}]  (n3);
       \draw (n3) to[lobj={\(\anycomod\)}{.5}{0}{6pt}]  (o1);
     \end{tikzpicture}
     =
     \begin{tikzpicture}[strdiag={-1.}{.}{2.}{2.25}]
       \coordinate (i1) at ({.*\dx},{.*\dy});
       \coordinate (s1) at ($(i1)+({.*\dx},{.5*\dy})$);
       \node[lmorph={n1}{($(s1)+({1.*\dx},{.*\dy})$)}{\(\theone\)}{340}{7pt}];
       \node[lmorph={n2}{($(n1)+({-.5*\dx},{.5*\dy})$)}{\(\anymorph\)}{160}{6pt}];
       \node[lmorph={n3}{($(n2)+({.*\dx},{.75*\dy})$)}{\(\Srightcoaction\)}{200}{6pt}];       
       \coordinate (o1) at ($(n3)+({-.5*\dx},{.5*\dy})$);
       \coordinate (o2) at ($(n3|-o1)+({.5*\dx},{.*\dy})$);
       \draw (i1) to[lobj={\(\anyspace\)}{.5}{0}{6pt}] (s1) -- (n2);
       \draw (n1) to[lobj={\(\thealgebra\)}{.5}{45}{7pt}] (n2);
       \draw (n2) to[lobj={\(\anycomod\)}{.5}{0}{6pt}] (n3);
       \draw (n3) to[lobj={\(\anycomod\)}{.75}{225}{7pt}] (o1);
       \draw (n3) to[lobj={\(\thealgebra\)}{.75}{315}{7pt}] (o2);
     \end{tikzpicture}.\IEEEyesnumber\label{eq:co-module_morphism_out_of_free_yetter_drinfeld_module}
   \end{IEEEeqnarray*}
   In particular, the co\-/unit  $\thecounit$ of the Hopf algebra $\thealgebra$ is a morphism of co\-/modules with respect to the Yetter-Drinfeld co\-/action \eqref{eq:regular_yetter_drinfeld_module} on $\thealgebra$. Beyond that the above proves the following about the objects of the \hyperref[main-result]{Main result}. 
   \par
   \begin{remark}
     For any family $(\thechainscoaction{\orderind})_{\orderind\in\Sintegersp}$  such that for each $\orderind\in\Sintegersp$  the mapping $\thechainscoaction{\orderind}$ is a right co\-/action of $\thealgebra$ on  $\thefield\thechains{\orderind}$, such that
  \begin{IEEEeqnarray*}{rCl}
     \begin{tikzpicture}[strdiag={-2.}{.}{1.5}{2.5}]
       \coordinate (i1) at ({.*\dx},{.*\dy});
       \node[lmorph={n1}{($(i1)+({.*\dx},{.5*\dy})$)}{\(\thechainscoaction{1}\)}{340}{8pt}];
       \coordinate (s1) at ($(n1)+({.5*\dx},{.5*\dy})$);
       \coordinate (s2) at ($(n1)+({-1.*\dx},{1.*\dy})$);
       \node[lmorph={n2}{(n1|-s2)}{\(\theone\)}{270}{7pt}];
       \node[lmorph={n3}{($(n2)+({-.5*\dx},{.5*\dy})$)}{\(\thedifferential{1}\)}{160}{8pt}];
       \coordinate (o1) at ($(n3)+({.*\dx},{.5*\dy})$);
       \coordinate (o2) at (s1|-o1);
       \draw (i1) to[lobj={\(\thefield\thechains{1}\)}{.5}{180}{13pt}] (n1);
       \draw (n1) to[lobj={\(\thefield\thechains{1}\)}{.75}{225}{13pt}] (s2) -- (n3);
       \draw (n1) -- (s1) to[lobj={\(\thealgebra\)}{.5}{0}{6pt}] (o2);
       \draw (n2) to[lobj={\(\thealgebra\)}{.5}{45}{7pt}] (n3);
       \draw (n3) to[lobj={\(\thealgebra\)}{.5}{0}{6pt}] (o1);
     \end{tikzpicture}
     &=&
     \begin{tikzpicture}[strdiag={-1.}{.}{2.5}{3.5}]
       \coordinate (i1) at ({.*\dx},{.*\dy});
       \node[lmorph={n1}{($(i1)+({1.*\dx},{.5*\dy})$)}{\(\theone\)}{340}{6pt}];
       \node[lmorph={n2}{($(n1)+({-.5*\dx},{.5*\dy})$)}{\(\thedifferential{1}\)}{180}{10pt}];
       \node[lmorph={n3}{($(n2)+({.*\dx},{.5*\dy})$)}{\(\thecomultiplication\)}{330}{6pt}];
       \node[lmorph={n4}{($(n3)+({-.5*\dx},{.5*\dy})$)}{\(\theantipode\)}{180}{6pt}];
       \node[lmorph={n5}{($(n3)+({.5*\dx},{.5*\dy})$)}{\(\thecomultiplication\)}{340}{7pt}];
       \coordinate (s1) at ($(n5)+({.5*\dx},{.5*\dy})$);
       \node[lmorph={n6}{($(s1)+({-.5*\dx},{.5*\dy})$)}{\(\themultiplication\)}{20}{6pt}];       
       \coordinate (o1) at ($(i1|-n6)+({.*\dx},{.5*\dy})$);
       \coordinate (o2) at (n6|-o1);       
       \draw (i1) to[lobj={\(\thefield\thechains{1}\)}{.5}{0}{13pt}] (i1|-n1) -- (n2);
       \draw (n1) to[lobj={\(\thealgebra\)}{.25}{45}{7pt}] (n2);
       \draw (n2) to[lobj={\(\thealgebra\)}{.5}{180}{6pt}] (n3);
       \draw (n3) to[lobj={\(\thealgebra\)}{.5}{45}{7pt}] (n4);
       \draw (n3) to[lobj={\(\thealgebra\)}{.75}{315}{7pt}] (n5);
       \draw (n4) to[lobj={\(\thealgebra\)}{.75}{315}{7pt}] (n6);
       \draw (n5) -- (i1|-n6) to[lobj={\(\thealgebra\)}{.5}{0}{6pt}] (o1);
       \draw (n5) to[lobj={\(\thealgebra\)}{1.}{0}{6pt}] (s1) -- (n6);
       \draw (n6) to[lobj={\(\thealgebra\)}{.5}{180}{6pt}] (o2);
     \end{tikzpicture}\IEEEyesnumber\label{eq:remark_yetter_drinfeld_01}
   \end{IEEEeqnarray*}
     and such that for any $\orderind\in\Sintegersp$ with $2\leq \orderind$  
  \begin{IEEEeqnarray*}{rCl}
     \begin{tikzpicture}[strdiag={-2.5}{.}{1.5}{2.5}]
       \coordinate (i1) at ({.*\dx},{.*\dy});
       \node[lmorph={n1}{($(i1)+({.*\dx},{.5*\dy})$)}{\(\thechainscoaction{\orderind}\)}{340}{8pt}];
       \coordinate (s1) at ($(n1)+({.5*\dx},{.5*\dy})$);
       \coordinate (s2) at ($(n1)+({-1.*\dx},{1.*\dy})$);
       \node[lmorph={n2}{(n1|-s2)}{\(\theone\)}{270}{7pt}];
       \node[lmorph={n3}{($(n2)+({-.5*\dx},{.5*\dy})$)}{\(\thedifferential{\orderind}\)}{0}{8pt}];
       \coordinate (o1) at ($(s2|-n3)+({.*\dx},{.5*\dy})$);
       \coordinate (o2) at (n2|-o1);
       \coordinate (o3) at (s1|-o1);
       \draw (i1) to[lobj={\(\thefield\thechains{\orderind}\)}{.5}{180}{13pt}] (n1);
       \draw (n1) to[lobj={\(\thefield\thechains{\orderind}\)}{.75}{225}{13pt}] (s2) -- (n3);
       \draw (n1) -- (s1) to[lobj={\(\thealgebra\)}{.5}{0}{6pt}] (o3);
       \draw (n2) to[lobj={\(\thealgebra\)}{.5}{225}{7pt}] (n3);
       \draw (n3) to[lobj={\(\thefield\thechains{\orderind-1}\)}{1.}{225}{13pt}] (o1);
       \draw (n3) to[lobj={\(\thealgebra\)}{1.}{315}{7pt}] (o2);
     \end{tikzpicture}
     &=&
     \begin{tikzpicture}[strdiag={-2.25}{.}{3.}{4.}]
       \coordinate (i1) at ({.*\dx},{.*\dy});
       \node[lmorph={n1}{($(i1)+({1.*\dx},{.5*\dy})$)}{\(\theone\)}{320}{6pt}];
       \node[lmorph={n2}{($(n1)+({-.5*\dx},{.5*\dy})$)}{\(\thedifferential{\orderind}\)}{0}{9pt}];
       \node[lmorph={n3}{($(n2)+({-.5*\dx},{.5*\dy})$)}{\(\thechainscoaction{\orderind-1}\)}{180}{12pt}];
       \node[lmorph={n4}{($(n2)+({.5*\dx},{.5*\dy})$)}{\(\thecomultiplication\)}{330}{7pt}];
       \coordinate (s1) at ($(n3)+({-.5*\dx},{.5*\dy})$);
       \node[lmorph={n5}{($(n4)+({.5*\dx},{.5*\dy})$)}{\(\thecomultiplication\)}{330}{7pt}];
       \coordinate (s2) at ($(n5)+({.5*\dx},{.5*\dy})$);
       \node[lmorph={n6}{($(s1)+({.5*\dx},{.5*\dy})$)}{\(\theantipode\)}{120}{7pt}];
       \node[lmorph={n7}{($(s2)+({-.5*\dx},{.5*\dy})$)}{\(\themultiplication\)}{20}{6pt}];
       \node[lmorph={n8}{($(n7)+({-.5*\dx},{.5*\dy})$)}{\(\themultiplication\)}{30}{6pt}];       
       \coordinate (o1) at ($(s1|-n8)+({.*\dx},{.5*\dy})$);
       \coordinate (o2) at (n6|-o1);
       \coordinate (o3) at (n8|-o1);       
       \draw (i1) to[lobj={\(\thefield\thechains{\orderind}\)}{.5}{180}{13pt}] (i1|-n1) -- (n2);
       \draw (n1) to[lobj={\(\thealgebra\)}{.5}{225}{7pt}] (n2);
       \draw (n2) to[lobj={\(\thefield\thechains{\orderind-1}\)}{.5}{200}{17pt}] (n3);
       \draw (n2) to[lobj={\(\thealgebra\)}{.5}{135}{7pt}] (n4);
       \draw (n4) to[lobj={\(\thealgebra\)}{.625}{315}{7pt}] (n5);
       \draw (n4) to[lobj={\(\thealgebra\)}{.75}{45}{7pt}] (n6);
       \draw (n6) to[lobj={\(\thealgebra\)}{.75}{315}{7pt}] (n8);
       \draw (n7) to[lobj={\(\thealgebra\)}{.375}{45}{7pt}] (n8);
       \draw (n8) to[lobj={\(\thealgebra\)}{.5}{180}{6pt}] (o3);
       \draw (n3) -- (s1) to[lobj={\(\thefield\thechains{\orderind-1}\)}{.5}{180}{17pt}] (o1);
       \draw (n3) to[lobj={\(\thealgebra\)}{.16161616}{135}{7pt}] (n7);
       \draw (n5) -- (i1|-n8) to[lobj={\(\thealgebra\)}{.5}{0}{6pt}] (o2);
       \draw (n5) to[lobj={\(\thealgebra\)}{1.}{0}{6pt}] (s2) -- (n7);       
     \end{tikzpicture}\IEEEyesnumber\label{eq:remark_yetter_drinfeld_02}
   \end{IEEEeqnarray*}
     the family $(\thedifferential{\orderind})_{\orderind\in\Sintegersnn}$ becomes a resolution in co\-/modules if  $\thechainsmodule{0}$ is considered the regular Yetter\-/Drinfeld module and $\thechainsmodule{\orderind}$ the free Yetter\-/Drinfeld module over $(\thefield\thechains{\orderind},\thechainscoaction{\orderind})$ for any $\orderind\in\Sintegersp$.
   \end{remark}
   \begin{proof}
     As mentioned above, $\thedifferential{0}=\thecounit$ is a co\-/module morphism  
since $\thechainsmodule{0}$ has the co\-/action \eqref{eq:regular_yetter_drinfeld_module}. For the same reason     condition \eqref{eq:remark_yetter_drinfeld_01} is precisely what \eqref{eq:co-module_morphism_out_of_free_yetter_drinfeld_module} means for $\thedifferential{1}$. Likewise, \eqref{eq:remark_yetter_drinfeld_02} spells out \eqref{eq:co-module_morphism_out_of_free_yetter_drinfeld_module} for $\thedifferential{\orderind}$ for $\orderind\in\Sintegersp$ with $2\leq \orderind$ because both $\thechainsmodule{\orderind}$ and $\thechainsmodule{\orderind-1}$ are equipped with the co\-/action \eqref{eq:free_yetter_drinfeld_module}.
\end{proof}
Perhaps a recursive algorithm can be developed for computing such a family $(\thechainscoaction{\orderind})_{\orderind\in\Sintegersp}$ like for $(\thedifferential{\orderind})_{\orderind\in\Sintegersnn}$ itself.
}
  \clearpage  
\appendix
\pagenumbering{roman}
\section{Relationship to the known resolution}

\label{section:relationship}
  \newcommand{\Tzerobv}[1]{z^{#1}}
  \newcommand{\Tonefirstbv}[3]{b^{#1}_{#2,#3}}
  \newcommand{\Tonesecondbv}{y}
  \newcommand{\Ttwobv}[3]{a^{#1}_{#2,#3}}
  \newcommand{\Tthreebv}[1]{x^{#1}}
  \newcommand{\Tbvs}[1]{C'_{#1}}
  \newcommand{\Tmod}[1]{P'_{#1}}
  \newcommand{\Tdiff}[1]{d'_{#1}}
{
  \newcommand{\anypoly}{p}
  This section answers the question of how the resolution from the \hyperref[main-result]{Main result} relates to the one previously found by Baraquin, Franz, Gerhold, Kula, Tobolski in \cite{BaraquinFranzGerholdKulaTobolski2023}. I.e., an explicit quasi-isomorphism is given.
As before, for any $\anypoly\in\thefreealg$ the element $\anypoly+\theideal$ of $\thealgebra$ will be referred to as $\anypoly$. And this convention is extended to matrices of elements of  $\thealgebra$.
  }
On p.~20 of \cite{BaraquinFranzGerholdKulaTobolski2023} the following  result about the universal co-sovereign Hopf algebra $\mathcal{H}(F)$ can be found verbatim.
\begin{quotation}
\enquote{\textbf{Theorem~5.2.} \textit{Let $F=E^{t}E^{-1}\in \mathrm{GL}_n(\mathbb{C})$ be a generic asymmetry and $\mathcal{H}=\mathcal{H}(F)$. Then the following sequence
  \begin{IEEEeqnarray*}{C}
    \begin{tikzpicture}
      \node[inner sep=2.5pt] (p0)  at (0,0) {$0$};
      \node[inner sep=2.5pt] (p1) [left = .25cm of p0] {$\mathbb{C}_\varepsilon$};
      \node[inner sep=1.5pt] (p2) [left = .5cm of p1] {$
        \begin{pmatrix}
          \mathcal{H}\\
          \mathcal{H}
        \end{pmatrix}
        $};
      \node[inner sep=1.5pt] (p3) [left = .5cm of p2] {$
        \begin{pmatrix}
          M_n(\mathcal{H})\\
          M_n(\mathcal{H})\\
          \mathcal{H}
        \end{pmatrix}
        $};
      \node[inner sep=1.5pt] (p4) [left = .5cm of p3] {$
        \begin{pmatrix}
          M_n(\mathcal{H})\\
          M_n(\mathcal{H})
        \end{pmatrix}$};
      \node[inner sep=1.5pt] (p5) [left = .5cm of p4] {$
        \begin{pmatrix}
          \mathcal{H}\\
          \mathcal{H}
        \end{pmatrix}$};      
      \node[inner sep=2.5pt] (p6) [left = .25cm of p5] {$0$};
      \draw[->] (p6) to (p5);
      \draw[->] (p5) to node [above] {$\Phi_3^{\mathcal{H}}$} (p4);
      \draw[->] (p4) to node [above] {$\Phi_2^{\mathcal{H}}$} (p3);
      \draw[->] (p3) to node [above] {$\Phi_1^{\mathcal{H}}$} (p2);
      \draw[->] (p2) to node [above] {$\varepsilon$} (p1);
      \draw[->] (p1) to (p0);
    \end{tikzpicture}
  \end{IEEEeqnarray*}
  with the mappings
  \begin{IEEEeqnarray*}{rCl}
    \Phi_3^{\mathcal{H}}
    \begin{pmatrix}
      a\\
      b
    \end{pmatrix}
    &=&
    \begin{pmatrix}
      -Fa+(EuE^{-t})b\\
      F^{-1}va-Fb
    \end{pmatrix}
    ,
    \\
    \Phi_2^{\mathcal{H}}
    \begin{pmatrix}
      A\\
      B
    \end{pmatrix}
    &=&
    \begin{pmatrix}
      A+(E^{-1}u^{t}BE)^{t}\\
      B+(v^{t}E^{-1}AE)^{t}\\
      0
    \end{pmatrix}
    ,
    \\
    \Phi_1^{\mathcal{H}}
    \begin{pmatrix}
      A\\
      B\\
      c
    \end{pmatrix}
    &=&
    \begin{pmatrix}
      \mathrm{tr}(-A+u^{t}B)+c\\
      \mathrm{tr}(Ev^{t}E^{-1}A-B)-c
    \end{pmatrix}
  \end{IEEEeqnarray*}
  for $a, b, c\in\mathcal{H}$, $A,B\in M_n(\mathcal{H})$, yields a projective resolution of the counit of $\mathcal{H}$.}}
\end{quotation}
{
  \newcommand{\indi}{i}
  \newcommand{\indj}{j}
  \newcommand{\indt}{t}
  \newcommand{\inds}{s}
  \newcommand{\indk}{k}
  \newcommand{\indp}{p}  
  \newcommand{\indl}{\ell}
  \newcommand{\orderind}{\ell}
  \newcommand{\elmatrix}[2]{m^{#1,#2}}
  \newcommand{\elmatrixX}[4]{m^{#1,#2}_{#3,#4}}
  \newcommand{\anypoly}{p}
  \newcommand{\anycolor}{\mathfrak{c}}  
  \newcommand{\Tiso}{\Psi}
  \newcommand{\TisoX}[1]{\Tiso_{#1}}
  \newcommand{\anymatX}[2]{v_{#1,#2}}
Expressed in a way closer to the language of the present article a special case of \cite[Theorem~5.2]{BaraquinFranzGerholdKulaTobolski2023}   might read as follows.
  \begin{proposition}
  If $\thealgebra$, $\thecounit$ and $\themodule$  are as in the \hyperref[main-result]{Main result}, if  $\SdualX{\Swhite}\Seqpd \Sblack$ and $\SdualX{\Sblack}\Seqpd \Swhite$, if
\begin{IEEEeqnarray*}{rCl}
  \Tbvs{0}&=&\{\Tzerobv{\Swhite},\Tzerobv{\Sblack}\}, \\ \Tbvs{1}&=&\{\Tonefirstbv{\Swhite}{\indj}{\indi},\Tonefirstbv{\Sblack}{\indj}{\indi},\Tonesecondbv\}_{\indi,\indj=1}^\thedim,\\
  \Tbvs{2}&=&\{\Ttwobv{\Swhite}{\indj}{\indi},\Ttwobv{\Sblack}{\indj}{\indi}\}_{\indi,\indj=1}^\thedim,\\ \Tbvs{3}&=&\{\Tthreebv{\Swhite},\Tthreebv{\Sblack}\}
  \end{IEEEeqnarray*}
   are such that $|\Tbvs{0}|=|\Tbvs{3}|=2$ and $|\Tbvs{1}|=2\thedim^2+1$ and $|\Tbvs{2}|=2\thedim^2$, if for any $\orderind\in\Sintegers$ with $-1\leq \orderind$,
  \begin{IEEEeqnarray*}{rCl}
    \Tmod{\orderind}&\Seqpd&
    \begin{cases}
      \themodule&\Scase \orderind=-1\\
      \thefield\Tbvs{\orderind}\Smonoidalproduct\thealgebra&\Scase 0\leq \orderind\leq 3\\
      0&\Scase 4\leq \orderind
    \end{cases},
  \end{IEEEeqnarray*}
  if  $(\Tdiff{\orderind})_{\orderind\in\Sintegersnn}$ is such that for each $\orderind\in\Sintegersnn$ the mapping $\Tdiff{\orderind}$ is a morphism $\Sfromto{\Tmod{\orderind}}{\Tmod{\orderind-1}}$ of right $\thealgebra$-modules and such that for any $(\indj,\indi)\in\SYnumbers{\thedim}^{\Ssetmonoidalproduct 2}$ and any $\anycolor\in\Scolors$,
  \begin{IEEEeqnarray*}{rCl}
    \Tdiff{0}(\Tzerobv{\anycolor}\Smonoidalproduct\theone)&=&1,\\ \Tdiff{1}(\Tonefirstbv{\anycolor}{\indj}{\indi}\Smonoidalproduct\theone)&=&\Tzerobv{\SdualX{\anycolor}}\Smonoidalproduct\thecuniX{\SdualX{\anycolor}}{\indj}{\indi}-\Skronecker{\indj}{\indi}\Saction\Tzerobv{\anycolor}\Smonoidalproduct\theone,\\
    \Tdiff{1}(\Tonesecondbv\Smonoidalproduct\theone)&=&(\Tzerobv{\Swhite}-\Tzerobv{\Sblack})\Smonoidalproduct\theone,\\
    \Tdiff{2}(\Ttwobv{\anycolor}{\indj}{\indi}\Smonoidalproduct\theone)&=&\textstyle\sum_{\indt=1}^\thedim\Tonefirstbv{\SdualX{\anycolor}}{\indi}{\indt}\Smonoidalproduct\thecuniX{\SdualX{\anycolor}}{\indj}{\indt}+\Tonefirstbv{\anycolor}{\indj}{\indi}\Smonoidalproduct\theone,\\
    \Tdiff{3}(\Tthreebv{\anycolor}\Smonoidalproduct\theone)&=&\textstyle\sum_{\inds,\indt=1}^\thedim\Ttwobv{\SdualX{\anycolor}}{\indt}{\inds}\Smonoidalproduct\thecuniX{\SdualX{\anycolor}}{\indt}{\inds}-\sum_{\inds=1}^\thedim\Ttwobv{\anycolor}{\inds}{\inds}\Smonoidalproduct\theone
  \end{IEEEeqnarray*}  
  and such that $\Tdiff{\orderind}$ is the zero morphism  of right $\thealgebra$-modules for each $\orderind\in\Sintegersp$ with $4\leq \orderind$, then the complex
    \begin{IEEEeqnarray*}{C}
  \begin{tikzpicture}
    \node (p0) at (0,0) {$\themodule$};
    \node (p1) [left = .5cm of p0] {$\thefield\Tmod{0}\Smonoidalproduct\thealgebra$};
    \node (p2) [left = .5cm of p1] {$\thefield\Tmod{1}\Smonoidalproduct\thealgebra$};
    \node (p3) [left = .5cm of p2] {$\thefield\Tmod{2}\Smonoidalproduct\thealgebra$};
    \node (p4) [left = .5cm of p3] {$\thefield\Tmod{3}\Smonoidalproduct\thealgebra$};
    \node (p5) [left = .5cm of p4] {$0$};    
    \node (p6) [left = .5cm of p5] {$\hdots$};
    \draw [->] (p6) to  (p5);    
    \draw [->] (p5) to  (p4);    
    \draw [->] (p4) to node [above] {$\Tdiff{3}$} (p3);
    \draw [->] (p3) to node [above] {$\Tdiff{2}$} (p2);
    \draw [->] (p2) to node [above] {$\Tdiff{1}$} (p1);
    \draw [->] (p1) to node [above] {$\Tdiff{0}$} (p0);
  \end{tikzpicture}  
\end{IEEEeqnarray*}
  of right $\thealgebra$-modules is exact, meaning that $(\Tmod{\orderind},\Tdiff{\orderind})_{\orderind\in\Sintegersnn}$ is a resolution of $\themodule$ in terms of free right $\thealgebra$-modules.
\end{proposition}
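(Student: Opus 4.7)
The plan is to derive this from the Main result by exhibiting an explicit chain homotopy equivalence between $(\Tmod{\bullet},\Tdiff{\bullet})$ and the Anick resolution $(\thechainsmodule{\bullet},\thedifferential{\bullet})$; since the latter is a resolution of $\themodule$ by the Main result, any complex of projectives chain-homotopy equivalent to it also is. (Alternatively, the statement is a special case of \cite[Theorem~5.2]{BaraquinFranzGerholdKulaTobolski2023}; the quasi-isomorphism supplied below makes the connection to the present setting explicit.)

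First I would verify that $(\Tmod{\bullet},\Tdiff{\bullet})$ is a complex, i.e.\ $\Tdiff{\orderind-1}\Tdiff{\orderind}=0$ for $\orderind\in\{1,2,3\}$. The cases $\orderind=1,2$ reduce directly to the defining relations $\therels$ of $\thealgebra$, in particular to the unitarity identities $\sum_{\inds}\thecuniX{\anycolor}{\indi}{\inds}\thecuniX{\SdualX{\anycolor}}{\indj}{\inds}=\Skronecker{\indj}{\indi}\theone$ for $\anycolor\in\Scolors$; the case $\orderind=3$ is a matter of rearranging sums and applying the same relations once more.

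Next I would construct chain maps $\Tiso_\bullet:(\thechainsmodule{\bullet},\thedifferential{\bullet})\to(\Tmod{\bullet},\Tdiff{\bullet})$ and $\Tiso'_\bullet$ in the opposite direction, each lifting $\mathrm{id}_\themodule$. The formulas in degrees $0$ through $3$ are dictated by matching the differentials:\ for instance $\TisoX{0}(\theone)=\Tzerobv{\Swhite}\otimes\theone$ and $\TisoX{1}(\Sbv{\theuniA}{\indj}{\indi}\otimes\theone)=\Tonefirstbv{\Sblack}{\indj}{\Sexchanged(\indi)}\otimes\theone-\Skronecker{\indj}{\Sexchanged(\indi)}\Tonesecondbv\otimes\theone$, with the correction term $\Tonesecondbv$ bridging the two- versus one-dimensional degree-$0$ parts; analogous formulas handle the generators $\anymat\in\{\theuniAT,\theuniB,\theuniBT\}$ and the higher chains. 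In degrees $\orderind\geq 4$ one is forced to set $\TisoX{\orderind}=0$, so the chain-map condition there demands $\TisoX{3}\thedifferential{4}=0$, a non-trivial identity to be checked against the closed-form expression for $\thedifferential{4}$ from Theorem~\ref{theorem:higher_orders}. Verifying $\Tdiff{\orderind}\TisoX{\orderind}=\TisoX{\orderind-1}\thedifferential{\orderind}$ and the analogue for $\Tiso'_\bullet$ then amounts to finitely many term-by-term computations in $\thealgebra$.

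The final step is to exhibit chain homotopies $\sigma_\bullet:\Tiso'_\bullet\Tiso_\bullet\simeq\mathrm{id}_{\thechainsmodule{\bullet}}$ and $\sigma'_\bullet:\Tiso_\bullet\Tiso'_\bullet\simeq\mathrm{id}_{\Tmod{\bullet}}$, confirming that $\Tiso_\bullet$ is a quasi-isomorphism. Since $\Tmod{\bullet}$ is supported in degrees $\leq 3$, only finitely many components of $\sigma'_\bullet$ need to be defined. The main obstacle is $\sigma_\bullet$, which lives on the infinite complex $\thechainsmodule{\bullet}$; for $\orderind\geq 4$ one has $\Tiso'_\orderind\Tiso_\orderind=0$, so $\sigma_\bullet$ in those degrees must serve as an outright contracting homotopy for the tail of the Anick complex. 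The parity-based periodicity of $\thedifferential{\orderind}$ from Theorem~\ref{theorem:higher_orders} suggests that $\sigma_\orderind$ admits a similar recursion with at most two inequivalent cases, solvable by the same inductive bookkeeping that drives Section~\ref{section:anick_computation}; producing this recursion in closed form and verifying the homotopy identity is where the bulk of the proof effort would lie.
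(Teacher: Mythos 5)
The paper does not actually prove this proposition: it is presented as a translation, into the notation of the \hyperref[main-result]{Main result}, of Theorem~5.2 of \cite{BaraquinFranzGerholdKulaTobolski2023} specialized to $U_n^+$, and exactness is simply imported from there. The chain maps and homotopies appearing afterwards in the appendix serve to exhibit the quasi-isomorphism between the two resolutions, not to establish exactness. So your fallback option (citing that theorem) is precisely the paper's route, while your primary route --- transporting exactness from the Anick resolution of the \hyperref[main-result]{Main result} through explicit chain maps and homotopies --- is genuinely different, and it would have the merit of making the statement independent of the machinery behind \cite{BaraquinFranzGerholdKulaTobolski2023} (Bichon's resolution for $\mathcal{B}(E)$, monoidal equivalence, glued free products). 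Your degree-$0$ and degree-$1$ formulas are consistent choices (they differ from the paper's $f_\ell$ only by a different, characteristic-independent normalization of the $\Tonesecondbv$-correction), and checking that $(P'_{\ell},d'_{\ell})_{\ell}$ is a complex and that the maps are chain maps over $\themodule$ is indeed finite bookkeeping against Theorems~\ref{theorem:second_order}, \ref{theorem:third_order} and \ref{theorem:higher_orders}.

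Two points on the execution, though. First, you plan more than you need: once you have chain maps in both directions lifting $\mathrm{id}_{\themodule}$, a homotopy $f_{\bullet}f'_{\bullet}\simeq\mathrm{id}$ on the \emph{finite} complex $(P'_{\ell})_{\ell}$ alone already makes the homology of the augmented complex $(P'_{\ell})_{\ell}$ a retract of the homology of the augmented Anick complex, which vanishes by the \hyperref[main-result]{Main result}; hence the finite complex is exact, and the homotopy $\sigma_{\bullet}$ on the infinite Anick complex is not needed for the conclusion. Second, and more importantly, the step you defer as ``the bulk of the proof effort'' is not a matter of the same inductive bookkeeping that produces the differentials: in degrees where $f'_{\ell}f_{\ell}=0$, a module-map homotopy satisfying $\mathrm{id}=\thedifferential{\ell+1}\sigma_{\ell}+\sigma_{\ell-1}\thedifferential{\ell}$ splits $\thedifferential{\ell+1}$ onto $\ker\thedifferential{\ell}$ and so exhibits a syzygy of $\themodule$ as a direct summand of a free module; that is, it encodes the finiteness of the projective dimension of $\themodule$, which is exactly the nontrivial content of the statement being proved and cannot be read off the recursion formalism by itself. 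Such a homotopy does exist --- the appendix records one explicitly (the maps $D_{\ell}$, $\ell\geq 3$, with values among degree-one basis elements) --- but as written your plan leaves unproved precisely the assertion that carries the weight. Either drop that step and run the retract argument, or supply the explicit formula and verify it; with that repair your proposal is a valid, self-contained alternative to the paper's proof-by-citation.
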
 

\newcommand{\herethere}{f}
\newcommand{\herethereD}[1]{\herethere_{#1}}
\newcommand{\therehere}{f'}
\newcommand{\therehereD}[1]{\therehere_{#1}}
\newcommand{\herehomotopy}{D}
\newcommand{\herehomotopyD}[1]{\herehomotopy_{#1}}
\newcommand{\therehomotopy}{D'}
\newcommand{\therehomotopyD}[1]{\therehomotopy_{#1}}
It can then be checked that a chain map from the chain complex in the \hyperref[main-result]{Main result} can be defined as follows.
\begin{proposition}
  If
  $(\herethereD{\orderind})_{\orderind\in\Sintegersnn}$ is such that for any $\orderind\in \Sintegersnn$ the mapping $\herethereD{\orderind}$ is a morphism of right $\thealgebra$-modules $\Sfromto{\thechainsmodule{\orderind}}{\Tmod{\orderind}}$, such that for any $(\indj,\indi)\in \SYnumbers{\thedim}^{\Ssetmonoidalproduct 2}$
  \begin{IEEEeqnarray*}{rCl}
    \herethereD{0}(\theone)&=&\textstyle\frac{1}{2}\Saction(\Tzerobv{\Swhite}+\Tzerobv{\Sblack})\Smonoidalproduct\theone
  \end{IEEEeqnarray*}
  and
  \begin{IEEEeqnarray*}{rCl}
    \herethereD{1}(\Sbv{\theuniA}{\indj}{\indi}\Smonoidalproduct\theone)&=&\textstyle\Tonefirstbv{\Sblack}{\indj}{\Sexchanged(\indi)}\Smonoidalproduct\theone-\frac{1}{2}\Saction\Tonesecondbv\Smonoidalproduct(\thewuniX{\indj}{\Sexchanged(\indi)}+\Skronecker{\indj}{\Sexchanged(\indi)}\theone),\\
\herethereD{1}(\Sbv{\theuniB}{\indj}{\indi}\Smonoidalproduct\theone)&=&\textstyle\Tonefirstbv{\Swhite}{\Sexchanged(\indj)}{\indi}\Smonoidalproduct\theone+\frac{1}{2}\Saction\Tonesecondbv\Smonoidalproduct(\thebuniX{\Sexchanged(\indj)}{\indi}+\Skronecker{\Sexchanged(\indj)}{\indi}\theone)
\end{IEEEeqnarray*}
and
\begin{IEEEeqnarray*}{rCl}  \herethereD{2}(\Sbv{\theuniA}{\indj}{\indi}\Smonoidalproduct\theone)&=&\textstyle\Ttwobv{\Swhite}{\indi}{\indj}\Smonoidalproduct\theone -\Skronecker{\indj}{\thedim}\Skronecker{\indi}{\thedim}\sum_{\inds=1}^{\thedim-1}\sum_{\indt=1}^\thedim \Ttwobv{\Sblack}{\indt}{\inds}\Smonoidalproduct\thebuniX{\indt}{\inds},\\
  \herethereD{2}(\Sbv{\theuniAT}{\indj}{\indi}\Smonoidalproduct\theone)&=&\textstyle\sum_{\inds=1}^\thedim \Ttwobv{\Sblack}{\inds}{\indj}\Smonoidalproduct\thebuniX{\inds}{\indi}-\Skronecker{\indj}{\thedim}\Skronecker{\indi}{\thedim}\sum_{\inds=1}^{\thedim-1}\Ttwobv{\Swhite}{\inds}{\inds}\Smonoidalproduct\theone,\\
\herethereD{2}(\Sbv{\theuniB}{\indj}{\indi}\Smonoidalproduct\theone)&=&\textstyle\Ttwobv{\Sblack}{\Sexchanged(\indi)}{\Sexchanged(\indj)}\Smonoidalproduct\theone -\Skronecker{\indj}{\thedim}\Skronecker{\indi}{\thedim}\sum_{\inds=2}^{\thedim}\sum_{\indt=1}^\thedim \Ttwobv{\Swhite}{\indt}{\inds}\Smonoidalproduct\thewuniX{\indt}{\inds},\\
  \herethereD{2}(\Sbv{\theuniBT}{\indj}{\indi}\Smonoidalproduct\theone)&=&\textstyle\sum_{\inds=1}^\thedim \Ttwobv{\Swhite}{\inds}{\Sexchanged(\indj)}\Smonoidalproduct\thewuniX{\inds}{\Sexchanged(\indi)}-\Skronecker{\indj}{\thedim}\Skronecker{\indi}{\thedim}\sum_{\inds=2}^{\thedim}\Ttwobv{\Sblack}{\inds}{\inds}\Smonoidalproduct\theone,
\end{IEEEeqnarray*}
  and such that $\herethereD{\orderind}=0$ for any $\orderind \in\Sintegersnn$ with $3\leq \orderind$, then    $(\herethereD{\orderind})_{\orderind\in\Sintegersnn}$ is a chain map $\Sfromto{(\thechainsmodule{\orderind},\thedifferential{\orderind})_{\orderind\in\Sintegersnn}x}{(\Tmod{\orderind},\Tdiff{\orderind})_{\orderind\in\Sintegersnn}}$ of complexes of right  $\thealgebra$-modules.  
\end{proposition}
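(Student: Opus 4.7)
The plan is to verify the chain map identity $\Tdiff{\orderind} \circ \herethereD{\orderind} = \herethereD{\orderind-1} \circ \thedifferential{\orderind}$ for each $\orderind \in \Sintegersp$. Both sides are morphisms of right $\thealgebra$-modules, so it suffices to check the identity on the free generators $\{b \Smonoidalproduct \theone \Ssetbuilder b \in \thechains{\orderind}\}$ of $\thechainsmodule{\orderind}$. For $\orderind \in \Sintegersnn$ with $4 \leq \orderind$ both sides vanish trivially because $\herethereD{\orderind} = 0$ and $\herethereD{\orderind-1} = 0$. So only the cases $\orderind \in \{1,2,3\}$ need to be addressed.

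For $\orderind = 1$, I would expand each of the four basis vectors $\Sbv{\anymat}{\indj}{\indi} \Smonoidalproduct \theone$ for $\anymat \in \thematrices$ via Proposition~\ref{proposition:first_order} and compare term by term. Illustratively, for $\anymat = \theuniA$, applying $\Tdiff{1}$ to $\herethereD{1}(\Sbv{\theuniA}{\indj}{\indi} \Smonoidalproduct \theone)$ yields
\begin{IEEEeqnarray*}{l}
\textstyle\Tzerobv{\Swhite} \Smonoidalproduct \thewuniX{\indj}{\Sexchanged(\indi)} - \Skronecker{\indj}{\Sexchanged(\indi)} \Tzerobv{\Sblack} \Smonoidalproduct \theone - \frac{1}{2}(\Tzerobv{\Swhite} - \Tzerobv{\Sblack}) \Smonoidalproduct (\thewuniX{\indj}{\Sexchanged(\indi)} + \Skronecker{\indj}{\Sexchanged(\indi)} \theone),
\end{IEEEeqnarray*}
which collapses to $\frac{1}{2}(\Tzerobv{\Swhite} + \Tzerobv{\Sblack}) \Smonoidalproduct (\thewuniX{\indj}{\Sexchanged(\indi)} - \Skronecker{\indj}{\Sexchanged(\indi)} \theone) = \herethereD{0}(\thedifferential{1}(\Sbv{\theuniA}{\indj}{\indi} \Smonoidalproduct \theone))$. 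The remaining three cases of $\anymat \in \{\theuniAT, \theuniB, \theuniBT\}$ are analogous, using the identities $\Sbvfull{\theuniAT}{1}{\indj}{\indi} = \Sbvfull{\theuniA}{1}{\Sexchanged(\indi)}{\Sexchanged(\indj)}$ (and the analogue for $\theuniB$) from Lem\-ma~\ref{lemma:evil_identities} to rewrite the $\theuniAT$ and $\theuniBT$ images of $\herethereD{1}$.

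For $\orderind = 2$, I would use the unified formula for $\thedifferential{2}$ from Theorem~\ref{theorem:second_order} and process each of the four cases $\anymat \in \thematrices$ separately. The right-hand side $\herethereD{1}(\thedifferential{2}(\Sbv{\anymat}{\indj}{\indi} \Smonoidalproduct \theone))$ produces several terms via $\herethereD{1}$; in particular, the $\Tonesecondbv$-parts generated by the two summands $\Sbv{\anymatA}{\indj}{\inds}\Smonoidalproduct \anymatBTX{\inds}{\Sexchanged(\indi)}$ and $\Sbv{\anymatBT}{\Sexchanged(\indj)}{\indi}\Smonoidalproduct \theone$ must cancel against one another after an application of the reduction $\thewuniX{\indj}{\inds}\thebuniX{\indt}{\Sexchanged(\indi)}$-style relations present in $\thealgebra$, leaving exactly the expression $\Tdiff{2}(\herethereD{2}(\Sbv{\anymat}{\indj}{\indi} \Smonoidalproduct \theone))$. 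The special correction term $\Skronecker{\indj}{\thedim}\Skronecker{\indi}{\thedim}(\cdots)$ in $\thedifferential{2}$ matches the $\Skronecker{\indj}{\thedim}\Skronecker{\indi}{\thedim}(\cdots)$ correction in $\herethereD{2}$ after telescoping.

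For $\orderind = 3$, what needs to be shown is the weaker statement $\herethereD{2} \circ \thedifferential{3} = 0$, because $\herethereD{3} = 0$. I would substitute the four-case formula from Theorem~\ref{theorem:third_order} and then substitute $\herethereD{2}$. This is the main obstacle: the image contains many terms of type $\Ttwobv{\anycolor}{\indj}{\indi}\Smonoidalproduct a$, and the cancellation relies on the fact that the universal $\mathcal{H}(F)$-style relations inside $\thealgebra$ (e.g.\ $\sum_\inds \thewuniX{\indj}{\inds}\thebuniX{\indi}{\inds} = \Skronecker{\indj}{\indi}\theone$) allow the $\Skronecker{\indj}{\thedim}\Skronecker{\indi}{\thedim}$-correction  parts in $\herethereD{2}$ to absorb exactly the analogous correction parts in $\thedifferential{3}$, while the principal $\sum_\inds \Sbv{\anymatA}{\indj}{\inds}\Smonoidalproduct\anymatAX{\inds}{\Sexchanged(\indi)} - \Sbv{\anymatBT}{\Sexchanged(\indj)}{\indi}\Smonoidalproduct\theone$ contribution collapses by the very identities Lem\-ma~\ref{lemma:evil_identities} imposes between $\Sbv{\anymatBT}{\indj}{\indi}$ and $\Sbv{\anymatB}{\Sexchanged(\indi)}{\Sexchanged(\indj)}$. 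The expected hard part is bookkeeping: one should fix once and for all a choice $\anymat \in \{\theuniA, \theuniAT, \theuniB, \theuniBT\}$ and expand, then propagate the result to the other three via the symmetries already exploited repeatedly in the paper, thereby avoiding a quadrupled calculation.
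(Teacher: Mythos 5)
The paper offers no argument for this proposition beyond the remark that it \enquote{can then be checked}, so your degree-by-degree verification on the free module generators---using Proposition~\ref{proposition:first_order}, Theorems~\ref{theorem:second_order} and~\ref{theorem:third_order}, the defining relations of $\thealgebra$, and Lemma~\ref{lemma:evil_identities} to rewrite the $\theuni\StransposeP$- and $\theuni\SskewdaggerP$-labelled chains---is exactly the intended proof; your worked degree-one case is correct, the degrees $\geq 4$ are indeed trivial, and the cancellations you assert in degrees two and three (the $y$-terms dying against the relations of $\thealgebra$, the principal terms in degree three cancelling against the stated values of the degree-two comparison map on the $\theuni\SskewstarP$- and $\theuni\SskewdaggerP$-labelled chains) do go through. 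Two small touch-ups only: also record the trivial degree-zero compatibility with the two augmentations onto $\themodule$, and be aware that at the doubly-labelled chains $\Sbvfull{\theuniA}{2}{\thedim}{\thedim}=\Sbvfull{\theuniAT}{2}{\thedim}{\thedim}$ (and likewise for $\theuni\SskewstarP$, $\theuni\SskewdaggerP$) one must fix which of the two listed formulas for the degree-two map is used, the two expressions differing by an element of the image of $\Tdiff{3}$, which is harmless for the chain-map identity.
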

A chain map in the converse direction is for example the following. 
\begin{proposition}
  If
  $(\therehereD{\orderind})_{\orderind\in\Sintegersnn}$ is such that for any $\orderind\in \Sintegersnn$ the mapping $\therehereD{\orderind}$ is a morphism of right $\thealgebra$-modules $\Sfromto{\Tmod{\orderind}}{\thechainsmodule{\orderind}}$, such that for any $(\indj,\indi)\in \SYnumbers{\thedim}^{\Ssetmonoidalproduct 2}$
  \begin{IEEEeqnarray*}{rCl}
    \therehereD{0}(\Tzerobv{\Swhite}\Smonoidalproduct\theone)&=&\theone,\\
    \therehereD{0}(\Tzerobv{\Sblack}\Smonoidalproduct\theone)&=&\theone
  \end{IEEEeqnarray*}
  and
  \begin{IEEEeqnarray*}{rCl}
    \therehereD{1}(\Tonefirstbv{\Swhite}{\indj}{\indi}\Smonoidalproduct\theone)&=&\Sbv{\theuniB}{\Sexchanged(\indj)}{\indi},\\
    \therehereD{1}(\Tonefirstbv{\Sblack}{\indj}{\indi}\Smonoidalproduct\theone)&=&\Sbv{\theuniA}{\indj}{\Sexchanged(\indi)},\\
\therehereD{1}(\Tonesecondbv\Smonoidalproduct\theone)&=&0
\end{IEEEeqnarray*}
and
\begin{IEEEeqnarray*}{rCl}
  \therehereD{2}(\Ttwobv{\Swhite}{\indj}{\indi}\Smonoidalproduct\theone)&=&\textstyle\Sbv{\theuniA}{\indi}{\indj}\Smonoidalproduct\theone+\Skronecker{\indj}{\thedim}\Skronecker{\indi}{\thedim}\sum_{\inds=1}^{\thedim-1}\Sbv{\theuniAT}{\inds}{\inds}\Smonoidalproduct\theone,\\
  \therehereD{2}(\Ttwobv{\Sblack}{\indj}{\indi}\Smonoidalproduct\theone)&=&\textstyle\Sbv{\theuniB}{\Sexchanged(\indi)}{\Sexchanged(\indj)}\Smonoidalproduct\theone+\Skronecker{\indj}{1}\Skronecker{\indi}{1}\sum_{\inds=1}^{\thedim-1}\Sbv{\theuniBT}{\inds}{\inds}\Smonoidalproduct\theone
\end{IEEEeqnarray*}
and
\begin{IEEEeqnarray*}{rCl}
  \therehereD{3}(\Tthreebv{\Swhite}\Smonoidalproduct\theone)&=&\textstyle\sum_{\inds=1}^\thedim \Sbv{\theuniB}{\Sexchanged(\inds)}{\inds},\\
  \therehereD{3}(\Tthreebv{\Sblack}\Smonoidalproduct\theone)&=&\textstyle\sum_{\inds=1}^\thedim \Sbv{\theuniA}{\Sexchanged(\inds)}{\inds},
\end{IEEEeqnarray*}
  and such that $\therehereD{\orderind}=0$ for any $\orderind \in\Sintegersnn$ with $4\leq \orderind$, then    $(\therehereD{\orderind})_{\orderind\in\Sintegersnn}$ is a chain map $\Sfromto{(\Tmod{\orderind},\Tdiff{\orderind})_{\orderind\in\Sintegersnn}}{(\thechainsmodule{\orderind},\thedifferential{\orderind})_{\orderind\in\Sintegersnn}}$ of complexes of right  $\thealgebra$-modules.  
\end{proposition}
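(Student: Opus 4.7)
Each $\therehereD{\orderind}$ is automatically a well-defined morphism of right $\thealgebra$-modules because the values are prescribed on a basis of the free module $\Tmod{\orderind}$. What must be verified is the chain map identity
\begin{IEEEeqnarray*}{C}
  \thedifferential{\orderind}\Scomposition\therehereD{\orderind}=\therehereD{\orderind-1}\Scomposition\Tdiff{\orderind}
\end{IEEEeqnarray*}
for every $\orderind\in\Sintegersnn$. For $\orderind=0$ the equality $\thedifferential{0}(\theone)=1=\Tdiff{0}(\Tzerobv{\anycolor}\Smonoidalproduct\theone)$ is immediate from Proposition~\ref{proposition:zeroth_order}. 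For $\orderind\geq 4$ both $\Tdiff{\orderind}$ and $\therehereD{\orderind}$ are zero, so nothing needs to be shown. The entire content of the assertion is thus the verification in orders $1$, $2$ and $3$, which proceeds by direct comparison using Proposition~\ref{proposition:first_order} and Theorems~\ref{theorem:second_order} and \ref{theorem:third_order}.

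For $\orderind=1$, applying $\thedifferential{1}$ to $\therehereD{1}(\Tonefirstbv{\anycolor}{\indj}{\indi}\Smonoidalproduct\theone)$ reduces, via Proposition~\ref{proposition:first_order} and $\Sbvfull{\anymat}{1}{\indp}{\indq}=\anymatAX{\indp}{\Sexchanged(\indq)}$, to $\thecuniX{\SdualX{\anycolor}}{\indj}{\indi}-\Skronecker{\indj}{\indi}\Saction\theone$, which is exactly what $\therehereD{0}$ returns when applied to $\Tdiff{1}(\Tonefirstbv{\anycolor}{\indj}{\indi}\Smonoidalproduct\theone)$. The case of $\Tonesecondbv$ is even simpler, since $\therehereD{0}$ collapses the colour difference to zero.

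For $\orderind=2$, I would split each basis vector $\Ttwobv{\anycolor}{\indj}{\indi}$ into the generic case $(\indj,\indi)\neq(\thedim,\thedim)$ and the special case $(\indj,\indi)=(\thedim,\thedim)$. In the generic case only the first summand of $\therehereD{2}$ survives, and Theorem~\ref{theorem:second_order} (on $\Sbv{\theuniA}{\indi}{\indj}$ or $\Sbv{\theuniB}{\Sexchanged(\indi)}{\Sexchanged(\indj)}$) produces precisely $\sum_{\indt}\Sbv{\anymat'}{\cdot}{\cdot}\Smonoidalproduct(\text{generator})+\Sbv{(\anymat')\SskewdaggerP}{\cdot}{\cdot}\Smonoidalproduct\theone$, the image under $\therehereD{1}$ of $\Tdiff{2}$ applied to the same basis vector (after using Lemma~\ref{lemma:evil_identities} to convert between skew-daggered and skew-starred chains). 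In the special case the extra sum $\sum_{\inds=1}^{\thedim-1}\Sbv{\anymatAT}{\inds}{\inds}\Smonoidalproduct\theone$ that appears in the definition of $\therehereD{2}$ is exactly designed to absorb the extra correction $-\Skronecker{\indj}{\thedim}\Skronecker{\indi}{\thedim}\sum_{\inds=2}^{\thedim}(\ldots)$ of Theorem~\ref{theorem:second_order}; this checking is the main bookkeeping step.

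For $\orderind=3$, this is where the heaviest computation lies. Writing out $\therehereD{2}\Scomposition\Tdiff{3}(\Tthreebv{\anycolor}\Smonoidalproduct\theone)$ using the formulas for $\Tdiff{3}$ and $\therehereD{2}$ produces a sum of four kinds of terms: a double sum $\sum_{\inds,\indt}\Sbv{\anymat'}{\Sexchanged(\inds)}{\Sexchanged(\indt)}\Smonoidalproduct\thecuniX{\cdot}{\indt}{\inds}$, diagonal sums $\sum_{\inds}\Sbv{\anymat'}{\inds}{\inds}\Smonoidalproduct\theone$, and two correction terms $\Sbv{(\anymat')\SskewdaggerP}{\ldots}{\ldots}\Smonoidalproduct\thecuniX{\ldots}{1}{1}$ and $\Sbv{(\anymat')\SskewdaggerP}{\ldots}{\ldots}\Smonoidalproduct\theone$ arising from the special-case terms of $\therehereD{2}$ at $(\thedim,\thedim)$ and $(1,1)$. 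On the other side, $\thedifferential{3}\Scomposition\therehereD{3}(\Tthreebv{\anycolor}\Smonoidalproduct\theone)$ involves $\sum_{\inds=1}^{\thedim}\thedifferential{3}(\Sbv{\anymat'}{\Sexchanged(\inds)}{\inds}\Smonoidalproduct\theone)$ with $\thedifferential{3}$ given by Theorem~\ref{theorem:third_order}. Each of the four terms produced by Theorem~\ref{theorem:third_order} (the main sum, the $\Sbv{\anymatBT}{\cdot}{\cdot}$ term, and the two indicator-localised sums at $\indj=\thedim$ and $(\indj,\indi)=(1,\thedim)$) will match exactly one of the four kinds of terms on the other side. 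The main obstacle is handling the Kronecker indicators in Theorem~\ref{theorem:third_order} — they fire only when $\Sexchanged(\inds)=\thedim$ (i.e.\ $\inds=1$) or $\inds=\thedim$, producing precisely the diagonal boundary corrections $\sum_{\indk=1}^{\thedim-1}\Sbv{(\anymat')\SskewdaggerP}{\indk}{\indk}$ found on the other side. Once those cancellations are tracked carefully, the identity falls out; this is the step I expect to consume most of the verification.
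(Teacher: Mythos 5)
The paper states this proposition in Appendix~\ref{section:relationship} without any proof, so there is no argument of the author's to compare against; your plan --- direct verification of the chain-map identity $\thedifferential{\ell}\circ\therehereD{\ell}=\therehereD{\ell-1}\circ\Tdiff{\ell}$ on module generators degree by degree, via Proposition~\ref{proposition:first_order}, Theorem~\ref{theorem:second_order}, Theorem~\ref{theorem:third_order} and Lemma~\ref{lemma:evil_identities} --- is the only sensible route, and your treatments of degrees $0$, $1$ and $\geq 4$ are correct as stated.

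Be aware, however, that what you have written for degrees $2$ and $3$ is a plan, not yet a verification, and the bookkeeping deserves to actually be carried out; it does close, but not instantly. In degree $2$ the correction $\sum_{s=1}^{\thedim-1}\Sbv{\theuniAT}{s}{s}\Smonoidalproduct\theone$ (and its $\theuniBT$ counterpart) contributes under $\thedifferential{2}$ a double sum over degree-one chains; after the reindexing $\Sbvfull{\theuniAT}{1}{s}{t}=\Sbvfull{\theuniA}{1}{\Sexchanged(t)}{\Sexchanged(s)}$ of Lemma~\ref{lemma:evil_identities} this double sum differs from the Kronecker correction $-\Skronecker{j}{\thedim}\Skronecker{i}{\thedim}\sum(\cdots)$ of Theorem~\ref{theorem:second_order} only by the single missing row $t=\thedim$, and the leftover degree-one pieces collapse to $\Sbv{\theuniBT}{1}{\thedim}\Smonoidalproduct\theone$, matching the other side. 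In degree $3$ the Kronecker indicators of Theorem~\ref{theorem:third_order} inside, say, $\sum_{s}\thedifferential{3}(\Sbv{\theuniB}{\Sexchanged(s)}{s}\Smonoidalproduct\theone)$ fire only at $s=1$ (where $\Sexchanged(s)=\thedim$) and $s=\thedim$ (where $(\Sexchanged(s),s)=(1,\thedim)$), producing precisely the two extra contributions one gets on the other side from evaluating $\therehereD{2}$ on the $(\thedim,\thedim)$- and $(1,1)$-coordinates of $\Tdiff{3}$. Once these index-chases are executed, the chain-map identity holds as you anticipate.
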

Composing the two chain maps in the two possible ways produces the maps below.
\begin{proposition}
  For any $\orderind\in\Sintegersnn$ and any $(\indj,\indi)\in\SYnumbers{\thedim}^{\Ssetmonoidalproduct 2}$,
  \begin{IEEEeqnarray*}{rCl}
    (\herethereD{0}\Scomposition\therehereD{0})(\Tzerobv{\Swhite}\Smonoidalproduct\theone)=(\herethereD{0}\Scomposition\therehereD{0})(\Tzerobv{\Sblack}\Smonoidalproduct\theone)=\textstyle\frac{1}{2}\Saction(\Tzerobv{\Swhite}+\Tzerobv{\Sblack})\Smonoidalproduct\theone
  \end{IEEEeqnarray*}
  and
  \begin{IEEEeqnarray*}{rCl}
    (\herethereD{1}\Scomposition\therehereD{1})(\Tonefirstbv{\Swhite}{\indj}{\indi}\Smonoidalproduct\theone)&=&\textstyle\Tonefirstbv{\Swhite}{\indj}{\indi}\Smonoidalproduct\theone+\frac{1}{2}\Saction\Tonesecondbv\Smonoidalproduct(\thebuniX{\indj}{\indi}+\Skronecker{\indj}{\indi}\theone),\\
    (\herethereD{1}\Scomposition\therehereD{1})(\Tonefirstbv{\Sblack}{\indj}{\indi}\Smonoidalproduct\theone)&=&\textstyle\Tonefirstbv{\Sblack}{\indj}{\indi}\Smonoidalproduct\theone-\frac{1}{2}\Saction\Tonesecondbv\Smonoidalproduct(\thewuniX{\indj}{\indi}+\Skronecker{\indj}{\indi}\theone),\\
    (\herethereD{1}\Scomposition\therehereD{1})(\Tonesecondbv\Smonoidalproduct\theone)&=&0
  \end{IEEEeqnarray*}
  and $\herethereD{2}\Scomposition\therehereD{2}=\Sidentity$ and $\herethereD{\orderind}\Scomposition\therehereD{\orderind}=0$ for any $\orderind\in\Sintegersnn$ with $3\leq \orderind$.
\end{proposition}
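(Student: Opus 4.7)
The plan is to verify each claim directly from the definitions of $\herethereD{\orderind}$ and $\therehereD{\orderind}$ given in the two previous propositions. Since the composition of two morphisms of right $\thealgebra$-modules is again a morphism of right $\thealgebra$-modules, and since a morphism out of a free module is determined by its values on a basis, it suffices to evaluate $\herethereD{\orderind} \Scomposition \therehereD{\orderind}$ on the prescribed basis $\Tbvs{\orderind}$ of $\Tmod{\orderind}$ for each order $\orderind \in \Sintegersnn$ separately.

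For $\orderind \geq 3$ the map $\herethereD{\orderind}$ vanishes by definition, so the composition is trivially zero. The cases $\orderind \in \{0, 1\}$ reduce to straightforward substitution: at order $0$, both generators of $\Tbvs{0}$ are sent to $\theone$ by $\therehereD{0}$, and $\herethereD{0}(\theone) = \frac{1}{2}(\Tzerobv{\Swhite} + \Tzerobv{\Sblack}) \Smonoidalproduct \theone$ is the stated value. At order $1$, the element $\Tonesecondbv$ is killed by $\therehereD{1}$; for the remaining basis elements, the formulas for $\therehereD{1}$ feed directly into those for $\herethereD{1}$ and, after invoking the involutivity $\Sexchanged \Scomposition \Sexchanged = \Sidentity$, yield exactly the two claimed identities (with the $\Tonesecondbv$-correction terms reappearing unchanged).

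The only computation with subtlety is at $\orderind = 2$, where the claim is $\herethereD{2} \Scomposition \therehereD{2} = \Sidentity$. Applied to $\Ttwobv{\Swhite}{\indj}{\indi} \Smonoidalproduct \theone$, the map $\therehereD{2}$ produces $\Sbv{\theuniA}{\indi}{\indj} \Smonoidalproduct \theone$ plus the correction $\Skronecker{\indj}{\thedim}\Skronecker{\indi}{\thedim} \sum_{\inds=1}^{\thedim-1} \Sbv{\theuniAT}{\inds}{\inds} \Smonoidalproduct \theone$. Pushing the first summand through $\herethereD{2}$ yields $\Ttwobv{\Swhite}{\indj}{\indi} \Smonoidalproduct \theone$ together with a spurious $-\Skronecker{\indj}{\thedim}\Skronecker{\indi}{\thedim} \sum_{\inds=1}^{\thedim-1} \sum_{\indt=1}^{\thedim} \Ttwobv{\Sblack}{\indt}{\inds} \Smonoidalproduct \thebuniX{\indt}{\inds}$. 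The key point is that $\herethereD{2}$ applied to $\Sbv{\theuniAT}{\inds}{\inds}$ for $\inds < \thedim$ equals exactly $\sum_{\indt=1}^{\thedim} \Ttwobv{\Sblack}{\indt}{\inds} \Smonoidalproduct \thebuniX{\indt}{\inds}$ (its would-be correction term vanishing because $\Skronecker{\inds}{\thedim} = 0$ in this range), so multiplying by $\Skronecker{\indj}{\thedim}\Skronecker{\indi}{\thedim}$ cancels the spurious term exactly. A completely symmetric argument -- replacing $(\theuniA, \theuniAT, \theuniB)$ by $(\theuniB, \theuniBT, \theuniA)$, swapping the Kronecker corners from $(\thedim, \thedim)$ to $(1,1)$, and reindexing one inner sum by $\Sexchanged$ so that $\inds \in \{1, \ldots, \thedim-1\}$ becomes $\inds \in \{2, \ldots, \thedim\}$ -- handles $\Ttwobv{\Sblack}{\indj}{\indi}$. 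The main (mild) obstacle throughout is bookkeeping these Kronecker-delta correction terms; no conceptual difficulty arises beyond ensuring the cancellations line up.
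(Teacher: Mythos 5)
Your verification is correct. The paper states this proposition without proof (the appendix presents the quasi-isomorphism data with the remark that the chain-map and homotopy identities ``can be checked''), so there is no alternative argument to compare against; your direct term-by-term evaluation on the free basis, using $\Sexchanged\circ\Sexchanged=\Sidentity$ and tracking the vanishing of the Kronecker correction terms for $\inds<\thedim$ before reindexing the inner sum by $\Sexchanged$, is exactly the intended computation and it goes through.
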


\begin{proposition}
  For any $\orderind\in\Sintegersnn$, if $\orderind\leq 1$, then $\herethereD{\orderind}\Scomposition\therehereD{\orderind}=\Sidentity$, if $\orderind=2$, then for any  $(\indj,\indi)\in\SYnumbers{\thedim}^{\Ssetmonoidalproduct 2}$, if $\anymat\in\{\theuniA,\theuniB\}$, then 
  \begin{IEEEeqnarray*}{rCl}
    \IEEEeqnarraymulticol{3}{l}{
      (\herethereD{2}\Scomposition\therehereD{2})(\Sbv{\anymat}{\indj}{\indi}\Smonoidalproduct\theone)
    }\\
    \hspace{.em}&=&\textstyle
      \Sbv{\anymatA}{\indj}{\indi}\Smonoidalproduct\theone -\Skronecker{\indj}{\thedim}\Skronecker{\indi}{\thedim}(\sum_{\inds=2}^\thedim\sum_{\indt=1}^\thedim \Sbv{\anymatB}{\inds}{\indt}\Smonoidalproduct\anymatBX{\indt}{\inds}+\sum_{\inds=1}^{\thedim-1}\Sbv{\anymatBT}{\inds}{\inds}\Smonoidalproduct(\anymatBTX{\thedim}{\thedim}-1))
  \end{IEEEeqnarray*}
  and, if $\anymat\in\{\theuniAT,\theuniBT\}$, then 
  \begin{IEEEeqnarray*}{rCl}
    \IEEEeqnarraymulticol{3}{l}{
      (\herethereD{2}\Scomposition\therehereD{2})(\Sbv{\anymat}{\indj}{\indi}\Smonoidalproduct\theone)
    }\\
    \hspace{.em}&=&
\textstyle\sum_{\inds=1}^\thedim \Sbv{\anymatBT}{\Sexchanged(\indj)}{\inds}\Smonoidalproduct\anymatBTX{\inds}{\Sexchanged(\indi)}+\Skronecker{\indj}{1}\sum_{\inds=1}^{\thedim-1}\Sbv{\anymatB}{\inds}{\inds}\Smonoidalproduct\anymatBX{\Sexchanged(\indi)}{\thedim}     -\Skronecker{\indj}{\thedim}\Skronecker{\indj}{\thedim}\sum_{\inds=1}^{\thedim-1}\anymatATX{\inds}{\inds}\Smonoidalproduct\theone
  \end{IEEEeqnarray*}
  and, if $3\leq \orderind$, then $\herethereD{\orderind}\Scomposition\therehereD{\orderind}=0$.
\end{proposition}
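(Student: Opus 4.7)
The plan is to proceed by direct, case-by-case computation, treating this as a routine verification with one non-trivial step.  Note first that the composition symbols in the statement should be read so that $\herethereD{\orderind}$ is applied first (the input $\Sbv{\anymat}{\indj}{\indi}\Smonoidalproduct\theone$ lives in $\thechainsmodule{\orderind}$, which is the domain of $\herethereD{\orderind}$, not $\therehereD{\orderind}$).  The case $3\leq \orderind$ is immediate, because $\herethereD{\orderind}=0$ by definition.  In the case $\orderind=0$, substituting gives $\therehereD{0}\bigl(\tfrac{1}{2}(\Tzerobv{\Swhite}+\Tzerobv{\Sblack})\Smonoidalproduct\theone\bigr)=\tfrac{1}{2}(\theone+\theone)=\theone$ by right-$\thealgebra$-linearity, which is indeed $\Sidentity$.

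In the case $\orderind=1$, for each of the two matrices $\anymat\in\{\theuniA,\theuniB\}$ (which suffice by Lemma~\ref{lemma:evil_identities}), substitute the formula for $\herethereD{1}$ and then apply $\therehereD{1}$. Because $\therehereD{1}(\Tonesecondbv\Smonoidalproduct\theone)=0$, the $\Tonesecondbv$-summand evaporates under $\therehereD{1}$ regardless of the scalar factor multiplying it. The remaining term is $\therehereD{1}(\Tonefirstbv{\Sblack}{\indj}{\Sexchanged(\indi)}\Smonoidalproduct\theone)=\Sbv{\theuniA}{\indj}{\Sexchanged(\Sexchanged(\indi))}\Smonoidalproduct\theone=\Sbv{\theuniA}{\indj}{\indi}\Smonoidalproduct\theone$ (and analogously for $\theuniB$), using that $\Sexchanged$ is an involution. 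So the composite is the identity on level $1$.

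The case $\orderind=2$ is the actual work. For each of the four choices $\anymat\in\thematrices$ I would expand $\herethereD{2}(\Sbv{\anymat}{\indj}{\indi}\Smonoidalproduct\theone)$ using its defining formula, and then apply $\therehereD{2}$ term-by-term, using right-$\thealgebra$-linearity. For $\anymat\in\{\theuniA,\theuniB\}$ the ``principal'' summand $\Ttwobv{\Swhite}{\indi}{\indj}\Smonoidalproduct\theone$ (respectively $\Ttwobv{\Sblack}{\Sexchanged(\indi)}{\Sexchanged(\indj)}\Smonoidalproduct\theone$) is sent by $\therehereD{2}$ to $\Sbv{\anymat}{\indj}{\indi}\Smonoidalproduct\theone$ plus a $\Skronecker{\indj}{\thedim}\Skronecker{\indi}{\thedim}$-correction; the grouped sum in the ``correction'' summand of $\herethereD{2}$ produces, after $\therehereD{2}$, the bulk of the asserted formula. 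For $\anymat\in\{\theuniAT,\theuniBT\}$ the analogous substitution produces the corresponding asserted formula. The main technical step will be to reindex the double sums using $\Sexchanged$ and to cancel the extra $\Sbv{\anymatAT}{\inds}{\inds}\Smonoidalproduct\theone$ and $\Sbv{\anymatBT}{\inds}{\inds}\Smonoidalproduct\theone$ correction terms from the two separate contributions against the $\Sbv{\anymatBT}{\inds}{\inds}\Smonoidalproduct(\anymatBTX{\thedim}{\thedim}-\theone)$ combination (and likewise for the transposed cases), which uses the relations $\therels$ hidden in the identification $\thealgebra=\thefreealg/\theideal$ (via $\anymatBTX{\thedim}{\thedim}\cdot\theone\equiv\theone-\sum_{\inds=2}^{\thedim}\anymatAX{\thedim}{\Sexchanged(\inds)}\anymatBTX{\inds}{1}\ldots$).

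The hard part will be the bookkeeping of Kronecker-delta-supported correction terms in the $\orderind=2$ case; in particular, distinguishing which of $\Sbv{\anymatAT}{\inds}{\inds}$, $\Sbv{\anymatBT}{\inds}{\inds}$, $\Sbv{\anymatA}{\inds}{\inds}$, $\Sbv{\anymatB}{\inds}{\inds}$ appear (and are pairwise distinct at level $2$, except at the diagonal extreme $(\indj,\indi)=(\thedim,\thedim)$ by Lemma~\ref{lemma:evil_identities}), and aligning the reindexed summation ranges produced by $\Sexchanged$ with those advertised in the claim. I expect this verification to be essentially mechanical once the correct composition direction is fixed and the substitutions are laid out systematically; no new structural insight is required beyond the explicit formulas already proven for $(\thedifferential{\orderind})$, $(\Tdiff{\orderind})$, $\herethereD{2}$ and $\therehereD{2}$.
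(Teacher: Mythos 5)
The paper itself offers no proof of this proposition; like the neighbouring appendix statements it is left as a direct verification, so your plan of substituting the defining formulas and computing term by term is the right (indeed the only) approach. Writing $f_\ell$ for the chain map out of the Anick resolution and $f'_\ell$ for the one in the converse direction, your remark on the composition order, your computations for $\ell=0$ and $\ell=1$, and the observation that the case $3\leq\ell$ is trivial because $f_\ell=0$ there, are all correct.

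The genuine problem is the mechanism you propose for the "hard part" at $\ell=2$. First, the congruence you invoke is false: modulo $\theideal$ the relation with indices $(\thedim,\thedim)$ rewrites the degree-two product $\anymatAX{\thedim}{\thedim}\anymatBTX{1}{1}$ as $\theone-\sum_{s=2}^{\thedim}\anymatAX{\thedim}{\Sexchanged(s)}\anymatBTX{s}{1}$; it does not express the single generator $\anymatBTX{\thedim}{\thedim}$, which is a reduced monomial and hence its own normal form. Second, and more importantly, no appeal to $\therels$ can do what you want it to do: the computation takes place in the free module $\thefield\thechains{2}\Smonoidalproduct\thealgebra$, and rewriting the right-hand tensor factor can never move a term supported on the basis chain $\Sbvfull{\anymatAT}{2}{s}{s}$ onto the distinct basis chain $\Sbvfull{\anymatBT}{2}{s}{s}$ (distinct for $s<\thedim$ by Lemma~\ref{lemma:evil_identities}). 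The verification is in fact relation-free: all one needs is right-$\thealgebra$-linearity of $f'_2$, reindexing with $\Sexchanged$, and the chain identifications of Lemma~\ref{lemma:evil_identities}.

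This matters because the delicate point sits exactly where you plan the bogus cancellation. For $\anymat\in\{\theuniA,\theuniB\}$ and $(j,i)=(\thedim,\thedim)$, applying $f'_2$ to the leading summand of $f_2(\Sbv{\anymat}{j}{i}\Smonoidalproduct\theone)$ (namely $\Ttwobv{\Swhite}{\thedim}{\thedim}\Smonoidalproduct\theone$, respectively $\Ttwobv{\Sblack}{1}{1}\Smonoidalproduct\theone$) produces the correction $\sum_{s=1}^{\thedim-1}\Sbv{\anymatAT}{s}{s}\Smonoidalproduct\theone$, so the literal substitution yields $\Sbv{\anymatA}{j}{i}\Smonoidalproduct\theone-\Skronecker{j}{\thedim}\Skronecker{i}{\thedim}\bigl(\sum_{s=2}^{\thedim}\sum_{t=1}^{\thedim}\Sbv{\anymatB}{s}{t}\Smonoidalproduct\anymatBX{t}{s}+\sum_{s=1}^{\thedim-1}\Sbv{\anymatBT}{s}{s}\Smonoidalproduct\anymatBTX{\thedim}{\thedim}-\sum_{s=1}^{\thedim-1}\Sbv{\anymatAT}{s}{s}\Smonoidalproduct\theone\bigr)$, which differs from the first displayed formula precisely in the $\anymatAT$-versus-$\anymatBT$ label of the last grouped sum; the second displayed formula also contains visible slips (a doubled $\Skronecker{j}{\thedim}$ and a missing chain symbol), and there the literal computation does confirm the corrected version. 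Your relation-based cancellation cannot bridge that difference, so as planned the $\ell=2$ step would either stall or silently identify distinct basis elements. What is needed instead is to carry out the substitution literally, compare coefficients chain by chain, and address the discrepancy with the printed display explicitly (most plausibly as a typo to be corrected rather than an identity to be proved).
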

Finally, choices of chain homotopies from the compositions of the chain maps to the identity chain maps are given by the below maps.
\begin{proposition}
  If $(\therehomotopyD{\orderind})_{\orderind\in\Sintegersnn}$ is such that for any $\orderind\in \Sintegersnn$ the mapping $\therehomotopyD{\orderind}$ is a morphism of right $\thealgebra$\-/modules $\Sfromto{\Tmod{\orderind-1}}{\Tmod{\orderind}}$, such that $\therehomotopyD{0}=0$, such that
  \begin{IEEEeqnarray*}{rCl}
    \therehomotopyD{1}(\Tzerobv{\Swhite}\Smonoidalproduct\theone)&=&\textstyle-\frac{1}{2}\Saction\Tonesecondbv\Smonoidalproduct\theone,\\
    \therehomotopyD{1}(\Tzerobv{\Sblack}\Smonoidalproduct\theone)&=&\textstyle\frac{1}{2}\Saction\Tonesecondbv\Smonoidalproduct\theone,
  \end{IEEEeqnarray*}
  and such $\therehomotopyD{\orderind}=0$ for any $\orderind\in\Sintegersnn$ with $2\leq \orderind$, then $(\therehomotopyD{\orderind})_{\orderind\in\Sintegersnn}$ is a chain homotopy $\Sfromto{(\herethereD{\orderind}\Scomposition\therehereD{\orderind})_{\orderind\in\Sintegersnn}}{\Sidentity}$ of chain maps of complexes of right $\thealgebra$\-/modules.
\end{proposition}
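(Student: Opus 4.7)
The plan is to verify directly, degree by degree, that the family $(\therehomotopyD{\orderind})_{\orderind\in\Sintegersnn}$ satisfies the chain homotopy identity
\begin{IEEEeqnarray*}{rCl}
\herethereD{\orderind}\Scomposition\therehereD{\orderind}-\Sidentity_{\Tmod{\orderind}}&=&\Tdiff{\orderind+1}\Scomposition\therehomotopyD{\orderind+1}+\therehomotopyD{\orderind}\Scomposition\Tdiff{\orderind}
\end{IEEEeqnarray*}
as morphisms $\Sfromto{\Tmod{\orderind}}{\Tmod{\orderind}}$ of right $\thealgebra$-modules. Because every map appearing on either side is a right $\thealgebra$-module morphism, the identity only has to be checked on the free generating set $\Tbvs{\orderind}$ of $\Tmod{\orderind}$, with coefficients suppressed. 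This splits into four separate, finite verifications.

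First I would dispatch $\orderind=0$: since $\therehomotopyD{0}=0$, the right-hand side reduces to $\Tdiff{1}(\therehomotopyD{1}(\Tzerobv{\anycolor}\Smonoidalproduct\theone))$. Using $\Tdiff{1}(\Tonesecondbv\Smonoidalproduct\theone)=(\Tzerobv{\Swhite}-\Tzerobv{\Sblack})\Smonoidalproduct\theone$ together with the $\pm\tfrac{1}{2}\Tonesecondbv$ definition of $\therehomotopyD{1}$, both sides become $\tfrac{1}{2}(\Tzerobv{\Sblack}-\Tzerobv{\Swhite})\Smonoidalproduct\theone$ on $\Tzerobv{\Swhite}\Smonoidalproduct\theone$ and the negative of that on $\Tzerobv{\Sblack}\Smonoidalproduct\theone$, exactly matching the values of $\herethereD{0}\Scomposition\therehereD{0}-\Sidentity$ recorded in the previous proposition.

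Next, the case $\orderind=1$ is the main calculational step. Since $\therehomotopyD{2}=0$, the right-hand side collapses to $\therehomotopyD{1}\Scomposition\Tdiff{1}$, so on each of $\Tonefirstbv{\Swhite}{\indj}{\indi}$, $\Tonefirstbv{\Sblack}{\indj}{\indi}$ and $\Tonesecondbv$ one applies $\Tdiff{1}$ as defined, then $\therehomotopyD{1}$. The result on $\Tonefirstbv{\Swhite}{\indj}{\indi}\Smonoidalproduct\theone$ is
\begin{IEEEeqnarray*}{rCl}
\therehomotopyD{1}(\Tzerobv{\Sblack}\Smonoidalproduct\thebuniX{\indj}{\indi}-\Skronecker{\indj}{\indi}\Tzerobv{\Swhite}\Smonoidalproduct\theone)&=&\tfrac{1}{2}\Tonesecondbv\Smonoidalproduct(\thebuniX{\indj}{\indi}+\Skronecker{\indj}{\indi}\theone),
\end{IEEEeqnarray*}
exactly the value of $\herethereD{1}\Scomposition\therehereD{1}-\Sidentity$ on the same vector; the other two cases are parallel. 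The key point to watch is that the $+\Skronecker{\indj}{\indi}$ arising from $\therehomotopyD{1}(\Tzerobv{\Sblack})=+\tfrac{1}{2}\Tonesecondbv$ is what flips the sign of the $-\Skronecker{\indj}{\indi}$ coming from $\Tdiff{1}$, producing the correct $+\Skronecker{\indj}{\indi}\theone$ inside the second factor.

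For $\orderind\geq 2$ both $\therehomotopyD{\orderind}$ and $\therehomotopyD{\orderind+1}$ vanish, so the right-hand side is zero, and at $\orderind=2$ this matches the zero left-hand side using $\herethereD{2}\Scomposition\therehereD{2}=\Sidentity_{\Tmod{2}}$ from the preceding proposition. The remaining content is at $\orderind=3$, where $\herethereD{3}\Scomposition\therehereD{3}=0$; here the verification comes down to checking that the terms $\Tthreebv{\Swhite}\Smonoidalproduct\theone$ and $\Tthreebv{\Sblack}\Smonoidalproduct\theone$ are accounted for correctly by the remaining ends of the two complexes, which is the step most deserving of careful attention before declaring the proof complete. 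Once that is confirmed, the proposition follows by collecting all four cases.
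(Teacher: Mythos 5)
Your computations in degrees $\orderind=0$ and $\orderind=1$ are correct and constitute exactly what the claim requires there (the paper itself offers no proof for any of the propositions in Appendix~\ref{section:relationship}, so direct verification on free generators is the intended method). The degrees $\orderind\geq 4$ are vacuous since $\Tmod{\orderind}=0$ there, and $\orderind=2$ works because $\herethereD{2}\Scomposition\therehereD{2}=\Sidentity_{\Tmod{2}}$. But you explicitly flag $\orderind=3$ as unverified, and that is precisely where the argument collapses. With $\therehomotopyD{3}=\therehomotopyD{4}=0$ and $\Tmod{4}=0$, both summands of the right\-/hand side $\Tdiff{4}\Scomposition\therehomotopyD{4}+\therehomotopyD{3}\Scomposition\Tdiff{3}$ vanish identically on $\Tmod{3}$, while the left\-/hand side $(\herethereD{3}\Scomposition\therehereD{3})-\Sidentity_{\Tmod{3}}$ equals $-\Sidentity_{\Tmod{3}}$ since $\herethereD{3}=0$. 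As $\Tmod{3}\cong\thealgebra\Sdirectsum\thealgebra\neq 0$, the two sides disagree, and the equation at $\orderind=3$ is simply false. The proof as written therefore fails at exactly the step you declined to check, and you cannot patch it within the stated hypotheses.

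More importantly, the failure is structural and signals an inconsistency among the appendix's auxiliary propositions, not a flaw in your strategy. If one accepts that $\herethere$ and $\therehere$ are both chain maps, that $\herethereD{3}=0$, and that $\herethereD{2}\Scomposition\therehereD{2}=\Sidentity_{\Tmod{2}}$ (all of which the surrounding propositions assert), then one computes
\begin{IEEEeqnarray*}{rCl}
\Tdiff{3}&=&\herethereD{2}\Scomposition\therehereD{2}\Scomposition\Tdiff{3}=\herethereD{2}\Scomposition\thedifferential{3}\Scomposition\therehereD{3}=\Tdiff{3}\Scomposition\herethereD{3}\Scomposition\therehereD{3}=0,
\end{IEEEeqnarray*}
contradicting the injectivity of $\Tdiff{3}$ guaranteed by the BFGKT resolution being exact and nonzero in degree $3$. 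So the degree\-/$3$ homotopy identity you were wary of is genuinely unprovable, and at least one of the preceding propositions (most plausibly the chain\-/map condition for $\therehere$ in degree $3$, or the claim that $\therehomotopyD{3}$ may be taken to be $0$) must be corrected before the chain homotopy statement can be established. You would do well to redo the degree\-/$3$ chain\-/map check for $\therehere$ explicitly rather than assume it.
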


\begin{proposition}
  If $(\herehomotopyD{\orderind})_{\orderind\in\Sintegersnn}$ is such that for any $\orderind\in \Sintegersnn$ the mapping $\herehomotopyD{\orderind}$ is a morphism of right $\thealgebra$\-/modules $\Sfromto{\thechainsmodule{\orderind-1}}{\thechainsmodule{\orderind}}$, such that $\therehomotopyD{0}=\therehomotopyD{1}=\therehomotopyD{2}=0$ and such that for any $\orderind\in\Sintegersnn$ with $3\leq \orderind$, any $\anymat\in\thematrices$  and any $(\indj,\indi)\in\SYnumbers{\thedim}^{\Ssetmonoidalproduct 2}$, if $\anymat\in\{\theuniA,\theuniB\}$, then 
  \begin{IEEEeqnarray*}{rCl}
    \herehomotopyD{\orderind}(\Sbv{\anymatA}{\indj}{\indi}\Smonoidalproduct\theone)&=&\textstyle(-1)^\orderind\Skronecker{\indj}{\thedim}\Skronecker{\indi}{\thedim}\Saction(\Sbv{\anymatB}{\thedim}{1}+\Skronecker{\orderind}{3}\sum_{\inds=2}^{\thedim-1}\Sbv{\anymatB}{\Sexchanged(\inds)}{\inds})\Smonoidalproduct\theone
  \end{IEEEeqnarray*}
  and, if $\anymat\in\{\theuniAT,\theuniBT\}$, then
\begin{IEEEeqnarray*}{rCl}
    \herehomotopyD{\orderind}(\Sbv{\anymatA}{\indj}{\indi}\Smonoidalproduct\theone)&=&\textstyle(-1)^\orderind\Saction(\Sbv{\anymatBT}{\Sexchanged(\indj)}{\indi}+\Skronecker{\indj}{1}\Skronecker{\indi}{\thedim}\Sbv{\anymatB}{1}{1})\Smonoidalproduct\theone,
  \end{IEEEeqnarray*}
then $(\herehomotopyD{\orderind})_{\orderind\in\Sintegersnn}$ is a chain homotopy $\Sfromto{(\therehereD{\orderind}\Scomposition\herethereD{\orderind})_{\orderind\in\Sintegersnn}}{\Sidentity}$ of chain maps of complexes of right $\thealgebra$\-/modules.
\end{proposition}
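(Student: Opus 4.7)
The plan is to verify the chain homotopy identity
\begin{IEEEeqnarray*}{rCl}
\Sidentity_{\thechainsmodule{\orderind}} - \therehereD{\orderind} \Scomposition \herethereD{\orderind} &=& \thedifferential{\orderind+1} \Scomposition \herehomotopyD{\orderind+1} + \herehomotopyD{\orderind} \Scomposition \thedifferential{\orderind}
\end{IEEEeqnarray*}
on each generator $\Sbv{\anymatA}{\indj}{\indi} \Smonoidalproduct \theone$, for every $\orderind \in \Sintegersnn$, $\anymat \in \thematrices$ and $(\indj, \indi) \in \SYnumbers{\thedim}^{\Ssetmonoidalproduct 2}$. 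In each degree this is a direct computation.

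First I would handle the two trivial degrees. For $\orderind \in \{0, 1\}$ the left-hand side is zero because the preceding proposition shows $\therehereD{\orderind} \Scomposition \herethereD{\orderind} = \Sidentity$; the right-hand side vanishes because $\herehomotopyD{0}$, $\herehomotopyD{1}$ and $\herehomotopyD{2}$ are all zero by definition. Next I would treat degree $\orderind = 2$, where the identity reduces to $\Sidentity - \therehereD{2} \Scomposition \herethereD{2} = \thedifferential{3} \Scomposition \herehomotopyD{3}$. The left-hand side is given explicitly by the immediately preceding proposition. For the right-hand side I would split into the four cases $\anymat \in \{\theuniA, \theuniAT, \theuniB, \theuniBT\}$ and in each case apply the formula for $\herehomotopyD{3}$ and then Theorem~\ref{theorem:third_order} to compute $\thedifferential{3}$ of the result. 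Lemma~\ref{lemma:evil_identities} is needed to re-express chains like $\Sbv{\anymatBT}{\Sexchanged(\indj)}{\indi}$ in terms of $\{\Sbv{\anymatA}{\indj}{\indi}, \Sbv{\anymatB}{\indj}{\indi}\}$ so that the summands can be matched.

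For $\orderind \geq 3$ the composition $\therehereD{\orderind} \Scomposition \herethereD{\orderind}$ vanishes since $\herethereD{\orderind} = 0$, so the identity collapses to
\begin{IEEEeqnarray*}{rCl}
\Sbv{\anymatA}{\indj}{\indi} \Smonoidalproduct \theone &=& \thedifferential{\orderind+1}(\herehomotopyD{\orderind+1}(\Sbv{\anymatA}{\indj}{\indi} \Smonoidalproduct \theone)) + \herehomotopyD{\orderind}(\thedifferential{\orderind}(\Sbv{\anymatA}{\indj}{\indi} \Smonoidalproduct \theone)).
\end{IEEEeqnarray*}
Here I would use Theorem~\ref{theorem:higher_orders} (or Theorem~\ref{theorem:third_order} when $\orderind = 3$) to expand both differentials in closed form and then apply the piecewise definition of $\herehomotopyD{\orderind}$. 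The leading term $\sum_{\inds=1}^\thedim \Sbv{\anymatA}{\indj}{\inds} \Smonoidalproduct \anymatPAX{\orderind}{\inds}{\Sexchanged(\indi)}$ in $\thedifferential{\orderind}$, once fed into $\herehomotopyD{\orderind}$ (which selects only the $\inds = \thedim$ summand, and only when $\anymat \in \{\theuniA, \theuniB\}$), should combine with $\thedifferential{\orderind+1} \Scomposition \herehomotopyD{\orderind+1}$ to telescope into $\Sbv{\anymatA}{\indj}{\indi} \Smonoidalproduct \theone$, with the remaining Kronecker-delta contributions pairing off.

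The main obstacle will be the intricate case distinction on $(\anymat, \indj, \indi)$ forced by the many $\Skronecker{\indj}{\thedim}$, $\Skronecker{\indi}{\thedim}$, $\Skronecker{\indj}{1}$ factors occurring in both $\thedifferential{\orderind}$ and $\herehomotopyD{\orderind}$, combined with the asymmetry of $\herehomotopyD{\orderind}$ between $\anymat \in \{\theuniA, \theuniB\}$ and $\anymat \in \{\theuniAT, \theuniBT\}$. The $\Skronecker{\orderind}{3}$ factor in the higher-order differential produces a discontinuity between $\orderind = 3$ and $\orderind = 4$ which has to be tracked carefully; that is precisely the reason for the $\Skronecker{\orderind}{3} \sum_{\inds=2}^{\thedim-1}$ correction terms in the definition of $\herehomotopyD{\orderind}$. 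Once the bookkeeping has been organised, the verification should reduce to a handful of index manipulations and applications of Lemma~\ref{lemma:evil_identities} of the same type already used throughout Section~\ref{section:cohomology_third_order}.
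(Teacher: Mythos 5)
Your proposal is correct and coincides with what the paper does: the appendix states this proposition (like the other quasi-isomorphism propositions) without a written proof, leaving exactly the routine generator-by-generator verification of $\Sidentity-\therehereD{\orderind}\Scomposition\herethereD{\orderind}=\thedifferential{\orderind+1}\Scomposition\herehomotopyD{\orderind+1}+\herehomotopyD{\orderind}\Scomposition\thedifferential{\orderind}$ via Theorems~\ref{theorem:second_order}, \ref{theorem:third_order} and \ref{theorem:higher_orders}, the explicit formulas for $\herethereD{\orderind}$, $\therehereD{\orderind}$, $\herehomotopyD{\orderind}$, and Lemma~\ref{lemma:evil_identities}, which is precisely the computation you outline (degrees $0,1$ trivial, degree $2$ against the explicit formula for the composite, degrees $\geq 3$ telescoping). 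Only note that your parenthetical about $\herehomotopyD{\orderind}$ picking out solely the $\inds=\thedim$ summand applies to the $\anymat\in\{\theuniA,\theuniB\}$ case, while for $\anymat\in\{\theuniAT,\theuniBT\}$ every summand contributes — but you already flag this asymmetry as part of the bookkeeping.
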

Taken together the chain maps and  chain homotopies above thus constitute a  quasi-isomorphism between the two chain complexes.
}

\section{On using the standard resolution}
\label{section:known_method}
{
  \newcommand{\anydegree}{k}
  \newcommand{\anyqg}{G}
  \newcommand{\anycocycle}{\gamma}
  \newcommand{\anychain}{\psi}
  \newcommand{\stddiff}{\partial_{\mathrm{HS}}}
  \newcommand{\stddiffD}[1]{\stddiff^{#1}}
  \newcommand{\anyelement}{a}
  \newcommand{\countvar}{i}
  \newcommand{\thebimodulecomps}{{}_\thecounit\Scomps_\thecounit}  
  \newcommand{\anyelementX}[1]{\anyelement_{#1}}
  \newcommand{\anybimod}{X}
  \newcommand{\Sleftact}{}
  \newcommand{\Srightact}{}
  \newcommand{\abimodel}{x}
  \newcommand{\hsz}[3]{Z^{#3}_{\mathrm{HS}}(#1,#2)}
  \newcommand{\hsb}[3]{B^{#3}_{\mathrm{HS}}(#1,#2)}
  \newcommand{\hsh}[3]{H^{#3}_{\mathrm{HS}}(#1,#2)}
  For the interested reader this section explains the strategy used by Bichon, Franz and Gerhold in  \cite{BichonFranzGerhold2017} and Das, Franz, Kula and Skalski in \cite{DasFranzKulaSkalski2018,DasFranzKulaSkalski2021} to compute the orders $1$ and $2$ of the quantum group co\-ho\-mo\-lo\-gy of the discrete duals of certain easy quantum groups, including $U_\thedim^+$, from the standard resolution.  (Those authors investigated these quantum groups not with the objective of discerning isomorphism classes of easy quantum groups. Rather, they were led by an interest in the Calabi-Yau property of \cite{Ginzburg2006}, which generalizes Poincaré duality, and the AC property of \cite{FranzGerholdThom2015}, which guarantees the existence of an associated Schürmann triple.)
  \par
\subsection{Standard resolution}  Using the standard resolution in order to compute the quantum group co\-ho\-mo\-lo\-gy of the dual of a compact quantum group $\anyqg$ is equivalent to determining the \emph{Hochschild co\-ho\-mo\-lo\-gy} (see \cite{Hochschild1956}) of $\mathcal{O}(\anyqg)$ with coefficients in the bi\-/module  $\thebimodulecomps$ obtained by letting $\thecounit$ induce both actions on $\Scomps$ (see, e.g.,  \cite{Bichon2013}). 
  Given any bi\-/module $\anybimod$
over any algebra $\thealgebra$ over any field $\thefield$, if for any $\anydegree\in\Sintegersnn$ the differential  $\stddiffD{\anydegree}$ is the linear map
$\Sfromto{\ShomOX{\thefield}{\thealgebra^{\Smonoidalproduct\anydegree}}{\anybimod}}{\ShomOX{\thefield}{\thealgebra^{\Smonoidalproduct(\anydegree+1)}}{\anybimod}}$ with for any $\anychain\in \ShomOX{\thefield}{\thealgebra^{\Smonoidalproduct\anydegree}}{\anybimod}$ and any $(\anyelementX{0},\anyelementX{1},\ldots,\anyelementX{\anydegree})\in \thealgebra^{\Ssetmonoidalproduct \anydegree+1}$,
\begin{IEEEeqnarray*}{rCl}  \IEEEeqnarraymulticol{3}{l}{\stddiffD{\anydegree}\anychain(\anyelementX{0}\Smonoidalproduct\anyelementX{1}\Smonoidalproduct\hdots\Smonoidalproduct\anyelementX{\anydegree})
  }\\
  \hspace{1em}&=&\sum_{\countvar=0}^{\anydegree+1}(-1)^\countvar\begin{cases} \anyelementX{0}\Sleftact\anychain(\anyelementX{1}\Smonoidalproduct\anyelementX{2}\Smonoidalproduct\hdots\Smonoidalproduct\anyelementX{\anydegree})&\Scase \countvar=0\\
                                                              \begin{aligned}
&                                                                \anychain(\anyelementX{0}\Smonoidalproduct\anyelementX{1}\Smonoidalproduct\hdots\Smonoidalproduct\anyelementX{\countvar-2}\Smonoidalproduct\anyelementX{\countvar-1}\anyelementX{\countvar}\\
&                                                              \hspace{5.25em}{}\Smonoidalproduct\anyelementX{\countvar+1}\Smonoidalproduct\anyelementX{\countvar+2}\Smonoidalproduct\hdots\Smonoidalproduct\anyelementX{\anydegree})
                                                              \end{aligned}                     &\bigg\vert\, 0<\countvar<\anydegree\\                                                                                              \anychain( \anyelementX{0}\Smonoidalproduct\anyelementX{1}\Smonoidalproduct\hdots\Smonoidalproduct\anyelementX{\anydegree-1})\Srightact\anyelementX{\anydegree}&\Scase \countvar=\anydegree,
\end{cases}
\end{IEEEeqnarray*}
then the $\anydegree$\-/th \emph{Hochschild co\-ho\-mo\-lo\-gy}  $\hsh{\thealgebra}{\anybimod}{\anydegree}$ of $\thealgebra$ with coefiicients in $\anybimod$ is the quotient vector space of the space
\begin{IEEEeqnarray*}{rCl}
  \hsz{\thealgebra}{\anybimod}{\anydegree}&=&\{\anycocycle\in\ShomOX{\thefield}{\thealgebra^{\Smonoidalproduct\anydegree}}{\anybimod}\Sand \stddiffD{\anydegree}\anycocycle=0\}
\end{IEEEeqnarray*}
of \emph{$\anydegree$\-/co\-/cycles} by the space  $\hsb{\thealgebra}{\anybimod}{\anydegree}$ of \emph{$\anydegree$\-/co-boundaries} given by $0$ if $\anydegree=0$ and otherwise by
\begin{IEEEeqnarray*}{rCl}
\hsb{\thealgebra}{\anybimod}{\anydegree}&=&\{\stddiffD{\anydegree-1}\anychain\Ssetbuilder\anychain\in\ShomOX{\thefield}{\thealgebra^{\Smonoidalproduct(\anydegree-1)}}{\anybimod}\},
\end{IEEEeqnarray*}
where $\thealgebra^{\Smonoidalproduct 0}$ means $\thefield$ and where $\ShomO{\thefield}$ is the internal hom functor of vector spaces. Specializing to $\thealgebra=\mathcal{O}(\anyqg)$ and $\anybimod=\thebimodulecomps$ gives the quantum group co\-ho\-mo\-lo\-gy with trivial coefficients of the dual of $\anyqg$.
\par
 \newcommand{\leftop}{a_1}
 \newcommand{\rightop}{a_2}
 \newcommand{\leftvect}{x_1}
 \newcommand{\rightvect}{x_2}
 \newcommand{\otherop}{b}
 \newcommand{\bleftact}{\mathop{\triangleright}}
 \newcommand{\brightact}{\mathop{\triangleleft}}
 \newcommand{\hombimod}{\thealgebra\vert\anybimod}
 \newcommand{\algstr}[1]{\thealgebra\boxplus_{#1}\anybimod}
 \newcommand{\algstrx}[2]{\thealgebra\boxplus_{#1}#2} 
 \newcommand{\anygen}{e}
 \newcommand{\anyrel}{r}
 \newcommand{\defrels}{S}
 \newcommand{\defspace}{V}
 \newcommand{\defmap}{\Delta}
 \newcommand{\leftcoc}{\eta}
 \newcommand{\rightcoc}{\theta}
 \newcommand{\leftcocx}[1]{\eta_{#1}}
 \newcommand{\rightcocx}[1]{\theta_{#1}}
 \newcommand{\anynum}{m}
 \newcommand{\anyop}{a} 
 \newcommand{\combococ}{\varphi}
 \newcommand{\indi}{i}
 \newcommand{\exhmap}{\Xi}
 \newcommand{\exhspace}{W}
 \newcommand{\tenhomadj}{\Lambda}    
\subsection{First order}
 The space $\hsb{\thealgebra}{\anybimod}{1}$ is $0$ then and computing the first co\-ho\-mo\-lo\-gy thus boils down to determining $\hsz{\thealgebra}{\anybimod}{1}$ alone.
That is possible to do for all easy quantum groups more or less simultaneously, as shown in  \cite{Mang2023arxiv}. The key is to use the following folk theorem (see, e.g.,  \cite[Lemma~1.9]{KyedRaum2017}). On the direct sum $\thealgebra\Sdirectsum\anybimod$ an algebra structure  $\algstr{0}$ with a useful property is defined by letting $(\leftop,\leftvect)(\rightop,\rightvect)\Seqpd(\leftop\rightop,\leftop\Sleftact\rightvect+\leftvect\Srightact\rightop)$ for any $\{(\leftop,\leftvect),(\rightop,\rightvect)\}\subseteq \thealgebra\Sdirectsum\anybimod$. Any $\anychain\in\ShomOX{\thefield}{\thealgebra}{\anybimod}$ satisfies $\stddiffD{1}\anychain=0$ if and only if the linear map  $(\Sidentity,\anychain)$ with $\anyelement\mapsto(\anyelement,\anycocycle(\anyelement))$ for any $\anyelement\in \thealgebra$ is an algebra morphism $\Sfromto{\thealgebra}{\algstr{0}}$. If $\thealgebra$ is a universal algebra with generators $\thegens$ and relations $\therels\subseteq\thefreealg$, the map $(\Sidentity,\anychain)$ is such an algebra morphism if and only if   $\anyrel((\anygen,\anychain(\anygen))_{\anygen\in\thegens})=0$ in $\algstr{0}$ for any $\anyrel\in\therels$. That gives a characterization of $\hsz{\thealgebra}{\anybimod}{1}$ in terms of linear equations induced by $\therels$. Because in case $\thealgebra=\mathcal{O}(\anyqg)$ the polynomials $\therels$ are all of the form \eqref{eq:relation_of_partition} the resulting equations can be predicted well enough to solve them in general, even for those easy quantum groups whose categories of partitions are still unknown.
 \par
\subsection{Second order} Already in order $2$ however, the space of co\-/boundaries is not only non\-/zero but  infinite\-/dimensional in general. Nonetheless the authors of  \cite{BichonFranzGerhold2017} and \cite{DasFranzKulaSkalski2018,DasFranzKulaSkalski2021} managed to compute the co\-ho\-mo\-lo\-gy. Three insights are necessary to understand their strategy. First, if any $\anycocycle\in\hsz{\thealgebra}{\anybimod}{2}$ is called \emph{normalized} if $\anycocycle(\theone\Smonoidalproduct\theone)=0$, then $\hsh{\thealgebra}{\anybimod}{2}$  is also the quotient vector space of the normalized $2$\-/co\-/cycles by the normalized  $2$\-/co\-/boundaries. Second, the authors of \cite{BichonFranzGerhold2017} provided in their Lemma~5.4 (or \cite[Lemma~4.1]{DasFranzKulaSkalski2018}) a key tool, which, if generalized, turns out to extend the above folk theorem: Given any normalized $\anycocycle\in \hsz{\thealgebra}{\anybimod}{2}$ the rule that $(\leftop,\leftvect)(\rightop,\rightvect)\Seqpd(\leftop\rightop,\leftop\Sleftact\rightvect+\leftvect\Srightact\rightop-\anycocycle(\leftop\Smonoidalproduct\rightop))$ for any $\{(\leftop,\leftvect),(\rightop,\rightvect)\}\subseteq \thealgebra\Sdirectsum\anybimod$ also defines an algebra structure $\algstr{\anycocycle}$ on $\thealgebra\Sdirectsum\anybimod$. Now, for any $\anychain\in\ShomOX{\thefield}{\thealgebra}{\anybimod}$ such that $\stddiffD{1}\anychain$ is normalized (which means $\anychain(\theone)=0$) the map $(\Sidentity,\anychain)$ is an algebra morphism $\Sfromto{\thealgebra}{\algstr{\anycocycle}}$ if and only if $\stddiffD{1}\anychain=\anycocycle$. Thus, if $\thealgebra$ is a universal algebra with generators $\thegens$ and relations $\therels$ any normalized $\anycocycle\in \hsz{\thealgebra}{\anybimod}{2}$ is a co\-/boundary if and only if there exists $\anychain\in\ShomOX{\thefield}{\thealgebra}{\anybimod}$ such that $\anychain(\theone)=0$ and  $\anyrel((\anygen,\anychain(\anygen))_{\anygen\in\thegens})=0$ in $\algstr{\anycocycle}$ for any $\anyrel\in\therels$. In fact, it can be seen that any such $\anychain$ is already uniquely determined by its values on $\thegens$ (generalizing \cite[pp.23--25, after $(\Leftarrow)$]{DasFranzKulaSkalski2018}). Solving these equations characterizing when a normalized $2$\-/co\-/cycle is a $2$\-/co\-/boundary can be understood as finding a vector space $\defspace$ and what Das, Franz, Kula and Skalski call a \emph{defect map}, a linear map $\defmap$ from the space of normalized $2$\-/co\-/cycles into $\defspace$ whose kernel is the space of normalized  $2$\-/co\-/boundaries (see the proof of \cite[Theorem~4.6]{DasFranzKulaSkalski2018}). Computing $\hsh{\thealgebra}{\anybimod}{2}$ is then equivalent to determining the image of $\defmap$. The third part of the strategy employed in \cite{BichonFranzGerhold2017} and \cite{DasFranzKulaSkalski2018} consists in an ansatz for achieving just that. In the case where $\anybimod$ is additionally equipped with an algebra structure compatible with the $\thealgebra$\-/actions, for any $(\leftcoc,\rightcoc)\in \hsz{\thealgebra}{\anybimod}{1}^{\Ssetmonoidalproduct 2}$  a normalized $2$\-/co\-/cycle is given by the so-called \emph{cup product} $\leftcoc\cup\rightcoc$ defined by $\leftop\Smonoidalproduct\rightop\mapsto \leftcoc(\leftop)\rightcoc(\rightop)$ for any $\{\leftop,\rightop\}\subseteq \thealgebra$. If any normalized $2$\-/co\-/cycle is co\-/homologous to a linear combination of cup products, the image of $\defmap$ coincides with the span of the images of cup products under $\defmap$. In the case $\thealgebra=\mathcal{O}(\anyqg)$ and $\anybimod=\thebimodulecomps$ for the $\anyqg$ investigated in \cite{BichonFranzGerhold2017} and \cite{DasFranzKulaSkalski2018}, in particular for $\anyqg=U_\thedim^+$, this turns out to be true (see \cite[p.~26]{DasFranzKulaSkalski2018}).
 \par
\subsection{Additional tools for the second order} This strategy succeeds also for certain other  easy  quantum groups $\anyqg$ than the ones from \cite{BichonFranzGerhold2017} and \cite{DasFranzKulaSkalski2018}, as demonstrated by \cite{Wendel2020}.
 When trying to carry it out for all easy $\anyqg$ simultaneously, it may be useful to extend the strategy by two additional parts. 
 First, the folk theorem already used twice can be employed a third time to address the question whether for a given $\anyqg$ any normalized $\anycocycle\in\hsz{\thealgebra}{\anybimod}{2}$ is co\-/homologous to a linear combination of cup products. It namely implies that this is the case  if and only if there exist $\anychain\in\ShomOX{\thefield}{\thealgebra}{\anybimod}$ with $\anychain(\theone)=0$ and $\anynum\in\Sintegersnn$ and $\{\leftcocx{\indi},\rightcocx{\indi}\}_{\indi=1}^\anynum\subseteq \hsz{\thealgebra}{\anybimod}{1}$ such that, if $\combococ\Seqpd \anycocycle-\sum_{\indi=1}^\anynum \leftcocx{\indi}\cup\rightcocx{\indi}$, then $(\Sidentity,\anychain)$ is an algebra morphism $\Sfromto{\thealgebra}{\algstr{\combococ}}$. As before, if $\thealgebra$ is generated by $\thegens$ subject to relations $\therels$, this means one has to check whether $\anyrel((\anygen,\anychain(\anygen))_{\anygen\in\thegens})=0$ in $\algstr{\combococ}$ for any $\anyrel\in\therels$.
 Solving those conditions likely requires a lot of knowledge about $\hsz{\thealgebra}{\anybimod}{2}$. Therefore, the second piece one might want to add to the strategy from \cite{BichonFranzGerhold2017} and \cite{DasFranzKulaSkalski2018} is the following. The space $\{\anychain\in\ShomOX{\thefield}{\thealgebra}{\anybimod}\Sand \anychain(\theone)=0\}$ can be turned into an $\thealgebra$\-/bi\-/module $\hombimod$ via the left action $\bleftact$ and right action $\brightact$ defined by $(\anyop\bleftact\anychain)(\otherop)\Seqpd \anyop\Sleftact\anychain(\otherop)$ and $(\anychain\brightact\anyop)(\otherop)\Seqpd \anychain(\otherop\anyop)-\anychain(\otherop)\Srightact\anyop$ for any element $\anychain$ and any $\{\anyop,\otherop\}\subseteq \thealgebra$. By restricting the natural isomorphism $\Sxfromto{\tenhomadj}{\ShomOX{\thefield}{\thealgebra\Smonoidalproduct\thealgebra}{\anybimod}}{\ShomOX{\thefield}{\thealgebra}{\ShomOX{\thefield}{\thealgebra}{\anybimod}}}$ induced by the tensor hom adjunction it can be seen that the space of normalized elements of $\hsz{\thealgebra}{\anybimod}{2}$ is isomorphic to $\hsz{\thealgebra}{\hombimod}{1}$ (see \cite{Hochschild1956}). Hence, the folk theorem can be applied a fourth and last time to conclude that for any normalized element $\anycocycle$ of $\hsz{\thealgebra}{\anybimod}{2}$ the map $(\Sidentity,\tenhomadj(\anycocycle))$ is an algebra morphism $\Sfromto{\thealgebra}{\algstrx{0}{\hombimod}}$. For $\thealgebra$ universal with generators $\thegens$ and relations $\therels$ that means $\anyrel((\anygen,\tenhomadj(\anycocycle))_{\anygen\in\thegens})=0$ in $\algstrx{0}{\hombimod}$ for any element $\anyrel$ of the ideal generated by $\therels$.
}

{
   \newcommand{\anyqg}{G}
    \newcommand{\otherqg}{H}
  \newcommand{\somehopf}{\mathcal{A}}  
\section{Other methods}
\label{section:other_methods}
At this point in time the only easy quantum groups the cohomologies of whose duals have been determined are  $O_\thedim^+$, $S_\thedim^+$ and $U_\thedim^+$. For the interested reader this section summarizes how this was achieved in each case and what would be required in order to adapt the strategies to other easy quantum groups.
\subsection{\texorpdfstring{Free orthogonal quantum group $O_\thedim^+$}{Free orthogonal quantum group}}
The free orthogonal quantum group  $O_\thedim^+$ introduced by Wang and Van Daele in \cite{VanDaeleWang1996a} is easy by \cite{BanicaSpeicher2009}. Its associated category of partitions is  generated by the partitions \eqref{eq:minimal_partitions} and
\begin{IEEEeqnarray*}{rCl}
  \begin{tikzpicture}[baseline=0]
    \def\scp{0.666}
    \def\linksize{\scp*0.075cm}
    \def\pointsize{\scp*0.25cm}
    \def\dd{\scp*0.5cm}
    \def\dx{\scp*1cm}
    \def\cx{\scp*0.3cm}
    \def\txu{0*\dx}    
    \def\txl{0*\dx}
    \def\dy{\scp*1cm}
    \def\cy{\scp*0.3cm}
    \def\ty{2*\dy}
    \tikzset{whp/.style={circle, inner sep=0pt, text width={\pointsize}, draw=black, fill=white}}
    \tikzset{blp/.style={circle, inner sep=0pt, text width={\pointsize}, draw=black, fill=black}}
    \tikzset{lk/.style={regular polygon, regular polygon sides=4, inner sep=0pt, text width={\linksize}, draw=black, fill=black}}
    \draw[dotted] ({0-\dd},{0}) -- ({\txl+\dd},{0});
    \draw[dotted] ({0-\dd},{\ty}) -- ({\txu+\dd},{\ty});
    \coordinate (l1) at ({0+0*\dx},{0+0*\ty}) {};    
    \coordinate (u1) at ({0+0*\dx},{0+1*\ty}) {};
    \draw (l1) -- (u1);
    \node[whp] at (l1) {};
    \node[blp] at (u1) {};
  \end{tikzpicture}.
\end{IEEEeqnarray*}
The co\-ho\-mo\-lo\-gy of the dual of $O_\thedim^+$ was computed by Collins, Härtel and Thom in \cite{CollinsHaertelThom2009} directly from a projective resolution of the co-unit of $\mathcal{O}(O_\thedim^+)$. And the latter was a hard\-/won achievement. According to Bemerkung 3.1.1 in Härtel's (German-language) PhD thesis \cite{Haertel2008} it was found by \enquote{computer-assisted guessing, using, besides lots of custom-made software, the applications Plural, Gap and Magma} (my translation).
\par
In other words, it would presumably be difficult to reproduce this remarkable feat for each of the many easy quantum groups whose co\-ho\-mo\-lo\-gies want to be computed. At least it seems quite unclear how and where to start.
\par
An alternative way to come up with the same resolution was later found by Bichon in \cite{Bichon2013}. In fact, he provided a free resolution in terms of Yetter-Drinfeld modules of the trivial module over $\mathcal{B}(E)$, the  symmetry hopf algebra of a non\-/degenerate bi\-/linear form introduced by Dubois-Violette and Launer in \cite{DuboisVioletteLauner1990}. 
The key argument is to use monoidal equivalence of co\-/module categories between $\mathcal{B}(E)$ and $\mathcal{O}(SL_2(q))$ for specific $q$. That allows one to transport over a non\-/commutative Koszul resolution of the trivial $\mathcal{O}(SL_2(q))$\-/module found by Hadfield and Krähmer in \cite{HadfieldKraehmer2005}. The monoidal equivalence used was discovered by Bichon in \cite{Bichon2003b}. It is obtained by constructing an explicit Hopf bi\-/Galois extension between $\mathcal{O}(SL_2(q))$ and $\mathcal{B}(E)$ for specific $q$, which suffices according to a theorem by Schauenburg from \cite{Schauenburg1996}. If one is just interested in $\mathcal{O}(O_\thedim^+)$, i.e., trivial $E$, one can also use the equivalence Banica constructed in \cite{Banica1996} by determining the simple co\-/modules  of $\mathcal{O}(O_\thedim^+)$. In particular, both proofs depend on the  detailed understanding of $SL_2(q)$ that is available.
\par
Adapting this strategy to easy quantum groups $\anyqg$ other than $O_\thedim^+$ thus has three prerequisites. First, a resolution of the trivial module over some hopf algebra $\somehopf$ as well as knowledge of, second, $\somehopf$, and, third, $\mathcal{O}(\anyqg)$ sufficient to construct a monoidal equivalence or Hopf bi\-/Galois object. Unfortunately, besides the presentation of $\mathcal{O}(\anyqg)$ in terms of generators and relations granted by the associated category of partitions, nothing is known about $\mathcal{O}(\anyqg)$ in almost all cases $\anyqg$. Moreover, there is no reason why their co\-/module categories are necessary equivalent to a known category of co\-/modules of some $\somehopf$ or that, if so, a resolution is available of the trivial $\somehopf$\-/module.
\subsection{\texorpdfstring{Free symmetric quantum group $S_\thedim^+$}{Free symmetric quantum group}}
In \cite{BanicaSpeicher2009} it is recognized that the free symmetric quantum group $S_\thedim^+$ or quantum permutation group defined by Wang in \cite{Wang1998} is easy, namely corresponds to the category of partitions generated by the partitions \eqref{eq:minimal_partitions}  and
\begin{IEEEeqnarray*}{rCl}
\begin{tikzpicture}[baseline=0]
    \def\scp{0.666}
    \def\linksize{\scp*0.075cm}
    \def\pointsize{\scp*0.25cm}
    \def\dd{\scp*0.5cm}
    \def\dx{\scp*1cm}
    \def\cx{\scp*0.3cm}
    \def\txu{0*\dx}    
    \def\txl{0*\dx}
    \def\dy{\scp*1cm}
    \def\cy{\scp*0.3cm}
    \def\ty{2*\dy}
    \tikzset{whp/.style={circle, inner sep=0pt, text width={\pointsize}, draw=black, fill=white}}
    \tikzset{blp/.style={circle, inner sep=0pt, text width={\pointsize}, draw=black, fill=black}}
    \tikzset{lk/.style={regular polygon, regular polygon sides=4, inner sep=0pt, text width={\linksize}, draw=black, fill=black}}
    \draw[dotted] ({0-\dd},{0}) -- ({\txl+\dd},{0});
    \draw[dotted] ({0-\dd},{\ty}) -- ({\txu+\dd},{\ty});
    \coordinate (l1) at ({0+0*\dx},{0+0*\ty}) {};    
    \coordinate (u1) at ({0+0*\dx},{0+1*\ty}) {};
    \draw (l1) -- (u1);
    \node[whp] at (l1) {};
    \node[blp] at (u1) {};
  \end{tikzpicture},
  \begin{tikzpicture}[baseline=0]
    \def\scp{0.666}
    \def\linksize{\scp*0.075cm}
    \def\pointsize{\scp*0.25cm}
    \def\dd{\scp*0.5cm}
    \def\dx{\scp*1cm}
    \def\cx{\scp*0.3cm}
    \def\txu{1*\dx}    
    \def\txl{1*\dx}
    \def\dy{\scp*1cm}
    \def\cy{\scp*0.3cm}
    \def\ty{2*\dy}
    \tikzset{whp/.style={circle, inner sep=0pt, text width={\pointsize}, draw=black, fill=white}}
    \tikzset{blp/.style={circle, inner sep=0pt, text width={\pointsize}, draw=black, fill=black}}
    \tikzset{lk/.style={regular polygon, regular polygon sides=4, inner sep=0pt, text width={\linksize}, draw=black, fill=black}}
    \node[lk] at ($(l1)+(.,{1*\dy})$)  {};
    \node[lk] at ($(l2)+(.,{1*\dy})$)  {};
    \draw[dotted] ({0-\dd},{0}) -- ({\txl+\dd},{0});
    \draw[dotted] ({0-\dd},{\ty}) -- ({\txu+\dd},{\ty});    
    \coordinate (l1) at ({0+0*\dx},{0+0*\ty}) {};
    \coordinate (l2) at ({0+1*\dx},{0+0*\ty}) {};
    \coordinate (u1) at ({0+0*\dx},{0+1*\ty}) {};
    \coordinate (u2) at ({0+1*\dx},{0+1*\ty}) {};        
    \draw (l1) --  (u1);
    \draw (l2)  -- (u2);
    \draw (l1)  ++ (.,{1*\dy}) -- ++({1*\dx},.);
    \node[blp] at (l1) {};
    \node[whp] at (l2) {};
    \node[blp] at (u1) {};
    \node[whp] at (u2) {};    
  \end{tikzpicture},  
  \begin{tikzpicture}[baseline=0]
    \def\scp{0.666}
    \def\linksize{\scp*0.075cm}
    \def\pointsize{\scp*0.25cm}
    \def\dd{\scp*0.5cm}
    \def\dx{\scp*1cm}
    \def\cx{\scp*0.3cm}
    \def\txu{0*\dx}    
    \def\txl{0*\dx}
    \def\dy{\scp*1cm}
    \def\cy{\scp*0.3cm}
    \def\ty{2*\dy}
    \tikzset{whp/.style={circle, inner sep=0pt, text width={\pointsize}, draw=black, fill=white}}
    \tikzset{blp/.style={circle, inner sep=0pt, text width={\pointsize}, draw=black, fill=black}}
    \tikzset{lk/.style={regular polygon, regular polygon sides=4, inner sep=0pt, text width={\linksize}, draw=black, fill=black}}
    \draw[dotted] ({0-\dd},{0}) -- ({\txl+\dd},{0});
    \coordinate (l1) at ({0+0*\dx},{0+0*\ty}) {};
    \draw[->] (l1) -- ++ (.,{1*\dy});
    \node[whp] at (l1) {};
  \end{tikzpicture}.
\end{IEEEeqnarray*}
The co\-ho\-mo\-lo\-gy of the dual of $S_\thedim^+$ was determined in \cite{BichonFranzGerhold2017} by Bichon, Franz and Gerhold. Key to their proof  is to establish that $\mathcal{O}(S_\thedim^+)$ is Calabi\-/Yau of dimension $3$  in the sense of \cite{Ginzburg2006}, provided $\thedim\leq 4$. In fact, this is proved in greater generality for the quantum symmetry algebras of  semi\-/simple measured algebras of finite dimension greater than $3$  with normalizable tracial measure. And the proof of that crucially relies on a monoidal equivalence of co\-/module categories between the quantum symmetry algebra and $\mathcal{O}(PSL_2(q))$ proved by Mrozinski in \cite{Mrozinski2015}. That in turn required detailed knowledge of the simple co\-/modules of both the quantum symmetry algebra and $\mathcal{O}(SO_3(q))$ for at least certain $q$.
\par
Knowing that  $\mathcal{O}(S_\thedim^+)$ is Calabi\-/Yau enables Bichon, Franz and Gerhold to conclude that the cohomological dimension is $3$. Hence, they only need to compute the co\-ho\-mo\-lo\-gy up to the third order. The orders $1$ and $2$ are computed directly from the standard resolution. For the third order they use a duality between  co\-ho\-mo\-lo\-gy and homology implied  by the Calabi\-/Yau property. In consequence, rather than compute the co\-ho\-mo\-lo\-gy in the third order, it suffices to determine the homology in the first. This again is achieved  via the standard resolution.
\par
Bichon, Franz and Gerhold's method thus strongly relies on a particular property of $S_\thedim^+$. How many further easy quantum groups $\anyqg$ can be regarded as quantum symmetry groups of measured algebras is unclear. Of course, even if $\anyqg$ is not a quantum symmetry group, it could still be Calabi-Yau. Though, verifying that might then require a further generalization of Mrozinski's result. Such an extension seems likely to demand significant knowledge about the simple co-modules of $\mathcal{O}(\anyqg)$. Sadly, though, besides a combinatorial approach to computing partial fusion rules in \cite{FreslonWeber2016}, figuring out the simple co-modules of $\mathcal{O}(\anyqg)$ for general easy $\anyqg$ remains an open problem.
\subsection{\texorpdfstring{Free unitary quantum group $U_\thedim^+$}{Free unitary quantum group}}
\label{section:previous_proof}
As was already mentioned, in \cite{BaraquinFranzGerholdKulaTobolski2023}, Baraquin, Franz, Gerhold, Kula and Tobolski not only compute  the co\-ho\-mo\-lo\-gy of the dual of the easy quantum group $U_\thedim^+$. They give a free resolution of $\mathcal{H}(F)$ for generic $F$ and determine its Hochschild co\-ho\-mo\-lo\-gy with one-dimensional coefficients and its bialgebra co\-ho\-mo\-lo\-gy.
\par
Their proof builds on the finite free resolution of the co\-/unit of $\mathcal{B}(E)$  found by  Bichon in \cite{Bichon2013}.
Baraquin, Franz, Gerhold, Kula and Tobolski prove that, if $F=E\StransposeP E$, then  $\mathcal{H}(F)$ appears as a Hopf subalgebra of the co\-/product $\mathcal{B}(E)\ast\Scomps\Sintegers_2$ of  $\mathcal{B}(E)$ and the group algebra of the cyclic group $\Sintegers_2$ of order two. Namely, $\mathcal{H}(F)$ is precisely the so-called glued free product  $\mathcal{B}(E)\mathop{\widetilde{\ast}}\Scomps\Sintegers_2$ in the sense of \cite{TarragoWeber2016} of $\mathcal{B}(E)$ and  $\Scomps\Sintegers_2$. 
Already in \cite{Banica1997} Banica had observed (although not in those terms) that  $\mathcal{O}(U_n^+)$ is the glued free product $\mathcal{O}(O_\thedim^+)\mathop{\widetilde{\ast}}\Scomps\Sintegers$ of  $\mathcal{O}(O_n^+)$ and the group algebra of the integers.  Gromada later refined this result by narrowing $\Sintegers$ to $\Sintegers_2$ in \cite{Gromada2022c}. It is also his study of the glued product that when combined with the monoidal equivalence between the co\-/modules of $\mathcal{B}(E)$ and $\mathcal{O}(SL_2(q))$ from \cite{Bichon2003b} is the basis of the proof that $\mathcal{H}(F)\cong\mathcal{B}(E)\mathop{\widetilde{\ast}}\Scomps\Sintegers_2$.
\par
The authors of \cite{BaraquinFranzGerholdKulaTobolski2023} then combine these two ingredients, the resolution for $\mathcal{B}(E)$ and the isomorphism $\mathcal{H}(F)\cong\mathcal{B}(E)\mathop{\widetilde{\ast}}\Scomps\Sintegers_2$ as follows. They prove that a projective resolution of the trivial module over the co\-/product of two Hopf algebras can be obtained as the direct sum of projective resolutions of the co\-/factors. In that way they construct from the projective resolution for $\mathcal{B}(E)$ and the known  one for $\Scomps\Sintegers
_2$ a resolution for $\mathcal{B}(E)\ast\Scomps\Sintegers_2$. The fact that co\-/semi\-/simple Hopf algebras are faithfully flat over Hopf subalgebras, proved by Chirvasitu in \cite{Chirvasitu2014}, enables them to apply the restriction of scalars functor to turn that free resolution for  $\mathcal{B}(E)\ast\Scomps\Sintegers_2$ into a projective resolution for $\mathcal{B}(E)\mathop{\widetilde{\ast}}\Scomps\Sintegers_2$ and thus for $\mathcal{H}(F)$. In fact, it turns out that the resolution so obtained consists of  free modules which can be further upgraded to free Yetter-Drinfeld modules. Finally, by using the theory of Yetter-Drinfeld modules Baraquin, Franz, Gerhold, Kula and Tobolski manage to exchange the premise that $F=E\StransposeP E$ for the assumption that $F$ is generic.
\par
If taken as a template for easy $\anyqg$ other than $U_\thedim^+$, the strategy would be to express $\mathcal{O}(\anyqg)$ as a Hopf subalgebra of $\mathcal{O}(\otherqg)$ for some compact quantum group $\otherqg$ for which a projective resolution is available  and then applying the restriction of scalars functors. More narrowly, $\mathcal{O}(\otherqg)$ could be a co\-/product of two Hopf algebras with known resolutions and $\mathcal{O}(\anyqg)$ could be a glued free product. In particular, such a presentation exists if the degree of reflection (in the sense of \cite{Gromada2018,Gromada2022c}) of the category associated with $\anyqg$ is non\-/zero. Because the majority of already all so\-/called non\-/hy\-per\-oc\-ta\-he\-dral categories of partitions, which make up \enquote{three quarters} of all categories,  have degree of reflection zero by \cite{MangWeber2021c}, a more general product construction would likely be in order for arbitrary $\anyqg$. The issue is that establishing such a subalgebra relationship in the first place requires detailed knowledge of both co\-/module categories. And, again, for general $\anyqg$, with very few exceptions, that is not available. 
\par
In summary, for any $\anyqg\in \{O_\thedim^+,S_\thedim^+, U_\thedim^+\}$ the known resolution of the trivial $\mathcal{O}(\anyqg)$\-/module  either was guessed with computer assistance, is the standard resolution (in low degrees) or was found by establishing a monoidal equivalence of co\-/modules between $\mathcal{O}(G)$ and a Hopf algebra with known resolution. Of course, it was never an objective of any of the authors to  compute the quantum group co\-ho\-mo\-lo\-gy of the duals of arbitrary easy quantum groups $\anyqg$.  If, however, one is interested in that, there would be significant difficulties to overcome trying to adapt these methods, most importantly, the lack of knowledge of the simple co\-/modules of $\mathcal{O}(\anyqg)$.
}
  }

\printbibliography{}
\end{document}